\documentclass[reqno]{amsart}
\usepackage[margin = 1in]{geometry}
\usepackage{amsmath, amssymb, amsthm, fancyhdr, verbatim, graphicx, relsize, mathtools, xfrac}
\usepackage{enumerate}
\usepackage{enumitem} 
\usepackage[all]{xy}
\usepackage[dvipsnames]{xcolor}
\usepackage{mathrsfs}
\usepackage{tikz}
\usetikzlibrary{arrows.meta, positioning, calc, patterns, decorations.pathmorphing, decorations.pathreplacing}
\usepackage{tikz-cd}
\usetikzlibrary{arrows.meta,backgrounds,calc}
\definecolor{CFink}{HTML}{334958}
\definecolor{CFone}{HTML}{238589}
\definecolor{CFtwo}{HTML}{B17E46}
\definecolor{CFthree}{HTML}{75629D}
\definecolor{CFprimeone}{HTML}{7655AD}
\definecolor{CFprimetwo}{HTML}{3E80BA}
\definecolor{CFprimethree}{HTML}{409D73}
\definecolor{CFprimefour}{HTML}{C2AF39}
\definecolor{CFprimefive}{HTML}{D88638}
\definecolor{CFprimesix}{HTML}{C95655}
\definecolor{CFwire}{HTML}{CAD2D8}
\tikzset{
  cf dot/.style={circle,inner sep=0pt,minimum size=4.2pt,
    outer sep=1pt,draw=none,fill=#1},
  cf group/.style={font=\normalsize,text=CFink,fill=none,inner sep=2pt},
  cf axis/.style={draw=CFink,line width=.65pt,
    -{Stealth[length=4.5pt,width=3.2pt]}},
  cf plane/.style={draw=#1!32,line width=.45pt,fill=#1,fill opacity=.022},
  cf edge/.style={line width=.85pt},
  cf route/.style={line width=1.35pt},
  cf change/.style={draw=CFink!80,line width=1.05pt,
    dash pattern=on 3pt off 2.5pt},
  cf guide/.style={line width=.4pt,densely dotted},
}
\usetikzlibrary{calc}
\usepackage[T1]{fontenc} 
\usepackage{framed}
\usepackage[hypertexnames=false]{hyperref}
\usepackage[OT2, T1]{fontenc}  
\usepackage[titletoc]{appendix}
\usepackage{bbm}
\usepackage{adjustbox}
\usepackage{amssymb} 
\usepackage{float}

\numberwithin{equation}{subsection}

\DeclareSymbolFont{cyrletters}{OT2}{wncyr}{m}{n}
\DeclareMathSymbol{\Sha}{\mathalpha}{cyrletters}{"58}

\usepackage{color}

\newcommand{\F}{\mathbf{F}}
\newcommand{\RR}{\mathbf{R}}

\newcommand{\G}{\mathbf{G}}
\newcommand{\tr}[0]{\operatorname{tr}}
\newcommand{\wt}[1]{\widetilde{#1}}

\newcommand{\Q}{\mathbf{Q}}
\newcommand{\Z}{\mathbf{Z}}

\newcommand{\mf}[1]{\mathfrak{#1}}

\newcommand{\sgn}{\operatorname{sgn}}
\newcommand{\Gal}{\operatorname{Gal}}

\newcommand{\ol}[1]{\overline{#1}}
\newcommand{\wh}[1]{\widehat{#1}}

\newcommand{\co}{\colon}

\newcommand{\ld}{{}^L}

\newcommand{\surj}{\twoheadrightarrow}

\newcommand\cA{\mathcal{A}}
\newcommand\cB{\mathcal{B}}

\newcommand\cE{\mathcal{E}}
\newcommand\cF{\mathcal{F}}
\newcommand\cG{\mathcal{G}}
\newcommand\cH{\mathcal{H}}

\newcommand\cL{\mathcal{L}}
\newcommand\cM{\mathcal{M}}

\newcommand\cO{\mathcal{O}}

\newcommand\cS{\mathcal{S}}
\newcommand\cT{\mathcal{T}}

\newcommand\frG{\mathfrak{G}}

\newcommand\frg{\mathfrak{g}}
\newcommand\frh{\mathfrak{h}}

\newcommand\frp{\mathfrak{p}}

\newcommand\frt{\mathfrak{t}}

\DeclareMathOperator{\GL}{GL}
\DeclareMathOperator{\SL}{SL}

\DeclareMathOperator{\ab}{ab}

\DeclareMathOperator{\Tr}{Tr}

\DeclareMathOperator{\PGL}{PGL}
\DeclareMathOperator{\Sp}{Sp}
\DeclareMathOperator{\SO}{SO}
\DeclareMathOperator{\Hom}{Hom}

\DeclareMathOperator{\ord}{ord}
\DeclareMathOperator{\Aut}{Aut}

\DeclareMathOperator{\Nm}{Nm}

\DeclareMathOperator{\Lie}{Lie}

\DeclareMathOperator{\ad}{ad}

\DeclareMathOperator{\unr}{unr}
\DeclareMathOperator{\Res}{Res}
\DeclareMathOperator{\Frac}{Frac}

\DeclareMathOperator{\ind}{ind}

\DeclareMathOperator{\FS}{FS}
\DeclareMathOperator{\cInd}{c-Ind}

\DeclareMathOperator{\red}{red}

\DeclareMathOperator{\Kal}{Kal}

\DeclareMathOperator{\der}{der}

\DeclareMathOperator{\rec}{rec}
\DeclareMathOperator{\rk}{rk}

\DeclareMathOperator{\Fr}{Fr}

\RequirePackage{xspace}

\newcommand{\rH}{\ensuremath{\mathrm{H}}\xspace}

\newcommand{\rT}{\ensuremath{\mathrm{T}}\xspace}

\newcommand{\bF}{\mathbf{F}}

\renewcommand{\ss}{\mathrm{ss}}

\newtheorem{thm}{Theorem}[subsection]
\newtheorem{lemma}[thm]{Lemma}
\newtheorem{prop}[thm]{Proposition}

\theoremstyle{remark}
\newtheorem{remark}[thm]{Remark} 
 
\newtheorem{defn}[thm]{Definition}
\newtheorem{ansatz}[thm]{Ansatz}

\newtheorem{example}[thm]{Example}

\makeatletter
\def\th@remark{%
  \thm@headfont{\bfseries}%
  \normalfont 
  \thm@preskip \thm@preskip 
  \thm@postskip\thm@preskip
}
\def\imod#1{\allowbreak\mkern5mu({\operator@font mod}\,\,#1)}
\makeatother

\numberwithin{equation}{subsection}
\numberwithin{figure}{subsection}

\title[Local Langlands functoriality for Yu's supercuspidals I]{Local Langlands functoriality for Yu's supercuspidals I: Kaletha's parametrization}

\author{Sean Cotner and Tony Feng}

\begin{document}

\begin{abstract}
This is the first of two papers dedicated to the explicit computation of the Fargues--Scholze correspondence. In this first part, we construct (inspired by, and extending, Kaletha's explicit Local Langlands Correspondence for non-singular supercuspidal representations) semisimple inertial L-parameters for all cuspidal representations arising from Yu's constructions, with arbitrary coefficients. We then establish a characterization of this correspondence in terms of certain functoriality properties. This characterization will be used in the sequel paper in order to compute the Fargues--Scholze correspondence after restriction to inertia.
\end{abstract}

\maketitle

\tableofcontents


\section{Introduction}

This paper is the first in a two-part series which computes the Fargues--Scholze correspondence \cite{FS} on inertia for supercuspidal representations arising from Yu's construction. 

\subsection{Informal preview of results}\label{ssec:II-informal-preview}
Let $p$ be a prime and let $G$ be a reductive group over a nonarchimedean local field $F$ of residue characteristic $p$. 

\subsubsection{The Local Langlands Correspondence} The Local Langlands Correspondence (LLC) predicts a relationship between the smooth irreducible representations of $G(F)$ and the Galois representations (more generally, $L$-parameters) into the L-group $\ld G$. Several rather distinct approaches to the LLC have now emerged, at least in characteristic $0$. In this paper, we focus on the following ones. 
\begin{enumerate}
    \item The ``classical'' approach stemming from the seminal work of Laumon--Rapoport--Stuhler \cite{LRS93}, Harris--Taylor \cite{HT01}, and Henniart \cite{Hen00} on the Local Langlands Correspondence for $\GL_n$. This has been extended to certain other families of groups, as we shall review in \S \ref{ssec:related-work}. 
    \item The ``explicit'' approach developed by Kaletha \cite{Kal19}, \cite{Kal21b}. This has earlier roots in the depth 0 correspondence of DeBacker--Reeder \cite{DR09}, and draws inspiration from the character formulas of Adler--Spice and DeBacker--Spice \cite{AS09,DS18}. 
    \item The ``geometric'' approach of Fargues--Scholze \cite{FS}, which is based on adapting the Geometric Langlands program and Lafforgue's work \cite{Laff18} to the Fargues--Fontaine curve. Other geometric approaches due to Genestier--Lafforgue \cite{GL18} and Zhu \cite{Zhu25} will also be reviewed in \S \ref{ssec:related-work}.
\end{enumerate}

Each of these approaches has its own advantages and disadvantages. The classical approach furnishes a complete Local Langlands Correspondence, but only for specific families of groups, and it is difficult to make it explicit. Kaletha's correspondence is explicit and applies uniformly to quite general groups, but only for specific families of representations, and it carries technical assumptions (e.g., that $p$ not be too small relative to $G$). The Fargues--Scholze correspondence is completely general and uniform in $G$, and it is part of a broader categorical framework, but it is difficult to compute explicitly, and its fundamental expected properties (such as surjectivity and finiteness of fibers) remain open in general. Therefore, it is of interest and utility to reconcile these approaches. In this paper and its sequel \cite{CF26b}, we will compare (2) and 
(3).

\subsubsection{Our results} 
One of the main results \cite[Theorem~10.1.1]{CF26b} of this two-part series \emph{computes the Fargues--Scholze correspondence on inertia for cuspidal representations arising from Yu's construction}. Moreover, for non-singular cuspidal representations, we calculate the restriction of the Fargues--Scholze parameter to an explicit finite index subgroup. The answer is compatible with Kaletha's explicit Local Langlands parametrization (when the latter is applicable). 

In particular, when $G$ splits after a tamely ramified extension and $p$ does not divide the order of the absolute Weyl group of $G$, work of Fintzen \cite{Fin21}, \cite{Fin22}, building on work of Kim \cite{Kim07}, shows that Yu's construction \cite{Yu01} exhausts all cuspidal representations, so our results determine the Fargues--Scholze parameters on inertia for all irreducible smooth representations of $G(F)$. In this case, we are also able to deduce important qualitative consequences for the Fargues--Scholze correspondence, including: 
\begin{itemize}
    \item \cite[Theorem~10.1.1(2)]{CF26b} A characterization of the representations with supercuspidal L-parameter.
    \item \cite[Theorem~10.2.1]{CF26b} Depth preservation, and finiteness of the fibers. 
    \item \cite[Theorem~10.3.1]{CF26b} Surjectivity onto inertial $L$-parameters.
\end{itemize}

There has been considerable recent progress towards some of these results in \cite{DL26}, \cite{Ete23}, \cite{Fu26}, \cite{GHS24}, and \cite{BPHT25} under various hypotheses on $F$ and $G$, which we will recall in more detail in the introduction to \cite{CF26b}. One motivation for the ``soft'' results bulleted above is that they are among the inputs needed in the strategy of Hansen--Mann \cite{HM26} to prove the categorical local Langlands conjecture of \cite{FS}. A depth $0$ version of the categorical conjecture has already been established by other means by Zhu \cite{Zhu25} for his version of categorical local Langlands, and he has compared the resulting Local Langlands Correspondence to DeBacker--Reeder in \cite[\S 5.3.4]{Zhu25} when $G$ is unramified of adjoint type.

\begin{remark}
There has been much recent work on the compatibility of (3) with the semisimplification of (1). In \cite[Theorem I.9.6(ix)]{FS}, Fargues--Scholze proved compatibility for $G = \GL_n$. Most recently, Daniels--van Hoften--Kim--Zhang \cite[Theorem II]{DvHKZ26} (building on \cite{FS}, \cite{HKW22}, \cite{BMHN24}, \cite{Ham25}, \cite{Han26}, and \cite{Pen26}) establish this compatibility over arbitrary $p$-adic local fields for most inner forms of classical groups. This does not imply our results, even for the groups to which it applies, because the compatibility of (1) and (2) was only previously known in special cases \cite{OT21}, \cite{Tok23}, \cite{Oi26}. We note that our results imply inertial compatibility of (1) and (2) in the cases in which (1) and (3) are known to be equivalent.
\end{remark}

\subsubsection{Our methods}\label{ssec:II-intro-methods} 
Our methods are completely different from prior approaches. In particular, they are purely local and representation-theoretic, and uniform in the group. The starting point is the theory of \emph{modular functoriality} as developed in \cite{F24}, building on ideas of Treumann--Venkatesh \cite{TV}, which gives concrete constructions of functorial (in the sense of Langlands) descents of $\ol\F_\ell$-representations in certain very special situations.

The basic idea is to use modular functoriality to probe the Fargues--Scholze parameters modulo arbitrarily large prime numbers. In particular, although the applications described above are to representation theory with characteristic $0$ coefficients, \emph{the proofs make essential use of miracles occurring in positive characteristic}. The key advantage of positive characteristic is the (provocatively phrased) slogan that ``there are many more Jacquet functors in positive characteristic.'' 

More precisely, the theory of modular functoriality supplies ``exceptional'' functors from smooth characteristic $\ell$ representations of $G(F)$ to smooth characteristic $\ell$ representations to a subgroup $H(F)$, which ``behave like Jacquet functors''. In principle, this provides a mechanism to inductively bootstrap understanding of the Local Langlands correspondence. However, this mechanism is subtle to use for at least two reasons: 
\begin{enumerate}
\item we only obtain congruential information modulo a prime $\ell$, and 
\item it is not obvious which pairs $(H,G)$ can arise in modular functoriality.
\end{enumerate} 
For these reasons it is not immediately clear how much power can be leveraged from modular functoriality. 
In this paper and its sequel, we will demonstrate how to carve out of the web of modular functoriality a systematic strategy to compute the Fargues--Scholze correspondence on inertia (and more), for all supercuspidals arising from Yu's construction. Roughly speaking, the proof involves using modular functoriality and independence of $\ell$ to pass to an extension of $F$ of very large prime degree, after which $G(F)$ has a lot of torsion of large prime order, and then to use the same methods again to pass to an unramified twisted Levi $F$-subgroup of $G$ (see Figure \ref{fig:cf26a-inductive-path}).

\begin{figure}[!htbp]
  \centering
  \resizebox{.6\linewidth}{!}{%
\begin{tikzcd}[
  arrow style=tikz,
  arrows={-{Stealth[length=5pt,width=3.6pt]}},
  cells={nodes={inner sep=0pt,minimum size=0pt}},
  labels={font=\scriptsize,fill=white,inner sep=2pt},
  execute before arrows={
    \filldraw[cf plane=CFprimesix] (4.5,2.45) rectangle (13.5,8.70);
    \filldraw[cf plane=CFprimefive] (3.6,1.85) rectangle (12.6,8.10);
    \filldraw[cf plane=CFprimefour] (2.7,1.25) rectangle (11.7,7.50);
    \filldraw[cf plane=CFprimethree] (1.8,.65) rectangle (10.8,6.90);
    \filldraw[cf plane=CFprimetwo] (.9,.05) rectangle (9.9,6.30);
    \filldraw[cf plane=CFprimeone] (0,-.55) rectangle (9,5.70);
    \draw[CFwire,line width=.45pt] (0,-.55)--(4.5,2.45);
    \draw[CFwire,line width=.45pt] (0,5.70)--(4.5,8.70);
    \draw[CFwire,line width=.45pt] (9,5.70)--(13.5,8.70);
    \draw[cf guide,CFprimeone,opacity=.32] (8.10,.55)--(9,.55)--(9,-.55);
    \draw[cf guide,CFprimetwo,opacity=.32] (9,2.55)--(9.9,2.55)--(9.9,.05);
    \draw[cf guide,CFprimethree,opacity=.32] (7.35,3.15)--(10.8,3.15)--(10.8,.65);
    \draw[cf guide,CFprimefour,opacity=.32] (8.25,5.35)--(11.7,5.35)--(11.7,1.25);
    \draw[cf guide,CFprimefive,opacity=.32] (6.55,5.95)--(12.6,5.95)--(12.6,1.85);
    \draw[cf guide,CFprimesix,opacity=.32] (7.45,8.15)--(13.5,8.15)--(13.5,2.45);
%
    \node[cf dot=CFprimeone,minimum size=5pt] (cfA) at (8.10,.55) {};
    \node[cf dot=CFprimeone,minimum size=5pt] (cfB) at (8.10,1.95) {};
    \node[cf dot=CFprimetwo,minimum size=5pt] (cfJ) at (9,2.55) {};
    \node[cf dot=CFprimetwo,minimum size=5pt] (cfD) at (6.45,2.55) {};
    \node[cf dot=CFprimethree,minimum size=5pt] (cfC) at (7.35,3.15) {};
    \node[cf dot=CFprimethree,minimum size=5pt] (cfK) at (7.35,4.75) {};
    \node[cf dot=CFprimefour,minimum size=5pt] (cfE) at (8.25,5.35) {};
    \node[cf dot=CFprimefour,minimum size=5pt] (cfF) at (5.65,5.35) {};
    \node[cf dot=CFprimefive,minimum size=5pt] (cfG) at (6.55,5.95) {};
    \node[cf dot=CFprimefive,minimum size=5pt] (cfL) at (6.55,7.55) {};
    \node[cf dot=CFprimesix,minimum size=5pt] (cfH) at (7.45,8.15) {};
    \node[cf dot=CFprimesix,minimum size=5.5pt] (cfI) at (4.95,8.15) {};
    \node[cf group,anchor=north] at (8.10,.48) {$G_F$};
    \node[cf group,anchor=south east] at (8.00,2.03) {$G_{F'}$};
    \node[cf group,anchor=north east] at (6.38,2.48) {$H_{F'}$};
    \node[cf group,anchor=south west] at (8.12,5.44) {$H_{F''}$};
    \node[cf group,anchor=north east] at (5.58,5.28) {$M_{F''}$};
    \node[cf group,anchor=south west] at (7.52,8.22) {$M_{F'''}$};
    \node[cf group,anchor=north west] at (5.02,8.00) {$T_{F'''}$};
%
    \draw[cf axis] (-.38,-.05)--(-.38,6.10);
    \node[text=CFink,anchor=south] at (-.38,6.22) {field};
    \foreach \h/\f in {.55/F,1.95/F',3.55/F'',5.15/F'''}{
      \draw[CFink,line width=.5pt] (-.44,\h)--(-.32,\h);
      \node[text=CFink,font=\small,anchor=east] at (-.52,\h) {$\f$};
    }
    \draw[cf axis] (0,-.94)--(9.25,-.94);
    \foreach \s/\t in {.45/0,2.95/s_2,5.55/s_1,8.10/s}{
      \draw[CFink,line width=.5pt] (\s,-1.00)--(\s,-.88);
      \node[text=CFink,font=\small,anchor=north] at (\s,-1.06) {$\t$};
    }
    \node[text=CFink,anchor=north] at (4.45,-1.55) {semisimple rank};
    \draw[cf axis] (8.65,-.78333)--(14.10,2.85);
    \foreach \x/\y/\c/\i in {9/-.55/CFprimeone/1,9.9/.05/CFprimetwo/2,10.8/.65/CFprimethree/3,11.7/1.25/CFprimefour/4,12.6/1.85/CFprimefive/5,13.5/2.45/CFprimesix/6}{
      \node[cf dot=\c,minimum size=5pt] at (\x,\y) {};
      \node[text=\c,font=\small,anchor=north west,inner sep=3pt]
        at (\x,\y) {$(\ell_{\i})$};
    }
    \node[text=CFink,anchor=west] at (14.18,2.88)
      {$\operatorname{Spec}\mathbb Z$};
  }
]
  \arrow[from=cfA,to=cfB,draw=CFprimeone,cf route]
  \arrow[from=cfB,to=cfJ,cf change]
  \arrow[from=cfJ,to=cfD,draw=CFprimetwo,cf route]
  \arrow[from=cfD,to=cfC,cf change]
  \arrow[from=cfC,to=cfK,draw=CFprimethree,cf route]
  \arrow[from=cfK,to=cfE,cf change]
  \arrow[from=cfE,to=cfF,draw=CFprimefour,cf route]
  \arrow[from=cfF,to=cfG,cf change]
  \arrow[from=cfG,to=cfL,draw=CFprimefive,cf route]
  \arrow[from=cfL,to=cfH,cf change]
  \arrow[from=cfH,to=cfI,draw=CFprimesix,cf route]
\end{tikzcd}}
  \caption{Cartoon of the strategy to compute the Local Langlands correspondence. We identify specific paths within the web of modular functoriality that facilitate an induction on semisimple rank, down to the case of tori. We organize modular functoriality along three axes: the base field (vertical axis), the semisimple rank (horizontal axis), and the coefficient characteristic. Vertical edges represent congruences along cyclic base change. Dashed edges represent invocations of ``independence of $\ell$''. Horizontal edges represent congruences to a twisted Levi. The extensions $F'/F$, $F''/F'$, and $F'''/F''$ are unramified of degrees $\ell_1$, $\ell_3$, and $\ell_5$, respectively.}
  \label{fig:cf26a-inductive-path}
\end{figure}

In fact, the above strategy only applies on the nose to relatively ``unramified'' settings, and one more step (again involving modular functoriality and independence of $\ell$) is needed to reduce to this setting.

\subsection{The role of this paper} 

Let us highlight some of the main outcomes of this paper. First, as noted above, our strategy requires passage through positive characteristic coefficients, even for the ultimate characteristic 0 applications. Moreover, to obtain finiteness of L-packets, we need to (partially) compute the Fargues--Scholze L-parameters of arbitrary cuspidal representations. Therefore, we define (following the construction of Kaletha for non-singular supercuspidal representations with characteristic $0$ coefficients) explicit semisimplified \emph{inertial} L-parameters for cuspidal representations with arbitrary coefficients.\footnote{A technical point is that our definitions are only made on the level of Yu data defining a cuspidal representation. There is more than one Yu datum associated to a given representation, and we do not know that our definitions are independent of this choice because we have not proven the analogue of \cite[Theorem 6.6]{HM08}, \cite[Corollary 3.5.5]{Kal19} with positive characteristic coefficients. For the applications, this distinction will be irrelevant, so we will elide this in the remainder of the introduction.} For ``non-singular'' cuspidal representations, we are able to define full $L$-parameters, which coincide with those constructed by Kaletha.

One of the difficulties in comparing different approaches to the LLC is that, beyond the case of $\GL_n$, there is no known list of properties characterizing it. One of our main results gives a partial characterization of the explicit (inertial) L-parameters above, using known properties of $\rho^{\FS}$ and two forms of functoriality described below. This will be described in more detail shortly in \S \ref{ssec:II-intro-parameter}. Both of these forms of functoriality will be verified in \cite{CF26b} for the Fargues--Scholze correspondence.

Given a cuspidal representation $\pi$, we define ``descents'' of $\pi$ to cuspidal representations of unramified twisted Levi subgroups $H$ of $G$, under some hypotheses on $H$. These descents should be thought of as adjoint to the ``functorial transfer'' corresponding to a natural $L$-homomorphism $\ld j_{H,G} \co \ld H \rightarrow \ld G$. Similarly, for a cyclic $\ell$-extension $E/F$ we define (under some hypotheses) a cuspidal representation $\pi_E$ of $G(E)$, which should be regarded as the ``base change lifting'' in the sense of Langlands. A major subtlety in both cases is that the sign characters of \cite{FKS23} show up in nontrivial ways in these constructions. This will be sketched further in \S \ref{ssec:II-intro-functoriality}.

\subsection{Kaletha's parametrization}\label{ssec:II-intro-parameter} Throughout the rest of this introduction, we assume that $G$ splits after a tamely ramified extension of $F$ and that $p$ does not divide the order of the absolute Weyl group of $G$. For an algebraically closed field $k$ of characteristic $\neq p$, let $\Pi_k(G)_{\mathrm{cusp}}$ denote the set of irreducible cuspidal smooth $k$-representations of $G(F)$ up to isomorphism, and let $\Phi_k^{\ss}(G)$ denote the set of semisimple L-parameters $W_F \to \ld G(k)$ up to $\wh G(k)$-conjugation.

In \cite{Kal21b}, when $k$ is of characteristic $0$, Kaletha singled out a class $\Pi_k(G)_{\mathrm{cusp,ns}}$ of irreducible supercuspidal smooth $k$-representations which he called \emph{non-singular}, and he defined a map of sets
\[
\rho^{\Kal}\co \Pi_k(G)_{\mathrm{cusp,ns}} \to \Phi_k^{\ss}(G)
\]
with the property that $\rho^{\Kal}(\pi)$ is always a \emph{supercuspidal} L-parameter, i.e., there is no proper parabolic subgroup of $\wh G$ normalized by $\rho^{\Kal}(\pi)$. Roughly, the recipe is as follows: from the representation $\pi$, one can extract a ($G(F)$-conjugacy class of) pair(s) $(T, \theta)$ consisting of an elliptic maximal $F$-torus $T \subset G$ and a character $\theta\co T(F) \to k^\times$. From this, one defines
\begin{equation}\label{eqn:I-intro-kal-defn}
\rho^{\Kal}(\pi) = \left(W_F \xrightarrow{\ld\theta} \ld T(k) \xrightarrow{\ld j_{T,G}} \ld G(k)\right),
\end{equation}
where $\ld\theta\co W_F \to \ld T(k)$ is the L-parameter arising from the LLC for tori and $\ld j_{T,G} \co \ld T \to \ld G$ is a certain L-embedding. One of the innovations of \cite{Kal21b}, developed from \cite{FKS23}, is the definition of $\ld j_{T,G}$, which is highly non-canonical.\footnote{In particular, despite our notation, $\ld j_{T,G}$ depends on $\pi$ and not just $T$!} Supporting evidence for this definition of $\rho^{\Kal}(\pi)$ comes from the Harish--Chandra character formulas and from stability and the endoscopic character identities proved for certain endoscopic elements in \cite[\S 4.3, Theorem 4.4.4]{FKS23} at least under additional technical hypotheses on $F$. 


The first goal of this paper is to extend Kaletha's definition in two directions, as we explain in \S\ref{sec:kaletha-param}: first, one can allow $k$ to be of positive characteristic with few changes. Second, for arbitrary cuspidal $\pi$, one can still extract a pair $(T, \theta)$ and attempt to use \eqref{eqn:I-intro-kal-defn} to define a semisimple L-parameter. However, if $\pi$ is not non-singular, then this cannot be the ``correct'' semisimple L-parameter in general because it has finite image mod center, while the correct L-parameter should not (in characteristic $0$); moreover, the pair $(T, \theta)$ is not unique, so this definition depends on a choice. We will show that both objections disappear for the \emph{inertial} (semisimple) L-parameter 
\[
\rho_I^{\Kal}(\pi) = \left(I_F \xrightarrow{\ld\theta} \ld T(k) \xrightarrow{\ld j_{T,G}} \ld G(k) \right).
\]
Similar restricted L-parameters were proposed in \cite{CDT25} and \cite{DF26}; the former proposes a candidate for the restriction of the L-parameter to the ``greatest nontrivial depth'', while the latter proposes a candidate for the restriction to all of wild inertia. We propose $\rho_I^{\Kal}(\pi)$ as a construction of the ``true'' semisimple inertial L-parameter associated to $\pi$. The following theorem, which will be stated far more precisely in Theorem~\ref{thm:llc-partial-characterization}, gives some justification for believing this.

\begin{thm}[Informal; see Theorem~\ref{thm:llc-partial-characterization}]\label{thm:I-intro-llc-partial-characterization}
    Let $k$ be an algebraically closed field of characteristic $\neq p$. Suppose that for every finite tamely ramified extension $E/F$ and every twisted Levi $E$-subgroup $H \subset G_E$, we are given a map of sets
    \[
    \rho = \rho_{E,H}\co \Pi_k(H)_{\mathrm{cusp}} \to \Phi_k^{\ss}(H)
    \]
    satisfying the following conditions:
    \begin{enumerate}
        \item Agreement with class field theory in the case of tori.
        \item Compatibility with formation of central characters.
        \item Compatibility with homomorphisms which are isomorphisms up to center.
        \item A weak form of the existence of base change liftings along certain cyclic extensions of prime degree $\neq p$ for a certain class of cuspidal representations.
        \item Existence of descent to unramified twisted Levi subgroups.
    \end{enumerate}
    Then
    \begin{enumerate}[label=(\Alph*)]
        \item $\rho(\pi)|_{I_F} \sim \rho_I^{\Kal}(\pi)$ for all $\pi \in \Pi_k(G)_{\mathrm{cusp}}$.
        \item If $\pi$ is non-singular, then there is an explicit integer $N$ (depending on $\pi$) such that
        \begin{equation}\label{eqn:I-intro-restriction-to-finite-index}
        \rho(\pi)|_{W_{F_N}} \sim \rho^{\Kal}(\pi)|_{W_{F_N}}.
        \end{equation}
        Moreover, $\rho(\pi)$ is irreducible if and only if $\rho^{\Kal}(\pi)$ is irreducible.
    \end{enumerate}
\end{thm}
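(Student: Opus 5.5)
The plan is an induction on the semisimple rank of $G$, following the path of Figure~\ref{fig:cf26a-inductive-path}: use axioms (4) and (5) to reduce to tori, where axiom (1) pins down $\rho$. Fix a cuspidal $\pi$ with Yu datum $(\vec G, \vec r, \vec\phi, x, \rho_0)$ and associated pair $(T,\theta)$.

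\textbf{Reduction to the unramified case.} First I would use axiom (3), together with a tamely ramified base extension $E/F$ that splits the ramification of the twisted Levi sequence, to reduce to the case where $G^0 \subset G$ is an unramified twisted Levi. Restricting to $I_F$ loses nothing, since inertia is unchanged up to finite index by tame extensions, and both sides are compatible with this restriction. Axiom (3) lets me pass between $G$, $G_{\der}$, and $G_{\mathrm{ad}}$, and axiom (2) controls the central part of the parameter.

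\textbf{Base change and descent.} Next I choose a large prime $\ell_1 \neq p$, invertible in the coefficients relevant to the current step, and an unramified extension $F'/F$ of degree $\ell_1$. Axiom (4) supplies a base change $\pi_{F'}$ with $\rho(\pi_{F'}) \sim \rho(\pi)|_{W_{F'}}$. Since $I_{F'} = I_F$, this is harmless for the inertial statement. After this extension, $G(F')$ has enough $\ell_1$-torsion that axiom (5) applies to the twisted Levi $H = G^{d-1}$ (or to an intermediate group). This produces a cuspidal $\pi_H$ of $H(F')$ whose parameter, composed with $\ld j_{H,G}$, equals $\rho(\pi_{F'})$.

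Two points must be checked here. The first is that the Yu datum of $\pi_H$ is the truncated datum of $\pi$, twisted by the appropriate sign characters of \cite{FKS23}. The second is that the resulting $(T,\theta')$ satisfies $\ld j_{T,H}\circ{}^L\theta'$ composed with $\ld j_{H,G}$ equals $\ld j_{T,G}\circ{}^L\theta$. This requires that the $\chi$-data defining $\ld j_{T,G}$ factor through $\ld j_{H,G}$, with the discrepancy absorbed exactly by the sign characters. Iterating this, alternating base change and descent, reaches $H = T$, where axiom (1) gives $\rho(\pi_T)={}^L\theta$. Unwinding the chain yields (A).

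\textbf{The role of positive characteristic.} Axioms (4) and (5) are only congruences mod $\ell$. When $k$ has characteristic $0$, I would compare through reductions modulo many large primes $\ell$, using that two semisimple parameters of $I_F$ (which has finite image on the relevant tame and wild quotient) agreeing mod infinitely many $\ell$ are conjugate.

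\textbf{Part (B).} In the non-singular case, I would choose $N$ so that $W_{F_N}$ acts through $\ld T$ unramifiedly after all the base changes. The same chain of identifications then holds on $W_{F_N}$ rather than just $I_F$, since unramified base change preserves $W_{F_N}$-information. For irreducibility, both $\rho(\pi)$ and $\rho^{\Kal}(\pi)$ are semisimple extensions from $W_{F_N}$ of isomorphic restrictions. Non-singularity ensures that the centralizer of $\rho^{\Kal}|_{W_{F_N}}$ is a torus, which forces the two parameters to differ only by a twist, and a twist does not affect irreducibility.

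\textbf{Main obstacle.} I expect the hardest step to be the sign bookkeeping in the descent step. One must verify that the sign characters of \cite{FKS23}, which appear in Yu's construction, in base change, and in the definition of $\ld j_{T,G}$, match exactly. This comes down to comparing the toral invariants $\epsilon$ for $G$, $H$, $G_{F'}$ for odd $\ell_1$. I would attack it by computing root by root on the orbits of $\Gamma_F$ on $R(G,T)\smallsetminus R(H,T)$, using that an unramified extension of odd degree does not change symmetric versus asymmetric type.
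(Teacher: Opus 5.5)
\textbf{There is a genuine gap.} Your chain ``alternate base change and descent until $H=T$, then apply (1)'' uses axioms (4) and (5) in forms the theorem does not provide.

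Axiom (4) is restricted in three ways:
\begin{itemize}
\item it applies only to toral data ($G^0=T$);
\item it applies only along extensions $E$ for which $T_E$ stays elliptic;
\item it only asserts $\rho(\pi(\Psi))|_{P_F}\sim\rho(\pi(\Psi_E))|_{P_F}$, i.e.\ agreement on \emph{wild} inertia, not on $W_{F'}$ or even on $I_F$.
\end{itemize}
Axiom (5) is similarly limited. It descends only to a proper \emph{unramified} twisted Levi, not to $G^{d-1}$, which is in general ramified. Apart from clauses (a) and (b), it only controls inertia. And it applies only when $G^0\cap G_{\der}$ is not a totally ramified torus.

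So descent stops exactly at the case $G^0=T$ with $T\cap G_{\der}$ totally ramified, and you cannot reach $H=T$ from there. Your preliminary step does not rescue this:
\begin{itemize}
\item axiom (3) only concerns central isogenies, and there is no base change axiom for general $\pi$;
\item base changing along the splitting field of $T$ would destroy ellipticity;
\item knowing the parameter on $I_E$ for a ramified $E/F$ does not determine it on $I_F$.
\end{itemize}
The mod-$\ell$ comparisons you describe belong to verifying the axioms for the Fargues--Scholze correspondence in the sequel. Here the axioms are already statements over the fixed field $k$. The sign bookkeeping you flag is real, but it is packaged into Theorem~\ref{thm:kaletha-functoriality} and Proposition~\ref{prop:ramified-kal-bc}, which the proof simply invokes.

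\textbf{What is missing} is the treatment of this terminal, rigid case, which is the real content of the theorem.

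First, one proves agreement on $P_F$ by induction on the order $n$ of the image of $I_F$ in $\Aut(X^*((T_{\der})_{\ol F}))$.
\begin{itemize}
\item Make unramified base changes of prime degrees dividing a power of $n$ or dividing $[F(\mu_n):F]$. This is why these primes must avoid $\cS$.
\item Construct a Kummer extension $E'=F(\sqrt[n]{c\varpi_0})$ that is linearly disjoint from the splitting field $E$ and has $EE'/E'$ unramified.
\item Base change along a cyclic subextension $E_0'/F$ of prime degree $\ell_0$. This keeps $T$ elliptic, so (4) and Proposition~\ref{prop:kal-small-degree-bc} apply, while shrinking the inertia image to order $n/\ell_0$.
\end{itemize}

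Second, one \emph{promotes} $P_F$-agreement to $I_F$-agreement.
\begin{itemize}
\item Since $T_{\der}$ is totally ramified, $Z_{\wh G}(\rho^{\Kal}(\Psi)(P_F))^\circ=\wh T$.
\item Lemma~\ref{lemma:align-tame-components} aligns the two tame actions on $\wh T$. It applies because the wild centralizer has $p$-group component group while $I_F/P_F$ is pro-prime-to-$p$.
\item The central character axiom (2), together with finiteness of the fixed points on $\wh T\cap\wh G_{\der}$, then shows that the images of a tame generator are $\wh T(k)$-conjugate.
\end{itemize}

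For (B), $N$ is not obtained by unwinding base changes.
\begin{itemize}
\item Reduce via (3) to the case where $\wh G_{\der}$ is simply connected.
\item Use that $p$ is good to get $Z_{\wh G}(\rho(\pi)(P_F))=\wh T$ connected.
\item The two parameters then differ by a cocycle $W_F/I_F\to\Hom(X,k^\times)$. In the reduction step there is also the central cocycle $z$ from (5)(b).
\item $N$ is by definition the index at which such classes become coboundaries.
\end{itemize}
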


\begin{remark}The integer $N$ in \eqref{eqn:I-intro-restriction-to-finite-index} depends on the torus $T$ associated to $\pi$. If $T$ is maximally unramified (i.e., $T$ contains a maximal unramified $F$-torus), then one can take $N = 1$. In Example~\ref{ex:llc-classical}, we study the case that $T \cap G_{\der}$ is totally ramified\footnote{Using assumption (5) of Theorem~\ref{thm:llc-partial-characterization}, one can reduce to this case by replacing $G$ by an unramified twisted Levi. However, the bounds become more complicated because these twisted Levis are products (up to center) of several smaller rank groups, and the bound one obtains is roughly the least common multiple of the bounds associated to such factors.}, and we show that if $G$ is an unramified form of $\GL_r$ then one can take $N = r$, while if $G$ is an unramified form of $\SO_{2r+1}$, $\Sp_{2r}$, or $\SO_{2r}$ then one can take $N \leq 2^{\lceil\log_2(2r)\rceil}$ to be a power of $2$. In Example~\ref{ex:llc-exceptional}, we give a brief indication of how to establish concrete bounds on $N$ when $G$ is an unramified form of an exceptional group, if $\rho$ is compatible with Weil restriction and products; in all cases, one can show $N \leq 36$. We will show in \cite[Theorem~10.4.3]{CF26b} that if $G_{F^{\unr}} \cong \SL_r$, then one can take $N = 1$.
\end{remark}

\begin{remark}\label{rem:modular-functoriality-black-box-II}
In \cite{CF26b}, we will apply Theorem~\ref{thm:I-intro-llc-partial-characterization} to $\rho=\rho^{\FS}$. Properties (1), (2), and (3) follow from \cite[Theorem I.9.6(i), (iii), (v)]{FS}. Properties (4) and (5) are established in \cite{CF26b} using modular functoriality \cite[Theorem~9.1.1 and Example~9.1.2]{F24}, which applies at every prime $\ell\neq p$. Theorem~\ref{thm:llc-partial-characterization} gives a sharper statement with more flexible hypotheses and explicit bounds.
\end{remark}

The very rough idea of the proof of Theorem~\ref{thm:I-intro-llc-partial-characterization} is to use (5) to reduce to the case that $T\cap G_{\der}$ is totally ramified, then use (4) (which applies after this reduction) to pass to a field extension $E/F$ such that $T_E$ is elliptic and maximally unramified. The a priori surprising feature of Theorem~\ref{thm:I-intro-llc-partial-characterization} is that assumption (4) will only concern \emph{wild inertia}, so this naive argument only determines $\rho(\pi)|_{P_F}$ a priori. However, once one passes to the case that $T\cap G_{\der}$ is totally ramified, the situation becomes extremely rigid, to the point that the L-parameter $\rho(\pi)$ is almost completely determined by its restriction to $P_F$.

\subsection{Functoriality}\label{ssec:II-intro-functoriality}

To make hypotheses (4) and (5) in Theorem~\ref{thm:I-intro-llc-partial-characterization} more precise, we must specify the sense in which they hold for $\rho^{\Kal}$ and $\rho_I^{\Kal}$. We start with (5).

\subsubsection{Unramified twisted Levis}
Recall that a closed $F$-subgroup $H \subset G$ is an \emph{unramified twisted Levi} if $H$ becomes a Levi factor of a parabolic subgroup of $G$ after passing to an unramified extension of $F$. For such a subgroup $H$, there is a canonical L-embedding $\ld j_{H,G}\co \ld H \to \ld G$, whose definition we recall in Definition~\ref{defn:canonical-l-embeddings}.

\begin{thm}[Theorem~\ref{thm:kaletha-functoriality}]\label{thm:I-intro-kaletha-unramified-twisted-levi}
    Let $\pi$ be an irreducible cuspidal $k$-representation of $G(F)$, let $(T, \theta)$ be a torus-character pair associated to $\pi$ as above, and let $H \subset G$ be an unramified twisted Levi $F$-subgroup containing $T$. Then there is an explicit finite set of pairs $\{(L, \pi_L)\}$, where $L$ is a Levi $F$-subgroup of $H$ and $\pi_L$ is a cuspidal irreducible smooth $k$-representation of $L(F)$ such that
    \[
    \rho_I^{\Kal}(\pi) \sim \ld j_{L,G} \circ \rho_I^{\Kal}(\pi_L).
    \]
    If $\pi$ is non-singular, then $L = H$ for every such pair, $\pi_H$ is non-singular, and
    \[
    \rho^{\Kal}(\pi) \sim \ld j_{H,G} \circ \rho^{\Kal}(\pi_H).
    \]
\end{thm}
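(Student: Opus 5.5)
We would prove Theorem~\ref{thm:I-intro-kaletha-unramified-twisted-levi} by comparing the two $\chi$-data (and sign characters) entering the definition of ${}^L j_{T,G}$ and ${}^L j_{T,L}$, together with a construction of $\pi_L$ from the Yu datum of $\pi$. Fix a Yu datum $\Psi = ((G^0 \subset \cdots \subset G^d), x, \rho_0, (\phi_i))$ for $\pi$ with associated pair $(T,\theta)$, so $T \subset G^0$ and $\theta$ is built from the depth-zero datum $\rho_0$ and the characters $\phi_i$, twisted by the sign character $\epsilon$ of \cite{FKS23}.

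\emph{Step 1: the descended data.} Intersect the twisted Levi sequence with $H$: set $H^i = G^i \cap H$. Since $H$ is an unramified twisted Levi containing $T$, each $H^i$ is a twisted Levi of $H$ containing $T$. Genericity of $\phi_i$ relative to $G^{i+1}$ restricts to genericity relative to $H^{i+1}$. The depth-zero part is handled as follows. The component $\rho_0$ arises from a Deligne--Lusztig-type cuspidal representation of the reductive quotient $\mathsf{G}^0_x$ attached to $(\mathsf{T},\theta_0)$. Since $H^0$ need not be elliptic at $x$, the Deligne--Lusztig datum $(\mathsf{T},\theta_0)$ is cuspidal for $\mathsf{G}^0_x$ but may only be cuspidal for the reductive quotient of some Levi $L^0 \subset H^0$ at suitable points. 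This produces the finite set of pairs: for each Levi $L \subset H$ such that $T \subset L$ is elliptic (equivalently $L = $ centralizer in $H$ of the maximal split central torus of $T$ relative to an $H$-conjugate, up to the finitely many choices of such facets), we form a Yu datum for $L$ with torus $T$ and character $\theta_L := \theta \cdot \epsilon_{G}/\epsilon_{L}$ adjusted by the ratio of sign characters. Here $\pi_L$ is the associated cuspidal representation. When $\pi$ is non-singular, $\theta_0$ is in general position for $\mathsf{G}^0_x$ and $T$ is elliptic in $G$, hence in $H$. In this case $L=H$, and regularity/non-singularity is inherited by restricting root conditions to the roots of $H$.

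\emph{Step 2: comparison of L-embeddings.} By definition ${}^L j_{T,G}$ is built from $\chi$-data for $R(T,G)$ determined by $\Psi$, and ${}^L j_{T,L}$ from those for $R(T,L)$. We use transitivity: ${}^L j_{L,G}\circ {}^L j_{T,L}$ is ${}^L j_{T,G}'$ for $\chi$-data equal to the canonical unramified/ trivial data on $R(T,G)\smallsetminus R(T,L)$ (Definition~\ref{defn:canonical-l-embeddings}) and to the $L$-data on $R(T,L)$. Changing $\chi$-data changes the embedding by the character of $T(F)$ given by the ratio of the $\chi$-data, via Langlands--Shelstad. Thus
\[
{}^L j_{T,G}\circ {}^L\theta \sim {}^L j_{L,G}\circ {}^L j_{T,L}\circ {}^L(\theta\cdot \zeta)
\]
for an explicit character $\zeta$. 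The heart of the argument is to check that $\zeta$ agrees with the ratio of sign characters $\epsilon_G/\epsilon_L$ built into $\theta_L$. Since $H$ is unramified twisted Levi, the roots outside $R(T,H)$ come in orbits whose symmetric/asymmetric and ramified/unramified types are controlled. We verify orbit-by-orbit, using the explicit formulas of \cite{FKS23} and \cite{Kal19}, that contributions match, at least on $I_F$ in general, and on all of $W_F$ in the non-singular case where $L=H$. Roots of $L$ versus roots of $H$ outside $L$ contribute only unramified characters, which vanish on inertia; this is why only the inertial statement survives when $L\neq H$.

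\emph{Main obstacle.} We expect the sign bookkeeping in Step 2 to be the hardest part: matching the toral invariants and $\chi$-data ratio with $\epsilon_G/\epsilon_L$ for symmetric unramified orbits. Our first attempt would be to reduce to the case of a single orbit by factoring through intermediate twisted Levis and using multiplicativity of both sides. After that, we would compute the rank-one case explicitly. We would also check that positive characteristic $k$ causes no change, since all characters involved take values in roots of unity of order prime to $p$.
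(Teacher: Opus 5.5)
Your Step~2 is essentially the paper's argument for Proposition~\ref{prop:fks-and-tasho-cancel}: reduce to minimally ramified $\chi$-data, use \cite[Proposition 6.9]{Kal21a} for transitivity, and compare root orbits one at a time. The gaps are in Step~1 and in your identification of the sign twist.

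\textbf{The descended representation for singular $\pi$.} Your recipe takes $L$ to be a Levi of $H$ in which $T$ is elliptic, then attaches ``the'' cuspidal representation built from $(T,\theta_L)$. This fails for two reasons:
\begin{enumerate}
\item When $\theta_0$ is singular, there is no such representation attached to $(\ol T,\theta_0)$.
\item Cuspidality is not governed by ellipticity of the torus.
\end{enumerate}
Take the example at the start of \S\ref{ss:kaletha-functoriality}, where $\tau=\theta_{10}$ and $H\cong\SL_2\times S$. You may choose $T=T_1\times S$ with $T_1\subset\SL_2$ anisotropic, so your recipe gives $L=H$. But $-R^{\SL_2}_{\ol T_1}(1)=\mathrm{St}-1$, and $\SL_2(\F_q)$ has no cuspidal unipotent representation, so no such $\pi_H$ exists.

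The paper proceeds differently:
\begin{enumerate}
\item It chooses an irreducible $\tau_{H,0}$ in the series $\cE(\ol H^0_{[x]_G},[\ol T',\theta_0'])$ cut out by $\tau$.
\item It takes the Harish-Chandra cuspidal support $(\ol L,\tau_{L,0})$ of $\tau_{H,0}$.
\item It lifts the split central torus of $\ol L$ to $S$ and sets $L=Z_H(S)$.
\item It checks that $Z(L^0)/Z(L)$ is anisotropic and that $[x]_L$ is a vertex (Lemmas~\ref{lemma:descended-yu-datum-anisotropic} and~\ref{lemma:descended-yu-datum-vertex}).
\item It extends $\tau_{L,0}$ from $\ol L^0_{[x]_G}(\F_q)$ to $\ol L^0_{[x]_L}(\F_q)$.
\end{enumerate}
For $k=\ol\F_\ell$ this uses modular Harish-Chandra theory and \cite[Proposition~2.8.5]{Cot26b}, which your remark about roots of unity does not address.

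The torus $T_L$ attached to $\pi_L$ is in general not conjugate to $T$. So you need a step your proposal lacks entirely:
\begin{enumerate}
\item Equality of Lusztig series gives almost equivalence of $(T,\theta)$ and $(T_L,\wt\eta_0)$ (Lemma~\ref{lemma:conjugacy-of-torus-character-pairs}).
\item The argument of Lemma~\ref{lemma:associated-tori-inertial-parameter} (unramified base change plus Lemma~\ref{lemma:tasho-base-change}) turns this into $\ld j_{T,G}\circ\ld\theta|_{I_F}\sim\ld j_{T_L,G}\circ\ld\wt\eta_0|_{I_F}$.
\end{enumerate}
Only after this does the $\chi$-data comparison enter, and it is applied to $(T_L,\wt\eta_0)$, not to $(T,\theta)$.

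\textbf{The sign twist.} Use $\theta=\theta_0\prod_i\phi_i|_{T(F)_{[x]}}$, the normalization that defines $\rho^{\Kal}$. The $\chi$-data comparison then yields the twist $\epsilon_{G,\flat}\epsilon_{L,\flat}$. Since $\epsilon_f^{G/M}=\epsilon_f^{L/(M\cap L)}$ (Lemma~\ref{lemma:fks-unram-levi}), this equals $\epsilon_G\epsilon_L\epsilon_{G,\sharp,x}\epsilon_{L,\sharp,x}$, not $\epsilon_G/\epsilon_L$.

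The $\epsilon_\sharp$-factor comes from asymmetric and symmetric unramified roots outside $L$ with $r_i/2\in\ord_x(\alpha)$. The $\chi$-data do not see these roots at all, so the identity you call the heart of the argument is false. For instance, take $G=\GL_2$, $H=T$ an unramified elliptic torus, and $r_0$ even. Both sets of $\chi$-data are canonical and the correct twist is trivial, yet $\epsilon_G\epsilon_H=\sgn_{\F_\alpha^1}\circ\alpha\neq1$. If instead your $\theta$ already contains $\epsilon_G$, your recipe amounts to no twist at all. Example~\ref{example:nontrivial-sign-char} together with Remark~\ref{remark:nontrivial-sign-char} rules that out.

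Outside the toral case there is a further issue. The twist must be applied to the depth-zero representation, not just to a character of $T(F)$. So you need a character of $L^0(F)_{[x]_G}$ with the right restriction to every relevant torus. The characters $\epsilon_{G,\sharp,x}$ and $\epsilon_{L,\sharp,x}$ need not extend individually, but their product does because $L$ is an unramified twisted Levi (Lemma~\ref{lem:sharp-x-char-extension}). This is why the paper twists $\tau_{L,0}$ by $\kappa_{G,L,x}=\epsilon_G\epsilon_L\epsilon^{\vec G,L}_{\sharp,x}$.
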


The existence of \emph{some} pair $(L, \pi_L)$ as in Theorem~\ref{thm:I-intro-kaletha-unramified-twisted-levi} has limited content, at least when $G$ is quasi-split, since \cite{Kal21b} already shows that every irreducible L-parameter is in the image of $\rho^{\Kal}$; the content of the theorem is that $\pi_L$ is constructed explicitly. The fact that this is subtle can be seen from Remark~\ref{remark:nontrivial-sign-char}, which shows that if $\pi$ is non-singular and $(T, \theta)$ is the torus-character pair associated to $\pi$, then although the pair $(T_H, \theta_H)$ associated to $\pi_H$ satisfies $T_H = T$, the character $\theta_H$ need not be $N_G(T)(F)$-conjugate to $\theta$. What is true is that $\theta_H$ is conjugate to $\theta\epsilon$ for some sign character $\epsilon\co T(F) \to k^\times$, but $\epsilon$ is generally nontrivial and depends on $\pi$; we will discuss this in \S\ref{ss:I-intro-proofs}. Beyond dealing with sign issues, the main content in the construction of Theorem~\ref{thm:I-intro-kaletha-unramified-twisted-levi} is to carefully track the semi-rational Lusztig series associated to the special fibers of Bruhat--Tits group schemes, as introduced in \cite[\S 2.8]{Cot26b}.

Assumption (5) in Theorem~\ref{thm:I-intro-llc-partial-characterization} is a slight variant of the statement that Theorem~\ref{thm:I-intro-kaletha-unramified-twisted-levi} holds with $\rho$ (resp.\ $\rho|_{I_F}$) in place of $\rho^{\Kal}$ (resp.\ $\rho_I^{\Kal}$) for a \emph{single} pair $(L, \pi_L)$.

\subsubsection{Cyclic field extensions}

Let $E/F$ be a cyclic extension of degree $\ell \neq p$. If $\pi$ is an irreducible cuspidal $k$-representation of $G(F)$, then we do not have a general recipe for producing a representation $\pi_E$ of $G(E)$ such that $\rho^{\Kal}(\pi_E) \sim \rho^{\Kal}(\pi)|_{W_E}$. Even for depth $0$ representations, this appears to be complicated: see, for example, the treatment of ramified $\mathrm{U}(2,1)$ in \cite[\S 7]{AL10}. However, there are two situations in which matters simplify considerably.

\begin{thm}[Proposition~\ref{prop:kal-base-change}]\label{thm:I-intro-large-bc}
    Let $\pi$ be an irreducible cuspidal $k$-representation of $G(F)$, let $\ell \neq p$ be a prime, and let $E/F$ be the unramified extension of degree $\ell$. If $\ell$ is sufficiently large, then there is an explicit irreducible cuspidal $k$-representation $\pi_\ell$ of $G(E)$ such that
    \[
    \rho_I^{\Kal}(\pi_\ell) \sim \rho_I^{\Kal}(\pi).
    \]
    If $\pi$ is non-singular, then $\pi_\ell$ is non-singular and
    \[
    \rho^{\Kal}(\pi_\ell) \sim \rho^{\Kal}(\pi)|_{W_E}.
    \]
\end{thm}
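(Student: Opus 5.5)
The strategy is to build $\pi_\ell$ directly from a Yu datum for $\pi$ by base-changing each ingredient to $E$. Then we check that the recipe $(T,\theta)\mapsto {}^L j_{T,G}\circ{}^L\theta$ commutes with restriction to $W_E$, provided $\ell$ is large.

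Let $\Psi=(\vec G, y, \vec r, \rho_0, \vec\phi)$ be a Yu datum giving $\pi$. Here $\vec G=(G^0\subset\cdots\subset G^d=G)$ is a tame twisted Levi sequence, $y$ is a point with $[y]$ a vertex of $\mathcal B(G^0_{\der},F)$, $\rho_0$ is a depth-zero cuspidal representation of $G^0(F)_{[y]}$, and the $\phi_i$ are generic characters of $G^i(F)$. We then carry out the following steps.

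\textbf{Base change of the datum.} Since $E/F$ is unramified, $\mathcal B(G,F)=\mathcal B(G_E,E)^{\Gal(E/F)}$ with the same normalization of depth. The sequence $\vec G_E$ is again a tame twisted Levi sequence. The characters $\phi_{i,E}:=\phi_i\circ \Nm_{E/F}$ are generic of the same depths, because on $\mathfrak g^i(E)_{x,r}$ the norm induces the trace on Moy--Prasad quotients, and genericity is tested after unramified base change anyway.

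\textbf{Choice of $\ell$.} We take $\ell$ larger than the order of the (finite) image of Frobenius acting on the relevant absolute root data and component groups. Then $Z(G^i)/Z(G)$ stays anisotropic over $E$, so the $G^i_E$ remain with the correct centers, and $[y]$ is still a vertex, i.e.\ a minimal facet of $\mathcal B(G^0_{\der},E)$. Moreover, the finite reductive quotient $\mathsf G^0_y$ over $\mathbf F_q$ base-changes to $\mathbf F_{q^\ell}$ with the same Frobenius twisting, since $\ell$ is prime to that order.

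\textbf{Depth-zero part.} By the earlier work on semi-rational Lusztig series \cite[\S 2.8]{Cot26b}, $\rho_0$ lies in a series attached to a pair $(\mathsf S,\bar\theta)$ with $\mathsf S$ an elliptic torus of $\mathsf G^0_y$. We define $\rho_{0,\ell}$ to be a cuspidal irreducible representation of $\mathsf G^0_y(\mathbf F_{q^\ell})$ in the series of $(\mathsf S_{\mathbf F_{q^\ell}},\bar\theta\circ\Nm)$. This is a Shintani-type lift. Large $\ell$ guarantees three things: $\mathsf S$ remains elliptic, $\bar\theta\circ\Nm$ has the same regularity behavior (non-singular iff $\bar\theta$ is, because $\Nm$ is surjective on $\mathsf S$-points and $\ell\nmid|W|$), and cuspidality is preserved. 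We extend $\rho_{0,\ell}$ to $G^0(E)_{[y]}$ by the base change of the central character, and apply Yu's construction to obtain $\pi_\ell$. Irreducibility and cuspidality follow from Fintzen's genericity criterion.

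\textbf{Comparing parameters.} The torus-character pair attached to $\pi_\ell$ is $(T_E,\theta\circ\Nm_{E/F})$. For tori, class field theory gives ${}^L(\theta\circ\Nm)={}^L\theta|_{W_E}$, and the two parameters agree on $I_E=I_F$.

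\textbf{Main obstacle: the $L$-embeddings.} The hard step is showing ${}^L j_{T_E,G_E}={}^L j_{T,G}|_{W_E}$ after choosing the $\chi$-data. The Kaletha embedding depends on $\pi$ through the sign characters $\epsilon$ of \cite{FKS23} and on $\chi$-data chosen relative to the Yu datum. The data $(a,\chi)$ for $T$ restrict to valid data for $T_E$, because $E/F$ is unramified and does not change ramification of roots. However, the relevant toral invariants and sign characters, $\epsilon_{f,\mathrm{ram}}$ and $\epsilon^{G^{i+1}/G^i}$, are defined via orbits of $\Gal$ on roots and Legendre-type symbols $\left(\tfrac{q}{\cdot}\right)$, and these can change under $q\mapsto q^\ell$. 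My first attempt would be to compute each factor explicitly. For symmetric unramified roots, $q\mapsto q^\ell$ with $\ell$ odd preserves the relevant signs. For symmetric ramified roots, the Gauss-sum sign transforms by a factor $\left(\tfrac{q}{p}\right)^{\ell-1}=1$. The twist $\theta\epsilon$ then base-changes to $(\theta\epsilon)\circ\Nm=\theta_E\epsilon_E$.

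Any leftover discrepancy is an unramified character on $W_F$, which is invisible on $I_F$; this gives the inertial statement for all cuspidal $\pi$. In the non-singular case it must be eliminated exactly, and that is where the careful sign bookkeeping above is essential.
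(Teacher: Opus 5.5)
Your architecture matches the paper's. You build $\Psi_\ell$ by base-changing each ingredient, observe that $[(T_E,\theta\circ\Nm_{E/F})]\in\cT(\Psi_\ell)$, and conclude by norm functoriality for tori together with $\ld j_{T_E,G_E}\sim\ld j_{T,G}|_{\ld(T_E)}$ for odd unramified $E/F$ (Lemma~\ref{lemma:tasho-base-change}(2)). The gap is that the construction of $\Psi_\ell$ is not actually supplied.

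First, $\phi_i\circ\Nm_{E/F}$ is not a character of $G^i(E)$. There is no norm map on a nonabelian group, and a character of $G^i(F)$ need not factor through $G^i_{\ab}(F)$; for example, a quadratic character of $\PGL_2(F)$ can factor through $\det\co\PGL_2(F)\to F^\times/F^{\times 2}$ although $(\PGL_2)_{\ab}$ is trivial. The paper works with a normalized $\Psi$ and uses Proposition~\ref{prop:extend-normalized-characters} (a $z$-extension $\wt G\to G$ and the norm on $\wt G/\wt G_{\der}$) to produce $\phi_{i,E}$ with $\phi_{i,E}|_{T(E)}=\phi_i|_{T(F)}\circ\Nm_{E/F}$. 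That restriction property is exactly what your identification of the torus--character pair of $\pi_\ell$ needs. Depth and genericity then follow from Lemmas~\ref{lemma:depth-stays-same} and~\ref{lemma:generic-stays-same}.

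Second, at depth zero you take ``a cuspidal irreducible member'' of the base-changed series and assert that large $\ell$ preserves cuspidality. The real issue is existence, which is not formal for singular series, e.g.\ those containing cuspidal unipotent representations. Moreover, extending ``by the central character'' does not determine a representation of $G^0(E)_{[y]}$, which is in general larger than $Z(G^0)(E)\,G^0(E)_{y,0}$, since the quotient $\ol G^0_{[x]}$ is a possibly disconnected paraductive group. The paper produces an explicit $\tau_\ell$ via the Glauberman correspondence (Definition~\ref{def:depth-zero-base-change}; this is where banality of $\ell$ is used). It then takes from \cite{Cot26b} that $\tau_\ell$ is irreducible, cuspidal, and lies in $\cE((\ol G^0_{[x]})_{\F_{q^\ell}},[\ol T_{\F_{q^\ell}},\theta_0\circ\Nm])$. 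Once existence is established, any member of that series would give the displayed identities, since $\rho_I^{\Kal}$ only depends on $\cT(\Psi_\ell)$ (Lemma~\ref{lemma:associated-tori-inertial-parameter}). You also do not address $k=\ol\F_\ell$, with the same $\ell$ as $[E:F]$; the paper reduces this case to $\ol\Q_\ell$ via an integral lift.

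On the $L$-embedding step, the FKS characters do not enter. In the twisted Yu construction $\epsilon$ is already built into $\pi(\Psi)$, and $\cT(\Psi)$ is defined from $\theta_0\prod_i\phi_i$ with no $\epsilon$; in any case Lemma~\ref{lemma:fks-base-change} handles them for odd unramified degree. What is needed is $\chi''_{F,\alpha_i}\circ\Nm_{E_{\alpha_i}/F_{\alpha_i}}=\chi''_{E,\alpha_i}$ on all of $E_{\alpha_i}^\times$.

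Your Gauss-sum claim is incorrect. By Hasse--Davenport, $-\frG_{\F_{E,\alpha_i}}(\psi_E^0)=(-\frG_{\F_{\alpha_i}}(\psi^0))^m$ with $m=[E_{\alpha_i}:F_{\alpha_i}]\in\{1,\ell\}$. So for $m=\ell$ the Gauss sum is multiplied by $\sgn_{\F_{\alpha_i}^\times}(-1)^{(\ell-1)/2}$, which can be $-1$. The identity survives because $\ell_{G,p'}(\alpha^\vee)a_\alpha\in F_{\alpha_i}^\times$, so the pulled-back character takes the value $\chi''_{F,\alpha_i}(\ell_{G,p'}(\alpha^\vee)a_\alpha)^m$ there. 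Since $m$ is odd and $f_{E,\alpha_i}\equiv f_{F,\alpha_i}\pmod 2$, this is exactly the normalization \eqref{eqn:ramified-symmetric-chi-data} over $E$. The asymmetric and symmetric unramified cases are easy, since odd degree preserves the unramified quadratic character. Your ``leftover unramified discrepancy'' remark does give the inertial statement, but the non-singular statement on $W_E$ needs this exact computation.
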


Once again, the significance of Theorem~\ref{thm:I-intro-large-bc} is that the representation $\pi_\ell$ is explicit. The construction of $\pi_\ell$ in Theorem~\ref{thm:I-intro-large-bc} involves the Glauberman correspondence \cite{Gla68}, which, roughly speaking, provides base change lifts for finite reductive groups under field extensions of large prime degree.\footnote{It is likely true that the ``correct'' base change lifting is given by Shintani descent. As explained in \cite[Remark~4.3.1]{Cot26b}, Shintani descent and the Glauberman correspondence differ by a twist, which when reduced modulo $\ell$ corresponds to the $\ell$-Frobenius endomorphism. Thus our construction also involves a certain twist.} Plugging this into Yu's construction leads to the existence of $\pi_\ell$. The sign issues described above are relatively minor in this case by Lemma~\ref{lemma:fks-base-change} and Lemma~\ref{lemma:tasho-base-change}.

Next, consider the case that $\pi$ is \emph{toral}, i.e., that the pair $(T, \theta)$ associated to $\pi$ is toral in the sense of \cite[Definition 3.7]{CO21}.\footnote{The word \emph{toral} has two competing definitions in the literature; our definition is the less restrictive one.} Any toral irreducible cuspidal $k$-representation is non-singular.

\begin{thm}[Proposition~\ref{prop:kal-small-degree-bc}]\label{thm:I-intro-small-bc}
    Let $\pi$ be a toral irreducible cuspidal $k$-representation of $G(F)$, let $\ell \neq p$ be a prime, let $(T,\theta)$ be a torus-character pair associated to $\pi$, and let $E/F$ be a cyclic extension of prime degree $\ell\neq p$ such that $T_E$ is elliptic. There is an explicit irreducible cuspidal $k$-representation $\pi_E$ of $G(E)$ such that if either $\ell$ is odd, or $\ell = 2$ and $k = \ol\F_2$, we have
    \[
    \rho^{\Kal}(\pi_E) \sim \rho^{\Kal}(\pi)|_{W_E}.
    \]
\end{thm}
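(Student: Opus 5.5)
The plan is to build $\pi_E$ directly from a toral Yu datum attached to $\pi$, and then compare the two L-parameters using the explicit formula \eqref{eqn:I-intro-kal-defn}.

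\textbf{Step 1 (the datum).} Since $\pi$ is toral, it comes from a Yu datum whose twisted Levi sequence is $T = G^0 \subset G^1 = G$. So $\pi$ is determined by:
\begin{itemize}
\item the elliptic maximal torus $T$;
\item a character $\theta\colon T(F)\to k^\times$ that is $G$-generic of some depth $r>0$, written as $\theta = \phi_0\cdot(\phi_1|_{T(F)})$ with $\phi_1$ a character of $G(F)$ (or simply absorbed into $\theta$);
\item the Heisenberg--Weil construction on $T(F)G(F)_{x,r/2}$, where $x$ is the point attached to $T$.
\end{itemize}

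\textbf{Step 2 (transport to $E$).} I would set $\theta_E = \theta\circ \Nm_{E/F}\colon T(E)\to k^\times$. Because $T_E$ is still elliptic by hypothesis, $(T_E,\theta_E)$ is again a candidate toral pair. Genericity should survive: since $\ell\neq p$, $E/F$ is tame, and $\Nm_{E/F}$ acts on the Moy--Prasad filtration quotients $\mathfrak t(E)_{r}/\mathfrak t(E)_{r+}$ by multiplication by $\ell$ (unramified case) or through the ramification index (ramified case). This rescales the generic element $X^*\in \mathfrak t^*_{-r}$ by a unit (unramified case) or reindexes the depth as $er$ (ramified case). Either way the values on the coroots $H_\alpha$ remain of exact valuation, so the genericity conditions GE1 and GE2 hold over $E$. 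I would then define $\pi_E$ as the Yu/Kaletha toral representation attached to $(T_E,\theta_E\cdot\epsilon)$. Here $\epsilon$ is a correction character built from the sign characters of \cite{FKS23}, taken as the ratio between the FKS/Kaletha twist over $E$ and the base change of the twist over $F$.

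\textbf{Step 3 (parameter comparison).} By class field theory for tori, ${}^L(\theta\circ\Nm_{E/F}) = {}^L\theta|_{W_E}$. By construction $\rho^{\Kal}(\pi) = {}^L j_{T,G}\circ{}^L\theta$, where ${}^L j_{T,G}$ is determined by $\chi$-data (Langlands--Shelstad $\chi$-data, carrying the toral/FKS modifications) chosen from $\pi$. So the whole identity reduces to comparing ${}^L j_{T_E,G_E}$, built from the $\chi$-data attached to $\pi_E$, with ${}^L j_{T,G}|_{{}^L T_E}$. Restriction of $\chi$-data from $F$ to $E$ is standard. The content is therefore the compatibility of the Kaletha/FKS sign character and the toral $\chi$-data under the restriction $\Gamma_E\subset\Gamma_F$. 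This is presumably what Lemma~\ref{lemma:fks-base-change} and Lemma~\ref{lemma:tasho-base-change} provide, and I would invoke them to choose $\epsilon$ so that the two L-embeddings become $\widehat G$-conjugate. Equivalently, the product of $\epsilon$ with the sign discrepancy should be trivial.

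\textbf{Main obstacle.} The hard step is this sign computation. The FKS character is a product over symmetric $\Gamma_F$-orbits of roots of quadratic-type signs (for example Kottwitz-type signs $\mathrm{sgn}_{k_\alpha^1}$ and Gauss sums). When passing to $E$, a single $\Gamma_F$-orbit may split into $\ell$ orbits, or a symmetric root may become asymmetric. For $\ell$ odd, each orbit's sign appears raised to the power $\ell$, or an orbit's sign is replaced by an $\ell$-fold product, and either way it reduces to the original sign; this is why odd $\ell$ works. For $\ell=2$ a genuine discrepancy of order $2$ can appear. It disappears exactly when $k=\ol\F_2$, since every sign is $\pm1 = 1$ there. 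I would check this orbit by orbit, separating unramified-symmetric, ramified-symmetric, and asymmetric roots over $F$ and following how each kind decomposes over $E$.
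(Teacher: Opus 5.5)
Your overall architecture matches the paper's. You keep the tower, $x$ and the $r_i$, pull the characters back along the norm, twist $\tau\circ\Nm_{E/F}$ by a depth-zero sign correction, and reduce $\rho^{\Kal}(\pi_E)\sim\rho^{\Kal}(\pi)|_{W_E}$ to comparing $\ld j_{T_E,G_E}$ with $\ld j_{T,G}|_{\ld T_E}$. Your treatment of $\ell=2$, $k=\ol\F_2$ is also essentially the paper's: everything is $\mu_4$-valued and dies mod $2$.

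The gap is in the step you yourself flag. Lemma~\ref{lemma:fks-base-change} and Lemma~\ref{lemma:tasho-base-change}(2) only treat \emph{unramified} $E/F$ of odd degree. There they show that no correction is needed at all: $\epsilon_F\circ\Nm_{E/F}=\epsilon_E$ and $\ld j_E\sim\ld j_F|_{\ld T_E}$. Your heuristic (an orbit's sign is raised to an odd power or replaced by an $\ell$-fold product, hence unchanged) is exactly that unramified argument, and it would again predict a trivial correction. The statement, however, allows totally ramified $E/F$ of odd prime degree, and that is where all the content lies (Proposition~\ref{prop:ramified-kal-bc}, proved in Appendix~\ref{app:ramified-bc-proof}). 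Your plan does not supply this case, and the mechanism you predict is the wrong one there.

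In the ramified case, what changes is not how orbits split but the numerical invariants feeding both sides.
\begin{itemize}
\item On the $\chi$-data side, at a symmetric ramified root $\alpha$ of a layer $G^{j+1}/G^j$, $\chi''_{E,\alpha}$ is normalized by a Gauss sum for $\psi_E^0=\psi^0(\ell\,\cdot)$. So $\frG(\psi_E^0)$ differs from $\frG(\psi^0)$ by $\sgn_{\F_\alpha^\times}(\ell)$, combined with Hasse--Davenport if the residue field grows. Consequently $(\chi''_{F,\alpha}\circ\Nm_{E_\alpha/F_\alpha})(\chi''_{E,\alpha})^{-1}$ is in general a nontrivial unramified character.
\item On the sign side, $\epsilon_{\flat,1}$ and $\epsilon_{\flat,2}$ see $e_\alpha$ and $\tfrac{e(\alpha/\alpha_j)-1}{2}$. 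Here $e_\alpha$ drops by a factor of $\ell$ when $E\subset F_\alpha$ or when $E_\alpha/F_\alpha$ is unramified.
\end{itemize}
For instance, if $E\subset F_{\alpha_j}$, both sides change by $\sgn_{\F_\alpha^\times}(\ell)$, which is $-1$ for $\F_\alpha=\F_7$, $\ell=3$.

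The two discrepancies have to be matched case by case: $E\subset F_\alpha$; $E_\alpha/F_\alpha$ unramified; $E_\alpha/F_\alpha$ ramified. The last case needs $\sgn_{\F_\alpha^\times}\bigl(\ell(-1)^{(\ell-1)/2}\bigr)=1$. This follows from quadratic reciprocity, because a tame Galois totally ramified extension of degree $\ell$ forces $|\F_\alpha|\equiv 1\pmod{\ell}$. Moreover, the correction proved to work is $\epsilon_E\epsilon_{E,\sharp,x}\cdot(\epsilon_F\epsilon_{F,\sharp,x})\circ\Nm_{E/F}$, with the $\epsilon_{\sharp,x}$-factors stripped on both sides. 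So your ``ratio of the FKS twists'' needs to be pinned down.

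If you only wanted \emph{some} explicit $\pi_E$, you could bypass the computation. Take the correction to be the depth-zero character of $T(E)$ dual, via Lemma~\ref{lemma:dual-l-homs}, to $[Q_{\chi''_F}|_{W_E}Q_{\chi''_E}^{-1}]$. You would then lose the FKS form of $\Psi_E$, which is what hypothesis~(4) of Theorem~\ref{thm:llc-partial-characterization} refers to.

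Two smaller points.
\begin{itemize}
\item Here ``toral'' only means $G^0=T$. You must allow a tower $T=G^0\subset\cdots\subset G^d=G$ and run the comparison layer by layer, not just $G^1=G$.
\item To get a Yu datum over $E$ you need characters of $G^i(E)$ restricting to $\phi_i|_{T(F)}\circ\Nm_{E/F}$ on $T(E)$. The paper obtains these from normalization (Proposition~\ref{prop:extend-normalized-characters}). Genericity then holds because the same $X_i^*$ represents them against $\psi\circ\Tr_{E/F}$, not because the norm acts by multiplication by $\ell$.
\end{itemize}
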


The same sign issues described above for unramified twisted Levis arise in the setting of Theorem~\ref{thm:I-intro-small-bc}, particularly when $E/F$ is ramified. If $\ell = 2$, then one can still define a candidate base change lifting $\pi_E$ for arbitrary $k$, but the sign issues are even more severe; this is the reason for the restriction $k = \ol\F_2$.

Assumption (4) of Theorem~\ref{thm:I-intro-llc-partial-characterization} is the statement that, in the setting of Theorem~\ref{thm:I-intro-small-bc}, one has
\[
\rho(\pi_E)|_{P_F} \sim \rho(\pi)|_{P_F}.
\]
We emphasize that this is a fairly weak hypothesis: it only concerns toral representations, and it only requires functoriality after restriction to wild inertia. Although Theorem~\ref{thm:I-intro-small-bc} shows that $\pi_E$ is an actual base change lifting of $\pi$ from $F$ to $E$, adding this as a further assumption to Theorem~\ref{thm:I-intro-llc-partial-characterization} does not significantly improve \eqref{eqn:I-intro-restriction-to-finite-index}. In any case, for the Fargues--Scholze correspondence we can only prove that $\pi_E$ is a base change lifting of $\pi$ in characteristic $\ell$.

\subsection{Sign characters}\label{ss:I-intro-proofs} One important theme of the explicit Local Langlands Correspondence is the necessity of certain subtle sign characters, as alluded to above and treated definitively in \cite{FKS23}. For simplicity, we will assume in the following discussion that $\pi$ is non-singular, but the issues are essentially the same in general. 

In the proof, we see sign characters arise in two independent ways. First, if $T \subset H \subset G$ are as in Theorem~\ref{thm:I-intro-kaletha-unramified-twisted-levi}, then there is a naive way to construct a candidate $\pi_H$ using Yu's construction \cite{Yu01}, which we temporarily call $\pi_H'$, such that $(T, \theta)$ is a torus-character pair associated to $\pi_H'$. From $\pi_H'$, one obtains an L-embedding $\ld j_{T,H}\co \ld T \to \ld H$ as above. However, in general,
\begin{equation}\label{eqn:I-intro-l-embedding-not-transitive}
\ld j_{T,G} \circ \ld\theta \not\sim \ld j_{H,G} \circ \ld j_{T,H} \circ \ld\theta,
\end{equation}
so $\pi_H'$ cannot be the correct functorial descent.\footnote{Note that this does not contradict \cite[Proposition 6.9]{Kal21a}, which looks similar to the negation of this statement but whose notation implicitly refers to different L-embeddings than those appearing in \eqref{eqn:I-intro-l-embedding-not-transitive}.} This discrepancy gives rise to one sign character. 

The second source of sign characters is (as explained in \cite{FKS23}) that Yu's construction should itself be twisted by a sign character which depends on the data involved in the definition. This suggests that the naive construction of $\pi_H'$ should also be twisted by a sign character; finding the ``correct'' twist, however, requires a careful analysis of \cite{FKS23}. The precise sign characters that intervene appear slightly ad hoc; they will be partially explained by the Tate cohomology of the Weil representation in \cite{CF26b}, where they will ultimately arise from the sign characters in \cite{Ger77}.

\subsection{Context and related work}\label{ssec:related-work}
In this subsection we discuss further context on the different approaches to the Local Langlands correspondence, and related work on their compatibility.

\subsubsection{The classical approach}
After the Local Langlands Correspondence was established for $\GL_n$ \cite{LRS93, HT01,Hen00}, it was extended to more general groups in work of many people.

Over $p$-adic fields (in characteristic 0), twisted endoscopy was used to extend the correspondence to quasi-split symplectic and special orthogonal groups by Arthur \cite{Art13}, to quasi-split unitary groups by Mok \cite{Mok15}, and to their inner forms of unitary groups by Kaletha--M\'{i}nguez--Shin--White \cite{KMSW14}. M\oe glin--Renard \cite[\S 3.6]{MoeRen18} establish the tempered correspondence for pure inner forms of quasi-split special orthogonal and unitary groups. Applying the Langlands classification to these tempered correspondences, including those for Levi subgroups, gives the correspondence for all irreducible representations.\footnote{For even special orthogonal groups, these endoscopic classifications retain the usual ambiguity under the outer automorphism induced by the full orthogonal group.} 

The theta correspondence gives another construction: for even orthogonal groups, this was used by Chen--Zou \cite{CZ21}, and one can deduce a correspondence for special orthogonal groups up to outer automorphism. Gan--Takeda and Gan--Tantono \cite{GT11,GT14} also deduce the correspondence for $\mathrm{GSp}_4$ and its non-split inner form; Gan--Takeda and Choiy treat $\mathrm{Sp}_4$ and its non-split inner form \cite{GT10,Cho17}. For $\mathrm{G}_2$, Gan--Savin \cite{GS23} use an exceptional theta correspondence, while Aubert--Xu \cite[Theorem 10.2.1]{AX22} give an explicit construction when $p\ne 2,3$.

Other classical constructions include Hiraga--Saito's work on inner forms of $\SL_n$ \cite{HiSa12}, completed over arbitrary nonarchimedean local fields by Aubert--Baum--Plymen--Solleveld \cite{ABPS16}. They use restriction from inner forms of $\GL_n$, whose correspondence follows from the Jacquet--Langlands correspondence. For split odd special orthogonal groups, Jiang--Soudry \cite{JS03} established the correspondence for generic supercuspidal representations using local converse theorems and descent.

Over local fields of positive characteristic, the method of close fields has been used to construct a Local Langlands correspondence in some situations: Ganapathy \cite{Gan15} treats $\mathrm{GSp}_4$ when $p>2$, Ganapathy--Varma \cite{GV17} treat split classical groups under bounds on $p$, and \cite{ABPS16} treats inner forms of $\SL_n$ without such bounds. Gan--Lomel\'{i} \cite[\S 7]{GanLom18} give a different construction of parameter maps for quasi-split classical groups using globalization and Lafforgue's work, under some hypotheses. 

In a different direction, for coefficients of characteristic $\ell\ne p$, Vign\'{e}ras \cite{Vig01} constructs the semisimple correspondence for $\GL_n(F)$ over an arbitrary nonarchimedean local field $F$. Her proof uses congruences between automorphic representations following Khare \cite{Kha01}. 

\subsubsection{The geometric approach}

The ``geometric'' approach to the Local Langlands Correspondence itself divides into several different approaches.

The earliest is that of of Genestier--Lafforgue \cite{GL18}, which constructs a semisimplified Local Langlands correspondence over function fields. It uses the global Langlands correspondence of V. Lafforgue \cite{Laff18} plus local-global compatibility. Later, Li-Huerta \cite{LH23} has shown that the Genestier--Lafforgue and Fargues--Scholze correspondences agree in this case.

Zhu \cite{Zhu25} has developed his own approach to the categorical Local Langlands Conjecture. While this does not yet have a ``spectral action'' (hence no construction of L-parameters attached to higher depth representations), Zhu is able to construct a fully faithful embedding of depth zero blocks into the spectral category. The recent work \cite{GHILZ26} compares the relevant categories of sheaves in the approaches of Fargues--Scholze and Zhu. For $G = \GL_n$, this had been done earlier by Ben-Zvi--Chen--Helm--Nadler \cite{BCHN24}. 

Building on Zhu's work, Helm--Solleveld--Xu \cite[Theorem~1.2]{HSX26} construct fully faithful functors from Bernstein blocks with generic supercuspidal support to ind-coherent sheaves on parameter stacks for a class of quasi-split groups over $p$-adic fields that includes symplectic, special orthogonal, and unitary groups, and $G_2$. This in turn has applications to the Fargues--Scholze correspondence through the work of Hansen--Mann \cite{HM26}.

\subsection{Outline of the paper}

We now give some details on the various sections appearing in this work.

In \S\ref{ssec:notation}, we detail the various conventions on notations and definitions which we will use.

In \S\ref{sec:sign-characters}, we recall the sign characters defined in \cite{FKS23} which will be used throughout. In \S\ref{ss:chi-data-L-embedding}, we recall the definition of sets of $\chi$-data and their associated L-embeddings from \cite{LS87} (mainly following \cite{Kal21a}), and we define the particular L-embeddings which will be of use to us. In \S\ref{sec:sign-l-embedding-compatibility}, we establish certain compatibilities between the sign characters of \S\ref{sec:sign-characters} and the L-embeddings of \S\ref{ss:chi-data-L-embedding} under base change and descent to unramified twisted Levi subgroups, and in particular we show how to modify \eqref{eqn:I-intro-l-embedding-not-transitive} to a true statement. Since several of the proofs involve rather long and complicated calculations, they are mostly relegated to Appendix~\ref{app:rootwise-sign-chi-calculations}.

In \S\ref{ss:yu-construction}, we recall the twisted Yu construction from \cite{FKS23}, which involves modifying the classical Yu construction \cite{Yu01} with a certain sign character. With an eye towards applications to functoriality, we carefully analyze the behavior of this construction under base change and passage to unramified twisted Levis.

In \S\ref{sec:kaletha-param}, we describe our extension of Kaletha's parametrization in detail, and we prove some basic compatibility results for it. In \S\ref{sec:bc-functoriality-yu-data}, we describe the explicit base change liftings required for Theorems~\ref{thm:I-intro-large-bc} and \ref{thm:I-intro-small-bc}, and we prove those theorems. The proofs mainly involve applying the results of \S\ref{sec:sign-l-embedding-compatibility}. In \S\ref{ss:kaletha-functoriality}, we describe the explicit descent to unramified twisted Levi subgroups used in Theorem~\ref{thm:I-intro-kaletha-unramified-twisted-levi}, and we prove that theorem. In \S\ref{ss:sign-example}, we give an explicit example showing that the sign issues can be quite subtle.

Finally, in \S\ref{sec:partial-characterization}, we prove Theorem~\ref{thm:I-intro-llc-partial-characterization}, establishing a partial characterization of the LLC in terms of Theorems~\ref{thm:I-intro-kaletha-unramified-twisted-levi} and \ref{thm:I-intro-small-bc}.

\begin{figure}[!htbp]
  \centering
  \hypersetup{hidelinks}
  \tikzset{
    lf box/.style={draw, line width=.45pt, rectangle, align=center,
      inner xsep=7pt, inner ysep=7pt, font=\small,
      text width=3.35cm, minimum height=1.55cm, fill=white},
    lf wide/.style={lf box, text width=4.65cm, minimum height=1.55cm},
    lf result/.style={lf box, text width=7.1cm},
    lf input/.style={lf box, dashed},
    lf arrow/.style={-{Stealth[length=4.5pt,width=3.4pt]}, line width=.5pt},
    lf external/.style={lf arrow, dashed},
    lf note/.style={font=\scriptsize, align=center, fill=white, inner sep=2pt}
  }
  \resizebox{.85\textwidth}{!}{%
    \begin{tikzpicture}[x=1cm,y=1cm]
      \node[lf box] (signs) at (-4.35,0)
        {FKS sign characters\\
         \S\ref{sec:sign-characters}, Appendix~\ref{app:rootwise-sign-chi-calculations}};
      \node[lf box] (chi) at (0,0)
        {$\chi$-data and\\L-embeddings\\
         \S\ref{ss:chi-data-L-embedding}};
      \node[lf box] (yu) at (4.35,0)
        {Yu's construction\\
         \S\ref{ss:yu-construction}};
    
      \node[lf wide] (compatibility) at (-3.35,-2.3)
        {Compatibility of signs\\and L-embeddings\\
         \S\ref{sec:sign-l-embedding-compatibility}};
      \node[lf wide] (parameters) at (3.35,-2.3)
        {Explicit L-parameters\\
         \S\ref{sec:kaletha-param}};
      \node[lf wide] (basechange) at (-3.35,-5.2)
        {Base change functoriality\\
         \S\ref{sec:bc-functoriality-yu-data}};
      \node[lf wide] (levi) at (3.35,-5.2)
        {Unramified twisted Levi descent\\
         \S\ref{ss:kaletha-functoriality}};
      \node[lf result] (characterization) at (0,-7.7)
        {Partial characterization of the LLC\\
         \S\ref{sec:partial-characterization}};
    
      \draw[lf arrow] (signs.south) -- ([xshift=-1cm]compatibility.north);
      \draw[lf arrow] ([xshift=-.65cm]chi.south) -- ([xshift=1cm]compatibility.north);
      \draw[lf arrow] ([xshift=.65cm]chi.south) -- ([xshift=-1cm]parameters.north);
      \draw[lf arrow] (yu.south) -- ([xshift=1cm]parameters.north);
      \draw[lf arrow] ([xshift=-1cm]compatibility.south) -- ([xshift=-1cm]basechange.north);
      \draw[lf arrow] ([xshift=1cm]compatibility.south) -- ([xshift=-1cm]levi.north);
      \draw[lf arrow] ([xshift=-1cm]parameters.south) -- ([xshift=1cm]basechange.north);
      \draw[lf arrow] ([xshift=1cm]parameters.south) -- ([xshift=1cm]levi.north);
      \draw[lf arrow] (basechange.south) -- ([xshift=-2cm]characterization.north);
      \draw[lf arrow] (levi.south) -- ([xshift=2cm]characterization.north);
    \end{tikzpicture}
  }
  \caption{Leitfaden of the paper.}
  \label{fig:cf26a-leitfaden-normal}
\end{figure}

\subsection{AI methodology} 

After an initial draft of this paper was written, AI tools (specifically GPT 5.6 and Claude Fable 5) were used for proofreading, revision, and the creation of all tables and figures. Finally, GPT 6 was used for literature search in order to give appropriate attributions in the introduction. AI tools were not used in any other way. 

\subsection{Acknowledgements}

We thank Jeff Adler, Ad\`ele Bourgeois, Charlotte Chan, Stephen DeBacker, Jessica Fintzen, Michael Harris, Alex Hazeltine, Alex Ivanov, Josh Lansky, Monica Nevins, Sian Nie, Peter Scholze, David Schwein, Jack Sempliner, Loren Spice, Jay Taylor, and Cheng-Chiang Tsai for helpful conversations. 

S.C.~acknowledges support from the National Science Foundation under Award No.\ 2402231 and the European Research Council (ERC) under the
European Union’s Horizon 2020 research and innovation programme (grant agreement no.\ 950326).

T.F.~was supported by the NSF (grants DMS-2302520 and DMS-2441922), the Simons Foundation, and the Alfred P. Sloan Foundation.

\section{Notation and conventions}\label{ssec:notation}

\subsection{Local fields}
Let $F$ be a local field with ring of integers $\cO_F$ and residue field $\F_q$ of characteristic $p > 0$. We fix a separable closure $\ol F/F$. A separable algebraic extension of $F$ is always understood as a subfield of $\ol F$. For any finite separable field extension $E/F$ and integer $n \geq 1$, we denote by $E_n$ the unramified subextension of $\ol F/E$ of degree $n$. We write $F^{\unr}\subset \ol F$ for the maximal unramified extension of $F$. Unless another field is specified, $\varpi$ denotes a uniformizer of $F$.

We write $W_F$ for the Weil group of $F$, $I_F \triangleleft W_F$ for the inertia subgroup, and $P_F \triangleleft W_F$ for the wild inertia subgroup. 

We write $\Fr$ for geometric Frobenius in $W_F/I_F$, and also for a chosen lift to $W_F$.

\subsection{Reductive groups}
We denote by $G$ a connected reductive group over $F$. (Our convention is that reductive groups over fields are not necessarily assumed to be connected.) All representations and characters of $G(F)$ are understood to be smooth unless otherwise stated. If $C \subset G$ is a closed subscheme, then we denote by $Z_G(C)$ (resp.\ $N_G(C)$) the scheme-theoretic centralizer (resp.\ normalizer) of $C$ in $G$.

If $H$ is a group scheme of finite type over a field $k$, then we use $H^\circ$ to denote the identity component of $H$. The notation $H^0$, which is sometimes used instead of $H^\circ$, will be reserved for Yu's construction. We write $\pi_0(H)=H/H^\circ$ for the group scheme of connected components and $H_{\red}$ for the underlying reduced subscheme. Similarly, if $\cO$ is a discrete valuation ring and $\cG$ is a smooth separated $\cO$-group scheme, then we will use $\cG^\circ$ to denote the (open) relative identity component of $\cG$ over $\cO$, a smooth finite type $\cO$-group scheme with connected fibers.

For a connected reductive group $H$, we write $Z(H)$ for its center, $H_{\der}$ for its derived subgroup, $H_{\ad}=H/Z(H)$ for its adjoint quotient, and $H_{\ab}=H/H_{\der}$ for its abelianization. We write $\rk H$ for its absolute rank. For semisimple $H$, $\pi_1(H)$ denotes its algebraic fundamental group. We use $\frg=\Lie(G)$ and $\Lie^*(G)$ for the linear dual of $\Lie(G)$.

If $T$ is an $F$-torus, then $T(F)_{\mathrm{b}}$ denotes the maximal bounded subgroup of $T(F)$.

We make compatible choices of square roots of $q$ in $\ol\Q_\ell$, $\ol\Z_\ell$, and $\ol\F_\ell$. We use $\ind$ for ordinary induction in finite or finite-index settings and $\cInd$ for compact induction.

\subsection{Dual groups}
The Langlands dual group $\wh{G}$ is regarded over $\Z[1/p]$, though we will often base change to a field of characteristic $\neq p$ without changing the notation. 

We write $\ld G=\wh G\rtimes W_F$ for the L-group of $G$, with the usual pinning-preserving action of $W_F$ on $\wh G$. If $\rho_1,\rho_2$ are (inertial) L-parameters valued in $\ld G(k)$, then $\rho_1\sim\rho_2$ means that $\rho_1$ and $\rho_2$ are $\wh G(k)$-conjugate. We reserve \emph{equality} of L-parameters for actual equality of chosen representatives by homomorphisms.

\subsection{Bruhat--Tits buildings}
We let $\cB(G)$ denote the extended Bruhat--Tits building of $G$ over $F$. For a point $x \in \cB(G)$, we let $[x] = [x]_G$ denote the corresponding point of $\cB(G_{\der})$. Let $\cG_x$ (resp.\ $\cG_{[x]}$) denote the smooth separated $\cO_F$-group scheme with generic fiber $G$ and $\cG_x(\cO_F) = G(F)_x$ (resp.\ $\cG_{[x]}(\cO_F) = G(F)_{[x]}$), the stabilizer of $x$ (resp.\ $[x]$) in $G(F)$, as in \cite[Remark 8.3.4]{KP}. Note that $\cG_x$ is of finite type, but $\cG_{[x]}$ might not be. Let $\ol G_x$ (resp.\ $\ol G_{[x]}$) denote the quotient of the special fiber of $\cG_x$ (resp.\ $\cG_{[x]}$) by the unipotent radical of its identity component.

For $r\geq 0$, the notations $G(F)_{x,r}$ and $G(F)_{x,r+}$ refer to the Moy--Prasad filtration at $x$; analogous notation is used for the Lie algebra. We implicitly use the normalized valuation $v$ of $F$ in this definition; however, if $E/F$ is a finite separable extension then $G(E)_{x,r}$ and $G(E)_{x,r+}$ will be defined using the unique extension of $v$ to $E$. Thus $G(F)_{x,r} = G(E)_{x,r} \cap G(F)$.

\subsection{Coefficients}\label{sssec:coefficient-conventions}
We use $k$ for coefficient rings, which need not be fields. Our coefficient prime $\ell$ is always distinct from $p$. We write $\ol\Z_\ell$ for the valuation ring of $\ol\Q_\ell$, with residue field $\ol\F_\ell$.

For a $k$-algebra $k'$ and a $k$-module or representation $V$, we write $V_{k'}=V\otimes_k k'$. Subscripts on group schemes and homomorphisms likewise indicate base change.

\section{The FKS sign character}\label{sec:sign-characters}\label{ss:fks}

There is a notorious sign error in \cite{Yu01}, noticed by Loren Spice and arising from a misprint in \cite[Theorem 2.4(b)]{Ger77}; see the introduction to \cite{Fin21b}. This has led to substantial ``sign adjustments'' in the literature, the culmination of which is \cite{FKS23}. The latter paper introduces a complicated sign character $\epsilon = \epsilon_G$ and shows (among other things) that if Yu's construction is adjusted by $\epsilon$ then the errant results in \cite{Yu01} become valid.

The character $\epsilon$ is the product of five individually complicated sign characters, called $\epsilon_{\sharp,x}$, $\epsilon_{\flat,0}$, $\epsilon_f$, $\epsilon_{\flat,1}$, and $\epsilon_{\flat,2}$ in \cite{FKS23}. The character $\epsilon_{\sharp,x}$ was introduced in \cite[\S 4.3]{DS18}; the character $\epsilon_{\flat,0}$ first appeared in \cite[Proposition 5.27]{Kal21a}; and the character $\epsilon_f$ is the quotient of two sign characters which appear in \cite[Definition 4.7.3]{Kal19}. 

Write $\F=\F_q$ for the residue field of $F$. Whenever a coefficient ring $k \in \{\ol\Q_\ell,\ol\Z_\ell,\ol\F_\ell\}$ is fixed in the following constructions, we also fix once and for all an additive character $\psi = \psi_k\co F \to k^\times$ which is trivial on the maximal ideal $\frp_F \subset \cO_F$ and whose restriction to $\cO_F$ is the inflation of a nontrivial character $\psi^0\co \F_q \to k^\times$ of the form $\F_q \xrightarrow{\Tr_{\F_q/\F_p}} \F_p \xrightarrow{\ol\psi} k^\times$.
The characters $\psi_{\ol\Q_\ell}$ and $\psi_{\ol\F_\ell}$ are chosen to be compatible with $\psi_{\ol\Z_\ell}$ by extension of scalars and reduction, respectively. 

Assume that $p \neq 2$ and that $G$ is a connected reductive $F$-group of adjoint type, and let $M$ be a tamely ramified twisted Levi $F$-subgroup of $G$. Fix an embedding $\cB(M) \subset \cB(G)$ of enlarged Bruhat--Tits buildings, and let $x \in \cB(M)$. Let $\cM_x$ be the smooth affine $\cO_F$-group scheme with $\cM_x(\cO_F)=M(F)_x$, and let $\ol M_x$ be the quotient of its special fiber by the unipotent radical of its identity component. If $T\subset G$ is a tamely ramified maximal torus, let $\ol T$ denote the analogous quotient of the special fiber of the finite-type N\'eron model of $T$. Then \cite[Theorem 3.4]{FKS23} exhibits a sign character
\[
\epsilon_x^{G/M}\co \ol M_x(\F)\longrightarrow\{\pm1\},
\]
whose definition we now recall.

\subsection{Preliminary notions}\label{ssec:FKS-preliminaries}
For an $F$-torus $T \subset M$, let $\Phi(G_{\ol F}, T_{\ol F})$ (resp.\ $\Phi((G/M)_{\ol F}, T_{\ol F})$) denote the set of nonzero weights (which we will call roots, even if $T$ is not maximal) for the action of $T_{\ol F}$ on $\mathfrak{g}_{\ol F}$ (resp.\ $(\mathfrak{g}/\mathfrak{m})_{\ol F}$). Let $\Gamma = \Gal(\ol F/F)$ and $\Sigma = \Gamma \times \{\pm 1\}$, so $\Gamma$ and $\Sigma$ act on $\Phi(G_{\ol F}, T_{\ol F})$ and $\Phi((G/M)_{\ol F}, T_{\ol F})$.

\begin{defn}\label{defn:symmetric-roots}
    A root $\alpha \in \Phi(G_{\ol F}, T_{\ol F})$ is said to be \textit{symmetric} if $-\alpha$ lies in the $\Gamma$-orbit of $\alpha$; otherwise $\alpha$ is said to be \textit{asymmetric}. For each absolute root $\alpha \in \Phi(G_{\ol F}, T_{\ol F})$, let $F_\alpha$ be the finite extension of $F$ such that $\Gal(\ol F/F_\alpha)$ is the stabilizer of $\alpha$ under the $\Gamma$-action, and let $F_{\pm\alpha}$ be the subextension of $F_\alpha$ such that $\Gal(\ol F/F_{\pm\alpha})$ is the (setwise) stabilizer of the set $\{\pm\alpha\}$ under the $\Gamma$-action.
    
    A symmetric root $\alpha$ is \textit{unramified} if the quadratic extension $F_\alpha/F_{\pm\alpha}$ is unramified; otherwise $\alpha$ is \textit{ramified}. 

Below we indicate the corresponding subsets of $\Phi((G/M)_{\ol F}, T_{\ol F})$ with self-explanatory subscripts, e.g., $\Phi((G/M)_{\ol F},T_{\ol F})_{\mathrm{sym,unram}}$ denotes the subset of symmetric unramified roots in $\Phi((G/M)_{\ol F},T_{\ol F})$.
\end{defn}

Let $\F_\alpha$ (resp.\ $\F_{\pm \alpha}$) denote the residue field of $F_\alpha$ (resp.\ $F_{\pm \alpha}$). If $\alpha$ is unramified, so that $\F_\alpha \neq \F_{\pm\alpha}$, then let $\F_\alpha^1$ denote the kernel of the norm map $\Nm_{\F_\alpha/\F_{\pm\alpha}}\co \F_\alpha^\times \to \F_{\pm\alpha}^\times$. Let $\sgn_{\F_\alpha^\times}$ (resp.\ $\sgn_{\F_\alpha^1}$) denote the unique nontrivial character $\F_\alpha^\times \to \{\pm 1\}$ (resp.\ $\F_\alpha^1 \to \{\pm 1\}$). Let $\ord_x(\alpha) = \ord_x(\alpha)_F$ denote the set of real numbers $t$ such that $\mathfrak{g}_\alpha(F_\alpha)_{x, t+} \subsetneq \mathfrak{g}_\alpha(F_\alpha)_{x, t}$.

If $M \subset G$ is a twisted Levi $F$-subgroup, then we will write $M_{\mathrm{sc}}$ to denote the preimage of $M \cap G_{\der}$ in the universal cover $G_{\mathrm{sc}} \to G_{\der}$.

\subsection{Definition of the sign character}\label{sssec:fks-sign-character-definition}
Fix a \emph{good} element\footnote{Here we follow the definitions in \cite{FKS23}, which differ from those of \cite{Yu01}; see \cite[Remark 4.1.3]{FKS23} for a discussion of the difference.} $X \in \Lie^*(M_{\mathrm{sc},\ab})(F) \subset \Lie^*(M_{\mathrm{sc}})(F)$, i.e., an element such that there exists $r \in \mathbf{R}$ such that
\[
\ord(\langle X, H_\alpha\rangle) = -r \quad \text{ for all $\alpha \in \Phi((G/M)_{\ol F}, S_{\ol F})$},
\]
where $S$ ranges over all tamely ramified maximal $F$-tori in $M$ and $H_\alpha = \mathrm{d}\alpha^\vee(1)$.\footnote{Generic elements (the definition of which we will recall later) are good, so such elements will arise in the setting of Yu's construction.} For a tamely ramified maximal $F$-torus $T \subset M$ such that $x\in\cB(T)$, the character $\epsilon_x^{G/M}|_{\ol T(\F)}$ is characterized as a product
\begin{equation}\label{eqn:def-of-epsilon}
\epsilon_x^{G/M}|_{\ol T(\F)} = \epsilon_{\sharp,x}^{G/M} \cdot \epsilon_{\flat,0}^{G/M} \cdot \epsilon_f^{G/M} \cdot \epsilon_{\flat,1}^{G/M} \cdot \epsilon_{\flat,2}^{G/M}
\end{equation}
where each term on the right hand side is a character of $\ol T(\F)$. Remarkably, \cite[Theorem 3.4 and Corollary 3.6]{FKS23} shows that there is a unique character $\epsilon_x^{G/M}\co \ol M_x(\F) \to \{\pm 1\}$ whose restriction to $\ol T(\F)$ is given by \eqref{eqn:def-of-epsilon} for every tamely ramified maximal $F$-torus $T \subset M$ with $x \in \cB(T)$.

We now define the sign characters appearing in \eqref{eqn:def-of-epsilon}. Fix a tamely ramified maximal $F$-torus $T \subset M$ such that $x\in\cB(T)$, and fix $\gamma \in \ol T(\F)$. Define first
\begin{equation}\label{eqn:FKS-1}
\epsilon_{\sharp,x}^{G/M}(\gamma) = \prod_{\substack{\alpha \in \Phi((G/M)_{\ol F}, T_{\ol F})_{\mathrm{asym}}/\Sigma \\ r/2 \in \ord_x(\alpha)}} \sgn_{\F_\alpha^\times}(\alpha(\gamma)) \cdot \prod_{\substack{\alpha \in \Phi((G/M)_{\ol F},T_{\ol F})_{\mathrm{sym,unram}}/\Gamma \\ r/2 \in \ord_x(\alpha)}} \sgn_{\F_\alpha^1}(\alpha(\gamma)).
\end{equation}

Let $Z_M$ denote the maximal central torus of $M$, and for $\alpha \in \Phi((G/M)_{\ol F}, T_{\ol F})$ let $\alpha_M = \alpha|_{Z_M}$. Let $e(\alpha/\alpha_M) = e(\alpha/\alpha_M)_F$ denote the ramification degree of $F_\alpha/F_{\alpha_M}$, where $F_{\alpha_M}$ is the finite extension of $F$ corresponding to $\alpha_M$. We extend the terminology of Definition~\ref{defn:symmetric-roots} to the nonzero weights of $Z_M$: the field $F_{\pm\alpha_M}$ is the fixed field of the setwise stabilizer of $\{\pm\alpha_M\}$ in $\Gamma$, and $\alpha_M$ is symmetric ramified precisely when $-\alpha_M \in \Gamma\cdot\alpha_M$ and $F_{\alpha_M}/F_{\pm\alpha_M}$ is ramified. Define
\begin{equation}\label{eqn:FKS-2}
\epsilon_{\flat,0}^{G/M}(\gamma) = \prod_{\substack{\alpha \in \Phi((G/M)_{\ol F},T_{\ol F})_{\mathrm{asym}}/\Sigma \\ \alpha_M \in \Phi((G/M)_{\ol F},(Z_M)_{\ol F})_{\mathrm{sym,ram}} \\ 2 \nmid e(\alpha/\alpha_M)}} \sgn_{\F_\alpha^\times}(\alpha(\gamma)) \cdot \prod_{\substack{\alpha \in \Phi((G/M)_{\ol F},T_{\ol F})_{\mathrm{sym,unram}}/\Gamma \\ \alpha_M \in \Phi((G/M)_{\ol F},(Z_M)_{\ol F})_{\mathrm{sym,ram}} \\ 2 \nmid e(\alpha/\alpha_M)}} \sgn_{\F_\alpha^1}(\alpha(\gamma)).
\end{equation}

For a symmetric ramified root $\alpha \in \Phi((G/M)_{\ol F}, T_{\ol F})$, choose a nonzero element $X_\alpha \in \mathfrak{g}_\alpha(F_\alpha)$, and choose $\tau \in \Gamma$ such that $\tau \cdot \alpha = -\alpha$. It is observed in \cite[\S 3.1]{Kal15} that
\begin{equation}\label{eq:c-alpha}
c_\alpha=\frac{[X_\alpha,\tau X_\alpha]}{H_\alpha}
\end{equation}
lies in $F_{\pm\alpha}^\times$. Let
\[
\kappa_\alpha\co F_{\pm \alpha}^\times/\Nm_{F_\alpha/F_{\pm\alpha}}(F_\alpha^\times) \cong \{\pm 1\}
\]
denote the unique isomorphism. Replacing $X_\alpha$ by $aX_\alpha$, with $a\in F_\alpha^\times$, multiplies \eqref{eq:c-alpha} by $a\tau(a)=\Nm_{F_\alpha/F_{\pm\alpha}}(a)$. Thus $\kappa_\alpha(c_\alpha)$ is independent of the choice of $X_\alpha$; we denote it by $f_{(G,T)}(\alpha)$. Moreover, $f_{(G,T)}(\alpha)$ depends only on the $\Gal(\ol F/F)$-orbit of $\alpha$. If $\frp_\alpha$ denotes the maximal ideal of $\cO_{F_\alpha}$, then we define
\begin{equation}\label{eqn:FKS-3}
\epsilon_f^{G/M}(\gamma) = \prod_{\substack{\alpha \in \Phi((G/M)_{\ol F},T_{\ol F})_{\mathrm{sym,ram}}/\Gamma \\ \alpha(\gamma) \in -1 + \frp_\alpha}} f_{(G,T)}(\alpha).
\end{equation}

For any $\alpha \in \Phi(G_{\ol F}, T_{\ol F})$, let $\ell_{p'}(\alpha^\vee) = \ell_{G,p'}(\alpha^\vee) \in \{1,2,3\}$ denote the prime-to-$p$ part of the normalized square length of $\alpha^\vee$, as in \cite[Definition 5.4.1]{FKS23}. Let $e_\alpha$ denote the ramification degree of $F_\alpha/F$. We define then
\begin{equation}\label{eqn:FKS-4}
\epsilon_{\flat,1}^{G/M}(\gamma) = \prod_{\substack{\alpha \in \Phi((G/M)_{\ol F},T_{\ol F})_{\mathrm{sym,ram}}/\Gamma \\ \alpha(\gamma) \in -1 + \frp_\alpha}} (-1)^{[\F_\alpha:\F]+1}\sgn_{\F_\alpha^\times}(e_\alpha \ell_{p'}(\alpha^\vee)).
\end{equation}
By \cite[Lemma 5.6.5]{FKS23}, the integer $e(\alpha/\alpha_M)$ is odd for every symmetric ramified root $\alpha$, so the exponent in \eqref{eqn:FKS-5} is an integer. Finally, define
\begin{equation}\label{eqn:FKS-5}
\epsilon_{\flat,2}^{G/M}(\gamma) = \prod_{\substack{\alpha \in \Phi((G/M)_{\ol F},T_{\ol F})_{\mathrm{sym,ram}}/\Gamma \\ \alpha(\gamma) \in -1 + \frp_\alpha}} \sgn_{\F_\alpha^\times}(-1)^{(e(\alpha/\alpha_M) - 1)/2}.
\end{equation}

We have now defined $\epsilon_x^{G/M}$ as a character of $\ol M_x(\F)$; inflation along $M(F)_x\to\ol M_x(\F)$ gives a character of $M(F)_x$. For a general connected reductive $G$, set $G_{\mathrm{ad}}=G/Z(G)$ and let $M_{\mathrm{ad}}$ be the image of $M$ in $G_{\mathrm{ad}}$. If $[x]_G$ denotes the image of $x$ in $\cB(G_{\der}) = \cB(G_{\ad})$, then we define (cf.\ \cite[Lemma 4.1.2 and Definition 4.1.10]{FKS23}):
\[
\epsilon_x^{G/M}\co M(F)_{[x]_G}\longrightarrow M_{\mathrm{ad}}(F)_{[x]_G}
\xrightarrow{\ \epsilon_x^{G_{\mathrm{ad}}/M_{\mathrm{ad}}}\ }\{\pm1\}.
\]
We emphasize that, despite the notation, the character $\epsilon_x^{G/M}$ depends on the good element $X$, although it only does so through $r$.

\subsubsection{Unramified twisted Levis}

We record here a couple of basic properties concerning the behavior of $\epsilon_x^{G/M}$ under passage to unramified twisted Levi subgroups of $G$; this will be studied further in \S\ref{ss:unram-levi-signs}.

\begin{lemma}\label{lemma:unramified-levi-not-ramified-symmetric}
Let $H \subset G$ be an unramified twisted Levi, and let $T \subset H$ be an $F$-subtorus containing $Z_H$. Every symmetric ramified root in $\Phi(G_{\ol F}, T_{\ol F})$ lies in $\Phi(H_{\ol F}, T_{\ol F})$.
\end{lemma}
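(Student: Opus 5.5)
We argue by contraposition: let $\alpha \in \Phi(G_{\ol F}, T_{\ol F})$ with $\alpha \notin \Phi(H_{\ol F}, T_{\ol F})$, and show that if $\alpha$ is symmetric then it is unramified. The key input is the structure of $H$. Since $H$ is an unramified twisted Levi, there is a finite unramified extension $F'/F$ such that $H_{F'}$ is a Levi subgroup of $G_{F'}$. In particular $H = Z_G(Z_H)$ (a Levi is the centralizer of its central torus, and this can be checked after base change). Hence the roots of $G$ with respect to $T$ that lie in $H$ are exactly those whose restriction to $Z_H \subset T$ is trivial. So $\alpha \notin \Phi(H,T)$ means precisely that $\alpha_H := \alpha|_{Z_H}$ is a nontrivial character of $(Z_H)_{\ol F}$.

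Next we use that $Z_H$ is an unramified torus. Indeed $Z_H^\circ$ contains the maximal split central torus of $H_{F'}$, which is the connected center of the Levi $H_{F'}$, so $Z_H^\circ$ is split over $F'$. Thus the inertia group $I_F$ acts trivially on $X^*(Z_H^\circ)$. Since $X^*(Z_H)\to X^*(Z_H^\circ)$ need not be injective if $Z_H$ is disconnected, it is cleaner to restrict to $Z_H^\circ$ from the start; equivalently, replace $Z_H$ by its maximal central torus $Z_H^\circ$ throughout, since $H = Z_G(Z_H^\circ)$ as well.

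Now suppose $\alpha$ is symmetric, say $\tau\alpha = -\alpha$ with $\tau \in \Gamma$. Restricting to $Z_H^\circ$ gives $\tau \alpha_H = -\alpha_H$ with $\alpha_H \neq 0$. Consider the inertia group $I_{F_{\pm\alpha}} = I_F \cap \Gal(\ol F/F_{\pm\alpha})$. The element $\tau$ lies in $\Gal(\ol F/F_{\pm\alpha})\setminus \Gal(\ol F/F_\alpha)$. If $\alpha$ were ramified, then the quadratic extension $F_\alpha/F_{\pm\alpha}$ would be ramified, so $I_{F_{\pm\alpha}} \not\subset \Gal(\ol F/F_\alpha)$. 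We could then choose $\tau \in I_{F_{\pm\alpha}}$ with $\tau\alpha = -\alpha$. But $\tau \in I_F$ acts trivially on $X^*(Z_H^\circ)$, so $\alpha_H = \tau\alpha_H = -\alpha_H$. Since $X^*(Z_H^\circ)$ is torsion-free, this forces $\alpha_H = 0$, a contradiction. Hence every symmetric ramified root lies in $\Phi(H_{\ol F}, T_{\ol F})$.

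The only delicate points are the facts $H = Z_G(Z_H^\circ)$ and that $Z_H^\circ$ splits over an unramified extension. The hypothesis $Z_H \subset T$ ensures the restriction $\alpha|_{Z_H^\circ}$ makes sense, and it is compatible with the Galois action because $Z_H^\circ \subset T$ is defined over $F$. Torsion-freeness of the character group of the torus is what is used to conclude from $2\alpha_H = 0$. I expect the identification $H = Z_G(Z_H^\circ)$ for non-maximal $T$ to be the only step requiring some care: it is standard for Levi subgroups over $F'$, and it descends because both sides are defined over $F$.
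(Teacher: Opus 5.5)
Your overall strategy is sound: if $\alpha$ were symmetric ramified, an inertia element $\tau$ with $\tau\alpha=-\alpha$ would have to act by $-1$ on the restriction of $\alpha$ to a central torus of $H$, and if inertia acts trivially on that torus's characters, the restriction vanishes. This is essentially the paper's mechanism. The gap is the torus you chose to restrict to.

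The claim that the connected center of the Levi $H_{F'}$ equals its maximal $F'$-split central torus is false in general. Consequently so is the claim that $Z_H$ splits over $F'$ and that $I_F$ acts trivially on $X^*(Z_H)$. It fails whenever $Z(G)^\circ$ is a ramified torus, since $Z(G)^\circ\subset Z_H$. It also fails on the restricted roots themselves. Take the quasi-split unitary group in three variables attached to a ramified quadratic extension, and let $H=T$ be the Levi factor of an $F$-rational Borel, hence an unramified twisted Levi. Then $Z_H=T$, and an element of $I_F$ that is nontrivial on the splitting field sends $e_1-e_2$ to $e_2-e_3$. So $\tau\alpha_H=\alpha_H$ is not justified as written. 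That example has no symmetric roots, so the lemma survives; only your intermediate step breaks. You flagged $H=Z_G(Z_H)$ as the delicate point, but that identity is harmless; the splitting claim is the one that fails.

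The repair is to replace $Z_H^\circ$ by the maximal $F'$-split central torus $A$ of $H_{F'}$.
\begin{itemize}
\item $A\subset (Z_H)_{F'}\subset T_{F'}$.
\item $H_{F'}=Z_{G_{F'}}(A)$, because a Levi factor of an $F'$-parabolic is the centralizer of its maximal split central torus.
\item Since $F'/F$ is unramified, $I_F\subset\Gal(\ol F/F')$. Hence the restriction map $X^*(T_{\ol F})\to X^*(A_{\ol F})$ is $I_F$-equivariant, with $I_F$ acting trivially on the target.
\end{itemize}
Your argument then gives $\alpha|_A=-\alpha|_A$, so $\alpha|_A=0$ by torsion-freeness. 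Hence $\frg_\alpha\subset\frg^A=\frh$, i.e.\ $\alpha\in\Phi(H_{\ol F},T_{\ol F})$; no descent of $A$ to $F$ is needed.

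With this change your proof is the paper's proof in different language. The paper takes the unipotent radical $U$ of a parabolic of $G$ over the unramified extension with Levi factor $H$, and writes $\Phi(G_{\ol F},T_{\ol F})\smallsetminus\Phi(H_{\ol F},T_{\ol F})$ as $\Phi(U_{\ol F},T_{\ol F})\sqcup-\Phi(U_{\ol F},T_{\ol F})$, with both pieces Galois-stable over that extension. Those two pieces are exactly the roots pairing positively, respectively negatively, with a suitable cocharacter of $A$, so your sign flip on $A$ is the same idea. In the paper $Z_H$ already denotes the maximal central torus, so your discussion of disconnectedness is unnecessary.
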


\begin{proof}
Choose a finite unramified extension $E/F$ for which $H_E$ is a Levi factor of an $E$-parabolic subgroup $P\subset G_E$, and let $U$ be the unipotent radical of $P$. Observe that
\begin{equation}\label{eq:root-difference}
\Phi(G_{\ol F},T_{\ol F})-\Phi(H_{\ol F},T_{\ol F})
=\Phi(U_{\ol F},T_{\ol F})\sqcup
-\Phi(U_{\ol F},T_{\ol F}),
\end{equation}
where $\Gal(\ol F/E)$ preserves the two summands. Thus the $\Gal(\ol F/E)$-orbit of a weight $\alpha$ in \eqref{eq:root-difference} cannot contain $-\alpha$, which implies that $F_\alpha/F_{\pm\alpha}$ is unramified.
\end{proof}

Kaletha has informed us that in general there is no character of $M(F)_{[x]_G}$ extending the characters $\epsilon_{\sharp,x}^{G/M}$ associated to the various tamely ramified maximal $F$-tori $T$ of $M$ such that $x \in \cB(T)$. The following lemma shows, among other things, that this is not the case if $M$ is an unramified twisted Levi of $G$.

\begin{lemma}\label{lem:sharp-x-char-extension}
Let $M \subset G$ be a tamely ramified twisted Levi, and let $H \subset G$ be an unramified twisted Levi such that $M \cap H$ contains a maximal $F$-torus of $G$. Assume $x \in \cB(M \cap H)$. There exists a unique character $\epsilon_{\sharp,x}^{G/M,H}$ of $(M \cap H)(F)_{[x]_G}$ such that for each tamely ramified maximal $F$-torus $T$ of $M \cap H$ such that $x \in \cB(T)$, the restriction of $\epsilon_{\sharp,x}^{G/M,H}$ to $\ol T_{[x]}(\F_q)$ is equal to $\epsilon_{\sharp,x}^{G/M} \cdot \epsilon_{\sharp,x}^{H/M\cap H}$. If
\[
V = \frg(F)_{x,\frac{r}{2}}/\left(\frg(F)_{x,\frac{r}{2}+} + \frh(F)_{x,\frac{r}{2}} + \mathfrak{m}(F)_{x,\frac{r}{2}}\right),
\]
and there exists a parabolic $F$-subgroup $P \subset G$ with Levi factor $H$, and $V_+$ denotes the image of $\frp(F)_{x,\frac{r}{2}}$ in $V$, then 
\[
\epsilon_{\sharp, x}^{G/M,H} = \sgn_{\F_q^\times}\circ\det(-|V_+).
\]
\end{lemma}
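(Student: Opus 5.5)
The plan is to compare the two products root by root on a fixed torus, and then recognize the surviving factor as a determinant on the module $V$, which is visibly a character of the whole group. Fix a tamely ramified maximal $F$-torus $T \subset M\cap H$ with $x \in \cB(T)$. Both $\epsilon_{\sharp,x}^{G/M}$ and $\epsilon_{\sharp,x}^{H/M\cap H}$ are defined by \eqref{eqn:FKS-1} with the same $r$, since $X$ also serves as a good element for $H$. They are products over $\Sigma$- or $\Gamma$-orbits of roots in $\Phi(G)-\Phi(M)$ and in $\Phi(H)-\Phi(M\cap H)$ respectively, cut out by the condition $r/2\in\ord_x(\alpha)$. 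The index sets overlap in $\Phi(H)-\Phi(M)$, and there the local factors (and the symmetric/asymmetric type of the orbit) agree, so those factors appear squared and cancel. Hence on $\ol T(\F)$ the product equals
\[
\prod_{\alpha \in \Phi_{\mathrm{asym}}^{\circ}/\Sigma} \sgn_{\F_\alpha^\times}(\alpha(\gamma)) \cdot \prod_{\alpha \in \Phi_{\mathrm{sym,unram}}^{\circ}/\Gamma} \sgn_{\F_\alpha^1}(\alpha(\gamma)),
\qquad \Phi^\circ=\{\alpha\in \Phi(G)-(\Phi(H)\cup\Phi(M)) : r/2\in \ord_x(\alpha)\}.
\]
By Lemma~\ref{lemma:unramified-levi-not-ramified-symmetric}, $\Phi^\circ$ contains no symmetric ramified roots. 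The set $\Phi^\circ$ is exactly the set of $T$-weights on $V\otimes \ol\F$ (after the usual Moy--Prasad identification of graded pieces over the tame splitting field). Each root space contributes a one-dimensional graded piece over $\F_\alpha$ at depth $r/2$.

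For existence, choose an unramified extension $E=F_n$ over which $H_E$ is a Levi of an $E$-parabolic $P$ with unipotent radical $U$, so that $\Phi^\circ=\Phi^\circ_U\sqcup -\Phi^\circ_U$ as in \eqref{eq:root-difference}. Let $V_{E,+}$ be the image of $\frp(E)_{x,r/2}$ in $V_E$; it is an $\F_{q^n}$-vector space stable under $(M\cap H)(F)_{[x]_G}$, because this group normalizes $P$ up to the Frobenius twist only via $H$. The candidate is $\gamma\mapsto \sgn_{\F_{q^n}^\times}\det_{\F_{q^n}}(\gamma\mid V_{E,+})$, which is a character of $(M\cap H)(F)_{[x]_G}$ factoring through the reductive quotient. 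Since $\Gal(E/F)$ preserves $\Phi_U$, this determinant is Frobenius-invariant and in fact lies in $\F_q^\times$.

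The key verification is that on $\ol T(\F)$ this candidate equals the displayed product, which I would check orbit by orbit. Take an asymmetric $\Sigma$-orbit. If the $\Gamma$-orbit of $\alpha$ is contained in $\Phi_U$, it contributes $\Nm_{\F_\alpha/\F_q}(\alpha(\gamma))$ to $\det$ over $\F_q$, and $\sgn_{\F_q^\times}\circ\Nm=\sgn_{\F_\alpha^\times}$. If the $\Gamma$-orbit meets both $\Phi_U$ and $-\Phi_U$, then the split into $\Gal(\ol F/E)$-orbits must be tracked: the contributions of $\beta$ and $-\beta$ give $\alpha(\gamma)^{\pm1}$ norms, whose signs agree. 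For a symmetric unramified orbit, Frobenius interchanges the $U$ and $-U$ halves. The $U$-half then contributes a norm from $\F_\alpha$ down to $\F_{\pm\alpha}$ of an element of $\F_\alpha^1$ raised to a suitable power, and one must check this gives $\sgn_{\F_\alpha^1}$. This uses the fact that $\F_\alpha^1\to \F_{\pm\alpha}^\times$, $z\mapsto z^{(q_{\pm}+1)/2}$-type identities detect squares in $\F_\alpha^1$. I expect this symmetric unramified case to be the main obstacle, and I would first try to reduce it to a rank-one computation in $\mathrm{Res}\,\mathrm U(1)$.

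Uniqueness follows because a $\{\pm1\}$-valued character of the finite group $\ol{(M\cap H)}_x(\F)$ (with $p\neq2$) is trivial on $p$-elements. By Jordan decomposition it is therefore determined by its values on semisimple elements, and each of these lies in $\ol T(\F)$ for some such $T$, as in \cite[Corollary 3.6]{FKS23}. Finally, if $P$ is defined over $F$, take $E=F$. Then every root of $\Phi^\circ$ is asymmetric, each $\Sigma$-orbit has exactly one $\Gamma$-orbit inside $\Phi_U$, and the computation above directly gives $\sgn_{\F_q^\times}\circ\det(-\mid V_+)$.
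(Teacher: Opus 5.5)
Your cancellation of the common factors over $\Phi(H)-\Phi(M)$ is fine and agrees with the paper, and so are the uniqueness argument and the computation when $P$ is defined over $F$. The gap is the existence construction when $H$ is not a Levi of an $F$-parabolic.

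You assert that $\Gal(E/F)$ preserves $\Phi_U$. That is false in this case: it would make $P$ Galois-stable, hence defined over $F$. You also contradict it yourself when you let Frobenius swap the two halves of a symmetric unramified orbit. Without this assertion, $\det_{\F_{q^n}}(\gamma\mid V_{E,+})$ does not lie in $\F_q^\times$, and taking $\sgn_{\F_{q^n}^\times}$ of it gives the wrong character.

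Here is a concrete counterexample. Let $G=\PGL_2$, let $M=H=T$ be an unramified elliptic torus, take $E=F_2$, and choose $r$ with $r/2\in\ord_x(\alpha)$. Then $V_{E,+}$ is the $\alpha$-line and the determinant is $\alpha(\gamma)\in\F_{q^2}^1$. Since $q+1$ divides $(q^2-1)/2$, every element of $\F_{q^2}^1$ is a square in $\F_{q^2}^\times$, so your candidate is trivial. The target $\sgn_{\F_\alpha^1}(\alpha(\gamma))$, however, is nontrivial on $\ol T(\F_q)\cong\F_{q^2}^1$.

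This failure is not special to the example:
\begin{itemize}
\item A symmetric unramified orbit always contributes an element of $\F_\alpha^1\subset(\F_\alpha^\times)^2$, hence a trivial sign. Its presence also forces $n$ to be even, because $\alpha$ and $-\alpha$ must lie in different $\Gal(\ol F/E)$-orbits.
\item An asymmetric orbit contributes $\sgn_{\F_\alpha^\times}(\alpha(\gamma))^n$. For example, a $\Gamma$-orbit $\{\alpha,\sigma\alpha\}$ with $\alpha\in\Phi_U$ and $\sigma\alpha\in-\Phi_U$ contributes $z\,\sigma(z)^{-1}=z^{1-q}$, which is a square.
\end{itemize}
So the step you flagged as the main obstacle cannot be carried out as a computation; the candidate character itself has to change.

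What is missing is a construction that does not rely on a rational polarization. The paper, via \cite[Proposition 5.1.13]{FKS23}, proceeds as follows.
\begin{itemize}
\item Decompose $V_{\ol\F_q}=\bigoplus_\cO V_\cO$ according to the weights of the central torus $Z_{M\cap H}$, grouped into $I_F$-orbits $\cO$.
\item Each $V_\cO$ is stable under all of $(M\cap H)(F)_x$, so $\chi_\cO=\det(\cdot\mid V_\cO)$ is an $\F_\cO^\times$-valued character of the whole group.
\item For one representative $\cO$ of each asymmetric $\Sigma$-orbit, take $\Nm_{\F_\cO/\F_q}\circ\chi_\cO$.
\item For symmetric $\cO$ (necessarily unramified by Lemma~\ref{lemma:unramified-levi-not-ramified-symmetric}), first apply the Hilbert-90 inverse $L_\cO\colon\F_\cO^1\to\F_\cO^\times/\F_{\pm\cO}^\times$ and then the norm.
\end{itemize}
The product of these is a character valued in $\F_q^\times/\F_q^{\times2}$.

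The Hilbert-90 step is exactly what recovers $\sgn_{\F_\alpha^1}$: in rank one, $\sgn_{\F_{q^2}^1}(w^{1-q})=\sgn_{\F_{q^2}^\times}(w)=\sgn_{\F_q^\times}(\Nm w)$. Then \cite[Remark 5.1.12]{FKS23} identifies the restriction to $T$ with your product over $\Phi^\circ$. Only when $P$ is $F$-rational does this collapse to $\sgn_{\F_q^\times}\circ\det(-\mid V_+)$, which is the case your final paragraph handles correctly.
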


\begin{proof}
Uniqueness follows from \cite[Lemma 3.5]{FKS23}, so we need only prove existence. We may and do assume that $G$ is of adjoint type, in which case $[x]_G = x$. We will apply \cite[Proposition 5.1.13]{FKS23}. Let $Z_{M\cap H}$ be the maximal central $F$-subtorus of $M \cap H$. Following the notation of \textit{loc.\ cit.}, define
\[
\mathfrak{S} = \{\alpha \in \Phi((G/(M \cup H))_{\ol F}, (Z_{M \cap H})_{\ol F})\co r/2 \in \ord_x(\alpha)\}/I_F,
\]
Note that $\mathfrak{S}$ inherits a $\Sigma$-action from the $\Sigma$-action on $\Phi(G_{\ol F}, (Z_{M\cap H})_{\ol F})$. By Lemma~\ref{lemma:unramified-levi-not-ramified-symmetric}, every root $\alpha \in \Phi((G/(M \cup H))_{\ol F}, (Z_{M \cap H})_{\ol F})$ is either asymmetric or symmetric unramified; such an $\alpha$ is asymmetric if and only if its image $\cO_\alpha$ in $\mathfrak{S}$ is asymmetric in the sense that $\cO_{-\alpha} \not\subset \Gamma \cdot \cO_\alpha$. Observe the $(M \cap H)(F)_x$-equivariant decomposition
\[
V_{\ol\F_q} = \bigoplus_{\cO \in \mathfrak{S}} V_\cO,
\]
where $V_\cO$ is the sum of the weight spaces for the action of $Z_{M\cap H}(F)$ on $V_{\ol\F_q}$ corresponding to weights in $\cO$. Given $\cO \in \mathfrak{S}$, let $\F_\cO$ be the residue field of the fixed field of the stabilizer of $\cO$ in $\Gamma$, and define a character $\chi_\cO\co (M \cap H)(F)_x \to \F_\cO^\times$ by
\[
\chi_\cO(\gamma) = \det(\gamma, V_\cO).
\]
Define $\F_{\pm\cO}$ similarly, and let $\F_\cO^1 = \ker(\Nm_{\F_\cO/\F_{\pm\cO}}\co \F_\cO^\times \to \F_{\pm\cO}^\times)$. Let $L_\cO\co \F_\cO^1 \to \F_\cO^\times/\F_{\pm\cO}^\times$ be the isomorphism which is inverse to the natural isomorphism $t \mapsto t\sigma(t)^{-1}$, where $\sigma \in \Gal(\F_\cO/\F_{\pm\cO})$ is the nontrivial element.

By \cite[Proposition 5.1.13]{FKS23}, there is a character $\epsilon_{\mathfrak{S}}\co (M\cap H)(F)_x \to \F^\times/\F^{\times 2}$ which satisfies
\[
\epsilon_{\mathfrak{S}}(\gamma) = \prod_{\cO \in \mathfrak{S}_{\mathrm{asym}}/\Sigma} \Nm_{\F_\cO/\F}(\chi_\cO(\gamma)) \cdot \prod_{\cO \in \mathfrak{S}_{\mathrm{sym}}/\Gamma} \Nm_{\F_\cO/\F}(L_\cO(\chi_\cO(\gamma))) \pmod{\F^{\times 2}}
\]
for all $\gamma \in (M \cap H)(F)_x$. If $T \subset M\cap H$ is a tamely ramified maximal $F$-torus such that $x \in \cB(T)$, then applying \cite[Remark 5.1.12]{FKS23} to the $\Sigma$-equivariant surjection
\[
\{\alpha \in \Phi((G/(M \cup H))_{\ol F}, T_{\ol F})\co r/2 \in \ord_x(\alpha)\} \to \mathfrak{S}
\]
and noting again Lemma~\ref{lemma:unramified-levi-not-ramified-symmetric}, we see that the character $\epsilon_{\mathfrak{S}}$ restricts to $T(F)_x$ as $\epsilon_{\sharp,x}^{G/M}\epsilon_{\sharp,x}^{H/M \cap H}$, as desired.

Now suppose that there exists a parabolic $F$-subgroup $P \subset G$ with Levi factor $H$, and let $V_+$ be as in the lemma statement. Fix a tamely ramified maximal $F$-torus $T$ of $M \cap H$ such that $x \in \cB(T)$. Let $\Phi$ denote the set of roots $\alpha \in \Phi((G/(M \cup H))_{\ol F}, T_{\ol F})$ such that $r/2 \in \ord_x(\alpha)$. We can decompose
\[
V = \bigoplus_{\substack{\omega \in \Phi/\Gamma \\ r/2 \in \ord_x(\omega)}} V_\omega,
\]
where $\ord_x(\omega) \coloneqq \ord_x(\alpha)$ for any $\alpha \in \omega$, and
\[
V_\omega = \left(\bigoplus_{\alpha \in \omega/I_F} (V_{\ol\F_q})_\alpha\right) \cap V.
\]
The subspace $V_+$ of $V$ corresponds to a $\Gamma$-stable subset $\Psi \subset \Phi$ such that $\Phi = \Psi \sqcup -\Psi$. We have therefore
\[
\det(t|V_+) = \prod_{\alpha \in \Psi/I_F} \alpha(t).
\]
Since $\Psi$ is $\Gamma$-stable, every element of $\Phi$ is asymmetric. Fix an orbit $\omega \in \Phi/\Gamma$, and let $\alpha \in \omega$; without loss of generality, we may assume $\omega \subset \Psi$. Then for $t \in \ol T_{[x]}(\F_q)$ we have
\begin{align*}
\sgn_{\F_q^\times}(\det(t|V_+ \cap (V_\omega \oplus V_{-\omega}))) &= \prod_{\alpha \in \omega/I_F} \alpha(t)^{(q-1)/2} \\
    &= \Nm_{\F_\alpha/\F_q}(\alpha(t)^{(q-1)/2}) \\
    &= \sgn_{\F_\alpha^\times}(\alpha(t)),
\end{align*}
as desired.
\end{proof}

\subsection{Sign characters associated to a twisted Levi tower}\label{sssec:eps-chars-descent-Levi}

Suppose we are given a sequence $G^0 \subset G^1 \subset \cdots \subset G^d = G$ of twisted Levi $F$-subgroups each of which splits after a tamely ramified extension of $F$. Fix embeddings $\cB(G^0) \subset \cB(G^1) \subset \cdots \subset \cB(G^d)$ and suppose $x \in \cB(G^0)$.\footnote{This property does not depend on the choice of embedding $\cB(G^0) \subset \cB(G)$.}
For every $i$, let $[x]_{G^{i+1}}$ be the image of $x$ in $\cB(G^{i+1}_{\der})$. Note
\[
G^0(F)_{[x]_G}\subset G^i(F)_{[x]_{G^{i+1}}},
\]
so the restriction in \eqref{eqn:yu-datum-sign-character} is defined.
If we are given good (relative to $G^{i+1}$) elements $X_i^* \in \Lie^*(G^i_{\mathrm{sc,ab}})(F)$ of depth $-r_i$ for $0 \leq i \leq d-1$, then we define a character $\epsilon = \epsilon_G = \epsilon_G(\vec{G}, x, \vec{r})$ on $G^0(F)_{[x]_G}$ by
\begin{equation}\label{eqn:yu-datum-sign-character}
\epsilon = \prod_{i=0}^{d-1} \epsilon_x^{G^{i+1}/G^i}|_{G^0(F)_{[x]_G}}.
\end{equation}
Similarly, if $T \subset G^0$ is a tamely ramified maximal $F$-torus such that $x \in \cB(T)$, then the formulas \eqref{eqn:FKS-1}--\eqref{eqn:FKS-5} define characters of $T(F)_{[x]_G}$. We may therefore define a character $\epsilon_{\sharp,x} = \epsilon_{G,\sharp,x} = \epsilon_{\vec{G},\sharp,x}$ of $T(F)_{[x]_G}$ by
\begin{equation}\label{eqn:eps-sharp-x}
\epsilon_{\sharp,x} = \prod_{i=0}^{d-1} \epsilon_{\sharp,x}^{G^{i+1}/G^i}|_{T(F)_{[x]_G}}.
\end{equation}
We define characters $\epsilon_{\flat,0}$, $\epsilon_f$, $\epsilon_{\flat,1}$, and $\epsilon_{\flat,2}$ of $T(F)_{[x]_G}$ completely analogously. Define also
\begin{equation}\label{eqn:eps_flat}
\epsilon_\flat \coloneqq \epsilon_{G,\flat} \coloneqq \epsilon_{\flat,0}\epsilon_{\flat,1}\epsilon_{\flat,2}.
\end{equation}
Finally, if $H \subset G$ is an unramified twisted Levi $F$-subgroup such that $G^0 \cap H$ contains a maximal $F$-torus of $G$, then define
\begin{equation}\label{eqn:sharp-x-difference}
\epsilon_{\sharp,x}^{\vec{G},H} = \prod_{i=0}^{d-1} \epsilon_{\sharp,x}^{G^{i+1}/G^i, H\cap G^{i+1}}|_{(G^0\cap H)(F)_{[x]_G}}
\end{equation}
with notation as in Lemma~\ref{lem:sharp-x-char-extension}.

\section{$\chi$-data and L-embeddings}\label{ss:chi-data-L-embedding}

As mentioned in the introduction, the explicit (inertial) L-parameter associated to a supercuspidal $\ol\Q_\ell$-representation $\pi = \pi(\Psi)$ arising from a Yu datum $\Psi$ is obtained by first constructing a pair $(T, \theta)$ consisting of a tamely ramified maximal $F$-torus $T \subset G$ and a character $\theta\co T(F) \to \ol\Q_\ell^\times$, and then defining $\rho^{\Kal}(\Psi)$ as the composition of $\ld\theta\co W_F \to \ld T(\ol\Q_\ell)$ and an L-embedding $\ld j_{T,G}\co \ld T \to \ld G$. When $T$ is unramified, the L-embedding $\ld j_{T,G}$ is canonical, but in general it is not, and another innovation of \cite{FKS23} is to select a ``good'' choice of $\ld j_{T,G}$ (depending on both $T$ and $\theta$).

We begin by recalling a number of definitions which originate from \cite{LS87}, though we will largely follow \cite{Kal19}, \cite{Kal21a}, and \cite{Kal21b}. Fix a tamely ramified $F$-torus $S \subset G$. Let $\Phi(G_{\ol F}, S_{\ol F})$ denote the set of nonzero weights for the action of $S_{\ol F}$ on $\mathfrak{g}_{\ol F}$. We follow the notation of \S\ref{ssec:FKS-preliminaries}: let $\Gamma = \Gal(\ol F/F)$ and $\Sigma = \Gamma \times \{\pm 1\}$, and for each nonzero weight $\alpha \in \Phi(G_{\ol F}, S_{\ol F})$, let $F_\alpha$ be the fixed field of the stabilizer of $\alpha$ in $\Gamma$, and let $F_{\pm\alpha} \subset F_\alpha$ be the fixed field of the (setwise) stabilizer of $\{\pm\alpha\}$. Let
\begin{equation}\label{eq:kappa-alpha}\kappa_\alpha\co F_{\pm\alpha}^\times/\Nm_{F_\alpha/F_{\pm\alpha}}(F_\alpha^\times) \to \{\pm 1\}
\end{equation}
denote the unique injection, which is an isomorphism if and only if $\alpha$ is symmetric. Throughout this section, let $k$ be a ring among $\ol\Q_\ell$, $\ol\Z_\ell$, and $\ol\F_\ell$, where $\ell \neq p$ is a prime number. We choose once and for all a square root of $q$ in $k$, and we will assume that the choices for $\ol\Q_\ell$, $\ol\Z_\ell$, and $\ol\F_\ell$ are compatible in the obvious sense.

\subsection{$\chi$-data and $a$-data} In \cite{LS87}, the following definition is introduced.

\begin{defn}\label{defn:chi-data}
A \textit{set of $\chi$-data} $(\chi_\alpha)_{\alpha \in \Phi(G_{\ol F}, S_{\ol F})}$ for $S$ with coefficients in $k$ is a collection of continuous homomorphisms $\chi_\alpha\co F_\alpha^\times \to k^\times$ satisfying the following properties for all $\alpha \in \Phi(G_{\ol F}, S_{\ol F})$.
\begin{enumerate}
    \item $\chi_{-\alpha} = \chi_\alpha^{-1}$.
    \item $\chi_{\gamma(\alpha)} = \chi_\alpha \circ \gamma^{-1}$ for all $\gamma \in \Gamma$.
    \item $\chi_\alpha|_{F_{\pm\alpha}^\times} = \kappa_\alpha$ if $\alpha$ is symmetric.
\end{enumerate}
\end{defn}

In some cases, there are canonical choices of $\chi$-data.


\begin{defn}\label{defn:minimally-ramified-chi-data}If $(\chi_\alpha)_\alpha$ is a set of $\chi$-data such that $\chi_\alpha = 1$ for all asymmetric $\alpha$ and $\chi_\alpha$ is the unramified character $F_\alpha^\times \to k^\times$ sending a uniformizer to $-1$ for all symmetric unramified roots $\alpha$, then we will call $(\chi_\alpha)$ \emph{minimally ramified}. Given a set $\chi = (\chi_\alpha)_\alpha$, we will let $\min \chi = (\min \chi_\alpha)_\alpha$ denote the unique minimally ramified set of $\chi$-data such that $\min \chi_\alpha = \chi_\alpha$ for all symmetric ramified $\alpha$. 

If all roots for $S_{\ol F}$ are either asymmetric or symmetric unramified, then we will refer to the unique minimally ramified set of $\chi$-data $(\chi_\alpha)_\alpha$ as the \textit{canonical set of $\chi$-data}.
\end{defn}

\begin{example}\label{ex:unramified-chi-data}
If $S$ is unramified, or more generally if $S$ becomes maximally unramified after a tame extension of \textit{odd} ramification degree, then every root in $\Phi(G_{\ol F}, S_{\ol F})$ is either asymmetric or symmetric unramified and thus $\Phi(G_{\ol F}, S_{\ol F})$ has a canonical set of $\chi$-data. 
\end{example}

As we will soon recall, if $M \subset G$ is a tamely ramified twisted Levi with maximal central torus $Z$, then a set of $\chi$-data $(\chi_\alpha)$ for $Z$ can be used to induce a $\widehat{G}(k)$-conjugacy class of L-embeddings $\ld j_\chi\co \ld M \to \ld G$.

One of the key technical points in \cite{Kal19} (resp.\ \cite{Kal21b} and \cite{FKS23}) is the definition of a set of $\chi$-data $(\chi_\alpha')$ (resp.\ $(\chi_\alpha'')$) associated to a non-singular torus-character pair $(T, \theta)$. When $T$ is unramified, this set of $\chi$-data is the canonical set described above, but in general it depends on a number of subtle choices.

We now recall the notion of sets of $a$-data, introduced in \cite{LS87}.

\begin{defn}[{\cite[\S 4.6]{Kal19}}]\label{defn:a-data}
If $M \subset G$ is a tamely ramified twisted Levi $F$-subgroup and $T \subset M$ is a tamely ramified maximal $F$-torus, then a \textit{set of $a$-data} $\{a_\alpha\}$ for $\Phi((G/M)_{\ol F}, T_{\ol F})$ is an assignment to each $\alpha \in \Phi((G/M)_{\ol F}, T_{\ol F})$ of an element $a_\alpha \in F_\alpha^\times$ such that $a_{\gamma\alpha} = \gamma(a_\alpha)$ and $a_{-\alpha} = -a_\alpha$ for all $\gamma \in \Gamma$ and all $\alpha$.
\end{defn}

\subsection{The set of $\chi$-data associated to a quadruple}\label{sss:canonical-chi-data}

In this subsection, we assume $p \neq 2$.
Let $\Psi = (\vec{G}, x, \vec{r})$ be a triple as at the end of \S\ref{sssec:eps-chars-descent-Levi}, and for each $0 \leq i \leq d-1$ suppose that we are given a good element $X_i^* \in \Lie^*(G^i_{\mathrm{sc,ab}})(F)_{-r_i}$. Fix a maximally unramified maximal $F$-torus $T \subset G^0$. Recall the character $\psi^0\co \F_q \to k^\times$ fixed in \S\ref{ss:fks}. For each $0 \leq i \leq d-1$ and each $\alpha \in \Phi((G^{i+1}/G^i)_{\ol F}, T_{\ol F})$, define $a_\alpha \in F_\alpha^\times$ by $a_\alpha = \langle X_i^*, H_\alpha\rangle$. As observed in \cite[Notation 4.2.2]{FKS23}, for each $i$ the collection of $a_\alpha$ constitutes a set of $a$-data for $\Phi((G^{i+1}/G^i)_{\ol F}, T_{\ol F})$, and $a_\alpha \in F_{\alpha_i}^\times$.

We now use this set of $a$-data to define a set of $\chi$-data $(\chi_\alpha'')$. By convention, set $G^{-1}=Z^{-1}=T$. Given an integer $-1\leq i\leq d-1$ and $\alpha \in \Phi(G^{i+1}_{\ol F}, T_{\ol F}) - \Phi(G^i_{\ol F}, T_{\ol F})$, let $\alpha_i$ denote the restriction of $\alpha$ to the maximal central torus $Z^i_{\ol F}$ of $G^i_{\ol F}$. Define $\chi''_{\alpha_i} = \chi''_{G,\alpha_i} \co F_{\alpha_i}^\times \to k^\times$ as in \cite[Notation 4.2.2]{FKS23}, as follows.\footnote{Another similar description of a set of $\chi$-data, also denoted by $\chi''$, is described in the arXiv version of \cite[\S 3.5]{Kal21b}; Kaletha has informed us that these are likely different, and that the definition in \cite{FKS23} is the ``correct'' one.}

\begin{enumerate}
\item If $\alpha_i$ is asymmetric, then $\chi_{\alpha_i}'' = 1$.
\item If $\alpha_i$ is symmetric unramified, then $\chi_{\alpha_i}''$ is the unique unramified quadratic character.
\item If $\alpha_i$ is symmetric ramified, then $\chi_{\alpha_i}''$ is the unique tamely ramified character whose restriction to $\cO_{F_{\alpha_i}}^\times$ is the inflation of the quadratic character of the residue field $\F_{\alpha_i}^\times$, whose restriction to $F_{\pm\alpha_i}^\times$ is the $\kappa_{\alpha_i}$ from \eqref{eq:kappa-alpha}, and which has the property that
\begin{equation}\label{eqn:ramified-symmetric-chi-data}
\chi_{\alpha_i}''(\ell_{G,p'}(\alpha^\vee)a_\alpha) = (-1)^{f_{\alpha_i} + 1}\frG_{\F_{\alpha_i}}(\psi^0),
\end{equation}
where $\ell_{G,p'}(\alpha^\vee)$ is the prime-to-$p$ part of the normalized squared length of $\alpha^\vee$ (so $\ell_{G,p'}(\alpha^\vee) \in \{1, 2, 3\}$)\footnote{Since we have assumed $p\neq 2$ throughout this section, the subscript $p'$ is only relevant if $p = 3$ and $G$ has a simple factor of type $\mathrm{G}_2$. The set of $\chi$-data introduced in \cite[\S 4.2]{FKS23} uses $\ell_G(\alpha^\vee)$ in place of $\ell_{G,p'}(\alpha^\vee)$, but that section also assumes that $p$ does not divide the order of any bond in the absolute Dynkin diagram for $G$, which rules out the $\mathrm{G}_2$ case above.}, the positive integer $f_{\alpha_i}$ is the degree of the residue field extension $\F_{\alpha_i}/\F_q$ and for any finite field extension $\F'/\F_q$ we define the normalized Gauss sum\footnote{Recall that we have fixed a square root of $q$.}
\[
\frG_{\F'}(\psi^0) = |\F'|^{-1/2}\sum_{x \in \F'^\times} \sgn_{\F'^\times}(x)\psi^0(\tr_{\F'/\F_q}(x)).
\]
\end{enumerate}
Observe that $\chi''_{\alpha_i}$ only depends on $\alpha_i$ by \cite[Notation 4.2.2]{FKS23}. Finally, define $\chi_\alpha'' = \chi_{G,\alpha}'' = \chi_{\alpha_i}'' \circ \Nm_{F_\alpha/F_{\alpha_i}}$. We will call $\chi''_G = \chi''_{(\vec{G}, x, \vec{r}, \vec{X}, T)} = (\chi''_\alpha)$ the \textit{set of $\chi$-data associated to $(\vec{G}, x, \vec{r}, \vec{X}, T)$}. We note that if $T$ becomes maximally unramified in $G$ after an extension of odd ramification degree, then $\chi''_G$ is the canonical set of $\chi$-data associated to $T$.

\subsection{The L-embedding associated to a set of $\chi$-data}\label{ss:l-embedding-defn}

We now drop the assumption $p \neq 2$ from the previous section.
As we have mentioned, from a set of $\chi$-data $(\chi_\alpha)$ for the maximal central torus $Z$ of a tamely ramified twisted Levi $M \subset G$, we obtain a $\wh G(k)$-conjugacy class $\ld j_\chi\co \ld M \to \ld G$ of L-embeddings. We recall this construction now, following \cite[\S\S 3.3, 3.4, 6.1]{Kal21a}, but we will restrict the generality for simplicity. The content below amounts essentially to unraveling the constructions in \cite{Kal21a}; because our use of sets of $\chi$-data will be limited to a few rather concrete settings, we choose not to recall the notion of gauges or the notations $t_p$, $s_{p/q}$, etc.

\subsubsection{The quasi-split case}\label{sss:quasi-split-l-embedding}
Suppose first that $G$ and $M$ are quasi-split, and let $(T_M, B_M)$ and $(T_G, B_G)$ be $F$-rational Borel pairs in $M$ and $G$, respectively. Choose $g \in G(\ol F)$ such that $(T_M)_{\ol F} = g(T_G)_{\ol F}g^{-1}$ and $(B_M)_{\ol F} \subset g(B_G)_{\ol F}g^{-1}$ and such that the system of simple absolute roots $\Delta((B_M)_{\ol F}, (T_M)_{\ol F})$ for $(B_M, T_M)$ is contained in the system of simple absolute roots $g\Delta((B_G)_{\ol F}, (T_G)_{\ol F})g^{-1}$ for $(g(B_G)_{\ol F}g^{-1}, g(T_G)_{\ol F}g^{-1})$. Given $\gamma \in \Gamma$, define $\gamma_{M,G}$ to be the image of $g^{-1}\gamma(g)$ in the Weyl group $\Omega(G_{\ol F}, (T_G)_{\ol F})$.

Choose a $\Gamma$-stable pinning $(\wh B, \wh T, \{X_{\alpha^\vee}\}_{\alpha \in \Delta(G_{\ol F}, (T_G)_{\ol F})})$ for $\wh G$, so we obtain a standard Levi $k$-subgroup scheme $\wh M \subset \wh G$ containing $\wh T$ with set of simple roots $\Delta(\wh M, \wh T)$ dual to 
\[
g^{-1}\Delta((B_M)_{\ol F}, (T_M)_{\ol F})g \subset \Delta(G_{\ol F}, (T_G)_{\ol F}) = \Delta(\wh G, \wh T)^\vee.
\]
Given $\gamma \in \Gamma$, let $\gamma_G$ denote the pinned automorphism of $\wh G$ induced by $\gamma$, and also use $\gamma_{M, G}$ to denote the element of $\Omega(\wh G, \wh T)$ corresponding to the above element $\gamma_{M, G}$ under the canonical isomorphism $\Omega(\wh G, \wh T) \cong \Omega(G_{\ol F}, (T_G)_{\ol F})$. Observe that $\Delta(\wh M, \wh T) \subset \Delta(\wh G, \wh T)$ is stable under the automorphism $\gamma_{M, G} \rtimes \gamma_G \in \Omega(\wh G, \wh T) \rtimes \Aut(\wh G, \wh B, \wh T, \{X_{\alpha^\vee}\})$, but $\Delta(\wh G, \wh T)$ is not stable under $\gamma_{M, G} \rtimes \gamma_G$ in general.

Let $n\co \Omega(\wh G, \wh T) \to N_{\wh G}(\wh T)(k)$ denote the Tits section of the projection map $N_{\wh G}(\wh T)(k) \to \Omega(\wh G, \wh T)$, given on simple reflections $s_\alpha$ by defining $n(s_\alpha) = x_\alpha(1)x_{-\alpha}(-1)x_\alpha(1)$, where $x_{\pm\alpha}$ are the root subgroup homomorphisms determined by the chosen pinning, and defined in general by $n(w) = n(s_1) \cdots n(s_r)$ if $w = s_1 \cdots s_r$ is any reduced expression of $w$ into simple reflections.

Let $A \subset \Phi \coloneqq \Phi((G/M)_{\ol F}, (T_M)_{\ol F})/\Omega(M_{\ol F}, (T_M)_{\ol F})$ be a chosen system of representatives for the $\Sigma$-orbits. We now define a function $Q = Q_\chi\co W_F \to Z_{\wh M}(k)$, where $Z_{\wh M}$ denotes the maximal central torus of $\wh M$, using the given set of $\chi$-data $(\chi_\alpha)_\alpha$. We will define $Q_\chi$ as a product
\[
Q_\chi = s \cdot \prod_{\alpha \in A} Q_{\chi,\alpha},
\]
so it suffices to define $s$ and each $Q_{\chi,\alpha}$. Observe that there is a natural map 
\[
\Phi \to X_*(Z_{\wh M}),
\]
given by $\cO \mapsto \sum_{\alpha \in \cO} \wh\alpha^\vee$. If $\alpha \in \cO$, then we will abusively use $\wh\alpha^\vee$ to denote the image of $\cO$ under this map. We will also abuse notation by using $\alpha$ to refer to an element of $A$.

For each $\alpha \in A$, let $W_\alpha$ be the stabilizer of $\alpha$ in $W_F$ and let $W_{\pm\alpha}$ be the (setwise) stabilizer of the set $\{\pm\alpha\}$. Fix a system of representatives $w_1, \dots, w_m \in W_F$ for the quotient $W_{\pm\alpha}\backslash W_F$, and let $v_0 \in W_\alpha$ be arbitrary. If $\alpha$ is symmetric, let $v_1 \in W_{\pm\alpha} - W_\alpha$ be arbitrary. Define a function $P \co \Phi \to \{\pm 1\}$ by
\[
P(\beta) = \begin{cases}
1 &\text{if } \beta = w_i^{-1}\alpha \text{ for some } \alpha \in A \text{ and } 1\leq i\leq m, \\
-1 &\text{otherwise.}
\end{cases}
\]
Let $\Phi^+ \subset \Phi$ denote (the image in $\Phi$ of) the positive system of roots corresponding to $g(B_G)_{\ol F}g^{-1}$. Using $P$, for each $w \in W_F$ we define the set
\begin{align*}
M(w) &= \{\alpha \in \Phi\co \alpha \in \Phi^+, w^{-1}\alpha \not\in \Phi^+, P(\alpha) = P(w^{-1}\alpha) = 1\} \\
&\sqcup \{\alpha \in \Phi\co \alpha \in \Phi^+, w^{-1}\alpha \in \Phi^+, P(\alpha) = -1, P(w^{-1}\alpha) = 1\}.
\end{align*}
Using this, for each $w \in W_F$ we define
\[
s(w) = \prod_{\alpha \in M(w)} \wh\alpha^\vee(-1) \in Z_{\wh M}(k).
\]

Now fix $\alpha \in A$. For $w \in W_F$ and $1 \leq i \leq m$, define $u_i(w) \in W_{\pm\alpha}$ by $w_i \cdot w = u_i(w) \cdot w_j$ for some (uniquely determined) $j$. Similarly, for $u \in W_{\pm\alpha}$ and $i \in \{0, 1\}$, define $v_i(u) \in W_\alpha$ by $v_i \cdot u = v_i(u) \cdot v_j$ for some (uniquely determined) $j \in \{0, 1\}$. We define
\begin{equation}\label{eqn:def-of-Q}
Q_{\chi,\alpha}(w) = \prod_{i=1}^m (w_i^{-1}\wh\alpha^\vee)(\chi_\alpha(\rec_{F_\alpha}(v_0(u_i(w))))),
\end{equation}
where $\rec_{F_\alpha}\co W_{F_\alpha}\to F_\alpha^\times$ is the reciprocity map from class field theory (normalized to send a lift of \emph{geometric} Frobenius to a uniformizer of $F_\alpha^\times$). We then define $\ld j_\chi\co \ld M \to \ld G$ by
\begin{equation}\label{eqn:def-of-L-embedding}
\ld j_\chi(h, w) = (hQ(w)n(\gamma_{M,G}), w),
\end{equation}
where $\gamma \in \Gamma$ is the image of $w \in W_F$. It is proven in \cite[Corollary 6.7]{Kal21a} that $\ld j_\chi$ is an L-embedding. This completes the construction in the case that $G$ and $M$ are quasi-split.

\subsubsection{The general case}
In the general case, let $G_0$ be a quasi-split inner form of $G$. By \cite[Lemma 6.4]{Kal21a}, there exists an inner twist $\xi\co G_{0,\ol F} \to G_{\ol F}$ with the property that $\xi^{-1}(M_{\ol F})$ is the base change to $\ol F$ of a quasi-split twisted Levi $M_0 \subset G_0$, and the restricted map $\xi\co M_{0,\ol F} \to M_{\ol F}$ is an inner twist. The inner twist $\xi$ therefore induces compatible isomorphisms $\ld M \cong \ld M_0$ and $\ld G \cong \ld G_0$. If $\chi_0$ is the set of $\chi$-data for the maximal central torus $Z_0$ of $M_0$ induced by $\chi$, then we define $\ld j_\chi = \ld j_{\chi_0}$ via the above isomorphisms.

\subsection{Dual L-homomorphisms}\label{ss:dual-l-homs}

In the remainder of this subsection, fix a tamely ramified maximal $F$-torus $T \subset G$, and retain the notation of the previous subsection. The following remark partially explains the otherwise somewhat mystifying formulas occurring in the definitions of L-embeddings given above.

\begin{remark}[Explicit form of Shapiro's lemma]\label{remark:explicit-shapiro}
Let $\Gamma$ be a locally profinite group, let $\Gamma_0\subset\Gamma$ be an open subgroup of finite index, and let $M$ be a locally profinite group equipped with a continuous $\Gamma_0$-action. The continuous noncommutative Shapiro lemma gives a natural bijection $\rH^1(\Gamma_0,M)\cong\rH^1(\Gamma,\ind_{\Gamma_0}^{\Gamma}M)$. This bijection can be described concretely on the level of cocycles as follows.

First, recall that $\ind_{\Gamma_0}^{\Gamma} M$ can be described as the space of $\Gamma_0$-equivariant functions $f\co \Gamma \to M$, with $\Gamma$-action given by $(\gamma \cdot f)(\gamma_0) = f(\gamma_0\gamma)$. Choose a set of representatives $\eta_1, \dots, \eta_n \in \Gamma$ for $\Gamma_0\backslash\Gamma$, and observe that an element $f \in \ind_{\Gamma_0}^\Gamma M$ is determined by the elements $f(\eta_1), \dots, f(\eta_n)$. For $\gamma \in \Gamma$, we define $\eta_i(\gamma) \in \Gamma_0$ by the equation
\[
\eta_i \cdot \gamma = \eta_i(\gamma) \cdot \eta_j,
\]
where $j$ is uniquely determined. For $1$-cocycles, the map in Shapiro's lemma can be described on cocycles as sending a $1$-cocycle $c_0\co \Gamma_0 \to M$ to the $1$-cocycle $c\co \Gamma \to \ind_{\Gamma_0}^{\Gamma} M$ given by
\[
c(\gamma)(\eta_i) = c_0(\eta_i(\gamma)).
\]
It is easy to check that $c$ is a $1$-cocycle and that it is independent of the choice of representatives $\eta_1, \dots, \eta_n$ up to a coboundary. Moreover, the inverse bijection sends a $1$-cocycle $c$ to the $1$-cocycle $\Gamma_0\to M$, $\gamma\mapsto c(\gamma)(1)$.
\end{remark}

Now we prove a technical lemma which will be necessary later. For each $\alpha \in \Phi(G_{\ol F}, T_{\ol F})$, define an $F$-torus $J_\alpha$ as in \cite[\S 3.1]{Kal21a} as follows: if $\alpha$ is asymmetric, let $J_\alpha=\Res_{F_\alpha/F}\G_m$; if $\alpha$ is symmetric, let
\[
J_\alpha=\Res_{F_{\pm\alpha}/F}\Res^1_{F_\alpha/F_{\pm\alpha}}\G_m
=\ker\bigl(\Res_{F_\alpha/F}\G_m\xrightarrow{\Nm_{F_\alpha/F_{\pm\alpha}}}
\Res_{F_{\pm\alpha}/F}\G_m\bigr).
\]
Observe that the root $\alpha$ is an $F_\alpha$-homomorphism $T_{F_\alpha}\to\G_m$, which corresponds to an $F$-homomorphism $\alpha\co T\to J_\alpha$. In turn, this is dual to an L-homomorphism $\ld\alpha\co\ld J_\alpha\to\ld T$. Let $\wh\alpha^\vee\co\G_m\to\wh T$ denote the cocharacter corresponding to $\alpha$, and let $\wh\alpha\co\wh J_\alpha\to\wh T$ be the restriction of $\ld\alpha$ to dual groups.

Recall that the dual group of $\Res_{F_\alpha/F}\G_m$ can be described (functorially) as the space of functions $W_{F_\alpha}\backslash W_F \to \G_m$. If $\alpha$ is symmetric, then $\wh J_\alpha$ can be described as the space of functions $W_{F_\alpha}\backslash W_F\to\G_m$ modulo pointwise multiplication by functions $W_{F_{\pm\alpha}}\backslash W_F\to\G_m$.

Recall that $k$ is a ring among $\ol\Q_\ell$, $\ol\Z_\ell$, and $\ol\F_\ell$, where $\ell\neq p$.
For each $\alpha$, let $\xi_\alpha\co F_\alpha^\times\to k^\times$ be a character. If $\alpha$ is asymmetric, identify $J_\alpha(F)$ with $F_\alpha^\times$ and set $\xi_\alpha^1=\xi_\alpha$. If $\alpha$ is symmetric, assume that $\xi_\alpha$ factors through the surjection $F_\alpha^\times\surj F_\alpha^1$ given by $t\longmapsto t\,v_1(t)^{-1}$, where $F_\alpha^1 \coloneqq \ker(\Nm_{F_\alpha/F_{\pm\alpha}}\co F_\alpha^\times \to F_{\pm\alpha}^\times)$, and let
\[
\xi_\alpha^1\co J_\alpha(F)=F_\alpha^1\to k^\times
\]
be the factored character. Let $\wh \xi_\alpha^1\co W_F\to\wh J_\alpha(k)$ denote its dual cocycle. Let $\wh \xi_{\alpha,0}\co W_{F_\alpha}\to k^\times$ be the character corresponding to $\xi_\alpha$ as a character of $\G_m(F_\alpha)$, and let $\wh \xi_\alpha\co W_F\to(\Res_{F_\alpha/F}\G_m)^\land(k)$ be the cocycle corresponding to $\xi_\alpha$ as a character of $(\Res_{F_\alpha/F}\G_m)(F)$.

\begin{lemma}\label{lemma:dual-l-homs}
We have
\begin{equation}\label{eqn:l-hom-dual-to-root-sign}
[\wh\alpha \circ \wh \xi_\alpha^1] = \left[w\longmapsto \prod_{i=1}^m (w_i^{-1}\wh\alpha^\vee)(\wh \xi_{\alpha,0}(v_0(u_i(w))))\right]\quad\text{in }\rH^1(W_F, \wh T(k)).
\end{equation}
\end{lemma}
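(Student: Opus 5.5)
The plan is to compute both sides through explicit cocycles, using the induction/Shapiro description of Remark~\ref{remark:explicit-shapiro} twice: once for $W_{F_{\pm\alpha}}\subset W_F$ (representatives $w_1,\dots,w_m$), and once for $W_{F_\alpha}\subset W_{F_{\pm\alpha}}$ (representatives $v_0,v_1$). The dual torus of $\Res_{F_\alpha/F}\G_m$ is the space of functions $W_{F_\alpha}\backslash W_F\to\G_m$, which is $\ind_{W_{F_\alpha}}^{W_F}\G_m$. Its coset space has representatives $v_jw_i$, and transitivity of induction shows that Shapiro's map for $W_{F_\alpha}\subset W_F$ can be computed as the composite of the two stages.

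First I would treat the asymmetric case. Here $J_\alpha=\Res_{F_\alpha/F}\G_m$, $W_{\pm\alpha}=W_\alpha$, and $\wh\xi^1_\alpha=\wh\xi_\alpha$ is the Shapiro image of $\wh\xi_{\alpha,0}$. This is because the LLC for induced tori is compatible with Shapiro's lemma, which is the standard functoriality of Langlands' correspondence for tori under $\Res$. By Remark~\ref{remark:explicit-shapiro}, its value at $w$ is the function $w_i\mapsto \wh\xi_{\alpha,0}(u_i(w))$; since $v_0\in W_\alpha$, we have $v_0(u)=v_0uv_0^{-1}$, which changes nothing after identification. It then remains to identify $\wh\alpha\co \wh J_\alpha\to\wh T$. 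Dually, $\alpha\co T\to\Res_{F_\alpha/F}\G_m$ is adjoint to $\alpha\in X^*(T)$ over $F_\alpha$, so on cocharacter lattices of the dual groups it sends the basis function $\delta_{W_{F_\alpha}w_i}$ to $w_i^{-1}\wh\alpha^\vee$. Hence $\wh\alpha(f)=\prod_i (w_i^{-1}\wh\alpha^\vee)(f(w_i))$, and composing with the cocycle gives exactly the right-hand side of \eqref{eqn:l-hom-dual-to-root-sign}, on the nose.

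Next I would treat the symmetric case. Here $\wh J_\alpha$ is a quotient of $(\Res_{F_\alpha/F}\G_m)^\wedge$, and the inclusion $J_\alpha\hookrightarrow\Res_{F_\alpha/F}\G_m$ is dual to the projection. Since $\xi_\alpha$ is the pullback of $\xi^1_\alpha$ along $t\mapsto t\,v_1(t)^{-1}$, which is an $F$-endomorphism $\Res_{F_\alpha/F}\G_m\to J_\alpha\subset\Res_{F_\alpha/F}\G_m$, functoriality of the torus LLC gives $\wh\xi_\alpha=\wh{(1-v_1)}\circ\wh\xi^1_\alpha$ in cohomology.

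The key point is that $\wh\alpha\co\wh J_\alpha\to\wh T$ lifts along the projection to $f\mapsto\prod_{i}(w_i^{-1}\wh\alpha^\vee)(f(w_i))$, using only the representatives $w_i$ and not $v_1w_i$. This lift is well defined modulo functions from $W_{F_{\pm\alpha}}\backslash W_F$ only up to the identity $v_1^{-1}\wh\alpha^\vee=-\wh\alpha^\vee$, and I would check that this identity is what makes the map descend. I would then take the cocycle $w\mapsto(w_i\mapsto\wh\xi_{\alpha,0}(v_0(u_i(w))))$ and check that it is a lift of $\wh\xi^1_\alpha$ up to coboundary. To do this I would apply the two-stage Shapiro description, and use the hypothesis that $\xi_\alpha$ is trivial on $F_{\pm\alpha}^\times$ (norms), which makes the $v_1$-components redundant modulo the quotient.

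I expect the main obstacle to be this symmetric-case bookkeeping: showing that the half of the induced cocycle indexed by $v_1w_i$ is absorbed by the quotient, and that the specific choice $v_0(u_i(w))$, rather than $u_i(w)$, is what yields a genuine cocycle whose class matches. I would first verify this when $F_\alpha=F$ fails, that is, in the minimal case $m=1$ with $W_F=W_{\pm\alpha}$. In that case the statement is the usual description of the LLC for the norm-one torus $\Res^1_{F_\alpha/F}\G_m$ via Hilbert 90. The general case would then follow by inducing from $W_{F_{\pm\alpha}}$ to $W_F$ using the asymmetric-case computation.
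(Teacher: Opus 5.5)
Your asymmetric case is correct and is exactly the paper's argument. The symmetric case, however, rests on a false claim: the map $f\mapsto\prod_i(w_i^{-1}\wh\alpha^\vee)(f(w_i))$ on $(\Res_{F_\alpha/F}\G_m)^\wedge$ does \emph{not} descend along the projection $q\colon(\Res_{F_\alpha/F}\G_m)^\wedge\to\wh J_\alpha$.

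To see this, let $f$ be the pullback of a function $g$ on $W_{F_{\pm\alpha}}\backslash W_F$. Then $q(f)=1$, but your formula gives $\prod_i(w_i^{-1}\wh\alpha^\vee)(g(w_i))$, which is nontrivial in general. The map is also not $W_F$-equivariant: already for $m=1$, right translation by $v_1$ interchanges the two cosets. No identity among cocharacters repairs this. The correct lift is the dual of $T\to J_\alpha\hookrightarrow\Res_{F_\alpha/F}\G_m$,
\[
\wh\alpha\circ q\colon f\longmapsto\prod_{i=1}^m(w_i^{-1}\wh\alpha^\vee)\bigl(f(v_0w_i)\,f(v_1w_i)^{-1}\bigr).
\]
The identity $v_1^{-1}\wh\alpha^\vee=-\wh\alpha^\vee$ is what produces the inverse on the $v_1w_i$-terms and makes \emph{this} map descend. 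Your one-half formula agrees with $\wh\alpha$ only on representatives normalized to be $1$ on the cosets $v_1w_i$.

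The same problem affects your proposed cocycle. The map $w\mapsto(w_i\mapsto\wh\xi_{\alpha,0}(v_0(u_i(w))))$, completed by $1$ on the cosets $v_1w_i$, is not a cocycle in $(\Res_{F_\alpha/F}\G_m)^\wedge$. If you complete it instead by the Shapiro values, you get $\wh\xi_\alpha$, whose image in $\wh J_\alpha$ is the parameter of $(\xi^1_\alpha)^2$, not $\xi^1_\alpha$. So ``check that it is a lift of $\wh\xi^1_\alpha$ up to coboundary'' cannot be carried out as you describe.

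The missing idea, which is how the paper argues, is to compute through the \emph{injection} you already wrote down, rather than through $q$. This is $\wh N=\wh{(1-v_1)}\colon\wh J_\alpha\hookrightarrow(\Res_{F_\alpha/F}\G_m)^\wedge$, given by $\wh N(f^1)(v_jw_i)=f^1(v_jw_i)f^1(v_{1-j}w_i)^{-1}$. Indeed $\wh\alpha=\Lambda\circ\wh N$ with $\Lambda(f)=\prod_i(w_i^{-1}\wh\alpha^\vee)(f(v_0w_i))$, which is the precise form of your formula. The argument then runs as follows.

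\begin{enumerate}
\item Normalize the representative of $\wh\xi^1_\alpha(w)$ by $\wh\xi^1_\alpha(w)(v_1w_i)=1$.
\item Because $\xi_\alpha$ factors through $t\mapsto t\,v_1(t)^{-1}$, one checks that $\wh\xi_{\alpha,0}(v_1(u))=\wh\xi_{\alpha,0}(v_0(u))^{-1}$ for $u\in W_{\pm\alpha}$. Hence the two-stage Shapiro cocycle $\wh\xi_\alpha(w)(v_jw_i)=\wh\xi_{\alpha,0}(v_j(u_i(w)))$ lies in the image of $\wh N$.
\item Identify it with $\wh N\circ\wh\xi^1_\alpha$. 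This uses functoriality, together with injectivity of $\wh N_*$ on $\rH^1$, which follows from Hilbert 90 and the torus LLC. It forces $\wh\xi^1_\alpha(w)(v_0w_i)=\wh\xi_{\alpha,0}(v_0(u_i(w)))$.
\item The displayed formula for $\wh\alpha\circ q$ then gives the right-hand side of the lemma.
\end{enumerate}

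Your proposed fallback, verifying $m=1$ via ``the usual description'' of the LLC for $\Res^1_{F_\alpha/F}\G_m$, simply cites the $m=1$ case of the lemma. That case is exactly where this argument is needed.
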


\begin{proof}
Suppose first that $\alpha$ is asymmetric, so $F_\alpha = F_{\pm\alpha}$. The dual L-homomorphism $\wh\alpha\co \wh J_\alpha \to \wh T$ is given by
\begin{equation}\label{eqn:l-hom-dual-to-root}
\wh\alpha(f) = \prod_{i=1}^m (w_i^{-1}\wh\alpha^\vee)(f(v_0w_i)),
\end{equation}
where $f\co W_{F_\alpha}\backslash W_F \to \G_m$ as above. By Remark~\ref{remark:explicit-shapiro}, for $w\in W_F$ we have
\begin{equation}\label{eqn:l-param-dual-to-sign}
\wh \xi_\alpha(w)(v_0w_i) = \wh \xi_{\alpha,0}(v_0(u_i(w))).
\end{equation}
Combining \eqref{eqn:l-hom-dual-to-root} and \eqref{eqn:l-param-dual-to-sign} yields \eqref{eqn:l-hom-dual-to-root-sign}.

Now suppose that $\alpha$ is symmetric. Note that $\wh\alpha\co \wh J_\alpha \to \wh T$ is given by
\begin{equation}\label{eqn:l-hom-dual-to-root-1}
\wh\alpha(f^1) = \prod_{i=1}^m (w_i^{-1}\wh\alpha^\vee)(f^1(v_0w_i)\cdot f^1(v_1w_i)^{-1}),
\end{equation}
where we regard $f^1$ as a function $W_{F_\alpha}\backslash W_F \to \G_m$, up to multiplication by a function $W_{F_{\pm\alpha}}\backslash W_F \to \G_m$. By Remark~\ref{remark:explicit-shapiro}, the dual L-parameter $\wh \xi_\alpha\co W_F \to (\Res_{F_\alpha/F}\G_m)^\land(k)$ is given for $w \in W_F$ by
\begin{equation}\label{eqn:l-param-dual-to-sign-1-2}
\wh \xi_\alpha(w)(v_jw_i) = \wh \xi_{\alpha,0}(v_j(u_i(w))).
\end{equation}
The injective homomorphism $\wh J_\alpha \to (\Res_{F_\alpha/F}\G_m)^\land$ which is dual to $t \mapsto t \cdot v_1(t)^{-1}$ is given by sending the equivalence class of a function $f^1\co W_{F_\alpha}\backslash W_F \to \G_m$ to the function $f\co W_{F_\alpha}\backslash W_F \to \G_m$ given by
\[
f(v_jw_i) = f^1(v_jw_i) \cdot f^1(v_{1-j}w_i)^{-1}
\]
for all $j$ and $i$. For each $w\in W_F$, represent the class $\wh \xi_\alpha^1(w)\in\wh J_\alpha(k)$ by the unique function normalized by $\wh \xi_\alpha^1(w)(v_1w_i)=1$ for every $i$. With this normalization,
\begin{equation}\label{eqn:norm-1-difference}
\wh \xi_\alpha^1(w)(v_0w_i)=\wh \xi_{\alpha,0}(v_0(u_i(w)))
\quad\text{and}\quad
\wh \xi_\alpha^1(w)(v_1w_i)=1
\end{equation}
for all $i$. Combining \eqref{eqn:l-hom-dual-to-root-1}, \eqref{eqn:l-param-dual-to-sign-1-2}, and \eqref{eqn:norm-1-difference}, we obtain \eqref{eqn:l-hom-dual-to-root-sign}.
\end{proof}

\subsection{Associated L-embeddings} We have now associated explicit L-embeddings to twisted Levi $F$-subgroups of $G$ in two important settings.

\begin{defn}\label{defn:canonical-l-embeddings} If $H \subset G$ is an unramified twisted Levi with maximal central torus $Z$ and $\chi$ is the canonical set of $\chi$-data for $\Phi(G_{\ol F}, Z_{\ol F})$ (which exists by Lemma~\ref{lemma:unramified-levi-not-ramified-symmetric}), then we let $\ld j_{H,G}\co \ld H \to \ld G$ denote the L-embedding $\ld j_{H,G} = \ld j_\chi$.

Similarly, if $p \neq 2$ and $(\vec{G},x,\vec{r},\vec{X})$ is a tuple for $G$ as in \S\ref{sss:canonical-chi-data} and $T \subset G^0$ is a maximally unramified maximal $F$-torus and $\chi''$ is the set of $\chi$-data associated to $(\vec{G},x,\vec{r},\vec{X},T)$ in \S\ref{sss:canonical-chi-data}, let $\ld j_{T, G}\co \ld T \to \ld G$ denote the L-embedding $\ld j_{T, G} = \ld j_{\chi''}$. When these two notations overlap, it will be clear from context which is meant.
\end{defn}

We summarize some basic properties of $\ld j_{H,G}$ in the following lemma.

\begin{lemma}\label{lem:unramified-L-embedding-properties}
Let $H \subset G$ be an unramified twisted Levi $F$-subgroup. The L-embedding $\ld j_{H,G}$ enjoys the following properties.
\begin{enumerate}
\item If $H$ is the Levi of an $F$-rational parabolic subgroup of $G$, then $\ld j_{H, G}$ agrees with the canonical dual embedding $(t, w) \mapsto (t, w)$.
\item If $\pi\co G' \to G$ is an $F$-homomorphism which induces an isomorphism on adjoint groups, and we let $H' = \pi^{-1}(H)$, then the diagram
\[
\begin{tikzcd}
    \ld H \arrow[r, "{\ld j_{H,G}}"] \arrow[d]
        &\ld G \arrow[d, "\pi"] \\
    \ld H' \arrow[r, "{\ld j_{H',G'}}"]
        &\ld G'
\end{tikzcd}
\]
commutes.
\item If $G \cong G_1 \times G_2$, so $H \cong H_1 \times H_2$, then $\ld j_{H,G}$ is the product of $\ld j_{H_1, G_1}$ and $\ld j_{H_2, G_2}$.
\end{enumerate}
\end{lemma}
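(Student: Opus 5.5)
All three statements can be read off from the explicit formula \eqref{eqn:def-of-L-embedding}, $\ld j_\chi(h,w) = (hQ_\chi(w)n(\gamma_{M,G}),w)$, with $\chi$ the canonical set of $\chi$-data. For $H$ unramified, Lemma~\ref{lemma:unramified-levi-not-ramified-symmetric} shows that every root of $Z$ in $G/H$ is asymmetric or symmetric unramified. Hence $\chi_\alpha$ is trivial for asymmetric $\alpha$ and is the unramified quadratic character for symmetric $\alpha$. We first reduce to the quasi-split case. This uses the inner twist $\xi$ of \cite[Lemma 6.4]{Kal21a}. The only thing to check is that the canonical $\chi$-data is transported to canonical $\chi$-data. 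This holds because $\xi$ is $\Gamma$-equivariant on the maximal central tori up to inner automorphisms. These inner automorphisms act trivially on $Z$, so the symmetry and ramification type of each root is preserved.

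\emph{Part (1).} Suppose $H$ is the Levi of an $F$-parabolic $P$. Then we may choose the Borel pairs so that $B_H \subset B_G$ with $T_H = T_G$ and $g = 1$. Thus $\gamma_{H,G} = 1$ and $n(\gamma_{H,G}) = 1$. Every root of $Z$ in $G/H$ is asymmetric, because $\Gamma$ preserves $\Phi(U_P)$, as in the proof of Lemma~\ref{lemma:unramified-levi-not-ramified-symmetric}. So $\chi_\alpha = 1$ and each $Q_{\chi,\alpha}$ is trivial. It remains to show $s \equiv 1$, which we do by choosing the representatives suitably. Choose $A \subset \Phi(U_P)$ and the $w_i$ so that $P(\beta) = 1$ exactly for $\beta \in \Phi^+ \cap \Phi(U_P)$ (the positive roots outside $H$); this is possible since $\Phi(U_P)$ is $\Gamma$-stable. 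Then both sets in $M(w)$ are empty, because $W_F$ preserves $\Phi(U_P)$. The only remaining issue is independence of these choices up to $\wh G(k)$-conjugacy, and this is part of \cite[Corollary 6.7]{Kal21a} and the surrounding discussion there.

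\emph{Part (3).} Everything decomposes according to $G = G_1 \times G_2$: the roots, the orbits, the pinning, the Tits section and the Weyl group elements. So $Q_\chi$ and $n(\gamma_{H,G})$ factor as products, and the statement follows immediately.

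\emph{Part (2).} Here $\pi$ is an isomorphism on adjoint groups, so it induces $\Gamma$-equivariant bijections between the roots of $Z'$ and of $Z$ in $G'/H'$ and $G/H$. These bijections preserve the fields $F_\alpha$ and $F_{\pm\alpha}$, so canonical $\chi$-data correspond to canonical $\chi$-data. The dual map $\ld\pi \co \ld G \to \ld G'$ sends $\wh T$ to $\wh T'$ and the pinning to the pinning. It is compatible with Tits sections, and sends the coroot $\wh\alpha^\vee$ to $\wh\alpha'^\vee$. Consequently $\ld\pi$ carries $s$, each $Q_{\chi,\alpha}$ and $n(\gamma_{H,G})$ to the corresponding objects for $G'$, and the square commutes on the nose once compatible Borel pairs and representatives are chosen. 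I expect the main obstacle to be this bookkeeping. One must check that the images of the sums $\sum_{\alpha \in \mathcal O} \wh\alpha^\vee$ in $X_*(Z_{\wh H'})$ match under $\ld\pi$. In the non-quasi-split case one must also check that compatible inner twists $\xi,\xi'$ can be chosen. For the latter I would pull back $\xi$ along the adjoint isomorphism, using that an inner twist is determined by its adjoint part.
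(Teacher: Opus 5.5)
Your proof is correct and follows the paper's argument: reduce to the quasi-split case via \cite[Lemma 6.4]{Kal21a}; in (1) choose $B_H = B \cap H \subset B_G$, so that $\gamma_{H,G} = 1$, every root outside $H$ is asymmetric (hence $Q = 1$), and the representatives make $M(w)$ empty (hence $s = 1$); and read off (2) and (3) from the compatibility of $s$, $n$ and $Q$ with $\pi$ and with products. The one point you leave implicit in (1) is that the inner twist preserves the property of being the Levi of an $F$-parabolic, but this follows from your own observation that $\xi$ restricts to an $F$-isomorphism of maximal central tori, which carries the maximal split central torus of $H$ to one whose centralizer in $G_0$ is $M_0$.
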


\begin{proof}
(1) We may and do assume that $G$ is quasi-split by \cite[Lemma 6.4]{Kal21a}, since passing to an inner twist which preserves $H$ also preserves the property that $H$ is the Levi of a parabolic $F$-subgroup of $G$. In this case, there is a Borel $F$-subgroup $B \subset G$ such that $B \cap H$ is a Borel $F$-subgroup. In the construction of \S\ref{sss:quasi-split-l-embedding}, we may choose $B_G = B$, $B_H = B \cap H$, and $T_G = T_H = T$ a maximal $F$-torus in $B$. We therefore have $\gamma_{H,G} = 1$ for all $\gamma \in \Gamma$, and it suffices to show $Q = 1$ by \eqref{eqn:def-of-L-embedding}. Note that we may choose representatives $w_i$ as in \S\ref{sss:quasi-split-l-embedding} so that $w_i^{-1}\alpha$ is positive (relative to $B$) whenever $\alpha$ is positive. With this choice, the set $M(w)$ is empty and thus $s = 1$. Moreover, every element $\alpha \in \Phi((G/H)_{\ol F}, T_{\ol F})$ is asymmetric, so the set of $\chi$-data is trivial and thus $Q_{\chi,\alpha} = 1$ for all $\alpha$ by \eqref{eqn:def-of-Q}.

(2) First, it is clear that $H'$ is an unramified twisted Levi in $G'$, so the diagram makes sense. We may again assume that $G$ is quasi-split, hence $G'$ is also quasi-split. In this case, it suffices to observe that the definitions of $s$, $n$, and $Q$ in \S\ref{sss:quasi-split-l-embedding} are all evidently compatible with $\pi$, since the sets of $\chi$-data defining $\ld j_{H,G}$ and $\ld j_{H',G'}$ are the same.

(3) This is clear from the definitions.
\end{proof}

\section{Compatibility of sign characters and L-embeddings}\label{sec:sign-l-embedding-compatibility}

It will be important to understand the behavior of the L-embedding $\ld j_{T,G}$ associated to a quadruple $(\vec G, x, \vec r, \vec X)$ as in Definition~\ref{defn:canonical-l-embeddings} when $G$ is replaced by an unramified twisted Levi subgroup $H$ such that $x \in \cB(H)$. For such an $H$, we obtain a quadruple $(\vec H, x, \vec r, \vec X_H)$, where $H^i = G^i \cap H$ for all $i$ and $X_{H,i}^* = X_i^*|_{\Lie(H^i)}$, and from this we obtain an L-embedding $\ld j_{T,H}\co \ld T \to \ld H$. There is a canonical L-embedding $\ld j_{H,G}\co \ld H \to \ld G$, and we want to compare $\ld j_{T,G}$ and $\ld j_{H,G}\circ\ld j_{T,H}$. It is not true that
\[
\ld j_{T,G}\sim \ld j_{H,G}\circ\ld j_{T,H}
\]
in general; see Example~\ref{example:nontrivial-sign-char}. The technical issues are twofold: first, the definition of $\ld j_{T,G}$ (resp.\ $\ld j_{T,H}$) involves a product over the set of roots $\Phi(G_{\ol F}, T_{\ol F})$ (resp.\ $\Phi(H_{\ol F}, T_{\ol F})$), and these sets are of course different. Second, the definitions both involve the numbers $\ell_{G,p'}(\alpha^\vee)$ (resp.\ $\ell_{H,p'}(\alpha^\vee)$), and these differ even when $\alpha \in \Phi(H_{\ol F}, T_{\ol F})$.

However, we will prove in Proposition~\ref{prop:fks-and-tasho-cancel} that
\[
\ld j_{T,G}\circ\ld\theta
\sim
\ld j_{H,G}\circ\ld j_{T,H}\circ
\ld(\theta\epsilon_G\epsilon_H\epsilon_{\sharp,x,G}\epsilon_{\sharp,x,H}),
\]
where the $\epsilon$ are the sign characters from \cite{FKS23} recalled above, whenever $T$ is elliptic and $\theta\co T(F) \to k^\times$ is a character. We will similarly analyze base change, i.e., the behavior of $\ld j_{T,G}$ under passage from $F$ to a cyclic tamely ramified extension $E/F$. Since the calculations become rather involved, the proofs are mostly relegated to Appendix~\ref{app:rootwise-sign-chi-calculations}.

We retain the notation of Sections~\ref{sec:sign-characters} and~\ref{ss:chi-data-L-embedding}. Fix a tuple $(\vec{G}, x, \vec{r}, \vec{X})$ as in \S\ref{sss:canonical-chi-data}, and let $k$ be a ring among $\ol\Q_\ell$, $\ol\Z_\ell$, and $\ol\F_\ell$.


\subsection{Unramified twisted Levis}\label{ss:unram-levi-signs}

We first deal with the case of descent to unramified twisted Levis. We assume in this section that $p \neq 2$.

\begin{lemma}\label{lemma:fks-unram-levi}
Let $H$ and $M$ be tamely ramified twisted Levi $F$-subgroups of $G$ such that $H$ is an unramified twisted Levi and $H \cap M$ contains a maximal $F$-torus $T$ of $G$\footnote{Note that this automatically implies that $H \cap M$ is a twisted Levi $F$-subgroup of $H$.} such that $x\in\cB(T)$. Then
\[
\epsilon_f^{G/M} = \epsilon_f^{H/(M\cap H)}
\]
as characters of $T(F)_{[x]}$.
\end{lemma}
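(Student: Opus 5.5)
The plan is to compare the two products \eqref{eqn:FKS-3} defining $\epsilon_f^{G/M}$ and $\epsilon_f^{H/(M\cap H)}$ factor by factor. Both are products over $\Gamma$-orbits of symmetric ramified roots $\alpha$, taken over $\Phi((G/M)_{\ol F},T_{\ol F})$ and $\Phi((H/(M\cap H))_{\ol F},T_{\ol F})$ respectively. In each case the factor is $f(\alpha)$, and it is included exactly when $\alpha(\gamma)\in -1+\frp_\alpha$. So it suffices to show three things: the two indexing sets agree, the inclusion conditions agree, and $f_{(G,T)}(\alpha)=f_{(H,T)}(\alpha)$ for each such $\alpha$. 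First, though, I will dispose of the reduction to adjoint groups built into the definition for general $G$. Every ingredient of \eqref{eqn:FKS-3} depends only on the $\Gamma$-set of roots, on the root spaces $\frg_\alpha$, on the bracket $\frg_\alpha\times\frg_{-\alpha}\to \frt$, and on the element $H_\alpha$. Under the maps $G\to G_{\ad}$ and $H\to H_{\ad}$, the root spaces map isomorphically and these brackets and coroot elements correspond. Hence it is harmless to evaluate both sides on $\gamma\in T(F)_{[x]}$ directly via the roots of $T$. Here $T(F)_{[x]_G}\subset T(F)_{[x]_H}$, so the right-hand side restricts.

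\emph{Indexing sets.} Lemma~\ref{lemma:unramified-levi-not-ramified-symmetric}, applied with the maximal torus $T\subset H$ (which contains $Z_H$), shows that every symmetric ramified root in $\Phi(G_{\ol F},T_{\ol F})$ already lies in $\Phi(H_{\ol F},T_{\ol F})$. Whether a root is symmetric, and whether it is ramified, depends only on the $\Gamma$-action on $X^*(T)$. Hence
\[
\Phi((G/M)_{\ol F},T_{\ol F})_{\mathrm{sym,ram}}=\Phi(H_{\ol F},T_{\ol F})_{\mathrm{sym,ram}}\smallsetminus\Phi(M_{\ol F},T_{\ol F})=\Phi((H/(M\cap H))_{\ol F},T_{\ol F})_{\mathrm{sym,ram}}.
\]
Here I use that $\Phi((M\cap H)_{\ol F},T_{\ol F})=\Phi(M_{\ol F},T_{\ol F})\cap\Phi(H_{\ol F},T_{\ol F})$, which holds because both are twisted Levis containing $T$. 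The condition $\alpha(\gamma)\in-1+\frp_\alpha$ involves only $\alpha$, $\gamma$ and $F_\alpha$, so it is identical on both sides.

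\emph{The invariant $f$.} For $\alpha\in\Phi(H_{\ol F},T_{\ol F})$ we have $\frh_\alpha=\frg_\alpha$, since $H_{\ol F}$ is a Levi subgroup containing $T_{\ol F}$. So the same nonzero $X_\alpha\in\frg_\alpha(F_\alpha)$ and the same $\tau\in\Gamma$ with $\tau\alpha=-\alpha$ can be used for $H$ and for $G$. The bracket $[X_\alpha,\tau X_\alpha]$ computed in $\frh$ agrees with the one computed in $\frg$. The coroot of $\alpha$ relative to $(H_{\ol F},T_{\ol F})$ coincides with its coroot relative to $(G_{\ol F},T_{\ol F})$, again because $H_{\ol F}$ is a Levi containing $T_{\ol F}$, so $H_\alpha=\mathrm{d}\alpha^\vee(1)$ is the same element. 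Therefore $c_\alpha$ in \eqref{eq:c-alpha} is literally the same element of $F_{\pm\alpha}^\times$, and $f_{(G,T)}(\alpha)=\kappa_\alpha(c_\alpha)=f_{(H,T)}(\alpha)$.

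The main obstacle I anticipate is the bookkeeping in the adjoint reduction and possible characteristic subtleties. Specifically, I need to check that passing to $G_{\ad}$ versus $H_{\ad}$ does not change the Lie-theoretic data $c_\alpha$. If the isogeny is inseparable, the Lie algebra maps need care. I would handle this by computing $c_\alpha$ inside $\frg_{\alpha}\oplus\frg_{-\alpha}\oplus \Lie(\alpha^\vee(\G_m))$, i.e.\ inside the Lie algebra of the rank-one subgroup generated by $U_{\pm\alpha}$. Both adjoint quotients restrict to central isogenies on this subgroup, and such isogenies are isomorphisms on root spaces and carry $H_\alpha$ to $H_\alpha$. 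With this in place, the three comparisons above combine to give the identity of products.
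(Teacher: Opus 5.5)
Your proposal is correct and is essentially the paper's argument: Lemma~\ref{lemma:unramified-levi-not-ramified-symmetric} shows that the symmetric ramified roots indexing the product \eqref{eqn:FKS-3} for $G/M$ are exactly those for $H/(M\cap H)$, and $f_{(G,T)}(\alpha)=f_{(H,T)}(\alpha)$ because $c_\alpha$ is computed from the same root spaces and the same coroot. The paper treats this second point, and the passage to adjoint groups, as evident from the definitions, so your extra bookkeeping about isogenies is harmless but not needed.
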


\begin{proof}
Observe that if $\alpha \in \Phi((G/M)_{\ol F},T_{\ol F})$ is symmetric ramified then $\alpha \in \Phi((H/(M \cap H))_{\ol F}, T_{\ol F})$ by Lemma~\ref{lemma:unramified-levi-not-ramified-symmetric}, and it is clear from the definitions that $f_{(G,T)}(\alpha) = f_{(H,T)}(\alpha)$.
\end{proof}

Throughout this section, let $H \subset G$ be an unramified twisted Levi $F$-subgroup such that $H \cap G^0$ contains a maximal $F$-torus of $G$, and let $H^i = H \cap G^i$ for all $0 \leq i \leq d$. Fix a maximally unramified maximal $F$-torus $T \subset H^0$, and let $\chi''_G$ (resp.\ $\chi''_H$) be the set of $\chi$-data for $\Phi(G_{\ol F}, T_{\ol F})$ (resp.\ $\Phi(H_{\ol F}, T_{\ol F})$) associated to the tuple $(\vec{G},x,\vec{r},\vec{X})$ where $X_i^*$ is as in \S \ref{sss:canonical-chi-data}.\footnote{Note that the restrictions of the good elements $X_i^*$ to $\Lie(H^i)$ are automatically good relative to $H^{i+1}$, since the roots of $H^{i+1}/H^i$ relative to $T$ form a subset of the roots of $G^{i+1}/G^i$.} Below we will use the characters defined in \S \ref{sssec:eps-chars-descent-Levi}.

For the following statements, we recall that for a character $\eta\co T(F)_{\mathrm{b}} \to k^\times$, there is an associated inertial L-parameter $I_F \to \ld T(k)$; we will denote this homomorphism by $\ld\eta|_{I_F}$.\footnote{This amounts to the claim that if $\eta_1, \eta_2\co T(F) \to k^\times$ are two characters extending $\eta$, then $\ld\eta_1|_{I_F}\sim\ld\eta_2|_{I_F}$. From the construction of the Local Langlands Correspondence for tori, this reduces to the case $T = \G_m$, in which case it follows from the fact that the isomorphism $W_F^{\ab} \cong F^\times$ from class field theory sends $I_F$ to $\cO_F^\times = \G_m(F)_{\mathrm{b}}$.} If $T$ is not anisotropic, then the notation $\ld\eta|_{I_F}$ is slightly abusive, because there is no canonical choice of L-parameter $\ld\eta\co W_F \to \ld T(k)$ of which $\ld\eta|_{I_F}$ is the restriction.

\begin{lemma}\label{lemma:comparison-to-minimal-ramified-chi-data}
Let $\theta\co T(F) \to k^\times$ be a character. Recall from Definition~\ref{defn:minimally-ramified-chi-data} that $\min \chi''_G$ denotes the minimally ramified $\chi$-data associated to $\chi''_G$. We have
\[
    \ld j_{\chi''_G} \circ \ld\theta|_{I_F} \sim \ld j_{\min \chi''_G} \circ \ld(\theta \cdot \epsilon_{G,\flat,0})|_{I_F}.
\]
If moreover $T$ is elliptic in $G$\footnote{Observe that the following displayed equation would not make sense if $T$ were not assumed elliptic, because then $T(F) \neq T(F)_{[x]}$, so $\epsilon_{G,\flat,0}$ would not be a character of $T(F)$. Completely similar remarks apply to the subsequent results.}, then we have
\[
\ld j_{\chi''_G} \circ \ld\theta \sim \ld j_{\min \chi''_G} \circ \ld(\theta \cdot \epsilon_{G,\flat,0}).
\]
The completely analogous claims hold for $H$.
\end{lemma}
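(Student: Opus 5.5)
The plan is to compare the two L-embeddings through the standard ``$\zeta$-data'' formalism: two sets of $\chi$-data for the same torus differ by a set of $\zeta$-data, and the resulting L-embeddings differ by twisting with an explicit character of $T(F)$. Concretely, set $\zeta_\alpha = \chi''_{G,\alpha}\cdot(\min\chi''_{G,\alpha})^{-1}$ for $\alpha\in\Phi(G_{\ol F},T_{\ol F})$. By Definition~\ref{defn:minimally-ramified-chi-data} and the construction in \S\ref{sss:canonical-chi-data}, $\zeta_\alpha=1$ for symmetric ramified $\alpha$. For every $\alpha$, $\zeta_\alpha$ is trivial on $F_{\pm\alpha}^\times$ when $\alpha$ is symmetric, since both sets of $\chi$-data restrict to $\kappa_\alpha$ there. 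Hence $\zeta_\alpha$ factors through $t\mapsto t\,v_1(t)^{-1}$, and we obtain characters $\zeta_\alpha^1$ of $J_\alpha(F)$ in the notation of \S\ref{ss:dual-l-homs}.

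First, working in the quasi-split model of \S\ref{sss:quasi-split-l-embedding} (reducing to it via \cite[Lemma 6.4]{Kal21a}), I will compare the cocycles defining $\ld j_{\chi''_G}$ and $\ld j_{\min\chi''_G}$ via \eqref{eqn:def-of-L-embedding}. The terms $s$ and $n(\gamma_{T,G})$ coincide, and only the factors $Q_{\chi,\alpha}$ from \eqref{eqn:def-of-Q} differ. Their ratio is
\[
\prod_{\alpha\in A}\prod_{i}(w_i^{-1}\wh\alpha^\vee)\bigl(\zeta_\alpha(\rec_{F_\alpha}(v_0(u_i(w))))\bigr).
\]
By Lemma~\ref{lemma:dual-l-homs}, this is cohomologous to $\prod_{\alpha\in A}\wh\alpha\circ\wh\zeta^1_\alpha$, which is exactly the L-parameter of the character $\zeta_T\coloneqq\prod_{\alpha\in A}\zeta^1_\alpha\circ\alpha$ of $T(F)$. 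Because the ratio takes values in the abelian group $\wh T(k)$, the local Langlands correspondence for tori then gives $\ld j_{\chi''_G}\circ\ld\theta\sim\ld j_{\min\chi''_G}\circ\ld(\theta\zeta_T)$. Restricting to $I_F$ gives the inertial version, where only $\zeta_T|_{T(F)_{\mathrm b}}$ matters; this is why the first statement needs no ellipticity.

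The main step is to identify $\zeta_T$ with $\epsilon_{G,\flat,0}$: on $T(F)_{[x]}\supseteq T(F)_{\mathrm b}$ in general, and on all of $T(F)$ when $T$ is elliptic, since then $T(F)=T(F)_{[x]}$. Here I would argue root by root. Because $\chi''_\alpha=\chi''_{\alpha_i}\circ\Nm_{F_\alpha/F_{\alpha_i}}$, the ratio $\zeta_\alpha$ is nontrivial only when $\alpha$ is asymmetric or symmetric unramified and $\alpha_i$ is symmetric ramified. In that case $\chi''_{\alpha_i}$ is tamely ramified with restriction to units equal to the residual quadratic character, while the unramified parts of $\chi''_\alpha$ and $\min\chi''_\alpha$ cancel.

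Composing with the norm, the unit part of $\zeta_\alpha$ becomes $\sgn_{\F_\alpha^\times}$ composed with reduction when $e(\alpha/\alpha_i)$ is odd, and becomes trivial when it is even (the norm then lands in residual squares). This reproduces exactly the parity condition $2\nmid e(\alpha/\alpha_M)$ in \eqref{eqn:FKS-2}. Evaluating $\zeta^1_\alpha\circ\alpha$ on $\gamma\in T(F)_{[x]}$ then yields $\sgn_{\F_\alpha^\times}(\alpha(\gamma))$ for asymmetric $\alpha$, and $\sgn_{\F^1_\alpha}(\alpha(\gamma))$ for symmetric unramified $\alpha$, via the identification $F^1_\alpha\to\F^1_\alpha$.

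The hard part will be the careful bookkeeping. This includes matching the orbit sets $A$ ($\Sigma$-orbits in the asymmetric case, $\Gamma$-orbits in the symmetric case) with the index sets in \eqref{eqn:FKS-2}. It also includes handling the value of $\zeta_\alpha$ on a uniformizer: I expect this value to matter only on $T(F)$ outside $T(F)_{\mathrm b}$, which for elliptic $T$ is controlled by the center, and I expect it to be absorbed because $\alpha$ kills $Z(G)$. Finally, the case where $e(\alpha/\alpha_i)$ is even and $\alpha$ is symmetric unramified needs care. I would isolate these rootwise verifications in the spirit of Appendix~\ref{app:rootwise-sign-chi-calculations}. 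The statement for $H$ follows by the identical argument applied to $\vec H$.
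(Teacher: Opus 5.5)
Your plan is essentially the paper's proof. After passing to the quasi-split case, the paper compares $Q_{\chi''_G}$ and $Q_{\min\chi''_G}$ orbit by orbit and uses Lemma~\ref{lemma:dual-l-homs} to identify each ratio with the parameter of $\zeta^1_\alpha\circ\alpha$. It then evaluates that character using $\alpha(T(F))\subset\cO_{F_\alpha}^\times$ for elliptic $T$ (or $\alpha(T(F)_{\mathrm b})\subset\cO_{F_\alpha}^\times$ in the inertial case), which is exactly the ``center'' argument you anticipate.

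One correction: your claim that $\zeta_\alpha$ is nontrivial only when $\alpha_i$ is symmetric ramified is false. If $\alpha$ is asymmetric and $\alpha_i$ is symmetric unramified, then $\zeta_\alpha$ is the unramified character $\chi''_{\alpha_i}\circ\Nm_{F_\alpha/F_{\alpha_i}}$, which can be nontrivial. Its contribution is still trivial by the same unit argument, which is how the paper disposes of its case (2).
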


\begin{proof}
The proof is given in Appendix~\ref{app:comparison-minimal-proof}.
\end{proof}

Let $\chi''_{H,G}$ denote the unique minimally ramified set of $\chi$-data for $\Phi(G_{\ol F}, T_{\ol F})$ such that for all $\alpha \in \Phi(H_{\ol F}, T_{\ol F})$ we have $\chi''_{H,G,\alpha} = \min \chi''_{H,\alpha}$; this determines $\chi''_{H,G}$ by Lemma~\ref{lemma:unramified-levi-not-ramified-symmetric}.

\begin{lemma}\label{lemma:modification-of-symmetric-ramified-chi-data}
If $\theta\co T(F) \to k^\times$ is a character, then
\[
\ld j_{\chi''_{H,G}} \circ \ld\theta|_{I_F} \sim \ld j_{\min\chi''_G} \circ \ld(\theta \cdot \epsilon_{G,\flat,1}\epsilon_{H,\flat,1}\epsilon_{G,\flat,2}\epsilon_{H,\flat,2})|_{I_F}.
\]
If moreover $T$ is elliptic in $G$, then
\[
\ld j_{\chi''_{H,G}} \circ \ld\theta \sim \ld j_{\min\chi''_G} \circ \ld(\theta \cdot \epsilon_{G,\flat,1}\epsilon_{H,\flat,1}\epsilon_{G,\flat,2}\epsilon_{H,\flat,2}).
\]
\end{lemma}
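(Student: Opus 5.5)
Both sides are L-embeddings attached to minimally ramified sets of $\chi$-data for the same torus $T$. They are built from the same Borel pair and $g$, so the Tits-section factor $n(\gamma_{T,G})$ and the sign function $s$ in \eqref{eqn:def-of-L-embedding} agree. I will reduce the claim to a comparison of the factors $Q_{\chi,\alpha}$ alone. By Lemma~\ref{lemma:unramified-levi-not-ramified-symmetric}, $\chi''_{H,G}$ and $\min\chi''_G$ agree on every asymmetric and every symmetric unramified root: both are $1$ on asymmetric roots and the unramified quadratic character on symmetric unramified roots. They can differ only on symmetric ramified roots $\alpha$, all of which lie in $\Phi(H_{\ol F},T_{\ol F})$. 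Both restrict to $\kappa_\alpha$ on $F_{\pm\alpha}^\times$ and to the residual quadratic character on $\cO_{F_\alpha}^\times$. Hence for each such $\alpha$ the ratio $\xi_\alpha\coloneqq \chi''_{H,G,\alpha}\cdot(\min\chi''_{G,\alpha})^{-1}$ is a character of $F_\alpha^\times$ trivial on $\cO_{F_\alpha}^\times$ and on $F_{\pm\alpha}^\times$. It is therefore an unramified sign character determined by its value on a uniformizer $\varpi_\alpha$, and it factors through $t\mapsto t\,v_1(t)^{-1}$ onto $F_\alpha^1$.

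The key step is to compute $\xi_\alpha(\varpi_\alpha)$ from \eqref{eqn:ramified-symmetric-chi-data}. For $G$ the normalization is evaluated at $\ell_{G,p'}(\alpha^\vee)a_\alpha$; for $H$ it is at $\ell_{H,p'}(\alpha^\vee)a_\alpha$, where the depth index $i$ for $H$ is the same because $H^i=G^i\cap H$. The Gauss-sum side is identical, except that the factor $(-1)^{f_{\alpha_i}+1}$ and the residue field $\F_{\alpha_i}$ depend on whether $\alpha_i$ is the restriction to $Z^i_G$ or to $Z^i_H$. Writing $\ell_{p'}a_\alpha=u\varpi_\alpha^{n}$ and using that both characters agree on units, I expect
\[
\xi_\alpha(\varpi_\alpha)^{n}=\sgn_{\F_\alpha^\times}\!\bigl(\ell_{G,p'}(\alpha^\vee)\ell_{H,p'}(\alpha^\vee)\bigr)\cdot(\text{ratio of }(-1)^{f+1}\text{ factors and of }e(\alpha/\alpha_M)\text{-parities}),
\]
where $n$ is odd because $\alpha$ is symmetric ramified and $a_{-\alpha}=-a_\alpha$. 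Comparing with \eqref{eqn:FKS-4} and \eqref{eqn:FKS-5}, the $e_\alpha$ factors cancel between $G$ and $H$, and what remains is exactly the value of $\epsilon_{G,\flat,1}\epsilon_{H,\flat,1}\epsilon_{G,\flat,2}\epsilon_{H,\flat,2}$ on elements $\gamma$ with $\alpha(\gamma)\in-1+\frp_\alpha$. This bookkeeping, especially reconciling $e(\alpha/\alpha_M)$ and $f_{\alpha_i}$ for $G^i$ versus $H^i$, is the step I expect to be the main obstacle. I would do it rootwise in the appendix.

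Next I translate the change of $\chi$-data into a twist of $\theta$. By \eqref{eqn:def-of-Q}, $Q_{\chi''_{H,G}}=Q_{\min\chi''_G}\cdot\prod_{\alpha}Q_{\xi,\alpha}$, the product running over symmetric ramified orbits. Since $\ld j(h,w)=(hQ(w)n,w)$, composing with $\ld\theta$ multiplies the parameter by the $\wh T(k)$-valued cocycle $\prod_\alpha Q_{\xi,\alpha}$. Lemma~\ref{lemma:dual-l-homs} identifies $Q_{\xi,\alpha}$ with $\wh\alpha\circ\wh\xi^1_\alpha$, which is the L-parameter of the character $\xi^1_\alpha\circ\alpha\co T(F)\to J_\alpha(F)=F_\alpha^1\to k^\times$. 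It remains to check that $\xi^1_\alpha(\alpha(\gamma))$ is nontrivial exactly when $\alpha(\gamma)\in-1+\frp_\alpha$: an element of $F_\alpha^1$ is $t/v_1(t)$ with the valuation of $t$ determining the sign, and its residue is $\pm1$. Together with the previous paragraph, this shows the product of the characters $\xi^1_\alpha\circ\alpha$ equals $\epsilon_{G,\flat,1}\epsilon_{H,\flat,1}\epsilon_{G,\flat,2}\epsilon_{H,\flat,2}$ on $T(F)_{[x]}$.

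This gives the elliptic statement, since then $T(F)=T(F)_{[x]}$. For general $T$, the sign characters are only defined on the bounded part, so only the restriction to $I_F$ is meaningful. The characters agree on $T(F)_{\mathrm b}$, and by the footnote preceding Lemma~\ref{lemma:comparison-to-minimal-ramified-chi-data} this suffices to give the inertial statement.
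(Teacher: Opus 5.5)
Your overall route matches the paper's. You reduce to the factors $Q_{\chi,\alpha}$ over symmetric ramified orbits, which all lie in $\Phi(H_{\ol F},T_{\ol F})$. You show that $\xi_\alpha=\chi''_{H,\alpha}(\chi''_{G,\alpha})^{-1}$ is unramified and trivial on $F_{\pm\alpha}^\times$, evaluate it on the odd-valuation element $\ell_{p'}(\alpha^\vee)a_\alpha$, and finish with Lemma~\ref{lemma:dual-l-homs}. The gap is that this evaluation, which is the whole content of the lemma, is left as an expectation, and as you set it up it does not close.

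The normalization \eqref{eqn:ramified-symmetric-chi-data} constrains $\chi''_{\alpha_i}$ as a character of $F_{\alpha_i}^\times$. But $\chi''_{G,\alpha}=\chi''_{\alpha_{G,i}}\circ\Nm_{F_\alpha/F_{\alpha_{G,i}}}$, where $\alpha_{G,i}$ (resp.\ $\alpha_{H,i}$) is the restriction of $\alpha$ to $Z_{G^i}$ (resp.\ $Z_{H^i}$). Since $\ell_{G,p'}(\alpha^\vee)a_\alpha$ already lies in $F_{\alpha_{G,i}}$, you get
\[
\chi''_{G,\alpha}(\ell_{G,p'}(\alpha^\vee)a_\alpha)=\chi''_{\alpha_{G,i}}(\ell_{G,p'}(\alpha^\vee)a_\alpha)^{ef},\qquad e=e(\alpha/\alpha_{G,i}),\quad f=[\F_\alpha:\F_{\alpha_{G,i}}].
\]
The Hasse--Davenport relation turns the $f$-th power of $(-1)^{f_{\alpha_{G,i}}+1}\frG_{\F_{\alpha_{G,i}}}(\psi^0)$ into $(-1)^{f_\alpha+1}\frG_{\F_\alpha}(\psi^0)$. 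Since $e$ is odd, the remaining $e$-th power contributes $(\frG_{\F_\alpha}(\psi^0)^2)^{(e-1)/2}=\kappa_\alpha(-1)^{(e-1)/2}$. This is \eqref{eqn:chi-double-prime-G-root-value}, which the paper takes from the end of the proof of \cite[Lemma 4.2.7]{FKS23}.

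Once both characters are evaluated at the level of $F_\alpha$, the Gauss sum and the sign $(-1)^{f_\alpha+1}$ are identical for $G$ and $H$ and cancel. What survives is $\sgn_{\F_\alpha^\times}(\ell_{G,p'}(\alpha^\vee)\ell_{H,p'}(\alpha^\vee))$ times the two $\kappa_\alpha(-1)$ parity terms, i.e.\ \eqref{eqn:chi-double-prime-HG-quotient-value}. This matches $\epsilon_{G,\flat,1,\alpha}\epsilon_{H,\flat,1,\alpha}\epsilon_{G,\flat,2,\alpha}\epsilon_{H,\flat,2,\alpha}$, because the $e_\alpha$ and $(-1)^{[\F_\alpha:\F]+1}$ factors of \eqref{eqn:FKS-4} each occur twice.

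So there is no residual ``ratio of $(-1)^{f+1}$ factors'' to reconcile. Nor are the $e(\alpha/\alpha_M)$-parities something to be matched by bookkeeping: the norm down to $F_{\alpha_i}$ generates them. Comparing the two normalizations directly at $\alpha_{G,i}$ and $\alpha_{H,i}$, as your sketch suggests, would pit normalized Gauss sums over different residue fields against each other. Their ratio need not even be $\pm1$.

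Relatedly, you asserted without justification that both $\chi''_{G,\alpha}$ and $\chi''_{H,\alpha}$ restrict to $\sgn_{\F_\alpha^\times}$ on $\cO_{F_\alpha}^\times$. The restriction is actually $\sgn_{\F_\alpha^\times}^{e(\alpha/\alpha_{G,i})}$ (resp.\ with $e(\alpha/\alpha_{H,i})$). So you need the oddness of these ramification indices, \cite[Lemma 5.6.5]{FKS23}, for $\xi_\alpha$ to be unramified. The same oddness drives the $e$-th power computation above.

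Finally, $\xi^1_\alpha$ equals $\xi_\alpha(\varpi_\alpha)$ on $F_\alpha^1\cap(-1+\frp_\alpha)$ and $1$ on $F_\alpha^1\cap(1+\frp_\alpha)$. It is ``nontrivial exactly on $-1+\frp_\alpha$'' only after the missing computation shows the relevant value is $-1$.
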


\begin{proof}
The proof is given in Appendix~\ref{app:modification-symmetric-proof}.
\end{proof}

Let $\chi_H$ be the canonical (minimally ramified) set of $\chi$-data for $\Phi((G/H)_{\ol F}, (Z_H)_{\ol F})$, and let $\chi_{H,G}$ be the unique set of $\chi$-data for $\Phi(G_{\ol F}, T_{\ol F})$ whose restriction to $\Phi((G/H)_{\ol F}, T_{\ol F})$ is the inflation of $\chi_H$ in the sense of \cite[Definition 5.16]{Kal21a} and whose restriction to $\Phi(H_{\ol F}, T_{\ol F})$ is equal to the restriction of $\chi''_{H,G}$.

\begin{lemma}\label{lemma:inflation-change}
We have $\ld j_{\chi''_{H,G}}|_{\wh T \rtimes I_F} \sim \ld j_{\chi_{H,G}}|_{\wh T \rtimes I_F}$. If $T$ is elliptic in $G$, then $\ld j_{\chi''_{H,G}} \sim \ld j_{\chi_{H,G}}$.
\end{lemma}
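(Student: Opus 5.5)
We compare the two sets of $\chi$-data $\chi''_{H,G}$ and $\chi_{H,G}$ root by root and reduce to the standard result that modifying $\chi$-data changes the L-embedding by a twist. Both sets agree on $\Phi(H_{\ol F},T_{\ol F})$ by construction. On $\Phi((G/H)_{\ol F},T_{\ol F})$, Lemma~\ref{lemma:unramified-levi-not-ramified-symmetric} shows that every root is asymmetric or symmetric unramified. On these roots $\chi''_{H,G}$ is minimally ramified: trivial on asymmetric roots and the unramified quadratic character on symmetric unramified ones. The inflated data $\chi_{H,G,\alpha}=\chi_{H,\alpha_H}\circ\Nm_{F_\alpha/F_{\alpha_H}}$ is built from the canonical data for $Z_H$, and is therefore \emph{unramified} on every root $\alpha\in\Phi((G/H)_{\ol F},T_{\ol F})$. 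Hence the ratio $\zeta_\alpha=\chi_{H,G,\alpha}/\chi''_{H,G,\alpha}$ is an unramified character of $F_\alpha^\times$ for every $\alpha$, and trivial for $\alpha\in\Phi(H_{\ol F},T_{\ol F})$. By \cite[Lemma 5.17]{Kal21a} (or the analogous statement in \cite{LS87}), $\zeta=(\zeta_\alpha)$ is a set of ``$\zeta$-data''. The two L-embeddings then differ by multiplication by the $1$-cocycle $W_F\to\wh T(k)$ given by $w\mapsto\prod_{\alpha\in A}Q_{\zeta,\alpha}(w)$, where $A$ runs over representatives of $\Sigma$-orbits, as in \eqref{eqn:def-of-Q}.

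The first claim follows from the explicit form of this cocycle. By Lemma~\ref{lemma:dual-l-homs}, $Q_{\zeta,\alpha}$ is cohomologous to $\wh\alpha\circ\wh\zeta^1_\alpha$, the composite of the L-parameter of a character $\zeta^1_\alpha$ of $J_\alpha(F)$ with $\ld\alpha$. We need to check that the factorization hypothesis of that lemma holds in the symmetric case. There $\zeta_\alpha$ is a ratio of two characters each restricting to $\kappa_\alpha$ on $F_{\pm\alpha}^\times$, so $\zeta_\alpha$ is trivial on $F_{\pm\alpha}^\times$ and factors through $t\mapsto t/v_1(t)$ by Hilbert 90. Since $\zeta_\alpha$ is unramified, $\zeta^1_\alpha$ is trivial on the bounded part of $J_\alpha(F)$, so its L-parameter is trivial on $I_F$ up to coboundary. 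Summing over $\alpha\in A$, the difference cocycle is represented by the L-parameter of the character $\zeta_T=\prod_\alpha\zeta^1_\alpha\circ\alpha$ of $T(F)$, which is trivial on $T(F)_{\mathrm b}$. Its restriction to $I_F$ is therefore a coboundary, which gives $\ld j_{\chi''_{H,G}}|_{\wh T\rtimes I_F}\sim\ld j_{\chi_{H,G}}|_{\wh T\rtimes I_F}$.

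For the second claim, assume $T$ is elliptic. Then $T(F)=T(F)_{\mathrm b}$ modulo the central part, and more precisely $T(F)/T(F)_{\mathrm b}$ is finite. An unramified character of $T(F)$ is of the form $\eta\circ\mathrm{val}$, with $\mathrm{val}$ factoring through $X_*(T)_{I_F}^{\Fr}$, which is torsion. Its L-parameter is an unramified cocycle valued in $\wh T^{I_F}$, and the key claim to verify is that it is a coboundary. We compute $\rH^1(\langle\Fr\rangle,\wh T^{I_F})=\wh T^{I_F}/(\Fr-1)$. Ellipticity makes $\Fr-1$ surjective on the identity component $(\wh T^{I_F})^\circ$, so this group equals $\pi_0(\wh T^{I_F})_{\Fr}$. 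Thus the unramified class is not automatically trivial. \textbf{This is the main obstacle.} I would instead show directly that $\zeta_T$ is trivial. For each $\alpha\in\Phi((G/H)_{\ol F},T_{\ol F})$, the character $\zeta^1_\alpha\circ\alpha$ factors through $T\to Z_H$-type data; more precisely, $\zeta_\alpha$ depends only on $\alpha_H$, so the product over an orbit of $\alpha$ with fixed $\alpha_H$ factors through $\alpha_H\circ(\text{restriction})$. One then needs a value computation at a uniformizer-like element of $T(F)$. I would try to verify that the contributions of the roots in each fiber over $\alpha_H$ cancel pairwise in the quadratic unramified characters, via a parity count of $[F_\alpha:F_{\alpha_H}]$. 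Failing that, I would fall back on the argument of \cite[Lemma 5.18/Prop.\ 6.9]{Kal21a}, which shows that inflated and minimally ramified data give conjugate embeddings, and adapt it with coefficients in $k$.
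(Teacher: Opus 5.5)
Your argument for the first claim is correct and is essentially the paper's. The ratio $\zeta_\alpha=\chi_{H,G,\alpha}(\chi''_{H,G,\alpha})^{-1}$ is trivial on $\Phi(H_{\ol F},T_{\ol F})$. It is unramified on $\Phi((G/H)_{\ol F},T_{\ol F})$, because by Lemma~\ref{lemma:unramified-levi-not-ramified-symmetric} neither $\alpha$ nor $\alpha_H$ is symmetric ramified. Lemma~\ref{lemma:dual-l-homs} identifies $[Q_{\chi_{H,G},\ol\alpha}Q_{\chi''_{H,G},\ol\alpha}^{-1}]$ with the parameter of $\eta_\alpha=\zeta^1_\alpha\circ\alpha$. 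Finally, $\zeta_T=\prod_\alpha\eta_\alpha$ is trivial on the bounded part, so the difference dies on $I_F$. Your direct observation that $\zeta_\alpha$ is unramified replaces the paper's appeal to the proof of \cite[Proposition 5.27]{Kal21a}.

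For the elliptic claim there is a genuine gap, and the obstacle you isolate is not the real one. You treat $\zeta_T$ merely as some unramified character of $T(F)$, and then rightly note that an unramified class in $\rH^1(W_F,\wh T(k))$ need not vanish. Your fallback strategies (cancellation over fibers of $\alpha\mapsto\alpha_H$, or adapting \cite{Kal21a}) are not carried out. The missing idea is that each $\eta_\alpha$ is already the trivial character of $T(F)$.

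Every root $\alpha$ is trivial on $Z(G)$, so $\alpha\colon T\to J_\alpha$ factors through $T/Z(G)$, which is anisotropic because $T$ is elliptic. Hence $\alpha(T(F))$ is a compact subgroup of $J_\alpha(F)$: it lies in $\cO_{F_\alpha}^\times$ when $\alpha$ is asymmetric, and $J_\alpha(F)=F_\alpha^1$ is compact anyway when $\alpha$ is symmetric. Since $\zeta^1_\alpha$ is unramified, it kills $J_\alpha(F)_{\mathrm b}$, so $\eta_\alpha=1$ on $T(F)$. Lemma~\ref{lemma:dual-l-homs} together with functoriality of the correspondence for tori then gives $[Q_{\chi_{H,G},\ol\alpha}Q_{\chi''_{H,G},\ol\alpha}^{-1}]=[\wh\eta_\alpha]=0$ in $\rH^1(W_F,\wh T(k))$, with no restriction to $I_F$. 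In fact, for symmetric $\alpha$ one has $\zeta_\alpha=1$ outright, since it is unramified and trivial on $F_{\pm\alpha}^\times$, which contains a uniformizer of the unramified extension $F_\alpha$.

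This is what the paper means by saying that $\zeta_T$ is trivial on $T(F)_{[x]}$, which equals $T(F)$ for elliptic $T$. It is the same boundedness argument as in case~(2) of Appendix~\ref{app:comparison-minimal-proof}.

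Your assertions that $T(F)/T(F)_{\mathrm b}$ is finite and that $X_*(T)_{I_F}^{\Fr}$ is torsion are false once $Z(G)$ has a split part (e.g.\ $G=\GL_2$). Ellipticity only controls $T/Z(G)$, which is exactly why you must use that the roots factor through $T/Z(G)$.
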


\begin{proof}
The two sets of $\chi$-data agree on $\Phi(H_{\ol F},T_{\ol F})$, so fix a $\Sigma$-orbit represented by $\alpha\in\Phi((G/H)_{\ol F},T_{\ol F})$ and let $\alpha_H$ be its restriction to $Z_H$. Set
\[
\zeta_\alpha\coloneqq\chi_{H,G,\alpha}(\chi''_{H,G,\alpha})^{-1}.
\]
If $\alpha$ is asymmetric, set $\zeta_\alpha^1=\zeta_\alpha$ on $J_\alpha(F)=F_\alpha^\times$; if $\alpha$ is symmetric, then $\zeta_\alpha$ is trivial on $F_{\pm\alpha}^\times$, so it factors through $F_\alpha^\times\to F_\alpha^1=J_\alpha(F)$, and we denote the factored character by $\zeta_\alpha^1$. Put $\eta_\alpha = \zeta_\alpha^1\circ\alpha$, so the product $\prod_{\alpha \in \Phi((G/H)_{\ol F}, T_{\ol F})/\Sigma} \eta_\alpha$ is the character $\zeta_T$ of $T(F)$ from \cite[\S 3.2]{Kal21a}.

By Lemma~\ref{lemma:unramified-levi-not-ramified-symmetric}, neither $\alpha$ nor $\alpha_H$ is symmetric ramified. This implies that the restriction of $\zeta_T$ to $T(F)_{[x]}$ is trivial (as in the proof of \cite[Proposition 5.27]{Kal21a}). Lemma~\ref{lemma:dual-l-homs}, applied to $\zeta_\alpha$, identifies
\[
[Q_{\chi_{H,G},\ol\alpha}Q_{\chi''_{H,G},\ol\alpha}^{-1}]
=[\wh\eta_\alpha]
\quad\text{in }\rH^1(W_F, \wh T(k)).
\]
The right hand class restricts trivially to $I_F$ by local Langlands for tori, so the same is true of the left hand class. By the definitions \eqref{eqn:def-of-L-embedding} and \eqref{eqn:def-of-Q}, this implies
$\ld j_{\chi''_{H,G}}|_{\wh T\rtimes I_F}\sim
\ld j_{\chi_{H,G}}|_{\wh T\rtimes I_F}$. If $T$ is elliptic, then the same argument applies without restricting to $I_F$.
\end{proof}

\begin{prop}\label{prop:fks-and-tasho-cancel}
Let $\theta\co T(F) \to k^\times$ be a character. Then, with $\epsilon_{G,\sharp,x}$ and $\epsilon_{H,\sharp,x}$ defined as in \eqref{eqn:eps-sharp-x}, we have
\[
\ld j_{\chi''_G} \circ \ld\theta|_{I_F} \sim \ld j_{H,G} \circ \ld j_{\chi''_H} \circ \ld(\theta \cdot \epsilon_G\epsilon_H\epsilon_{G,\sharp,x}\epsilon_{H,\sharp,x})|_{I_F}.
\]
If moreover $T$ is elliptic in $G$, then
\[
\ld j_{\chi''_G} \circ \ld\theta \sim \ld j_{H,G} \circ \ld j_{\chi''_H} \circ \ld(\theta \cdot \epsilon_G\epsilon_H\epsilon_{G,\sharp,x}\epsilon_{H,\sharp,x}).
\]
\end{prop}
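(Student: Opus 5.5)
The plan is to chain together the four preceding lemmas, Lemma~\ref{lemma:comparison-to-minimal-ramified-chi-data}, Lemma~\ref{lemma:modification-of-symmetric-ramified-chi-data}, Lemma~\ref{lemma:inflation-change} and Lemma~\ref{lemma:fks-unram-levi}, together with one transitivity statement for L-embeddings built from inflated $\chi$-data, and then do the sign bookkeeping. I will run the argument for the full L-parameters assuming $T$ elliptic. The inertial version is word-for-word the same, using the $|_{I_F}$ versions of each lemma and working with characters of $T(F)_{\mathrm{b}}$. Throughout I use that all the $\epsilon$'s are $\{\pm1\}$-valued, so each squares to $1$.

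\textbf{Chain of equivalences.} Each line below is $\wh G(k)$-conjugate to the next.
\begin{enumerate}
\item By Lemma~\ref{lemma:comparison-to-minimal-ramified-chi-data} for $G$, $\ld j_{\chi''_G}\circ\ld\theta \sim \ld j_{\min\chi''_G}\circ\ld(\theta\epsilon_{G,\flat,0})$.
\item Apply Lemma~\ref{lemma:modification-of-symmetric-ramified-chi-data} to the character $\theta'=\theta\epsilon_{G,\flat,0}\epsilon_{G,\flat,1}\epsilon_{H,\flat,1}\epsilon_{G,\flat,2}\epsilon_{H,\flat,2}$. Since the signs square to $1$, this rewrites the expression from step 1 as $\ld j_{\chi''_{H,G}}\circ\ld\theta'$.
\item Lemma~\ref{lemma:inflation-change} replaces $\chi''_{H,G}$ by $\chi_{H,G}$.
\item The restriction of $\chi_{H,G}$ to $\Phi((G/H)_{\ol F},T_{\ol F})$ is the inflation of the canonical $\chi$-data $\chi_H$ for $Z_H$. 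Its restriction to $\Phi(H_{\ol F},T_{\ol F})$ is $\min\chi''_H$. So I invoke the transitivity of L-embeddings under inflation of $\chi$-data from \cite[\S 6]{Kal21a} to get
\[
\ld j_{\chi_{H,G}} \sim \ld j_{\chi_H}\circ \ld j_{\min\chi''_H} = \ld j_{H,G}\circ\ld j_{\min\chi''_H}.
\]
This is the statement that Kaletha's $\ld j_\chi$ is compatible with towers $T\subset H\subset G$ when the outer $\chi$-data is inflated; it is Proposition 6.9 there, or its proof.
\item Lemma~\ref{lemma:comparison-to-minimal-ramified-chi-data} for $H$, read backwards, gives $\ld j_{\min\chi''_H}\circ\ld\theta' \sim \ld j_{\chi''_H}\circ\ld(\theta'\epsilon_{H,\flat,0})$.
\end{enumerate}

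\textbf{Net twist.} After step 5 the accumulated twist is $\theta\,\epsilon_{G,\flat}\epsilon_{H,\flat}$, with $\epsilon_\flat$ as in \eqref{eqn:eps_flat}. It remains to show that this equals $\theta\,\epsilon_G\epsilon_H\epsilon_{G,\sharp,x}\epsilon_{H,\sharp,x}$ as characters of $T(F)=T(F)_{[x]}$; here ellipticity makes the sign characters defined on all of $T(F)$.
\begin{itemize}
\item By \eqref{eqn:def-of-epsilon} and \eqref{eqn:yu-datum-sign-character}, the restriction of $\epsilon_G$ to $T(F)_{[x]}$ is $\epsilon_{G,\sharp,x}\,\epsilon_{G,\flat}\,\epsilon_{G,f}$, and similarly for $H$.
\item Hence $\epsilon_G\epsilon_H\epsilon_{G,\sharp,x}\epsilon_{H,\sharp,x} = \epsilon_{G,\flat}\epsilon_{H,\flat}\,\epsilon_{G,f}\epsilon_{H,f}$.
\item It therefore suffices to show $\epsilon_{G,f}=\epsilon_{H,f}$. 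This is done level by level: apply Lemma~\ref{lemma:fks-unram-levi} to the group $G^{i+1}$, its unramified twisted Levi $H^{i+1}=H\cap G^{i+1}$, and $M=G^i$, noting $H^{i+1}\cap G^i = H^i$. This gives $\epsilon_f^{G^{i+1}/G^i}=\epsilon_f^{H^{i+1}/H^i}$ on $T(F)_{[x]}$.
\end{itemize}

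\textbf{Expected obstacle.} The main difficulty is step 4. Kaletha's transitivity statement is phrased with his gauges and $\chi$-data conventions, so one must check three things:
\begin{itemize}
\item the Tits-section cocycle $s$ and the representatives $w_i$ of \S\ref{sss:quasi-split-l-embedding} are compatible along the tower;
\item the restriction of $\chi_{H,G}$ to $H$-roots, which is $\min\chi''_H$, produces exactly $\ld j_{\min\chi''_H}$ inside $\ld H$;
\item the reduction to the quasi-split inner form can be done simultaneously for $T\subset H\subset G$, using \cite[Lemma 6.4]{Kal21a} for the pair $H\subset G$.
\end{itemize}
If citing Kaletha's result directly proves awkward, I would verify step 4 by hand. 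In the quasi-split case, choose Borel pairs adapted to the parabolic $P_E\supset H_E$ over an unramified splitting field. The factor $Q_\chi$ then splits into a product over $H$-roots and $(G/H)$-roots. The $(G/H)$-roots are asymmetric or symmetric unramified by Lemma~\ref{lemma:unramified-levi-not-ramified-symmetric}, which makes their contribution match the $Q$ attached to $\chi_H$.
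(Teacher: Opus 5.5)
Your proposal is correct and follows the paper's proof essentially step for step. It uses the same chain through Lemma~\ref{lemma:comparison-to-minimal-ramified-chi-data} (for $G$), Lemma~\ref{lemma:modification-of-symmetric-ramified-chi-data}, Lemma~\ref{lemma:inflation-change}, \cite[Proposition 6.9]{Kal21a} applied to the inflated data $\chi_{H,G}$, and Lemma~\ref{lemma:comparison-to-minimal-ramified-chi-data} again (for $H$). It then closes with the same identity $\epsilon_{G,\flat}\epsilon_{H,\flat}=\epsilon_G\epsilon_H\epsilon_{G,\sharp,x}\epsilon_{H,\sharp,x}$, obtained from \eqref{eqn:def-of-epsilon}, \eqref{eqn:eps_flat} and Lemma~\ref{lemma:fks-unram-levi}.

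For your step 4, the paper simply cites Kaletha's proposition. The set $\chi_{H,G}$ was defined precisely so that it has the inflated form that result requires, so the by-hand verification you anticipate is not needed.
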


\begin{proof}
By definition, the restriction of $\chi_{H,G}$ to $\Phi((G/H)_{\ol F}, T_{\ol F})$ is equal to the inflation (in the sense of \cite[Definition 5.16]{Kal21a}) of $\chi_H$, the canonical set of $\chi$-data for $\Phi((G/H)_{\ol F}, (Z_H)_{\ol F})$. The restriction of $\chi_{H,G}$ to $\Phi(H_{\ol F}, T_{\ol F})$ is by definition equal to $\min \chi''_H$. Thus by \cite[Proposition 6.9]{Kal21a}, we have
\[
\ld j_{\chi_{H,G}} \sim \ld j_{H,G} \circ \ld j_{\min \chi''_H}.
\]
By Lemmas~\ref{lemma:comparison-to-minimal-ramified-chi-data}, \ref{lemma:modification-of-symmetric-ramified-chi-data}, and \ref{lemma:inflation-change}, we have
\begin{align*}
\ld j_{\chi''_G} \circ \ld\theta|_{I_F} &\sim \ld j_{\min \chi''_G} \circ \ld(\theta \cdot \epsilon_{G,\flat,0})|_{I_F} \\
    &\sim \ld j_{\chi''_{H,G}} \circ \ld(\theta \cdot \epsilon_{G,\flat,0}\epsilon_{G,\flat,1}\epsilon_{H,\flat,1}\epsilon_{G,\flat,2}\epsilon_{H,\flat,2})|_{I_F} \\
    &\sim \ld j_{\chi_{H,G}} \circ \ld(\theta \cdot \epsilon_{G,\flat,0}\epsilon_{G,\flat,1}\epsilon_{H,\flat,1}\epsilon_{G,\flat,2}\epsilon_{H,\flat,2})|_{I_F} \\
    &\sim \ld j_{H,G} \circ \ld j_{\min \chi''_H} \circ \ld(\theta \cdot \epsilon_{G,\flat,0}\epsilon_{G,\flat,1}\epsilon_{H,\flat,1}\epsilon_{G,\flat,2}\epsilon_{H,\flat,2})|_{I_F} \\
    &\sim \ld j_{H, G} \circ \ld j_{\chi''_H} \circ \ld(\theta \cdot \epsilon_{G,\flat}\epsilon_{H,\flat})|_{I_F},
\end{align*}
where $\epsilon_{G,\flat}$ and $\epsilon_{H,\flat}$ are as in \eqref{eqn:eps_flat}. By \eqref{eqn:def-of-epsilon}, \eqref{eqn:eps_flat}, and Lemma~\ref{lemma:fks-unram-levi}, we obtain
\[
\epsilon_{G,\flat}\epsilon_{H,\flat}
=\epsilon_G\epsilon_H\epsilon_{G,\sharp,x}\epsilon_{H,\sharp,x},
\]
as required. The calculation in the case that $T$ is elliptic is identical.
\end{proof}

\subsection{Base change}

Now we move on to the case of base change functoriality. In this section, we continue to assume $p \neq 2$. For any finite extension $E/F$ and any weight $\alpha$, write $\F_{E,\alpha}$ and $\F_{E,\pm\alpha}$ for the residue fields of $E_\alpha$ and $E_{\pm\alpha}$, respectively.

\begin{lemma}\label{lemma:fks-base-change}
Let $T \subset M$ be a tamely ramified maximal $F$-torus such that $x\in\cB(T)$, and let $E/F$ be a finite unramified extension of odd degree. Then
\[
\epsilon_x^{G/M} \circ \Nm_{E/F}|_{T(E)_{[x]}} = \epsilon_x^{G_E/M_E}|_{T(E)_{[x]}}.
\]
\end{lemma}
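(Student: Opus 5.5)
We reduce to the case $G$ adjoint, since both sides are inflated from adjoint quotients and $\Nm_{E/F}$ commutes with $G\to G_{\ad}$. By the uniqueness statement \cite[Lemma 3.5]{FKS23}, a character of $\ol M_x$ is determined by its restrictions to tori, so it is enough to compare the two sides on $T(E)_{[x]}$. Both sides factor through the reductive quotient $\ol T_E(\F_{q^n})$, where $n=[E:F]$; on this quotient $\Nm_{E/F}$ induces the norm $\ol T(\F_{q^n})\to\ol T(\F_q)$. The plan is to compare the five factors of \eqref{eqn:def-of-epsilon} one at a time.

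First we check the combinatorial invariance of the root data. Because $E/F$ is unramified, $\Gamma_E$ acts on $\Phi((G/M)_{\ol F},T_{\ol F})$ and each $\Gamma$-orbit $\Gamma\alpha$ splits into $\Gamma_E$-orbits. Because $n$ is odd, $\alpha$ is symmetric over $E$ if and only if it is symmetric over $F$: the element of $\Gamma$ sending $\alpha\mapsto-\alpha$ may be taken in $\Gamma_{F_{\pm\alpha}}$ of order-$2$ image, and odd-degree linear disjointness gives $E_\alpha=EF_\alpha$ with $[E_\alpha:E_{\pm\alpha}]=2$. Ramification of $E_\alpha/E_{\pm\alpha}$ equals that of $F_\alpha/F_{\pm\alpha}$, and the same holds for the weights $\alpha_M$ of $Z_M$. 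The quantities $e_\alpha$, $e(\alpha/\alpha_M)$, $\ell_{p'}(\alpha^\vee)$ and $\ord_x(\alpha)$ are unchanged, using the normalized valuation of $F$ as the paper does. The good element $X$ stays good with the same $r$.

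Next we handle $\epsilon_\sharp$ and $\epsilon_{\flat,0}$. Fix a $\Gamma$-orbit $\omega$ (or $\Sigma$-orbit) and let $\alpha\in\omega$. The $\Gamma_E$-orbits in $\omega$ are indexed by $\Gamma_E\backslash\Gamma/\Gamma_{F_\alpha}$, and the corresponding residue fields are $\F_{E,\alpha}=\F_\alpha\F_{q^n}$. The key identity to check is
\[
\prod_{\beta\in\omega/\Gamma_E}\sgn_{\F_{E,\beta}^\times}(\beta(\gamma))=\sgn_{\F_\alpha^\times}\bigl(\alpha(\Nm\gamma)\bigr)
\]
for $\gamma\in\ol T(\F_{q^n})$, together with the analogous identity for $\sgn_{\F^1}$. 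This is transitivity of norms combined with $\sgn_{\F'^\times}=\sgn_{\F^\times}\circ\Nm_{\F'/\F}$. For the $\F^1$-variant we use that odd-degree extensions induce surjective norms on norm-one groups, so that $\sgn_{\F^1}$ pulls back to $\sgn$.

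The main obstacle is the symmetric ramified factors $\epsilon_f$, $\epsilon_{\flat,1}$ and $\epsilon_{\flat,2}$, whose conditions are of the form $\alpha(\gamma)\in-1+\frp_\alpha$. Such roots are fixed under the residue Frobenius action needed here, since $F_\alpha/F_{\pm\alpha}$ is ramified. For $\gamma$ in the reductive quotient, $\alpha(\gamma)$ modulo $\frp$ lies in $\{\pm1\}$, because $\alpha(\gamma)\tau(\alpha(\gamma))=1$ and $\tau$ acts trivially on the residue field. We then plan three checks.
\begin{itemize}
\item Over the $[E_\alpha:F_\alpha]$ orbits, $\alpha(\Nm\gamma)\equiv\prod\alpha(\sigma\gamma)$, and a product of an odd number of signs contributes the condition $-1$ exactly when the count of $-1$'s is odd. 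This must be matched against the product over $E$-orbits of the factor $f$; note that all the factors are $\pm1$.
\item We must show $f_{(G_E,T_E)}(\alpha)=f_{(G,T)}(\alpha)$. This holds because $\kappa$ composed with restriction to an odd-degree unramified extension is the same, by norm compatibility of local class field theory for odd degree.
\item We must show $(-1)^{[\F_{E,\alpha}:\F_{q^n}]+1}\sgn_{\F_{E,\alpha}^\times}(c)=(-1)^{[\F_\alpha:\F_q]+1}\sgn_{\F_\alpha^\times}(c)$ for $c\in\F_q^\times$. This holds since odd-degree residue field extensions preserve whether $c$ is a square and preserve the parity of the residue degree.
\end{itemize}
The delicate point is making the orbit bookkeeping consistent when $\omega$ splits into several $E$-orbits. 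Multiplying the $F$-factors, each to the power of the number of $E$-orbits $\beta$ with $\beta(\gamma)\equiv-1$, is what must equal the $F$-factor evaluated at $\Nm\gamma$. Since the factor $c$ is a sign $c^{\pm}$ and the relevant counts agree modulo $2$, this parity argument should conclude the proof.
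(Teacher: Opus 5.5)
Your proposal is correct and follows essentially the paper's proof. Both reduce to $G$ adjoint and split each $\Gamma$-orbit into $\Gal(\ol F/E)$-orbits. For asymmetric and symmetric unramified roots, both use the norm-compatibility of the quadratic characters of $\F^\times$ and $\F^1$. For symmetric ramified roots, both check that $f_{(G,T)}(\alpha)$, $e_\alpha$, $e(\alpha/\alpha_M)$ and the parity of the residue degree are unchanged, and then reduce to the parity statement $\alpha(\Nm\gamma)\equiv\prod_\beta\beta(\gamma)$.

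The differences are cosmetic. The paper first filters $E/F$ into steps of odd prime degree, so each orbit either stays whole or splits into exactly $\ell$ pieces, and it works with the combined rootwise factors $\epsilon^{G/M}_{x,\alpha}$. You instead treat general odd $[E:F]$ at once via double cosets and compare the five factors separately. One phrase is inaccurate: a symmetric ramified root need not be fixed by the Frobenius action. This is not used, since your final parity bookkeeping handles the split orbits correctly.
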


\begin{proof}
The proof is given in Appendix~\ref{app:fks-base-change-proof}.
\end{proof}

\begin{lemma}\label{lemma:canonical-l-embedding-base-change}
Let $H \subset G$ be an unramified twisted Levi $F$-subgroup, and let $E/F$ be a finite extension. Recall from \S\ref{ssec:notation} that $E_2/E$ denotes the unramified extension of degree $2$. Then
\[
\ld j_{H_E,G_E}|_{\wh H\rtimes W_{E_2}} \sim \ld j_{H,G}|_{\wh H\rtimes W_{E_2}}.
\]
\end{lemma}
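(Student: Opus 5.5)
Our plan is to compare the two L-embeddings through the explicit formula \eqref{eqn:def-of-L-embedding}, $\ld j_\chi(h,w) = (hQ_\chi(w)n(\gamma_{M,G}),w)$, with $\chi$ the canonical set of $\chi$-data for $Z_H$ over $F$, respectively over $E$. First we reduce to the quasi-split case. By \cite[Lemma 6.4]{Kal21a}, choose an inner twist $\xi\co G_{0,\ol F}\to G_{\ol F}$ carrying a quasi-split twisted Levi $H_0$ to $H$. Its base change to $E$ serves the same purpose for $(G_E,H_E)$, and $H_0$ remains an unramified twisted Levi. Next, pick the $F$-rational Borel pairs $(T_H,B_H)$, $(T_G,B_G)$ and the element $g$ over $F$. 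Their base changes are valid choices over $E$. The elements $\gamma_{H,G}$ for $\gamma\in\Gal(\ol F/E)$ are then literally the same, and so are the Tits sections $n(\gamma_{H,G})$. Hence the $n$-factors agree on $W_E$, and everything reduces to comparing the cochains $Q_{\chi_F}|_{W_{E_2}}$ and $Q_{\chi_E}|_{W_{E_2}}$ valued in $Z_{\wh H}(k)$. It suffices to show that these differ by a coboundary for the relevant twisted action on $Z_{\wh H}$, or equivalently by conjugation by an element of $Z_{\wh H}(k)$.

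Since $H$ is an unramified twisted Levi, Lemma~\ref{lemma:unramified-levi-not-ramified-symmetric} says that every root $\alpha\in\Phi((G/H)_{\ol F},(Z_H)_{\ol F})$ is asymmetric or symmetric unramified over $F$. Over $E$, a root that was asymmetric over $F$ stays asymmetric. A root that was symmetric unramified over $F$ either stays symmetric unramified, or becomes asymmetric when $E$ contains the relevant unramified quadratic extension $F_\alpha/F_{\pm\alpha}$ after compositum. In every case the canonical $\chi$-data over $E$ is the minimally ramified one. The asymmetric characters are trivial, and each symmetric unramified $\chi_\alpha$ is the unramified character sending a uniformizer to $-1$, so it has order dividing $2$ and is trivial on $\mathrm{Nm}$ of an unramified quadratic extension.

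We then decompose $Q_\chi = s\cdot\prod_\alpha Q_{\chi,\alpha}$ and restrict to $W_{E_2}$. For the $\chi$-factors, formula \eqref{eqn:def-of-Q} evaluates $\chi_\alpha$ on $\rec_{F_\alpha}(v_0(u_i(w)))$. By Lemma~\ref{lemma:dual-l-homs}, the product over the $F$-orbit of $\alpha$ has the cohomology class of $\wh\alpha\circ\wh\chi^1_\alpha$, and its restriction to $W_E$ is described by Mackey/Shapiro: it becomes a product over the $E$-orbits in the $F$-orbit, each contributing $\chi_\alpha\circ\Nm_{E_\alpha/F_\alpha}$. On $W_{E_2}$ every such character becomes trivial. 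Indeed, $\Nm_{E_{2,\alpha}/F_\alpha}$ of a uniformizer has even valuation whenever the unramified degree in play contributes a factor of $2$, and the unramified characters of order $2$ die on the $W_{E_2}$-image. The same holds for the $E$-side $\chi$-factors, so on $W_{E_2}$ both $\chi$-contributions are trivial in $\rH^1$.

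It then remains to compare the sign cocycles $s$, which depend on the choice of representatives $A$, $w_i$ and the function $P$. Both $s_F|_{W_{E_2}}$ and $s_E|_{W_{E_2}}$ are, by \cite[\S 6]{Kal21a} (the $s_{p/q}$-type cocycles), the cocycle attached to the trivial gauge difference. Their classes are determined by the $\Sigma$-orbit structure, and changing gauges or representatives changes $s$ by a coboundary times a product of $\wh\alpha^\vee(-1)$ over symmetric orbits, which is exactly compensated by the $\chi$-data being trivial on $W_{E_2}$. We expect this last point to be the main obstacle: matching the sign cocycle $s$ over $F$ restricted to $W_E$ with $s$ over $E$ when symmetric orbits split into asymmetric ones. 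We would handle it by first proving that $s\cdot\prod_\alpha Q_{\chi,\alpha}$ restricted to a subgroup is the corresponding cocycle for the restricted data up to coboundary (the restriction-compatibility of $\ld j_\chi$ underlying \cite[Corollary 6.7]{Kal21a}). Any residual discrepancy would come from the quadratic unramified characters, and these vanish on $W_{E_2}$.
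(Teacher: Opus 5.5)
Your skeleton (reduce to the quasi-split case, base-change the Borel pairs and $g$ so that the $n$-factors agree, then compare $s$ and $Q_\chi$) matches the paper's. However, the step that carries the content is wrong.

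First, Lemma~\ref{lemma:dual-l-homs} requires the character to factor through $F_\alpha^\times\to F_\alpha^1$. For a symmetric root, $\chi_\alpha|_{F_{\pm\alpha}^\times}=\kappa_\alpha\neq 1$, so $\chi_\alpha^1$ does not exist. Hence $Q_{\chi,\alpha}$ is merely a cochain and has no class in $\rH^1$.

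Second, the $\chi$-contributions do not vanish on $W_{E_2}$. Take $T\subset\PGL_2$ unramified elliptic and $E/F$ totally ramified. Then both the $F$-side and the $E$-side factor on $W_{E_2}$ equal $w\mapsto\wh\alpha^\vee((-1)^{d(w)})$, with $d$ the Frobenius degree of $E_2$, and this is nontrivial. The two sides agree rather than vanish. Indeed, on roots symmetric over $E$ one has $\chi_{F,\alpha}\circ\Nm_{E_\alpha/F_\alpha}=\chi_{E,\alpha}$, since both are the unique unramified extension of $\kappa_{E,\alpha}=\kappa_{F,\alpha}\circ\Nm_{E_{\pm\alpha}/F_{\pm\alpha}}$.

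So the right route is a restriction compatibility $\ld j_\chi|_{\wh H\rtimes W_E}\sim\ld j_{\chi\circ\Nm}$. This is Mackey bookkeeping in \eqref{eqn:def-of-Q}, which you must actually prove; it is not \cite[Corollary 6.7]{Kal21a}. After that, the only discrepancy comes from roots that are symmetric over $F$ but asymmetric over $E$. There it is the unramified character $\chi_{F,\alpha}\circ\Nm_{E_\alpha/F_\alpha}$, of order at most $2$. By contrast, $s$ is no obstacle: choose $A_E$ and the $w_i$ over $F$ so that the two functions $P$ coincide, and then $s_F|_{W_E}=s_E$ on the nose.

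That residual character dies on $W_{E_2}$ when $E_2\not\subset E_\alpha$, i.e.\ when $f(E_\alpha/E)$ is odd. For $E/F$ unramified this always holds: asymmetry over $E$ forces the $2$-adic valuation of $f(E/F)$ to exceed that of $f(F_{\pm\alpha}/F)$. That is the only case the paper uses (Lemma~\ref{lemma:functorial-semisimplicity}). For ramified $E$ it can fail, and then the statement itself fails, so no argument can close your gap. The paper's one-line proof glosses over the same point.

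Here is a counterexample. Let $L=F(\sqrt{\varpi})$, let $u\in\cO_F^\times$ be a nonsquare, and let $E=F(\sqrt{u\varpi})$, so that $E_2=L_2=F(\sqrt{\varpi},\sqrt{u})$. Put $G=\Res_{L/F}\PGL_2$ and $H=\Res_{L/F}T'$ with $T'\subset\PGL_{2,L}$ unramified elliptic. Then:
\begin{itemize}
\item $H$ becomes a Levi over $F_2$.
\item Its roots form one symmetric unramified orbit with $F_{\pm\alpha}=L$ and $F_\alpha=L_2$.
\item $W_{E_2}=W_{L_2}$ acts trivially on the roots and on $\wh G=\SL_2\times\SL_2$, so $s=n=1$ on $W_{E_2}$.
\item Formula \eqref{eqn:def-of-Q} then gives $\ld j_{H,G}(1,w)=((-1,-1)^{d(w)},w)$, with $(-1,-1)\in Z(\wh G)$ and $d$ the Frobenius degree of $L_2$.
\end{itemize}
On the other side, $G_E=\Res_{E_2/E}\PGL_2$ and $H_E$ is the Levi of the $E$-Borel $\Res_{E_2/E}B$. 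So $\ld j_{H_E,G_E}(1,w)=(1,w)$ by Lemma~\ref{lem:unramified-L-embedding-properties}(1). A nontrivial central twist on a subgroup acting trivially on $\wh G$ cannot be removed by $\wh G(k)$-conjugation when $\mathrm{char}\,k\neq 2$.

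The lemma should therefore be stated for $E/F$ unramified, or under the parity condition above. In that setting your outline, with the corrections above, goes through once the restriction compatibility is established.
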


\begin{proof}
By twisting, we may assume that $G$ is quasi-split. In this case, observe that each $\chi_\alpha$ is unramified of order at most $2$ and the maps $s$ and $n$ are essentially independent of $F$, so the result follows from the definitions \eqref{eqn:def-of-Q} and \eqref{eqn:def-of-L-embedding}.
\end{proof}

\begin{remark}\label{remark:canonical-embedding-incompatible}
The restriction to $W_{E_2}$ in Lemma~\ref{lemma:canonical-l-embedding-base-change} may look strange, but it is necessary in general. For instance, let $G=\PGL_2$ and let $T\subset G$ be an unramified elliptic maximal $F$-torus. If $E=F_2$, the representatives furnished by the construction satisfy $\ld j_{T_E,G_E}(1,w)=(1,w)$ for every $w\in W_E$, since $T_E$ is split. On the other hand, if $w\in W_F$ has image $\Fr^2$ in $W_F/I_F$, then
\[
\ld j_{T,G}(1,w)=(-1,w).
\]
We expect a more natural-looking compatibility if one uses the c-group in place of the L-group, but the difference will not ultimately be relevant to this paper.
\end{remark}

Let $E/F$ be a finite separable, tamely ramified field extension. Choose a maximally unramified maximal $F$-torus $T \subset G^0$, and let $\chi''_F$ (resp.\ $\chi''_E$) denote the set of $\chi$-data for $\Phi(G_{\ol F}, T_{\ol F})$ associated to the tuple $(\vec{G},x,\vec{r},\vec{X})$ (resp.\ $(\vec{G}_E,x,\vec{r},\vec{X})$). Let $\ld j_F\co \ld T \to \ld G$ (resp.\ $\ld j_E\co \ld(T_E) \to \ld(G_E)$) denote the L-embedding associated to $\chi''_F$ (resp.\ $\chi''_E$) by the mechanism of \S\ref{ss:chi-data-L-embedding}. Note that since the characters associated to $\chi''_F$ and $\chi''_E$ are valued in $\mu_4$, we may regard $\ld j_F$ and $\ld j_E$ as homomorphisms of $\ol\Z[1/p]$-group schemes. In particular, we may consider the base change $k$-homomorphisms $(\ld j_F)_k$ and $(\ld j_E)_k$ for our chosen coefficient ring $k$.

Put $\psi_E = \psi\circ\Tr_{E/F}$, and let $\psi_E^0\co \F_E\to k^\times$ be the induced character of the residue field; this is the character relative to which we define the L-embedding $\ld j_E$. 

\begin{lemma}\label{lemma:tasho-base-change}
Suppose that $E/F$ is a cyclic extension of prime degree $\ell$.
\begin{enumerate}
    \item If $\ell=2$, then $(\ld j_E)_{\ol\bF_2}\sim(\ld j_F)_{\ol\bF_2}|_{\ld(T_E)}$. If moreover $E/F$ is unramified, then $\ld j_E|_{\wh T \rtimes I_F}\sim\ld j_F|_{\wh T \rtimes I_F}$.
    \item If $\ell$ is odd and $E/F$ is unramified, then $\ld j_E\sim\ld j_F|_{\ld(T_E)}$.
\end{enumerate}
\end{lemma}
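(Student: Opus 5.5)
The plan is to compare the two L-embeddings root-by-root through the formulas \eqref{eqn:def-of-Q} and \eqref{eqn:def-of-L-embedding}. After an inner twist we may take $G$ quasi-split. The first step is to observe that the quasi-split data $(T_G,B_G)$, the element $g$, the Tits section $n$, and the Weyl elements $\gamma_{T,G}$ may be chosen over $F$ and reused over $E$. The restriction of $w\mapsto n(\gamma_{T,G})$ to $W_E$ is then the corresponding term for $E$. Consequently the comparison reduces to comparing the cochains $Q_{\chi''_E}$ and $Q_{\chi''_F}|_{W_E}$, including the sign function $s$, in $\rH^1(W_E,\wh T(k))$ (respectively after restriction to $I_E$ or after reduction to $\ol\F_2$).

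The second step is to reinterpret the terms. For this I would use Lemma~\ref{lemma:dual-l-homs} together with the fact that the inverse of Shapiro's lemma (Remark~\ref{remark:explicit-shapiro}) is compatible with restriction, i.e.\ with the Mackey decomposition of $W_F$-orbits of roots into $W_E$-orbits. Then $Q_{\chi''_F}|_{W_E}$ is cohomologous to the cochain built from the $\Gamma_E$-orbits of roots, with $\chi$-data $\chi''_{F,\alpha}\circ\Nm_{E_\alpha/F_\alpha}$. Here the $s$-factor discrepancy should be a coboundary, by \cite[Corollary 6.7]{Kal21a}-type independence of choices. The claim thus becomes a comparison of two sets of $\chi$-data for $T_E$, namely $\chi''_E$ and the ``restricted'' data $\chi''_F\circ\Nm$. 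Note that the latter is a legitimate set of $\chi$-data only for roots whose symmetry type does not change. Whenever the $\chi$-data agree, the L-embeddings agree up to conjugacy.

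The third step is a case analysis on roots. Suppose first that $E/F$ is unramified of odd degree $\ell$. Then a root is symmetric or asymmetric over $E$ exactly as over $F$, the ramification behaviour is unchanged, and $E_\alpha=E F_\alpha$ is unramified over $F_\alpha$ of degree dividing $\ell$.

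\begin{itemize}
\item For asymmetric roots both data are trivial.
\item For symmetric unramified roots, the unramified quadratic character composed with the norm is again the unramified quadratic character, since the norm multiplies valuations by an odd number.
\item For symmetric ramified roots, the restriction to units is the quadratic residue character composed with the residue norm, which is still quadratic. The condition on $F_{\pm\alpha}^\times$ is preserved since $\kappa$ is compatible with norms.
\end{itemize}

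The crux is then \eqref{eqn:ramified-symmetric-chi-data}. Since $\ell_{G,p'}(\alpha^\vee)a_\alpha\in F_{\alpha_i}$, we have $\chi''_F(\Nm(\ell a_\alpha))=\chi''_F(\ell a_\alpha)^{[E_\alpha:F_\alpha]}$, whereas the $E$-side value is $(-1)^{f_{E,\alpha}+1}\frG_{\F_{E,\alpha}}(\psi^0_E)$. I would use the Hasse--Davenport relation in the normalized form $-\frG_{\F'}(\psi^0\circ\tr)=(-\frG_{\F}(\psi^0))^{[\F':\F]}$, together with $\psi^0_E=\psi^0\circ\tr$ and the oddness of the degree, to match the two values. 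This gives part (2).

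For $\ell=2$, over $\ol\F_2$ every sign is $1$, so all characters valued in $\mu_4$ become trivial after reduction, up to the $\pm i$ values of the Gauss sums. Since $\mu_4\to\ol\F_2^\times$ is trivial, both L-embeddings reduce to $w\mapsto n(\gamma_{T,G})$ twisted by trivial $Q$, and part (1) follows. For $E/F$ unramified quadratic restricted to inertia, the unramified characters die on $I_F$, and the ramified-symmetric data restricted to $\cO^\times$ agree by the quadratic-residue computation. Note that symmetric unramified roots may become asymmetric, but their data are unramified and so are invisible on $I_F$. Hence the claim holds on $\wh T\rtimes I_F$.

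The main obstacle I expect is the second step, namely a careful check that the $s$ and $Q$ factors are compatible with restriction from $W_F$ to $W_E$ under the change of orbit representatives $w_i$. I would handle this by choosing $W_F$-representatives refining $W_E$-representatives and arguing as in Lemma~\ref{lemma:dual-l-homs}. The details, like the other sign computations, belong in the appendix.
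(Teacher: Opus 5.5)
Your overall route is the paper's. You pass to the quasi-split case and reuse $(T_G,B_G)$, $g$, $\gamma_{T,G}$ and $n(\gamma_{T,G})$ from $F$. You choose orbit representatives over $F$ refining those over $E$ and use norm functoriality of $\rec$ to reduce to comparing $\chi''_F\circ\Nm$ with $\chi''_E$. Part (1) then dies because everything is $\mu_4$-valued, and part (2) follows from Hasse--Davenport and oddness of the degree.

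However, your third step runs the case analysis on the wrong root, and as written it is false. The data $\chi''$ are not minimally ramified. For $\alpha\in\Phi(G^{i+1}_{\ol F},T_{\ol F})-\Phi(G^i_{\ol F},T_{\ol F})$ one has $\chi''_\alpha=\chi''_{\alpha_i}\circ\Nm_{F_\alpha/F_{\alpha_i}}$, and its type is governed by the restriction $\alpha_i$ of $\alpha$ to $Z^i$, not by $\alpha$. So ``for asymmetric roots both data are trivial'' fails whenever $\alpha$ is asymmetric but $\alpha_i$ is symmetric. For instance, if $\alpha_i$ is symmetric ramified then $\chi''_{F,\alpha}|_{\cO_{F_\alpha}^\times}=\sgn_{\F_\alpha^\times}^{e(\alpha/\alpha_i)}$, which is nontrivial when $e(\alpha/\alpha_i)$ is odd. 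This is exactly where $\epsilon_{\flat,0}$ in Lemma~\ref{lemma:comparison-to-minimal-ramified-chi-data} comes from. Likewise, a symmetric unramified $\alpha$ whose $\alpha_i$ is symmetric ramified need not carry the unramified quadratic character. So neither your second bullet nor your claim (in the $\ell=2$ case) that roots which become asymmetric over $E$ are ``invisible on $I_F$'' is justified for such roots.

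The repair is the reduction the paper makes. We have $\Nm_{F_\alpha/F_{\alpha_i}}\circ\Nm_{E_\alpha/F_\alpha}=\Nm_{E_{\alpha_i}/F_{\alpha_i}}\circ\Nm_{E_\alpha/E_{\alpha_i}}$ and $\chi''_{E,\alpha}=\chi''_{E,\alpha_i}\circ\Nm_{E_\alpha/E_{\alpha_i}}$. Hence it suffices to prove $\chi''_{F,\alpha_i}\circ\Nm_{E_{\alpha_i}/F_{\alpha_i}}=\chi''_{E,\alpha_i}$, restricted to $\cO_{E_{\alpha_i}}^\times$ for the inertial claim. The trichotomy should then be run on $\alpha_i$:
\begin{itemize}
\item odd-degree unramified base change preserves the type of $\alpha_i$;
\item for unramified quadratic $E/F$, the only possible change is from symmetric unramified to asymmetric, and then both sides are trivial on units.
\end{itemize}
Your three bullets and the Hasse--Davenport computation go through verbatim after replacing $F_\alpha$, $\F_\alpha$, $f_\alpha$, $[E_\alpha:F_\alpha]$ by $F_{\alpha_i}$, $\F_{\alpha_i}$, $f_{\alpha_i}$, $[E_{\alpha_i}:F_{\alpha_i}]$.

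Two smaller points on your second step. First, with compatibly chosen representatives, the function $P$, and hence $M(w)$ and $s(w)$, agree on the nose for $w\in W_E$. So no independence-of-choices argument is needed, and none would be available anyway when $\chi''_F\circ\Nm$ fails to be a set of $\chi$-data over $E$. Second, the transfer computation identifies $Q_{\chi''_F}|_{W_E}$ with the $E$-cochain attached to $\chi''_F\circ\Nm$ as an identity of cochains, which is what you need. Lemma~\ref{lemma:dual-l-homs} cannot be applied to a single $Q_{\chi,\alpha}$ at a symmetric root, because $\chi_\alpha$ restricts to $\kappa_\alpha\neq1$ on $F_{\pm\alpha}^\times$.
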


\begin{proof}
We may and do assume that $G$ is quasi-split. Note that the characters associated to $\chi''_F$ and $\chi''_E$ are $\mu_4$-valued, so the products $Q$ associated to both by \eqref{eqn:def-of-Q} are trivial after reduction modulo $2$; the factor $s$ is trivial modulo $2$ because it is a product of values $\wh\alpha^\vee(-1)$. Finally, the Tits section $n\co \Omega(\wh G,\wh T)\to N_{\wh G}(\wh T)(\F_2)$ is a homomorphism.  Formula~\eqref{eqn:def-of-L-embedding} therefore proves the first claim of (1).

For the second claim of (1), choose the systems of representatives used in \eqref{eqn:def-of-Q} compatibly with the decomposition of every $\Gal(\ol F/F)$-orbit into $\Gal(\ol F/E)$-orbits. Note that for $w \in I_F$, the Weyl element $\gamma_{T,G}$, its Tits lift $n(\gamma_{T,G})$, and the factor $s(w)$ in \eqref{eqn:def-of-L-embedding} are the same over $F$ and over $E$. Thus norm functoriality in local class field theory reduces us to the claim
\[
    \chi''_{F,\alpha}\circ \Nm_{E_\alpha/F_\alpha}|_{\cO_{E_\alpha}^\times}
    =\chi''_{E,\alpha}|_{\cO_{E_\alpha}^\times}
\]
for every absolute root $\alpha$. In fact, by definition of $\chi_{F,\alpha}''$ and $\chi_{E,\alpha}''$, it is enough to show
\begin{equation}\label{eqn:chi-data-unramified-base-change-restricted-inertia}
    \chi''_{F,\alpha_i}\circ \Nm_{E_{\alpha_i}/F_{\alpha_i}}|_{\cO_{E_{\alpha_i}}^\times}
    =\chi''_{E,\alpha_i}|_{\cO_{E_{\alpha_i}}^\times}
\end{equation}
if $\alpha \in \Phi(G^{i+1}_{\ol F}, T_{\ol F}) - \Phi(G^i_{\ol F}, T_{\ol F})$.

If $\alpha_i$ is either asymmetric or symmetric unramified over $F$, then \eqref{eqn:chi-data-unramified-base-change-restricted-inertia} is clear from the definitions (since both sides are trivial), so we may and do assume that $\alpha_i$ is symmetric ramified over $F$. Since $E/F$ is unramified, it follows that $\alpha_i$ is symmetric ramified over $E$, and thus both sides of \eqref{eqn:chi-data-unramified-base-change-restricted-inertia} are the unique nontrivial sign character of $\cO_{E_{\alpha_i}}^\times$, and in particular they are equal.

For (2), arguing as above reduces us to proving
\begin{equation}\label{eqn:chi-data-unramified-base-change-restricted}
    \chi''_{F,\alpha_i}\circ \Nm_{E_{\alpha_i}/F_{\alpha_i}}
    =\chi''_{E,\alpha_i}.
\end{equation}
for every absolute root $\alpha$. Unramified base change of odd degree preserves whether $\alpha_i$ is asymmetric, symmetric unramified, or symmetric ramified.  If it is asymmetric, both characters in \eqref{eqn:chi-data-unramified-base-change-restricted} are trivial.  If it is symmetric unramified, both are the unramified quadratic character: the degree $[E_{\alpha_i}:F_{\alpha_i}]$ is odd, so pullback by the norm preserves the value $-1$ on a uniformizer.

It remains to treat the symmetric ramified case. The pullback $\chi''_{F,\alpha_i}\circ \Nm_{E_{\alpha_i}/F_{\alpha_i}}$ has the required restriction to $\cO_{E_{\alpha_i}}^\times$, because the quadratic character of $\cO_{E_{\alpha_i}}^\times$ is compatible with norms; it has the required restriction to $E_{\pm\alpha_i}^\times$ by norm functoriality for the quadratic characters $\kappa_{\alpha_i}$.  Thus it remains only to check the final condition in \eqref{eqn:ramified-symmetric-chi-data}.

Put $f_{F,\alpha_i} = [\F_{\alpha_i}:\F]$ and $f_{E,\alpha_i} = [\F_{E,\alpha_i}: \F_E]$, and let $m = [E_{\alpha_i}:F_{\alpha_i}]$. The Hasse--Davenport relation gives the identity of Gauss sums
\[
    -\frG_{\F_{E,\alpha_i}}(\psi_E^0)
    =\left(-\frG_{\F_{\alpha_i}}(\psi^0)\right)^m.
\]
Since $m$ is odd, we have $f_{F,\alpha_i}\equiv f_{E,\alpha_i}\pmod 2$ and thus \eqref{eqn:ramified-symmetric-chi-data} gives
\begin{align*}
    (\chi''_{F,\alpha_i}\circ \Nm_{E_{\alpha_i}/F_{\alpha_i}})
    (\ell_{G,p'}(\alpha^\vee)a_\alpha)
    &=\chi''_{F,\alpha_i}(\ell_{G,p'}(\alpha^\vee)a_\alpha)^m \\
    &= \left((-1)^{f_{F,\alpha_i} +1}\frG_{\F_{\alpha_i}}(\psi^0)\right)^m \\
    &=(-1)^{f_{E,\alpha_i} +1}\frG_{\F_{E,\alpha_i}}(\psi_E^0).
\end{align*}
This is precisely the remaining defining condition, so \eqref{eqn:chi-data-unramified-base-change-restricted} follows.
\end{proof}

We now deal with the analogue of Lemma~\ref{lemma:tasho-base-change} for ramified extensions of odd degree. Suppose that $G^0 = T$, so the image of the point $x$ in $\cB((G^0_{\der})_E)$ is a vertex. Let $\epsilon_F$ (resp.\ $\epsilon_E$) denote the sign character associated to the tuple $(\vec G, x, \vec r, \vec X)$ (resp.\ $(\vec G_E, x, \vec r, \vec X_E)$) as in \S\ref{sssec:eps-chars-descent-Levi}.

\begin{prop}\label{prop:ramified-kal-bc}
Let $E/F$ be a cyclic extension of prime degree $\ell$. If $\ell \neq 2$ and $\theta\co T(F)_{[x]} \to k^\times$ is a character and $\theta_E \coloneqq \theta \circ \Nm_{E/F}|_{T(E)_{[x]}}$, then
\[
    \ld j_E \circ \ld(\theta_E\cdot\epsilon_E\epsilon_{E,\sharp,x}\cdot (\epsilon_F\epsilon_{F,\sharp,x}) \circ \Nm_{E/F})|_{I_E} \sim \ld j_F \circ \ld\theta|_{I_E}.
\]
If moreover $T_E$ is elliptic, then
\[
    \ld j_E \circ \ld(\theta_E\cdot\epsilon_E\cdot \epsilon_F \circ \Nm_{E/F}) \sim \ld j_F \circ \ld\theta|_{W_E}.
\]
The same assertions hold if $\ell = 2$ and $k = \ol \F_2$.
\end{prop}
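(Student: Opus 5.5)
Both sides are built from the explicit formula \eqref{eqn:def-of-L-embedding}, so I will compare them term by term. First I reduce to $G$ quasi-split, as in the proof of Lemma~\ref{lemma:tasho-base-change}. Then I choose the representatives $w_i$, $v_0$, $v_1$ entering \eqref{eqn:def-of-Q} over $F$ compatibly with the decomposition of each $\Gal(\ol F/F)$-orbit of roots into $\Gal(\ol F/E)$-orbits. Since $\ell$ is odd, prime to $p$ and the extension is cyclic, each $\Sigma$-orbit over $F$ splits into $\Sigma_E$-orbits of the same symmetry type, except that ramification may change. With these choices the factors $n(\gamma_{T,G})$ and $s(w)$ agree for $w\in W_E$; for $w\in I_E$ this was already observed in Lemma~\ref{lemma:tasho-base-change}, and in general both are determined by the action of $w$ on $\Phi$. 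Norm functoriality of local class field theory, together with Lemma~\ref{lemma:dual-l-homs}, then identifies $\ld j_F|_{\ld(T_E)}$ with $\ld j_{\chi''_F\circ \Nm}$, the L-embedding over $E$ attached to the pulled-back $\chi$-data $(\chi''_{F,\alpha}\circ\Nm_{E_\alpha/F_\alpha})_\alpha$. So the proposition reduces to showing
\[
\ld j_{E}\circ\ld(\theta_E\cdot\eta)\sim \ld j_{\chi''_F\circ\Nm}\circ\ld\theta_E,
\]
where $\eta$ is the sign character appearing in the statement. Lemma~\ref{lemma:dual-l-homs} turns this into the rootwise identity
\[
\prod_{\alpha}\zeta^1_\alpha\circ\alpha=\eta,\qquad \zeta_\alpha = \chi''_{E,\alpha}\,(\chi''_{F,\alpha}\circ\Nm)^{-1},
\]
with the product taken over representatives of $\Sigma_E$-orbits. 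On inertia this identity only needs to hold on $T(E)_{[x]}$.

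Next I treat the roots by type, with the twisted Levi index $i$ fixed and $\alpha_i$ the restricted root. When $E/F$ is unramified, the computation in Lemma~\ref{lemma:tasho-base-change}(2) gives $\zeta_\alpha=1$. In that case Lemma~\ref{lemma:fks-base-change} gives $\epsilon_E=\epsilon_F\circ\Nm$; the $\sharp$ factors likewise agree, because each factor is $\pm1$-valued, so every correction character is a square and is trivial. The real case is therefore $E/F$ totally ramified of odd degree $\ell$. If $\alpha_i$ is asymmetric, both $\chi$-data are trivial. If $\alpha_i$ is symmetric unramified, then $\chi''_{F,\alpha}\circ \Nm$ sends a uniformizer $\varpi_E$ to $(-1)^{\ell}=-1$; both characters are unramified, so they are equal. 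The whole discrepancy is therefore concentrated at symmetric ramified $\alpha_i$. There the inertial restrictions agree because both are the quadratic residue character and the residue fields coincide. What remains is the normalization \eqref{eqn:ramified-symmetric-chi-data}. The element $a_\alpha$ is unchanged, $f_{\alpha_i}$ is unchanged, and $\psi_E^0=\ell\cdot\psi^0$ on the common residue field, so $\frG(\psi_E^0)=\sgn(\ell)\frG(\psi^0)$. On the other side, $\chi''_{F,\alpha_i}(\Nm(\ell_{p'}a_\alpha))=\chi''_{F,\alpha_i}(\ell_{p'}a_\alpha)^\ell\cdot(\text{a quadratic symbol in }\ell)$. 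The ramification index $e_\alpha$ also changes by the factor $\ell$ in $\epsilon_{\flat,1}$, via $\sgn_{\F_\alpha^\times}(e_\alpha\ell_{p'}(\alpha^\vee))$.

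The step I expect to be hardest is this bookkeeping. I must show that the defect $\zeta^1_\alpha\circ\alpha$, a character trivial on units up to the quadratic symbol $\sgn_{\F_\alpha^\times}(\ell)$ evaluated where $\alpha(\gamma)\in-1+\frp_\alpha$, matches exactly the change in $\epsilon_{\flat,1}$. I must also check that the changes in $\epsilon_{\flat,0}$, $\epsilon_{\flat,2}$, $\epsilon_f$ and $\epsilon_{\sharp,x}$ cancel in pairs. For $\epsilon_f$ the invariant $f_{(G,T)}(\alpha)$ changes by $\kappa$ of a norm, hence by an $\ell$-th power, hence not at all. For $e(\alpha/\alpha_M)$ the factor $\ell$ is odd, so its parity is preserved, but $(e-1)/2$ can change parity, and this must be absorbed by $\epsilon_{\flat,1}$. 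For $\epsilon_{\sharp,x}$ the depth $r/2$ lies in $\ord_x(\alpha)_E$ exactly when it did over $F$ rescaled, which is where the $\sharp$ terms enter the inertial statement. I would carry these checks out orbit by orbit, in the style of Appendix~\ref{app:rootwise-sign-chi-calculations}. This proves the claim on $I_E$.

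When $T_E$ is elliptic, $T(E)=T(E)_{[x]}$, so the same identity holds on all of $W_E$. Because $G^0=T$ and $x$ is a vertex, the $\sharp$ characters are trivial on the full group: the relevant residue-field determinants vanish, since $r/2\notin\ord_x$ for elliptic tori at a vertex, or they are squares. This explains why the $\sharp$ factors drop out of the second display. Finally, for $\ell=2$ and $k=\ol\F_2$ every sign character and every $Q$ becomes trivial, and Lemma~\ref{lemma:tasho-base-change}(1) gives the result directly.
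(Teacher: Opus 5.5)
\textbf{There are two genuine gaps.} Your reduction matches the paper's: quasi-split, coset representatives chosen compatibly so that $s$ and $n(\gamma_{T,G})$ agree, Lemma~\ref{lemma:dual-l-homs} with norm functoriality, the unramified and $\ell=2$ cases via Lemmas~\ref{lemma:fks-base-change} and~\ref{lemma:tasho-base-change}, and the observation that only symmetric ramified $\alpha_j$ contribute.

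\textbf{First gap: the symmetric ramified computation.} You leave this decisive step as ``bookkeeping,'' and the sketch you give would not close. Even though $E/F$ is totally ramified, $E_\alpha/F_\alpha$ can be trivial ($E\subset F_\alpha$), unramified of degree $\ell$, or ramified of degree $\ell$, and these cases behave differently. When $E\subset F_\alpha$ the norm is the identity, not an $\ell$-th power. Since $a_\alpha\in F_{\alpha_j}$, the norm never produces an extra ``quadratic symbol.'' When $E_\alpha/F_\alpha$ is unramified, the residue fields do \emph{not} coincide, and you need Hasse--Davenport rather than $\frG(\psi_E^0)=\sgn(\ell)\frG(\psi^0)$. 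When $E_\alpha/F_\alpha$ is ramified, $e_{\alpha,F}=e_{\alpha,E}$, so $\epsilon_{\flat,1}$ contributes nothing, contrary to your claim that $e_\alpha$ always changes by $\ell$. In that case the $\chi$-data side carries an extra factor $\frG_{\F_\alpha}(\psi^0)^{\ell-1}\sgn_{\F_\alpha^\times}(\ell)=\sgn_{\F_\alpha^\times}\bigl(\ell\cdot(-1)^{(\ell-1)/2}\bigr)$ with no counterpart on the sign side.

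The missing idea is that this symbol equals $1$. A tamely, totally ramified cyclic extension of degree $\ell$ forces $\mu_\ell\subset F$, hence $|\F_\alpha|\equiv 1\pmod \ell$, and quadratic reciprocity then gives \eqref{eqn:quadratic-reciprocity-identity}. Without this, the two sides differ by exactly that symbol. Two smaller misattributions:
\begin{itemize}
\item The parity change of $(e(\alpha/\alpha_j)-1)/2$ in $\epsilon_{\flat,2}$ is matched by the factor $\kappa_\alpha(-1)^{(e(\alpha/\alpha_j)-1)/2}$ in the normalization \eqref{eqn:chi-double-prime-G-root-value} of $\chi''$, not by $\epsilon_{\flat,1}$.
\item Nothing needs checking for $\epsilon_{\sharp,x}$ in the inertial display, since $\epsilon\,\epsilon_{\sharp,x}$ is exactly the non-$\sharp$ part.
\end{itemize}

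\textbf{Second gap: dropping the $\sharp$ factors in the elliptic display.} Your justification is false, because $\epsilon_{\sharp,x}$ need not be trivial for toral data. Take $G=\PGL_2$, $T$ unramified elliptic, $x$ the hyperspecial point of $\cB(T)$, and $\phi_0$ generic of even depth $r$. Then $\alpha$ is symmetric unramified with $\ord_x(\alpha)=\Z\ni r/2$. So $\epsilon_{\sharp,x}(\gamma)=\sgn_{\F_{q^2}^1}(\alpha(\gamma))$, which on $T(F)=\cO_{F_2}^\times/\cO_F^\times$ is $\sgn_{\F_{q^2}^\times}$ of the reduction, a nontrivial character.

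In the elliptic case the paper reduces to \eqref{eqn:signs-cancel-0}, which keeps $\epsilon_{E,\sharp,x}$ and $\epsilon_{F,\sharp,x}\circ\Nm_{E/F}$. That is also the form invoked in Proposition~\ref{prop:kal-small-degree-bc}. To obtain the display literally without $\sharp$ factors, you would have to prove $\epsilon_{E,\sharp,x}=\epsilon_{F,\sharp,x}\circ\Nm_{E/F}$. That requires comparing $\ord_x(\alpha)_E$, which is strictly finer when $E_\alpha/F_\alpha$ is ramified, with $\ord_x(\alpha)_F$ at $r/2$; a triviality argument cannot work.
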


\begin{proof}
If $\ell = 2$ and $k = \ol\F_2$, then the result follows from Lemma~\ref{lemma:tasho-base-change}(1) and the fact that $\epsilon_E$ and $\epsilon_F$ are both trivial; thus one may assume $\ell \neq 2$. If $E/F$ is unramified, then the result follows from Lemma~\ref{lemma:fks-base-change} and Lemma~\ref{lemma:tasho-base-change}(2); thus one may assume that $E/F$ is totally ramified. In this case, the proof is given in Appendix~\ref{app:ramified-bc-proof}.
\end{proof}

\subsection{Composition of canonical L-embeddings}

Now drop the assumption $p \neq 2$. In this subsection, let $M \subset H \subset G$ be unramified twisted Levi $F$-subgroups. Let $Z_M$ and $Z_H$ denote the maximal central $F$-tori of $M$ and $H$, respectively. Recall that we have defined a canonical L-embedding $\ld j_{M, H}\co \ld M \to \ld H$, and similarly for $\ld j_{H,G}$ and $\ld j_{M,G}$ in Definition~\ref{defn:canonical-l-embeddings}. For clarity, let $\chi_{M,H}$, $\chi_{H,G}$, and $\chi_{M,G}$ denote the associated sets of $\chi$-data. We also define a set of $\chi$-data
\[
\chi_{M,H,G} = (\chi_{M,H,G,\alpha})_{\alpha \in \Phi(G_{\ol F}, (Z_M)_{\ol F})}
\]
as follows: for $\alpha \in \Phi(G_{\ol F}, (Z_M)_{\ol F})$, we set
\[
\chi_{M,H,G,\alpha} = \begin{cases}
\chi_{M,H,\alpha} &\text{if } \alpha \in \Phi(H_{\ol F}, (Z_M)_{\ol F}), \\
\chi_{H,G,\alpha_H} \circ \Nm_{F_\alpha/F_{\alpha_H}} &\text{if } \alpha \in \Phi((G/H)_{\ol F}, (Z_M)_{\ol F}),
\end{cases}
\]
where $\alpha_H$ denotes the restriction of $\alpha$ to $Z_H$.

The following two results will not be used in this paper; they will be used in \cite{CF26b} to accommodate inductive arguments.

\begin{lemma}\label{lemma:canonical-change-of-chi-data}
We have $\ld j_{\chi_{M,G}}|_{\wh M \rtimes I_F} \sim \ld j_{\chi_{M,H,G}}|_{\wh M \rtimes I_F}$. If $Z(M)/Z(G)$ is anisotropic, then $\ld j_{\chi_{M,G}} \sim \ld j_{\chi_{M,H,G}}$.
\end{lemma}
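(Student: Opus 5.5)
The plan is to mimic the proof of Lemma~\ref{lemma:inflation-change}: compare the two sets of $\chi$-data $\chi_{M,G}$ and $\chi_{M,H,G}$ for $\Phi(G_{\ol F},(Z_M)_{\ol F})$ rootwise, express the ratio of the resulting $Q$-functions as the dual cocycle of a character of $Z_M(F)$, and show that this character is trivial on the bounded part. By twisting (\cite[Lemma 6.4]{Kal21a}) we may assume $G$ is quasi-split. The embeddings $\ld j_{\chi_{M,G}}$ and $\ld j_{\chi_{M,H,G}}$ are then given by \eqref{eqn:def-of-L-embedding} with the same factors $s$ and $n(\gamma_{M,G})$, so they differ only through $Q_{\chi_{M,G}}Q_{\chi_{M,H,G}}^{-1}=\prod_{\alpha\in A}Q_{\chi_{M,G},\alpha}Q_{\chi_{M,H,G},\alpha}^{-1}$, which is a function $W_F\to Z_{\wh M}(k)$.

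First, I would check that both are sets of $\chi$-data and record their shapes. By Lemma~\ref{lemma:unramified-levi-not-ramified-symmetric} applied to $Z_M$ (which contains $Z_G$) and to $Z_H$, no weight $\alpha$ or $\alpha_H$ is symmetric ramified. Hence $\chi_{M,G,\alpha}$ is trivial or unramified quadratic. Likewise $\chi_{M,H,G,\alpha}$ is either $\chi_{M,H,\alpha}$ (minimally ramified) or the pullback along the norm of an unramified character of order at most $2$. Thus every $\zeta_\alpha\coloneqq\chi_{M,G,\alpha}\chi_{M,H,G,\alpha}^{-1}$ is an unramified character $F_\alpha^\times\to\{\pm1\}$. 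On $\Phi(H_{\ol F},(Z_M)_{\ol F})$ the two data agree, since $\chi_{M,H}$ and $\chi_{M,G}$ are both canonical and symmetry of $\alpha$ is intrinsic, so $\zeta_\alpha=1$ there. For symmetric $\alpha$, $\zeta_\alpha$ is trivial on $F_{\pm\alpha}^\times$, because both data restrict to $\kappa_\alpha$. One point needs care here: if $\alpha$ is symmetric but $\alpha_H$ is asymmetric, then $\chi_{H,G,\alpha_H}\circ\Nm$ is trivial while $\chi_{M,G,\alpha}$ restricts to $\kappa_\alpha$. I would rule this out: $-\alpha=\gamma\alpha$ forces $-\alpha_H=\gamma\alpha_H$. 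Conversely, if $\alpha_H$ is symmetric unramified, then $\alpha$ is symmetric and $\chi_{H,G,\alpha_H}\circ\Nm$ restricts correctly on $F_{\pm\alpha}^\times$.

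Next, as in Lemma~\ref{lemma:inflation-change}, let $\zeta^1_\alpha$ be the induced character of $J_\alpha(F)$ and set $\eta=\prod_{\alpha\in\Phi((G/H)_{\ol F},(Z_M)_{\ol F})/\Sigma}\zeta^1_\alpha\circ\alpha$, a character of $Z_M(F)$. Lemma~\ref{lemma:dual-l-homs}, applied with $T$ replaced by the torus $Z_M$ (the argument there is formal), identifies $[Q_{\chi_{M,G},\alpha}Q_{\chi_{M,H,G},\alpha}^{-1}]$ with $[\wh{\zeta^1_\alpha\circ\alpha}]$ in $\rH^1(W_F,Z_{\wh M}(k))$. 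Since each $\zeta_\alpha$ is unramified, $\zeta^1_\alpha$ is trivial on $J_\alpha(F)_{\mathrm b}$, so $\eta$ is trivial on $Z_M(F)_{\mathrm b}$. By the LLC for tori, the class of $Q_{\chi_{M,G}}Q_{\chi_{M,H,G}}^{-1}$ then restricts trivially to $I_F$. A coboundary in $Z_{\wh M}(k)$ alters \eqref{eqn:def-of-L-embedding} by $Z_{\wh M}(k)$-conjugation, which gives the first claim.

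For the second claim, the relevant class is the image of $\eta$ in $\rH^1(W_F,Z_{\wh M}(k))$, which comes from $Z_M$. The characters $\zeta_\alpha^1\circ\alpha$ are trivial on $Z_G$, so $\eta$ factors through $(Z_M/Z_G)(F)$, or at least is unramified in the appropriate sense. When $Z(M)/Z(G)$ is anisotropic, $(Z_M/Z_G)(F)$ is bounded, so $\eta$ is trivial up to this identification and the classes agree globally. The main obstacle is exactly this bookkeeping: the character naturally lives on $Z_M(F)$, and we must make precise how triviality on $Z_G$ combined with anisotropy of $Z(M)/Z(G)$ forces the $W_F$-class, not just its restriction to $I_F$, to be a coboundary. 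I would handle it by pulling back along the isogeny $Z_M\to Z_M/Z_G$ and using the dual map $(Z_M/Z_G)^\wedge\to Z_{\wh M}$ together with the vanishing of unramified characters on the compact group $(Z_M/Z_G)(F)$.
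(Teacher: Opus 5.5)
Your outline is the paper's intended one: adapt Lemma~\ref{lemma:inflation-change} root by root. The local analysis is essentially right.
\begin{itemize}
\item No weight of $Z_M$ or $Z_H$ is symmetric ramified.
\item The two sets of $\chi$-data agree on $\Phi(H_{\ol F},(Z_M)_{\ol F})$.
\item Every $\zeta_\alpha$ is unramified. In fact $\zeta_\alpha=1$ for symmetric $\alpha$: an unramified character of $F_\alpha^\times$ trivial on $F_{\pm\alpha}^\times$ is trivial when $F_\alpha/F_{\pm\alpha}$ is unramified. So only asymmetric $\alpha$ contribute.
\end{itemize}
Your side remark that $\alpha_H$ symmetric unramified forces $\alpha$ symmetric is false; this is case (2) in the proof in Appendix~\ref{app:comparison-minimal-proof}. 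It is harmless, because no $\kappa_\alpha$-condition is imposed in that case and $\zeta_\alpha$ is still unramified.

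The genuine gap is the torus on which you place the comparison character. The cochains $Q$ take values in $Z_{\wh M}$, whose dual torus is $M_{\ab}$, not $Z_M$. The cocharacters $\sum_{\beta\in\cO}\wh\beta^\vee$ in \eqref{eqn:def-of-Q} are the characters $\tilde\alpha=\sum_{\beta\in\cO}\beta\in X^*(M_{\ab})$, not the weights $\alpha|_{Z_M}$.

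Consequently, Lemma~\ref{lemma:dual-l-homs} applied to $Z_M$ produces classes in $\rH^1(W_F,\wh{Z_M}(k))$. Your identification inside $\rH^1(W_F,Z_{\wh M}(k))$ does not typecheck. What your $\eta$ actually controls is the image of the $Q$-ratio class under the map $\rH^1(W_F,Z_{\wh M}(k))\to\rH^1(W_F,\wh{Z_M}(k))$ dual to the isogeny $Z_M\to M_{\ab}$.

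That map corresponds to pulling characters back along $Z_M(F)\to M_{\ab}(F)$. The cokernel of this map is finite but can be nontrivial: for $M=\GL_n\times\GL_m\subset\GL_{n+m}$ it is $F^\times/F^{\times n}\times F^\times/F^{\times m}$. So the map on $\rH^1$ need not be injective, and triviality of $\eta$ on $Z_M(F)$ or on $Z_M(F)_{\mathrm b}$ proves neither claim.

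Your proposed repair does not fix this. The map $Z_M\to Z_M/Z_G$ is not an isogeny, and the dual of a quotient of $Z_M$ maps into $\wh{Z_M}$, not into $Z_{\wh M}$.

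The fix is to run the same argument on $M_{\ab}$.
\begin{itemize}
\item Apply Lemma~\ref{lemma:dual-l-homs}, whose proof is formal, to the $F$-homomorphisms $\tilde\alpha\co M_{\ab}\to J_\alpha=\Res_{F_\alpha/F}\mathbf{G}_m$ for asymmetric $\alpha$. This identifies the $Q$-ratio class with the parameter of $\eta_{\ab}=\prod_\alpha\zeta_\alpha\circ\tilde\alpha$ on $M_{\ab}(F)$.
\item For the inertial claim: $\tilde\alpha(M_{\ab}(F)_{\mathrm b})\subset\cO_{F_\alpha}^\times$ and each $\zeta_\alpha$ is unramified, so $\eta_{\ab}$ kills $M_{\ab}(F)_{\mathrm b}$.
\item For the second claim, each $\tilde\alpha$ is trivial on the image of $Z(G)$, so it factors through $M_{\ab}/\mathrm{im}\,Z(G)$.
\item That torus is a quotient of $Z(M)/Z(G)$, because $Z(M)\to M_{\ab}$ is surjective. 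Hence it is anisotropic under your hypothesis.
\item Therefore $\tilde\alpha(M_{\ab}(F))$ is compact, lies in $\cO_{F_\alpha}^\times$, and $\eta_{\ab}=1$. The class vanishes in $\rH^1(W_F,Z_{\wh M}(k))$, and the two L-embeddings are $Z_{\wh M}(k)$-conjugate.
\end{itemize}
This is the same torus the paper uses in the proof of Proposition~\ref{prop:composition-of-canonical-l-embeddings}.
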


\begin{proof}
The proof of Lemma~\ref{lemma:inflation-change} adapts nearly verbatim to this setting, so we omit it.
\end{proof}

The following proposition is a technical extension (in view of Lemma~\ref{lemma:canonical-change-of-chi-data}) of \cite[Proposition 6.9]{Kal21a} to the above choices of $\chi$-data.\footnote{Note that, despite appearances, \cite[Proposition 6.9]{Kal21a} if $M$ is a torus then only applies because of Lemma~\ref{lemma:canonical-change-of-chi-data}; the sets of $\chi$-data implicitly appearing in \cite[Proposition 6.9]{Kal21a} are $\chi_{M,H}$, $\chi_{H,G}$, and $\chi_{M,H,G}$, not $\chi_{M,G}$.}

\begin{prop}\label{prop:composition-of-canonical-l-embeddings}
We have $\ld j_{M,G}|_{\wh M \rtimes I_F} \sim \ld j_{H,G} \circ \ld j_{M,H}|_{\wh M \rtimes I_F}$. If $Z(M)/Z(G)$ is anisotropic, then $\ld j_{M,G} \sim \ld j_{H,G} \circ \ld j_{M,H}$.
\end{prop}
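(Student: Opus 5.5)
The plan is to combine Lemma~\ref{lemma:canonical-change-of-chi-data} with Kaletha's composition result \cite[Proposition 6.9]{Kal21a}, in the same pattern as the proof of Proposition~\ref{prop:fks-and-tasho-cancel}.

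The first step is to check that the hypotheses of \cite[Proposition 6.9]{Kal21a} hold for $M \subset H \subset G$ with the three sets of $\chi$-data $\chi_{M,H}$, $\chi_{H,G}$ and $\chi_{M,H,G}$. Concretely, one needs that the restriction of $\chi_{M,H,G}$ to $\Phi((G/H)_{\ol F}, (Z_M)_{\ol F})$ is the inflation, in the sense of \cite[Definition 5.16]{Kal21a}, of $\chi_{H,G}$ along $Z_H \subset Z_M$. One also needs that its restriction to $\Phi(H_{\ol F},(Z_M)_{\ol F})$ equals $\chi_{M,H}$.

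The second condition holds by definition. For the first, the defining formula $\chi_{H,G,\alpha_H}\circ \Nm_{F_\alpha/F_{\alpha_H}}$ is exactly the inflation recipe. The only thing to verify is that it agrees with Kaletha's inflation in the symmetric cases: the inflation of a $\chi$-datum at a symmetric $\alpha_H$ may involve a twist when $\alpha$ becomes asymmetric, or when $[F_\alpha:F_{\alpha_H}]$ is even. By Lemma~\ref{lemma:unramified-levi-not-ramified-symmetric}, no root of $G/H$ restricted to $Z_M$ or $Z_H$ is symmetric ramified. The canonical data are therefore trivial or unramified quadratic. The norm pullback of the unramified quadratic character $\chi_{H,G,\alpha_H}$ is again unramified, and it is $\kappa_\alpha$ on $F_{\pm\alpha}^\times$ whenever $\alpha$ is symmetric, so it is a valid $\chi$-datum. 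I expect this bookkeeping against \cite[Definition 5.16]{Kal21a} to be the main obstacle; it may require a small lemma that the conventions agree.

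Granting this, \cite[Proposition 6.9]{Kal21a} gives
\[
\ld j_{\chi_{M,H,G}} \sim \ld j_{\chi_{H,G}} \circ \ld j_{\chi_{M,H}} = \ld j_{H,G}\circ \ld j_{M,H}.
\]
This is an honest conjugacy of L-embeddings $\ld M \to \ld G$, with no restriction to inertia.

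Finally, Lemma~\ref{lemma:canonical-change-of-chi-data} gives $\ld j_{M,G} = \ld j_{\chi_{M,G}} \sim \ld j_{\chi_{M,H,G}}$ after restriction to $\wh M\rtimes I_F$. It gives the same conjugacy on all of $\ld M$ when $Z(M)/Z(G)$ is anisotropic. Composing the two conjugacies yields both assertions of Proposition~\ref{prop:composition-of-canonical-l-embeddings}.

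The anisotropy hypothesis enters only through Lemma~\ref{lemma:canonical-change-of-chi-data}. There, the discrepancy class $[\wh\eta]\in \rH^1(W_F,\wh{Z_M}(k))$ comes via Lemma~\ref{lemma:dual-l-homs} from a character of $Z_M(F)$ trivial on its bounded part. That class vanishes on $I_F$ in general, and it vanishes globally when $Z_M(F)$ is bounded modulo $Z(G)$.
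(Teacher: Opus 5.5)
Your proposal is correct and is essentially the paper's proof. Lemma~\ref{lemma:canonical-change-of-chi-data} replaces $\ld j_{M,G}$ by $\ld j_{\chi_{M,H,G}}$, on $\wh M\rtimes I_F$ in general and on all of $\ld M$ under the anisotropy hypothesis, and Kaletha's composition result then gives $\ld j_{\chi_{M,H,G}}\sim\ld j_{H,G}\circ\ld j_{M,H}$ with no restriction to inertia. The paper recalls the argument of \cite[Proposition 6.9]{Kal21a} instead of citing it: it reduces to the quasi-split case via \cite[Lemma 6.4]{Kal21a}, then applies \cite[Proposition 5.23(2)]{Kal21a} to $M_{\ab}\to H_{\ab}$. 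Like you, it treats as part of that argument the identification of $\chi_{M,H,G}$ on $\Phi((G/H)_{\ol F},(Z_M)_{\ol F})$ with the inflation of $\chi_{H,G}$, the step you single out as the main bookkeeping.
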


\begin{proof}
The proof is nearly identical to the proof of \cite[Proposition 6.9]{Kal21a}, but we recall the outline for convenience of the reader. We will only deal with the case that $Z(M)/Z(G)$ is anisotropic, the general case being completely similar. By Lemma~\ref{lemma:canonical-change-of-chi-data}, we have $\ld j_{M,G} \sim \ld j_{\chi_{M,H,G}}$. Applying \cite[Lemma 6.4]{Kal21a} twice, we may and do assume that $M$, $H$, and $G$ are quasi-split. Let $Q_{H,G}$, $Q_{M,H}$, and $Q_{M,H,G}$ be the functions as defined in \eqref{eqn:def-of-Q} using $\chi_{H,G}$, $\chi_{M,H}$, and $\chi_{M,H,G}$, respectively. Unraveling the construction in \S\ref{ss:l-embedding-defn} (following the proof of \cite[Proposition 6.9]{Kal21a}), one reduces to showing that the $1$-cocycle $w \mapsto Q_{M,H}(w)Q_{H,G}(w)Q_{M,H,G}(w)^{-1}$ is a $1$-coboundary.

Let $T \subset M$ be a maximal $F$-torus. If $\Sigma = \Gal(\ol F/F) \times \{\pm 1\}$ as in \S\ref{ss:l-embedding-defn}, then $Q_{M,H,G}$ is defined as a product over $\Phi((G/M)_{\ol F}, T_{\ol F})/(\Sigma \ltimes \Omega(M_{\ol F}, T_{\ol F}))$, and $Q_{M,H}$ is the subproduct indexed only over the orbits in $\Phi((H/M)_{\ol F}, T_{\ol F})/(\Sigma\ltimes \Omega(M_{\ol F}, T_{\ol F}))$. Thus $Q_{M,H}Q_{M,H,G}^{-1}$ is equal, in the notation of \cite[\S6]{Kal21a}, to the $1$-cochain from \cite[Definition 3.17]{Kal21a} for the torus $M_{\ab}$ and the set $\Phi((G/H)_{\ol F}, (M_{\ab})_{\ol F})$, and the set of $\chi$-data inflated (in the sense of \cite[Definition 5.16]{Kal21a}) from $\chi_{H,G}$ as above. Also, $Q_{H,G}$ is equal to the $1$-cochain from \cite[Definition 3.17]{Kal21a} for the torus $H_{\ab}$, the set $\Phi((G/H)_{\ol F}, (H_{\ab})_{\ol F})$, and the set of $\chi$-data $\chi_{H,G}$. Thus \cite[Proposition 5.23(2)]{Kal21a} applied to the surjection $M_{\ab} \to H_{\ab}$ shows that $Q_{H,G}$ is cohomologous to $Q_{M,H}Q_{M,H,G}^{-1}$, as desired.
\end{proof}

\section{Yu's construction}\label{ss:yu-construction}

In this section we recall Yu's construction of cuspidal representations from Yu data \cite{Yu01}, following the exposition (but not the notation) of \cite{Fin21b}. We use the ``twisted Yu construction'' from \cite{FKS23}, which incorporates a sign character derived from those recalled in \S\ref{ss:fks} and is better suited for functoriality. With an eye toward later applications, we work with $\ol\Q_\ell$-, $\ol\F_\ell$-, and $\ol\Z_\ell$-coefficients, and we record the behavior of the construction under tamely ramified extensions of $F$ and restriction to twisted Levi subgroups of $G$. Our definitions will be slightly broader than the standard ones because of these concerns.

\subsection{Yu data}\label{sss:yu-data}
Assume $p \neq 2$, let $\ell \neq p$ be another prime, let $k$ be a ring among $\ol\Q_\ell$, $\ol\Z_\ell$, and $\ol\F_\ell$, and use the fixed additive character $\psi\co F \to k^\times$ introduced at the beginning of \S \ref{sec:sign-characters}. As in \S\ref{ssec:notation}, $\cB(G)$ denotes the enlarged Bruhat--Tits building of \(G\) over \(F\). In this paper, a $k$-valued \textit{Yu datum} for $G$ is a $5$-tuple $\Psi = ((G^i)_{0 \leq i \leq d}, x, (r_i)_{0 \leq i \leq d}, \tau, (\phi_i)_{0 \leq i \leq d})$, where
\begin{enumerate}[label=(\alph*)]
\item $G^0 \subset G^1 \subset \cdots \subset G^d = G$ is a sequence of twisted Levi $F$-subgroups\footnote{In most references, it is further assumed that $G^i \neq G^{i+1}$ for all $0 \leq i \leq d-1$. However, relaxing this requirement does not significantly affect the theory, and for the purposes of functoriality with respect to unramified twisted Levi subgroups of $G$, our convention is more convenient. This generalization (with slightly different conditions on the $r_i$ and $\phi_i$) was introduced in \cite[\S 15]{Yu01}, where it was used to analyze the irreducible supercuspidal representations of products.} of $G$ which split over a tamely ramified extension of $F$,
\item $x$ is a point of $\cB(G^0) \subset \cB(G)$,
\item $(r_i)_{0 \leq i \leq d}$ is a sequence of real numbers such that $0 < r_0 < r_1 < \cdots < r_{d-1} \leq r_d$ if $d > 0$, and $0 \leq r_0$ if $d = 0$,
\item $\tau$ is a $k$-representation of $G^0(F)_{[x]}$ (where $G^0(F)_{[x]} \subset G^0(F)$ is the stabilizer of the image $[x]$ of $x$ in the reduced Bruhat--Tits building of $G/F$) which is trivial on $G^0(F)_{x, 0+}$ and whose underlying $k$-module is finite free,
\item $(\phi_i)_{0 \leq i \leq d}$ is a sequence of characters $\phi_i\colon G^i(F) \to k^\times$. For $0 \leq i \leq d-1$, we assume that $\phi_i$ is of depth $r_i$. 

By convention we set $r_{-1}\coloneqq0$ if $d=0$. If $r_{d-1}<r_d$, then $\phi_d$ is of depth $r_d$; if $r_{d-1}=r_d$, then $\phi_d$ is trivial.
\end{enumerate}
The constituents of this $5$-tuple are moreover required to satisfy the following properties:
\begin{enumerate}
\item $Z(G^0)/Z(G)$ is anisotropic,
\item $[x]$ is a vertex in $\cB(G^0_{\der})$,
\item $\tau \otimes_k {\Frac(k)}$ is a cuspidal representation of $G^0(F)_{[x]}/G^0(F)_{x, 0+}$ in the sense of \cite[\S 2.10]{Cot26b}. (See also \cite[Proposition~2.1.4]{Cot26b}.)
\item $\phi_i$ is $G^{i+1}$-generic of depth $r_i$ for $0 \leq i \leq d-1$: this means that $\phi_i$ is of depth $r_i$ and under the Moy--Prasad isomorphism $G^i(F)_{x,r_i}/G^i(F)_{x,r_i+} \cong \frg^i(F)_{x,r_i}/\frg^i(F)_{x,r_i+}$, there is an element $X_i^* \in \Lie^*(G^i)^{G^i}(F)_{-r_i}$ which is $G^{i+1}$-generic in the sense of \cite[\S 8]{Yu01} (see \cite[Remark 4.1.3]{FKS23} for a slight correction) such that $\phi_i|_{G^i(F)_{x,r_i}}$ is induced by $X_i^*$ and the fixed additive character $\psi$.\footnote{Note that if $G^i = G^{i+1}$, then every character of $G^i(F)$ which is induced by an element of $\Lie^*(G^i)^{G^i}$ is $G^{i+1}$-generic. (This can fail; it is essentially Hypothesis $\mathrm{C}(\vec G)$ from \cite[\S 2.6]{HM08}.}
\end{enumerate}
If $\tau \otimes_k \Frac(k)$ is an irreducible representation of $G^0(F)_{[x]}/G^0(F)_{x,0+}$, then we will call $\Psi$ \textit{irreducible}. If $G^i \neq G^{i+1}$ for all $0\leq i<d$, then we will call $\Psi$ \textit{irredundant}.

\begin{remark}
Typically, one only considers Yu data for which $k$ is an algebraically closed field of characteristic $\neq p$ and $\Psi$ is irreducible. Our ultimate interest is in representations arising from irreducible Yu data over $\ol\Q_\ell$, but for the study of L-parameters we will need to understand the modular reductions of $\ol\Z_\ell$-lattices in these; such modular reductions may a priori arise from Yu's construction applied to Yu data which are not irreducible.
\end{remark}

From a Yu datum $\Psi$, \cite{Yu01} constructs a compact-mod-center open subgroup $K \subset G(F)$, a $k$-representation $\rho$ of $K$, and then \cite{FKS23} defines a sign character $\epsilon\co K \to k^\times$ and sets
\[
\pi(\Psi) \coloneqq \cInd_K^{G(F)}(\epsilon \otimes \rho).
\]
We will now recall the definitions of $K$, $\rho$, and $\epsilon$.

\subsection{The inducing subgroup and extended characters}\label{sss:yu-characters}
From a $k$-valued Yu datum $\Psi$, one constructs a smooth representation $\pi=\cInd_K^{G(F)}(\epsilon\otimes\rho)$ as follows. First, we define a compact-mod-center open subgroup $K \subset G(F)$ by
\begin{equation}\label{eq:K}
K = K(\vec{G}, x, \vec{r}) = G^0(F)_{[x]}G^1(F)_{x, \frac{r_0}{2}} \cdots G^d(F)_{x, \frac{r_{d-1}}{2}}.
\end{equation}
See Figure \ref{fig:K} for an illustration of $K$.

\begin{figure}[!h]
\centering
\begin{tikzpicture}[
line cap=round,
line join=round,
x=0.53cm,
y=0.53cm,
outline/.style={draw=black, line width=0.6pt},
brace/.style={decorate, decoration={brace, amplitude=3.5pt}, draw=black!65, line width=0.6pt},
every node/.style={font=\scriptsize}
]

\definecolor{depthzero}{HTML}{EEF6FF}
\definecolor{depthone}{HTML}{D8E9FF}
\definecolor{depthtwo}{HTML}{9FC6FF}
\definecolor{depththree}{HTML}{0044AA}

\coordinate (a0) at (-6.025,0);
\coordinate (a1) at (-6.025,1.45);
\coordinate (b1) at (-4.325,1.45);
\coordinate (b2) at (-4.325,2.62);
\coordinate (c2) at (-2.575,2.62);
\coordinate (c3) at (-2.575,3.85);
\coordinate (d3) at (-0.725,3.85);
\coordinate (d4) at (-0.725,5.00);
\coordinate (e4) at (0.725,5.00);
\coordinate (e3) at (0.725,3.85);
\coordinate (f3) at (2.575,3.85);
\coordinate (f2) at (2.575,2.62);
\coordinate (g2) at (4.325,2.62);
\coordinate (g1) at (4.325,1.45);
\coordinate (h1) at (6.025,1.45);
\coordinate (h0) at (6.025,0);

\begin{scope}
\clip (a0) -- (a1) -- (b1) -- (b2) -- (c2) -- (c3) -- (d3) -- (d4)
    -- (e4) -- (e3) -- (f3) -- (f2) -- (g2) -- (g1) -- (h1) -- (h0) -- cycle;
\fill[depthzero]  (-6.025,3.85) rectangle (6.025,5.00);
\fill[depthone]   (-6.025,2.62) rectangle (6.025,3.85);
\fill[depthtwo]   (-6.025,1.45) rectangle (6.025,2.62);
\fill[depththree] (-6.025,0.00) rectangle (6.025,1.45);
\end{scope}

\draw[outline] (-6.7,0) -- (6.7,0);
\draw[outline] (a0) -- (a1) -- (b1) -- (b2) -- (c2) -- (c3) -- (d3) -- (d4)
-- (e4) -- (e3) -- (f3) -- (f2) -- (g2) -- (g1) -- (h1) -- (h0);
\foreach \x/\ytop in {-4.325/1.45,-2.575/2.62,-0.725/3.85,0.725/5.00,2.575/3.85,4.325/2.62}
\draw[outline] (\x,0) -- (\x,\ytop);

\draw[outline] (-8.55,0) -- (-8.55,5.00);
\foreach \y in {5.00,3.85,2.62,1.45}
\draw[outline] (-8.70,\y) -- (-8.40,\y);
\node[anchor=west] at (-8.05,5.00) {$0$};
\node[anchor=west] at (-8.05,3.85) {$r_0/2$};
\node[anchor=west] at (-8.05,2.62) {$r_1/2$};
\node[anchor=west] at (-8.05,1.45) {$r_2/2$};

\draw[brace] (-0.725,5.18) -- (0.725,5.18);
\node at (0,5.58) {$G^0$};

\draw[brace] (-2.575,6.25) -- (2.575,6.25);
\node at (0,6.65) {$G^1$};

\draw[brace] (-4.325,7.35) -- (4.325,7.35);
\node at (0,7.75) {$G^2$};

\end{tikzpicture}
\caption{Illustration of the subgroup $K$ from \eqref{eq:K}. The vertical axis measures depth, and the horizontal axis records the roots present in $G^i$.}\label{fig:K}
\end{figure}

Next, we recall the definition of the representation $\rho$ as above. We note that $K$ acts on $\tau$ by requiring that $G^1(F)_{x, \frac{r_0}{2}} \cdots G^d(F)_{x, \frac{r_{d-1}}{2}}$ acts trivially. We will define $\rho = \tau \otimes \kappa \otimes \phi_d$, where $\kappa$ is a representation of $K$ which we will define below. Roughly speaking, we have
\[
\kappa = \bigotimes_{i=0}^{d-1} V_{\wh\phi_i},
\]
where each $V_{\wh\phi_i}$ is constructed using a Weil--Heisenberg representation whose central character is derived from the character $\phi_i$.\footnote{This notation is not standard; for example, \cite{HM08} use $\phi_i'$ to denote $V_{\wh\phi_i}$, while \cite{Fin21b} uses $(\omega_i, V_{\omega_i})$.}

Fix a maximal $F$-torus $T' \subset G^0$ which splits over a tamely ramified Galois extension $E'/F$ and satisfies $x \in \cA(T'_{E'})$. As in \cite[\S 4]{Yu01}, for $0 \leq i \leq d-1$ let $\widehat{\phi}_i$ be the unique $k^\times$-valued character of $G^0(F)_{[x]}G^i(F)_{x,0}G(F)_{x,\frac{r_i}{2}+}$ 
satisfying the following two properties:
\begin{itemize}
\item $\widehat{\phi}_i|_{G^0(F)_{[x]}G^i(F)_{x,0}} = \phi_i|_{G^0(F)_{[x]}G^i(F)_{x,0}}$.
\item $\widehat{\phi}_i|_{G(F)_{x, \frac{r_i}{2}+}}$ factors through
\begin{align*}
G(F)_{x, \frac{r_i}{2}+}/G(F)_{x, r_i+} &\cong \mathfrak{g}(F)_{x, \frac{r_i}{2}+}/\mathfrak{g}(F)_{x, r_i+} \\
    &= (\mathfrak{g}^i(F) \oplus \mathfrak{n}^i(F))_{x, \frac{r_i}{2}+}/(\mathfrak{g}^i(F) \oplus \mathfrak{n}^i(F))_{x, r_i+} \\
    &\to \mathfrak{g}^i(F)_{x, \frac{r_i}{2}+}/\mathfrak{g}^i(F)_{x, r_i+} \\
    &\cong G^i(F)_{x, \frac{r_i}{2}+}/G^i(F)_{x, r_i+},
\end{align*}
where $\mathfrak{n}^i$ is defined as $\mathfrak{g} \cap \bigoplus_{\alpha \in \Phi(G, T'_{E'}) - \Phi(G^i, T'_{E'})} (\mathfrak{g}_{E'})_{\alpha}$, and the map $\mathfrak{g}^i(F) \oplus \mathfrak{n}^i(F) \to \mathfrak{g}^i(F)$ is the first projection.
\end{itemize}
Observe that the induced map $\widehat{\phi}_i\co G^i(F)_{x, \frac{r_i}{2}+} \to k^\times$ is equal to $\phi_i|_{G^i(F)_{x,\frac{r_i}{2}+}}$, so the two bullet points are compatible, hence this construction is well-defined. By convention, we set $\wh\phi_d = \phi_d$. 












\subsection{The Weil--Heisenberg factors}\label{sss:yu-heisenberg-weil}
Let $\wt\RR = \RR \cup \{r+\co r \in \RR\}$, ordered such that for $r < s \in \wt\RR$ we have $r < r+ < s$. For $\wt r \geq \wt r' \geq \frac{\wt r}{2} > 0$ in $\wt\RR$ and $1 \leq i\leq d$, let
\begin{align*}
G^i(F)_{x,\wt r, \wt r'} &= G(F) \\
&\quad\cap \langle T'(E')_{x,\wt r}, U_\alpha(E')_{x,\wt r}, U_\beta(E')_{x,\wt r'}\co \\
&\qquad \alpha \in \Phi(G^{i-1}_{E'}, T'_{E'}), \\
&\qquad \beta \in \Phi((G^i/G^{i-1})_{E'}, T'_{E'}) \rangle.
\end{align*}
In particular, define $J_i = J_i(\vec{G},x,\vec{r}) = G^i(F)_{x,r_{i-1}, \frac{r_{i-1}}{2}}$ and $J_i^+ = G^i(F)_{x,r_{i-1},\frac{r_{i-1}}{2}+}$. Observe that
\[
K = G^0(F)_{[x]}J_1 \cdots J_d.
\]
Figure~\ref{fig:Ji} illustrates $J_i$; we learned this picture from Stephen DeBacker.

\begin{figure}[!h]
\centering
\begin{tikzpicture}[
line cap=round,
line join=round,
x=0.60cm,
y=0.60cm,
outline/.style={draw=black, line width=0.6pt},
brace/.style={decorate, decoration={brace, amplitude=3.5pt}, draw=black!65, line width=0.6pt},
every node/.style={font=\scriptsize}
]

\definecolor{Jihalfdepth}{HTML}{D8E9FF}
\definecolor{Jifulldepth}{HTML}{0044AA}

\begin{scope}
\clip (-4.20,0) -- (-4.20,3.00) -- (-1.35,3.00) -- (-1.35,1.35)
-- (1.35,1.35) -- (1.35,3.00) -- (4.20,3.00) -- (4.20,0) -- cycle;
\fill[Jihalfdepth] (-4.20,1.35) rectangle (4.20,3.00);
\fill[Jifulldepth] (-4.20,0) rectangle (4.20,1.35);
\end{scope}

\draw[outline] (-4.70,0) -- (4.70,0);
\draw[outline] (-4.20,0) -- (-4.20,3.00) -- (-1.35,3.00) -- (-1.35,1.35)
-- (1.35,1.35) -- (1.35,3.00) -- (4.20,3.00) -- (4.20,0);
\foreach \x in {-1.35,1.35}
\draw[outline] (\x,0) -- (\x,1.35);

\draw[outline] (-6.55,0) -- (-6.55,4.25);
\foreach \y in {4.25,3.00,1.35}
\draw[outline] (-6.70,\y) -- (-6.40,\y);
\node[anchor=west] at (-6.15,4.25) {$0$};
\node[anchor=west] at (-6.15,3.00) {$r_{i-1}/2$};
\node[anchor=west] at (-6.15,1.35) {$r_{i-1}$};

\draw[brace] (-1.35,4.18) -- (1.35,4.18);
\node at (0,4.58) {$G^{i-1}$};
\draw[brace] (-4.20,5.18) -- (4.20,5.18);
\node at (0,5.58) {$G^i$};

\end{tikzpicture}
\caption{Illustration of $J_i=G^i(F)_{x,r_{i-1},r_{i-1}/2}$. The vertical axis measures depth, and the horizontal axis records the roots present in $G^i$. The roots present in $G^{i-1}$ enter at depth $r_{i-1}$, while the remaining roots enter at depth $r_{i-1}/2$.}
\label{fig:Ji}
\end{figure}

\begin{table}[h]
\centering
\begin{tabular}{|c|c|l|}
\hline 
symbol & range & role \\
\hline
\noalign{\vskip 2pt}
$\wh\phi_i$ & $0 \leq i \leq d-1$ & extension of $\phi_i$ used to define the $(i+1)$st Heisenberg datum \\
$J_i, J_i^+$ & $1 \leq i \leq d$ & compact subgroups attached to the jump from $G^{i-1}$ to $G^i$ \\
$V_i$ & $1 \leq i \leq d$ & symplectic quotient $J_i/J_i^+$, with central character from $\wh\phi_{i-1}$ \\
$B_i$ & $1 \leq i \leq d$ & $\F_q$-valued symplectic form on $V_i$ induced by $X_{i-1}^*$ \\
$V_{\wh\phi_i}$ & $0 \leq i \leq d-1$ & Weil--Heisenberg representation attached to $J_{i+1}$ \\
\noalign{\vskip 2pt}
\hline
\noalign{\vskip 2pt}
\end{tabular}
\caption{Table of objects and indices appearing in Yu's construction.}
\end{table}

Recall that $\ol\psi\co \F_p \to k^\times$ is the nontrivial additive character fixed in \S\ref{ss:fks}. It is proven in \cite[Proposition 11.4]{Yu01} that if we let $V_i = V_i(\vec{G},x,\vec{r}) = J_i/J_i^+$ for $1 \leq i\leq d$ and equip $V_i$ with the pairing $\langle \cdot, \cdot\rangle_i$ defined by $\langle a, b\rangle_i = \ol\psi^{-1}(\widehat{\phi}_{i-1}(aba^{-1}b^{-1}))$, then $(V_i, \langle\cdot, \cdot\rangle_i)$ is a symplectic $\F_p$-vector space.

\begin{lemma}\label{lemma:yu-symplectic-form}
For all $1 \leq i \leq d$, the Moy--Prasad isomorphism 
\[
G^i(F)_{x,\frac{r_{i-1}}{2}}/G^i(F)_{x,\frac{r_{i-1}}{2}+} \cong \frg^i(F)_{x,\frac{r_{i-1}}{2}}/\frg^i(F)_{x,\frac{r_{i-1}}{2}+}
\]
restricts to an isomorphism
\begin{equation}\label{eqn:mp-iso}
V_i \cong \frg^i(F)_{x,\frac{r_{i-1}}{2}}/(\frg^i(F)_{x,\frac{r_{i-1}}{2}+} + \frg^{i-1}(F)_{x,\frac{r_{i-1}}{2}}).
\end{equation}
Under this isomorphism, for all $v, w \in \frg^i(F)_{x,\frac{r_{i-1}}{2}}$ we have
\begin{equation}\label{eqn:yu-symplectic}
\langle \ol v, \ol w\rangle_i = \Tr_{\F_q/\F_p}\overline{X_{i-1}^*([v, w])},
\end{equation}
where the bar denotes reduction modulo $\frp_F$ of the element $X_{i-1}^*([v,w])\in\cO_F$.
\end{lemma}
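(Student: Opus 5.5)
The plan has three parts: identify $V_i$ inside the Moy--Prasad quotient, translate the group commutator into a Lie bracket, and then evaluate $\wh\phi_{i-1}$ on the result.

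\textbf{Identifying $V_i$.} Since $r_{i-1}>0$, we have $J_i\subset G^i(F)_{x,\frac{r_{i-1}}{2}}$ and $J_i^+\supset G^i(F)_{x,\frac{r_{i-1}}{2}+}$. The first step is to describe $J_i$ and $J_i^+$ concretely. Work over the tame splitting field $E'$ with the torus $T'$ and take Galois fixed points. Modulo $G^i(F)_{x,\frac{r_{i-1}}{2}+}$, the subgroup $J_i$ is generated by the images of two pieces:
\begin{itemize}
\item the root subgroups $U_\beta(E')_{x,\frac{r_{i-1}}{2}}$ for $\beta\in\Phi((G^i/G^{i-1})_{E'},T'_{E'})$;
\item the part of $G^{i-1}$ at depth $r_{i-1}\ge \frac{r_{i-1}}{2}$.
\end{itemize}
When $r_{i-1}/2<r_{i-1}$, the second piece has trivial image in the quotient. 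By contrast, $J_i^+$ has image equal to the $G^{i-1}$-directions of $J_i$ at depth $\frac{r_{i-1}}{2}$, together with everything deeper.

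Under the Moy--Prasad isomorphism, which is compatible with root space decompositions over $E'$ and with tame Galois descent (Adler--Yu), the image of $J_i$ is therefore
\[
\bigl(\frg^{i-1}\oplus\mathfrak n^{i-1}\bigr)\text{-part}=\frg^i(F)_{x,\frac{r_{i-1}}{2}}.
\]
Dividing out by the image of $J_i^+$ kills exactly $\frg^{i-1}(F)_{x,\frac{r_{i-1}}{2}}+\frg^i(F)_{x,\frac{r_{i-1}}{2}+}$, which gives \eqref{eqn:mp-iso}. Tame descent of the filtrations is needed here so that the $E'$-description really does produce the $F$-points; for this we invoke the standard result $\frg(F)_{x,r}=\frg(E')_{x,r}^{\Gal(E'/F)}$.

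\textbf{Commutator versus bracket.} Take $a,b\in G^i(F)_{x,\frac{r_{i-1}}{2}}$ lifting $v,w$. Their commutator $aba^{-1}b^{-1}$ lies in $G^i(F)_{x,r_{i-1}}$. The standard commutator estimates for Moy--Prasad filtrations (as in Adler or Yu, \S 5) show that under $G^i(F)_{x,r_{i-1}}/G^i(F)_{x,r_{i-1}+}\cong\frg^i(F)_{x,r_{i-1}}/\frg^i(F)_{x,r_{i-1}+}$ the commutator maps to $[v,w]$. This holds because the Moy--Prasad map agrees with $g\mapsto g-1$ to first order in a faithful representation, and $aba^{-1}b^{-1}-1\equiv[a-1,b-1]$ modulo depth $>r_{i-1}$. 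I expect this to be the main technical point. It is usually handled by reducing to $\GL_n$ via a tame embedding, or by citing \cite[Lemma 11.?]{Yu01}. For this reason I would cite the known compatibility of the Moy--Prasad isomorphism with commutators rather than reprove it.

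\textbf{Evaluating $\wh\phi_{i-1}$.} The commutator lies in $G^i(F)_{x,r_{i-1}}\subset G(F)_{x,\frac{r_{i-1}}{2}+}$. By the second defining property of $\wh\phi_{i-1}$, its value there is computed by projecting $[v,w]$ to $\frg^{i-1}(F)_{x,r_{i-1}}$ modulo depth $r_{i-1}+$ and applying $\phi_{i-1}$ on $G^{i-1}(F)_{x,r_{i-1}}$. That last map is $Y\mapsto\psi(X_{i-1}^*(Y))$ by genericity condition (4). Because $X_{i-1}^*$ is $G^{i-1}$-invariant, it is a functional on $\frg^{i-1}$ extended by zero on $\mathfrak n^{i-1}$, so the projection step is harmless and we get $\psi(X_{i-1}^*([v,w]))$. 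Finally, $X_{i-1}^*$ has depth $-r_{i-1}$ and $[v,w]$ has depth $r_{i-1}$, so $X_{i-1}^*([v,w])\in\cO_F$. Since $\psi|_{\cO_F}=\ol\psi\circ\Tr_{\F_q/\F_p}\circ(\text{reduction})$, applying $\ol\psi^{-1}$ yields \eqref{eqn:yu-symplectic}.
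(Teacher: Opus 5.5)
Your overall route matches the paper's. The paper takes both \eqref{eqn:mp-iso} and the identity $\ol\psi(\langle\ol v,\ol w\rangle_i)=\psi(X_{i-1}^*([v,w]))$ from the proof of \cite[Lemma~11.1]{Yu01}, then finishes exactly as you do. Your commutator-versus-bracket step and your evaluation of $\wh\phi_{i-1}$ (via its second defining property and genericity) are a correct account of that identity.

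The genuine problem is your identification of $V_i$, where two assertions are false.

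First, $J_i^+\supset G^i(F)_{x,\frac{r_{i-1}}{2}+}$ fails in general. The $G^{i-1}$-directions of $J_i^+$ only start at depth $r_{i-1}$, so in fact $J_i^+\subseteq G^i(F)_{x,\frac{r_{i-1}}{2}+}$.

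Second, the image of $J_i$ in $\frg^i(F)_{x,\frac{r_{i-1}}{2}}/\frg^i(F)_{x,\frac{r_{i-1}}{2}+}$ is not everything. As you note earlier, the $G^{i-1}$-part of $J_i$ sits at depth $r_{i-1}>\frac{r_{i-1}}{2}$ and dies. Only the part spanned by roots of $G^i$ not in $G^{i-1}$ survives. Correspondingly, $J_i^+$ has trivial image there, not image $\frg^{i-1}(F)_{x,\frac{r_{i-1}}{2}}$.

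In effect you replaced $(J_i,J_i^+)$ by $\bigl(G^i(F)_{x,\frac{r_{i-1}}{2}},\,G^{i-1}(F)_{x,\frac{r_{i-1}}{2}}G^i(F)_{x,\frac{r_{i-1}}{2}+}\bigr)$. That pair's quotient is the right side of \eqref{eqn:mp-iso}, but comparing the two quotients is exactly what must be shown. The correct argument runs as follows:
\begin{enumerate}
\item The map $J_i\to G^i(F)_{x,\frac{r_{i-1}}{2}}/G^i(F)_{x,\frac{r_{i-1}}{2}+}$ has kernel $J_i\cap G^i(F)_{x,\frac{r_{i-1}}{2}+}=J_i^+$. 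Check this over $E'$ using the product decomposition into $T'$ and root subgroups, then take $\Gal(E'/F)$-fixed points.
\item Under Moy--Prasad, the image of this map corresponds to the image of $(\mathfrak n^{i-1}\cap\frg^i)(F)_{x,\frac{r_{i-1}}{2}}$.
\item Since $\frg^i(F)_{x,s}=\frg^{i-1}(F)_{x,s}\oplus(\mathfrak n^{i-1}\cap\frg^i)(F)_{x,s}$ for all $s$, composing with the projection gives \eqref{eqn:mp-iso}.
\end{enumerate}

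There is also a smaller omission. In the commutator step you lift arbitrary $v,w\in\frg^i(F)_{x,\frac{r_{i-1}}{2}}$ to $a,b\in G^i(F)_{x,\frac{r_{i-1}}{2}}$. These need not lie in $J_i$, while $\langle\cdot,\cdot\rangle_i$ is defined using lifts in $J_i$. Your formula $\wh\phi_{i-1}(aba^{-1}b^{-1})=\psi(X_{i-1}^*([v,w]))$ is valid for such $a,b$. To identify it with $\langle\ol v,\ol w\rangle_i$, you must note that $X_{i-1}^*([v,w])\bmod\frp_F$ depends only on the classes of $v,w$ modulo $\frg^{i-1}(F)_{x,\frac{r_{i-1}}{2}}+\frg^i(F)_{x,\frac{r_{i-1}}{2}+}$. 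This holds for two reasons:
\begin{itemize}
\item $X_{i-1}^*([u,y])=0$ for $u\in\frg^{i-1}$ and $y\in\frg^i$. Bracketing with $u$ preserves both $\frg^{i-1}$ and $\mathfrak n^{i-1}$, and $X_{i-1}^*$ kills $[\frg^{i-1},\frg^{i-1}]$ by $G^{i-1}$-invariance. This, rather than the extension-by-zero convention, is where invariance is actually used.
\item $X_{i-1}^*$ maps $\frg^i(F)_{x,r_{i-1}+}$ into $\frp_F$.
\end{itemize}
With these two repairs your proof is complete.
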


\begin{proof}
The identity \eqref{eqn:mp-iso} is standard and is observed in the proof of \cite[Lemma 11.1]{Yu01}. The identity
\[
\ol\psi(\langle\ol v, \ol w\rangle_i) = \psi(X_{i-1}^*([v,w]))
\]
is also observed in the proof of \cite[Lemma 11.1]{Yu01}, and \eqref{eqn:yu-symplectic} follows from the fact that $\ol\psi$ is injective and $\psi^0=\ol\psi\circ\Tr_{\F_q/\F_p}$.
\end{proof}

Let $B_i = B_i(\vec{G},x,\vec{r})\co V_i \times V_i \to \F_q$ denote the symplectic form which is transported under the Moy--Prasad isomorphism to the symplectic form
\[
(\ol v, \ol w) \mapsto \overline{X_{i-1}^*([v,w])}.
\]
Our interest in this symplectic form (as opposed to $\langle\cdot,\cdot\rangle_i$, which is $\F_p$-valued) comes from the following lemma. Recall that $F_r$ denotes the unramified extension of $F$ of degree $r$.

\begin{lemma}\label{lemma:algebraic-action}
The maps $f_{i,r}\co \ol G^0_{[x]}(\F_{q^r}) \to \Sp(V_i(\vec{G}_{F_r}), B_i(\vec{G}_{F_r}))$ (for $r \in \Z_{\geq 1}$) induced by the actions of $G^0(F_r)_{[x]}$ on $J_i(\vec{G}_{F_r})$ and $J_i^+(\vec{G}_{F_r})$ are algebraic, i.e., there is an $\F_q$-homomorphism $\ol G^0_{[x]} \to \mathbf{Sp}(V_i, B_i)$ which restricts to $f_{i,r}$ on $\F_{q^r}$-points for all $r$.
\end{lemma}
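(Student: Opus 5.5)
The idea is to realize $V_i$ as the $\F_q$-points of a finite-dimensional $\F_q$-vector space scheme, compatibly in $r$. Once that is done, $\ol G^0_{[x]}$ acts on it through an algebraic representation by the standard theory of Moy--Prasad filtrations over Bruhat--Tits group schemes.

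First, by Lemma~\ref{lemma:yu-symplectic-form}, $V_i(\vec G_{F_r})$ is identified with the quotient
\[
\frg^i(F_r)_{x,\frac{r_{i-1}}{2}}/\bigl(\frg^i(F_r)_{x,\frac{r_{i-1}}{2}+} + \frg^{i-1}(F_r)_{x,\frac{r_{i-1}}{2}}\bigr).
\]
Because $F_r/F$ is unramified, the Moy--Prasad filtration is compatible with unramified base change: $\frg^i(F_r)_{x,s} = \frg^i(F)_{x,s}\otimes_{\cO_F}\cO_{F_r}$, and likewise for the $s+$ lattices and for $\frg^{i-1}$. This is Galois descent, using that the filtration lattices over $F^{\unr}$ are $\Gal(F^{\unr}/F)$-stable.

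Consequently there is a finite-dimensional $\F_q$-vector space $\mathsf V_i \coloneqq V_i(\vec G)$ with $V_i(\vec G_{F_r}) = \mathsf V_i\otimes_{\F_q}\F_{q^r}$. We regard $\mathsf V_i$ as a vector group scheme over $\F_q$. The form $B_i(\vec G_{F_r})$ is the $\F_{q^r}$-bilinear extension of $B_i(\vec G)$, since $X_{i-1}^*$ and the bracket are defined over $\cO_F$ and reduction mod $\frp_{F_r}$ restricts to reduction mod $\frp_F$. So $(\mathsf V_i,B_i)$ defines $\mathbf{Sp}(\mathsf V_i,B_i)$ over $\F_q$, and its $\F_{q^r}$-points are the targets of the $f_{i,r}$.

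Next we construct the homomorphism. The adjoint action of $\cG^0_{x}$ on the $\cO_F$-lattices $\frg^i(F)_{x,s}$ is algebraic. Concretely, the smooth $\cO_F$-group scheme $\cG^0_x$ (or $\cG^0_{[x]}$) acts on the $\cO_F$-module schemes attached to these lattices, by \cite{KP}'s description of $\cG_x$ as the schematic stabilizer. Reducing mod $\frp_F$ gives an action of the special fiber on the graded piece $\frg^i_{x,s}/\frg^i_{x,s+}$, and this action preserves the image of $\frg^{i-1}_{x,s}$. Hence the special fiber acts linearly on $\mathsf V_i$.

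The pro-unipotent part acts trivially on the quotient. Indeed, $G^0(F_r)_{x,0+}$ acts trivially on $V_i$, because commutators with $G^i_{x,s}$ land in $G^i_{x,s+}$, which lies in $J_i^+$. So the action factors through $\ol G^0_{[x]}$. This gives an $\F_q$-homomorphism $\ol G^0_{[x]}\to \GL(\mathsf V_i)$, and it preserves $B_i$ because $X_{i-1}^*$ is $G^{i-1}$-invariant, hence $G^0$-invariant, and the bracket is equivariant. It therefore lands in $\mathbf{Sp}(\mathsf V_i,B_i)$.

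Finally, on $\F_{q^r}$-points this map agrees with $f_{i,r}$. The conjugation action of $G^0(F_r)_{[x]}$ on $J_i/J_i^+$ is transported to the adjoint action under the Moy--Prasad isomorphism, since that isomorphism is equivariant. Moreover $\cG^0_{[x]}(\cO_{F_r})\to \ol G^0_{[x]}(\F_{q^r})$ is surjective by smoothness and Hensel, so the two maps are determined by the same data.

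The main obstacle is the component-group issue. $\cG_{[x]}$ may not be of finite type, and elements of $G^0(F)_{[x]}$ outside $G^0(F)_x$ act through the quotient by the center. I would handle this by noting that $Z(G^0)(F)$ acts trivially on $V_i$, and that $G^0(F)_{[x]}/Z(G^0)(F)G^0(F)_x$ is finite. I would then define the action on $\ol G^0_{[x]}$ via the adjoint group, where the stabilizer of $[x]$ is a finite-type smooth group scheme, and run the above argument there.
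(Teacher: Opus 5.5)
Your core construction matches the paper's. You extend the adjoint action on $\frg(F)_{x,r_{i-1}/2}\supset\frg(F)_{x,r_{i-1}/2+}$ to the integral model and reduce modulo $\frp_F$. You kill $G^0(F^{\unr})_{x,0+}$ by the Moy--Prasad commutator relation and pass to the quotient of Lemma~\ref{lemma:yu-symplectic-form}. Symplecticity comes from $G^{i-1}$-invariance of $X_{i-1}^*$, and the $\F_{q^r}$-points come from unramified descent.

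The gap is in the paragraph you single out as the main obstacle. First, $Z(G^0)(F)$ does \emph{not} act trivially on $V_i$. Since $Z(G^{i-1})\subset Z(G^0)$ and $G^{i-1}=Z_{G^i}(Z_{G^{i-1}})$, the center of $G^0$ acts on $\frg^i/\frg^{i-1}$ with nonzero weights, and its bounded part can act nontrivially on $V_i$. For instance, take $G=\GL_2$, $G^0=T$ an unramified elliptic torus, $G^1=G$ and $r_0=2$. Then $T(F)=Z(G^0)(F)$ acts on the two-dimensional space $V_1\cong\F_{q^2}$ through the root, i.e.\ by multiplication by $\ol t^{\,1-q}$. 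This is exactly the action that produces the nontrivial Weil-representation factor in Yu's construction. What does act trivially is $Z(G)(F)$.

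Second, even with $Z(G)$ in place of $Z(G^0)$, the detour does not help. Routing the action of $\ol G^0_{[x]}$ through $G^0/Z(G)$ requires an $\F_q$-homomorphism $\ol G^0_{[x]}\to\ol{(G^0/Z(G))}_x$. That means extending $G^0\to G^0/Z(G)$ over the non-finite-type model $\cG^0_{[x]}$, which is the very problem you were trying to sidestep. Finiteness of $G^0(F)_{[x]}/Z(G)(F)G^0(F)_x$ is a statement about points and does not produce a morphism of schemes.

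The missing idea, which is how the paper argues and which makes the detour unnecessary, is a cover of $\cG^0_{[x]}$ by affine opens. Although $\cG^0_{[x]}$ is only locally of finite type, it is covered by the translates $g\cdot(\cG^0_{[x]})^\circ$ for $g\in G^0(F^{\unr})_{[x]}$, whose $\cO_{F^{\unr}}$-points exhaust $G^0(F^{\unr})_{[x]}$. On each translate, apply the extension criterion \cite[Corollary~2.10.10]{KP}. This extends the adjoint action to $(\cG^0_{[x]})_{\cO_{F^{\unr}}}\to\GL(L\otimes\cO_{F^{\unr}})$ for $L$ each of the two lattices. The pieces glue because they agree on the schematically dense generic fiber, and Galois invariance descends the result to $\cO_F$.

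This criterion is phrased in terms of $\cO_{F^{\unr}}$-points, which is why one works over $\cO_{F^{\unr}}$ and then descends. It is also the precise justification for your otherwise vague assertion that $\cG^0_x$ acts on the lattice schemes ``as a schematic stabilizer.'' With this in place, the rest of your argument goes through. One small addition: use Lang's theorem, not only Hensel's lemma, for the surjectivity of $\cG^0_{[x]}(\cO_{F_r})\to\ol G^0_{[x]}(\F_{q^r})$.
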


\begin{proof}
Put
\[
L_i=\frg(F)_{x,r_{i-1}/2},\qquad
L_i^+=\frg(F)_{x,r_{i-1}/2+},
\]
considered as vector groups over $\cO_F$, and let $\cG^0_{[x]}$ be the smooth separated $\cO_F$-model of $G^0$ with $\cG^0_{[x]}(\cO_{F^{\unr}})=G^0(F^{\unr})_{[x]}$. The relative identity component of $\cG^0_{[x]}$ is affine and open, and its cosets form an affine open cover whose $\cO_{F^{\unr}}$-points cover $\cG^0_{[x]}(\cO_{F^{\unr}})$. Applying \cite[Corollary~2.10.10]{KP} to each coset, we see that the maps $G^0(F^{\unr})_{[x]} \to \Aut(L_i \otimes \cO_{F^{\unr}})$ and $G^0(F^{\unr})_{[x]} \to \Aut(L_i^+ \otimes \cO_{F^{\unr}})$ induced by the adjoint action extend uniquely to homomorphisms of $\cO_{F^{\unr}}$-group schemes $(\cG^0_{[x]})_{\cO_{F^{\unr}}} \to \GL(L_i \otimes \cO_{F^{\unr}})$, and similarly for $L_i^+$. These maps are Galois-invariant, and hence they are defined over $\cO_F$ by descent.
Since $\varpi L_i\subset L_i^+$, these maps induce an algebraic action of $(\cG_{[x]}^0)_{\F_q}$ on $L_i/L_i^+$. Thus the action factors through $\ol G^0_{[x]}$. The inverse image of the unipotent radical of $(\cG^0_{[x]})_{\F_q}^{\circ}$ is $G^0(F^{\unr})_{x,0+}$, which acts trivially on $L_i/L_i^+$ by the Moy--Prasad commutator relation. Since $G^0\subset G^{i-1}\subset G^i$, equation~\eqref{eqn:mp-iso} induces an algebraic action of $\ol G^0_{[x]}$ on $V_i$. Finally, the $G^{i-1}$-invariance of $X_{i-1}^*$ shows that the action preserves $B_i$. This gives the required algebraic homomorphism, and unramified base change gives the asserted maps on all $\F_{q^r}$-points.
\end{proof}

By \cite[Proposition 11.4]{Yu01} again, for $1\leq i\leq d$ there is a canonical special isomorphism
\begin{equation}\label{eq:special-isom}
j_i = j_i(\vec{G},x,\vec{r},\vec{\phi}) \colon J_i/\bigl(J_i^+ \cap \ker \widehat{\phi}_{i-1}\bigr) \to V_i^\sharp,
\end{equation}
where $V_i^\sharp$ is the Heisenberg group corresponding to $(V_i, \frac{1}{2}\langle\cdot, \cdot\rangle)$. In other words, $V_i^\sharp$ is the group with underlying set $V_i \times \F_p$ and group law $(v, a)\cdot(w, b) = (v + w, a + b + \frac{1}{2}\langle v, w\rangle_i)$.

For $0\leq i\leq d-1$, let $V_{\widehat{\phi}_i}$ denote the Heisenberg representation of $J_{i+1}/(J_{i+1}^+ \cap \ker \widehat{\phi}_i)$ with central character the character of $J_{i+1}^+/(J_{i+1}^+ \cap \ker \widehat{\phi}_i)$ induced by $\widehat{\phi}_i|_{J_{i+1}^+}$. By \eqref{eq:special-isom} applied with index $i+1$, the group $J_{i+1}/(J_{i+1}^+ \cap \ker \widehat{\phi}_i)$ is identified with the Heisenberg group $V_{i+1}^\sharp$ attached to the symplectic space $V_{i+1}=J_{i+1}/J_{i+1}^+$. When $k$ is a field, $V_{\widehat{\phi}_i}$ is the unique irreducible $k$-representation of $V_{i+1}^\sharp$ with central character $\wh\phi_i$. For $k=\ol\Z_\ell$, take a $V_{i+1}^\sharp$-stable lattice in the corresponding Heisenberg representation over $\ol\Q_\ell$. Existence and uniqueness up to isomorphism follow from \cite[Part III, no.\ 15.5, Proposition 43]{Serre77} and the fact that $V_{i+1}^\sharp$ is a $p$-group and $\ell \neq p$.

The representation $\kappa$ is defined to have underlying space $\bigotimes_{i=0}^{d-1} V_{\widehat{\phi}_i}$; we will define the action of $K$ separately on each $V_{\widehat{\phi}_i}$. We let $J_{i+1}$ act on $V_{\widehat{\phi}_i}$ through its quotient $V_{i+1}^\sharp$. If $j \neq i$, let $J_{i+1}$ act on $V_{\widehat{\phi}_j}$ through $\widehat{\phi}_j|_{J_{i+1}}$. Moreover, $G^0(F)_{[x]}$ acts on $V_{\widehat{\phi}_i}$ through $\phi_i \cdot W_i$, where $W_i$ is the representation
\[
G^0(F)_{[x]}/G^0(F)_{x,0+} \to \Sp(V_{i+1}) \to \GL(V_{\widehat{\phi}_i}),
\]
where $\Sp(V_{i+1}) \to \GL(V_{\widehat{\phi}_i})$ is the Weil representation, as defined in \cite[Theorem 2.4]{Ger77}. (See also \cite[Lemma 2.5]{Fin22} for an explanation of the existence of these representations over $\ol\Z_\ell$, and by extension $\ol\F_\ell$.)

\subsection{Compatibility with tame base change and restriction to twisted Levis}

\begin{ansatz}\label{ansatz:yu-functoriality-setup}
In the functoriality statements below, let $H \subset G$ be a twisted Levi $F$-subgroup containing a tamely ramified elliptic maximal $F$-torus in common with $G^0$, assume $x \in \cB(H)$, and let $E/F$ be a finite tamely ramified Galois extension. We set $H^i = G^i \cap H$ for all $i$ and write $\vec H = (H^i)_{0 \leq i \leq d}$. If $\phi_i$ is a character of $G^i(F)$, we write $\phi_{H,i} = \phi_i|_{H^i(F)}$.
\end{ansatz}

\begin{lemma}\label{lemma:yu-subgroup-fixed}
Assume we are in the setting of Ansatz~\ref{ansatz:yu-functoriality-setup}.
\begin{enumerate}
    \item $K({\vec{G}},x,\vec{r}) \cap H(F) = K(\vec{H},x,\vec{r})$.
    \item $K(\vec{G}_E, x_E, \vec{r})^{\Gal(E/F)} = K(\vec{G}, x, \vec{r})$.
\end{enumerate}
\end{lemma}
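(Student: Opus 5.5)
Both parts will be reduced to statements about Moy--Prasad filtration subgroups via a common tame maximal torus. Fix a tamely ramified maximal $F$-torus $T\subset G^0\cap H$, elliptic in $G$, with $x\in\cB(T)$ (possible by the Ansatz, after conjugating within $G^0(F)_{x}$ if necessary), and a tame Galois splitting field $E'/F$ for $T$.

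\textbf{Part (1).} I will use the Iwahori-type factorization of the groups $G^i(F)_{x,\tilde r,\tilde r'}$ from \cite[\S 2]{Yu01}: over $E'$, the group $K(\vec G,x,\vec r)_{E'}\cap G(E')_{x,0+}$ is generated by $T(E')_{x,0+}$ together with root subgroups $U_\alpha(E')_{x,f(\alpha)}$. Here $f(\alpha)=r_{i-1}/2$ when $\alpha\in\Phi(G^i)-\Phi(G^{i-1})$ (with $r_{-1}=0$ for $\alpha$ in $G^0$). This set is a concave function of roots, and any product of such groups over the roots can be taken in any order.

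The function $f_H$ defining $K(\vec H,x,\vec r)$ is just the restriction of $f$ to $\Phi(H,T)$, because $H^i=G^i\cap H$. The plan is then to apply the general fact (Bruhat--Tits, cf.\ \cite[Lemma 2.10]{Yu01} or \cite{KP}) that for a twisted Levi $H\supset T$ and a concave function $f$, one has $G(E')_{x,f}\cap H(E')=H(E')_{x,f|_{\Phi(H)}}$. Taking $\Gal(E'/F)$-invariants then gives the claim on the pro-$p$ parts.

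For the depth-zero factor, I need $G^0(F)_{[x]}\cap H(F)=H^0(F)_{[x]}$, where the brackets refer to the reduced buildings of $G$ and $H$. These stabilizers differ a priori. I expect this to be the main obstacle. The inclusion $\supseteq$ should follow from $Z(H)\subset Z(H^0)$ together with ellipticity of $T$ in $G$, which makes $Z(H)/Z(G)$ anisotropic; then $\cB(H)$ and $\cB(G)$ have the same central directions up to bounded sets, and $[x]_H$-stabilizers fix $[x]_G$. For $\subseteq$: an element of $H(F)$ fixing $[x]_G$ fixes $x$ modulo $X_*(Z(G))\otimes\RR$, hence fixes $[x]_H$, since $\cB(H)\hookrightarrow\cB(G)$ is equivariant.

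Finally, I need to handle the product decomposition $K=G^0(F)_{[x]}\cdot K_{0+}$. I will write $k\in K\cap H(F)$ as $g\,u$ with $g\in G^0(F)_{[x]}$ and $u$ in the pro-$p$ part. It then has to be shown that $g$ and $u$ may be chosen in $H$. For this I would project to $\ol G_x$, using that $K_{0+}\cap G^0(F)_{[x]}=G^0(F)_{x,0+}$, and argue inside the $T$-stable decomposition.

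\textbf{Part (2).} Here I will use that Moy--Prasad filtrations are compatible with tame descent: $G(E)_{x,r}^{\Gal(E/F)}=G(F)_{x,r}$. This holds with the normalized valuation of $F$, as set up in \S\ref{ssec:notation}, and is \cite[Proposition 2.4.?]{Yu01}-style tame descent, valid for concave-function groups by \cite{KP}. It also holds for stabilizers, $G^0(E)_{[x]}^{\Gal}=G^0(F)_{[x]}$, since $\cB(G)=\cB(G_E)^{\Gal}$ for tame $E$.

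For the product, I will use Galois cohomology: the map $K_E^{\Gal}\to (K_E/K_{E,0+})^{\Gal}$ has fiber controlled by $H^1(\Gal(E/F),K_{E,0+})$. That cohomology group is trivial when $[E:F]$ is prime to $p$ (pro-$p$ coefficients). When $E/F$ is tame but $p\mid[E:F]$ (wild degree occurs only via the unramified part), I would instead filter by $G(E)_{x,s}/G(E)_{x,s+}$, which are $\F$-vector spaces with semilinear Galois action, and use Hilbert 90/Lang vanishing of $H^1$ step by step.
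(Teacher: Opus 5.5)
Your outline of (2) is workable, but (1) has a genuine gap, and it sits at the step you postpone to the end. The stabilizer comparison you flag as the main obstacle is the easy part: it reduces to $A_H=A_G$. The real content of (1) is the refactorization. If $k=gu\in H(F)$ with $g\in G^0(F)_{[x]}$ and $u\in K_{0+}$, you must show that $g$ can be replaced by an element of $H^0(F)_{[x]}$, i.e.
\[
G^0(F)_{[x]}\cap H(F)K_{0+}\subseteq H^0(F)_{[x]}\,G^0(F)_{x,0+}.
\]
This does not follow from your two separate intersections ($K_{0+}\cap H(F)$ and $G^0(F)_{[x]}\cap H(F)$), since a priori neither factor of $k$ lies in $H$. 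Nor does ``project to $\ol G_x$ and argue inside the $T$-stable decomposition'' supply it. Over $F$, $T$ need not be maximally unramified and $H$ need not be unramified, so the image of $T$ need not contain a maximal torus of the reductive quotient. The image of $H(F)_{[x]}$ there need not be cut out by roots of a common torus. And at depth $0$ there is no factorization of the kind you use at positive depth, since that requires $f(\alpha)+f(-\alpha)>0$.

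The missing idea is the paper's. Using (2), pass to a tame extension over which $H=Z_G(z)$ for some $z\in Z(H)$ of finite order prime to $p$; work over that extension and suppress it from the notation. Being bounded and central in $H$, $z$ fixes $x\in\cB(H)$, so $z\in G^0_{[x]}$ normalizes all the groups involved, and $K\cap H=K^z$. If $k=gu$ commutes with $z$, then $g^{-1}zgz^{-1}=u\,(zuz^{-1})^{-1}\in K_{0+}\cap G^0=G^0_{x,0+}$, so the coset $gG^0_{x,0+}$ is $z$-stable. Since $G^0_{x,0+}$ is pro-$p$ and $z$ has order prime to $p$, this coset contains a $z$-fixed point $g'$. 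Then $g'\in Z_{G^0}(z)=H^0$ and $g'^{-1}k\in K_{0+}\cap H$. The paper runs this as the same induction on $d$ as in (2), with $\langle z\rangle$ in place of $\Gal(E/F)$. Alternatively, over a splitting field one can use the big cell of a parabolic with Levi $H$ and uniqueness of the big-cell factorization. Either way, over the extension the depth-zero factor for $H$ must be the stabilizer of $[x]_G$ in $H^0$. Only after descending to $F$ does your observation $A_H=A_G$ identify it with $H^0(F)_{[x]}$.

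There are also smaller problems.

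\emph{The opening torus may not exist.} For $T$ elliptic, $\cB(T)$ maps to a single point of the reduced building. Conjugation by $G^0(F)_x$ fixes $[x]$, so it cannot move that point onto $[x]$. The Ansatz only provides a tame elliptic torus in $G^0\cap H$, not one through $x$. Decouple the two roles: ellipticity is used only to get $A_H=A_G$, while the concave-function step needs a possibly non-elliptic tame maximal torus of $H^0$ whose apartment over a tame splitting field contains $x$. Also, $f$ must take the value $0+$, not $0$, on $\Phi(G^0)$.

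\emph{Part (2).} Your route (split off depth zero, then descend the pro-$p$ part) differs from the paper's induction on $d$, which peels off $G^d(E)_{x,r_{d-1}/2}$ one layer at a time. It can be made to work, with two corrections. The cohomology you need is $H^1(\Gal(E/F),G^0(E)_{x,0+})$, to find a Galois-fixed point in the Galois-stable coset $kK_{E,0+}\cap G^0(E)_{[x]}$; it is not $H^1(\Gal(E/F),K_{E,0+})$. And $K_{0+}(\vec G_E)^{\Gal(E/F)}=K_{0+}(\vec G)$ is itself the positive-depth content of (2). It is not a formal consequence of descent for each $G^i(E)_{x,s}$ separately, since it also needs $H^1$-vanishing, and the paper's induction is exactly what supplies it.
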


\begin{proof}
For (2), we induct on $d$, the case $d = 0$ being clear. If $d > 0$, then there is a short exact sequence
\begin{equation}\label{eqn:yu-group-bc-1}
1 \to G^d(E)_{x,\frac{r_{d-1}}{2}} \to K(\vec{G}_E,x_E,\vec{r}) \to K((G^i_E)_{0 \leq i < d}, x_E, (r_i)_{0\leq i < d})/G^{d-1}(E)_{x,\frac{r_{d-1}}{2}} \to 1.
\end{equation}
There is also a tautological short exact sequence
\begin{equation}\label{eqn:yu-group-bc-2}
1 \to G^{d-1}(E)_{x,\frac{r_{d-1}}{2}} \to K((G^i_E)_{0 \leq i < d}, x_E, (r_i)_{0\leq i < d}) \to K((G^i_E)_{0 \leq i < d}, x_E, (r_i)_{0\leq i < d})/G^{d-1}(E)_{x,\frac{r_{d-1}}{2}} \to 1.
\end{equation}
Taking $\Gal(E/F)$-fixed points in \eqref{eqn:yu-group-bc-1} and \eqref{eqn:yu-group-bc-2}, applying \cite[Proposition 2.2]{Yu01}, and using the fact that the Moy--Prasad isomorphism restricts well with respect to finite field extensions of $F$ (which follows easily from the proof of \cite[Theorem 13.5.1]{KP}), we obtain
\begin{equation}\label{eqn:yu-bc-ses}
1 \to G^d(F)_{x,\frac{r_{d-1}}{2}} \to K(\vec{G}_E,x_E,\vec{r})^{\Gal(E/F)} \to K((G^i_E)_{0 \leq i < d}, x_E, (r_i)_{0\leq i < d})^{\Gal(E/F)}/G^{d-1}(F)_{x,\frac{r_{d-1}}{2}} \to 1.
\end{equation}
By induction, we have
\begin{equation}\label{eqn:yu-bc-equality}
K((G^i_E)_{0 \leq i < d}, x_E, (r_i)_{0\leq i < d})^{\Gal(E/F)} = K((G^i)_{0\leq i < d}, x, (r_i)_{0\leq i<d}).
\end{equation}
Since clearly $K(\vec{G}, x, \vec{r}) \subset K(\vec{G}_E, x_E, \vec{r})^{\Gal(E/F)}$, it follows from \eqref{eqn:yu-bc-ses} and \eqref{eqn:yu-bc-equality} that this inclusion is an equality, as desired.

For (1), let $Z$ denote the maximal central torus of $H$. By (2), we may pass to a finite tamely ramified extension of $F$ to assume that there is an element $z \in Z(F)$ of finite order prime to $p$ such that $Z_G(z) = H$. Then the claim is that
\begin{equation}\label{eqn:yu-levi-equality}
K(\vec{G}, x, \vec{r})^z = K(\vec{H}, x, \vec{r}).
\end{equation}
Since $z$ is of order prime to $p$, the set $\rH^1(z, G^i(F)_{x, \frac{r_{i-1}}{2}})$ is a singleton. Moreover, we have $(G^i(F)_{x,\frac{r_{i-1}}{2}})^z = H^i(F)_{x,\frac{r_{i-1}}{2}}$ for all $i$ and $(G^0(F)_{[x]})^z = H^0(F)_{[x]}$, so the same argument as in the previous paragraph allows us to conclude inductively that \eqref{eqn:yu-levi-equality} holds.
\end{proof}

To study the behavior of the $\wh\phi_i$ under base change to a tamely ramified Galois extension $E/F$, we need to extend each $\phi_i$ from $G^i(F)$ to a $\Gal(E/F)$-invariant character of $G^i(E)$; this is not possible in complete generality, and it will lead to some small restrictions on $p$.

\begin{lemma}\label{lemma:yu-hat-functoriality}
Assume we are in the setting of Ansatz~\ref{ansatz:yu-functoriality-setup}.
\begin{enumerate}
    \item $\wh\phi_{H,i} = \wh\phi_i|_{H^0(F)_{[x]}H^i(F)_{x,0}H(F)_{x,\frac{r_i}{2}+}}$ for all $0 \leq i \leq d-1$.
    \item If $\phi_i$ extends to a character $\phi_{E,i}$ of $G^i(E)$ which is generic of depth $r_i$, then $\wh\phi_i = \wh\phi_{E,i}|_{G^0(F)_{[x]}G^i(F)_{x,0}G(F)_{x,\frac{r_i}{2}+}}$ for all $0 \leq i \leq d-1$.
\end{enumerate}
\end{lemma}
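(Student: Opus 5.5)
The plan is to exploit the fact that $\wh\phi_i$ is characterized uniquely by two properties: its restriction to $G^0(F)_{[x]}G^i(F)_{x,0}$ equals $\phi_i$, and its restriction to $G(F)_{x,\frac{r_i}{2}+}$ factors through the projection $\frg^i\oplus\mathfrak{n}^i\to\frg^i$ on $\frg(F)_{x,\frac{r_i}{2}+}/\frg(F)_{x,r_i+}$. In both parts, the right-hand side is a character of the appropriate group, because the group is a subgroup of the domain of $\wh\phi_i$ (resp.\ $\wh\phi_{E,i}$). So it suffices to check that the right-hand side has the two defining properties of $\wh\phi_{H,i}$ (resp.\ $\wh\phi_i$); uniqueness then gives equality.

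For (1), the first property is immediate. By Lemma~\ref{lemma:yu-subgroup-fixed}(1)-type reasoning, the group $H^0(F)_{[x]}H^i(F)_{x,0}$ sits inside $G^0(F)_{[x]}G^i(F)_{x,0}$, and there $\wh\phi_i$ equals $\phi_i$, whose restriction is $\phi_{H,i}$. For the second property, choose the torus $T'$ in Yu's construction to be the common tamely ramified elliptic maximal torus of $H$ and $G^0$ from Ansatz~\ref{ansatz:yu-functoriality-setup}; this is legitimate since $\wh\phi_i$ is independent of the choice of $T'$. Then $\mathfrak{n}^i_H=\mathfrak{n}^i\cap\frh$ and $\frh^i=\frg^i\cap\frh$, and the root-space decomposition of Moy--Prasad lattices over $E'$ shows that $\frh(F)_{x,s}=(\frh^i\oplus\mathfrak{n}_H^i)(F)_{x,s}$ is compatible with $\frg=\frg^i\oplus\mathfrak{n}^i$. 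The Moy--Prasad isomorphism for $H$ is the restriction of that for $G$, since $x\in\cB(H)$ and the filtrations satisfy $H(F)_{x,s}=G(F)_{x,s}\cap H(F)$. Hence the projection to $\frh^i$ is the restriction of the projection to $\frg^i$, and the required factorization follows.

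For (2), argue the same way using $E'$ containing $E$ and the same torus $T'$. Restricting $\phi_{E,i}$ to $G^0(F)_{[x]}G^i(F)_{x,0}$ gives $\phi_i$. On depth $\frac{r_i}{2}+$, use $G(F)_{x,s}=G(E)_{x,s}\cap G(F)$ (valuations normalized as in \S\ref{ssec:notation}) together with the compatibility of the Moy--Prasad isomorphism with the extension $E/F$ already invoked in the proof of Lemma~\ref{lemma:yu-subgroup-fixed}(2). The decomposition $\frg^i\oplus\mathfrak{n}^i$ over $E$ is the base change of the one over $F$, so projections are compatible. One subtlety is that $\wh\phi_{E,i}$ must be defined with respect to $\psi_E=\psi\circ\Tr_{E/F}$ or an equivalent normalization. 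But the characterization of $\wh\phi$ never refers to $\psi$, only to $\phi_i$, so this does not matter.

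The main obstacle I expect is the compatibility of Moy--Prasad isomorphisms and lattice decompositions with restriction to $H$ and with base change to $E$, particularly when $E/F$ is ramified and depths $\frac{r_i}{2}+$ jump. I would handle this by working entirely over $E'$ via root subgroups of $T'$, where everything is a direct sum of rank-one pieces, and then taking Galois invariants as in \cite[Proposition 2.2]{Yu01}.
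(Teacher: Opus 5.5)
Your proposal is correct and follows the paper's proof: check that the restricted character satisfies the two defining properties of $\wh\phi_{H,i}$ (resp.\ $\wh\phi_i$) and conclude by uniqueness, using compatibility of the Moy--Prasad isomorphism with restriction to twisted Levis and with tame extensions (the paper cites \cite[Corollary 2.4]{Yu01} for the latter). One small correction: the common elliptic torus from Ansatz~\ref{ansatz:yu-functoriality-setup} need not contain $x$ in its apartment, so you should instead take $T'$ to be any tame maximal torus of $H^0$ whose apartment contains $x$, or simply note that $\mathfrak{n}^i$ is the sum of the nontrivial weight spaces of the maximal central torus of $G^i$, so the projection $\frg^i\oplus\mathfrak{n}^i\to\frg^i$ does not depend on $T'$ at all.
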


\begin{proof}
We begin with (1). It is clear that $\wh\phi_i|_{H^0(F)_{[x]}H^i(F)_{x,0}H(F)_{x,\frac{r_i}{2}+}}$ satisfies the first defining property in \S\ref{sss:yu-characters}. Compatibility of the Moy--Prasad isomorphism with restriction to twisted Levi subgroups shows that the map
\[
H(F)_{x,\frac{r_i}{2}+}/H(F)_{x,r_i+} \to H^i(F)_{x,\frac{r_i}{2}+}/H^i(F)_{x,r_i+}
\]
defined there is the restriction of the corresponding map for $G$. Since $\phi_{H,i}$ is the restriction of $\phi_i$, it is therefore clear that the second defining property in \S\ref{sss:yu-characters} holds as well. The argument for (2) is completely similar, using compatibility of the Moy--Prasad isomorphism with tame extensions, as in \cite[Corollary 2.4]{Yu01}.
\end{proof}

\begin{lemma}\label{lemma:yu-Ji-fixed}
With notation and assumptions as in Ansatz~\ref{ansatz:yu-functoriality-setup}, we have
\begin{enumerate}
    \item $J_i(\vec{G}, x, \vec{r}) \cap H(F) = J_i(\vec{H}, x, \vec{r})$ and similarly for $J_i^+$,
    \item $J_i(\vec{G}_E,x_E,\vec{r})^{\Gal(E/F)} = J_i(\vec{G},x,\vec{r})$ and similarly for $J_i^+$.
\end{enumerate}
\end{lemma}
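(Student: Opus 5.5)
We follow the strategy used for Lemma~\ref{lemma:yu-subgroup-fixed}, and prove (2) before (1), since (1) will be reduced to a fixed-point statement of the same shape. Both claims concern the groups $J_i = G^i(F)_{x,r_{i-1},\frac{r_{i-1}}{2}}$ and $J_i^+$. By definition these are intersections with $G(F)$ of groups generated by Moy--Prasad filtration subgroups of $T'(E')$ and of root subgroups $U_\alpha(E')$, taken at two depths depending on whether $\alpha$ lies in $G^{i-1}$.

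For (2), the inclusion $J_i(\vec G,x,\vec r)\subset J_i(\vec G_E,x_E,\vec r)^{\Gal(E/F)}$ is clear from the definitions. Our valuations are normalized compatibly with $F$, and we may take $E'\supset E$ in the definition, so that the same generating set serves over $E$. For the reverse inclusion we filter $J_i$ by the short exact sequence
\[
1 \to G^i(E)_{x,r_{i-1}} \to J_i(\vec G_E) \to J_i(\vec G_E)/G^i(E)_{x,r_{i-1}} \to 1,
\]
whose quotient is identified by the Moy--Prasad isomorphism with the abelian group
\[
\frg^{i-1}(E)_{x,\frac{r_{i-1}}{2}\to r_{i-1}}\oplus \mathfrak n(E)_{x,\frac{r_{i-1}}{2}}\big/\cdots .
\]
More convenient is to filter by the successive jumps of the Moy--Prasad filtration between $\frac{r_{i-1}}{2}$ and $\infty$. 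Each graded piece is then the $E$-points of a lattice quotient, namely
\[
\frg^{i-1}(E)_{x,s}/\frg^{i-1}(E)_{x,s+}\ \oplus\ \mathfrak n^{i-1}(E)_{x,s}/\mathfrak n^{i-1}(E)_{x,s+},
\]
with the appropriate summand omitted when $s<r_{i-1}$.

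We then take $\Gal(E/F)$-invariants. Two inputs are needed. The first is the vanishing of $\rH^1(\Gal(E/F), G(E)_{x,s})$ for $s>0$, which is \cite[Proposition 2.2]{Yu01}. The second is that Galois-fixed points of each graded piece are the corresponding $F$-graded piece, again by \cite[Proposition 2.2]{Yu01} together with compatibility of Moy--Prasad isomorphisms with tame extensions. A five-lemma-style induction over the finitely many jumps (completed by passing to the limit, since all groups are pro-$p$ and complete) gives $J_i(\vec G_E)^{\Gal(E/F)}=J_i(\vec G)$. The identical argument, with $\frac{r_{i-1}}{2}$ replaced by $\frac{r_{i-1}}{2}+$, handles $J_i^+$.

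For (1), we imitate the proof of Lemma~\ref{lemma:yu-subgroup-fixed}(1). By (2), applied over $F$ and over a tame extension for both $G$ and $H$, we may enlarge $F$ to a tame extension over which the maximal central torus $Z$ of $H$ contains an element $z$ of finite prime-to-$p$ order with $Z_G(z)=H$. Then $J_i(\vec G)\cap H(F)=J_i(\vec G)^z$, and it remains to show $J_i(\vec G)^z=J_i(\vec H)$. We use the same filtration by depth jumps. On each graded piece, $z$ acts semisimply, and its fixed subspace is spanned by the weight spaces of $H$, so it equals the corresponding piece of $\frh^{i-1}\oplus(\frh^i\cap\mathfrak n^{i-1})$. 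Since $z$ has order prime to $p$, $\rH^1(\langle z\rangle,\cdot)$ vanishes on these pro-$p$ groups, and the induction closes exactly as before. The $J_i^+$ case is identical.

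The main obstacle is the descent step at the start of (1): the intersection with $H(F)$ must be recovered from the corresponding statement over $E$. This uses (2) for both $G$ and $H$, via $J_i(\vec G)\cap H(F) = (J_i(\vec G_E)\cap H(E))^{\Gal(E/F)}$, and it requires care that $x$ and the depth normalizations are unchanged under the tame extension.
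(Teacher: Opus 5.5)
Your proposal is correct and is essentially the paper's argument. The paper says only that the proof is completely analogous to Lemma~\ref{lemma:yu-subgroup-fixed}: for (2), take $\Gal(E/F)$-fixed points along a filtration using \cite[Proposition 2.2]{Yu01} and compatibility of Moy--Prasad isomorphisms; for (1), use (2) to pass to a tame extension where $H=Z_G(z)$ with $z$ of prime-to-$p$ order, then take $z$-fixed points using vanishing of $\mathrm{H}^1$. This is exactly what you do, with a filtration by depth jumps in place of the induction on $d$. One small slip: the second summand of your graded pieces in (2) should be $\frg^i\cap\mathfrak n^{i-1}$ rather than all of $\mathfrak n^{i-1}$, which is how you write it in part (1).
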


\begin{proof}
The proof is completely analogous to that of Lemma~\ref{lemma:yu-subgroup-fixed}.
\end{proof}

\begin{lemma}\label{lemma:yu-special-iso-functoriality}
Assume we are in the setting of Ansatz~\ref{ansatz:yu-functoriality-setup}.
\begin{enumerate}
\item Write $\vec\phi_H=(\phi_{H,j})_j$, and let $\iota_{H,i}\colon V_i(\vec H)^\sharp\lhook\joinrel\longrightarrow V_i(\vec G)^\sharp$ be the canonical embedding. Then
\[
\iota_{H,i}\circ j_i(\vec H,x,\vec r,\vec\phi_H)
=j_i(\vec G,x,\vec r,\vec\phi)\big|_{J_i(\vec H,x,\vec r)/(J_i^+(\vec H,x,\vec r)\cap\ker\widehat\phi_{H,i-1})}.
\]
\item If $\vec\phi_E=(\phi_{E,j})_j$ extends $\vec\phi$, with each $\phi_{E,j}$ being $G_E^{j+1}$-generic of depth $r_j$ for $0\leq j<d$, let $\iota_{E,i}\colon V_i(\vec G)^\sharp\lhook\joinrel\longrightarrow V_i(\vec G_E)^\sharp$ be the canonical embedding. Then
\[
\iota_{E,i}\circ j_i(\vec G,x,\vec r,\vec\phi)
=j_i(\vec G_E,x_E,\vec r,\vec\phi_E)\big|_{J_i(\vec G,x,\vec r)/(J_i^+(\vec G,x,\vec r)\cap\ker\widehat\phi_{i-1})}.
\]
\end{enumerate}
\end{lemma}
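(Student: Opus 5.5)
The plan is to reduce both statements to a uniqueness property of Yu's special isomorphism, as constructed in \cite[Proposition 11.4]{Yu01}. That construction produces $j_i$ as the unique special isomorphism $J_i/(J_i^+\cap\ker\wh\phi_{i-1})\to V_i^\sharp$ that is compatible with the conjugation action of the relevant group (normalizing $J_i$ and fixing $\wh\phi_{i-1}$), in particular of $G^{i-1}(F)_{x,0}$ and of $G^0(F)_{[x]}$. Concretely, Yu builds $j_i$ by choosing a splitting $J_i/J_i^+\to J_i/(J_i^+\cap\ker\wh\phi_{i-1})$ using a Lagrangian/polarization coming from the weight decomposition for a tame maximal torus $T'$ (the $p\neq 2$ "half" construction) and shows the result is independent of choices. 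So in each case I would check that the restriction of the big special isomorphism, composed appropriately, is itself a special isomorphism satisfying Yu's defining property for the smaller datum, and then invoke uniqueness.

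For (1): by Lemma~\ref{lemma:yu-Ji-fixed}(1), $J_i(\vec H)=J_i(\vec G)\cap H(F)$ and likewise for $J_i^+$. By Lemma~\ref{lemma:yu-hat-functoriality}(1), $\wh\phi_{H,i-1}$ is the restriction of $\wh\phi_{i-1}$, so $J_i^+(\vec H)\cap\ker\wh\phi_{H,i-1}=J_i^+(\vec H)\cap\ker\wh\phi_{i-1}$. Hence the quotient $J_i(\vec H)/(J_i^+(\vec H)\cap\ker\wh\phi_{H,i-1})$ injects into the corresponding quotient for $\vec G$, and the same holds for the symplectic spaces. Via \eqref{eqn:mp-iso}, $V_i(\vec H)$ is the subspace of $V_i(\vec G)$ spanned by the $T'$-weight spaces for roots of $H^i$. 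I will choose $T'$ to be the common elliptic torus of $H$ and $G^0$ from Ansatz~\ref{ansatz:yu-functoriality-setup}. By Lemma~\ref{lemma:yu-symplectic-form}, the form on $V_i(\vec G)$ restricts to the form on $V_i(\vec H)$, which makes $V_i(\vec H)$ a nondegenerate subspace. Thus $\iota_{H,i}$ is a well-defined map of Heisenberg groups. Yu's explicit construction of $j_i$ (via the decomposition $V_i=\bigoplus V_{\cO}$ over $\Gamma$-orbits of $T'$-weights, with a polarization chosen orbitwise) can be made with choices for $\vec G$ that extend those for $\vec H$. Then the two sides of the claimed identity agree on the nose, and independence of choices finishes the argument.

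For (2): the argument is the same, with Lemma~\ref{lemma:yu-Ji-fixed}(2) and Lemma~\ref{lemma:yu-hat-functoriality}(2) replacing their part (1) counterparts. Here $V_i(\vec G)=V_i(\vec G_E)^{\Gal(E/F)}$; this needs the vanishing of $\rH^1$ of $\Gal(E/F)$ on $J_i^+(\vec G_E)$, and for $E/F$ tame the relevant groups are pro-$p$ while the Galois group is a $p'$-group, as in \cite[Proposition 2.2]{Yu01}. The main obstacle is the form: the $\F_{q_E}$-valued form $B_i$ on $V_i(\vec G_E)$ restricts to the $F$-form only after applying a trace (the $\psi_E=\psi\circ\Tr_{E/F}$ convention). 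So I must check that $\langle\cdot,\cdot\rangle_i$ defined through $\wh\phi_{E,i-1}$ restricts to that defined through $\wh\phi_{i-1}$; this is immediate from $\wh\phi_{E,i-1}|=\wh\phi_{i-1}$. Then $\iota_{E,i}$ is a Heisenberg embedding. Rather than matching explicit choices, I would apply uniqueness via Galois equivariance. Uniqueness of $j_i(\vec G_E)$ implies it is $\Gal(E/F)$-equivariant, because $\vec\phi_E$ restricts to $\vec\phi$ and the datum is stable under $\Gal(E/F)$. Its restriction to Galois-fixed points is therefore a special isomorphism onto $V_i(\vec G)^\sharp$ satisfying Yu's characterizing property. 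By uniqueness it equals $j_i(\vec G)$. If this uniqueness turns out to be unavailable in the needed form, the fallback is again to choose the polarizations Galois-stably and compare the explicit formulas.
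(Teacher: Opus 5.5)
\textbf{There is a genuine gap: the uniqueness property your plan rests on is false.} Yu's $j_i$ is not the \emph{unique} special isomorphism $J_i/(J_i^+\cap\ker\wh\phi_{i-1})\to V_i^\sharp$ that converts the conjugation action of $G^0(F)_{[x]}$ (or of $G^{i-1}(F)_{x,0}$) into an action through $\Sp(V_i)$.

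Any two special isomorphisms differ by an automorphism $(v,a)\mapsto(v,a+\langle w,v\rangle_i)$ of $V_i^\sharp$, with $w\in V_i$. Suppose $j$ turns $k$ into $(v,a)\mapsto(\sigma_k v,a)$. Then the modified isomorphism turns $k$ into $(v,a)\mapsto(\sigma_k v,\,a+\langle\sigma_k^{-1}w-w,v\rangle_i)$. So the equivariant special isomorphisms form a torsor under $V_i^{K}$, and this can be nonzero. Take $G=\SL_3$ and $G^0=T$ the norm-one torus of a totally ramified cubic extension. Every root takes principal-unit values on $T(F)$, so $T(F)$ acts trivially on $V_1$, which is nonzero for suitable $r_0$ (e.g.\ $r_0=2/3$). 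Hence \emph{every} special isomorphism is equivariant. For the group you literally describe, ``normalizing $J_i$ and fixing $\wh\phi_{i-1}$'', no special isomorphism qualifies at all: $J_i$ lies in that group and acts by inner automorphisms, which are never of the form $(v,a)\mapsto(\sigma v,a)$ unless trivial.

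Yu's ``canonical'' only means that his explicit construction is independent of its auxiliary choices. Consequently the decisive step in your (2) is unsupported. That step says the restriction of $j_i(\vec G_E)$ to $\Gal(E/F)$-fixed points ``satisfies Yu's characterizing property, hence equals $j_i(\vec G)$ by uniqueness.'' At best, independence of choices makes $j_i(\vec G_E)$ Galois-equivariant. Even that needs $\wh\phi_{E,i-1}$ to be $\Gal(E/F)$-stable, which (2) does not assume. And nothing then singles out $j_i(\vec G)$ among the many equivariant special isomorphisms.

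\textbf{The repair is to use the definition of $j_i$, which is what the paper does.} For nonsplit $G^i$, $j_i$ is defined as follows. Take a tame extension $F'$ splitting a tame maximal torus $T\subset G^i$. Over $F'$, form the isomorphism of \cite[Lemma~10.1]{Yu01} attached to the polarization given by a positive system $\Phi^+\subset\Phi(G^i_{F'},T_{F'})$. Then restrict it to $F$-points, which gives a special isomorphism by \cite[Lemma~10.3]{Yu01}. The result is independent of $(F',T,\Phi^+)$. For (2), simply take $F'\supseteq E$. Then $j_i(\vec G)$ and $j_i(\vec G_E)$ are restrictions of one and the same isomorphism over $F'$, and (2) is immediate.

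Your fallback of ``Galois-stable polarizations'' is not the right repair, because such polarizations by root subgroups need not exist. For example, if $G^i_{\ad}$ is anisotropic, a $\Gal(F'/F)$-stable $\Phi^+$ would make the sum of its roots a nonzero invariant character of $T/Z(G^i)$. This is exactly why Yu works over $F'$ and then restricts to $F$-points.

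Likewise, in (1) Yu's construction has no ``orbitwise polarization over $F$''. The compatible choices are:
\begin{itemize}
\item a torus $T\subset H\cap G^0$;
\item a common splitting field $F'$;
\item a positive system $\Phi^+$ for $G^i$, with $\Phi^+\cap\Phi(H^i_{F'},T_{F'})$ used for $H^i$.
\end{itemize}
With that correction, your argument for (1) is the paper's.

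Finally, a smaller error: a tame Galois extension can have unramified part of degree divisible by $p$, so your $p'$-group justification of the $\rH^1$-vanishing is wrong as stated. The route through $F'$ never needs $V_i(\vec G)=V_i(\vec G_E)^{\Gal(E/F)}$, so this does not matter once you argue that way.
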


\begin{proof}
We briefly recall the construction of $j_i$ as in the proof of \cite[Proposition 11.4]{Yu01}. First suppose that $G^i$ is split, let $S \subset G^i$ be a split maximal $F$-torus, and choose a system of positive roots $\Phi^+ \subset \Phi(G^i, S)$. Let $J_i(+)$ (resp.\ $J_i(-)$) denote the subgroup of $J_i$ generated by $U_\alpha(F)_{x,\frac{r_{i-1}}{2}}$ for $\alpha \in \Phi^+$ (resp.\ $\alpha \in -\Phi^+$) not lying in $\Phi(G^{i-1},S)$. If $N_i = J_i^+ \cap \ker\wh\phi_{i-1}$, then the maps $\pi_+\co J_i(+)N_i/N_i \to J_i/J_i^+$ and $\pi_-\co J_i(-)N_i/N_i \to J_i/J_i^+$ are injective, and they form a complete polarization of $J_i/J_i^+$. By \cite[Lemma 10.1]{Yu01}, there is a canonical isomorphism $j_i\co J_i/N_i \to V_i^\sharp$ given by
\[
j_i(w_+w_-c) = (\pi_+(w_+)+\pi_-(w_-), c + \frac{1}{2}\langle\pi_+(w_+), \pi_-(w_-)\rangle)
\]
for $w_+ \in J_i(+)N_i/N_i$, $w_- \in J_i(-)N_i/N_i$, and $c \in J_i^+/N_i$, identified with an element of $\F_p$ via $\ol\psi^{-1}\circ\wh\phi_{i-1}$. It is shown in the proof of \cite[Proposition 11.4]{Yu01} that $j_i$ is independent of the choices of $S$ and $\Phi^+$. If $G^i$ is not necessarily split, let $T \subset G^i$ be a tamely ramified maximal $F$-torus, let $F'/F$ be a finite tamely ramified extension such that $T_{F'}$ is $F'$-split, and let $j_i$ denote the restriction to $J_i/N_i$ of the isomorphism $J_i(F')/N_i(F') \to (J_i(F')/J_i^+(F'))^\sharp$ constructed above. As noted in the proof of \cite[Proposition 11.4]{Yu01} (see also the discussion preceding \cite[Definition 3.15]{HM08}), this gives rise to a special isomorphism over $F$ by \cite[Lemma 10.3]{Yu01} which is independent of the choice of $F'$. From this construction, both points are evident.
\end{proof}

\subsection{The FKS twist and compact induction}\label{sss:yu-data-fks}
Finally, for a Yu datum $\Psi$, we define the character $\epsilon$ of $K$ to be trivial on $G^i(F)_{x,\frac{r_{i-1}}{2}}$ for $1 \leq i \leq d$, and to restrict to the character $\epsilon$ on $G^0(F)_{[x]}$ defined in \eqref{eqn:yu-datum-sign-character} using the twisted Levi sequence $(G^i)$ and the generic elements $X_i^*$ in the definition.

Finally, we define
\begin{equation}\label{eq:yu-compact-induction}
\pi = \pi(\Psi) \coloneqq \cInd_K^{G(F)}(\epsilon \otimes \tau \otimes \bigotimes_{i=0}^{d-1} V_{\widehat{\phi}_i} \otimes \phi_d).
\end{equation}
For $k\in\{\ol\Q_\ell,\ol\F_\ell\}$, \cite[Theorem~3.1]{Fin22} and exactness of compact induction show that $\pi(\Psi)$ has finite length; it is irreducible and cuspidal when $k \in \{\ol\Q_\ell, \ol\F_\ell\}$ and $\Psi$ is irreducible. If moreover $p$ does not divide the order of the absolute Weyl group of $G$, then \cite[Theorem~4.1]{Fin22} shows that every irreducible cuspidal smooth $k$-representation of $G(F)$ arises from this construction.

\subsection{Equivalence and normalization of Yu data}\label{sss:hm}

It is natural to ask when $\pi(\Psi) \cong \pi(\Psi')$ for two different Yu data $\Psi, \Psi'$. This leads to the notion of \textit{$G$-equivalence} on irredundant Yu data, an equivalence relation introduced by Hakim--Murnaghan in \cite[Definition 6.1]{HM08}, whose definition we will shortly recall.

\subsubsection{Removing redundant stages} Recall from \S\ref{sss:yu-data} that a Yu datum is irredundant if $G^i\neq G^{i+1}$ for all $0\leq i<d$. First, we show that one can reduce to irredundant Yu data.

\begin{lemma}\label{lemma:irredundant-yu-data}
Let 
\[
\Psi = ((G^i)_{0 \leq i \leq d}, x, (r_i)_{0\leq i\leq d}, \tau, (\phi_i)_{0\leq i\leq d})
\]
be a Yu datum for $G$. Suppose that $G^m = G^{m+1}$ for some $0\leq m < d$, and let 
\[
\Psi' = ((G'^i)_{0\leq i\leq d-1}, x, (r'_i)_{0\leq i\leq d-1}, \tau, (\phi'_i)_{0\leq i\leq d-1})
\]
be the Yu datum defined by
\begin{enumerate}
    \item $G'^i = G^i$ for $0\leq i< m$ and $G'^i = G^{i+1}$ for $m\leq i\leq d-1$,
    \item $r'_i = r_i$ for $0\leq i<m$ and $r'_i = r_{i+1}$ for $m\leq i\leq d-1$,
    \item $\phi'_i=\phi_i$ for $0\leq i<m$ and $\phi'_m = \phi_m\phi_{m+1}$ and $\phi'_i = \phi_{i+1}$ for $m < i\leq d-1$.
\end{enumerate}
    Then $\pi(\Psi) \cong \pi(\Psi')$. In particular, there is an irredundant Yu datum $\Psi_0$ such that $\pi(\Psi) \cong \pi(\Psi_0)$.
\end{lemma}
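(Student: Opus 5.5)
The plan is to show directly that the inducing data coincide, i.e.\ $K(\Psi)=K(\Psi')$ and that $\epsilon\otimes\rho$ for $\Psi$ and for $\Psi'$ are isomorphic representations of this common group. Compact inducing then gives $\pi(\Psi)\cong\pi(\Psi')$. The final assertion follows by induction on the number of indices $m$ with $G^m=G^{m+1}$, since each application of the first part lowers $d$ by one.

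\textbf{The groups.} In \eqref{eq:K} the factor indexed by $m+1$ is $G^{m+1}(F)_{x,r_m/2}$. Because $r_m<r_{m+1}$ and $G^{m+1}=G^{m+2}$, this factor lies in $G^{m+2}(F)_{x,r_{m+1}/2}$ when $m+1<d$. The same holds in the edge case $m+1=d$, where the factor lies in $G^d(F)_{x,r_{d-1}/2}$. Hence the factor can be absorbed, and $K(\Psi)=K(\Psi')$. The key point is that $J_{m+1}=G^{m+1}(F)_{x,r_m,r_m/2}$ with $G^m=G^{m+1}$ equals $G^m(F)_{x,r_m}$, and likewise $J_{m+1}^+=J_{m+1}$. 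Therefore $V_{m+1}=0$, the Heisenberg representation $V_{\wh\phi_m}$ is the one-dimensional character $\wh\phi_m|_{J_{m+1}}$, and the Weil factor $W_m$ is trivial. So on $K$ the factor $V_{\wh\phi_m}$ acts through a character. On $G^0(F)_{[x]}$ this character is $\phi_m$. On the $J_j$ it is $\wh\phi_m$: for $j\le m+1$ this is $\phi_m$, and for $j>m+1$ it is Yu's extension through the projection to $\frg^m$.

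\textbf{The characters and the sign.} The remaining task is to show that $\wh\phi_m\otimes\wh\phi_{m+1}$, restricted to the relevant subgroups, agrees with $\wh{\phi'_m}$ built from $\phi'_m=\phi_m\phi_{m+1}$ on $G'^m=G^{m+1}$. Here $\phi'_m$ has depth $r_{m+1}$, is generic via $X^*_{m+1}$, and its restriction to depth-$r_{m+1}$ subgroups sees only $\phi_{m+1}$ modulo the depth-$r_m$ contribution. Checking this on the factors $J_j$ is a comparison of the two defining properties of $\wh\phi$ in \S\ref{sss:yu-characters}. The projections $\frg\to\frg^m$ and $\frg\to\frg^{m+1}$ coincide, and $\phi_m$ is trivial on $G^m(F)_{x,r_m+}$. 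The Heisenberg and Weil representations for $V_{\wh\phi'_m}=V_{\wh\phi_{m+1}}$ depend on $\wh\phi$ only through its restriction to $J_{m+2}^+$ and the commutator pairing. Twisting by the $\phi_m$ part, which is trivial on $[J_{m+2},J_{m+2}]$ because it has depth $<r_{m+1}$ there, preserves the central character up to the one-dimensional factor. The sign character \eqref{eqn:yu-datum-sign-character} loses the factor $\epsilon_x^{G^{m+1}/G^m}$, which is trivial because $\Phi((G^{m+1}/G^m)_{\ol F},T_{\ol F})=\emptyset$ makes every product in \eqref{eqn:FKS-1}--\eqref{eqn:FKS-5} empty. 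The remaining factors $\epsilon_x^{G^{i+1}/G^i}$ depend only on $r_i$, which are unchanged.

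\textbf{Main obstacle.} I expect the hard step to be verifying that $\wh{\phi'_m}=\wh\phi_m\wh\phi_{m+1}$ on $G^0(F)_{[x]}G^{m+1}(F)_{x,0}G(F)_{x,r_{m+1}/2+}$. The difficulty is that $\wh\phi_m$ is a priori defined only on the smaller-depth group $G(F)_{x,r_m/2+}$ with a different projection. I would first check that on $G(F)_{x,r_{m+1}/2+}\subset G(F)_{x,r_m/2+}$, the character $\wh\phi_m$ factors through the projection to $\frg^m=\frg^{m+1}$ and agrees there with $\phi_m$. The uniqueness in Yu's characterization then gives the identity. The edge case $m=d-1$ is handled separately: there $\phi'_{d-1}=\phi_{d-1}\phi_d$ is the last character, and nothing further needs checking beyond the convention on $\phi_d$.
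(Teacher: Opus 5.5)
Your plan is the paper's proof: show the inducing groups agree, note that the lost sign factor $\epsilon_x^{G^{m+1}/G^m}$ is trivial because $G^{m+1}/G^m$ has no roots, and match the inducing representations, which the paper leaves to the reader. Your treatment of that last point is correct and supplies what the paper omits. Since $J_{m+1}=J_{m+1}^+=G^m(F)_{x,r_m}$, we get $V_{m+1}=0$ and $V_{\wh\phi_m}$ is just the character $\wh\phi_m|_K$. The identity $\wh{\phi'_m}=\wh\phi_m\wh\phi_{m+1}$ then follows from Yu's uniqueness once you check that $\wh\phi_m|_{G(F)_{x,\frac{r_{m+1}}{2}+}}$ factors through the projection to $\frg^m=\frg^{m+1}$. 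You can also sharpen the Heisenberg step: $\wh\phi_m$ is trivial on all of $J_{m+2}$, because its $G^{m+1}$-part has depth $r_{m+1}>r_m$ and the other root subgroups project to $0$ in $\frg^m$. So $V_{\wh{\phi'_m}}$ and $V_{\wh\phi_{m+1}}$ have literally the same central character and Weil factor, with no one-dimensional twist.

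However, your justification of $K(\Psi)=K(\Psi')$ is wrong as written. The hypothesis is $G^m=G^{m+1}$, not $G^{m+1}=G^{m+2}$. Moreover, the containment you claim runs the wrong way. Since $r_m<r_{m+1}$, the factor $G^{m+1}(F)_{x,\frac{r_m}{2}}$ contains $G^{m+1}(F)_{x,\frac{r_{m+1}}{2}}$ and is in general strictly larger. On the other hand, $G^{m+2}(F)_{x,\frac{r_{m+1}}{2}}\cap G^{m+1}(F)=G^{m+1}(F)_{x,\frac{r_{m+1}}{2}}$, so the factor is not contained in $G^{m+2}(F)_{x,\frac{r_{m+1}}{2}}$. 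Your edge case $m+1=d$ is vacuous, since there $G^d(F)_{x,\frac{r_{d-1}}{2}}$ is exactly the factor being deleted.

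The redundant factor must instead be absorbed into the \emph{preceding} one. Because $G^{m+1}=G^m$ and $r_m>r_{m-1}$, for $m\geq 1$ one has $G^{m+1}(F)_{x,\frac{r_m}{2}}=G^m(F)_{x,\frac{r_m}{2}}\subset G^m(F)_{x,\frac{r_{m-1}}{2}}=G'^m(F)_{x,\frac{r'_{m-1}}{2}}$. For $m=0$ one has $G^0(F)_{x,\frac{r_0}{2}}\subset G^0(F)_{[x]}$. All other factors of $K(\Psi)$ and $K(\Psi')$ agree after the index shift. With this correction your argument is complete.
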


\begin{figure}[H]
\centering
\begin{tikzpicture}[
line cap=round,
line join=round,
x=0.53cm,
y=0.53cm,
outline/.style={draw=black, line width=0.6pt},
brace/.style={decorate, decoration={brace, amplitude=3.5pt}, draw=black!65, line width=0.6pt},
every node/.style={font=\scriptsize}
]

\definecolor{collapsezero}{HTML}{EEF6FF}
\definecolor{collapseone}{HTML}{D8E9FF}
\definecolor{collapsetwo}{HTML}{9FC6FF}
\definecolor{collapsethree}{HTML}{0044AA}

\draw[outline] (-5.55,0) -- (-5.55,4.25);
\foreach \y in {4.25,3.15,2.10,1.05}
\draw[outline] (-5.70,\y) -- (-5.40,\y);
\node[anchor=west] at (-5.18,4.25) {$0$};
\node[anchor=west] at (-5.18,3.15) {$r_{m-1}$};
\node[anchor=west] at (-5.18,2.10) {$r_m$};
\node[anchor=west] at (-5.18,1.05) {$r_{m+1}$};

\begin{scope}[shift={(0,0)}]
\begin{scope}
\clip (-3.15,0) -- (-3.15,1.05) -- (-2.25,1.05) -- (-2.25,3.15) -- (-0.85,3.15)
    -- (-0.85,4.25) -- (0.85,4.25) -- (0.85,3.15) -- (2.25,3.15)
    -- (2.25,1.05) -- (3.15,1.05) -- (3.15,0) -- cycle;
\fill[collapsezero] (-3.15,3.15) rectangle (3.15,4.25);
\fill[collapseone] (-3.15,2.10) rectangle (3.15,3.15);
\fill[collapsetwo] (-3.15,1.05) rectangle (3.15,2.10);
\fill[collapsethree] (-3.15,0) rectangle (3.15,1.05);
\end{scope}
\draw[outline] (-3.55,0) -- (3.55,0);
\draw[outline] (-3.15,0) -- (-3.15,1.05) -- (-2.25,1.05) -- (-2.25,3.15) -- (-0.85,3.15)
-- (-0.85,4.25) -- (0.85,4.25) -- (0.85,3.15) -- (2.25,3.15)
-- (2.25,1.05) -- (3.15,1.05) -- (3.15,0);
\foreach \x/\ytop in {-2.25/3.15,-0.85/4.25,0.85/4.25,2.25/3.15}
\draw[outline] (\x,0) -- (\x,\ytop);
\draw[Red, densely dashed, line width=0.8pt] (-2.25,2.10) -- (2.25,2.10);
\node[anchor=west] at (2.42,2.62) {$\phi_m$};
\node[anchor=west] at (2.42,1.58) {$\phi_{m+1}$};
\draw[brace] (-0.85,4.52) -- (0.85,4.52);
\node at (0,4.91) {$G^{m-1}$};
\draw[brace] (-2.25,5.18) -- (2.25,5.18);
\node at (0,5.80) {$G^m=G^{m+1}$};
\end{scope}

\draw[-{Stealth[length=2.2mm]}, line width=0.7pt] (4.05,2.10) -- (5.18,2.10);

\begin{scope}[shift={(8.5,0)}]
\begin{scope}
\clip (-3.15,0) -- (-3.15,1.05) -- (-2.25,1.05) -- (-2.25,3.15) -- (-0.85,3.15)
    -- (-0.85,4.25) -- (0.85,4.25) -- (0.85,3.15) -- (2.25,3.15)
    -- (2.25,1.05) -- (3.15,1.05) -- (3.15,0) -- cycle;
\fill[collapsezero] (-3.15,3.15) rectangle (3.15,4.25);
\fill[collapseone] (-3.15,1.05) rectangle (3.15,3.15);
\fill[collapsethree] (-3.15,0) rectangle (3.15,1.05);
\end{scope}
\draw[outline] (-3.55,0) -- (3.55,0);
\draw[outline] (-3.15,0) -- (-3.15,1.05) -- (-2.25,1.05) -- (-2.25,3.15) -- (-0.85,3.15)
-- (-0.85,4.25) -- (0.85,4.25) -- (0.85,3.15) -- (2.25,3.15)
-- (2.25,1.05) -- (3.15,1.05) -- (3.15,0);
\foreach \x/\ytop in {-2.25/3.15,-0.85/4.25,0.85/4.25,2.25/3.15}
\draw[outline] (\x,0) -- (\x,\ytop);
\node[anchor=west, text=Red] at (2.42,2.10) {$\phi'_m=\phi_m\phi_{m+1}$};
\draw[outline] (6.55,0) -- (6.55,4.25);
\foreach \y in {4.25,3.15,1.05}
\draw[outline] (6.40,\y) -- (6.70,\y);
\node[anchor=east] at (6.18,4.25) {$0$};
\node[anchor=east] at (6.18,3.15) {$r'_{m-1}$};
\node[anchor=east] at (6.18,1.05) {$r'_m$};
\draw[brace] (-0.85,4.52) -- (0.85,4.52);
\node at (0,4.91) {$G'^{m-1}$};
\draw[brace] (-2.25,5.18) -- (2.25,5.18);
\node at (0,5.80) {$G'^m=G^m$};
\end{scope}

\end{tikzpicture}
\caption{Collapsing a redundant step in a Yu datum. The vertical axis records character depth, and the horizontal axis records the roots present in the corresponding twisted Levi.}
\label{fig:normalized-yu-collapse}
\end{figure}

\begin{proof}
The final claim is immediate from the first by induction, so it suffices to prove the first claim. The fact that $\Psi'$ is a Yu datum is trivial.
One checks that $K(\Psi) = K(\Psi')$; moreover, $\epsilon_\Psi = \epsilon_{\Psi'}$, since $\epsilon_x^{G^{m+1}/G^m}$ is trivial. It is just as straightforward to check that the inducing representations $\rho(\Psi)$ and $\rho(\Psi')$ are isomorphic, so we leave this to the reader.
\end{proof}

\subsubsection{$G$-equivalence}\label{sss:G-equivalence} We aim to explicitly describe the equivalence relation on Yu data induced by isomorphism of the corresponding representations.

\begin{defn}\label{def:G-equivalence}
Let $\Psi = ((G^i), x, (r_i), \tau, (\phi_i))$ and $\Psi' = ((G'^i), x', (r'_i), \tau', (\phi_i'))$ be irredundant Yu data over $k$. We say that $\Psi$ and $\Psi'$ are related by an \textit{elementary transformation} if $(G^i) = (G'^i)$, $x = x'$, $(r_i) = (r_i')$, $\tau \cong \tau'$, and $(\phi_i) = (\phi_i')$. We say that $\Psi$ and $\Psi'$ are \textit{$G$-conjugate} if there exists some $g \in G(F)$ such that every term of $\Psi'$ is obtained from the corresponding term of $\Psi$ by $g$-conjugation, where we interpret $g \cdot (r_i) = (r_i)$. We say that $\Psi'$ is a \textit{refactorization} of $\Psi$ if $G^i = G'^i$ for all $i$ and $[x] = [x']$, and the following conditions on the characters $\chi_i\co G^i(F) \to k^\times$ defined by $\chi_i(g) = \prod_{j=i}^d \phi_j(g)\phi_j'(g)^{-1}$ hold:
\begin{enumerate}
\item If $\phi_d = 1$, then $\phi_d' = 1$.
\item The character $\chi_i$ is of depth at most $r_{i-1}$ for all $0 \leq i \leq d$, where $r_{-1} = 0$.
\item $\tau' = \tau \otimes \chi_0$.
\end{enumerate}
Finally, we say that $\Psi$ and $\Psi'$ are \textit{$G$-equivalent} if they are equivalent under the equivalence relation generated by elementary transformations, $G$-conjugacy, and refactorization. 

\end{defn}

By \cite[Corollary 3.5.5]{Kal19} (which builds on \cite[Theorem 6.6]{HM08}), one knows that the supercuspidal representations of $G(F)$ over $k = \ol\Q_\ell$ constructed from irreducible Yu data $\Psi$ and $\Psi'$ are isomorphic if and only if $\Psi$ and $\Psi'$ are $G$-equivalent. We expect that the same claim holds for $k = \ol\F_\ell$, but we have not checked it. However, we will use one direction of this result, proven in \cite[Proposition 4.24]{HM08}. 

\begin{prop}\label{prop:hakim-murnaghan-4.24}
Let $\Psi$ and $\Psi'$ be irredundant Yu data for $G$ over a ring $k$ among $\ol\Q_\ell$, $\ol\Z_\ell$, and $\ol\F_\ell$. If $\Psi$ and $\Psi'$ are $G$-equivalent, then $\pi(\Psi) \cong \pi(\Psi')$.
\end{prop}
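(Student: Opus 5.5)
Since $G$-equivalence is the equivalence relation generated by elementary transformations, $G$-conjugacy and refactorization, and isomorphism of representations is an equivalence relation, it suffices to check $\pi(\Psi)\cong\pi(\Psi')$ when $\Psi'$ arises from $\Psi$ by a single move of each type. The inducing datum is $\epsilon\otimes\tau\otimes\kappa\otimes\phi_d$ on $K$, and in each case I will produce an isomorphism of inducing representations on the same (or a conjugate) subgroup, then conclude by transitivity and the functoriality of compact induction. I follow the argument of \cite[Proposition 4.24]{HM08} but track the FKS twist $\epsilon$ and coefficients in $k$.

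For elementary transformations, $K$, $\kappa$, $\phi_d$ and $\epsilon$ are unchanged: $\epsilon$ depends only on $(\vec G,x,\vec r)$ and the $X_i^*$, which are determined by the $\phi_i$. Hence an isomorphism $\tau\cong\tau'$ of $G^0(F)_{[x]}$-representations (on free $k$-modules) induces $\epsilon\otimes\tau\otimes\kappa\otimes\phi_d\cong\epsilon\otimes\tau'\otimes\kappa\otimes\phi_d$ on $K$. For $G$-conjugacy by $g$, every ingredient is transported by $\mathrm{Ad}(g)$: Moy--Prasad filtrations and buildings are equivariant, the $\wh\phi_i$ and the special isomorphisms $j_i$ (by their canonicity, \cite[Proposition 11.4]{Yu01}) are transported, and the FKS characters $\epsilon_x^{G^{i+1}/G^i}$ are defined intrinsically from roots, so they are conjugated as well (their uniqueness, \cite[Theorem 3.4]{FKS23}, makes this automatic). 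Thus the inducing representation of $\Psi'$ is the $g$-conjugate of that of $\Psi$ on $gKg^{-1}$, and $\cInd$ is unchanged up to isomorphism.

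Refactorization is the main step. Here $\vec G$ and $[x]$ agree, so $K(\Psi)=K(\Psi')$, because $K$ depends only on $[x]$ via $G^0(F)_{[x]}$ and the filtration groups $G^i(F)_{x,s}$ for $s>0$ depend only on $[x]$. Also the $\phi_i,\phi'_i$ have the same depths $r_i$, and the generic elements $X_i^*,X_i'^*$ share those depths, so $\epsilon_\Psi=\epsilon_{\Psi'}$ because $\epsilon$ depends on $X$ only through $r$. It remains to compare $\tau\otimes\kappa\otimes\phi_d$ with $\tau\otimes\chi_0\otimes\kappa'\otimes\phi'_d$. Following \cite[Lemma 4.22--Proposition 4.24]{HM08}, I would show that for each $i$ the Heisenberg--Weil representations satisfy $V_{\wh\phi_i}\otimes\cdots\cong V_{\wh\phi'_i}\otimes(\text{appropriate }\chi\text{-factors})$ as $K$-representations. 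The depth condition on $\chi_{i+1}$ (depth at most $r_i$) implies that $\wh\phi_i$ and $\wh\phi'_i$ agree on $J_{i+1}^+$ up to a character trivial on $[J_{i+1},J_{i+1}]$. Therefore the symplectic spaces $V_{i+1}$ and the forms $B_{i+1}$ coincide, and the two Heisenberg representations differ by a character of $J_{i+1}$. The Weil-representation part depends only on the symplectic structure, so it is unchanged. Telescoping the product $\prod\phi_j\phi_j'^{-1}$ then yields $\kappa\otimes\phi_d\cong\chi_0\otimes\kappa'\otimes\phi'_d$ on $K$. I expect the main obstacle to be carrying this out uniformly for $k=\ol\Z_\ell$ and $\ol\F_\ell$. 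My plan there is to use that everything is defined over $\ol\Z_\ell$ compatibly with reduction. The Heisenberg representations over $\ol\Z_\ell$ are unique up to isomorphism because $V^\sharp$ is a $p$-group and $\ell\neq p$, and the Weil representation is characterized integrally as in \cite[Lemma 2.5]{Fin22}. Together these let the characteristic-zero isomorphisms descend to the lattice and then reduce mod $\ell$.
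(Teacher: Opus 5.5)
Your reduction to single moves, your treatment of elementary transformations and $G$-conjugacy, and your observations $K(\Psi)=K(\Psi')$ and $\epsilon_\Psi=\epsilon_{\Psi'}$ are fine. Your overall plan, which is to run the argument of \cite[Proposition 4.24]{HM08}, is also the paper's. The gap is in the refactorization step, where you assert that the depth bound on $\chi_{i+1}$ by itself does two things: it makes $\wh\phi_i\wh\phi_i'^{-1}$ the restriction of a character of $J_{i+1}$, and it forces $B_{i+1}=B'_{i+1}$.

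By construction, $\wh\phi_i\wh\phi_i'^{-1}$ equals $\chi_{i+1}^{-1}$ on $G^i(F)_{x,r_i}$, because $\chi_i$ has depth at most $r_{i-1}<r_i$. However, it is \emph{trivial} on the $\mathfrak{n}^i$-directions of $G(F)_{x,\frac{r_i}{2}+}$. So it agrees with $\chi_{i+1}^{-1}|_{J_{i+1}^+}$ only if $\chi_{i+1}$ itself is trivial on those directions modulo $G^{i+1}(F)_{x,r_i+}$.

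The same issue appears for the forms. The difference $B_{i+1}(\ol v,\ol w)-B'_{i+1}(\ol v,\ol w)$ is the reduction of $(X_i^*-X_i'^*)([v,w])$, which only sees the $\frg^i$-component of $[v,w]$. The fact that $\chi_{i+1}$ kills group commutators, via the Moy--Prasad commutator relation, gives vanishing only on the \emph{whole} bracket. So again you need $\chi_{i+1}$ to vanish on $\mathfrak n^i$-components.

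Concretely, you need $\chi_{i+1}|_{G^{i+1}(F)_{x,\frac{r_i}{2}+}}$ to be realized, through the Moy--Prasad isomorphism and $\psi$, by an element of $\Lie^*(G^{i+1})^{G^{i+1}}(F)$. Then the hat construction is compatible with twisting by $\chi_{i+1}$. This is Hypothesis $\mathrm{C}(\vec G)$ of \cite{HM08}, which their proof of Proposition~4.24 assumes. It does not follow from depth considerations. Refactorization allows arbitrary, non-normalized characters $\chi_i$ of bounded depth, and positive-depth characters need not come from invariant Lie algebra elements. For example, $\PGL_p(F)$ has positive-depth characters through $\det$ modulo $p$th powers, while $\Lie^*(\PGL_p)^{\PGL_p}=0$.

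The paper's proof supplies exactly this missing input by passing to a z-extension $\wt G\to G$ with $\wt G_{\der}$ simply connected, where Hypothesis $\mathrm{C}$ holds, as in \cite[Lemmas~3.5.2--3.5.4]{Kal19}.
\begin{itemize}
\item Since $\wt G(F)_{x,r}\to G(F)_{x,r}$ is surjective, depths and genericity are preserved under pullback.
\item So $\Psi$, $\Psi'$ and the refactorization characters pull back to $\wt G$-equivalent Yu data on $\wt G$.
\item The associated representations are the inflations of $\pi(\Psi)$ and $\pi(\Psi')$. The FKS characters are unchanged, since they are defined through $G_{\ad}=\wt G_{\ad}$.
\item The argument of \cite{HM08} then runs on $\wt G(F)$.
\item The resulting isomorphism descends to $G(F)$, because inflation along the surjection $\wt G(F)\to G(F)$ is fully faithful.
\end{itemize}
With this step added, the rest of your outline goes through.
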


\begin{proof}
The proof of \cite[Proposition 4.24]{HM08} is largely self-contained, and it extends nearly verbatim to our setting once Hypothesis $\mathrm{C}(\vec G)$ from \cite[\S 4.3]{HM08} is arranged to hold by passing to a z-extension as in \cite[Lemmas 3.5.2--3.5.4]{Kal19}.
\end{proof}

\subsubsection{Normalized Yu data}
We introduce the following definition of ``normalized'' characters, whose nomenclature was chosen for consistency with the notion of normalized Yu data in \cite[Definition 3.7.1]{Kal19}.

\begin{defn}\label{def:normalized-character}
Let $H$ be a connected reductive $F$-group and let $k$ be a
commutative ring.  A smooth character
$\phi\co H(F)\to k^\times$ is \textit{normalized} if it is trivial on
the image of $H_{\mathrm{sc}}(F)\to H(F)$, where $H_{\mathrm{sc}}$ is
the simply connected cover of $H_{\der}$.
\end{defn}

With this terminology, \cite[Definition 3.7.1]{Kal19} says that a Yu datum $\Psi$ is \textit{normalized} if each $\phi_i$ is normalized as a character of $G^i(F)$. By \cite[Lemma 3.7.2]{Kal19}\footnote{The errata on Kaletha's website notes that Section 3.7 (among others) of \cite{Kal19} does not, but should, require that $p$ is a good prime for $G$ and that $p$ does not divide the order of $\pi_0(Z(G))$. However, this error only arises because the proof of \cite[Lemma 3.6.8]{Kal19} uses a misstated form of \cite[Lemma 8.1]{Yu01} in its proof. The proof of \cite[Lemma 3.7.2]{Kal19} does not use \cite[Lemma 3.6.8]{Kal19}, and it is still correct. Moreover, the proof applies without change over $\ol\Q_\ell$ and $\ol\F_\ell$.}, if $p$ does not divide $|\pi_1(G_{\mathrm{der}})|$, then every irredundant Yu datum is $G$-equivalent to a normalized Yu datum.

The following lemma will both facilitate the study of modular reductions and allow us to prove finiteness of L-packets later.

\begin{lemma}\label{lemma:yu-datum-finite-order}
If $\Psi$ is an irredundant Yu datum over $k$, then there is a Yu datum $\Psi'$ which is a refactorization of $\Psi$ such that each $\phi_i'$ is of finite order. Moreover, for each $i$ there is some integer $N_i$ depending only on $G^i$, $x$, and $r_i$ such that $\phi_i'$ is of order at most $N_i$. If $\Psi$ is normalized, then $\Psi'$ can be chosen to be normalized. 
\end{lemma}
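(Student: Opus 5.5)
We construct $\Psi'$ by modifying the characters one at a time, starting from the top index $d$ and working downwards. Each modification is an explicit refactorization in the sense of Definition~\ref{def:G-equivalence}. The basic observation is the following. For each $i$, the quotient $G^i(F)/G^i(F)_{x,r_i+}$ has a structure we can control. The group $G^i(F)_{x,r_i+}$ is an open normal subgroup of $G^i(F)$. Since $Z(G^0)/Z(G)$ is anisotropic, the only source of infinite order comes from the noncompact part of $G^i(F)$ modulo its maximal bounded subgroup.

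\emph{Step 1 (top stage).} We replace $\phi_d$ by a character $\phi_d'$ that agrees with $\phi_d$ on $G^d(F)_{x,r_d}$ (when $\phi_d\neq 1$) and has finite order. Concretely, $\phi_d|_{G(F)_{x,0+}}$ already has $p$-power finite order, since $G(F)_{x,0+}$ is pro-$p$ and the character is trivial on $G(F)_{x,r_d+}$. Its order is bounded by the exponent of the finite $p$-group $G(F)_{x,0+}/G(F)_{x,r_d+}$, which depends only on $G$, $x$, $r_d$.

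Next we need to extend $\phi_d|_{G(F)_{x,0+}}$ (or just $\phi_d|_{G(F)_{x,r_d}}$) to a finite-order character of $G(F)$. For this we use the abelianization $G(F)\to G_{\ab}(F)$. The character $\phi_d$ factors through the image of $G(F)$ in the abelian group $G(F)/[G(F),G(F)]$. Its restriction to $G(F)_{x,r_d}$ factors through a compact open subgroup $U$ of an abelian group $A$. This group $A$ is topologically finitely generated modulo $U$, since it is a quotient of a torus's $F$-points up to finite index.

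We then choose $\phi'_d$ on $A$ to agree with $\phi_d$ on $U$ and to be of finite order. Such a $\phi_d'$ exists because $A/U$ is a finitely generated abelian group. We can kill the free part by twisting $\phi_d$ by an unramified-type character that is trivial on $U$: this means choosing values on a splitting $A\supset U\times \Z^s$ and setting them to roots of unity. The resulting $\chi_d=\phi_d\phi_d'^{-1}$ is trivial on $G(F)_{x,r_d}$, so it has depth at most $r_{d-1}$. Hence conditions (1) and (2) hold, and we absorb $\chi_d$ into $\phi_{d-1}$ by replacing $\phi_{d-1}$ with $\phi_{d-1}\chi_d|_{G^{d-1}(F)}$.

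\emph{Step 2 (induction).} The modified $\phi_{d-1}$ still has depth $r_{d-1}$ and the same restriction to $G^{d-1}(F)_{x,r_{d-1}}$ modulo depth-$r_{d-1}+$ terms. So it remains generic, with the same $X^*_{d-1}$ up to the correct coset. We repeat the construction for $\phi_{d-1}$, and continue down to $\phi_0$. At the last step the leftover $\chi_0$ is absorbed into $\tau'=\tau\otimes\chi_0$. This is legitimate because $\chi_0$ has depth $0$, so $\tau'$ is still trivial on $G^0(F)_{x,0+}$ and still cuspidal.

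The orders are bounded as follows. The bound $N_i$ is the product of two factors: the exponent of $G^i(F)_{x,0}/G^i(F)_{x,r_i+}$, or rather of its image in the abelianization, and the exponent of the torsion of $A/U$. Both depend only on $G^i$, $x$, $r_i$, and we choose the free-part values to be trivial.

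\emph{Normalized case.} If each $\phi_i$ is trivial on the image of $G^i_{\mathrm{sc}}(F)$, we perform the whole construction on $A_i=G^i(F)/\mathrm{im}(G^i_{\mathrm{sc}}(F))$. This quotient is abelian; the image is normal, and the quotient embeds in a finite extension of $G^i_{\ab}(F)$-type groups by standard results. Then all $\phi_i'$ and $\chi_i$ factor through $A_i$ and so remain normalized.

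\emph{Main obstacle.} The main obstacle is to make the splitting $A\supset U\times\Z^s$ and the torsion bound precise and uniform, depending only on $G^i$, $x$, $r_i$. I would first try to reduce to tori: pass to $G^i(F)/\mathrm{im}\,G^i_{\mathrm{sc}}(F)$, which is commensurable with $D(F)$ for the torus $D=G^i/G^i_{\der}$ via Kottwitz/Deligne. For $D(F)$, the structure is explicit: it is the maximal bounded subgroup times a lattice.
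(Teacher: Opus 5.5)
Your proof has a genuine gap in the depth bookkeeping. In Step~1 you only require $\phi_d'$ to agree with $\phi_d$ on (the image $U$ of) $G(F)_{x,r_d}$. You then conclude that $\chi_d=\phi_d(\phi_d')^{-1}$, being trivial on $G(F)_{x,r_d}$, has depth at most $r_{d-1}$. This does not follow. Triviality on $G(F)_{x,r_d}$ only gives depth $<r_d$, and nothing in your choice of extension from $U\times\Z^s$ to $A$ stops $\chi_d$ from being nontrivial on $G(F)_{x,s}$ for some $r_{d-1}<s<r_d$. The consequences are:
\begin{itemize}
\item condition~(2) of a refactorization can fail;
\item the Step~2 claim that the modified $\phi_{d-1}$ has unchanged restriction to $G^{d-1}(F)_{x,r_{d-1}}$ is unjustified;
\item at the bottom, $\chi_0$ need not have depth $0$, so $\tau\otimes\chi_0$ need not be trivial on $G^0(F)_{x,0+}$, and $\Psi'$ need not be a Yu datum at all. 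This already happens for $d=0$ and $G=\G_m$.
\end{itemize}

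The missing idea is to make the correcting character genuinely unramified. It must be trivial on all of $G^i(F)^1=\{g\in G^i(F): |\chi(g)|=1 \text{ for all } \chi\in X^*(G^i)\}$, not just on $G^i(F)_{x,r_i}$. This is exactly the paper's argument. Since $G^i(F)/G^i(F)^1$ is free of finite rank, $G^i(F)/G^i(F)_{\der}\cong G^i(F)^1/G^i(F)_{\der}\times G^i(F)/G^i(F)^1$. One lets $\psi_i$ be $\phi_i$ on the second factor and sets $\phi_i'=\phi_i\psi_i^{-1}$. This gives the following.
\begin{itemize}
\item As $G^i(F)^1$ contains every parahoric $G^i(F)_{y,0}$, $\psi_i$ does not change $\phi_i|_{G^i(F)_{x,r_i}}$, so depth and genericity are automatic.
\item Since $G^i(F)^1\subset G^j(F)^1$ for $j\geq i$, each $\chi_i=\prod_{j\geq i}\psi_j|_{G^i(F)}$ is trivial on $G^i(F)^1$, hence of depth $0\leq r_{i-1}$. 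So all stages can be twisted simultaneously and only $\chi_0$ is absorbed (into $\tau$); no top-down absorption is needed.
\item Normalizedness is preserved for free, because the image of $G^i_{\mathrm{sc}}(F)$ lies in $G^i(F)^1$. This makes your separate treatment of the normalized case unnecessary.
\item $\phi_i'$ factors through the finite group $G^i(F)^1/(G^i(F)_{\der}G^i(F)_{x,r_i+})$, whose order serves as $N_i$.
\end{itemize}
In your language, it suffices to split $A=A^1\times\Z^s$ with $A^1$ the compact image of $G^i(F)^1$ rather than of $G^i(F)_{x,r_i}$, and to take $\phi_i'=\phi_i$ on $A^1$ and trivial on $\Z^s$.
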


\begin{proof}
Recall the subgroup $G^i(F)^1 \subset G^i(F)$ defined by
\[
G^i(F)^1 = \{g \in G^i(F)\co |\chi(g)| = 1 \text{ for all } \chi\co G^i \to \G_m\}.
\]
Note that $G^i(F)/G^i(F)^1$ is a free abelian group, so the map $G^i(F)/G^i(F)_{\der} \to G^i(F)/G^i(F)^1$ of abelian groups admits a section, and thus
\begin{equation}\label{eqn:abelianization-decomposition}
G^i(F)/G^i(F)_{\der} \cong G^i(F)^1/G^i(F)_{\der} \times G^i(F)/G^i(F)^1.
\end{equation}
For each $i$, let $\psi_i$ be the pullback, under projection to the second factor in \eqref{eqn:abelianization-decomposition}, of the restriction of $\phi_i$ to that factor. Thus $\psi_i$ is trivial on $G^i(F)^1$. Put $\phi_i'=\phi_i\psi_i^{-1}$. Then $\phi_i'$ factors through the compact group $G^i(F)^1/G^i(F)_{\der}$ and, since it is trivial on $G^i(F)_{x,r_i+}$, its order divides
\[
N_i\coloneqq\left|G^i(F)^1/\bigl(G^i(F)_{\der}G^i(F)_{x,r_i+}\bigr)\right|.
\]

Put $\chi_i=\prod_{j=i}^d\psi_j|_{G^i(F)}$ and $\tau'=\tau\otimes\chi_0$. If $j\geq i$, then $G^i(F)^1\subset G^j(F)^1$, so $\chi_i$ is trivial on $G^i(F)^1$ and hence has depth at most $0\leq r_{i-1}$. Moreover,
\[
\prod_{j=i}^d\phi_j\phi_j'^{-1}=\chi_i,
\]
while $\phi_d=1$ implies $\phi_d'=1$. Thus $\Psi' = ((G^i), x, (r_i), \tau', (\phi_i'))$ is a refactorization of $\Psi$. Finally, the image of $G^i_{\mathrm{sc}}(F)$ lies in $G^i(F)^1$, so each $\phi_i'$ has the same restriction as $\phi_i$ to this image; therefore $\Psi'$ is normalized whenever $\Psi$ is. 
\end{proof}

\section{Explicit construction of L-parameters}\label{sec:kaletha-param}

In this section, we recall Kaletha's definition of a Local Langlands Correspondence for non-singular tame supercuspidal representations. Moreover, we extend Kaletha's candidate to positive characteristic coefficients and partially to singular tame cuspidal representations. The key results established in this section are certain compatibilities of Kaletha's construction with Langlands functoriality, which will be needed later for comparison to the Fargues--Scholze correspondence.

Throughout this section, assume that $p\neq 2$. Recall from \S\ref{ssec:notation} that $F_n/F$ denotes the unramified extension of degree $n$. The notation $\ld j_{T,G}$ for the L-embedding of a torus $T$ associated to a Yu datum is defined in Definition~\ref{defn:canonical-l-embeddings}.

\subsection{Almost equivalence of torus-character pairs}\label{ss:inertial-equiv}

Recall from \S\ref{ssec:notation} the notation $\cG_{[x]}$ and $\ol G_{[x]}$ associated to a point $x \in \cB(G)$. Let $F^{\unr}$ denote the maximal unramified extension of $F$. If $T \subset G$ is a maximally unramified maximal torus and a point $x \in \cB(G)$ is given which lies in $\cB(T)$, then we let $\ol T$ denote the image in $\ol G_{[x]}$ of the special fiber of the schematic closure of $T$ in $\cG_{[x]}$, so $\ol T(\F_q) = T(F)_{[x]}/T(F)_{x,0+}$. Note in particular that depth $0$ characters of $T(F)_{[x]}$ are equivalent to characters of $\ol T(\F_q)$.

\begin{defn}\label{defn:almost-equivalent}
Let $T, T' \subset G$ be two tamely ramified maximal $F$-tori such that $x \in \cB(T) \cap \cB(T')$, let $k$ be a field among $\ol\Q_\ell$ and $\ol\F_\ell$, and let $\theta\co T(F)_{[x]} \to k^\times$ and $\theta'\co T'(F)_{[x]} \to k^\times$ be characters. Since $Z(G)(F)$ acts trivially on $\cB(G_{\der})$, it is contained in both $T(F)_{[x]}$ and $T'(F)_{[x]}$. We will say that $(T,\theta)$ and $(T',\theta')$ are \textit{almost equivalent} if the following conditions hold:
\begin{enumerate}
    \item $\theta|_{Z(G)(F)} = \theta'|_{Z(G)(F)}$.
    \item There exists a positive integer $n$ such that the pairs
    \[
        \left(T_{F_n},\theta\circ \Nm_{F_n/F}|_{T(F_n)_{[x]}}\right)
        \quad\text{and}\quad
        \left(T'_{F_n},\theta'\circ \Nm_{F_n/F}|_{T'(F_n)_{[x]}}\right)
    \]
    are $G(F_n)$-conjugate.
\end{enumerate}
\end{defn}

The notion of almost equivalence should be considered as a weak analogue for $p$-adic groups of the equivalence relation on tori arising from rational Lusztig series, as explained in \cite[\S 2.8]{Cot26b}. The terminology is justified by Lemma~\ref{lemma:conjugacy-of-torus-character-pairs}.

\begin{prop}\label{prop:extend-normalized-characters}
Let $k \in \{\ol\F_\ell, \ol\Z_\ell, \ol\Q_\ell\}$, let $E/F$ be a finite extension of degree $n$, and let $\phi\co G(F)\to k^\times$ be a normalized character (cf.\ Definition \ref{def:normalized-character}). There is a canonical normalized character $\phi_E\co G(E)\to k^\times$ such that
\[
\phi_E|_{T(E)}=\phi|_{T(F)}\circ \Nm_{E/F}
\]
for every maximal $F$-torus $T\subset G$. Moreover, $\phi_E|_{G(F)}=\phi^n$, and $\phi_E$ is $\Gal(E/F)$-stable when $E/F$ is Galois.
\end{prop}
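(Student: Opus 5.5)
We will build $\phi_E$ through a z-extension, so that the normalized character factors through a torus. Since $\phi$ is normalized, it is trivial on the image of $G_{\mathrm{sc}}(F)$. The reference point is the case $G_{\der}=G_{\mathrm{sc}}$ simply connected. There $D\coloneqq G/G_{\der}$ is a torus, and $G(F)\to D(F)$ is surjective because $\rH^1(F,G_{\mathrm{sc}})$ vanishes for local nonarchimedean $F$. Hence $\phi=\bar\phi\circ\pi$ for a unique character $\bar\phi\co D(F)\to k^\times$, where $\pi\co G\to D$ is the projection. We define
\[
\phi_E \coloneqq \bar\phi\circ\Nm_{E/F}\circ\pi_E,
\]
with $\Nm_{E/F}\co D(E)\to D(F)$ the norm for the torus $D$.

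In this case the required properties are formal.
\begin{enumerate}
\item $\phi_E$ is trivial on $G_{\der}(E)$, so it is normalized.
\item For any maximal $F$-torus $T$, the map $\pi|_T\co T\to D$ is an $F$-homomorphism of tori and therefore commutes with norms. This gives $\phi_E|_{T(E)}=\phi|_{T(F)}\circ\Nm_{E/F}$.
\item On $D(F)$ the norm is raising to the $n$th power, so $\phi_E|_{G(F)}=\phi^n$.
\item If $E/F$ is Galois, $\Gal(E/F)$-stability follows from $\Nm\circ\sigma=\Nm$.
\end{enumerate}
Canonicity is immediate, since no choices were made. Uniqueness follows from the fact that $G(E)$ is generated by $G_{\der}(E)$ together with $T(E)$ for maximal $F$-tori $T$: the image of $T(E)$ in $D(E)$ can be arranged to be everything, using the surjections $Z(G)^\circ\times T_{\der}\to T$ up to isogeny, or more cleanly by surjectivity onto $D(E)$ modulo norms. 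I will use this generation statement mainly for well-definedness in the general case below.

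For general $G$, choose a z-extension $1\to Z_1\to G_1\to G\to 1$ with $Z_1$ an induced torus and $G_{1,\der}$ simply connected. Then $G_1(F)\to G(F)$ is surjective, since $\rH^1(F,Z_1)=0$, and likewise over $E$. Pull $\phi$ back to $\phi_1$ on $G_1(F)$. This pullback is still normalized, because $G_{1,\mathrm{sc}}=G_{\mathrm{sc}}$. Apply the previous construction to get $\phi_{1,E}$.

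The remaining point is that $\phi_{1,E}$ is trivial on $Z_1(E)$. Since $Z_1\subset G_1$ maps into the torus $D_1=G_1/G_{1,\der}$, and $\phi_1$ is trivial on $Z_1(F)$, the restriction of $\phi_{1,E}$ to $Z_1(E)$ equals $\phi_1|_{Z_1(F)}\circ\Nm_{E/F}$, which is trivial. So $\phi_{1,E}$ descends to a character $\phi_E$ on $G(E)=G_1(E)/Z_1(E)$.

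The properties transfer as follows.
\begin{enumerate}
\item Every maximal $F$-torus $T\subset G$ has preimage $T_1$, a maximal torus of $G_1$ with $T_1(E)\to T(E)$ surjective, again because $Z_1$ is induced. Norms are compatible with $T_1\to T$, so the torus identity descends.
\item Normalizedness, the identity $\phi_E|_{G(F)}=\phi^n$, and Galois stability also descend.
\item Independence of the z-extension follows because any two z-extensions are dominated by a third, namely a fibre product and then a z-extension of it, and the construction is compatible with pullback.
\end{enumerate}

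The main obstacle I expect is making canonicity precise, meaning independence of the choice of z-extension. The cleanest route avoids this entirely. The family of identities $\phi_E|_{T(E)}=\phi|_{T(F)}\circ\Nm$ determines $\phi_E$ on every $T(E)$, and normalizedness determines it on the image of $G_{\mathrm{sc}}(E)$. One then argues that these subgroups generate $G(E)$: for instance, $T(E)\cdot\mathrm{im}\,G_{\mathrm{sc}}(E)=G(E)$ for an elliptic maximally unramified $T$ is not clear, so I would instead use all maximal $F$-tori together with the density of regular semisimple elements, each of which lies in some $T(E)$ for a maximal $E$-torus. Since not every $E$-torus is defined over $F$, I would fall back on the fibre-product argument if the generation statement fails.
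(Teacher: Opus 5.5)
Your proposal is correct and is essentially the paper's proof: you pass to a z-extension with simply connected derived group, factor the normalized character through the torus $D=\widetilde G/\widetilde G_{\der}$, compose with the norm $D(E)\to D(F)$, and descend to $G(E)$ because the result is trivial on the induced central torus $Z(E)$ by norm functoriality. The paper does this in one step via $G(L)/q(\widetilde G_{\der}(L))\cong D(L)/i(Z(L))$ rather than in your two stages, and your fibre-product argument for independence of the z-extension goes slightly beyond what the paper writes; the uniqueness/generation digression is neither needed for the statement nor justified as written, so you can simply drop it.
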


\begin{proof}
Choose a $z$-extension
\[
1\longrightarrow Z\longrightarrow\widetilde G\xrightarrow{q}G\longrightarrow1
\]
with $Z$ induced and $\widetilde G_{\der}$ simply connected, so that $q|_{\widetilde G_{\der}}$ is the universal covering map onto $G_{\der}$. Put $D=\widetilde G/\widetilde G_{\der}$, write $\pi_D\co\widetilde G\to D$ for the quotient, and let $i\co Z\to D$ be the induced map. For $L\in\{F,E\}$, Hilbert 90 and Kneser's theorem \cite[Theorem 10.6.4]{KP} give
\[
A_L\coloneqq G(L)/q(\widetilde G_{\der}(L))
\xrightarrow{\ \sim\ }D(L)/i(Z(L)).
\]
Since $\phi$ is normalized, the pullback $q^*\phi$ descends to a character $\phi_D\co D(F)\to k^\times$ which is trivial on $i(Z(F))$. The composite
\[
\widetilde G(E)\xrightarrow{\pi_D}D(E)
\xrightarrow{\Nm_{E/F}}D(F)
\xrightarrow{\phi_D}k^\times
\]
is trivial on $Z(E)$ by functoriality of the norm, and hence descends to a character $\phi_E$ of $G(E)$. It is normalized, and it is Galois-stable when $E/F$ is Galois. If $g\in G(F)$ and $\widetilde g\in\widetilde G(F)$ lifts it, then
\[
\phi_E(g)=\phi_D\bigl(\Nm_{E/F}(\pi_D(\widetilde g))\bigr)
=\phi_D(\pi_D(\widetilde g)^n)=\phi(g)^n.
\]
Finally, if $T\subset G$ is a maximal $F$-torus and $\widetilde T=q^{-1}(T)$, then $\widetilde T(E)\to T(E)$ is surjective. Lifting $t\in T(E)$ to $\widetilde t\in\widetilde T(E)$ and using functoriality of norms gives
\[
\phi_E(t)
=\phi_D\bigl(\Nm_{E/F}(\pi_D(\widetilde t))\bigr)
=\phi\bigl(\Nm_{E/F}(t)\bigr),
\]
which proves the proposition.
\end{proof}

\begin{lemma}\label{lemma:conjugacy-of-torus-character-pairs}
Let $T, T' \subset G$ be maximally unramified maximal $F$-tori such that $x \in \cB(T) \cap \cB(T')$, let $k$ be a field among $\ol\Q_\ell$ and $\ol\F_\ell$, and let $\theta_0\co T(F)_{[x]} \to k^\times$ and $\theta'_0\co T'(F)_{[x]} \to k^\times$ be characters of depth $0$. Let $\phi\co G(F) \to k^\times$ be a normalized character, and let $\theta = \theta_0 \cdot \phi|_{T(F)_{[x]}}$ and $\theta' = \theta'_0 \cdot \phi|_{T'(F)_{[x]}}$. If $\cE(\ol G_{[x]}, [\ol T, \theta_0]) = \cE(\ol G_{[x]}, [\ol T', \theta_0'])$, with notation as in \cite[Definition~2.8.2]{Cot26b}, then the pairs $(T, \theta)$ and $(T', \theta')$ are almost equivalent.
\end{lemma}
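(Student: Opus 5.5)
The plan is to verify the two conditions of Definition~\ref{defn:almost-equivalent} separately. First, the central characters. Since $\theta_0$ and $\theta_0'$ have depth $0$, they factor through $\ol T(\F_q)$ and $\ol T'(\F_q)$. The image of $Z(G)(F)$ lands in the center of $\ol G_{[x]}(\F_q)$; more precisely, $Z(G)(F)$ maps into $\ol Z(\F_q) \subset \ol T(\F_q)\cap\ol T'(\F_q)$. A standard property of the (semi-rational) Lusztig series of \cite[\S 2.8]{Cot26b} is that every member of $\cE(\ol G_{[x]},[\ol T,\theta_0])$ has central character $\theta_0|_{\ol Z}$. This follows because the Deligne--Lusztig virtual character $R_{\ol T}^{\theta_0}$ has central character $\theta_0$ restricted to the center. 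Equality of the two series therefore forces $\theta_0|_{Z(G)(F)}=\theta_0'|_{Z(G)(F)}$. Since $\theta$ and $\theta'$ are obtained by multiplying by the same $\phi$, condition (1) follows.

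For condition (2), I will use the standard fact that equality of rational (or semi-rational) Lusztig series implies that the pairs $(\ol T,\theta_0)$ and $(\ol T',\theta_0')$ are geometrically conjugate. Concretely, there is some $n$ such that
\[
(\ol T_{\F_{q^n}},\theta_0\circ\Nm_{\F_{q^n}/\F_q})\quad\text{and}\quad(\ol T'_{\F_{q^n}},\theta_0'\circ\Nm_{\F_{q^n}/\F_q})
\]
are $\ol G_{[x]}(\F_{q^n})$-conjugate. I would cite the relevant statement from \cite[\S 2.8]{Cot26b}, with a possible enlargement of $n$ to account for the component group of $\ol G_{[x]}$ and for the $\ol G^\circ_{[x]}$ versus $\ol G_{[x]}$ distinction.

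Next, lift the conjugacy. The groups $\ol T$ and $\ol T'$ are maximal tori of $\ol G_{[x]}$ corresponding to maximally unramified tori through $x$. Since $F_n/F$ is unramified, $\ol G_{[x]}$ base changes to the reductive quotient for $G_{F_n}$ at $x$, and $\ol T(\F_{q^n}) = T(F_n)_{[x]}/T(F_n)_{x,0+}$. Choose $\bar g\in\ol G_{[x]}(\F_{q^n})$ conjugating $\ol T$ to $\ol T'$ and matching the characters, and lift it to $g\in G(F_n)_{[x]}$ by smoothness of $\cG_{[x]}$. Then $gTg^{-1}$ and $T'$ are maximally unramified tori whose images in the reductive quotient coincide. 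By the standard bijection between $G(F_n)_{x,0+}$-conjugacy classes of such tori and maximal tori of the reductive quotient (DeBacker), adjusting $g$ by an element of $G(F_n)_{x,0+}$ gives $gTg^{-1}=T'$. After this adjustment, $\theta_0\circ\Nm\circ\mathrm{Ad}(g^{-1})$ and $\theta_0'\circ\Nm$ agree, because both are depth $0$ and agree on $\ol T'(\F_{q^n})$; here I use that norms of residue-field points are compatible with $\Nm_{F_n/F}$ on $T(F_n)_{[x]}$.

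Finally, the $\phi$ factor. By Proposition~\ref{prop:extend-normalized-characters}, $\phi|_{T(F)}\circ\Nm_{F_n/F}=\phi_{F_n}|_{T(F_n)}$, and similarly for $T'$, where $\phi_{F_n}$ is a character of all of $G(F_n)$ and hence invariant under $G(F_n)$-conjugation. Therefore $\theta\circ\Nm$ and $\theta'\circ\Nm$ are conjugate by $g$, which gives condition (2).

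I expect the main obstacle to be the geometric-conjugacy step. This means extracting from the equality of semi-rational Lusztig series in \cite{Cot26b} an honest rational conjugacy after finite extension. Two issues need care here: the possible disconnectedness of $\ol G_{[x]}$, and characters on $T(F)_{[x]}$ versus on $\ol T(\F_q)$ when $T(F)_{[x]}$ is larger than the connected part. My first attempt would be to reduce to the connected group $\ol G^\circ_{[x]}$, where the equivalence between geometric conjugacy and Lusztig series is classical (Deligne--Lusztig), and then handle components by enlarging $n$.
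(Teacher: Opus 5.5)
Your proposal is correct and is essentially the paper's argument: geometric conjugacy over some $\F_{q^n}$, lifting $\bar g$ to $\cG_{[x]}(\cO_{F_n})$, conjugating the maximal unramified subtori $S$ and $S'$ into one another by an element of $G(F_n)_{x,0+}$, and absorbing $\phi$ via Proposition~\ref{prop:extend-normalized-characters}. The paper differs only in details: it lifts $\bar g$ first to $\cG_{[x]}(\F_{q^n})$ by Lang's theorem, proves the conjugation of $S$ and $S'$ via SGA3 deformation theory of tori rather than quoting DeBacker (then uses $T=Z_G(S)$, $T'=Z_G(S')$), and treats $\phi$ as an initial reduction to $\phi=1$; your separate check of condition (1) of Definition~\ref{defn:almost-equivalent} via central characters of Lusztig series is a worthwhile addition, since the paper's proof only verifies condition (2) explicitly.
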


\begin{proof}
We first reduce to the case $\phi=1$. By geometric conjugacy of $(\ol T, \theta_0)$ and $(\ol T', \theta_0')$, there is a finite unramified extension $E/F$ and an element of $G(E)$ conjugating $(T_E,\theta_0 \circ \Nm_{E/F})$ to $(T',\theta'_0 \circ \Nm_{E/F})$. To show that the same element also conjugates $(T,\theta \circ \Nm_{E/F})$ to $(T',\theta' \circ \Nm_{E/F})$, it suffices to construct a character $\phi_E$ of $G(E)$ whose restrictions to $T(E)$ and $T'(E)$ are the corresponding norm pullbacks of $\phi$. Such a character is provided by Proposition~\ref{prop:extend-normalized-characters}, so we have made the desired reduction.

Let $n$ be a positive integer such that the pairs $(\ol T_{\F_{q^n}}, \theta \circ \Nm_{\F_{q^n}/\F_q})$ and $(\ol T'_{\F_{q^n}}, \theta' \circ \Nm_{\F_{q^n}/\F_q})$ are $\ol G_{[x]}(\F_{q^n})$-conjugate, and let $E = F_n$. Let $\ol g \in \ol G_{[x]}(\F_{q^n})$ be an element such that
\[
\ol g(\ol T_{\F_{q^n}}, \theta \circ \Nm_{\F_{q^n}/\F_q})\ol g^{-1} = (\ol T'_{\F_{q^n}}, \theta' \circ \Nm_{\F_{q^n}/\F_q}).
\]
Let $S$ (resp.\ $S'$) denote the maximal unramified $F$-subtorus of $T$ (resp.\ $T'$), and let $\cS$ (resp.\ $\cS'$) denote the $\cO_F$-torus with generic fiber $S$ (resp.\ $S'$). Note that there are canonical closed embeddings $\cS \to \cG_{[x]}$ and $\cS' \to \cG_{[x]}$ by \cite[Axiom 4.1.20]{KP}.

Let
\[
    \pi_x\co(\cG_{[x]})_{\F_{q^n}}\longrightarrow(\ol G_{[x]})_{\F_{q^n}}
\]
be the quotient by the unipotent radical of $(\cG_{[x]})_{\F_{q^n}}^\circ$. By Lang's theorem, $\pi_x$ is surjective on $\F_{q^n}$-points. Thus $\ol g$ lifts to $\cG_{[x]}(\F_{q^n})$, and then, by smoothness of $\cG_{[x]}$ and completeness of $\cO_E$, to some $g\in\cG_{[x]}(\cO_E)$. The images of $\cS_{\F_{q^n}}$ and $\cS'_{\F_{q^n}}$ under $\pi_x$ are $\ol T^\circ_{\F_{q^n}}$ and $\ol T'^\circ_{\F_{q^n}}$, respectively. Hence the tori $g\cS g^{-1}$ and $\cS'$ have the same image under $\pi_x$, so they are conjugate by $\ker\pi_x$. After passing to a translate of $g$ by an element of $\cG_{[x]}(\cO_E)$ whose special fiber lies in $(\ker \pi_x)(\F_{q^n})$, we may assume that $g\cS g^{-1}$ and $\cS'$ have the same special fiber. By deformation theory for tori \cite[Exp.\ IX, Corollaire 7.3]{SGA3II}, there is some
\[
    h\in\ker\bigl(\cG_{[x]}^\circ(\cO_E)\to\ol G_{[x]}^\circ(\F_{q^n})\bigr)
    =G(E)_{x,0+}
\]
such that $hgSg^{-1}h^{-1}=S'$. Since $T$ and $T'$ are maximally unramified, we have $T = Z_G(S)$ and $T' = Z_G(S')$, so also $hg T g^{-1}h^{-1} = T'$. Since $h$ has trivial image in $\ol G_{[x]}^\circ(\F_{q^n})$ and $\theta,\theta'$ have depth $0$, we deduce
\[
hg(T_E, \theta \circ \Nm_{E/F}|_{T(E)_{[x]}})g^{-1}h^{-1} = (T'_E, \theta' \circ \Nm_{E/F}|_{T'(E)_{[x]}}),
\]
as desired.
\end{proof}

\subsection{Associated torus-character pairs}\label{ss:kal-param}

In \cite{Kal21b}, Kaletha introduced his Local Langlands Correspondence for non-singular tame supercuspidal $\ol\Q_\ell$-representations of $G(F)$. In this section, we recall this correspondence and partially extend it to general tame cuspidal representations.

Let $\Psi = ((G^i), x, (r_i), \tau, (\phi_i))$ be an irreducible Yu datum for $G$ over a field $k$ which is either $\ol\Q_\ell$ or $\ol\F_\ell$. Let $\ol Z$ be the center of $\ol G^0_{[x]}$, and recall that $\ol G^0_{[x]}$ is a paraductive $\F_q$-group scheme in the sense of \cite[Definition~2.1.1]{Cot26b}.

\subsubsection{Associated depth $0$ torus-character pair}
By \cite[Proposition~2.6.2]{Cot26b}, there is a maximal $\F_q$-torus $\ol T^\circ \subset (\ol G^0_{[x]})^\circ$ and a character $\theta_0\co \ol T(\F_q) \to k^\times$, where $\ol T = \ol T^\circ \cdot \ol Z$, such that $\tau$ lies in the semi-rational Lusztig series $\cE(\ol G^0_{[x]}, [\ol T, \theta_0])$ (with notation as in \cite[Definition~2.8.2]{Cot26b}). 
The pair $(\ol T, \theta_0)$ is not unique in general; however, the Lusztig series $\cE(\ol G^0_{[x]}, [\ol T, \theta_0])$ is unique by \cite[Lemma~2.8.4]{Cot26b}. Moreover, if there is such a pair with $\theta_0$ non-singular, then the pair $(\ol T, \theta_0)$ is unique up to $\ol G^0_{[x]}(\F_q)$-conjugacy by \cite[Lemma~2.9.2]{Cot26b}.

\subsubsection{Associated $p$-adic torus-character pair}\label{sss:p-adic-torus-character-pair}
Choose an $\cO_F$-torus $\cT_0$ of $\cG^0_{[x]}$ whose special fiber $\ol\cT_0$ has image $\ol T^\circ$ in $\ol G^0_{[x]}$ (which exists by \cite[Proposition 11.14(1)]{Bor91} and deformation theory for tori \cite[Expos\'e IX, Th\'eor\`eme 3.6, Th\'eor\`eme 7.1]{SGA3II}), and let $T_0$ be the generic fiber of $\cT_0$, an $F$-torus of $G^0$. If $T = Z_{G^0}(T_0)$, then $T$ is a maximally unramified maximal $F$-torus of $G^0$ and thus also a maximal $F$-torus of $G$, and $x \in \cB(T)$. Define a character $\theta\co T(F)_{[x]} \to k^\times$ by
\[
\theta = \theta_0 \cdot \prod_{i=0}^d \phi_i|_{T(F)_{[x]}}.
\]
If $\tau$ is non-singular, then it follows that $\ol T^\circ$ is elliptic and thus $T$ is also elliptic since $Z(G^0)/Z(G)$ is anisotropic.



\begin{defn}\label{defn:associated-torus-character-pairs}
Define $\cT(\Psi)$ as the set of $G(F)$-conjugacy classes $[(T, \theta)]$ of pairs $(T, \theta\co T(F)_{[x]}\to k^\times)$ constructed above. It is clear that $\cT(\Psi)$ only depends on the $G$-equivalence class of $\Psi$, and it is unaffected by the procedure of Lemma~\ref{lemma:irredundant-yu-data}.

Let $\cT_0(\Psi) \subset \cT(\Psi)$ denote the set of $[(T,\theta)]$ constructed as above such that $\tau \in \cE_0(\ol G, [\ol T, \theta])$, with notation as in \cite[Definition~2.8.2]{Cot26b}.
\end{defn}

Note that if $k = \ol\Q_\ell$ and $\tau$ is cuspidal, then for every $[(T, \theta)] \in \cT_0(\Psi)$, the torus $T$ is elliptic in $G$. The set $\cT_0(\Psi)$ will be important for studying the Fargues--Scholze correspondence in \cite{CF26b} because it retains direct contact with Deligne--Lusztig induction, but the following lemma will eventually show that it is irrelevant for the study of Kaletha's correspondence.

\begin{lemma}\label{lemma:associated-tori-almost-equiv}
Assume that $\Psi$ is normalized. If $[(T, \theta)]$ and $[(T', \theta')]$ lie in $\cT(\Psi)$, then the pairs $(T,\theta)$ and $(T',\theta')$ are almost equivalent in the sense of Definition~\ref{defn:almost-equivalent}.
The central character of $\pi(\Psi)$ is equal to $\theta|_{Z(G)(F)}$.
\end{lemma}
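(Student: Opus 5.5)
The plan is to reduce both assertions to Lemma~\ref{lemma:conjugacy-of-torus-character-pairs} and to a direct computation of how the centre acts on the inducing representation.

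For almost equivalence, write $\theta = \theta_0\cdot\prod_{i=0}^d\phi_i|_{T(F)_{[x]}}$ and $\theta' = \theta_0'\cdot\prod_{i=0}^d\phi_i|_{T'(F)_{[x]}}$, where $(\ol T,\theta_0)$ and $(\ol T',\theta_0')$ are the depth $0$ pairs of \S\ref{ss:kal-param}. Both $T$ and $T'$ are maximally unramified maximal tori of $G^0$ with $x$ in their apartments.

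By \cite[Lemma~2.8.4]{Cot26b}, the semi-rational Lusztig series $\cE(\ol G^0_{[x]},[\ol T,\theta_0])$ and $\cE(\ol G^0_{[x]},[\ol T',\theta_0'])$ coincide, since both contain $\tau$. Apply Lemma~\ref{lemma:conjugacy-of-torus-character-pairs} with $G^0$ in place of $G$ and with the single normalized character $\phi\coloneqq\prod_{i}\phi_i|_{G^0(F)}$. This $\phi$ is normalized because $\Psi$ is: each $\phi_i$ is trivial on the image of $G^i_{\mathrm{sc}}(F)$, and the image of $G^0_{\mathrm{sc}}(F)$ maps into that image. The lemma then shows that the pairs are almost equivalent for $G^0$, i.e.\ $G^0(F_n)$-conjugate after norm pullback to some $F_n$, and hence $G(F_n)$-conjugate.

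Two points need care here. First, the building in the lemma is that of $G^0$, whereas Definition~\ref{defn:almost-equivalent} uses $T(F)_{[x]}$ with $[x]=[x]_G$. Since $T(F)$ is compact modulo $Z(G)(F)$ (as $Z(G^0)/Z(G)$ is anisotropic), I expect the $[x]_{G^0}$ and $[x]_G$ stabilizers in $T(F)$ to agree up to the obvious identifications; if they do not, I would restrict to the smaller group. Second, condition (1) of Definition~\ref{defn:almost-equivalent} asks that $\theta$ and $\theta'$ agree on $Z(G)(F)$. This will follow from the second assertion, since both restrictions equal the central character of $\pi(\Psi)$.

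For the central character, note that $Z(G)(F)\subset G^0(F)_{[x]}\subset K$. It therefore suffices to compute how $Z(G)(F)$ acts on each tensor factor of \eqref{eq:yu-compact-induction}; the factors behave as follows.
\begin{itemize}
\item The sign $\epsilon$ is trivial on $Z(G)(F)$, since it factors through the adjoint groups $G^{i+1}_{\mathrm{ad}}$ and $Z(G)$ maps trivially there.
\item The factor $W_i$ is trivial on $Z(G)(F)$, because $Z(G)$ acts trivially on $V_{i+1}$ by conjugation.
\item On $V_{\wh\phi_i}$, therefore, $Z(G)(F)$ acts by $\phi_i$, and on the last factor it acts by $\phi_d$.
\item On $\tau$, the centre $Z(G)(F)$ acts through $\ol Z(\F_q)$, and this action is given by $\theta_0$ restricted to the image of $Z(G)(F)$ in $\ol T(\F_q)$. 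This is the step I expect to be the main obstacle: it requires the fact that Deligne--Lusztig-type series for paraductive groups have central character $\theta_0|_{\ol Z}$, i.e.\ that everything in $\cE(\ol G^0_{[x]},[\ol T,\theta_0])$ has this central character when $\ol T=\ol T^\circ\cdot\ol Z$, as in \cite[\S2.8]{Cot26b}. I would first try to verify this directly from the definition of the semi-rational series there.
\end{itemize}

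Multiplying these factors gives the central character $\theta_0\cdot\prod_i\phi_i$ on $Z(G)(F)$, which is $\theta|_{Z(G)(F)}$. Compact induction preserves the central character because $Z(G)(F)$ is central in $G(F)$, so the central character of $\pi(\Psi)$ is $\theta|_{Z(G)(F)}$.
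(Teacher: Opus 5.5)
Your proposal is correct and follows the paper's proof. You form the normalized product $\phi=\prod_i\phi_i|_{G^0(F)}$, use uniqueness of the semi-rational Lusztig series to apply Lemma~\ref{lemma:conjugacy-of-torus-character-pairs} to $G^0$, and compute the central character factor by factor using $\epsilon|_{Z(G)(F)}=1$.

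The two points you flag are harmless. That $\tau$ has central character $\theta_0|_{\ol Z}$ is built into the definition of the series in \cite[Definition~2.8.2]{Cot26b}. The $[x]_{G^0}$- and $[x]_G$-stabilizers agree because $G^0$ and $G$ share a maximal $F$-split central torus. Your stated reason, that $T(F)$ is compact modulo $Z(G)(F)$, is not valid: it fails when $T$ is not elliptic, which can happen for singular $\tau$.
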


\begin{proof}
Put $\phi=\prod_{i=0}^d\phi_i|_{G^0(F)}$. The inclusions $G^0_{\der}\to G^i_{\der}$ lift to the simply connected covers, so the hypothesis that $\Psi$ is normalized implies that $\phi$ is normalized. The fact that $(T, \theta)$ and $(T', \theta')$ are almost equivalent is immediate from Lemma~\ref{lemma:conjugacy-of-torus-character-pairs}. The final claim is clear from the fact that the inducing representation $\epsilon\otimes\tau\otimes\kappa\otimes\phi_d$ in Yu's construction has central character $\theta|_{Z(G)(F)}$ (since $\epsilon|_{Z(G)(F)} = 1$).
\end{proof}

\begin{lemma}\label{lemma:associated-tori-inertial-parameter}
Assume that $\Psi$ is normalized. If $[(T,\theta)]$ and $[(T',\theta')]$ lie in $\cT(\Psi)$, then
\[
\ld j_{T,G}\circ\ld(\theta|_{T(F)_{\mathrm b}})|_{I_F}
\sim
\ld j_{T',G}\circ\ld(\theta'|_{T'(F)_{\mathrm b}})|_{I_F}.
\]
\end{lemma}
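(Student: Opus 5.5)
By Lemma~\ref{lemma:associated-tori-almost-equiv}, the pairs $(T,\theta)$ and $(T',\theta')$ are almost equivalent. Concretely, Lemma~\ref{lemma:conjugacy-of-torus-character-pairs} gives a finite unramified extension $E=F_n$ and an element $g\in G(E)$ conjugating $(T_E,\theta\circ\Nm_{E/F})$ to $(T'_E,\theta'\circ\Nm_{E/F})$. The plan is to transport the statement to $E$, where the two inertial parameters become literally conjugate, and then to descend back to $F$. Since $E/F$ is unramified, $I_E=I_F$, so nothing is lost by restricting to inertia.

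The first step is to compare the L-embeddings over $F$ and over $E$ on inertia. I would take $n$ odd if possible; otherwise I would factor $E/F$ as a tower of prime-degree cyclic unramified extensions. Lemma~\ref{lemma:tasho-base-change}(1) and (2) give, at each step, $\ld j_{T_E,G_E}|_{\wh T\rtimes I_F}\sim \ld j_{T,G}|_{\wh T\rtimes I_F}$, and similarly for $T'$. On the torus side, local class field theory shows that $\ld(\theta\circ\Nm_{E/F})|_{I_E}$ is the restriction of $\ld\theta$ to $I_F$. Both sides require only the bounded subgroup $T(F)_{\mathrm b}$, so no extension of $\theta$ is needed. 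Combining these,
\[
\ld j_{T,G}\circ\ld\theta|_{I_F}\sim \ld j_{T_E,G_E}\circ\ld(\theta\circ\Nm_{E/F})|_{I_E},
\]
and the same holds for $(T',\theta')$.

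The second step uses $g$ to compare the two right-hand sides over $E$. Conjugation by $g$ identifies the tuple $(\vec G_E,x,\vec r,\vec X,T_E)$ with $(\vec G_E,gx,\vec r,\vec X,T'_E)$. The $\chi$-data $\chi''$ of \S\ref{sss:canonical-chi-data} are built from the $a$-data $\langle X_i^*,H_\alpha\rangle$ and from the root-theoretic invariants of $T$ relative to the $G^i$. We need these to be transported by $g$, so we need $g$ to normalize the twisted Levi sequence. Since the $X_i^*$ are $G^i$-invariant, this holds if we can arrange $g\in G^0(E)$.

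To get $g\in G^0(E)$, I would rerun the argument of Lemma~\ref{lemma:conjugacy-of-torus-character-pairs} inside $\cG^0_{[x]}$, which is where both tori were constructed in \S\ref{sss:p-adic-torus-character-pair}. The two depth-zero pairs $(\ol T,\theta_0)$ and $(\ol T',\theta_0')$ lie in the same Lusztig series of $\ol G^0_{[x]}$, so the conjugating element already lives in $G^0$. The characters $\phi_i$ are handled by Proposition~\ref{prop:extend-normalized-characters} applied to $G^0$. With $g\in G^0(E)$, the associated $\chi$-data correspond under $\mathrm{Ad}(g)$. The L-embeddings then differ by the standard $\wh G$-conjugation attached to a rational isomorphism of tori, so $\ld j_{T_E,G_E}\circ\ld\theta_E\sim \ld j_{T'_E,G_E}\circ\ld\theta'_E$.

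I expect the main obstacle to be the base change comparison on inertia for even $n$, where Lemma~\ref{lemma:tasho-base-change} only gives the statement restricted to $\wh T\rtimes I_F$. That restricted version is exactly what is needed here. A smaller point to check is that the symmetric-ramified $\chi''_\alpha$ condition~\eqref{eqn:ramified-symmetric-chi-data} is insensitive to the conjugation. This should hold because $a_{g\alpha}$ and $a_\alpha$ agree under the transport, and because $f_\alpha$ and $\ell_{G,p'}$ are conjugation-invariant.
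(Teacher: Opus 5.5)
Your proposal is correct and follows essentially the same route as the paper. Almost equivalence gives an unramified $E/F$ and $g\in G^0(E)$ conjugating the norm-pulled-back pairs. Conjugation by $g$ then identifies $\ld j_{T'_E,G_E}$ with $\ld j_{T_E,G_E}\circ\ld c_g$. Finally, one descends to inertia over $F$ using norm functoriality for tori and Lemma~\ref{lemma:tasho-base-change}(1)--(2) along a prime-degree filtration of $E/F$. Your explicit check that $g$ can be taken in $G^0(E)$, so that the $a$-data and hence $\chi''$ are transported, spells out a point the paper simply asserts.
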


\begin{proof}
By Lemma~\ref{lemma:associated-tori-almost-equiv}, there are an unramified extension $E/F$ and $g\in G^0(E)$ such that $g(T_E, \theta \circ \Nm_{E/F})g^{-1} = (T'_E, \theta' \circ \Nm_{E/F})$. Let $c_g \co T_E\xrightarrow{\sim}T'_E$ be the conjugation map, and let $\ld c_g \co\ld T'_E\xrightarrow{\sim}\ld T_E$ be the associated L-isomorphism. It is visible from \eqref{eqn:def-of-L-embedding} that
\begin{equation}\label{eq:associated-tori-conjugate-E-embeddings}
    \ld j_{T'_E,G_E}
    \sim  \ld j_{T_E,G_E}\circ\ld c_g.
\end{equation}
Let
\[
    \theta_E\coloneqq\theta\circ \Nm_{E/F}|_{T(E)_{[x]}},
    \qquad
    \theta'_E\coloneqq\theta'\circ \Nm_{E/F}|_{T'(E)_{[x]}}.
\]
The choice of $g$ implies that $c_g^*\theta'_E=\theta_E$, so naturality of the inertial Local Langlands Correspondence for tori and \eqref{eq:associated-tori-conjugate-E-embeddings} yield
\[
    \ld j_{T'_E,G_E}\circ\ld(\theta'_E|_{T'(E)_{\mathrm b}})|_{I_E}
    \sim 
    \ld j_{T_E,G_E}\circ\ld(\theta_E|_{T(E)_{\mathrm b}})|_{I_E}.
\]
Thus we conclude by norm functoriality for the inertial Local Langlands Correspondence for tori and Lemma~\ref{lemma:tasho-base-change}(1)--(2) (applied to a filtration of $E/F$ by extensions of prime degree).
\end{proof}

\begin{lemma}\label{lemma:character-restricted-to-torus-depth}
If $T\subset G^0$ is a tamely ramified maximal $F$-torus such that $x\in\cB(T)$, then $\phi_i|_{T(F)}$ is of depth $r_i$ for all $0\leq i<d$, and $\phi_d|_{T(F)}$ is of depth $r_d$ if $\phi_d\neq1$.
\end{lemma}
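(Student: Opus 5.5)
The plan is to reduce the depth claim for $\phi_i|_{T(F)}$ to a statement about the Lie algebra, using genericity of $\phi_i$. Fix $0\le i<d$. Since $\phi_i$ has depth $r_i$ on $G^i(F)$, it is trivial on $G^i(F)_{x,r_i+}$. Because $x\in\cB(T)$ and $T$ is tamely ramified, we have $T(F)_{r_i+}\subset G^i(F)_{x,r_i+}$. It follows that $\phi_i|_{T(F)}$ has depth at most $r_i$, and it remains only to show that $\phi_i$ is nontrivial on $T(F)_{r_i}$.

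For this, I will use the Moy--Prasad isomorphism. It is compatible with restriction to tame tori containing $x$ in their apartment, as in \cite[Corollary 2.4]{Yu01} together with the tame-descent compatibility used in Lemma~\ref{lemma:yu-subgroup-fixed}. Under it, $\phi_i|_{T(F)_{r_i}}$ is the character $Y\mapsto\psi(\langle X_i^*,Y\rangle)$ on $\frt(F)_{r_i}/\frt(F)_{r_i+}$. So the task is to produce $Y\in\frt(F)_{r_i}$ with $\langle X_i^*,Y\rangle\notin\frp_F$, up to possibly using the trace to $\F_p$.

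To find such a $Y$, I will use genericity of $X_i^*$ relative to $G^{i+1}$. Because $G^i\ne G^{i+1}$ after discarding redundant steps via Lemma~\ref{lemma:irredundant-yu-data} (a redundant step is harmless, but I assume irredundancy), there is a root $\alpha\in\Phi((G^{i+1}/G^i)_{\ol F},T_{\ol F})$. Then $H_\alpha=\mathrm d\alpha^\vee(1)$ lies in $\frt(\ol F)$ and satisfies $\ord\langle X_i^*,H_\alpha\rangle=-r_i$. Over a tame splitting field $E$ of $T$, I take $Y=cH_\alpha$ with $c\in E$ of valuation $r_i$. Its pairing with $X_i^*$ is a unit, whose residue $\bar u$ can be adjusted by scaling $c$ with residue-field units. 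The Galois trace $\Tr_{E/F}(cH_\alpha)$ then lies in $\frt(F)_{r_i}$, since the tame trace is surjective on filtration quotients.

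The main obstacle is ensuring that the trace does not kill the residue of the pairing. My first attempt is to pass to the tame extension $E$ and use surjectivity of the residue trace, choosing $c$ so that $\Tr_{\F_E/\F_q}(\bar c\,\bar u)\ne0$, or alternatively choosing $\bar c$ so that $\psi^0$ is nontrivial on it. If the trace is delicate, a cleaner route is this: the map $\frt(F)_{r_i}/\frt(F)_{r_i+}\to\F_q$ given by $Y\mapsto\overline{\langle X_i^*,Y\rangle}$ is $\F_q$-linear, and it is nonzero after tensoring with the residue field of $E$ because of $H_\alpha$. Since filtration quotients commute with tame base change by \cite[Corollary 2.4]{Yu01}, the map is already nonzero over $F$. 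Its image is then all of $\F_q$, so $\psi^0$ is nontrivial on it.

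The case $i=d$ is identical when $\phi_d\ne1$, since $\phi_d$ has depth $r_d$ and $r_{d-1}<r_d$. I will argue it by the same Lie algebra reasoning, using the element $X_d^*$ that represents $\phi_d$ and the fact that $T$ contains $Z(G)^\circ$. Depth is then detected on $Z(G)$ and $T\cap G_{\der}$ through the decomposition of $\Lie^*$, with genericity replaced by the plain depth condition.
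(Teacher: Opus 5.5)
Your argument for an irredundant step $i<d$ is fine. With $Y=\Tr_{E/F}(cH_\alpha)\in\frt(F)_{r_i}$ one has $\langle X_i^*,Y\rangle=\Tr_{E/F}(c\langle X_i^*,H_\alpha\rangle)$. Tameness gives $\overline{\Tr_{E/F}(w)}=e\,\Tr_{\F_E/\F_q}(\bar w)$ with $p\nmid e$, which lets you make $\psi$ of this pairing nontrivial. This is the same device the paper uses in the proof of Lemma~\ref{lemma:ns-yu-data-and-characters}(1).

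Discard the ``cleaner route'', though. For ramified $E/F$, the quotient $\frt(E)_r/\frt(E)_{r+}$ is not $(\frt(F)_r/\frt(F)_{r+})\otimes\F_E$: for $T=\G_m$, $r=1/2$ and $E/F$ ramified quadratic, the former is $\F_E$ and the latter is $0$.

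The gap is every case in which genericity hands you no root $\alpha$, namely redundant steps and $i=d$. You cannot discard redundant steps: Lemma~\ref{lemma:irredundant-yu-data} replaces $\phi_m,\phi_{m+1}$ by $\phi_m\phi_{m+1}$, so it says nothing about the individual characters in the statement. And $i=d$ is not ``identical'' to the generic case; it is exactly the case your method does not reach.

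For $\phi_d$ your sketch fails in two concrete ways.

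First, the Yu datum supplies no $X_d^*$. A character of $G(F)$ of depth $r_d$ need not be induced on $G(F)_{x,r_d}/G(F)_{x,r_d+}$ by an element of $\Lie^*(G)^G(F)_{-r_d}$; this is Hypothesis $\mathrm{C}(\vec G)$ of \cite[\S 2.6]{HM08}. The paper passes to a z-extension $\widetilde G\to G$, where \cite[Lemma 3.5.2]{Kal19} provides such an element. It then comes back down using the surjectivity of $\widetilde G(F)_{x,r_d}\to G(F)_{x,r_d}$ and $\widetilde T(F)_{r_d}\to T(F)_{r_d}$ from \cite[Lemma 3.5.3]{Kal19}.

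Second, depth is not detected on $Z(G)$ and $T\cap G_{\der}$, because the lattice $\frt(F)_r$ need not split into a central part and a derived part. Take $G=\GL_p$ over $\Q_p$, $T$ diagonal, and $\phi=\chi\circ\det$ with $\chi$ of depth $n\geq1$. Then $\phi$ is trivial on $(T\cap G_{\der})(F)$, and also on $Z(G)(F)_n$ since $(1+p^n\Z_p)^p\subset 1+p^{n+1}\Z_p$. Yet $\phi|_{T(F)}$ has depth $n$.

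The missing idea is a projection. It is the core of the paper's proof and handles all $i$ uniformly without any genericity.
\begin{enumerate}
\item Choose $Y\in\frg^i(F)_{x,r_i}$ with $\langle X_i^*,Y\rangle\in\cO_F^\times$.
\item Over a tame splitting field $E$ of $T$, one has $\frg^i(E)_{x,r_i}=\frt(E)_{r_i}\oplus\bigoplus_\alpha\frg^i_\alpha(E)_{x,r_i}$.
\item The $G^i$-invariant functional $X_i^*$ kills every root space.
\item The projection onto $\frt$ is Galois-equivariant, so the projection of $Y$ lies in $\frt(F)_{r_i}$ and has the same unit pairing with $X_i^*$.
\end{enumerate}
This needs only $G^i$-invariance of $X_i^*$. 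So it also covers redundant steps, where the Yu datum does provide $X_i^*$, and it covers $\phi_d$ once the z-extension supplies an invariant representative.
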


\begin{proof}
Fix $i < d$. By genericity, the character induced by $\phi_i$ on $G^i(F)_{x,r_i}/G^i(F)_{x,r_i+}$ is represented, under the Moy--Prasad isomorphism and the fixed additive character $\psi$, by the class of some $X_i^*\in\Lie^*(G^i)^{G^i}(F)_{x,-r_i}$. Choose $Y\in\frg^i(F)_{x,r_i}$ whose pairing with $X_i^*$ lies in $\cO_F^\times$; this exists since $X_i^*$ has nontrivial image in $\Lie^*(G^i)(F)_{x,-r_i:-r_i+}$. Let $E/F$ be a tame extension splitting $T$, so there is a direct sum decomposition
\[
    \frg^i(E)_{x,r_i}
    =\frt(E)_{r_i}\oplus\bigoplus_\alpha\frg^i_\alpha(E)_{x,r_i}.
\]
Since $X_i^*$ is $G^i$-invariant, it annihilates every root space, each of which lies in $[\frg^i,\frg^i]$. Thus the projection of $Y$ to $\frt(E)_{r_i}$ has the same nonzero pairing with $X_i^*$. This projection lies in $\frt(F)_{r_i}$, so $\phi_i|_{T(F)_{r_i}}$ is nontrivial. Thus $\phi_i|_{T(F)}$ has depth $r_i$.

Now assume $\phi_d \neq 1$, so $\phi_d$ is of depth $d$ by assumption. Choose a z-extension $\pi\co \wt G \to G$, and note that by \cite[Lemma 3.5.3]{Kal19}, the map $\wt G(F)_{x,r_d} \to G(F)_{x,r_d}$ is surjective; thus the induced character $\wt\phi_d$ of $\wt G(F)$ is of depth $r_d$. Now \cite[Lemma 3.5.2]{Kal19} shows that $\wt\phi_d$ is represented by an element $X_i^* \in \Lie^*(\wt G)^{\wt G}(F)_{x,-r_d}$, so the same argument as above shows that if $\wt T = \pi^{-1}(T)$ then $\wt\phi_d|_{\wt T(F)}$ is of depth $r_d$. Since $\wt T(F)_{r_d} \to T(F)_{r_d}$ is surjective by \cite[Lemma 3.5.3]{Kal19} again, we conclude.
\end{proof}

\subsubsection{Non-singular characters}
We adapt the following definition from \cite[Definition 3.1.1]{Kal21b}.\footnote{In fact, for reasons explained below, our definition differs slightly from \cite[Definition 3.1.1]{Kal21b}.}

\begin{defn}\label{defn:depth-0-nonsingular}
Let $k$ be a field of characteristic not equal to $p$. Let $T \subset G$ be a maximally unramified maximal $F$-torus, fix a point $x \in \cB(T) \subset \cB(G)$, let $E_0/F$ be a finite tamely ramified extension splitting $T$, and let $F'/F$ be the maximal unramified subextension, so $F'$ splits the maximal unramified $F$-subtorus $S \subset T$. Let $\F_{F'}$ be the residue field of $F'$, and let $\ol S^\circ$ denote the special fiber of the relative identity component of the lft N\'eron model of $S$, viewed as a maximal torus of $\ol G_{[x]}^\circ$. Fix a subgroup $T^\dagger \subset T(F)$ containing $T(F)_{\mathrm{b}}$ and a character $\theta\co T^\dagger\to k^\times$. Assume
\begin{equation}\label{eqn:nonvanishing-on-absolute-coroots}
\theta \circ \Nm_{E_0/F} \circ \alpha^\vee|_{1 + \frp_{E_0}} = 1 \text{ for every coroot } \alpha^\vee \in \Phi^\vee(G_{E_0}, T_{E_0}).
\end{equation}
We say that
\begin{enumerate}
\item $\theta$ is \textit{$\F$-non-singular} if $(\theta \circ \Nm_{F'/F} \circ \alpha_{\mathrm{res}}^\vee)|_{\cO_{F'}^\times}$ is nontrivial\newline for each $\alpha_{\mathrm{res}} \in \Phi((\ol G_{[x]}^\circ)_{\F_{F'}}, \ol S^\circ_{\F_{F'}}) \subset \Phi(G_{F'}, S_{F'})$.
\item $\theta$ is \textit{$F$-non-singular} if $(\theta \circ \Nm_{F'/F} \circ \alpha_{\mathrm{res}}^\vee)|_{\cO_{F'}^\times}$ is nontrivial for each $\alpha_{\mathrm{res}} \in \Phi(G_{F'}, S_{F'})$.
\end{enumerate}
\end{defn}

We now generalize Definition~\ref{defn:depth-0-nonsingular} beyond the case of maximally unramified tori as in \cite[Definition 3.4.1]{Kal21b}, for which we must introduce some notation. Let $k$ be a field of characteristic different from $p$, fix a tamely ramified maximal $F$-torus $T \subset G$, a subgroup $T^\dagger$ with $T(F)_{\mathrm b}\subset T^\dagger\subset T(F)$, and a character $\theta\co T^\dagger\to k^\times$. Define the depth of $\theta$ to be the depth of $\theta|_{T(F)_{\mathrm b}}$. Let $E/F$ be the splitting field of $T$. For each positive real number $r$, we define
\[
\Phi_r = \Phi_r(T, \theta) = \{ \alpha \in \Phi(G_E, T_E) \co \theta(\Nm_{E/F}(\alpha^\vee (E_r^\times))) = 1 \}
\]
where $E_r^\times \coloneqq 1 + \mf{p}_E^{\lceil e r \rceil}$ for $e$ the ramification degree of $E/F$. Put $E_{r+}^\times\coloneqq\bigcup_{s>r}E_s^\times$. Let $\Phi_{r+} = \bigcap_{s>r} \Phi_s$ and say that $r \in \RR_{>0}$ is a \emph{break} in the filtration $\Phi_\bullet(T,\theta)$ if $\Phi_r \neq \Phi_{r+}$. There are finitely many (possibly $0$) breaks, which we denote $r_{d-1} > r_{d-2} > \ldots  > r_0 > 0$. We define by convention $r_{-1} \coloneqq 0$ and $r_d \coloneqq \mathrm{depth}(\theta)$. If $d>0$, then this gives a sequence $r_d \geq r_{d-1} > \cdots > r_0 > r_{-1}$, while if $d=0$ (no breaks in $\Phi_r$) then $r_0 \geq r_{-1} = 0$.

For $d \geq i \geq 0$, we define $G^i = G^i(T,\theta) \subset G$ to be the connected reductive subgroup with maximal torus $T$ and root system $\Phi_{r_{i-1}+}$. We define $G^{-1} \coloneqq T$, so we have 
\[
T = G^{-1}  \subset G^0 \subset \cdots \subset G^d = G.
\]
Note that if $p$ is good for $G$ and does not divide the order of $\pi_1(G_{\der})$, then $G^i$ is a twisted Levi subgroup of $G$ by \cite[Lemma 3.6.1]{Kal19}. In fact, Lemma~\ref{lemma:ns-yu-data-and-characters} will show that this is true for arbitrary $p$ when $(T, \theta) \in \cT(\Psi)$ for a normalized Yu datum $\Psi$.

\begin{defn}\label{defn:non-singular}
We say that $\theta\co T^\dagger\to k^\times$ is \emph{$\F$-non-singular} (resp.\ \textit{$F$-non-singular}) if $T$ is maximally unramified in $G^0$ and, when $T$ is considered inside $G^0$, the character $\theta$ (which satisfies the hypotheses of Definition~\ref{defn:depth-0-nonsingular}) is $\F$-non-singular (resp.\ $F$-non-singular) with respect to $G^0$ in the sense of Definition~\ref{defn:depth-0-nonsingular}. These notions and the groups $G^i(T,\theta)$ depend only on $\theta|_{T(F)_{\mathrm b}}$.
\end{defn}

As in \cite[Fact 3.1.4(1)]{Kal21b}, an $F$-non-singular character is automatically $\F$-non-singular.

\begin{remark}\label{rmk:difference-in-defn}
Definitions~\ref{defn:depth-0-nonsingular} and \ref{defn:non-singular} differ slightly from \cite[Definition 3.1.1, Definition 3.4.1]{Kal21b}: indeed, \cite[Definition 3.4.1]{Kal21b} requires that $\theta$ is non-singular with respect to $G^0$ in the sense of \cite[Definition 3.1.1]{Kal21b}, which strictly speaking implies that $\theta$ is of depth $0$; however, this is clearly an unintended restriction.
\end{remark}


\begin{defn}\label{defn:kF-non-singular-yu-datum}
We will say that an irreducible Yu datum $\Psi = (\vec{G},x,\vec{r},\tau,\vec{\phi})$ over $k \in \{\ol\F_\ell, \ol\Q_\ell\}$ is \textit{$\F$-non-singular} if $\tau$ is non-singular in the sense of \cite[Definition~2.9.3]{Cot26b}.
\end{defn}

\begin{lemma}\label{lemma:ns-yu-data-and-characters}
Assume $p \neq 2$, let $\Psi = (\vec{G}, x, \vec{r}, \tau, \vec{\phi})$ be an irredundant normalized irreducible Yu datum for $G$ over a field $k$ among $\ol\Q_\ell$ and $\ol\F_\ell$, and let $[(T, \theta)] \in \cT(\Psi)$.
\begin{enumerate}
    \item $G^i = G^i(T, \theta)$ for all $0\leq i\leq d$, where $G^i(T,\theta)$ is defined from the filtration $\Phi_\bullet(T,\theta)$ preceding Definition~\ref{defn:non-singular}.
    \item The canonical ($\wh G(k)$-conjugacy class of) embedding(s) $\wh{G^0} \to \wh G$ realizes $\wh{G^0}$ as the ($\wh G(k)$-conjugacy class of) connected centralizer(s) $Z_{\wh G}((\ld j_{T, G} \circ \ld\theta)(P_F))^\circ$.
    \item $\Psi$ is $\F$-non-singular if and only if $\theta$ is $\F$-non-singular. In this case, $T$ is elliptic and $\cT(\Psi)$ consists of the single conjugacy class $[(T, \theta)]$.
\end{enumerate}
\end{lemma}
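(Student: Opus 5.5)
The proof splits into the three parts of the statement, and everything rests on an explicit computation of the filtration $\Phi_\bullet(T,\theta)$.

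\textbf{Part (1): computing $\Phi_r(T,\theta)$.} Write
\[
\theta=\theta_0\cdot\prod_{j=0}^d\phi_j|_{T(F)_{[x]}}.
\]
Fix a coroot $\alpha^\vee$ and $r>0$.

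\emph{The depth-zero factor.} Since $\theta_0$ has depth $0$, the composite $\theta_0\circ\Nm_{E/F}\circ\alpha^\vee$ is trivial on $E_r^\times$ for every $r>0$. So only the $\phi_j$ matter.

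\emph{Large indices.} Because $\Psi$ is normalized, each $\phi_j$ is trivial on the image of $G^j_{\mathrm{sc}}(F)$. Hence for $\alpha\in\Phi(G^j_E,T_E)$ the composite $\phi_j\circ\Nm_{E/F}\circ\alpha^\vee$ is trivial. To see this, factor $\alpha^\vee$ through $T_{\mathrm{sc}}\subset G^j_{\mathrm{sc}}$ and use that the norm commutes with this factorization. Proposition~\ref{prop:extend-normalized-characters} supplies the cleanest formulation: $\phi_{j,E}$ is normalized on $G^j(E)$, so it kills $\alpha^\vee(E^\times)$.

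\emph{Small indices.} If $\alpha\notin\Phi(G^j)$, use the Moy--Prasad isomorphism and the genericity of $X_j^*$. Then $\phi_j(\Nm_{E/F}\alpha^\vee(1+y))$ at depth $r_j$ equals
\[
\psi\bigl(\tr\langle X_j^*,H_\alpha\rangle y\bigr)\quad\text{up to norm/trace},
\]
and genericity gives $\ord\langle X_j^*,H_\alpha\rangle=-r_j$. So this factor is trivial on $E_{r}^\times$ exactly when $r>r_j$, and it is nontrivial at $r=r_j$.

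\emph{Combining.} For $\alpha\in\Phi(G^{i+1})\smallsetminus\Phi(G^i)$, the only contributions are from $\phi_j$ with $j\le i$. These have depths $r_j\le r_i$, and the one at $r_i$ is nontrivial on $E_{r_i}^\times$, while the deeper-level pieces of the others do not interfere. Since $r_j<r_i$ for $j<i$ by irredundancy, we get $\alpha\in\Phi_{r_i+}\smallsetminus\Phi_{r_i}$. Therefore the breaks are exactly $r_0<\dots<r_{d-1}$, and
\[
\Phi_{r_{i-1}+}=\Phi(G^i,T).
\]
Irredundancy is needed so that every $r_i$ is a genuine break. This is the main technical step: I must carefully handle the norm from $E$ and the tame ramification index scaling $\lceil er\rceil$, using $p\neq2$ and the trace surjectivity of tame extensions to guarantee nontriviality.

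\textbf{Part (2).} By Part (1) and the Langlands correspondence for tori, the image of $P_F$ under $\ld j_{T,G}\circ\ld\theta$ lies in $\wh T$; the Tits and $Q$ factors are trivial on $P_F$, since $\chi''$ is tame and $s,n$ factor through $\Gamma$ acting on roots with $P_F$ fixing $T$'s roots. A root $\wh\alpha$ of $\wh G$ is trivial on $\ld\theta(P_F)$ iff the character $\theta\circ\Nm\circ\alpha^\vee$ is trivial on $1+\frp_E$, i.e.\ iff $\alpha\in\Phi_{0+}$. That condition is the same as $\alpha\in\Phi(G^0,T)$ by Part (1). Since the torus centralizer in $\wh G$ is connected reductive with those roots, $Z_{\wh G}(\cdot)^\circ$ is the Levi-type subgroup dual to $G^0$, and the canonical embedding $\wh{G^0}\to\wh G$ has this image up to conjugacy.

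\textbf{Part (3).} By Part (1) the group $G^0(T,\theta)$ equals $G^0$. Since $\prod\phi_j$ kills the coroots of $G^0$ (normalized, as above), the restriction of $\theta$ to $\alpha_{\mathrm{res}}^\vee(\cO_{F'}^\times)$ agrees with that of $\theta_0$. Hence $\F$-non-singularity of $\theta$ is equivalent to non-singularity of $(\ol T,\theta_0)$ in $\ol G^0_{[x]}$, which matches the notion in \cite[Definition~2.9.3]{Cot26b} for $\tau$; I invoke the equivalence there. Ellipticity was noted in \S\ref{sss:p-adic-torus-character-pair}. Uniqueness of $[(T,\theta)]$ follows from uniqueness of $(\ol T,\theta_0)$ up to $\ol G^0_{[x]}(\F_q)$-conjugacy \cite[Lemma~2.9.2]{Cot26b}, lifted to the $p$-adic level by the deformation argument in the proof of Lemma~\ref{lemma:conjugacy-of-torus-character-pairs}, now over $F$ itself with $n=1$.
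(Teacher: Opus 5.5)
Your proposal is correct and follows essentially the same route as the paper: normalization kills $\phi_j\circ\Nm_{E/F}\circ\alpha^\vee$ for $\alpha\in\Phi(G^j)$, genericity of $X_i^*$ (via the Moy--Prasad isomorphism and a well-chosen $1+z$) gives nontriviality at depth $r_i$ for $\alpha\in\Phi(G^{i+1})\smallsetminus\Phi(G^i)$, part (2) reads off the roots of the centralizer from $\Phi_{0+}$, and part (3) reduces to non-singularity of $\theta_0$ together with \cite[Lemma~2.9.2]{Cot26b} and the $n=1$ case of the lifting argument in Lemma~\ref{lemma:conjugacy-of-torus-character-pairs}. One small correction concerns your ``small indices'' step. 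There, genericity gives $\ord\langle X_j^*,H_\alpha\rangle=-r_j$ only for $\alpha\in\Phi(G^{j+1})\smallsetminus\Phi(G^j)$, not for all $\alpha\notin\Phi(G^j)$; likewise $r_j<r_i$ comes from the Yu datum axioms, with irredundancy being what makes each $r_i$ a genuine break. Neither point affects the argument, since your combining step uses nontriviality only in that range and otherwise uses only the depth bound.
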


\begin{proof}
We begin with (1). Let $E/F$ be a finite tamely ramified Galois extension splitting $T$. It is enough to prove
\[
    \Phi_{r_{i-1}+}(T,\theta)=\Phi(G^i_E,T_E)
    \qquad(0\leq i\leq d).
\]
If $j < i$, then $\phi_j$ has depth at most $r_{i-1}$, so we have
\begin{equation}\label{eqn:factorization-of-theta}
\theta(\Nm_{E/F}(\alpha^\vee(E_{r_{i-1}+}^\times))) = \prod_{j=i}^d\phi_j(\Nm_{E/F}(\alpha^\vee(E_{r_{i-1}+}^\times))).
\end{equation}
If $\alpha \in \Phi(G^i_E, T_E)$, then the coroot $\alpha^\vee\co \G_m \to G^i_E$ factors through a map $\alpha_{\mathrm{sc}}^\vee\co \G_m \to (G^i_{\mathrm{sc}})_E$, where $\pi\co G^i_{\mathrm{sc}} \to G^i_{\der}$ is the simply connected cover. Since $\Psi$ is normalized, functoriality of the norm on the inverse-image torus in $G^j_{\mathrm{sc}}$ gives
\begin{equation}\label{eqn:normalized-character-coroot-norm}
\phi_j\circ \Nm_{E/F}\circ\alpha^\vee=1
\qquad(i\leq j\leq d).
\end{equation}
By \eqref{eqn:factorization-of-theta}, this means that $\alpha \in \Phi_{r_{i-1}+}(T, \theta)$.

Conversely, suppose that $\alpha\in\Phi_{r_{i-1}+}(T,\theta)$. If $\alpha\in\Phi(G^j_E,T_E)$ for $j \leq i$, then $\alpha\in\Phi(G^i_E,T_E)$ immediately. Otherwise there is a unique $i\leq j<d$ such that
\[
\alpha\in\Phi(G^{j+1}_E,T_E) - \Phi(G^j_E,T_E).
\]
Equation~\eqref{eqn:normalized-character-coroot-norm} shows that $\phi_m\circ \Nm_{E/F}\circ\alpha^\vee$ is trivial for $m\geq j+1$, while $\phi_m\circ \Nm_{E/F}\circ\alpha^\vee$ has depth smaller than $r_j$ for $m < j$. The assumption $\alpha\in\Phi_{r_{i-1}+}(T,\theta)$ and the inequality $r_j>r_{i-1}$ therefore give
\begin{equation}\label{eqn:condition-for-contradiction}
\phi_j(\Nm_{E/F}(\alpha^\vee(E_{r_j}^\times))) = 1.
\end{equation}
Let $v_E$ denote the extension to $E$ of the normalized valuation of $F$. Let $X_j^*\in\Lie^*(G^j)^{G^j}(F)_{x,-r_j}$ represent $\phi_j|_{G^j(F)_{x,r_j}}$ through the Moy--Prasad isomorphism and $\psi$. Genericity means that $v_E(\langle X_j^*,H_\alpha\rangle)=-r_j$. Let $z \in E$ be such that $v_E(z) = r_j$ and
\[
\psi\bigl(\Tr_{E/F}(z\langle X_j^*,H_\alpha\rangle)\bigr)\neq1;
\]
such a $z$ exists because $E/F$ is tame. The Moy--Prasad isomorphism $
T(F)_{r_j}/T(F)_{r_j+}
    \xrightarrow{\ \sim\ }
\frt(F)_{r_j}/\frt(F)_{r_j+}$ and its analogue for $E$ sends $\Nm_{E/F}\bigl(\alpha^\vee(1+z)\bigr)$ to $\Tr_{E/F}(zH_\alpha)$. Since $X_j^*$ is $F$-rational and represents $\phi_j$ on $\mf{g}^j(F)_{x,r_j}/\mf{g}^j(F)_{x,r_j+}$, it follows that
\[
\begin{aligned}
\phi_j\bigl(\Nm_{E/F}(\alpha^\vee(1+z))\bigr)
    &=\psi\left(\left\langle X_j^*,
    \Tr_{E/F}(zH_\alpha)\right\rangle\right)\\
    &=\psi\bigl(\Tr_{E/F}(z\langle X_j^*,H_\alpha\rangle)\bigr)\neq1,
\end{aligned}
\]
contradicting Equation~\eqref{eqn:condition-for-contradiction}. This proves $\Phi_{r_{i-1}+}(T,\theta)=\Phi(G^i_E,T_E)$.

For (2), note that since $T$ is tamely ramified, the restriction $\ld j_{T,G}|_{\wh T \rtimes P_F}$ is simply the canonical embedding $\wh T \times P_F \to \wh G \times P_F$. Thus $Z_{\wh G}((\ld j_{T,G} \circ \ld\theta)(P_F))^\circ$ is the reductive $k$-subgroup of $\wh G$ with maximal $k$-torus $\wh T$ and root system consisting of those $\wh\alpha$ (for $\alpha \in \Phi(G_{\ol F}, T_{\ol F})$) such that $\wh\alpha \circ \wh\theta|_{P_F} = 1$. By duality, this means $\theta(\Nm_{E/F}(\alpha^\vee(E_{0+}^\times))) = 1$, i.e., $\alpha \in \Phi(G^0_{\ol F}, T_{\ol F})$. So the claim follows from (1).

We now prove (3). Let $S\subset T$ be the maximal unramified subtorus, and choose a finite unramified extension $F'/F$ splitting $S$. Part~(1) shows that the restriction of $\theta$ to every coroot occurring in $G^0$ has depth $0$. More precisely, for each restricted coroot $\alpha_0^\vee$ of $S_{F'}$ in $G^0_{F'}$, the same argument which led to Equation~\eqref{eqn:normalized-character-coroot-norm} gives
\[
\phi_i\circ \Nm_{F'/F}\circ\alpha_0^\vee=1
\qquad(0\leq i\leq d).
\]
Thus $\theta$ is $\F$-non-singular if and only if $\theta_0$ is non-singular. Since $\tau$ lies in $\cE(\ol G^0_{[x]}, [\ol T,\theta_0])$, \cite[Definition~2.9.3 and Lemma~2.9.2]{Cot26b} show that $\tau$ is non-singular if and only if $\theta_0$ is non-singular. Under these equivalent conditions, \cite[Lemma~2.9.4]{Cot26b} shows that $\ol T^\circ$ is elliptic. Hence $T$ is elliptic in $G^0$; the anisotropy of $Z(G^0)/Z(G)$ then makes $T$ elliptic in $G$. Moreover, \cite[Lemma~2.9.2]{Cot26b} shows that $(\ol T, \theta_0)$ is unique up to $\ol G_{[x]}^0(\F_q)$-conjugacy. Lemma~\ref{lemma:conjugacy-of-torus-character-pairs} therefore shows that $\cT(\Psi)$ consists of the single orbit class $[(T,\theta)]$.
\end{proof}

\begin{defn}\label{defn:non-singular-yu-datum}
Let $\Psi$ be an irreducible Yu datum over $k\in\{\ol\F_\ell,\ol\Q_\ell\}$. We say that $\Psi$ is \textit{$F$-non-singular} (or just \textit{non-singular}) if some class $[(T,\theta)]\in\cT(\Psi)$ has the property that $\theta$ is $F$-non-singular in the sense of Definition~\ref{defn:non-singular}. We will then say that $\pi = \pi(\Psi)$ is non-singular.
\end{defn}

\begin{remark}\label{remark:toral-yu-ns}
If $\Psi$ is a normalized irreducible Yu datum with $G^0 = T$, then $\Psi$ is $F$-non-singular.
\end{remark}

Note that Lemma~\ref{lemma:ns-yu-data-and-characters}(3) and \cite[Fact 3.1.4(1)]{Kal21b} show that any $F$-non-singular Yu datum is $\F$-non-singular.

\begin{defn}\label{def:expected-L-param}
Let $\Psi=(\vec G,x,\vec r,\tau,\vec\phi)$ be an irreducible Yu datum for $G$ over $k\in\{\ol\F_\ell,\ol\Q_\ell\}$, let $\Psi^{\mathrm{irr}}$ be the irredundant datum obtained by iterating Lemma~\ref{lemma:irredundant-yu-data}, and assume that $\Psi^{\mathrm{irr}}$ is $G$-equivalent to a normalized Yu datum. Choose a class $[(T,\theta)]\in\cT(\Psi)$. When $\Psi$ is $F$-non-singular, Lemma~\ref{lemma:ns-yu-data-and-characters}(3) shows that $\cT(\Psi)$ consists of this single $G(F)$-conjugacy class, with $T$ elliptic and $\theta\colon T(F)\to k^\times$ non-singular. Define $\rho^{\Kal}(\Psi)$ to be the $\wh G(k)$-conjugacy class represented by
\begin{equation}\label{eq:nonsingular-Kal-param}
W_F \xrightarrow{\ld \theta} \ld T(k) \xrightarrow{ \ld j_{T,G}} \ld G(k)
\end{equation}
where $\ld\theta\co W_F\to\ld T(k)$ is dual to $\theta$ under the Local Langlands Correspondence for tori, and $\ld j_{T,G}$ is the embedding from Definition~\ref{defn:canonical-l-embeddings}.

Under the same hypothesis on $\Psi^{\mathrm{irr}}$, without assuming that $\Psi$ is $F$-non-singular, define $\rho_I^{\Kal}(\Psi)$ to be the $\wh G(k)$-conjugacy class represented by
\begin{equation}\label{eq:inertial-Kal-param}
I_F \xrightarrow{\ld(\theta|_{T(F)_{\mathrm b}})} \ld T(k) \xrightarrow{ \ld j_{T,G}} \ld G(k)
\end{equation}
where $\ld(\theta|_{T(F)_{\mathrm b}})\co I_F\to\ld T(k)$ is the inertial L-parameter corresponding to $\theta|_{T(F)_{\mathrm{b}}}$ under the inertial Local Langlands Correspondence for tori, as in the discussion preceding Lemma~\ref{lemma:comparison-to-minimal-ramified-chi-data}. The fact that this is well-defined is proven in Lemma~\ref{lemma:kaletha-parameter-choice-independence}.
\end{defn}

The superscript ``Kal'' is used here in recognition of Kaletha's work, although strictly speaking Kaletha only defines $\rho^{\Kal}(\Psi)$ in \cite{Kal21b} in the case that $k$ is of characteristic $0$ and $\Psi$ is non-singular. Note that if $\Psi$ is $\F$-non-singular but not $F$-non-singular, then the recipe defining $\rho^{\Kal}(\Psi)$ still makes sense, but it does not yield the ``correct'' semisimple L-parameter.

\begin{lemma}\label{lemma:kaletha-parameter-choice-independence}
The conjugacy classes $\rho_I^{\Kal}(\Psi)$ and, when $\Psi$ is $F$-non-singular, $\rho^{\Kal}(\Psi)$, only depend on the $G$-equivalence class of $\Psi$.
\end{lemma}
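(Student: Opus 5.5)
The plan is to separate the two sources of choice in Definition~\ref{def:expected-L-param}. The first is the choice of a class $[(T,\theta)]\in\cT(\Psi)$ for a \emph{fixed} datum. The second is the replacement of $\Psi$ by a $G$-equivalent datum, including the normalized datum used in the definition. Independence of the choice of pair is already essentially available. For a normalized datum, Lemma~\ref{lemma:associated-tori-inertial-parameter} shows that any two classes in $\cT(\Psi)$ give $\wh G(k)$-conjugate inertial parameters. In the $F$-non-singular case, Lemma~\ref{lemma:ns-yu-data-and-characters}(3) shows that $\cT(\Psi)$ is a single $G(F)$-conjugacy class. Conjugating $(T,\theta)$ by $g\in G(F)$ transports $\ld j_{T,G}$ and $\ld\theta$ compatibly, since the $\chi$-data of \S\ref{sss:canonical-chi-data} are defined from the conjugated tuple $(\vec G,x,\vec r,\vec X)$. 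Formula \eqref{eqn:def-of-L-embedding} then shows that the full parameters are conjugate. I would also note that passing from $\Psi$ to $\Psi^{\mathrm{irr}}$ via Lemma~\ref{lemma:irredundant-yu-data} leaves $\cT(\Psi)$ unchanged, as remarked in Definition~\ref{defn:associated-torus-character-pairs}. It also leaves the $\chi$-data $\chi''$ unchanged, because a redundant step $G^m=G^{m+1}$ contributes no roots.

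Next I would reduce to the three generators of $G$-equivalence in Definition~\ref{def:G-equivalence}. Elementary transformations are immediate, since replacing $\tau$ by an isomorphic representation does not change its Lusztig series. For $G$-conjugacy, conjugation by $g$ carries $\cT(\Psi)$ to $\cT(g\Psi)$ and the tuple $(\vec G,x,\vec r,\vec X)$ to its conjugate. The same transport argument as above then applies.

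The substantive case is refactorization. Here $\vec G$ and $[x]$ are fixed, while $\tau'=\tau\otimes\chi_0$ and $\phi'_i=\phi_i\chi_i\chi_{i+1}^{-1}$. The plan has two parts. First, the depth-zero pair changes by $\theta_0'=\theta_0\cdot\chi_0|_{\ol T}$, because twisting by the depth-zero character $\chi_0$ of $\ol G^0_{[x]}(\F_q)$ translates the Lusztig series. The telescoping product then gives
\[
\theta'=\theta_0\chi_0\prod_i\phi_i\chi_i\chi_{i+1}^{-1}=\theta .
\]
So the same pair $(T,\theta)$ lies in both $\cT(\Psi)$ and $\cT(\Psi')$. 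Second, the L-embedding $\ld j_{T,G}=\ld j_{\chi''}$ depends on the datum only through $a_\alpha=\langle X_i^*,H_\alpha\rangle$. The generic element $X_i^*$ for $\phi_i'$ differs from that for $\phi_i$ by the element representing $\chi_i\chi_{i+1}^{-1}$ at depth $r_i$. The character $\chi_{i+1}$ has depth at most $r_i$, while $\chi_i$ has depth at most $r_{i-1}<r_i$. Hence the difference is a central element of depth $\geq -r_i$ that vanishes on $H_\alpha$ modulo terms of valuation $>-r_i$. It can also be absorbed by ensuring $\chi_{i+1}$ is normalized, so that it kills coroots by the argument giving \eqref{eqn:normalized-character-coroot-norm}. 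In either case the residues $\ol{a_\alpha}$ are unchanged. Since $\chi''_{\alpha_i}$ is tamely ramified, condition \eqref{eqn:ramified-symmetric-chi-data} depends only on $a_\alpha$ modulo $1+\frp$, so $\chi''$ is unchanged.

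I expect the main obstacle to be this last point. The conditions on a refactorization constrain depths of the $\chi_i$ but do not obviously make $\chi_{i+1}$ trivial on coroots at depth $r_i$. My first step would be to show that such a nontrivial contribution would alter the depth-$r_i$ piece in a way contradicting the fact that both $\phi_i$ and $\phi_i'$ are $G^{i+1}$-generic with the same $G^i$. Failing that, I would restrict to normalized data on both sides. This is legitimate because the definition only uses a normalized representative. It then suffices to show that two $G$-equivalent \emph{normalized} data can be connected through normalized refactorizations, by adapting Lemma~\ref{lemma:yu-datum-finite-order} and \cite[Lemma 3.7.2]{Kal19}.
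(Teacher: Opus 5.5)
Your overall structure matches the paper's proof. Independence of the pair comes from Lemma~\ref{lemma:associated-tori-inertial-parameter}, together with Lemma~\ref{lemma:ns-yu-data-and-characters}(3) in the non-singular case. Invariance under the generators of $G$-equivalence the paper simply leaves to the reader. However, the refactorization step, where all the content lies, does not go through as written.

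First, there is a sign error. From $\chi_i=\prod_{j\geq i}\phi_j(\phi'_j)^{-1}$ one gets $\phi'_i=\phi_i\chi_i^{-1}\chi_{i+1}$, not $\phi_i\chi_i\chi_{i+1}^{-1}$. With your formula the product telescopes to $\theta\chi_0^2$ rather than $\theta$. With the correct formula, $\prod_i\phi'_i=\chi_0^{-1}\prod_i\phi_i$ on $G^0(F)$. This cancels the twist $\theta'_0=\theta_0\chi_0$, so $\theta'=\theta$ as you want.

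Second, and more seriously, you leave open whether the residues of the $a_\alpha$ survive refactorization, and the reason you sketch does not work. Write $X_i^*$ and $Y_i^*$ for the generic elements attached to $\phi_i$ and $\phi'_i$. The difference $Y_i^*-X_i^*$ is a priori only $G^i$-invariant. A $G^i$-invariant functional can pair nontrivially with $H_\alpha$ for $\alpha\in\Phi((G^{i+1}/G^i)_{\ol F},T_{\ol F})$; genericity says exactly that $X_i^*$ does. These are precisely the roots that enter $\chi''$.

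The missing observation has two parts. On $G^i(F)_{x,r_i}$ the quotient $\phi'_i\phi_i^{-1}$ equals $\chi_{i+1}$, because $\chi_i$ has depth at most $r_{i-1}<r_i$. And $\chi_{i+1}$ is a character of the larger group $G^{i+1}(F)$, of depth at most $r_i$. Pull back to a z-extension $\wt G^{i+1}$ and use \cite[Lemmas 3.5.2 and 3.5.3]{Kal19}, as in the proof of Lemma~\ref{lemma:character-restricted-to-torus-depth}. Then the depth-$r_i$ layer of $\chi_{i+1}$ is induced by a $\wt G^{i+1}$-invariant functional. Such a functional kills $H_\alpha=[X_\alpha,X_{-\alpha}]$ for every root $\alpha$ of $G^{i+1}$. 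Hence $\chi_{i+1}\circ\Nm_{E/F}\circ\alpha^\vee$ is trivial on $E^\times_{r_i}$. The Moy--Prasad computation in the proof of Lemma~\ref{lemma:ns-yu-data-and-characters}(1) then gives $v(\langle Y_i^*-X_i^*,H_\alpha\rangle)>-r_i$. Since $\chi''_{\alpha_i}$ is tame, condition \eqref{eqn:ramified-symmetric-chi-data} is unchanged, and no normalization hypothesis is needed.

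Neither of your fallbacks supplies this. Genericity is an open condition, so it cannot pin down the residue of $a_\alpha$. Restricting to normalized data misreads Definition~\ref{def:expected-L-param}. That definition computes $\rho^{\Kal}(\Psi)$ from $\cT(\Psi)$ and the generic elements of $\Psi$ itself; the existence of a $G$-equivalent normalized datum is only a hypothesis. So invariance among normalized data alone would not prove the lemma.
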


\begin{proof}
Independence of the chosen pair $[(T, \theta)] \in \cT(\Psi)$ for the inertial parameter is Lemma~\ref{lemma:associated-tori-inertial-parameter}; when $\Psi$ is $F$-non-singular, there is a unique such pair. The fact that $G$-equivalent Yu data yield the same (inertial) L-parameter is simply a matter of unraveling the definitions recalled in \S\ref{sss:G-equivalence}, which we leave to the reader.
\end{proof}

\begin{remark}
If $k = \ol\Q_\ell$, then it follows from Lemma~\ref{lemma:kaletha-parameter-choice-independence} and \cite[Corollary 3.5.5]{Kal19} (which builds on \cite[Theorem 6.6]{HM08}) that $\rho^{\Kal}(\Psi)$ and $\rho_I^{\Kal}(\Psi)$ only depend on $\pi(\Psi)$. We expect that the analogous assertion over $\ol\F_\ell$ is true, and that it can be established by similar methods, but it will not be used or proven here. Our eventual results compare $\rho^{\Kal}(\Psi)$ and $\rho^{\Kal}_I(\Psi)$ to the parameters arising from the Fargues--Scholze correspondence, establishing a certain amount of independence a posteriori.
\end{remark}

\begin{remark}
We regard $\rho^{\Kal}_I(\Psi)$ as our guess for the restriction to $I_F$ of the L-parameter associated to $\pi(\Psi)$ under the ``true'' semisimple Local Langlands Correspondence. As a sanity check, we will show in Lemma~\ref{lemma:non-singular-irreducible}(1) that $\rho^{\Kal}_I(\Psi)$ is semisimple; this is nontrivial if $k$ is of positive characteristic.

It seems to be a difficult problem to give a reasonably concrete definition of a full (semisimple) L-parameter $\rho^{\Kal}(\Psi)$ when $\Psi$ is a Yu datum over $\ol\Q_\ell$ which is not $F$-non-singular, even when $\pi(\Psi)$ is of depth $0$.
\end{remark}

The following lemma is a straightforward generalization of \cite[Lemma~4.1.10]{Kal21b} beyond the case of characteristic $0$ coefficients.

\begin{lemma}\label{lemma:toral-inertial-centralizer-criterion}
Let $k\in\{\ol\Q_\ell,\ol\F_\ell\}$, let $\Psi$ be an irredundant normalized irreducible Yu datum for $G$ with coefficients in $k$, and choose $[(T,\theta)]\in\cT(\Psi)$. Let $\rho\co I_F\to\ld G(k)$ be the representative of $\rho_I^{\Kal}(\Psi)$ given by \eqref{eq:inertial-Kal-param} using $(T,\theta)$, and identify $\wh T$ with its image under $\ld j_{T,G}$.
The following are equivalent:
\begin{enumerate}
\item $Z_{\wh G}(\rho(I_F))^\circ_{\red}$ is a torus,
\item $\theta$ is non-singular.
\end{enumerate}
Under these hypotheses, $Z_{\wh G}(\rho(I_F))^\circ_{\red}=(\wh T^{I_F})^\circ_{\red}$.
\end{lemma}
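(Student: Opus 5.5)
The plan is to compute the centralizer of $\rho(I_F)$ in two stages: first the centralizer of the wild part $\rho(P_F)$, then the action of tame inertia on it. By Lemma~\ref{lemma:ns-yu-data-and-characters}(2), $Z_{\wh G}(\rho(P_F))^\circ = \wh{G^0}$, embedded canonically with maximal torus $\wh T$. Since $P_F$ is normal in $I_F$, we get
\[
Z_{\wh G}(\rho(I_F))^\circ_{\red} = \bigl(Z_{\wh{G^0}}(\rho(I_F))\bigr)^\circ_{\red},
\]
where $I_F/P_F$ acts on $\wh{G^0}$ through $\rho$. It is worth noting that $\rho(P_F)$ consists of semisimple elements of order prime to $\ell$ when $k=\ol\F_\ell$. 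The reason is that $P_F$ is pro-$p$, so $\rho(P_F)$ is a finite $p$-group and its centralizer is reductive; this is the point where we need characteristic $\ne p$. The problem therefore reduces to depth zero for $G^0$: pick a lift $\sigma\in I_F$ of a topological generator of $I_F/P_F$. Then $\rho(\sigma)=t\rtimes\sigma$, with $t\in\wh T(k)$ obtained from $\theta$ and $\ld j_{T,G}$ through Lemma~\ref{lemma:dual-l-homs}, and we want the identity component of the fixed points of $\mathrm{Ad}(t)\circ\sigma$ on $\wh{G^0}$ (after further centralizing the image of $P_F$ in $\wh T$, which is central in $\wh{G^0}$).

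Next I would use that $T$ is maximally unramified in $G^0$ (part of the definition of non-singular, and true for $[(T,\theta)]\in\cT(\Psi)$ by construction). So $I_F$ acts on the root datum of $(\wh{G^0},\wh T)$ by an automorphism $\sigma$ that preserves a Borel pair, in fact the one coming from the maximal unramified subtorus $S$. A Steinberg-type argument then describes $Z_{\wh{G^0}}(t\rtimes\sigma)^\circ$ as the reductive group with maximal torus $(\wh T^{\sigma})^\circ$ whose roots are the restricted roots $\alpha_{\mathrm{res}}$ of $\wh S=\wh T_{I_F}$-type, i.e.\ $\sigma$-orbits of roots of $\wh{G^0}$, for which $\wh\alpha_{\mathrm{res}}(t\rtimes\sigma)$ acts trivially on the corresponding fixed root line. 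Over $\ol\F_\ell$ this description needs care: when $\ell$ divides the orbit length the fixed points need not be reduced, which is why $(\cdot)_{\red}$ appears. I would apply \cite[Lemma~2.9.4]{Cot26b}-style arguments or Steinberg's theorem on fixed points of quasi-semisimple automorphisms, which hold in any characteristic once we take reduced identity components. The group is a torus iff no such root survives, and then it equals $(\wh T^{I_F})^\circ_{\red}$.

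The last step is to translate "the root $\alpha_{\mathrm{res}}$ survives" into a condition on $\theta$. Under the LLC for tori, $\wh\alpha_{\mathrm{res}}\circ\rho|_{I_F}$ restricted to the relevant open subgroup of inertia corresponds to $\theta\circ\Nm_{F'/F}\circ\alpha_{\mathrm{res}}^\vee|_{\cO_{F'}^\times}$ (up to the $\chi$-data contribution, which is trivial on these roots since $\chi''$ is unramified/asymmetric on roots of $G^0$ relative to a maximally unramified torus, by Example~\ref{ex:unramified-chi-data} and the minimally ramified convention). So a root survives exactly when this character is trivial, i.e.\ when $\theta$ is singular in the sense of Definition~\ref{defn:depth-0-nonsingular}(2). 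Which notion of non-singularity matches here ($\F$- versus $F$-) is decided by which roots appear. The roots of $\wh{G^0}$ relative to $\sigma$-orbits are all of $\Phi(G^0_{F'},S_{F'})$, giving the $F$-non-singular notion, consistent with \cite[Lemma~4.1.10]{Kal21b}.

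I expect the main obstacle to be exactly this bookkeeping of the twisting element $t$. Its $s$ and Tits-section factors from \eqref{eqn:def-of-L-embedding} contribute signs $\pm1$ to $\wh\alpha_{\mathrm{res}}(t\rtimes\sigma)$. These must be shown to cancel, or to be absorbed, against the sign $\kappa$ conventions, so that triviality on the fixed root line is exactly triviality of $\theta\circ\Nm\circ\alpha^\vee_{\mathrm{res}}$ on units. The main tool for this would be Lemma~\ref{lemma:dual-l-homs} together with the fact that we only evaluate on inertia, where unramified signs vanish.
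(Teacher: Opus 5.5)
Your architecture is the one the paper relies on. The paper gives no argument of its own; it defers to the proof of \cite[Lemma~4.1.10]{Kal21b}. That architecture is: identify $Z_{\wh G}(\rho(P_F))^\circ$ with $\wh{G^0}$ via Lemma~\ref{lemma:ns-yu-data-and-characters}(2), note that a tame generator $\sigma$ acts quasi-semisimply on $\wh{G^0}$ because $T$ is maximally unramified in $G^0$, and apply Steinberg's description of fixed points. The gap is in the final matching step, which you flag but propose to resolve incorrectly. You set aside the $\chi$-data except on roots of $G^0$, and expect the $s$ and Tits-section signs to drop out because ``unramified signs vanish on inertia.'' They do not. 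Note first that $\rho(\sigma)$ is not of the form $t\rtimes\sigma$ with $t\in\wh T(k)$, since it contains $n(\sigma_{T,G})$. Moreover, its $\wh T$-part contains $Q_{\chi'',\gamma}(\sigma)$ for roots $\gamma$ of $G$ \emph{outside} $G^0$, and $\chi''_\gamma=\chi''_{\gamma_j}\circ\Nm$ is ramified when $\gamma_j$ is symmetric ramified. Consider the situation of Example~\ref{example:nontrivial-sign-char}, with $G=\Sp_4$, $G^0=\SL_2\times S_2$, and $\sigma$ acting on $X^*(T)$ by $e_2\mapsto -e_2$. With the natural choices of representatives, $\sigma_{T,G}=s_\beta$ and $s(\sigma)=1$, and the data on $\pm\beta$ and $\pm(2\alpha+\beta)$ contribute nothing. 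The Tits lift $n(s_\beta)$ acts by $-1$ on the root group $U_{(2\alpha+\beta)^\vee}$ of $\wh{G^0}$, which sits in the middle of the $\beta^\vee$-string $\alpha^\vee,(2\alpha+\beta)^\vee,(\alpha+\beta)^\vee$. This sign is compensated only by $Q_{\chi'',\alpha}(\sigma)=\wh{(\alpha+\beta)}^\vee(-1)$, which comes from the \emph{ramified} datum on the orbit $\{\pm\alpha,\pm(\alpha+\beta)\}$ and acts on that root group by $(-1)^{\langle\alpha+\beta,(2\alpha+\beta)^\vee\rangle}=-1$. Done as you describe, your computation leaves a stray $-1$ and produces the wrong condition on $\theta$. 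What is needed is structural input showing that $\rho(\sigma)$ acts on $\wh{G^0}$ as $\mathrm{Ad}(\wh\theta(\sigma))$ composed with the pinned $L$-action of $G^0$, up to central elements. One way is to compare $\ld j_{T,G}|_{\wh T\rtimes I_F}$ with a composite through $\ld{G^0}$ as in \cite[Proposition~6.9]{Kal21a}. This is available because $\chi''$ on $\Phi((G/G^0)_{\ol F},T_{\ol F})$ is pulled back from $Z(G^0)$, but you must check that any correcting sign character is trivial on coroots of $G^0$.

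Second, even with the correct automorphism $\mathrm{Ad}(t)\circ\sigma_{\mathrm{pin}}$ of $\wh{G^0}$, your orbit-by-orbit claim fails in type $A_{2n}$. That claim is that an orbit survives exactly when $\theta\circ\Nm_{F'/F}\circ\alpha^\vee_{\mathrm{res}}$ is trivial on units. Take $G=G^0$ a quasi-split ramified unitary group in three variables, so $\wh{G^0}=\GL_3$ and $\sigma_{\mathrm{pin}}$ acts by $-1$ on $U_{\hat\alpha_1+\hat\alpha_2}$. Put $\lambda=\hat\alpha_1+\hat\alpha_2$. The orbit $\{\hat\alpha_1,\hat\alpha_2\}$ survives iff $\lambda(t)=1$, while the fixed root $\lambda$ survives iff $\lambda(t)=-1$. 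On the group side, $\Phi(G_{F'},S_{F'})=\{\pm a,\pm2a\}$ with $(2a)^\vee=\lambda$ and $a^\vee=2\lambda$. The two sides agree only globally, both amounting to $\lambda(t)\neq\pm1$. The surviving fixed root corresponds to singularity of the multipliable root $a$, not of $2a$. This sign is intrinsic to the pinned automorphism, so it cannot be cancelled against any $\kappa$- or $\chi$-data convention. The matching must be done using the non-reduced relative root system and the normalization $a^\vee=2(2a)^\vee$.
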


\begin{proof}
The proof of \cite[Lemma~4.1.10]{Kal21b} can be read nearly verbatim in our setting, so we omit it.
\end{proof}

\subsection{Modular reduction}

Our ultimate aim is to prove results about the L-parameters of supercuspidal $\ol\Q_\ell$-representations of $G(F)$ obtained from Yu's construction. However, in order to analyze these we need to relate such supercuspidal representations to those constructed over $\ol\F_\ell$.

\begin{lemma}\label{lemma:yu-datum-modular-reduction}
Let $\Psi=((G^i),x,(r_i),\tau,(\phi_i))$ be a Yu datum over $\ol\Z_\ell$. Put
\[
    \ol\tau=\tau\otimes_{\ol\Z_\ell}\ol\F_\ell,
    \qquad
    \ol\Psi=((G^i),x,(r_i),\ol\tau,(\ol\phi_i)),
\]
where $\ol\phi_i$ is the reduction of $\phi_i$. The tuple $\ol\Psi$ is a (possibly reducible) Yu datum, and there is a natural isomorphism
\begin{equation}\label{eq:yu-modular-base-change}
    \pi(\ol\Psi)\cong
    \pi(\Psi)\otimes_{\ol\Z_\ell}\ol\F_\ell.
\end{equation}
Moreover:
\begin{enumerate}
    \item Every irreducible subquotient $\ol\tau_0$ of $\ol\tau$ is cuspidal, and the tuple
    \[
    \ol\Psi_0=((G^i),x,(r_i),\ol\tau_0,(\ol\phi_i))
    \]
    is an irreducible Yu datum. The representation $\pi(\ol\Psi_0)$ is an irreducible subquotient of $\pi(\Psi)\otimes_{\ol\Z_\ell}\ol\F_\ell$.
    \item Every irreducible subquotient of $\pi(\Psi)\otimes_{\ol\Z_\ell}\ol\F_\ell$ is isomorphic to $\pi(\ol\Psi_0)$ for some irreducible subquotient $\ol\tau_0$ of $\ol\tau$.
\end{enumerate}
If $\Psi$ is irredundant (resp. normalized), then every $\ol\Psi_0$ is irredundant (resp. normalized).
\end{lemma}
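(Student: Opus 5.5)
The plan is to verify the Yu-datum axioms for $\ol\Psi$, then get \eqref{eq:yu-modular-base-change} by checking that every ingredient of Yu's construction commutes with $-\otimes_{\ol\Z_\ell}\ol\F_\ell$, and finally to read off (1) and (2) from exactness of compact induction together with the irreducibility results of \cite{Fin22}.

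\emph{Axioms and the isomorphism.} The axioms concerning $\vec G$, $x$ and $\vec r$ do not involve coefficients. For the characters, the depth and genericity of $\phi_i$ are encoded by $\phi_i|_{G^i(F)_{x,r_i}}$ being induced by $X_i^*$ and $\psi$. Since $\psi_{\ol\F_\ell}$ is the reduction of $\psi_{\ol\Z_\ell}$ and $\ol\psi$ stays injective on $\F_p$ modulo $\ell\neq p$, the reduction $\ol\phi_i$ is induced by the same $X_i^*$; hence $\ol\phi_i$ is generic of the same depth. Cuspidality of $\ol\tau\otimes\Frac$ is exactly the assertion in (1) that irreducible subquotients are cuspidal. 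For this I would invoke \cite[\S 2.10]{Cot26b}, namely that reductions of cuspidal lattices of the finite paraductive group $\ol G^0_{[x]}(\F_q)$ have only cuspidal constituents; this rests on the fact that Harish-Chandra restriction along unipotent radicals of $p$-groups is exact and commutes with reduction. Next I check the pieces of $\rho$ one at a time. The group $K$ and the special isomorphisms $j_i$ are independent of $k$. The character $\wh\phi_i$ reduces to $\wh{\ol\phi}_i$ by uniqueness of the construction in \S\ref{sss:yu-characters}. The lattice $V_{\wh\phi_i}$ reduces to the Heisenberg representation over $\ol\F_\ell$, since the reduction has the right central character and dimension and is therefore irreducible. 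The Weil representation lattice reduces compatibly by \cite[Lemma 2.5]{Fin22}. The sign $\epsilon$ is $\{\pm1\}$-valued. Thus $\rho(\ol\Psi)\cong\rho(\Psi)\otimes\ol\F_\ell$, and \eqref{eq:yu-modular-base-change} follows because $\cInd_K^{G(F)}$ commutes with base change of coefficients (it is a direct sum of copies over $K\backslash G(F)$).

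\emph{Parts (1) and (2).} Choose a composition series $0=\ol\tau^0\subset\cdots\subset\ol\tau^m=\ol\tau$ as $G^0(F)_{[x]}$-representations; each quotient factors through the finite group $G^0(F)_{[x]}/G^0(F)_{x,0+}$ (modulo centre). Tensoring with $\epsilon\otimes\kappa\otimes\ol\phi_d$ is exact, and compact induction is exact. Hence $\pi(\ol\Psi)$ has a filtration with successive quotients $\pi(\ol\Psi_{0,j})$, where $\ol\Psi_{0,j}$ uses the $j$-th subquotient $\ol\tau_{0,j}$. Each $\ol\Psi_{0,j}$ is an irreducible Yu datum, and by \cite[Theorem 3.1]{Fin22} each $\pi(\ol\Psi_{0,j})$ is irreducible and cuspidal. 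This gives (1). By Jordan--Hölder for the finite-length representation $\pi(\ol\Psi)$, this filtration is a composition series, which gives (2). Irredundancy and normalization depend only on $\vec G$ and on $\phi_i$ restricted to images of $G^i_{\mathrm{sc}}(F)$, and both survive reduction.

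\emph{Main obstacle.} The delicate step is the cuspidality in (1) for finite reductive groups in non-defining characteristic. I would first try to reduce it to the statement that Harish-Chandra restriction commutes with $\otimes\ol\F_\ell$ on $\ol\Z_\ell$-lattices, since Harish-Chandra restriction is the functor of invariants under a $p$-group and $\ell\neq p$. If the version of cuspidality in \cite{Cot26b} instead refers to all proper parabolics of the possibly disconnected group $\ol G^0_{[x]}$, I would also need that restriction to the identity component preserves cuspidality.
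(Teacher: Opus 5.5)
Your proposal is correct and follows the paper's proof essentially step by step. The paper likewise gets cuspidality of the constituents of $\ol\tau$ from \cite[\S2.10]{Cot26b} plus exactness of Harish-Chandra restriction for $\ell\neq p$, uses $\ell\neq p$ to preserve depth and genericity, obtains $\pi(\ol\Psi)\cong\pi(\Psi)\otimes_{\ol\Z_\ell}\ol\F_\ell$ by unraveling Yu's construction, and proves (1)--(2) by applying the exact functor $\ol\tau\mapsto\pi(\ol\Psi)$ to a composition series of $\ol\tau$ together with \cite[Theorem~3.1]{Fin22}. One small imprecision: $\phi_d$ need not be generic or represented by some $X_d^*$, so its depth should be checked directly, using that $G(F)_{y,r_d}$ is pro-$p$ for $r_d>0$ and that reduction mod $\ell$ is injective on $p$-power roots of unity.
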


\begin{proof}
First observe that \cite[Lemma~2.10.1]{Cot26b} and exactness of the formation of Jacquet modules (since $\ell \neq p$) imply that $\ol\tau$ and each of its irreducible subquotients are cuspidal. Next, note that modular reduction preserves the depth and genericity of each $\phi_i$, since $\ell \neq p$. Thus $\ol\Psi$ is a Yu datum and every $\ol\Psi_0$ is an irreducible Yu datum. Now \eqref{eq:yu-modular-base-change} and (1) follow from unraveling the definitions. 

Since Yu's construction is exact as a functor of the depth $0$ representation, applying it to a composition series of $\ol\tau$ proves part~(2); the resulting representations are irreducible by \cite[Theorem~3.1]{Fin22}. If $\Psi$ is normalized, its reductions are normalized because each $\ol\phi_i$ remains normalized. Preservation of irredundancy is also clear. 
\end{proof}

\begin{lemma}\label{lemma:torus-character-modular-reduction}
Retain the notation of Lemma~\ref{lemma:yu-datum-modular-reduction}, and assume that $\Psi \otimes_{\ol\Z_\ell} \ol\Q_\ell$ is irreducible. For an irreducible subquotient $\ol\tau_0$ of $\ol\tau$ let $\ol\Psi_0$ be the associated normalized irreducible Yu datum. For every class $[(T,\theta)]\in\cT(\Psi_{\ol\Q_\ell})$, the character $\theta$ takes values in $\ol\Z_\ell^\times$, and its reduction $\ol\theta$ satisfies $[(T,\ol\theta)]\in\cT(\ol\Psi_0)$. In particular,
\begin{enumerate}
    \item $\rho_I^{\Kal}(\ol\Psi_0)$ is the (semisimplified) modular reduction of $\rho_I^{\Kal}(\Psi_{\ol\Q_\ell})$.
    \item If $\ol\Psi_0$ is $F$-non-singular, then $\Psi_{\ol\Q_\ell}$ is $F$-non-singular and $\rho^{\Kal}(\ol\Psi_0)$ is the (semisimplified) modular reduction of $\rho^{\Kal}(\Psi_{\ol\Q_\ell})$.
\end{enumerate}
\end{lemma}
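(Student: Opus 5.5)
The plan is to trace the construction of \S\ref{sss:p-adic-torus-character-pair} through modular reduction, keeping the torus $T$ and the depth-$0$ input while reducing everything else mod $\ell$. The steps below go in order: integrality of $\theta$, compatibility of the depth-$0$ data, then the two consequences (1) and (2).

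First, fix $[(T,\theta)]\in\cT(\Psi_{\ol\Q_\ell})$. Write $\theta=\theta_0\cdot\prod_i\phi_i|_{T(F)_{[x]}}$, where $(\ol T,\theta_0)$ is a depth-$0$ pair with $\tau_{\ol\Q_\ell}\in\cE(\ol G^0_{[x]},[\ol T,\theta_0])$.
\begin{itemize}
\item $\theta_0$ is a character of the finite group $\ol T(\F_q)$, so it is $\ol\Z_\ell^\times$-valued.
\item Each $\phi_i$ is $\ol\Z_\ell^\times$-valued because $\Psi$ is a Yu datum over $\ol\Z_\ell$. If necessary, first refactor via Lemma~\ref{lemma:yu-datum-finite-order} to make the $\phi_i$ of finite order; refactorization does not change $\cT$.
\end{itemize}
Hence $\theta$ is integral, and $\ol\theta=\ol\theta_0\prod_i\ol\phi_i$.

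Second, I must show $\ol\tau_0\in\cE(\ol G^0_{[x]},[\ol T,\ol\theta_0])$, which is the main obstacle. I would invoke the compatibility of semi-rational Lusztig series with reduction mod $\ell$ from \cite[\S2.8]{Cot26b}. This is the modular analogue of Brou\'e--Michel: the $\ell$-reduction of a character in $\cE(\ol G^0_{[x]},[\ol T,\theta_0])$ has all its constituents in the $\ell$-series determined by the $\ell'$-part of $\theta_0$, i.e.\ by $\ol\theta_0$. If the cited paper states this only for connected groups, I would deduce it for the paraductive group $\ol G^0_{[x]}$ by Clifford theory relative to its identity component. With this in hand, and since $\ol\Psi_0$ uses the same $\vec G$, $x$, $\vec r$, the same choice of $\cO_F$-torus lifting $\ol T^\circ$ gives $[(T,\ol\theta)]\in\cT(\ol\Psi_0)$.

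For (1), reduction commutes with the inertial local Langlands correspondence for tori, because that correspondence is defined integrally via class field theory. The embedding $\ld j_{T,G}$ is defined over $\ol\Z[1/p]$ (its $\chi$-data are $\mu_4$-valued), and the $\chi$-data depend on the additive characters $\psi_{\ol\Q_\ell}$ and $\psi_{\ol\F_\ell}$, which were chosen compatibly. So $\ld j_{T,G}\circ\ld\ol\theta|_{I_F}$ is the reduction of $\ld j_{T,G}\circ\ld\theta|_{I_F}$. By Lemma~\ref{lemma:kaletha-parameter-choice-independence}, both sides are independent of the chosen pair, which gives (1), up to semisimplification.

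For (2), suppose $\ol\theta$ is $F$-non-singular. Non-singularity asks that certain characters $\ol\theta\circ\Nm\circ\alpha^\vee|_{\cO^\times}$ be nontrivial. If the reduction of such a character is nontrivial, the character itself is nontrivial. Moreover, the groups $G^i(T,\theta)$ agree with $G^i(T,\ol\theta)$ by Lemma~\ref{lemma:ns-yu-data-and-characters}(1), since both equal the given $G^i$. Hence $\theta$ is $F$-non-singular. By Lemma~\ref{lemma:ns-yu-data-and-characters}(3), $T$ is elliptic and the pair is unique. The full parameters then satisfy the same reduction compatibility as in (1), now using the local Langlands correspondence for tori on all of $W_F$.
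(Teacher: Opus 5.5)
Your route is the same as the paper's: first integrality of $\theta$, then $\ol\tau_0\in\cE(\ol G^0_{[x]},[\ol T,\ol\theta_0])$ from the definition of semi-rational Lusztig series in \cite[Definition~2.8.2]{Cot26b}, then (1) and (2) from integrality of the torus correspondence and of $\ld j_{T,G}$. However, your first step rests on a false claim: $\ol T(\F_q)=T(F)_{[x]}/T(F)_{x,0+}$ is \emph{not} finite in general. Here $\ol T=\ol T^\circ\cdot\ol Z$, where $\ol Z$ is the center of the paraductive (not necessarily finite type) group $\ol G^0_{[x]}$. So $\ol T(\F_q)$ contains the image of $Z(G)(F)$, which is non-compact whenever $Z(G)$ has a split part. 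For example, if $G=\GL_2$ and $G^0=T$ is unramified elliptic, then $\ol T(\F_q)\cong F_2^\times/(1+\frp_{F_2})$, which contains a copy of $\Z$. Nothing in your argument therefore controls the value of $\theta_0$ on a uniformizer of the center.

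The missing input, which is exactly what the paper uses, has three parts. First, $\ol T(\F_q)$ is finite only \emph{modulo} $Z(\ol G^0_{[x]})(\F_q)$. Second, on this central subgroup the definition of the series forces $\theta_0$ to coincide with the central character of $\tau\otimes_{\ol\Z_\ell}\ol\Q_\ell$. Third, that central character is $\ol\Z_\ell^\times$-valued: $\tau$ is a $G^0(F)_{[x]}$-stable finite free $\ol\Z_\ell$-module, so central elements act by scalars which, together with their inverses, preserve the lattice. Hence for each $t\in\ol T(\F_q)$ some power $\theta_0(t)^N$ is a unit, and so $\theta_0(t)$ is a unit.

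Your suggested refactorization via Lemma~\ref{lemma:yu-datum-finite-order} does not help. The $\phi_i$ are already integral, and that lemma moves the unbounded parts of the $\phi_i$ into $\tau'=\tau\otimes\chi_0$. That is precisely the central part of $\theta_0$ you need to control.

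With this repair the rest of your proposal works. In (2) you should also say explicitly why the reduced pair $(T,\ol\theta)$ is the $F$-non-singular one: $\cT(\ol\Psi_0)$ is a single class by Lemma~\ref{lemma:ns-yu-data-and-characters}(3) applied over $\ol\F_\ell$.
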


\begin{proof}
Fix $[(T,\theta)]\in\cT(\Psi_{\ol\Q_\ell})$, arising from a depth $0$ character $\theta_0$. The group $\ol T(\F_q)$ is finite modulo $Z(\ol G^0_{[x]})(\F_q)$. On this central subgroup, \cite[Definition~2.8.2]{Cot26b} identifies $\theta_0$ with the central character of $\tau\otimes_{\ol\Z_\ell}\ol\Q_\ell$, which is $\ol\Z_\ell^\times$-valued because $\tau$ is a finite free $\ol\Z_\ell$-lattice. Hence $\theta_0$ is $\ol\Z_\ell^\times$-valued on $\ol T(\F_q)$, while each $\phi_i$ is $\ol\Z_\ell^\times$-valued by hypothesis. Thus
\[
\theta=\theta_0\prod_{i=0}^d\phi_i|_{T(F)_{[x]}}
\]
is $\ol\Z_\ell^\times$-valued. Since $\ol\tau_0$ is an irreducible subquotient of the reduction of $\tau$, \cite[Definition~2.8.2]{Cot26b} shows $\ol\tau_0 \in \cE(\ol G^0_{[x]}, [\ol T,\ol\theta_0])$. Hence $[(T,\ol\theta)]\in\cT(\ol\Psi_0)$. From this, (1) and (2) are both clear from the definitions.
\end{proof}

\subsection{Compatibility with central isogenies}

Finally, we record a basic compatibility, which is improved considerably for non-singular supercuspidal representations with characteristic $0$ coefficients in \cite{BM25} under some technical conditions. Fix an $F$-homomorphism $f\co G \to G'$ inducing an isomorphism $G_{\ad} \cong G'_{\ad}$, let $k \in \{\ol\Q_\ell, \ol\F_\ell\}$, and let $\Psi' = ((G'^i), x', (r'_i), \tau', (\phi_i'))$ be a normalized irreducible irredundant Yu datum for $G'$ with coefficients in $k$. Choose a Yu datum $\Psi = ((G^i), x, (r_i), \tau, (\phi_i))$ as follows:
\begin{enumerate}
\item $G^i = f^{-1}(G'^i)$ for all $i$,
\item $x \in \cB(G)$ is any point with the same image as $x'$ in $\cB(G_{\ad}) = \cB(G'_{\ad})$,
\item $r_i = r'_i$ for all $i$,
\item $\tau$ is an irreducible constituent of $\tau'$ as a $\ol G^0_{[x]}(\F_q)$-representation,
\item $\phi_i = \phi_i' \circ f$ for all $i$.
\end{enumerate}
Suppose that each $\phi_i$ has the same depth as $\phi_i'$. In this case, the fact that $\Psi$ is a normalized irredundant irreducible Yu datum is clear.

\begin{lemma}\label{lemma:kaletha-compatibility-with-central-isogenies}
If $\ld f\co \ld G' \to \ld G$ is the dual L-homomorphism, then $\rho_I^{\Kal}(\Psi) \sim \ld f \circ \rho_I^{\Kal}(\Psi')$. If $\Psi$ is $F$-non-singular, then $\Psi'$ is also $F$-non-singular and $\rho^{\Kal}(\Psi) \sim \ld f \circ \rho^{\Kal}(\Psi')$.
\end{lemma}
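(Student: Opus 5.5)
The plan is to compare the torus-character pairs and the $\chi$-data on the two sides directly, and then use naturality of the L-embedding construction of \S\ref{ss:l-embedding-defn} with respect to $f$. First choose $[(T',\theta')]\in\cT(\Psi')$, built from a depth-$0$ pair $(\ol T'^\circ,\theta'_0)$ with $\tau'\in\cE(\ol G'^0_{[x']},[\ol T',\theta'_0])$. Put $T=f^{-1}(T')^\circ$ (more precisely, $T=Z_{G^0}(f^{-1}(T'))$). This is a maximally unramified maximal $F$-torus of $G^0$ with $x\in\cB(T)$, because $f$ identifies the adjoint buildings and the apartments. We then want to show that some class $[(T,\theta)]\in\cT(\Psi)$ has $\theta=\theta'\circ f|_{T(F)_{[x]}}$. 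Since $\phi_i=\phi'_i\circ f$, this comes down to the depth-$0$ statement: if $\tau$ is a constituent of $\tau'|_{\ol G^0_{[x]}(\F_q)}$, then $\tau\in\cE(\ol G^0_{[x]},[\ol T,\theta'_0\circ\ol f])$. This is compatibility of semi-rational Lusztig series with the homomorphism $\ol f\co\ol G^0_{[x]}\to\ol G'^0_{[x']}$ induced on reductive quotients, which is central-isogeny-like. I would cite the corresponding restriction property in \cite[\S 2.8]{Cot26b}; it follows from the fact that Deligne--Lusztig induction commutes with restriction along maps that are isomorphisms on adjoint groups. For the inertial statement the choice of pair does not matter, by Lemma~\ref{lemma:associated-tori-inertial-parameter}.

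Second, we compare the L-embeddings. The twisted Levi sequences correspond under $f$, the roots of $T$ in $G$ coincide with the roots of $T'$ in $G'$, and the good elements correspond: $X_i^*$ for $\Psi$ is the pullback $df^*(X_i'^*)$, because $\phi_i=\phi'_i\circ f$ and $f$ is compatible with Moy--Prasad isomorphisms. Consequently the $a$-data $a_\alpha=\langle X_i^*,H_\alpha\rangle$ agree. The integers $\ell_{p'}(\alpha^\vee)$ agree because they depend only on the adjoint root datum. The fields $F_\alpha$, $F_{\pm\alpha}$ and the symmetry and ramification types also agree. Hence $\chi''_G=\chi''_{G'}$ as sets of $\chi$-data on the same root set. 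Arguing exactly as in Lemma~\ref{lem:unramified-L-embedding-properties}(2), after reducing to the quasi-split case, the ingredients $s$, $n$ and $Q$ in \eqref{eqn:def-of-L-embedding} are compatible with $\ld f$. This gives $\ld f\circ\ld j_{T',G'}\sim\ld j_{T,G}\circ\ld(f|_T)$. Naturality of local Langlands for tori (and of its inertial version) gives $\ld(f|_T)\circ\ld\theta'\sim\ld(\theta'\circ f)$, and composing these yields both claimed identities.

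Third, for non-singularity: by Lemma~\ref{lemma:ns-yu-data-and-characters}(1), $G^i(T,\theta)=f^{-1}(G'^i(T',\theta'))$. The coroots of $T$ map to coroots of $T'$, and $\theta\circ\Nm\circ\alpha^\vee=\theta'\circ\Nm\circ f\circ\alpha^\vee$, where $f\circ\alpha^\vee$ is the corresponding coroot $\alpha'^\vee$. So the conditions in Definition~\ref{defn:depth-0-nonsingular} for $\theta$ and for $\theta'$ are literally the same, and $\Psi'$ is $F$-non-singular whenever $\Psi$ is. Moreover $T'$ is elliptic, so $\rho^{\Kal}(\Psi')$ is defined by \eqref{eq:nonsingular-Kal-param}, and the full identity follows from the second step without restricting to inertia.

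I expect the main obstacle to be the first step, namely showing that restricting $\tau'$ to $\ol G^0_{[x]}(\F_q)$ lands in the pulled-back semi-rational Lusztig series. The reductive quotients may be disconnected, and $\ol f$ need not be surjective on $\F_q$-points. I would first try to reduce to the connected-component statement via the paraductive formalism of \cite{Cot26b}, and then use that the kernel and cokernel of $\ol f$ are central.
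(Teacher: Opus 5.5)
Your proposal is correct and is essentially the paper's proof. The paper likewise pulls back $[(T',\theta')]\in\cT(\Psi')$ to $(f^{-1}(T'),\theta'\circ f)\in\cT(\Psi)$ via the restriction compatibility of semi-rational Lusztig series (it cites \cite[Lemma~2.10.3]{Cot26b}), obtains $\ld j_{T,G}\circ\ld(f|_T)\sim\ld f\circ\ld j_{T',G'}$ from the argument of Lemma~\ref{lem:unramified-L-embedding-properties}(2), and concludes by naturality of the Local Langlands Correspondence for tori. Your explicit check that $\chi''_G=\chi''_{G'}$ spells out what the paper leaves implicit in that citation. For the transfer of $F$-non-singularity, note that the witnessing class in $\cT(\Psi)$ is automatically a pullback, since $\cT(\Psi)$ is a single class by Lemma~\ref{lemma:ns-yu-data-and-characters}(3).
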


\begin{proof}
If $[(T', \theta')] \in \cT(\Psi')$ and we define $T = f^{-1}(T')$, $\theta = \theta' \circ f|_{T(F)_{[x]}}$, then $[(T, \theta)] \in \cT(\Psi)$: indeed, the definition of $\epsilon_{G'}$ involves passing to $G'_{\ad}$, so it is clear that $\epsilon_G = \epsilon_{G'} \circ f$, and it is enough to invoke \cite[Lemma~2.10.3]{Cot26b}. Moreover, we have $\ld j_{T,G} \circ \ld(f|_T) \sim \ld f\circ \ld j_{T',G'}$ by Lemma~\ref{lem:unramified-L-embedding-properties}(2). This implies the claim.
\end{proof}

\section{Base change functoriality for Yu data}\label{sec:bc-functoriality-yu-data}

Throughout this section, assume that $p\neq 2$. We will study the behavior of $\rho^{\Kal}$ and $\rho_I^{\Kal}$ under base change in two cases: base change of ``large'' prime degree for arbitrary cuspidal representations, base change of ``small'' prime degree for ``toral'' representations (i.e., those arising from Yu data such that $G^0$ is a torus). Both cases have important simplifying features. 
\begin{itemize}
\item In the first case, individual apartments in the Bruhat--Tits building are unchanged after base change of large prime degree (see \cite[Proposition~4.1.2]{Cot26b}).
\item In the second case, the reduced building of a torus is a point, so no issues from Bruhat--Tits theory arise. Furthermore, the irreducible representations of an abelian group are $1$-dimensional, so no complications from Deligne--Lusztig theory arise.
\end{itemize} 

\subsection{Preliminaries on characters}\label{ss:technical-character-results}

In this section, we collect a number of technical results which will allow us to extend characters of unramified twisted Levi subgroups after base change. Throughout this subsection, let $k\in\{\ol\F_\ell,\ol\Q_\ell,\ol\Z_\ell\}$, and let $G^i \subset G^{i+1} \subset G$ be tamely ramified twisted Levi subgroups.

Let $E/F$ be a tamely ramified cyclic extension of prime degree $\ell\neq p$, let $\phi\co G^i(F) \to k^\times$ be a normalized character (in the sense of Definition~\ref{def:normalized-character}), and let $\phi_E\co G^i(E) \to k^\times$ be the character defined in Proposition~\ref{prop:extend-normalized-characters}.

\begin{lemma}\label{lemma:depth-stays-same}
The characters $\phi$ and $\phi_E$ are of the same depth.
\end{lemma}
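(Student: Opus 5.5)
We reduce to tori and to a comparison of Moy--Prasad filtrations under the norm. The depth of a character of $G^i(F)$ is the smallest $r\geq 0$ such that the character is trivial on $G^i(F)_{x,r+}$ for some (equivalently every) $x\in\cB(G^i)$. For our normalized $\phi$, this equals the depth of $\phi|_{T(F)}$ for a suitable tamely ramified maximal torus $T$. Concretely, choose $x$ and a tamely ramified maximal $F$-torus $T\subset G^i$ with $x\in\cB(T)$. We use that $G^i(F)_{x,r+}$ is generated by $T(F)_{r+}$ together with root subgroups of depth $>r$ over a tame splitting field. Root subgroups lie in the image of $G^i_{\mathrm{sc}}$, and $\phi$ kills that image since it is normalized. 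Hence $\phi$ is trivial on $G^i(F)_{x,r+}$ if and only if $\phi|_{T(F)}$ is trivial on $T(F)_{r+}$.

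To make the generation statement valid over $F$ itself, we use Galois descent of Moy--Prasad filtrations for tame extensions (as in the proof of Lemma~\ref{lemma:yu-subgroup-fixed}). Alternatively, we pass to a $z$-extension as in the proof of Proposition~\ref{prop:extend-normalized-characters}, where $\phi$ factors through $D(F)$. The same reasoning applies over $E$ to $\phi_E$, since $\phi_E$ is also normalized by Proposition~\ref{prop:extend-normalized-characters}. Thus
\[
\mathrm{depth}(\phi)=\mathrm{depth}(\phi|_{T(F)}),\qquad \mathrm{depth}(\phi_E)=\mathrm{depth}(\phi_E|_{T(E)}).
\]
It is cleanest to work with the torus $D=\widetilde G^i/\widetilde G^i_{\der}$ and its image, where everything is a torus statement.

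By Proposition~\ref{prop:extend-normalized-characters}, $\phi_E|_{T(E)}=\phi|_{T(F)}\circ\Nm_{E/F}$. So it remains to show that for a tamely ramified torus $T$ and a tame extension $E/F$ of degree $\ell\neq p$, a character $\eta$ of $T(F)$ has the same depth as $\eta\circ\Nm_{E/F}$. With the normalized valuation of $F$ extended to $E$, we need the norm to satisfy
\[
\Nm_{E/F}\bigl(T(E)_{r+}\bigr)=T(F)_{r+}\qquad\text{and}\qquad \Nm_{E/F}\bigl(T(E)_{r}\bigr)=T(F)_{r}\quad\text{for } r>0.
\]
Given this, $\eta$ is trivial on $T(F)_{r+}$ if and only if $\eta\circ\Nm$ is trivial on $T(E)_{r+}$, and depths agree.

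The containment $\subseteq$ is clear, because the norm is a product of Galois conjugates and each Galois conjugate preserves the filtration. For surjectivity, pass to graded pieces, which Moy--Prasad identifies with $\frt(F)_r/\frt(F)_{r+}$. Restricted to elements of $T(F)_r$, the norm induces $\Tr_{E/F}$ on Lie algebras. We claim that restricted to $\frt(F)$ it is multiplication by $\ell$ modulo higher filtration: for $t\in T(F)_r$ we have $\Nm(t)=t^\ell$. Since $\ell\neq p$, multiplication by $\ell$ is an isomorphism on each graded piece. Completeness of the filtration then gives $T(F)_{r+}=(T(F)_{r+})^\ell\subseteq\Nm(T(E)_{r+})$. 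In fact this only uses $t\mapsto t^\ell$, so no analysis of ramification is needed.

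The main obstacle is the first step: justifying that depth of a normalized character is detected on a single tame maximal torus. The delicate point is that $G^i(F)_{x,r+}$ equals $T(F)_{r+}$ times elements in the image of $G^i_{\mathrm{sc}}(F)_{x,r+}$, when $G^i$ is not split over $F$. I would try the $z$-extension route first, using \cite[Lemma 3.5.3]{Kal19}: surjectivity of $\widetilde G^i(F)_{x,r}\to G^i(F)_{x,r}$, together with the analogous surjectivity for tori, reduces everything to $\widetilde G^i$. There $\widetilde G^i_{\der}$ is simply connected, and $\widetilde G^i(F)_{x,r+}=\widetilde T(F)_{r+}\,\widetilde G^i_{\der}(F)_{x,r+}$ holds by the standard decomposition.
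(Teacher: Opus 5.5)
Your argument is correct in outline, but it takes a different route from the paper for the lower bound. The step that carries the lower bound is asserted rather than proved.

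The paper handles an arbitrary point $y\in\cB(G^i_E)$ directly. It assumes $\phi_E|_{G^i(E)_{y,r}}=1$ and lets $c$ be the barycenter of $\Gal(E/F)\cdot y$. Using conjugation-invariance of $\phi_E$ and translations by a split torus of an apartment through $c$, it gets $\phi_E|_{G^i(E)_{c,r}}=1$. It then descends to $F$ via \cite[Proposition 12.9.4]{KP} and $\phi_E|_{G^i(F)}=\phi^\ell$.

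You work only at the rational point $x$. There an $F$-torus $T$ is available and $\phi_E|_{T(E)}=\phi|_{T(F)}\circ\Nm_{E/F}$ applies. At a non-rational $y$ no such torus exists, so your conclusion $\mathrm{depth}(\phi_E)\geq r$ rests entirely on the parenthetical ``(equivalently every)'' in your definition of depth.

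That claim is true for any character and is easy to supply. Suppose $y,y'$ lie in the apartment of a maximal $E$-split torus $S$, and put $Z=Z_{G^i_E}(S)$.
\begin{enumerate}
\item The group $G^i(E)_{y,s}$ is generated by $Z(E)_s$ and the $U_a(E)_{y,s}$ \cite[Definition 13.2.1]{KP}.
\item Every $U_a(E)$ lies in $[G^i(E),G^i(E)]$, because a suitable $t\in S(E)$ acts on $U_a(E)/U_{2a}(E)$ and on $U_{2a}(E)$ by scalars $\neq 1$.
\item Hence whether $\phi_E$ is trivial on $G^i(E)_{y,s}$ depends only on $Z(E)_s$, and any two points share an apartment.
\end{enumerate}
This is essentially the mechanism behind the paper's translation argument.

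Once this is added, your proof is complete, and in some respects more elementary than the paper's. Surjectivity of $\Nm_{E/F}\colon T(E)_s\to T(F)_s$ for $s>0$ falls out of $t\mapsto t^\ell$ being bijective on the abelian pro-$p$ group $T(F)_s$. So you need neither \cite[Lemma 3.1.3]{Kal19} nor the barycenter descent.

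Your decomposition $\widetilde G^i(F)_{x,s}=\widetilde T(F)_s\,\widetilde G^i_{\der}(F)_{x,s}$ does hold for $s>0$. It follows from the Moy--Prasad isomorphism, Galois descent of $\frg=\frt+\bigoplus_\alpha\frg_\alpha$, and completeness. It also makes explicit why triviality on $G^i(E)_{x,r+}$ is detected on the torus, a point the paper's first paragraph leaves implicit.

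The cost is that you must actually prove that decomposition and the point-independence. The paper's averaging argument treats every $y$ at once without them.
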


\begin{proof}
Suppose that $\phi$ is of depth $r$. By definition, this means that
\begin{enumerate}
    \item there exists a point $x \in \cB(G^i)$ such that $\phi|_{G^i(F)_{x,r+}}$ is trivial,
    \item if $r> 0$, then for any point $y \in \cB(G^i)$, the restriction $\phi|_{G^i(F)_{y,r}}$ is nontrivial.
\end{enumerate}
First, we show that $\phi_E|_{G^i(E)_{x,r+}}$ is trivial. Let $x \in \cB(G^i)$ be a point such that condition (1) holds for $\phi$, and let $T \subset G^i$ be a tamely ramified maximal $F$-torus with $x \in \cB(T)$. By Lemma~\ref{lemma:character-restricted-to-torus-depth}, the restriction $\phi|_{T(F)}$ is of depth $r$. The norm map $\Nm_{E/F}\colon T(E)_{x,r+} \to T(F)_{x,r+}$ is surjective by \cite[Lemma 3.1.3]{Kal19}, so indeed (1) holds.

Now suppose $r > 0$, let $y \in \cB(G^i_E)$ be any point, and suppose for the sake of contradiction that $\phi_E|_{G^i(E)_{y,r}}$ is trivial. Let $c$ be the center of mass of the $\Gal(E/F)$-orbit of $y$, so $c\in\cB(G^i)$ by \cite[Theorem 12.9.2]{KP}. Since $\phi_E$ is a character, it is trivial on $G^i(E)_{g\cdot y,r}$ for every $g\in G^i(E)$. Choose an apartment $A$ containing $c$ and a point $y_0\in A\cap G^i(E)\cdot y$, let $S$ be the maximal $E$-split torus defining $A$, and put $Z=Z_{G^i_E}(S)$. For each relative root $\alpha$, choose a vector $v_\alpha$ in the image of $S(E)$ in $\Aut(A)$ with $\langle \alpha, v_\alpha\rangle>0$. Then $y_0+n v_\alpha$ lies in $A\cap G^i(E)\cdot y$, and \cite[Lemma 6.1.6]{KP} gives
\[
U_\alpha(E)_{c,r}\subset U_\alpha(E)_{y_0+n v_\alpha,r}
\]
for all sufficiently large $n$: indeed, once $\alpha(y_0)+n\alpha(v_\alpha)\geq\alpha(c)$, we have
\[
r-\alpha(c)\geq r-\alpha(y_0)-n\alpha(v_\alpha),
\]
so the condition defining $U_\alpha(E)_{c,r}$ is stronger than the one defining $U_\alpha(E)_{y_0+n v_\alpha,r}$.

By \cite[Definition 13.2.1]{KP}, the group $G^i(E)_{c,r}$ is generated by $Z(E)_r$ and these $U_\alpha(E)_{c,r}$. Hence $\phi_E$ is trivial on $G^i(E)_{c,r}$. By \cite[Proposition 12.9.4]{KP}, this implies that $\phi^\ell$ is trivial on $G^i(F)_{c,r}$, contradicting the depth of $\phi$. Thus $\phi_E|_{G^i(E)_{y,r}}$ is nontrivial, as desired.
\end{proof}

\begin{lemma}\label{lemma:generic-stays-same}
If $\phi$ is of depth $r > 0$, then $\phi$ is $G^{i+1}$-generic of depth $r$ if and only if $\phi_E$ is $G^{i+1}_E$-generic of depth $r$.
\end{lemma}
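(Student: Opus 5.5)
We reduce genericity of $\phi_E$ to genericity of the element representing it. By Lemma~\ref{lemma:depth-stays-same}, $\phi_E$ has depth $r$. Since $\phi$ has depth $r>0$, after choosing a point $x$ at which the depth is achieved, genericity asks for an element $X^*\in\Lie^*(G^i)^{G^i}(F)_{-r}$ that represents $\phi|_{G^i(F)_{x,r}}$ through the Moy--Prasad isomorphism and $\psi$, and that satisfies Yu's conditions (GE1) and (GE2) relative to $G^{i+1}$. In the \cite{FKS23} formulation, (GE1) reads $v(\langle X^*,H_\alpha\rangle)=-r$ for all $\alpha\in\Phi((G^{i+1}/G^i)_{\ol F},T_{\ol F})$. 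The plan is to show that the same $X^*$, with the scalar correction forced by the trace, represents $\phi_E$. Genericity is a condition on the coadjoint orbit over $\ol F$ and on valuations in the normalized valuation extended to $\ol F$, so it is insensitive to the base field.

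\textbf{Step 1.} Compute the element representing $\phi_E$. Take $T\subset G^i$ a tamely ramified maximal $F$-torus with $x\in\cB(T)$. By Proposition~\ref{prop:extend-normalized-characters}, $\phi_E|_{T(E)}=\phi\circ\Nm_{E/F}$. The Moy--Prasad isomorphisms for $T$ over $E$ and over $F$ intertwine $\Nm_{E/F}$ at depth $r$ with $\Tr_{E/F}$ on $\frt(E)_{r}/\frt(E)_{r+}$; this is the same fact used in the proof of Lemma~\ref{lemma:ns-yu-data-and-characters}(1). Since $\psi_E=\psi\circ\Tr_{E/F}$, the character $\phi_E|_{T(E)_r}$ is represented, relative to $\psi_E$, by $X^*|_{\frt}$ itself. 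The representative of a character of $G^i(E)_{x,r}/G^i(E)_{x,r+}$ coming from a central dual element is determined by its restriction to $\frt$, because $X^*$ kills root spaces (Lemma~\ref{lemma:character-restricted-to-torus-depth}). Using that, together with the fact that $\phi_E$ is trivial on the image of $G^i_{\mathrm{sc}}(E)$, we conclude that $X^*$, viewed in $\Lie^*(G^i)^{G^i}(E)_{-r}$, represents $\phi_E$ relative to $\psi_E$. If the paper's convention instead fixes $\psi$ on $E$, the representative would be $X^*$ times a unit $u$ with $\Tr$-compatibility; this does not change valuations.

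\textbf{Step 2.} Compare genericity conditions. The set $\Phi((G^{i+1}/G^i)_{\ol F},T_{\ol F})$ is unchanged by base change, and the valuation is normalized compatibly across $E/F$, so (GE1) holds for $X^*$ over $F$ if and only if it holds over $E$. Condition (GE2) is formulated via the residual image of $X^*$ in an algebraic closure of the residue field, tested against Weyl group elements over $\ol F$. It is therefore also field-independent, and given $p\neq2$ (and the standing tameness) it follows from (GE1) anyway. Conversely, if $\phi_E$ is generic with representative $Y^*$, then by Step 1 we have $Y^*\equiv X^*$ modulo depth $(-r)+$. Genericity depends only on this class, so $X^*$ is generic and hence so is $\phi$.

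\textbf{Main obstacle.} The hardest step is Step 1: showing rigorously that the Moy--Prasad depth-$r$ quotient of $\phi_E$ is represented by the $F$-rational $X^*$ on all of $G^i(E)_{x,r}$, not merely on $T(E)_r$. I would handle this by passing to a z-extension as in Lemma~\ref{lemma:character-restricted-to-torus-depth}, so that $\phi$ factors through the torus $D=\wt G^i/\wt G^i_{\der}$. There both $\phi$ and $\phi_E$ are pulled back from $D$, and norm--trace compatibility for the single torus $D$ suffices.
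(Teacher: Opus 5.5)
Your proposal is correct and is essentially the paper's argument. After passing to a z-extension (which, via Hypothesis $\mathrm{C}(\vec G)$, is also what guarantees that $\phi$ has a $G^i$-invariant representative $X^*$ at all, something your converse direction implicitly needs), one shows the same $F$-rational $X^*$ represents $\phi_E$ relative to $\psi\circ\Tr_{E/F}$, and then observes that both genericity conditions are independent of the base field; the paper deduces the representation step from uniqueness of $\Gal(E/F)$-stable extensions on the depth-$r$ quotient, whereas you use norm--trace compatibility on a torus. One correction: your aside that (GE2) follows from (GE1) once $p\neq 2$ is false in general, since Yu's implication needs further restrictions on $p$ (torsion primes of the dual root datum), but your argument does not depend on it because you already showed (GE2) is field-independent.
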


\begin{proof}
Choose a z-extension $\wt G \to G$. By \cite[Lemma 3.5.3]{Kal19}, the map $\wt G(F)_{x,r} \to G(F)_{x,r}$ is surjective, so by construction of $\phi_E$ we may pass from $G$ to $\wt G$ to assume that $G_{\der}$ is simply connected. Therefore hypothesis $\mathrm{C}(\vec G)$ of \cite[\S 2.6]{HM08} holds by \cite[Lemma 3.5.2]{Kal19}. Recall that we have fixed a choice of additive character $\psi\co F \to k^\times$. Let $x \in \cB(G^i)$ be a point such that $\phi|_{G^i(F)_{x,r}}$ factors through a character of $G^i(F)_{x,r}/G^i(F)_{x,r+}$. Under the Moy--Prasad isomorphism 
\[
G^i(F)_{x,r}/G^i(F)_{x,r+} \cong \frg^i(F)_{x,r}/\frg^i(F)_{x,r+},
\]
Hypothesis $\mathrm{C}(\vec G)$ shows that there is an element $X^* \in \Lie^*(G^i)^{G^i}(F)_{-r}$ such that $\phi|_{G^i(F)_{x,r}}$ is induced by $X^*$ and $\psi$. We may consider $X^*$ as an element of $\Lie^*(G^i)^{G^i}(E)_{-r}$, in which case we will denote it by $X^*_E$ for clarity. Then the restriction of $\phi_E$ to $G^i(E)_{x,r}$ is induced by $X^*_E$ and the additive character $\psi \circ \Tr_{E/F}$, since both of these characters are the $\ell$th powers of the unique $\Gal(E/F)$-stable extension of $\phi|_{G^i(F)_{x,r}}$ to $G^i(E)_{x,r}$.



For a twisted Levi $M \subset G$, let $M_{\mathrm{sc}}$ denote the preimage of $M$ in the universal cover $G_{\mathrm{sc}}$ of $G_{\der}$, and fix a maximal $F$-torus $T$ of $G^i$. For a root $\alpha \in \Phi(G^{i+1}_{\ol F}, T_{\ol F})$, let $\alpha_{\mathrm{sc}}^\vee$ denote the $\ol F$-coroot $\G_m \to (G^{i+1}_{\mathrm{sc}})_{\ol F}$, and let $H_\alpha = \mathrm{d}\alpha_{\mathrm{sc}}^\vee(1) \in \Lie(G^{i+1})(\ol F)$. Recall from \cite[\S 8]{Yu01} (noting the mild correction of \cite[Remark 4.1.3]{FKS23}) that genericity of $\phi$ is equivalent to the conjunction of the following two conditions:
\begin{enumerate}
    \item We have $v(\langle X^*, H_\alpha\rangle) = -r$ for all $\alpha \in \Phi((G^{i+1}/G^i)_{\ol F}, T_{\ol F})$.
    \item Recall that there is a natural isomorphism
    \[
    \Lie^*(G^i_{\mathrm{sc},\ab}) \otimes_F \ol F \cong X^*((G^i_{\mathrm{sc},\ab})_{\ol F}) \otimes_{\Z} \ol F \subset X^*((T_{\mathrm{sc}})_{\ol F}) \otimes_{\Z} \ol F.
    \]
    Let $\varpi_r \in \ol F$ denote an element of valuation $r$, and let $\ol X^*$ denote the residue class of $\varpi_r X^*$ in $X^*((G^i_{\mathrm{sc},\ab})_{\ol F}) \otimes_{\Z} \ol\F_q$. Then the stabilizer of $\ol X^*$ in the absolute Weyl group of $G^{i+1}$ is equal to the absolute Weyl group of $G^i$.
\end{enumerate}
The conditions for genericity of $\phi_E$ are completely similar. It is clear that these two conditions hold for $\phi$ if and only if they hold for $\phi_E$.
\end{proof}

\subsection{Large prime degree base change}\label{ss:large-prime-deg-bc-kal}

In this section, let $k\in\{\ol\F_\ell,\ol\Z_\ell,\ol\Q_\ell\}$ be the coefficient ring. Suppose that $\ell$ is a banal prime for $G$ such that $\ell > \rk G + 1$, and note that $\ell$ is also banal for $G_{F_\ell}$ by \cite[Proposition~4.1.2(2)]{Cot26b}. Throughout the subsection, let
\[
\Psi=((G^i)_{0\leq i\leq d},x,(r_i),\tau,(\phi_i))
\]
be a normalized irreducible Yu datum for $G$ over $k$.

\begin{lemma}\label{lemma:depth-zero-twisting-character}
There is a character $\chi\co G^0(F)\to k^\times$, trivial on $G^0(F)^1$, such that $\tau \otimes \chi^{-1}$ factors through a finite quotient of $G^0(F)_{[x]}$ of order prime to $\ell$.
\end{lemma}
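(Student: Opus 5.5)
We work with the structure of $G^0(F)_{[x]}$ relative to its normal subgroup $G^0(F)_{x,0}G^0(F)^1\cap G^0(F)_{[x]}$. Recall that $\tau$ is trivial on $G^0(F)_{x,0+}$ and is finite free over $k$. The quotient $G^0(F)_{[x]}/G^0(F)_{x,0+}$ is an extension of the finitely generated abelian group $G^0(F)_{[x]}/(G^0(F)_{[x]}\cap G^0(F)^1)$ by the finite group $\bigl(G^0(F)_{[x]}\cap G^0(F)^1\bigr)/G^0(F)_{x,0+}$. The latter group is finite because $G^0(F)^1\cap G^0(F)_{[x]}$ is compact: since $Z(G^0)/Z(G)$ is anisotropic, the stabilizer of $[x]$ modulo $G^0(F)^1$ is a lattice. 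Moreover, the lattice $G^0(F)_{[x]}/(G^0(F)_{[x]}\cap G^0(F)^1)$ embeds in $G^0(F)/G^0(F)^1$, which is free abelian.

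\textbf{Step 1.} Choose a finite-index free subgroup $\Lambda\subset G^0(F)/G^0(F)^1$ lying in the image of $Z(G^0)(F)$. This is possible because $Z(G^0)(F)\to G^0(F)/G^0(F)^1$ has finite cokernel, a standard fact via the isogeny $Z(G^0)^\circ\times G^0_{\der}\to G^0$. Elements of $Z(G^0)(F)$ lie in $G^0(F)_{[x]}$ and act on the irreducible $\tau\otimes\Frac(k)$ by scalars. Over $k=\ol\Z_\ell$ they act by scalars on the lattice as well, up to passing to the irreducibility hypothesis over $\Frac(k)$, using that $\tau\otimes\Frac(k)$ is irreducible.

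\textbf{Step 2.} Lift a basis of $\Lambda$ to central elements $z_1,\dots,z_m$, with $\tau(z_j)=c_j\in k^\times$. Since $k$ is algebraically closed, or is $\ol\Z_\ell$ whose units are divisible, we can extract $N$th roots of units for any $N$. Define a character $\chi_0$ of $\Lambda$ by sending the image of $z_j$ to $c_j$. Extend $\chi_0$ to $G^0(F)/G^0(F)^1$, which is free abelian, by choosing a root of appropriate order $[G^0(F)/G^0(F)^1:\Lambda]$. This extension is possible because $k^\times$ is divisible. Inflate the result to obtain $\chi$, which is trivial on $G^0(F)^1$ by construction.

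\textbf{Step 3.} Now $\tau\otimes\chi^{-1}$ is trivial on each $z_j$ and on $G^0(F)_{x,0+}$. Hence it factors through $G^0(F)_{[x]}/\langle z_j\rangle G^0(F)_{x,0+}$, which is finite, being an extension of a finite abelian group by the finite group above. It remains to make the order prime to $\ell$.

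\textbf{Main obstacle: the prime-to-$\ell$ requirement.} The finite quotient $Q$ obtained in Step 3 need not have order prime to $\ell$, since its $\ell$-part can come from the free abelian directions. To handle this, I would enlarge the choice of $\Lambda$ to $\ell^a\Lambda$ and adjust $\chi$ on the $\ell$-primary torsion of the image.

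Concretely, the image of $\tau\otimes\chi^{-1}$ on the abelian piece $G^0(F)_{[x]}/(G^0(F)_{[x]}\cap G^0(F)^1)$ consists of scalars on the central part. Write the character of the finite abelian group $\langle z_j\rangle$-quotient via its decomposition into $\ell$-part times prime-to-$\ell$ part. Absorb the $\ell$-part into $\chi$ by extending it to $G^0(F)/G^0(F)^1$, which is again possible by divisibility of $k^\times$. After this adjustment, the kernel contains $\ell$-power multiples, and the quotient's order is prime to $\ell$.

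For the remaining finite group $\bigl(G^0(F)_{[x]}\cap G^0(F)^1\bigr)/G^0(F)_{x,0+}$, which is essentially a subgroup of $\ol G^0_{[x]}(\F_q)$, I would use the standing hypothesis that $\ell$ is banal for $G$, with $\ell>\rk G+1$. This should imply that $\ell$ does not divide the order of $\ol G^0_{[x]}(\F_q)$, or of the relevant component group, as in \cite[Proposition~4.1.2]{Cot26b}. Verifying that banality covers exactly this finite group is the step I expect to require care.
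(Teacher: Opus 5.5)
Your construction of $\chi$ in Steps 1--3 is essentially the paper's.

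\begin{itemize}
\item The paper writes $Z(G)(F)\cong Z(G)(F)_{\mathrm b}\times M$ with $M$ free, notes that $M\to G^0(F)/G^0(F)^1$ is injective, and extends the scalar character $\tau|_M$ to $G^0(F)/G^0(F)^1$ using divisibility of $k^\times$.
\item It then concludes from banality that the central character of $\tau\otimes\chi^{-1}$ is finite of order prime to $\ell$, hence that $\tau\otimes\chi^{-1}$ has finite image.
\end{itemize}

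Using a lattice in $Z(G^0)(F)$ instead of the free part of $Z(G)(F)$ works equally well, and leaves an abelian quotient no larger than the paper's. In the paper there is no further adjustment of $\chi$: the prime-to-$\ell$ property is attributed directly to the hypothesis on $\ell$.

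The genuine gap is your proposed adjustment for the prime-to-$\ell$ requirement, which does not work. Write $\Lambda_{[x]}\supset\Lambda_Z\supset\Lambda$ for the images in $G^0(F)/G^0(F)^1$ of $G^0(F)_{[x]}$, of the center, and of your chosen lattice.

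\begin{itemize}
\item Replacing $\Lambda$ by $\ell^a\Lambda$ multiplies $[\Lambda_{[x]}:\Lambda]$ by a power of $\ell$, so it goes the wrong way.
\item Twisting can remove the $\ell$-torsion of $\Lambda_Z/\Lambda$, since those directions come from central elements acting by scalars. But this is achieved more simply by taking $\Lambda=\Lambda_Z$ from the start.
\item Twisting cannot touch $\Lambda_{[x]}/\Lambda_Z$. This quotient can be nonzero: for $G^0=G=\mathrm{GSp}_4$ at the non-special vertex, an element whose similitude has valuation $1$ stabilizes $[x]$, so $\Lambda_{[x]}/\Lambda_Z\cong\Z/2$. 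The corresponding elements of $G^0(F)_{[x]}$ need not act on $\tau$ by scalars.
\item Twisting by any character of $G^0(F)/G^0(F)^1$ leaves the image of $\tau$ in $\PGL(\tau)$ unchanged. So if such an element had projective image of order divisible by $\ell$, no choice of $\chi$ could make the image of $\tau\otimes\chi^{-1}$ have order prime to $\ell$.
\end{itemize}

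The correct conclusion is as follows. Take $\Lambda=\Lambda_Z$ (or the paper's $M$) and lift a basis to central elements $z_1,\dots,z_m$. The finite quotient $G^0(F)_{[x]}/\bigl(\langle z_1,\dots,z_m\rangle\, G^0(F)_{x,0+}\bigr)$ is an extension of $\Lambda_{[x]}/\Lambda$ by $(G^0(F)_{[x]}\cap G^0(F)^1)/G^0(F)_{x,0+}$. Its order must be prime to $\ell$ as a consequence of the standing hypotheses on $\ell$ (banality, $\ell>\rk G+1$), and this must hold for \emph{both} pieces, as the paper asserts. You only invoked the hypothesis for the compact piece.
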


\begin{proof}

Choose a splitting $Z(G)(F) \cong Z(G)(F)_{\mathrm{b}} \times M$, where $M$ is a finitely generated free abelian group. The induced map $M \to G^0(F)/G^0(F)^1$ is injective, and since $k^\times$ is divisible there exists an extension of the central character of $\tau|_M$ to $G^0(F)/G^0(F)^1$. Since $\ell$ is banal for $G(F)$, it follows that the central character of $\tau \otimes\chi^{-1}$ is finite of order prime to $\ell$, and in particular $\tau\otimes\chi^{-1}$ has finite image.
\end{proof}

\begin{defn}\label{def:depth-zero-base-change}
For $k \in \{\ol\F_\ell,\ol\Z_\ell,\ol\Q_\ell\}$, the \emph{base change of $\tau$} is the $k$-representation $\tau_\ell$ of $G^0(F_\ell)_{[x]}$ defined as follows: first, choose a character $\chi$ as in Lemma~\ref{lemma:depth-zero-twisting-character}. Let $\ol{\wt\tau}_\chi$ denote the Frobenius twist of the $\ell$-modular reduction of the Glauberman correspondent (in the sense of \cite[\S 3.5.3]{Cot26b}) of $\tau \otimes \chi^{-1}$, and let $\wt\tau_\chi$ denote the unique irreducible lift of $\ol{\wt\tau}_\chi$ which factors through a finite quotient of $G^0(F_\ell)_{[x]}$ of order prime to $\ell$. Note that $\wt\tau_\chi$ is irreducible by \cite[Part III, no.\ 15.5, Proposition 43]{Serre77}. Then set
\begin{equation}\label{eq:def-depth-zero-base-change}
\tau_\ell
\coloneqq
\wt\tau_{\chi}
\otimes
\chi_{F_\ell}\big|_{G^0(F_\ell)_{[x]}}.
\end{equation}
\cite[Remark~3.5.10]{Cot26b} shows that $\tau_\ell$ does not depend on the choice of $\chi$; in particular, if $\tau$ itself factors through a finite quotient of order prime to $\ell$, one may take $\chi=1$.
If $G^0=T$ is a torus, then \eqref{eq:def-depth-zero-base-change} shows that $\tau_\ell$ is the character $\tau_{F_\ell}$ of Proposition~\ref{prop:extend-normalized-characters}.
\end{defn}

We define a Yu datum $\Psi_\ell = ((G_\ell^i), x_\ell, (r_{\ell,i}), \tau_\ell, (\phi_{\ell,i}))$ as follows.
\begin{enumerate}
\item $G_\ell^i = (G^i)_{F_\ell}$ for $0 \leq i \leq d$.
\item $x_\ell = x$, considered as a point of $\cB(G^0_{F_\ell})$; its image in $\cB((G^0_{F_\ell})_{\der})$ is a vertex by \cite[Proposition~4.1.2(3)]{Cot26b}.
\item $r_{\ell,i} = r_i$ for all $i$.
\item Let $\tau_\ell$ be the representation of Definition~\ref{def:depth-zero-base-change}; this is irreducible and cuspidal by \cite[Lemma~3.5.9 and Proposition~4.2.2]{Cot26b}.
\item Let $\phi_{\ell,i}$ be the character of $G^i(F_\ell)$ defined as $(\phi_i)_{F_\ell}$ in Proposition~\ref{prop:extend-normalized-characters}. Note that $\phi_{\ell,i}$ is $(G^{i+1})_{F_\ell}$-generic of depth $r_i$ for $0\leq i<d$ by Lemma~\ref{lemma:depth-stays-same} and Lemma~\ref{lemma:generic-stays-same}.
\end{enumerate}
Observe that the isomorphism class of $\pi(\Psi_\ell)$ is $\Gal(F_\ell/F)$-stable. If $k = \ol\Z_\ell$ and $\ol\Psi$ denotes the modular reduction of $\Psi$, then the modular reduction of $\Psi_\ell$ is equal to $(\ol\Psi)_\ell$.

\begin{prop}\label{prop:kal-base-change}
Let $\ell$ be a banal prime for $G$ such that $\ell > \rk G + 1$. Assume that $k\in\{\ol\F_\ell,\ol\Q_\ell\}$, and let $\Psi_\ell$ be as above. Then
\[
\rho_I^{\Kal}(\Psi_\ell)\sim
\rho_I^{\Kal}(\Psi)|_{I_{F_\ell}}.
\]
If $\Psi$ is moreover $F$-non-singular, then so is $\Psi_\ell$, and
\[
\rho^{\Kal}(\Psi_\ell)\sim
\rho^{\Kal}(\Psi)|_{W_{F_\ell}}.
\]
\end{prop}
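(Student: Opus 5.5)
The plan is to compare torus-character pairs on both sides: find $[(T,\theta)]\in\cT(\Psi)$ such that $[(T_{F_\ell},\theta_\ell)]\in\cT(\Psi_\ell)$ with $\theta_\ell=\theta\circ\Nm_{F_\ell/F}$ (at least on $T(F_\ell)_{[x]}$, up to modifications that do not affect the inertial parameter). Then the statement reduces to two known compatibilities: norm functoriality of the Local Langlands Correspondence for tori, and compatibility of the L-embeddings $\ld j_{T,G}$ with unramified base change.

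\textbf{Step 1: torus-character pairs.} Start from $[(T,\theta)]\in\cT(\Psi)$, built from a pair $(\ol T,\theta_0)$ with $\tau\in\cE(\ol G^0_{[x]},[\ol T,\theta_0])$ and $\theta=\theta_0\prod_i\phi_i|_{T(F)_{[x]}}$. By Proposition~\ref{prop:extend-normalized-characters}, $\phi_{\ell,i}|_{T(F_\ell)}=\phi_i|_{T(F)}\circ\Nm_{F_\ell/F}$, so the positive-depth factors behave correctly, and so does $\chi_{F_\ell}$ in \eqref{eq:def-depth-zero-base-change}. It then remains to see that $\tau_\ell$ lies in the Lusztig series $\cE(\ol G^0_{[x]}(\F_{q^\ell}),[\ol T_{\F_{q^\ell}},\theta_0\circ\Nm])$. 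I expect this to be the main obstacle. It should follow from the results of \cite{Cot26b} on the Glauberman correspondence and Lusztig series, namely the statement that the Glauberman correspondent followed by the Frobenius twist respects semi-rational Lusztig series via norm pullback. The choices are arranged so that the twist exactly compensates: the $\ell$-Frobenius twist turns $\theta_0^\ell$-type data into $\theta_0\circ\Nm$ after reduction mod $\ell$ and lifting prime to $\ell$. I would cite the relevant statement of \cite[\S3.5, \S4.2]{Cot26b}, handling the twist $\chi$ separately since it is a character factor. Since $\ell$ is large, $x$ remains a vertex and the apartment is unchanged (\cite[Proposition~4.1.2]{Cot26b}), so $T_{F_\ell}$ is still a maximally unramified torus whose apartment contains $x$.

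\textbf{Step 2: L-embeddings and tori.} The degree $\ell$ is odd and $F_\ell/F$ is unramified. By Lemma~\ref{lemma:tasho-base-change}(2), $\ld j_{T_{F_\ell},G_{F_\ell}}\sim\ld j_{T,G}|_{\ld T_{F_\ell}}$. The $\chi$-data for $\Psi_\ell$ are those attached to the base-changed tuple, because the generic elements $X_i^*$ are unchanged by the proof of Lemma~\ref{lemma:generic-stays-same}. Norm functoriality for tori gives $\ld(\theta\circ\Nm)\sim\ld\theta|_{W_{F_\ell}}$, and the same holds on $I_{F_\ell}=I_F$ for bounded parts. Composing these yields the inertial statement, and Lemma~\ref{lemma:associated-tori-inertial-parameter} shows that the choice of pair is irrelevant.

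\textbf{Step 3: the non-singular case.} Non-singularity is a condition on $\theta\circ\Nm\circ\alpha^\vee$ restricted to units over the unramified splitting field. Pulling back along the norm of an odd-degree unramified extension preserves nontriviality on units, since the norm is surjective on units. Hence $\theta_\ell$ is $F$-non-singular, and by Lemma~\ref{lemma:ns-yu-data-and-characters}(3), $\Psi_\ell$ is non-singular with $\cT(\Psi_\ell)=\{[(T_{F_\ell},\theta_\ell)]\}$. The ellipticity of $T_{F_\ell}$ follows because $\ell$ exceeds the orders relevant to the Galois action on $X_*(T)$. The full-parameter identity then follows from Step 2 applied over all of $W_{F_\ell}$.
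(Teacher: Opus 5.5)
Your argument matches the paper's in the case $k=\ol\Q_\ell$. You choose $[(T,\theta)]\in\cT(\Psi)$ and use the Lusztig-series compatibility of $\tau_\ell$; the paper cites \cite[Proposition~4.3.4(2)]{Cot26b} for this. Together with Proposition~\ref{prop:extend-normalized-characters}, this gives $[(T_{F_\ell},\theta\circ\Nm_{F_\ell/F})]\in\cT(\Psi_\ell)$. You then finish with Lemma~\ref{lemma:associated-tori-inertial-parameter}, Lemma~\ref{lemma:tasho-base-change}(2), and norm functoriality for tori.

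Your extra checks are correct, and they fill in points the paper passes over:
\begin{itemize}
\item The generic elements for $\Psi_\ell$ are the $X_i^*$ paired with $\psi\circ\Tr_{F_\ell/F}$, so Lemma~\ref{lemma:tasho-base-change} is applied to the right $\chi$-data.
\item $F$-non-singularity survives because the norm is surjective on units in unramified extensions.
\item Ellipticity survives because $\ell>\rk G+1$ cannot divide the order of the image of Galois in $\GL(X^*(T/Z(G)))$.
\end{itemize}
Before invoking Lemma~\ref{lemma:ns-yu-data-and-characters}, you should first pass to an irredundant datum via Lemma~\ref{lemma:irredundant-yu-data}, since that lemma assumes irredundancy.

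What is missing is the case $k=\ol\F_\ell$. The key input of your Step~1 is that $\tau_\ell$ lies in $\cE((\ol G^0_{[x]})_{\F_{q^\ell}},[\ol T_{\F_{q^\ell}},\theta_0\circ\Nm])$. This is a statement about the Glauberman correspondent, which is a characteristic-zero construction: Definition~\ref{def:depth-zero-base-change} passes through characteristic-zero lifts. The paper only applies this compatibility for $k=\ol\Q_\ell$. Your heuristic about the Frobenius twist does not establish it for the modular series.

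The paper handles $k=\ol\F_\ell$ by a separate reduction:
\begin{itemize}
\item Lemma~\ref{lemma:depth-zero-twisting-character}, \cite[Lemma~3.5.16]{Cot26b}, and Teichm\"uller lifts of the $\phi_i$ give a normalized $\ol\Z_\ell$-datum lifting $\Psi$, with irreducible cuspidal generic fiber.
\item The modular reduction of the base change of this lift is $(\ol\Psi)_\ell$.
\item Lemmas~\ref{lemma:yu-datum-modular-reduction} and~\ref{lemma:torus-character-modular-reduction} show that $\rho_I^{\Kal}$ of the reduced data is the semisimplified reduction of the characteristic-zero parameter. The same holds for $\rho^{\Kal}$ when the datum is $F$-non-singular, and $F$-non-singularity lifts to characteristic zero.
\end{itemize}
Both sides of the desired identity therefore reduce compatibly from the $\ol\Q_\ell$ case. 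Since $\ell$ is banal this step is routine. Still, your proposal needs it, or a modular version of the \cite{Cot26b} compatibility, to cover the $\ol\F_\ell$ half of the statement.
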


\begin{proof}
If $k=\ol\F_\ell$, then Lemma~\ref{lemma:depth-zero-twisting-character}, \cite[Lemma~3.5.16]{Cot26b}, and Teichm\"uller lifting give a normalized integral lift of $\Psi$ with cuspidal generic fiber. After applying Lemma~\ref{lemma:irredundant-yu-data} to assume $\Psi$ is irredundant, Lemmas~\ref{lemma:yu-datum-modular-reduction} and~\ref{lemma:torus-character-modular-reduction} reduce both assertions to the case $k = \ol\Q_\ell$. Let $[(T, \theta)] \in\cT(\Psi)$ as in \S\ref{ss:kal-param}. For the first statement, write $\theta\coloneqq\theta_0\prod_{i=0}^d\phi_i|_{T(F)_{[x]}}$ as in the construction of $\cT(\Psi)$, so $\tau \in \cE(\ol G^0_{[x]}, [\ol T, \theta_0])$. By \cite[Proposition~4.3.4(2)]{Cot26b} and the definitions, the representation $\tau_\ell$ lies in the semi-rational Lusztig series $\cE((\ol G^0_{[x]})_{\F_{q^\ell}}, [\ol T_{\F_{q^\ell}}, \theta_{\ell,0}])$, where $\theta_{\ell,0} = \theta_0 \circ \Nm_{\F_{q^\ell}/\F_q}$.

Let
\[
\theta_\ell
\coloneqq
\theta_{\ell,0}
\prod_{i=0}^d\phi_{\ell,i}|_{T(F_\ell)_{[x]}}.
\]
The definition of $\Psi_\ell$ gives
\[
[(T_{F_\ell},\theta_\ell)]
=
[(T_{F_\ell},\theta\circ \Nm_{F_\ell/F})]
\in\cT(\Psi_\ell).
\]
Norm functoriality for the inertial Local Langlands Correspondence for tori identifies the parameter of $\theta\circ \Nm_{F_\ell/F}$ with $\ld\theta|_{I_{F_\ell}}$.
Thus
\begin{align*}
\rho_I^{\Kal}(\Psi)|_{I_{F_\ell}}
    &\sim \ld j_{T,G}\circ\ld\theta|_{I_{F_\ell}}\\
    &\sim \ld j_{T_{F_\ell},G_{F_\ell}}
        \circ\ld(\theta\circ \Nm_{F_\ell/F})|_{I_{F_\ell}}\\
    &\sim \rho_I^{\Kal}(\Psi_\ell).
\end{align*}
The second relation follows from Lemma~\ref{lemma:associated-tori-inertial-parameter}, and the third follows from Lemma~\ref{lemma:tasho-base-change}(2) and norm functoriality. If $\Psi$ is $F$-non-singular, then so is $\Psi_\ell$, and the same calculation holds on all of $W_{F_\ell}$.
\end{proof}

\subsection{Small prime degree base change}\label{ss:small-prime-deg-bc-kal}
We do not currently have a robust general theory of low degree base change for Yu data, but for our main results we need only the following special case. Let $k\in\{\ol\F_\ell,\ol\Q_\ell\}$, and let
\[
\Psi=((G^i)_{0\leq i\leq d},x,(r_i),\tau,(\phi_i))
\]
be a normalized irreducible Yu datum for $G$ over $k$ such that $G^0=T$ is a torus. In particular, $\tau$ is a character. Let $E/F$ be a Galois extension of prime degree $\ell\ne p$, and assume that $T_E$ is an elliptic maximal $E$-torus of $G_E$. This implies that $T(E)=T(E)_{[x]}$ and $T(F)=T(F)_{[x]}$.

Define a Yu datum
\[
\Psi_E=((G_E^i),x_E,(r_{E,i}),\tau_E,(\phi_{E,i}))
\]
as follows.
\begin{enumerate}
\item $G_E^i = (G^i)_E$ for $0 \leq i \leq d$.
\item $x_E = x$, considered as a point of $\cB(G^0_E)$; its image in $\cB((G^0_E)_{\der})$ is the unique point, hence a vertex.
\item $r_{E,i} = r_i$ for $0 \leq i \leq d$.
\item $\tau_E = ((\tau\epsilon_F\epsilon_{F,\sharp,x}) \circ \Nm_{E/F})\epsilon_E\epsilon_{E,\sharp,x}$, where $\epsilon_F$, $\epsilon_{F,\sharp,x}$ and $\epsilon_E$, $\epsilon_{E,\sharp,x}$ are the sign characters associated to $(\vec G, x, \vec r, \vec\phi)$ and $(\vec G_E, x, \vec r, \vec\phi_E)$, respectively, as in \S\ref{sssec:eps-chars-descent-Levi}; this is trivially irreducible and cuspidal.
\item $\phi_{E,i} = (\phi_i)_E$ as in Proposition~\ref{prop:extend-normalized-characters}; this is $G_E^{i+1}$-generic of depth $r_i$ for each $0 \leq i < d$ by Lemmas~\ref{lemma:depth-stays-same} and~\ref{lemma:generic-stays-same}.
\end{enumerate}
Observe that $\Psi_E$ is normalized.

\begin{prop}\label{prop:kal-small-degree-bc}
If $\ell$ is odd, then
\[
\rho^{\Kal}(\Psi)|_{W_E}
\sim
\rho^{\Kal}(\Psi_E).
\]
The same holds if $\ell$ is even and $k = \ol\F_2$.
\end{prop}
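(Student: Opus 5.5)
The plan is to identify the torus-character pairs on both sides and then invoke Proposition~\ref{prop:ramified-kal-bc}, which carries all the sign bookkeeping. Since $G^0=T$, the reduced building of $T$ is a point and $\tau$ is a character, so the Lusztig-series step in \S\ref{sss:p-adic-torus-character-pair} is trivial. By Definition~\ref{defn:associated-torus-character-pairs}, $\cT(\Psi)$ then consists of the single class $[(T,\theta)]$ with $\theta=\tau\prod_{i=0}^d\phi_i|_{T(F)}$. By Remark~\ref{remark:toral-yu-ns}, $\Psi$ is $F$-non-singular. The same reasoning applied to $\Psi_E$ (normalized, with $G^0_E=T_E$ elliptic) shows that $\Psi_E$ is $F$-non-singular and that $\cT(\Psi_E)=\{[(T_E,\theta_E')]\}$, where $\theta_E'=\tau_E\prod_i\phi_{E,i}|_{T(E)}$.

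Next I compute $\theta_E'$. By Proposition~\ref{prop:extend-normalized-characters}, $\phi_{E,i}|_{T(E)}=\phi_i|_{T(F)}\circ\Nm_{E/F}$. Substituting the definition of $\tau_E$ gives
\[
\theta_E'=(\theta\circ\Nm_{E/F})\cdot\epsilon_E\epsilon_{E,\sharp,x}\cdot(\epsilon_F\epsilon_{F,\sharp,x})\circ\Nm_{E/F}.
\]
This is exactly the twisted character appearing in the first display of Proposition~\ref{prop:ramified-kal-bc}. I also need the tuples $(\vec G,x,\vec r,\vec X)$ and $(\vec G_E,x,\vec r,\vec X_E)$ used to define $\ld j_F=\ld j_{T,G}$ and $\ld j_E=\ld j_{T_E,G_E}$ to be those attached to $\Psi$ and $\Psi_E$. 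This holds because the generic elements representing $\phi_{E,i}$ are the base-changed $X_i^*$ (proof of Lemma~\ref{lemma:generic-stays-same}), with $\psi_E=\psi\circ\Tr_{E/F}$.

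The main obstacle is that Proposition~\ref{prop:ramified-kal-bc} gives the full $W_E$ statement only with the twist $\epsilon_E\cdot\epsilon_F\circ\Nm_{E/F}$, whereas its inertial statement carries the extra factors $\epsilon_{\sharp,x}$. I therefore must show that $\epsilon_{E,\sharp,x}\cdot\epsilon_{F,\sharp,x}\circ\Nm_{E/F}$ is trivial on $T(E)$, or at least that it does not change the parameter.
\begin{itemize}
\item If $E/F$ is unramified of odd degree, Lemma~\ref{lemma:fks-base-change} applied to the $\sharp$-factor, which is defined root-by-root and whose orbit and $\ord_x$ data are preserved by odd unramified base change, gives the cancellation.
\item If $E/F$ is ramified, I would examine \eqref{eqn:FKS-1} directly. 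Since $x$ is the unique point of the reduced building of $T$, the conditions $r_i/2\in\ord_x(\alpha)$ over $E$ and over $F$ correspond after rescaling by the odd ramification degree $\ell$. Moreover, $\sgn_{\F_\alpha^\times}$ composed with the norm is again $\sgn$, because $\ell$ is odd and the residue fields do not change.
\end{itemize}
If this direct check is awkward, my fallback is to read the proof of Proposition~\ref{prop:ramified-kal-bc} in the appendix, which I expect establishes precisely this cancellation in the elliptic case. Either way, this yields
\[
\ld j_{T_E,G_E}\circ\ld\theta_E'\sim\ld j_{T,G}\circ\ld\theta|_{W_E},
\]
which is $\rho^{\Kal}(\Psi_E)\sim\rho^{\Kal}(\Psi)|_{W_E}$ by Definition~\ref{def:expected-L-param}.

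For $\ell=2$ and $k=\ol\F_2$, all sign characters are trivial, so $\theta_E'=\theta\circ\Nm_{E/F}$. Lemma~\ref{lemma:tasho-base-change}(1), together with norm functoriality of the Local Langlands Correspondence for tori, then gives the claim directly.
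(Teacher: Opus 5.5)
Your argument is the paper's: you identify $\cT(\Psi)=\{[(T,\theta)]\}$ and $\cT(\Psi_E)=\{[(T_E,\theta_E')]\}$, compute $\theta_E'=((\theta\epsilon_F\epsilon_{F,\sharp,x})\circ\Nm_{E/F})\cdot\epsilon_E\epsilon_{E,\sharp,x}$, and apply Proposition~\ref{prop:ramified-kal-bc}. The case $\ell=2$, $k=\ol\F_2$ is handled by Lemma~\ref{lemma:tasho-base-change}(1), as you say.

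The ``main obstacle'' you flag, however, comes only from a typo in the second display of Proposition~\ref{prop:ramified-kal-bc}, which drops the $\epsilon_{\sharp,x}$ factors. Its proof in Appendix~\ref{app:ramified-bc-proof} treats the elliptic case. There it reduces to \eqref{eqn:signs-cancel-0}, an identity in $\rH^1(W_E,\wh T(k))$ in which $\epsilon_{F,\sharp,x}$ and $\epsilon_{E,\sharp,x}$ are included. That is exactly the twist in your $\theta_E'$, and it is what the paper's proof uses. So your fallback is the right move, but not for the reason you give. The appendix does not show that $\epsilon_{E,\sharp,x}\cdot(\epsilon_{F,\sharp,x}\circ\Nm_{E/F})$ is trivial; it makes that question irrelevant.

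You should drop the cancellation step, because your ramified-case justification for it does not hold up. The paper computes Moy--Prasad data over $E$ using the extension of $v$, so nothing is rescaled. The $r_i$ are unchanged, while $\ord_x(\alpha)_E=\ord_x(\alpha)_F+\frac{1}{\ell e_\alpha}\Z$ is strictly finer than $\ord_x(\alpha)_F$ whenever $E_\alpha/F_\alpha$ is ramified.

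For symmetric $\alpha$, a parity argument shows that membership of $r_j/2$ is unchanged: a jump $c$ satisfies $2c\in\frac{1}{e_\alpha}\Z$, while $r_j\in\frac{1}{e_\alpha}\Z$ and $\ell$ is odd. For asymmetric $\alpha$, nothing you say rules out $r_j/2$ landing on one of the new jumps. That would leave a stray factor $\sgn_{\F_\alpha^\times}(\alpha(\cdot))$.

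This is why $\tau_E$ is built with the $\epsilon_{\sharp,x}$ factors. They remove the $\ord_x$-dependent part of the rootwise characters in \eqref{eqn:norm-relation-asym-sym-unram}, so $\ord_x(\alpha)_E$ never has to be compared with $\ord_x(\alpha)_F$. Your unramified-case argument via Lemma~\ref{lemma:fks-base-change} is fine, but it too is superseded.
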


\begin{proof}
Ellipticity of $T_E$ implies that $T(E)=T(E)_{[x]}$ and $T(F)=T(F)_{[x]}$. Since $G^0=T$, the torus--character pairs associated to $\Psi$ and $\Psi_E$ are $(T,\theta)$ and $(T_E,\theta_E)$, where
\[
\theta=\tau\prod_{i=0}^d\phi_i|_{T(F)},
\qquad
\theta_E=\tau_E\prod_{i=0}^d\phi_{E,i}|_{T(E)}.
\]
By construction, we have $
\theta_E=((\theta\epsilon_F\epsilon_{F,\sharp,x})\circ \Nm_{E/F}) \cdot \epsilon_E\epsilon_{E,\sharp,x}$. By Proposition~\ref{prop:ramified-kal-bc}, we therefore have
\begin{align*}
    \rho^{\Kal}(\Psi)|_{W_E} \sim \ld j_{T,G}\circ\ld\theta|_{W_E} \sim \ld j_{T_E,G_E}\circ\ld\theta_E \sim \rho^{\Kal}(\Psi_E),
\end{align*}
as desired.
\end{proof}

\section{Unramified twisted Levi functoriality for Yu data}\label{ss:kaletha-functoriality}

Throughout this section, continue to assume $p \neq 2$ and let $\Psi = ((G^i)_{0\leq i\leq d}, x, (r_i)_{0\leq i\leq d}, \tau, (\phi_i)_{0\leq i\leq d})$ be a normalized irreducible Yu datum for $G$ with coefficients in a field $k \in \{\ol\Q_\ell,\ol\F_\ell\}$. Fix an unramified twisted Levi subgroup $H\subset G$ for which there exists $[(T, \theta)] \in \cT(\Psi)$ with $T \subset H$, let $H^i=H\cap G^i$, and fix an embedding $\cB(H^0) \subset \cB(G^0)$.

This section constructs a Yu datum $\Psi_L$ on a Levi subgroup $L$ of an unramified twisted Levi subgroup $H\subset G$ for which
\[
\rho_I^{\Kal}(\Psi)\sim \ld j_{L,G}\circ\rho_I^{\Kal}(\Psi_L).
\]
When $\Psi$ is $F$-non-singular, the construction gives $L=H$ and proves the full comparison
\[
\rho^{\Kal}(\Psi)\sim \ld j_{L,G}\circ\rho^{\Kal}(\Psi_L).
\]
In general, one cannot take $L = H$; this reflects the fact that functoriality for twisted Levis does not always preserve cuspidality. For instance, if $\theta_{10}$ is the unipotent supercuspidal $\ol\Q_\ell$-representation of $\Sp_4(F)$ and $H \subset G$ is a twisted Levi such that $H \cong \SL_2 \times S$ for an anisotropic $F$-torus $S$, then each smooth representation of $H(F)$ functorially associated to $\theta_{10}$ is unipotent, and $H(F)$ has no supercuspidal unipotent representations.

\subsection{Constructing the Levi}\label{sss:descended-levi-construction}

Let $(\ol T', \theta_0')$ be a pair consisting of a generalized maximal torus $\ol T'$ of $\ol H^0_{[x]_G}$ and a character $\theta_0'\co \ol T'(\F_q) \to k^\times$ such that $\tau$ lies in the semi-rational Lusztig series $\cE(\ol G^0_{[x]}, [\ol T', \theta_0'])$, with notation as in \cite[Definition~2.8.2]{Cot26b}. By \cite[Lemma~2.8.4]{Cot26b}, this set is uniquely determined by $\tau$.

We first extract an irreducible $k$-representation of $\ol H^0_{[x]_G}(\F_q)$.\footnote{We are careful to write $\ol H^0_{[x]_G}$ instead of $\ol H^0_{[x]}$ for clarity, because in this generality it may happen that $[x]_H \neq [x]_G$.} Let $\tau_{H,0}$ be an irreducible $k$-representation of $\ol H^0_{[x]_G}(\F_q)$ occurring in $\cE(\ol H^0_{[x]_G}, [\ol T', \theta_0'])$. Note that $\tau_{H,0}$ might not be cuspidal (see \cite[Remark~3.4.4]{Cot26b}), so it cannot be the depth $0$ representation in a Yu datum for $H$. Instead we will use parabolic induction to descend further to a Levi subgroup of $H$.

By \cite[Lemma~2.10.2]{Cot26b}, there is a split $\F_q$-subtorus $\ol S^\circ \subset \ol H^0_{[x]_G}$ which is the maximal central split $\F_q$-torus of $\ol L \coloneqq Z_{\ol H^0_{[x]_G}}(\ol S^\circ)$, a parabolic $\F_q$-subgroup $\ol P^\circ \subset (\ol H^0_{[x]_G})^\circ$ with Levi factor $\ol L^\circ$, and an irreducible cuspidal $k$-representation $\tau_{L,0}$ of $\ol L(\F_q)$ such that $\tau_{H,0}$ is an irreducible subrepresentation of $\ind_{\ol P(\F_q)}^{\ol H^0_{[x]_G}(\F_q)}(\tau_{L,0})$, where $\ol P = \ol L \cdot \ol P^\circ$.

Let $\cH^0_{[x]_G}$ be the smooth separated $\cO_F$-model of $H^0$ with $\cH^0_{[x]_G}(\cO_F) = H^0(F)_{[x]_G}$, so $\cH^0_{[x]_G}$ is an open $\cO_F$-subgroup scheme of $\cH^0_{[x]_H}$. Let $\cS \subset \cH^0_{[x]_G}$ be an $\cO_F$-subtorus lifting $\ol S^\circ$, as in the construction \S\ref{sss:p-adic-torus-character-pair}. Let $S$ be the generic fiber of $\cS$ and set $L=Z_H(S)$. For all $0\leq i\leq d$, put
\[
L^i \coloneqq L \cap G^i = L \cap H^i.
\]

Since $\cS\subset\cH^0_{[x]_G}$, \cite[\S9.7.6(2)]{KP} shows that there is a maximally unramified maximally split maximal $F$-torus $T_x\subset H^0$ containing $S$ whose apartment in $\cB(H^0)$ contains $x$; we will also regard $x$ as a point of $\cB(T_x)$ via a choice of embedding $\cB(T_x) \subset \cB(L^0)$. By definition of $L$, we must then have $T_x\subset L^0$. Fixing an embedding of $\cB(L^0)$ in $\cB(H^0)$, we may regard $x$ as a point of $\cA(T_x) \subset \cB(L^0)$. Note that the Moy--Prasad filtration depends only on the image $[x]_L$ of $x$ in $\cB(L^0_{\der})$, hence is independent of this choice of lift. Recall the notation $\cL^0_{[x]_L}$ and $\ol L^0_{[x]_L}$ from \S\ref{ssec:notation}. 

\begin{lemma}\label{lemma:descended-yu-datum-anisotropic}
The torus $S$ is the maximal $F$-split central subtorus of $L^0$, and thus $Z_{L^0}/Z_L$ is $F$-anisotropic.
\end{lemma}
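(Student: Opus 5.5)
First, $S$ is central in $L^0$. By construction $L=Z_H(S)$, so $S$ is central in $L$ and $S\subset T_x\subset L^0$; thus $S$ is a central $F$-split subtorus of $L^0$. It remains to show that $S$ contains every $F$-split central subtorus of $L^0$. The plan is to pass to special fibers: compare the maximal split central torus $A$ of $L^0$ with $\ol S^\circ$, which by the choice in \cite[Lemma~2.10.2]{Cot26b} is the maximal central split torus of $\ol L=Z_{\ol H^0_{[x]_G}}(\ol S^\circ)$.

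Let $A\supset S$ be the maximal $F$-split central torus of $L^0$. Since $A$ is split and lies in the maximally unramified maximal torus $T_x$ whose apartment contains $x$, its $\cO_F$-model (the closure in $\cL^0_{[x]_L}$, equivalently inside $\cH^0_{[x]_G}$ via the split part of $T_x$) is a split $\cO_F$-torus $\cA\supset\cS$. Its special fiber $\ol A^\circ$ is then a split torus in $\ol H^0_{[x]_G}$ containing $\ol S^\circ$. The key claim is that $\ol A^\circ$ centralizes $\ol L$, and I would argue it as follows.
\begin{itemize}
\item The image of the centralizer of the torus, $\cZ(\cS)=Z_{\cH^0_{[x]_G}}(\cS)$, has reductive quotient of special fiber $Z(\ol S^\circ)=\ol L$ (use \cite[\S9.7.6]{KP}, or the standard fact that centralizers of tori in smooth models commute with special fiber).
\item Its generic fiber is $L^0$, intersected with $H^0$, namely $Z_{H^0}(S)$.
\item Since $A$ is central in that generic fiber, $\cA$ commutes with $\cZ(\cS)$ by schematic density of the generic fiber (the model is flat).
\end{itemize}
Hence $\ol A^\circ$ is central in $\ol L^\circ$. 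It also commutes with the component group part, which comes from $L^0(F)_{[x]}$-points acting trivially on $A$. So $\ol A^\circ$ is a split central torus of $\ol L$, and maximality of $\ol S^\circ$ forces $\ol A^\circ=\ol S^\circ$. Comparing dimensions, split $\cO_F$-tori have constant fiber dimension, so $\dim A=\dim S$ and therefore $A=S$.

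For the consequence, $Z_{L^0}/Z_L$ is anisotropic if and only if $Z_{L^0}$ and $Z_L$ have the same maximal split central subtorus. Since $S\subset Z_L$ is split and $Z_L\subset Z_{L^0}$ (as $L^0\subset L$ share the maximal torus $T$ or $T_x$), the maximal split subtorus of $Z_{L^0}$, which is $S$, lies in $Z_L$. It follows that $(Z_{L^0}/Z_L)$ has trivial split part, because maximal split subtori of quotients of tori are images of the maximal split subtori.

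The main obstacle is the special-fiber step: one needs that the reductive quotient of the special fiber of the centralizer model is exactly $\ol L$. The subtlety is that $\cH^0_{[x]_G}$ may be a non-connected, stabilizer-of-$[x]_G$ model with $[x]_G\neq[x]_H$. I would handle this by working with the relative identity components first, then checking that the extra components act trivially on the split torus $\ol A^\circ$.
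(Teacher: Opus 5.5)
Your proposal is correct and follows essentially the same route as the paper. The paper also takes the maximal $F$-split central torus $A\supset S$ of $L^0$, extends it to a split $\cO_F$-torus in the integral model of $L^0$ (citing \cite[Axiom 4.1.20]{KP} for the closed embedding $\cA\to\cL^0_{[x]_L}$), deduces $\ol A^\circ=\ol S^\circ$ from maximality, and concludes $A=S$ by comparing dimensions, while your schematic-density argument for centrality and your $F$-rank count for the anisotropy of $Z_{L^0}/Z_L$ spell out steps the paper leaves implicit.
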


\begin{proof}
Let $A$ be the maximal $F$-split central subtorus of $L^0$, so $S \subset A$. By \cite[Axiom 4.1.20]{KP}, if $\cA$ is the split $\cO_F$-torus with generic fiber $A$, then the inclusion $A \subset L^0$ extends to an $\cO_F$-homomorphism $\cA \to \cL^0_{[x]_L}$ which is a closed embedding. But then the special fiber $\ol A^\circ$ is an $\F_q$-split central subtorus of $(\ol L^0_{[x]_L})^\circ$, so $\ol A^\circ = \ol S^\circ$ by choice of $\ol S^\circ$. For dimension reasons, it follows that $S=A$.
\end{proof}

Note that Lemma~\ref{lemma:descended-yu-datum-anisotropic} shows that $\cB(L^0_{\der}) \subset \cB(L_{\der})$, so the notation $[x]_L$ above is consistent with \S\ref{ssec:notation}.

\begin{lemma}\label{lemma:descended-yu-datum-vertex}
The point $[x]_L$ is a vertex in $\cB(L^0_{\der})$.
\end{lemma}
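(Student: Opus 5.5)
The plan is to apply the standard vertex criterion for Bruhat--Tits buildings. A point $[y]\in\cB(M_{\der})$ is a vertex exactly when the maximal $\F_q$-split central torus of the reductive quotient $(\ol M_{[y]})^\circ$ of the parahoric equals the special fiber of the maximal $F$-split central torus of $M$. Equivalently, the two tori have the same dimension, because the facet of $[y]$ has dimension equal to the difference of these ranks. We apply this with $M = L^0$ and $y = x$. Lemma~\ref{lemma:descended-yu-datum-anisotropic} already identifies the maximal $F$-split central torus of $L^0$ with $S$, whose special fiber is $\ol S^\circ$. It therefore suffices to show that the maximal $\F_q$-split central torus of $(\ol L^0_{[x]_L})^\circ$ is $\ol S^\circ$.

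First, we note that $L^0 = L\cap H^0 = Z_{H^0}(S)$, since $L = Z_H(S)$ and $S\subset H^0$. Next, we identify the connected parahoric $\cO_F$-model of $L^0$ at $x$ with the relative identity component of the scheme-theoretic centralizer $Z_{\cH^0_{[x]_G}}(\cS)$. The expected tool is the description of centralizers of split tori in parahoric group schemes in \cite{KP}, applied in the apartment of the maximally split maximally unramified torus $T_x\supset S$ containing $x$. This identification is compatible with the choice of $T_x$ and with the embedding $\cB(L^0)\subset\cB(H^0)$, because the Moy--Prasad filtration only depends on $[x]_L$. Passing to special fibers and dividing by unipotent radicals, we expect
\[
(\ol L^0_{[x]_L})^\circ \cong Z_{(\ol H^0_{[x]_G})^\circ}(\ol S^\circ)^\circ = \ol L^\circ .
\]
Here we use that the centralizer of a torus in a smooth connected affine group has unipotent radical mapping into that of the ambient group, and that $\ol L^\circ$ is reductive as a Levi factor of $\ol P^\circ$.

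Given this identification, the conclusion is immediate. By the choice made via \cite[Lemma~2.10.2]{Cot26b}, $\ol S^\circ$ is the maximal central split $\F_q$-torus of $\ol L = Z_{\ol H^0_{[x]_G}}(\ol S^\circ)$, and hence of $\ol L^\circ$. Its dimension equals $\dim S$, which is the $F$-split rank of the center of $L^0$. Therefore the facet of $[x]_L$ in $\cB(L^0_{\der})$ has dimension $0$, so $[x]_L$ is a vertex.

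The main obstacle is the identification in the second step. Two subtleties have to be handled there. The first is the difference between $[x]_G$ and $[x]_H$: the model $\cH^0_{[x]_G}$ is only an open subgroup of $\cH^0_{[x]_H}$, but this is harmless because only identity components enter. The second is checking that the special fiber of $Z(\cS)^\circ$ surjects onto $Z(\ol S^\circ)^\circ$ modulo unipotent radicals. For the latter, I would first try a root-by-root comparison in the apartment of $T_x$: both sides are generated by $\ol T_x$ and by the affine root subgroups at $x$ whose gradients vanish on $S$.
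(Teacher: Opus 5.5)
Your proposal is correct and is essentially the paper's argument: both rest on the fact that $\ol S^\circ$, of dimension $\dim S$, is the maximal $\F_q$-split central torus of $(\ol L^0_{[x]_L})^\circ$, while $S$ is an $F$-split central torus of $L^0$. The paper only proves the half of your vertex criterion that it needs, by taking a vertex $y$ in the closure of the facet of $[x]_L$ and using \cite[Axiom 4.1.22]{KP} to see that $y\neq[x]_L$ would force the split central rank at $[x]_L$ to be strictly larger than that at $y$, which is at least $\dim S$; and it treats the identification $(\ol L^0_{[x]_L})^\circ\cong\ol L^\circ$ that you single out as the main obstacle as holding ``by construction'', resting on \cite[Lemma~2.1.3]{Cot26b} and the equality $(\cL^0_{[x]_G})^\circ=(\cL^0_{[x]_L})^\circ$ recorded in \S\ref{sssec:descended-depth-zero-rep}.
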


\begin{proof}
Let $\cF$ be the facet of $\cB(L^0_{\der})$ containing $[x]_L$, and let $y$ be a vertex in the closure of $\cF$; we aim to show $y=[x]_L$. By \cite[Axiom 4.1.22]{KP}, there is an associated $\cO_F$-homomorphism $(\cL^0_{[x]_L})^\circ \to (\cL^0_y)^\circ$ between the associated parahoric models, and the associated map on special fibers gives rise to an isomorphism of $(\ol L^0_{[x]_L})^\circ$ with a Levi factor of a proper parabolic $\F_q$-subgroup of $(\ol L^0_y)^\circ$. In particular, if $[x]_L\neq y$ then the maximal $\F_q$-split central torus of $(\ol L^0_{[x]_L})^\circ$ is of strictly larger dimension than that of $(\ol L^0_y)^\circ$.

By construction, the $\F_q$-split central torus $\ol S^\circ$ of $(\ol L^0_{[x]_L})^\circ$ is maximal, and $\ol S^\circ$ is the special fiber of an $\cO_F$-torus $\cS \subset \cL^0_{[x]_L}$ with generic fiber $S$. Since $S$ is an $F$-split central torus in $L^0$, it follows that the maximal $\F_q$-split central torus of $(\ol L^0_y)^\circ$ is of dimension at least $\dim S$, concluding the proof.
\end{proof}

\subsection{Constructing the depth $0$ representation}\label{sssec:descended-depth-zero-rep}

We are almost ready to define a Yu datum $\Psi_L$, but there are two remaining technical subtleties:
\begin{itemize}
\item $\tau_{L,0}$ is a representation of $\ol L^0_{[x]_G}(\F_q)$, which is generally smaller than $\ol L^0_{[x]_L}(\F_q)$.
\item Even when $\ol L^0_{[x]_G}(\F_q) = \ol L^0_{[x]_L}(\F_q)$, the representation $\tau_{L,0}$ must be twisted by a sign character to give the ``correct'' $\Psi_L$.
\end{itemize}

Let $\cL^0_{[x]_G}\coloneqq Z_{\cH^0_{[x]_G}}(\cS)$, a smooth separated $\cO_F$-model of $L^0$ with
\[
\cL^0_{[x]_G}(\cO_F)=L^0(F)_{[x]_G}\coloneqq L^0(F)\cap G^0(F)_{[x]_{G^0}}.
\]
Let $\ol L^0_{[x]_G}$ denote the quotient of $(\cL^0_{[x]_G})_{\F_q}$ by the unipotent radical of $(\cL^0_{[x]_G})_{\F_q}^\circ$, so $\ol L^0_{[x]_G}$ is a paraductive $\F_q$-group scheme.

By \cite[Proposition~2.6.2]{Cot26b}, there is a generalized maximal torus-character pair $(\ol T_L,\eta_0)$ in $\ol L^0_{[x]_G}$ such that
\[
\tau_{L,0}\in\cE(\ol L^0_{[x]_G},[\ol T_L,\eta_0]).
\]
By \cite[Proposition~2.8.5]{Cot26b}, for either $k\in\{\ol\Q_\ell,\ol\F_\ell\}$ we have
\[
\tau_{H,0}\in\cE(\ol H^0_{[x]_G},[\ol T_L,\eta_0]).
\]

\subsubsection{The sign twisting}
Fix an $F$-parabolic subgroup $P\subset H$ with Levi factor $L$. Since $Z(G^0)/Z(G)$ is $F$-anisotropic, the groups $G^0$ and $G$ share a maximal $F$-split central subtorus. By compatibility of the Moy--Prasad filtration with respect to Levi $F$-subgroups, we have
\[
L^0(F)_{x,0+}=L^0(F)\cap H^0(F)_{x,0+}.
\]
Let $\epsilon_{\sharp,x}^{\vec G, L}$ be the (depth $0$) sign character of $L^0(F)_{[x]_G}$ defined in \eqref{eqn:sharp-x-difference}.
Define a sign character $\kappa_{G,L,x}$ on $L^0(F)_{[x]_G}$ by
\begin{equation}\label{eq:ambient-full-sharp-character}
\kappa_{G,L,x}
=\epsilon_G\epsilon_L\epsilon_{\sharp,x}^{\vec G, L}.
\end{equation}



\subsubsection{The extension} 

By \cite[Lemma~2.1.3]{Cot26b}, we have
\begin{equation}\label{eq:ambient-smaller-quotient}
\ol L^0_{[x]_G}=Z_{\ol H^0_{[x]_G}}(\ol S^\circ).
\end{equation}
By construction, the character $\kappa_{G,L,x}$ of \eqref{eq:ambient-full-sharp-character} factors through $\ol L^0_{[x]_G}(\F_q)$; we use the same notation for the resulting quotient character. Let $\ol T_{L,1} = Z_{\ol L^0_{[x]_L}}(\ol T_L^\circ)$. By \cite[Lemma~2.10.3]{Cot26b}, if $\tau_L$ is a representation of $\ol L^0_{[x]_L}(\F_q)$ whose restriction to $\ol L^0_{[x]_G}(\F_q)$ admits $\tau_{L,0} \otimes \kappa_{G,L,x}$ as an irreducible subquotient, then there exists a character $\eta_1\co\ol T_{L,1}(\F_q)\to k^\times$ and an irreducible cuspidal $k$-representation $\tau_L$ of $\ol L^0_{[x]_L}(\F_q)$ such that
\begin{align}
\eta_1|_{\ol T_L(\F_q)}
&=\eta_0\,\kappa_{G,L,x}|_{\ol T_L(\F_q)},
\label{eq:signed-descendant-pair}\\
\tau_{L,0}\otimes\kappa_{G,L,x}
&\subset\tau_L|_{\ol L^0_{[x]_G}(\F_q)},
\qquad
\tau_L\in\cE(\ol L^0_{[x]_L},[\ol T_{L,1},\eta_1]).
\label{eq:signed-descendant-restriction}
\end{align}

\subsection{The descended Yu datum and functoriality}\label{ss:unram-levi-yu-datum}
With notation as above, we also use $\tau_L$ to denote the inflation of $\tau_L$ to $L^0(F)_{[x]_L}$.
For every $0\leq i\leq d$, define $\phi_{L,i}\coloneqq \phi_i|_{L^i(F)}$. We may now define the $5$-tuple
\begin{equation}\label{eq:L-Yu-datum}
\Psi_{L,\tau_L} = ((L^i)_{0 \leq i \leq d}, x, (r_i)_{0 \leq i \leq d}, \tau_L, (\phi_{L,i})_{0\leq i \leq d}).
\end{equation}
We proceed to verify that it is a Yu datum.

\begin{lemma}\label{lemma:generic-character-restriction}For every $0\leq i<d$, the character $\phi_{L,i}$ is $L^{i+1}$-generic of depth $r_i$. Moreover, if $r_{d-1}<r_d$, then $\phi_{L,d}$ has depth $r_d$.
\end{lemma}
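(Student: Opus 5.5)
The plan is to check the two conditions characterizing genericity from \cite[\S 8]{Yu01}, in the form recalled in the proof of Lemma~\ref{lemma:generic-stays-same}. For the depth claims we use Lemma~\ref{lemma:character-restricted-to-torus-depth}. First we reduce, as in that proof, to the case that $G_{\der}$ is simply connected. We pass to a z-extension $\wt G\to G$ and use the surjectivity $\wt G(F)_{x,r}\to G(F)_{x,r}$ from \cite[Lemma 3.5.3]{Kal19}. This pulls back the tower $L^0\subset\cdots\subset L^d$ compatibly and guarantees Hypothesis $\mathrm{C}(\vec G)$. Fix $0\leq i<d$, and let $X_i^*\in\Lie^*(G^i)^{G^i}(F)_{-r_i}$ represent $\phi_i$ on $G^i(F)_{x,r_i}/G^i(F)_{x,r_i+}$.

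The key step is that $X_{L,i}^*\coloneqq X_i^*|_{\Lie(L^i)}$ represents $\phi_{L,i}$ on $L^i(F)_{x,r_i}$. It also lies in $\Lie^*(L^i)^{L^i}(F)$, since $L^i\subset G^i$ and $X_i^*$ is $G^i$-invariant. For the representation claim we want $L^i(F)_{x,r_i}=L^i(F)\cap G^i(F)_{x,r_i}$, compatibly with the Moy--Prasad isomorphisms. Note that $L^i=Z_{H^i}(S)$ is a Levi of the unramified twisted Levi $H^i$ of $G^i$, and $x\in\cA(T_x)$ with $T_x\subset L^0$. So this should follow from compatibility of the Moy--Prasad filtration with Levi subgroups and with tame twisted Levis, as used in Lemma~\ref{lemma:yu-hat-functoriality}(1). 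This requires the chosen embeddings $\cB(L^0)\subset\cB(H^0)\subset\cB(G^0)$ to be compatible, which they are by construction.

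Next we verify genericity. Choose the maximal torus $T_x\subset L^0$, which is also a maximal torus of $G^i$. The roots $\Phi((L^{i+1}/L^i)_{\ol F},(T_x)_{\ol F})$ form a subset of $\Phi((G^{i+1}/G^i)_{\ol F},(T_x)_{\ol F})$, because $L^{i+1}\cap G^i=L^i$. The coroots $H_\alpha$ are the same in both, because the simply connected cover of $L^{i+1}_{\der}$ maps to $G^{i+1}_{\mathrm{sc}}$ compatibly on coroots. Hence condition (1), namely $v(\langle X^*,H_\alpha\rangle)=-r_i$, is inherited directly, as in the footnote preceding Lemma~\ref{lemma:comparison-to-minimal-ramified-chi-data}.

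Condition (2) requires that the stabilizer of $\ol X^*$ in $W(L^{i+1})$ is $W(L^i)$. The stabilizer in $W(G^{i+1})$ is $W(G^i)$, so intersecting with $W(L^{i+1})$ gives $W(G^i)\cap W(L^{i+1})$. This intersection is the Weyl group generated by reflections in roots lying in both $G^i$ and $L^{i+1}$, i.e. $W(L^i)$. That last identification holds because $W(G^i)\cap W(L^{i+1})$ is the Weyl group of $G^i\cap L^{i+1}$, as both are centralizers of tori or of subgroups containing $T_x$. We expect this to be the main subtle point. If necessary, we will argue via the characterization of twisted Levis as $Z(\cdot)$ of central tori: an element of $W(L^{i+1})$ that fixes the character of $Z(G^i)^\circ$ also normalizes $L^{i+1}\cap G^i$ and stabilizes its root system.

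Finally, consider depth. The character $\phi_{L,i}$ is trivial on $L^i(F)_{x,r_i+}$ because it is a restriction. Its restriction to $T_x(F)$ has depth exactly $r_i$ by Lemma~\ref{lemma:character-restricted-to-torus-depth}, applied in $G^i$ with $T_x\subset G^0$ and $x\in\cB(T_x)$. Hence $\phi_{L,i}$ has depth exactly $r_i$. The same lemma gives the statement for $\phi_{L,d}$ when $r_{d-1}<r_d$, since then $\phi_d\neq1$.
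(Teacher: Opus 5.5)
Your proposal is correct and follows the paper's proof. The depth claims come from Lemma~\ref{lemma:character-restricted-to-torus-depth} applied to a tame maximal torus of $G^0$ lying in $L^i$, and genericity comes from restricting $X_i^*$ to $\Lie(L^i)$ and using functoriality of the Moy--Prasad isomorphisms. You additionally unpack Yu's two conditions (the roots of $L^{i+1}/L^i$ form a subset of those of $G^{i+1}/G^i$, and $W(G^i)\cap W(L^{i+1})=W(L^i)$), which the paper dismisses as ``by definition of genericity.'' The z-extension reduction at the start is unnecessary, since the element $X_i^*$ is already supplied by the Yu datum.
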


\begin{proof}
Since each $L^i$ contains a tamely ramified maximal $F$-torus of $G^0$, the depth claims follow from Lemma~\ref{lemma:character-restricted-to-torus-depth} and the fact that the Moy--Prasad filtration restricts in the obvious way from $G^i$ to $L^i$. For the genericity assertion, fix $0 \leq i<d$. If $L^i = L^{i+1}$, then genericity is vacuous, so we may and do assume that $L^i \neq L^{i+1}$.

Observe that $X_i^*|_{\Lie(L^i)}$ is $L^{i+1}$-generic of depth $-r_i$ by definition of genericity. Functoriality of the Moy--Prasad isomorphisms shows that $X_i^*|_{\Lie(L^i)}$ induces $\phi_{L,i}$ on the depth $r_i$ quotient at $x$, so $\phi_{L,i}$ is $L^{i+1}$-generic.
\end{proof}

\begin{prop}\label{prop:descent-to-levi-yu-datum}
The $5$-tuple $\Psi_{L,\tau_L}$ from \eqref{eq:L-Yu-datum} is a normalized irreducible Yu datum for $L$ over $k$. 
\end{prop}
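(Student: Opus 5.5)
We check, in order, the conditions (a)--(e) and (1)--(4) of \S\ref{sss:yu-data} for $\Psi_{L,\tau_L}$, together with irreducibility and normalization.

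\emph{Structural conditions (a)--(c).} Each $L^i = L\cap G^i$ is the centralizer in $G^i\cap H$ of the torus $S$. Since $H$ is an unramified twisted Levi of $G$ and $S$ is split, $L^i$ is a twisted Levi $F$-subgroup of $G^i$. It splits over a tame extension because it contains the maximally unramified torus $T_x$ (tamely ramified), and $L^d=L$. Condition (b) holds since $x\in\cA(T_x)\subset\cB(L^0)$, and the $r_i$ are unchanged, so (c) is inherited from $\Psi$.

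\emph{Condition (1), anisotropy of $Z(L^0)/Z(L)$.} This is Lemma~\ref{lemma:descended-yu-datum-anisotropic}: $S$ is the maximal split central torus of $L^0$. Since $S\subset Z(L)$ by definition of $L=Z_H(S)$, the quotient is anisotropic.

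\emph{Condition (2), $[x]_L$ a vertex.} This is Lemma~\ref{lemma:descended-yu-datum-vertex}.

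\emph{Condition (d)/(3), the depth-zero representation.} By \eqref{eq:signed-descendant-restriction}, $\tau_L$ is an irreducible cuspidal $k$-representation of $\ol L^0_{[x]_L}(\F_q)$ chosen via \cite[Lemma~2.10.3]{Cot26b}. Its inflation to $L^0(F)_{[x]_L}$ is trivial on $L^0(F)_{x,0+}$, since the latter is the kernel of reduction. This gives (d), cuspidality, and irreducibility at once.

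\emph{Conditions (e)/(4), the characters.} These are exactly Lemma~\ref{lemma:generic-character-restriction}. One small point remains when $r_{d-1}=r_d$: then $\phi_d$ is trivial, so $\phi_{L,d}$ is trivial as required.

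\emph{Normalization.} Each $\phi_i$ is trivial on the image of $G^i_{\mathrm{sc}}(F)$. The inclusion $L^i_{\der}\subset G^i_{\der}$ lifts to a map $L^i_{\mathrm{sc}}\to G^i_{\mathrm{sc}}$ (as in the proof of Lemma~\ref{lemma:associated-tori-almost-equiv}). Hence the image of $L^i_{\mathrm{sc}}(F)$ lies in the image of $G^i_{\mathrm{sc}}(F)$, so $\phi_{L,i}$ is normalized.

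The only genuinely delicate point is the generic-element compatibility behind (4). This needs $X_i^*|_{\Lie(L^i)}$ to lie in $\Lie^*(L^i)^{L^i}$ and to remain good and generic for $L^{i+1}$. This is where Lemma~\ref{lemma:generic-character-restriction} does the work, using that the roots of $L^{i+1}/L^i$ form a subset of those of $G^{i+1}/G^i$ with respect to a common torus. I expect no further obstacle beyond citing these earlier lemmas.
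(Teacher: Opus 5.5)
Your proposal is correct and takes essentially the same route as the paper. You check (a)--(c) directly, get (1) and (2) from Lemmas~\ref{lemma:descended-yu-datum-anisotropic} and~\ref{lemma:descended-yu-datum-vertex}, get (3) and irreducibility from the choice of $\tau_L$ in \eqref{eq:signed-descendant-restriction}, get (4) and the depth condition from Lemma~\ref{lemma:generic-character-restriction}, and get normalization by lifting $L^i_{\mathrm{sc}}\to G^i_{\mathrm{sc}}$. One small imprecision: condition (a) requires the $L^i$ to be twisted Levis of $L$, not of $G^i$, but this is immediate because $L^i=Z_L(Z(G^i)^\circ)$ and $Z(G^i)^\circ\subset T_x\subset L$.
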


\begin{proof}
It is clear that the constituents of $\Psi_L$ are of the kind described in (a)-(e) of \S\ref{sss:yu-data}, the depth requirement in (e) for $\phi_d|_{L^d(F)}$ holding by Lemma~\ref{lemma:generic-character-restriction}. Condition (1) follows from Lemma~\ref{lemma:descended-yu-datum-anisotropic}. Condition (2) is the statement of Lemma~\ref{lemma:descended-yu-datum-vertex}. For (3), note that $\tau_L$ is an irreducible cuspidal $k$-representation of $\ol L^0_{[x]_L}(\F_q)$ by definition. Finally, condition (4) follows from Lemma~\ref{lemma:generic-character-restriction}. It is obvious that $\Psi_{L,\tau_L}$ is normalized and irreducible.
\end{proof}


Since $L$ is an unramified twisted Levi $F$-subgroup of $G$, Definition~\ref{defn:canonical-l-embeddings} shows that there is a canonical L-embedding $\ld j_{L,G}\co\ld L\to\ld G$.

\begin{thm}\label{thm:kaletha-functoriality}
Suppose that $p \neq 2$. We have
\begin{equation}\label{eqn:kaletha-inertial-functoriality}
\rho_I^{\Kal}(\Psi) \sim \ld j_{L,G} \circ \rho_I^{\Kal}(\Psi_{L,\tau_L}).
\end{equation}
If $\Psi$ is $F$-non-singular, then $L = H$, the datum $\Psi_{L,\tau_L}$ is $F$-non-singular, and
\begin{equation}\label{eqn:kaletha-non-singular-functoriality}
\rho^{\Kal}(\Psi) \sim \ld j_{L, G} \circ \rho^{\Kal}(\Psi_{L,\tau_L}).
\end{equation}
\end{thm}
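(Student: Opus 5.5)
The plan is to compute both sides in terms of explicit torus-character pairs and then reduce to Proposition~\ref{prop:fks-and-tasho-cancel}, applied to the tower $L \subset G$ in place of $H \subset G$. Since $L$ is a Levi of the unramified twisted Levi $H$, it is itself an unramified twisted Levi of $G$. Because $L^0 \cap G^0$ contains $T_x$, that proposition is applicable once we have a common maximally unramified torus.

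\textbf{Step 1: a common torus.} Choose the pair $(\ol T_L,\eta_0)$ from \S\ref{sssec:descended-depth-zero-rep}. Lift $\ol T_L^\circ$ to an $\cO_F$-torus inside $\cL^0_{[x]_G}$ as in \S\ref{sss:p-adic-torus-character-pair}, and let $T_L$ be the centralizer in $L^0$ of its generic fiber. Then $T_L$ is a maximally unramified maximal $F$-torus of $L^0$, hence also of $H^0$ and of $G^0$. This is where \eqref{eq:ambient-smaller-quotient} is used: the generalized maximal tori of $\ol L^0_{[x]_G}$ are generalized maximal tori of $\ol H^0_{[x]_G} \subset \ol G^0_{[x]}$.

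\textbf{Step 2: the pair for $\Psi_{L,\tau_L}$.} By \eqref{eq:signed-descendant-pair} and \eqref{eq:signed-descendant-restriction}, the associated pair is $(T_L,\theta_L)$, where
\[
\theta_L=\eta_1\prod_i\phi_i|_{T_L(F)_{[x]_L}},
\]
and on $T_L(F)_{[x]_G}$ this restricts to $\eta_0\,\kappa_{G,L,x}\prod_i\phi_i$.

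\textbf{Step 3: the pair for $\Psi$.} By \cite[Proposition~2.8.5]{Cot26b}, $\tau_{H,0}$ lies in the series attached to $[\ol T_L,\eta_0]$. Since $\tau$ and $\tau_{H,0}$ share the series $[\ol T',\theta_0']$, uniqueness of Lusztig series \cite[Lemma~2.8.4]{Cot26b} shows that $\tau\in\cE(\ol G^0_{[x]},[\ol T_L,\eta_0])$. One checks that this is compatible with passing from $\ol H^0_{[x]_G}$ to $\ol G^0_{[x]}$. Hence
\[
(T_L,\theta),\qquad \theta=\eta_0\prod_i\phi_i,
\]
gives a class in $\cT(\Psi)$, at least on $T_L(F)_{[x]_G}$.

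\textbf{Step 4: comparison.} The difference $\theta_L\theta^{-1}$ restricted to bounded elements is $\kappa_{G,L,x}=\epsilon_G\epsilon_L\epsilon^{\vec G,L}_{\sharp,x}$. By Lemma~\ref{lem:sharp-x-char-extension}, $\epsilon^{\vec G,L}_{\sharp,x}$ restricts to $T_L$ as $\epsilon_{G,\sharp,x}\epsilon_{L,\sharp,x}$. Applying Proposition~\ref{prop:fks-and-tasho-cancel} with $H$ replaced by $L$ then gives
\[
\ld j_{T_L,G}\circ\ld\theta|_{I_F}\sim \ld j_{L,G}\circ\ld j_{T_L,L}\circ\ld\theta_L|_{I_F}.
\]
Lemma~\ref{lemma:associated-tori-inertial-parameter} allows replacing the chosen pairs by any others, which yields \eqref{eqn:kaletha-inertial-functoriality}. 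A subtle point is that $\theta$ is only defined on $T_L(F)_{[x]_G}$ while $\theta_L$ lives on $T_L(F)_{[x]_L}$. Both contain $T_L(F)_{\mathrm b}$, which is all the inertial parameter sees.

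\textbf{Non-singular case.} If $\Psi$ is $F$-non-singular, Lemma~\ref{lemma:ns-yu-data-and-characters}(3) shows that $T$ is elliptic and the pair is unique. Then $\theta_0'$ is non-singular for $\ol H^0_{[x]_G}$ too, being the restriction of a non-singular character to fewer roots. So $\tau_{H,0}$ is cuspidal with elliptic $\ol T'$, the split torus $\ol S^\circ$ is central in $H$, and therefore $L=H$. Ellipticity of $T$ makes $[x]_G=[x]_H$ on $T$, and non-singularity of $\theta_L$ follows since $\Phi(H,T)\subset\Phi(G,T)$. The elliptic case of Proposition~\ref{prop:fks-and-tasho-cancel} then gives \eqref{eqn:kaletha-non-singular-functoriality} on all of $W_F$.

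\textbf{Main obstacle.} I expect the hardest step to be Step 3 together with the sign bookkeeping. This means verifying that the Lusztig series of $\tau$ over $\ol G^0_{[x]}$ is the one induced from $[\ol T_L,\eta_0]$, and that $\kappa_{G,L,x}$ restricted to $\ol T_L$ matches exactly the product of characters appearing in Proposition~\ref{prop:fks-and-tasho-cancel}. This includes checking that the factors $\epsilon_f$ cancel, via Lemma~\ref{lemma:fks-unram-levi}.
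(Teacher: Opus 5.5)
Your outline matches the paper's proof. You lift $\ol T_L$ to $T_L$, compare the two characters on $T_L(F)_{[x]_G}$ via $\kappa_{G,L,x}=\epsilon_G\epsilon_L\epsilon_{G,\sharp,x}\epsilon_{L,\sharp,x}$ using Lemma~\ref{lem:sharp-x-char-extension}, and conclude with Proposition~\ref{prop:fks-and-tasho-cancel} for $L\subset G$ plus Lemma~\ref{lemma:associated-tori-inertial-parameter}. In the non-singular case, $L=H$ comes from cuspidality of the non-singular series with elliptic torus.

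Two differences in packaging are worth noting. Where you assert $(T_L,\eta_0\prod_i\phi_i)\in\cT(\Psi)$ and hedge about domains, the paper instead lifts $\ol T'$ to $T'$. It shows $(T,\theta)$, $(T',\theta')$, $(T_L,\wt\eta_0)$ are almost equivalent via Lemma~\ref{lemma:conjugacy-of-torus-character-pairs}, which is all that Lemma~\ref{lemma:associated-tori-inertial-parameter} actually uses. The paper also explicitly extends $\wt\eta_0|_{T_L(F)_{\mathrm b}}$ to $T_L(F)$, using divisibility of $k^\times$, before invoking the proposition.

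One step in your non-singular case does not work as justified. The character attached to $\Psi_{H,\tau_H}$ is $\theta\kappa_{G,H,x}$, not $\theta$. So $\Phi(H,T)\subset\Phi(G,T)$ alone does not give $F$-non-singularity. The twist $\kappa_{G,H,x}$ can be a nontrivial sign character (Example~\ref{example:nontrivial-sign-char}), and a $\{\pm1\}$-valued twist could a priori cancel a quadratic value of $\theta\circ\Nm_{F'/F}\circ\alpha^\vee_{\mathrm{res}}$ on $\cO_{F'}^\times$. Your argument does not address this.

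A clean repair uses the inertial identity you have already proved. By Lemmas~\ref{lemma:canonical-l-embedding-base-change} and~\ref{lem:unramified-L-embedding-properties}(1), $\ld j_{H,G}$ is, up to $\wh G(k)$-conjugacy, the standard embedding on $\wh H\rtimes I_F$. Hence $Z_{\wh H}(\rho_I^{\Kal}(\Psi_{H,\tau_H})(I_F))^\circ_{\red}$ embeds in $Z_{\wh G}(\rho_I^{\Kal}(\Psi)(I_F))^\circ_{\red}$. The latter is a torus by Lemma~\ref{lemma:non-singular-irreducible}(2), so the former is a torus too. Lemma~\ref{lemma:non-singular-irreducible}(2), applied to the normalized irreducible datum $\Psi_{H,\tau_H}$ from Proposition~\ref{prop:descent-to-levi-yu-datum}, then gives its non-singularity. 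The full identity follows from the elliptic case of Proposition~\ref{prop:fks-and-tasho-cancel}, as you say.
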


\begin{proof}
We consider the general case first. We will use the notation of the preceding sections freely. Choose, by the construction of \S\ref{ss:kal-param}, an $\cO_F$-torus $\cS'\subset\cH^0_{[x]_G}$ whose special fiber maps onto $\ol T'^{\circ}$ (with notation as in \S\ref{sss:descended-levi-construction}), and let $S'$ be its generic fiber. Let
\[
T'\coloneqq Z_{G^0}(S')=Z_{H^0}(S')\subset H^0,
\]
so $T'$ is a maximally unramified maximal $F$-torus of both $G^0$ and $H^0$. Let $\theta'_0$ also denote its inflation along the reduction map from $T'(F)_{[x]_G}$ to $\ol T'(\F_q)$. The pair $[(T',\theta')]\in\cT(\Psi)$ associated to $(\ol T',\theta'_0)$ satisfies
\[
\theta'=\theta'_0\prod_{i=0}^d\phi_i|_{T'(F)_{[x]}}.
\]
Note that $(\cL^0_{[x]_G})^\circ=(\cL^0_{[x]_L})^\circ$. Let $T_L \subset L^0$ be the torus associated to $\ol T_L$ in the same way that $T'$ is associated to $\ol T'$. Choose compatible embeddings $\cB(T_L)\subset\cB(L^0)\subset\cB(H^0)$, so we may regard $x$ as a point of $\cB(T_L)$ and $\cB(L^0)$.

Define
\[
\wt\eta_0 =\eta_0\prod_{i=0}^d\phi_i|_{T_L(F)_{[x]_G}}, \quad
\wt\eta_1
=\eta_1\prod_{i=0}^d\phi_{L,i}|_{T_L(F)_{[x]_L}}
\]
on $T_L(F)_{[x]_G}$ and $T_L(F)_{[x]_L}$, respectively.
By \eqref{eq:signed-descendant-pair}, we have 
\[
\wt\eta_1|_{T_L(F)_{[x]_G}}
=\bigl(\wt\eta_0\,\kappa_{G,L,x}\bigr)|_{T_L(F)_{[x]_G}}
=\bigl(\wt\eta_0\,\epsilon_G\epsilon_L
\epsilon_{G,\sharp,x}\epsilon_{L,\sharp,x}\bigr)|_{T_L(F)_{[x]_G}}.
\]
By construction, $T$, $T'$, and $T_L$ are maximally unramified in $G^0$. By \cite[Lemma~2.8.4]{Cot26b}, we have $\cE(\ol G^0_{[x]}, [\ol T, \theta_0]) = \cE(\ol G^0_{[x]}, [\ol T', \theta_0'])$ since both sets contain $\tau$. Meanwhile, the definition of $\tau_{H,0}$ shows that $\cE(\ol H^0_{[x]_G}, [\ol T',\theta_0']) = \cE(\ol H^0_{[x]_G}, [\ol T_L,\eta_0])$. Thus Lemma~\ref{lemma:conjugacy-of-torus-character-pairs} shows that $(T,\theta)$, $(T',\theta')$, and $(T_L,\wt\eta_0)$ are almost equivalent in $G^0$, and hence they are almost equivalent in $G$.

By Lemma~\ref{lemma:associated-tori-inertial-parameter}, it follows that
\[
\ld j_{T,G}\circ\ld(\theta|_{T(F)_{\mathrm b}})|_{I_F}
\sim
\ld j_{T_L,G}\circ\ld(\wt\eta_0|_{T_L(F)_{\mathrm b}})|_{I_F}.
\]
Hence \eqref{eqn:kaletha-inertial-functoriality} reduces to the claim
\begin{equation}\label{eqn:kaletha-functoriality-reduced}
\ld j_{L,G}\circ\ld j_{T_L,L}\circ\ld(\wt\eta_1|_{T_L(F)_{\mathrm b}})|_{I_F}
\sim\ld j_{T_L,G}\circ\ld(\wt\eta_0|_{T_L(F)_{\mathrm b}})|_{I_F},
\end{equation}
where $\ld j_{T_L,L}$ and $\ld j_{T_L,G}$ are the L-embeddings from Definition~\ref{defn:canonical-l-embeddings}. Since $k^\times$ is divisible and $T_L(F)_{\mathrm b}$ is open, $\wt\eta_0|_{T_L(F)_{\mathrm b}}$ extends to a smooth character of $T_L(F)$. Then \eqref{eqn:kaletha-functoriality-reduced} follows from Proposition~\ref{prop:fks-and-tasho-cancel} applied to any such extension.

Now suppose that $\Psi$ is $F$-non-singular. The proof of \eqref{eqn:kaletha-non-singular-functoriality} is similar to the above except easier, since no parabolic induction is needed, so we will be concise. 

Lemma~\ref{lemma:kaletha-parameter-choice-independence}, Lemma~\ref{lemma:ns-yu-data-and-characters}(3), and \cite[Fact~3.1.4(1)]{Kal21b} show that the chosen $\theta$ is $F$-non-singular and that $\tau$ is non-singular. \cite[Lemma~2.9.4]{Cot26b}, applied to $\cE(\ol H^0_{[x]_G},[\ol T',\theta_0'])$, implies that $\tau_{H,0}$ is non-singular and cuspidal; thus $L = H$ and $\tau_H = \tau_L$ is non-singular and cuspidal by definition. We now follow the same argument as above for the general case, except that we use \cite[Lemma~2.9.2]{Cot26b} in place of \cite[Lemma~2.8.4]{Cot26b}. Since $\Psi$ and $\Psi_{H,\tau_H}$ are $\F$-non-singular as above, Lemma~\ref{lemma:ns-yu-data-and-characters} and Lemma~\ref{lemma:conjugacy-of-torus-character-pairs} imply that $(T',\theta')$ is $G^0(F)$-conjugate to $(T,\theta)$.
From the definitions, it follows that $\Psi_{H,\tau_H}$ is $F$-non-singular. Note that Lemma~\ref{lemma:ns-yu-data-and-characters}(3) implies $T$ is elliptic in $G$, so $T'$ is also elliptic in $G$. Then Proposition~\ref{prop:fks-and-tasho-cancel} and \eqref{eq:signed-descendant-pair} yield
\[
\rho^{\Kal}(\Psi)
\sim \ld j_{T_L,G}\circ\ld(\wt\eta_0)
\sim \ld j_{L,G}\circ\ld j_{T_L,L}\circ\ld(\wt\eta_1)
=\ld j_{L,G}\circ\rho^{\Kal}(\Psi_L).
\]
This proves \eqref{eqn:kaletha-non-singular-functoriality}.
\end{proof}

\subsection{An example}\label{ss:sign-example}

The following example justifies the pains to which we have gone in defining sign twistings.

\begin{example}\label{example:nontrivial-sign-char}
It can happen that $\kappa_{G,L,x} \neq 1$. For example, let $S_1, S_2 \subset \SL_2$ be elliptic $F$-tori such that $S_1$ is unramified and $S_2$ is ramified. Let $G = \Sp_4$, and let $T = S_1 \times S_2 \subset \SL_2 \times \SL_2 \subset G$. Let $H = S_1 \times \SL_2$, so $H$ is an unramified twisted Levi $F$-subgroup of $G$. Let
\[
\Psi = ((G^0 \subsetneq G^1), x, (r_0=r_1=r>0), \tau, (\phi_0, 1))
\]
be a normalized Yu datum for $G$ over $\ol\Q_\ell$, where $G^0 = \SL_2 \times S_2$. Choose compatible embeddings $\cB(T)\subset\cB(G^0)\subset\cB(G)$, denote the images of the unique point of $\cB(T)$ by $x$, and suppose that $\tau$ is non-singular. Every irreducible constituent of ${}^*R^{\ol G^0_{[x]}}_{\ol H^0_{[x]_G}}(\tau)$ is non-singular and cuspidal by \cite[Lemma~2.9.4]{Cot26b}, so the construction of this section gives $L=H$ and a Yu datum $\Psi_H$ with $H^0=T$ and $H^1=H$. If $\epsilon_G$ and $\epsilon_H$ are constructed from $\Psi$ and $\Psi_H$ as in \S\ref{sss:yu-data-fks}, then we claim $\epsilon_G\epsilon_H\epsilon_{G,\sharp,x}\epsilon_{H,\sharp,x}|_{T(F)_x} \neq 1$. By Lemma~\ref{lemma:fks-unram-levi}, we have
\[
\epsilon_G\epsilon_H\epsilon_{G,\sharp,x}\epsilon_{H,\sharp,x} = \epsilon_{G,\flat}\epsilon_{H,\flat},
\]
where $\epsilon_{G,\flat} = \epsilon_{\flat,0}^{G^1/G^0}\epsilon_{\flat,1}^{G^1/G^0}\epsilon_{\flat,2}^{G^1/G^0}$, and similarly for $\epsilon_{H,\flat}$.

Let $e_i \in X^*((S_i)_{\ol F})$ be a $\Z$-basis element for $i = 1, 2$ such that the absolute roots of $T_{\ol F}$ are (up to sign) given by
\[
\alpha=e_1-e_2,\qquad \beta=2e_2,
\qquad \alpha+\beta=e_1+e_2,
\qquad 2\alpha+\beta=2e_1.
\]
Thus $\Phi(H_{\ol F},T_{\ol F})=\{\pm\beta\}$ and $\Phi(G^0_{\ol F},T_{\ol F})=\{\pm(2\alpha+\beta)\}$. Let $E/F$ be the totally ramified quadratic splitting field of $S_2$, and let $F_2/F$ be the unramified quadratic splitting field of $S_1$, so $E_2$ is the splitting field of $T$. Let $E'/F$ be the other ramified quadratic subfield of $E_2/F$. Thus both $E_2/E$ and $E_2/E'$ are unramified quadratic extensions. Let $\sigma_1$ be the nontrivial element of $\Gal(E_2/E)$ and $\sigma_2$ the nontrivial element of $\Gal(E_2/F_2)$. The elements $\sigma_1$ and $\sigma_2$ alter the signs of $e_1$ and $e_2$, respectively, and the $\Gal(\ol F/F)$-orbits on $\Phi(G_{\ol F},T_{\ol F})$ are as follows:
\begin{equation}\label{eq:sp4-root-orbit-table}
\begin{array}{c|c|c|c}
\Gal(\ol F/F)\text{-orbit} & F_\gamma/F_{\pm\gamma} & \text{type} & \text{location} \\ \hline
\{\pm\beta\} & E/F & \text{symmetric ramified} & G^1/G^0\text{ and }H^1/H^0 \\
\{\pm(2\alpha+\beta)\} & F_2/F & \text{symmetric unramified} & G^0 \\
\{\pm\alpha,\pm(\alpha+\beta)\} & E_2/E' & \text{symmetric unramified} & G^1/G^0
\end{array}
\end{equation}

We now compute the three sign factors. Fix $t\in\ol T^\circ(\F_q)$. For $\epsilon_{\flat,0}^{G^1/G^0}$, the restriction $\alpha_0$ of $\alpha$ to $1\times S_2$ is symmetric ramified, and $F_{\alpha_0}=E$, so $e(\alpha/\alpha_0)=1$. Thus \eqref{eqn:FKS-2} gives
\[
\epsilon_{\flat,0}^{G^1/G^0}(t)=\sgn_{\F_\alpha^1}(\alpha(t)),
\qquad
\epsilon_{\flat,0}^{H^1/H^0}(t)=1.
\]
The two $\epsilon_{\flat,1}$ factors come from the orbit of $\beta$ and, by \eqref{eqn:FKS-4}, are equal to
\[
\epsilon_{\flat,1}^{G^1/G^0}(t)=\epsilon_{\flat,1}^{H^1/H^0}(t)=\begin{cases}
    \sgn_{\F_q^\times}(2) &\text{if } \beta(t)=-1,\\
    1 &\text{otherwise.}
\end{cases}
\]
Similarly, \eqref{eqn:FKS-5} gives
\[
\epsilon_{\flat,2}^{G^1/G^0}(t)=\epsilon_{\flat,2}^{H^1/H^0}(t).
\]
Thus we conclude
\[
(\epsilon_{G,\flat}\epsilon_{H,\flat})(t)=\sgn_{\F_\alpha^1}(\alpha(t)).
\]
Note that this character is nontrivial: the restriction $\alpha|_{S_1\times\{1\}}$ identifies $S_1$ with $\Res^1_{F_2/F}\G_m$, and hence identifies $S_1(F)$ with $\ker\bigl(\Nm_{F_2/F}:F_2^\times\to F^\times\bigr)$. Reduction identifies $S_1(F)/S_1(F)_{0+}$ with $\F_\alpha^1$, on which $\sgn_{\F_\alpha^1}$ is nontrivial.
\end{example}

\begin{remark}\label{remark:nontrivial-sign-char}
One consequence of Example~\ref{example:nontrivial-sign-char} is that, if $\theta\co T(F) \to \ol\Q_\ell^\times$ is a regular character (in the sense of \cite[Definition 3.7.5]{Kal19}) and $\pi$ is the $\ol\Q_\ell$-representation constructed from $(T,\theta)$ as in \cite[Corollary 3.7.10]{Kal19}, then the representation $\pi^H_{(T,\theta)}$ does not always satisfy the obvious guess $\rho^{\Kal}\left(\pi^G_{(T,\theta)}\right) \sim \ld j_{H,G} \circ \rho^{\Kal}\left(\pi^H_{(T,\theta)}\right)$. By Theorem~\ref{thm:kaletha-functoriality}, we have
\[
\rho^{\Kal}\left(\pi^G_{(T,\theta)}\right) \sim \ld j_{H,G} \circ \rho^{\Kal}\left(\pi^H_{(T,\theta\epsilon)}\right),
\]
where $\epsilon\coloneqq \epsilon_{G,\flat}\epsilon_{H,\flat}\co T(F)\to\{\pm1\}$. This character can be nontrivial, as Example~\ref{example:nontrivial-sign-char} shows. If $\theta$ is of odd order, then we claim
\[
\ld j_{H,G}\circ\ld j_{T,H}\circ\ld\theta\not\sim\ld j_{H,G}\circ\ld j_{T,H}\circ\ld(\theta\epsilon)
\]
To see this, note that if these two L-parameters were $\wh G(\ol\Q_\ell)$-conjugate, then they must be $\wh T(\ol\Q_\ell)$-conjugate: indeed, any element of $\wh G(\ol\Q_\ell)$ not lying in $\wh T(\ol\Q_\ell)$ changes the restrictions of these L-parameters to $W_E$ for some quadratic extension $E/F$ since $\theta \circ \Nm_{E/F}$ is regular.\footnote{Note that $x$ was chosen to have hyperspecial image in $\cB(G^0_{\der})$, so the rational Weyl group appearing in \cite[Definition 3.7.5]{Kal19} does not change after base extension.} Since $\theta$ and $\theta\epsilon$ are different characters, the Local Langlands Correspondence for tori implies that these two L-parameters cannot be $\wh T(\ol\Q_\ell)$-conjugate.
\end{remark}

\section{A partial characterization of the semisimple Local Langlands Correspondence}\label{sec:partial-characterization}

This section contains the main theorem of this paper (Theorem~\ref{thm:llc-partial-characterization}), which gives a partial characterization of $\rho^{\Kal}$ in terms of the various functoriality results that we have established in the previous sections.

\subsection{Generalities on L-parameters}\label{ss:generalities-1}

We begin with a number of technical results on reductive groups. The following lemma generalizes \cite[Lemme 1.10]{DM94}.

\begin{lemma}\label{lemma:finite-fixed-points}
If $\alpha$ is an automorphism of a connected reductive group $H$ over a field $k$ such that $H^\alpha/Z(H)^\alpha$ is finite, then $H$ is a torus. 
\end{lemma}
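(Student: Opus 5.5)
The strategy is to reduce to the case where the reductive group is connected and contains no central torus, and then to invoke Steinberg's theorem on automorphisms of semisimple groups. I will work over $\ol k$, since the statement is geometric: finiteness of a fixed-point scheme, or of a quotient of such, is insensitive to base change.

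First, replace $H$ by $H' = H/Z(H)^\circ$, or better by the adjoint group $H_{\ad} = H/Z(H)$. The automorphism $\alpha$ preserves $Z(H)$, so it descends to $H_{\ad}$. The key step is to show that $H_{\ad}^{\alpha}$ is finite. There is a natural map $H^\alpha/Z(H)^\alpha \to H_{\ad}^\alpha$, and I claim its image has finite index, or at least that its cokernel is finite.

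To see this, take $\bar h \in H_{\ad}^\alpha$ and lift it to $h \in H$. Then $\alpha(h)h^{-1} \in Z(H)$. This defines a map $H_{\ad}^\alpha \to Z(H)/\{\alpha(z)z^{-1}\}$, a kind of nonabelian coboundary map, whose kernel is the image of $H^\alpha$. The target is the cokernel of $z \mapsto \alpha(z)z^{-1}$ on $Z(H)$.

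I expect this step to be the main obstacle, because the target need not be finite: if $\alpha$ acts trivially on a central torus, the map $z\mapsto \alpha(z)z^{-1}$ kills that torus and its cokernel is positive-dimensional. I would handle this dimensionally. Working with identity components, $(H_{\ad}^\alpha)^\circ$ is connected and its map to the commutative target is a homomorphism on it; indeed the cocycle is a homomorphism, since $\alpha(hh')(hh')^{-1} = \alpha(h)\bigl(\alpha(h')h'^{-1}\bigr)h^{-1}$ with central middle factor. Suppose $(H_{\ad}^\alpha)^\circ$ were a nontrivial connected group. By Steinberg's theorem it is reductive, and any unipotent or semisimple subgroup of it maps trivially to a torus target, so it lifts into $H^\alpha$ modulo finite. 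A torus in it maps to a torus quotient of $Z(H)$, but its preimage in $H$ is a torus $S$ with $\alpha(s)s^{-1}$ central. Conjugation by $s$ and by $\alpha(s)$ then agree, so the subtorus generated by the map $s\mapsto \alpha(s)s^{-1}$ is a homomorphism from a torus into $Z(H)$, and it can be corrected by a square-root-type argument: replace $S$ by the image of $s\mapsto s^n$, using divisibility of tori. Either route gives a positive-dimensional subgroup of $H^\alpha/Z(H)^\alpha$, a contradiction. Hence $H_{\ad}^\alpha$ is finite.

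Now $H_{\ad}$ is semisimple and $\alpha$ acts on it with finite fixed points. By Steinberg's theorem (cf.\ \cite[Lemme 1.10]{DM94}, the adjoint case), a semisimple group whose automorphism has finite fixed points must be trivial. Concretely, $\alpha$ preserves some Borel pair $(B,T)$ and induces a permutation of the simple roots. The fixed points contain, for each orbit of root subgroups, a positive-dimensional diagonal subgroup unless the root system is empty. More precisely, a quasi-semisimple automorphism has reductive fixed-point group of rank at least $1$ when $H_{\ad}\neq 1$. Thus $H_{\ad}$ is trivial, so $H = Z(H)^\circ$ is a torus.
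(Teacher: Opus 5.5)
Your outline (remove the center, then show that a nontrivial semisimple group has positive-dimensional fixed points) matches the paper's, but the decisive step is missing.

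\textbf{The main gap is the final step.} You assume that $\alpha$ stabilizes a Borel pair $(B,T)$ of $H_{\ad}$, i.e.\ that $\alpha$ is quasi-semisimple, and a general automorphism is not. For example, take $H=\PGL_2$ over a field of characteristic $\neq 2$ and $\alpha=c_u$ for a nontrivial unipotent $u$. Then $\alpha$ stabilizes exactly one Borel subgroup and no maximal torus inside it, and $H^\alpha=Z_H(u)\cong\G_{\mathrm a}$ is not reductive. So your orbit argument covers only quasi-semisimple automorphisms. Citing \cite[Lemme 1.10]{DM94} does not supply the general case either, since the paper presents this lemma as a generalization of that result.

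What is missing is a way to pass from a pinned automorphism to an arbitrary one. Because $\mathrm{Aut}_{H/k}^\circ=H_{\ad}$ acts by inner automorphisms, you can write $\alpha=c_h\circ\alpha_0$ with $\alpha_0$ pinning-preserving. For $\alpha_0$, your kind of argument (for instance, summing a simple coroot over its $\alpha_0$-orbit) gives $\rk (H^{\alpha_0})^\circ=r>0$. Then $H^\alpha=\{g : gh\alpha_0(g)^{-1}=h\}$ is the stabilizer of $h$ under $\alpha_0$-twisted conjugation. Its dimension is at least $r$ by the twisted analogue of ``every centralizer has dimension at least the rank'', \cite[Remark 5.2.2(1)]{XZ19}. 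This dimension bound is exactly how the paper concludes, after first passing to the simply connected cover so that \cite[Proposition 1]{Spr06} makes $H^{\alpha_0}$ connected semisimple.

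\textbf{Your reduction to $H_{\ad}$ is also not correct as written.} First, ``$(H_{\ad}^\alpha)^\circ$ is reductive by Steinberg'' fails for the same example. Second, replacing $S$ by the image of $s\mapsto s^n$ changes nothing, since that map is surjective on a torus. It therefore cannot remove a homomorphism $S\to Z(H)$ with positive-dimensional image.

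The easy fix is to reduce to $H_{\der}$ instead, as the paper does. Since $Z(H_{\der})=Z(H)\cap H_{\der}$, the natural map $H_{\der}^\alpha/Z(H_{\der})^\alpha\to H^\alpha/Z(H)^\alpha$ is injective. Hence $H_{\der}^\alpha$ is finite, and the possibly infinite cokernel you were worried about never arises.

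If you want to keep your setup, the repair goes as follows. Take $S$ to be the identity component of the preimage of the torus $(H_{\ad}^\alpha)^\circ$. Then $S=Z(H)^\circ\cdot(S\cap H_{\der})^\circ$, and $s\mapsto\alpha(s)s^{-1}$ sends the connected group $(S\cap H_{\der})^\circ$ into the finite group $Z(H_{\der})$, so it is trivial. Thus $\alpha$ fixes $(S\cap H_{\der})^\circ$ pointwise. This is a subgroup of $H^\alpha$ whose image in $H^\alpha/Z(H)^\alpha$ is infinite, which gives the contradiction you wanted.
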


\begin{proof}
By passing from $H$ to $H_{\mathrm{der}}$, we may assume that $H$ is semisimple. Every automorphism of $H$ lifts to an automorphism of the universal cover of $H$, and using this fact one reduces easily to the case that $H$ is simply connected. Let $\alpha_0$ denote a pinning-preserving automorphism of $H$ lying in the same component of the automorphism scheme $\mathrm{Aut}_{H/k}$ as $\alpha$. By \cite[Proposition 1]{Spr06} (which applies since $H$ is simply connected), the group $H^{\alpha_0}$ is semisimple, say of rank $r$. In fact, $r > 0$: for instance, if $\alpha_0$ preserves the pinning $(B, T, \{X_\alpha\})$ and $\beta^\vee$ is a simple coroot with respect to this pinning, then $\sum_{\gamma^\vee \in \alpha_0^{\Z}(\beta^\vee)} \gamma^\vee\colon \G_{\mathrm{m}} \to H$ is a nontrivial cocharacter factoring through $H^{\alpha_0}$.

Since the identity component of $\mathrm{Aut}_{H/k}$ is $H/Z(H)$, acting via inner automorphisms, it follows that $\alpha = c_h \circ \alpha_0$ for some $h \in H(k)$, where $c_h$ denotes the inner automorphism of $H$ induced by $h$. In the notation of \cite[\S 5.2]{XZ19}, we have $H^\alpha = I_h$, and it follows from \cite[Remark 5.2.2(1)]{XZ19} that $\dim H^\alpha \geq r$. This proves the lemma.
\end{proof}

\begin{lemma}\label{lemma:wild-centralizer-components}
Let $k$ be a field of characteristic $\neq p$, let $H$ be a reductive $k$-group such that $\pi_0(H)$ is a finite $p$-group, and let $A$ be a finite $p$-group acting on $H$. Then $\pi_0(H^A)$ is a finite $p$-group.
\end{lemma}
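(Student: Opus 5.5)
Since $A$ is a finite $p$-group, it has a central series with cyclic subquotients of order $p$. We therefore argue by induction on $|A|$. Choose a normal subgroup $A_0 \triangleleft A$ of index $p$. Then $H^A = (H^{A_0})^{A/A_0}$. By induction, $H' \coloneqq H^{A_0}$ is a reductive $k$-group whose component group $\pi_0(H')$ is a finite $p$-group. The fixed points of a linearly reductive group (order prime to $\mathrm{char}\, k$) on a smooth group are smooth, and since $p \neq \mathrm{char}\, k$ the group $A$ acts linearly reductively, so $H^A$ is again smooth and reductive. This reduces the lemma to the case $A = \langle a \rangle$ cyclic of order $p$.

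Now consider the exact sequence $1 \to H^\circ \to H \to \pi_0(H) \to 1$. Taking $a$-invariants gives the exact sequence
\[
1 \to (H^\circ)^a \to H^a \to \pi_0(H)^a,
\]
so $\pi_0(H^a)$ is an extension of a subgroup of the $p$-group $\pi_0(H)^a$ by a quotient of $\pi_0((H^\circ)^a)$. It therefore suffices to treat connected $H$.

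For connected reductive $H$ and an automorphism $a$ of order $p$, we pass to an algebraic closure and compare $a$ with a quasi-semisimple representative in the same $H$-coset of the automorphism group; this is possible because $p$ is invertible, so $a$ is semisimple. The Steinberg/Digne--Michel description of fixed points of quasi-semisimple automorphisms then applies. For a quasi-semisimple $a$ preserving a Borel pair $(B,T)$, one has $\pi_0(H^a) \cong \pi_0(T^a)$ up to a subgroup of the $a$-fixed part of $\pi_1(H_{\der})$-type data. We instead compute directly: $H^a = (T^a)\cdot (H^a)^\circ$, and $\pi_0(H^a)$ is a quotient of $\pi_0(T^a)$. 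For a torus $T$ with an order-$p$ action, $\pi_0(T^a) \cong \widehat{H}^0(\langle a\rangle, X_*(T))$-type torsion (Tate cohomology of a cyclic $p$-group), which is killed by $p$. Hence it is a $p$-group.

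The main obstacle is justifying $\pi_0(H^a) = \pi_0(T^a)$ (surjectivity of $T^a \to \pi_0(H^a)$) for quasi-semisimple $a$. This is \cite[Th\'eor\`eme 1.8]{DM94}/Steinberg's result, which states that every component of $H^a$ meets $T^a$ when $a$ preserves $(B,T)$. I would invoke it after conjugating $a$ within $a\cdot H(\bar k)$ to a quasi-semisimple element, noting that $H^{c_h\circ a}$ and $H^a$ need not be isomorphic. The fix is to work with the semisimple element $a$ itself in $H\rtimes\langle a\rangle$: any semisimple element normalizes some Borel pair, so $a$ is already quasi-semisimple.
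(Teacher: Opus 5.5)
Your reductions to cyclic $A$ and to connected $H$ are fine and agree with the paper, which then simply cites \cite[Corollary 2.16(b)]{St75}. (A small slip: $(H^\circ)^a$ is open in $H^a$, so $\pi_0((H^\circ)^a)$ is a normal \emph{subgroup} of $\pi_0(H^a)$, not a quotient. This is harmless.)

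The gap is in the connected case. You claim that if $a$ stabilizes a Borel pair $(B,T)$ then $H^a=T^a\cdot(H^a)^\circ$, so that $\pi_0(H^a)$ is a quotient of $\pi_0(T^a)$. This is false, and \cite[Th\'eor\`eme 1.8]{DM94} does not assert it: it describes $(H^a)^\circ$ and its maximal torus $(T^a)^\circ$, not the components of $H^a$. For a counterexample, take $p=2$, $H=\PGL_2$, and $a=\mathrm{Ad}(s)$ with $s=\mathrm{diag}(1,-1)$. Then $a$ fixes the diagonal torus $T$ pointwise and stabilizes the upper triangular Borel $B$. But $H^a=N_H(T)$ while $T^a=T=(H^a)^\circ$, so $\pi_0(H^a)\cong\Z/2$ is not a quotient of $\pi_0(T^a)=1$. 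The extra component is a Weyl element, which swaps the two $a$-stable Borels $B$ and $B^-$ containing $T$. Nothing in your argument controls such components.

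These components are governed by the fundamental group, i.e.\ the ``$\pi_1(H_{\der})$-type data'' you mentioned and then dropped. One way to finish is as follows.
\begin{itemize}
\item Let $f\colon\wt H=H_{\mathrm{sc}}\times Z(H)^\circ\to H$ be the canonical central isogeny, with finite kernel $\mu$. The automorphism $a$ lifts uniquely to an automorphism of $\wt H$ of order $p$, which is semisimple since $p\neq\operatorname{char}k$.
\item By Steinberg's theorem on semisimple automorphisms of simply connected groups, $H_{\mathrm{sc}}^a$ is connected. Your torus computation shows that $\pi_0((Z(H)^\circ)^a)$ is killed by $p$.
\item Since $\mu$ is central, sending $h\in H^a(\bar k)$ to the class of $a^j\mapsto\tilde h^{-1}a^j(\tilde h)$, for any lift $\tilde h$, gives an injective homomorphism $H^a(\bar k)/f(\wt H^a(\bar k))\hookrightarrow\rH^1(\langle a\rangle,\mu(\bar k))$. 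Its target is finite and killed by $p$.
\item Hence $f(\wt H^a)$ is closed of finite index in $H^a$, so it contains $(H^a)^\circ$. Therefore $\pi_0(H^a)$ is an extension of a subgroup of $\rH^1(\langle a\rangle,\mu(\bar k))$ by a quotient of $\pi_0(\wt H^a)$, and so is a $p$-group.
\end{itemize}
In the $\PGL_2$ example, the antidiagonal component maps exactly to the nontrivial class in $\rH^1(\Z/2,\mu_2)$.
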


\begin{proof}
By induction, we may assume that $A$ is cyclic, and we may further assume that $H$ is connected. In this case, the result is \cite[Corollary 2.16(b)]{St75}.
\end{proof}

\begin{lemma}\label{lemma:align-tame-components}
Let $k \in \{\ol\Q_\ell, \ol\F_\ell\}$ with $\ell\ne p$, and let $H$ be a (possibly disconnected) reductive $k$-group. Let $I$ be a finite group with a normal $p$-subgroup $P$ such that $I/P$ is cyclic of order prime to $p$. Let $\rho_1,\rho_2\co I\to H(k)$ be homomorphisms such that
\begin{enumerate}
    \item $\rho_1|_P = \rho_2|_P$,
    \item the induced maps $I \to \pi_0(H)(k)$ are equal,
    \item $Z_H(\rho_1(P))^\circ$ is a torus.
\end{enumerate}
Then there is some $c\in Z_{H^\circ}(\rho_1(P))(k)$ such that $c\rho_1c^{-1}$ and $\rho_2$ induce the same action of $I$ on $Z_H(\rho_1(P))^\circ$.
\end{lemma}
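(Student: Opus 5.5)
Set $Z = Z_H(\rho_1(P))$ and $S = Z^\circ$; by (3), $S$ is a torus. Since $\rho_1|_P=\rho_2|_P$, both $\rho_1(I)$ and $\rho_2(I)$ normalize $\rho_1(P)$, because $P$ is normal in $I$. Hence both normalize $Z$ and $S$, so each $\rho_j$ gives an action $a_j\co I\to\Aut(S)$. Elements of $P$ centralize $S$, so $a_1$ and $a_2$ factor through the cyclic quotient $I/P$. Choose $\sigma\in I$ generating $I/P$. It then suffices to find $c\in Z_{H^\circ}(\rho_1(P))(k)$ with $c\rho_1(\sigma)c^{-1}$ and $\rho_2(\sigma)$ acting identically on $S$. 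Conjugation by such a $c$ does not change $\rho_1|_P$, and it preserves $S$, so this choice of $c$ is compatible with the whole of $I$.

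Consider $g=\rho_2(\sigma)\rho_1(\sigma)^{-1}$. By (2), $g$ maps to $1$ in $\pi_0(H)$, so $g\in H^\circ(k)$. For $u\in P$ we have
\[
g\rho_1(u)g^{-1}=\rho_2(\sigma)\rho_1(\sigma^{-1}u\sigma)\rho_2(\sigma)^{-1}=\rho_2(u)=\rho_1(u),
\]
using $\sigma^{-1}u\sigma\in P$ and $\rho_1|_P=\rho_2|_P$. Hence $g\in Z\cap H^\circ$, and the two actions of $\sigma$ on $S$ differ by conjugation by $g$. The task is therefore to modify $\rho_1(\sigma)$ by conjugation by some $c\in (Z\cap H^\circ)(k)$ so that the discrepancy $g$ acts trivially on $S$. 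The natural reduction is to the finite group $\Gamma=N_{Z}(S)/Z_{Z}(S)$, which embeds in $\Aut(S)$ and in which the image of $g$ lives. Conjugation by $c\in Z(k)$ replaces $\rho_1(\sigma)$ by $c\rho_1(\sigma)c^{-1}$, which changes the discrepancy to $g\,\rho_1(\sigma)c\rho_1(\sigma)^{-1}c^{-1}$. So we must show that the image $\bar g$ of $g$ in $\pi_0(Z^{\circ\prime})$ is a twisted coboundary for the automorphism $\mathrm{Ad}\rho_1(\sigma)$. Here $Z^{\circ\prime}=Z\cap H^\circ$, and $Z_Z(S)\supset S=Z^\circ$.

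Lemma~\ref{lemma:wild-centralizer-components} is the key input. Replace $\rho_1(P)$ by its image, a finite $p$-group acting on $H^\circ$, which is connected so its $\pi_0$ is trivial. The lemma then gives that $\pi_0(Z\cap H^\circ)$ is a finite $p$-group, and so $\bar g$ lies in a $p$-group $\Pi$. On the other hand, $\sigma$ has order prime to $p$ modulo $P$. We have $\rho_1(\sigma)^n\in\rho_1(P)$ for $n=|I/P|$, and similarly $\rho_2(\sigma)^n\in\rho_2(P)$. Since both of these power elements centralize $S$, the discrepancy satisfies the norm condition $\bar g\cdot{}^{\sigma}\bar g\cdots{}^{\sigma^{n-1}}\bar g=1$ in the quotient acting on $S$. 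Thus $\bar g$ defines a class in $\rH^1(\Z/n,\Pi)$ with $\gcd(n,|\Pi|)=1$. This cohomology set vanishes, by Schur–Zassenhaus, which applies to nonabelian coefficients when the orders are coprime. So $\bar g$ is a coboundary, and lifting the witnessing element to $c\in (Z\cap H^\circ)(k)$ finishes the proof. The lift is possible because $k$ is algebraically closed, so $k$-points surject onto $\pi_0$.

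The main obstacle is making the norm condition precise. The difficulty is that the relevant group is $\pi_0(Z\cap H^\circ)$, which acts on $S$ through a quotient, rather than literal components. We also need to check that conjugation by $\rho_1(\sigma)$ preserves $Z\cap H^\circ$, which holds because $H^\circ$ is normal in $H$. The plan is to work throughout in the finite group $N_H(\rho_1(P))\cap(\text{stuff acting on }S)$ modulo $Z_{H}(S)$, and to apply the coprime-order cohomology vanishing there.
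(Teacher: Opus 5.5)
Your proposal is correct and is essentially the paper's argument. The paper also treats the discrepancy $x\mapsto\rho_1(x)\rho_2(x)^{-1}$ as a $1$-cocycle of $I/P$ valued in $Z_{H^\circ}(\rho_1(P))(k)$. It notes that $\pi_0(Z_{H^\circ}(\rho_1(P)))(k)$ is a $p$-group by Lemma~\ref{lemma:wild-centralizer-components}, and uses coprimality with $|I/P|$ to make the reduced cocycle a coboundary, after which the discrepancy lies in the commutative group $Z_H(\rho_1(P))^\circ$. Your single element $g$ with a norm condition is just the cyclic-group description of that cocycle.

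The step you flag as the main obstacle is in fact immediate and holds on the nose. Since $\sigma^n\in P$, we have $(g\rho_1(\sigma))^n=\rho_2(\sigma^n)=\rho_1(\sigma^n)=\rho_1(\sigma)^n$, so the norm of $g$ is already trivial in $(Z\cap H^\circ)(k)$, not merely in the quotient acting on $S$. Likewise $\mathrm{Ad}\,\rho_1(\sigma)^n$ is trivial on $Z$, so the $\Z/n$-action on $\pi_0(Z\cap H^\circ)$ is well defined. The detour through $N_Z(S)/Z_Z(S)$ is therefore unnecessary.
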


\begin{proof}
By (1), the map
\[
x \mapsto \rho_1(x)\rho_2(x)^{-1}
\]
is a $1$-cocycle $\varphi\co I/P \to Z_H(\rho_1(P))(k)$; by (2), the map $\varphi$ is valued in $Z_{H^\circ}(\rho_1(P))(k)$. The map $\varphi$ induces a $1$-cocycle $\ol\varphi\co I/P \to \pi_0(Z_{H^\circ}(\rho_1(P)))(k)$, and the lemma is equivalent to the statement that $\ol\varphi$ is a coboundary (since $Z_H(\rho_1(P))^\circ$ is commutative by (3)). But $I/P$ is of order prime to $p$ and $\pi_0(Z_{H^\circ}(\rho_1(P)))(k)$ is a $p$-group by Lemma~\ref{lemma:wild-centralizer-components}, so this is automatic.
\end{proof}

\subsubsection{Semisimple and irreducible L-parameters}

Recall from \cite[Lemma 2.1.4 and the following paragraph]{CGP15} that if $k$ is a field, $H$ is a reductive $k$-group, and $\lambda\colon \G_{\mathrm{m}} \to H$ is a cocharacter, then there are associated subgroups $Z_H(\lambda)$ and $P_H(\lambda)$ of $H$: by definition, $Z_H(\lambda)$ is the centralizer of the cocharacter $\lambda$, while $P_H(\lambda)$ represents the functor parameterizing local sections $h$ of $H$ such that the limit $\lim_{t \to 0} \lambda(t)h\lambda(t)^{-1}$ exists. Following \cite[\S 6]{BMR05}, we call $P_H(\lambda)$ an \textit{R-parabolic subgroup} of $H$, and we call $Z_H(\lambda)$ an \textit{R-Levi subgroup} of $P_H(\lambda)$. By \cite[Proposition 2.2.9]{CGP15}, the identity component $P_H(\lambda)^\circ$ is a parabolic subgroup of $H^\circ$. Moreover, by \cite[Proposition 2.1.8]{CGP15} we have $P_H(\lambda) = Z_H(\lambda) \ltimes U_H(\lambda)$, where $U_H(\lambda)$ is the unipotent radical of $P_H(\lambda)^\circ$; in particular, there is a natural projection map $P_H(\lambda) \to Z_H(\lambda)$, given informally by ``sending $h$ to $\lim_{t \to 0} \lambda(t)h\lambda(t)^{-1}$''.

\begin{defn}
Let $\Gamma$ be a(n ``abstract'') group, and let $H$ be a reductive group over a field $k$. If $\rho \co \Gamma \to H(k)$ is a homomorphism, then $\rho$ is \textit{semisimple} if for every R-parabolic $\ol{k}$-subgroup $P \subset H_{\ol k}$ such that $\rho(\Gamma) \subset P(\ol k)$, there is an R-Levi $\ol k$-subgroup $L \subset P$ such that $\rho(\Gamma) \subset L(\ol k)$.

If $\rho\colon \Gamma \to H(k)$ is any homomorphism, then a \textit{semisimplification} of $\rho$ is a semisimple homomorphism $\rho^{\mathrm{ss}}\colon \Gamma \to H(k)$ such that there exists an R-parabolic $P \subset H$ with R-Levi $L \subset P$ such that $\rho$ factors through $P(k)$ and $\rho^{\mathrm{ss}}$ factors as $\Gamma \xrightarrow{\rho} P(k) \to L(k) \subset H(k)$. We will use $\rho^{\mathrm{ss}}$ to denote a choice of semisimplification of $\rho$.
\end{defn}

The proof of the following lemma is surprisingly complicated, and we do not know whether one should expect a simpler proof.

\begin{lemma}\label{lemma:general-semisimplicity-criterion}
Let $k$ be a field, and let $H$ be a reductive $k$-group. Let $\Gamma$ be a group which admits a normal subgroup $\Gamma_0$ such that $\Gamma/\Gamma_0$ is cyclic. If $\rho\co \Gamma \to H(k)$ is a homomorphism, then the following are equivalent:
\begin{enumerate}
    \item $\rho|_{\Gamma_0}$ is semisimple and there is a $\rho(\Gamma)$-stable Borel-torus pair $(B, T)$ in $Z_H(\rho(\Gamma_0))^\circ_{\red}$,
    \item $\rho$ is semisimple.
\end{enumerate}
\end{lemma}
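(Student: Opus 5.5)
We prove both implications using the standard dictionary of \cite{BMR05} between semisimplicity (complete reducibility) and cocharacters. Throughout, write $C \coloneqq Z_H(\rho(\Gamma_0))$ and $M \coloneqq C^\circ_{\red}$. Let $\gamma\in\Gamma$ map to a generator of $\Gamma/\Gamma_0$, and let $\alpha$ be conjugation by $\rho(\gamma)$. Since $\Gamma_0$ is normal, $\alpha$ normalizes $C$, and hence $M$. We may base change to $\ol k$, since both conditions are insensitive to this.

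\textbf{(2)$\Rightarrow$(1).} Normal subgroups of completely reducible subgroups are completely reducible (Bate--Martin--R\"ohrle), so $\rho|_{\Gamma_0}$ is semisimple.

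By \cite{BMR05}, semisimplicity of $\rho(\Gamma_0)$ makes $M$ reductive. Semisimplicity of $\rho(\Gamma)$ then implies that $\rho(\Gamma)$ acts on $M$ ``completely reducibly''. Concretely, the image $\ol\alpha$ of $\alpha$ in $\mathrm{Out}(M)$ generates a group, and we want a $\rho(\Gamma)$-stable Borel pair in $M$.

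To produce it, consider the subgroup of $H$ generated by $M$ and $\rho(\gamma)$. The automorphism $\alpha$ of $M$ has a quasi-semisimple representative: if $\rho(\gamma)$ normalized no Borel pair of $M$, there would be a cocharacter $\lambda$ of $M$ with $\rho(\gamma)\in P_H(\lambda)$ but no conjugate in $Z_H(\lambda)$. We then aim to produce an R-parabolic of $H$ containing $\rho(\Gamma)$ (since $\lambda$ centralizes $\rho(\Gamma_0)$) without an R-Levi containing it, contradicting (2). This step is the main obstacle: an automorphism of a reductive group always normalizes some Borel, but not necessarily a Borel pair. The torus part requires quasi-semisimplicity of $\alpha$, i.e.\ semisimplicity of $\rho(\gamma)$ modulo $M$, which must be extracted from complete reducibility. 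I would try to reduce via Jordan decomposition $\rho(\gamma)=su$ and Steinberg's theorem that semisimple automorphisms normalize a Borel pair, using (2) to kill the unipotent part acting on $M$.

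\textbf{(1)$\Rightarrow$(2).} Let $P=P_H(\lambda)\supset\rho(\Gamma)$ be an R-parabolic. Since $\rho|_{\Gamma_0}$ is semisimple, after conjugating by $U_H(\lambda)$ we may assume $\rho(\Gamma_0)\subset Z_H(\lambda)$, so $\lambda$ lands in $C$ and, up to conjugacy, in $M$. The remaining freedom is conjugation by $P\cap C$. Viewing $\lambda$ as a cocharacter of $M$, the element $\rho(\gamma)$ lies in $P_M(\lambda)\rtimes\langle\alpha\rangle$ (as $P\cap M=P_M(\lambda)$).

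Now use the $\rho(\Gamma)$-stable pair $(B,T)$. Conjugate $\lambda$ within $P_M(\lambda)$ to a cocharacter $\lambda'$ of $T$ that is dominant for $B$; since $P_M(\lambda)\cap B$ contains a Borel of $M$, such a conjugate exists with $B\subset P_M(\lambda')$. The condition $\rho(\gamma)\in P_H(\lambda')$ together with $\alpha$ preserving $(B,T)$ forces $\alpha(\lambda')$ to be dominant and to define the same parabolic, hence $\alpha(\lambda')=\lambda'$ up to the standard rational-parabolic identification. Averaging $\lambda'$ over the finite orbit of $\alpha$ on $X_*(T)$ (finite, since $\alpha$ preserves $B$ and so acts on $X_*(T)$ through a finite group), we obtain $\mu$ with $P_H(\mu)=P_H(\lambda')$ and $\mu$ fixed by $\alpha$ and centralizing $\rho(\Gamma_0)$. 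Then $\rho(\Gamma)\subset Z_H(\mu)$, an R-Levi of $P$, which proves (2).

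The delicate point in this direction is checking that averaging preserves $P_H$. I will do this by using dominance with respect to $B$ for all roots of $H$ relative to $T\cdot Z(C)$-weights, not just the roots of $M$.
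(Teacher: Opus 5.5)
Both directions of your proposal have a genuine gap, and it is the same gap. You never prove the real content of the lemma. Let $\alpha$ be the automorphism of $M=Z_H(\rho(\Gamma_0))^\circ_{\red}$ induced by $\rho(\gamma)$. The missing fact is that complete reducibility of the cyclic group generated by $\alpha$ is equivalent to $\alpha$ being quasi-semisimple, i.e.\ normalizing a Borel--torus pair.

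The paper does not prove this by hand; it imports it.
\begin{itemize}
\item It first replaces $\Gamma$ by the Zariski closure of $\rho(\Gamma)$.
\item By \cite[Corollary 3.7]{BMR08}, $\rho$ is semisimple if and only if $\rho|_{\Gamma_0}$ is semisimple and the induced map $\Gamma/\Gamma_0\to N_H(\Gamma_0)_{\red}/\Gamma_0$ is semisimple.
\item The identity component of this target is centrally isogenous to $Z_H(\Gamma_0)^\circ_{\red}$.
\item Then \cite[Proposition 3]{Spr06} identifies semisimplicity of such a cyclic map with quasi-semisimplicity of the generator.
\end{itemize}

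In (2)$\Rightarrow$(1), you assert the key implication without proof: that failure of quasi-semisimplicity produces a cocharacter $\lambda$ of $M$ with $\rho(\gamma)\in P_H(\lambda)$ but $\rho(\gamma)$ lying in no $U_M(\lambda)$-conjugate of $Z_H(\lambda)$. That implication is exactly the hard half of Springer's result. Your suggested route through the Jordan decomposition $\rho(\gamma)=su$ is also misdirected. When $\operatorname{char}k>0$, an element can induce an outer automorphism of order divisible by $\operatorname{char}k$ and be unipotent yet quasi-semisimple. For example, take the block-swapping permutation matrix in $\GL_6$ in characteristic $2$, normalizing $\{\mathrm{diag}(aI_3,bI_3)\}$. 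So there is no unipotent part to ``kill''.

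In (1)$\Rightarrow$(2), the step that fails is moving $\lambda$ into $T$. Conjugating by $P_M(\lambda)$ never changes $P_M(\lambda)$. That group is an arbitrary $\alpha$-stable parabolic of $M$, and it need not contain $T$ or $B$. Your claim that $P_M(\lambda)\cap B$ contains a Borel subgroup of $M$ is false already when $\alpha$ is trivial, where every parabolic is $\alpha$-stable (take $P_M(\lambda)$ opposite to $B$).

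What you actually need is an $\alpha$-stable maximal torus $T_1$ of $M$ lying \emph{inside} $P_M(\lambda)$. Equivalently, a quasi-semisimple automorphism that stabilizes a parabolic must stabilize a Levi factor of it. This is again the nontrivial half of the Springer equivalence, and it cannot be read off from the single pair $(B,T)$.

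Granting such a $T_1$, no averaging is needed. Take a $P_M(\lambda)$-conjugate $\lambda'\in X_*(T_1)$ of $\lambda$. Then $P_H(\lambda')=P_H(\lambda)$, and
\[
\lambda'(t)\rho(\gamma)\lambda'(t)^{-1}=(\lambda'-\alpha\lambda')(t)\,\rho(\gamma).
\]
This has a limit as $t\to 0$ only if $\alpha\lambda'=\lambda'$, so $\rho(\Gamma)\subset Z_H(\lambda')$. Thus the ``delicate'' averaging step you worry about never arises. Separately, your justification that the $\alpha$-orbit on $X_*(T)$ is finite via $B$ ignores the central torus of $M$, on which $B$ imposes no constraint. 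Finiteness comes instead from $\rho(\gamma)$ lying in the algebraic group $N_H(\overline{\rho(\Gamma_0)})$, whose image in $\mathrm{Out}(M)$ is finite.
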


\begin{proof}
We may and do assume $k = \ol k$. Passing from $\Gamma$ to the Zariski closure of $\rho(\Gamma)$, we may assume that $\rho$ is the inclusion of a smooth closed subgroup $\Gamma$ such that $\Gamma/\Gamma_0$ is topologically generated by an element $\gamma \in \Gamma(k)$. If $\rho$ is semisimple, then $\rho|_{\Gamma_0}$ is semisimple by (the disconnected version of) \cite[Theorem 3.10]{BMR05}. Thus by \cite[Corollary 3.7]{BMR08}, the homomorphism $\rho$ is semisimple if and only if $\rho|_{\Gamma_0}$ is semisimple and the induced homomorphism $\alpha\co \Gamma/\Gamma_0 \to N_H(\Gamma_0)_{\red}/\Gamma_0$ is semisimple. The identity component of $N_H(\Gamma_0)_{\red}/\Gamma_0$ admits a central isogeny from $Z_H(\Gamma_0)^\circ_{\red}$, so $\alpha$ is semisimple if and only if $\Gamma$ preserves a Borel-torus pair of $Z_H(\Gamma_0)^\circ_{\red}$ (if and only if conjugation by $\gamma$ induces a quasi-semisimple automorphism of $Z_H(\Gamma_0)^\circ_{\red}$) by \cite[Proposition 3]{Spr06}.
\end{proof}

\begin{lemma}\label{lemma:functorial-semisimplicity}
Let $k \in \{\ol\Q_\ell, \ol\F_\ell\}$ be a field.
\begin{enumerate}
\item If $E/F$ is finite Galois, $G=\Res_{E/F}(H_E)$, and $\rho\co W_F\to\ld H(k)$ is semisimple, then the composition $\Delta \circ \rho$ is semisimple, where $\Delta\co \ld H\to\ld G$ is the canonical homomorphism arising from base change.
\item If $H\subset G$ is an unramified twisted Levi and $\rho\co W_F\to\ld H(k)$ is semisimple, then $(\ld j_{H,G}\circ\rho)|_{I_F}$ is semisimple. Consequently,
\[
\bigl(z\cdot\ld j_{H,G}\circ\rho\bigr)^{\ss}|_{I_F}
\sim (\ld j_{H,G}\circ\rho)|_{I_F}
\]
for every unramified cocycle $z$ valued in $Z(\wh H)(k)^{I_F}$.
\item In (2), if $\ell>\rk G+1$, then $z\cdot\ld j_{H,G}\circ\rho$ itself is semisimple.
\end{enumerate}
\end{lemma}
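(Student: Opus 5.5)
For (1), I will use the standard characterization of semisimplicity via R-parabolics and Shapiro's lemma. Identify $\wh G=\prod_{W_E\backslash W_F}\wh H$ with $W_F$ permuting the factors, and let $\Delta$ be the diagonal-type embedding. Since $E/F$ is finite, $W_E$ has finite index in $W_F$, and semisimplicity of a homomorphism is insensitive to passing to a finite-index normal subgroup. This holds in both directions by \cite[Theorem 3.10]{BMR05} and its converse for finite index subgroups when the index is invertible or the Zariski closures agree up to finite groups; I will argue via complete reducibility over the Zariski closure. So it suffices to check $\Delta\circ\rho|_{W_E}$, whose image lands in $\prod\wh H\times W_E$ as a product of $W_F$-conjugates of $\rho|_{W_E}$. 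Each factor is semisimple, and a product of semisimple homomorphisms is semisimple because R-parabolics of a product are products. If the index-invertibility issue arises (characteristic $\ell$ dividing $[E:F]$), I would instead apply Lemma~\ref{lemma:general-semisimplicity-criterion} iteratively along a filtration of $W_E\subset W_F$ with cyclic quotients. The stable Borel--torus pair in the centralizer would be produced from one for $\rho$ by transporting it across the factors.

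For (2), the plan is to reduce to the image of $I_F$, which is finite modulo the torus part: $\rho(I_F)$ has finite image in $\pi_0$ and the wild part $P_F$ acts through a finite $p$-group. Note that $\rho|_{I_F}$ is semisimple by \cite[Theorem 3.10]{BMR05} since $I_F$ is normal in $W_F$. By \eqref{eqn:def-of-L-embedding}, on $\wh H\rtimes I_F$ the embedding $\ld j_{H,G}$ is $(h,w)\mapsto(hQ(w)n(\gamma_{H,G}),w)$. Here $Q|_{I_F}$ takes values in $Z_{\wh H}$ and the $\chi$-data for $H$ are unramified of order $\le 2$, so $Q|_{I_F}$ is essentially the sign factor $s$. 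Thus $\ld j_{H,G}(\wh H\rtimes I_F)$ lies in an R-Levi of $\ld G$ restricted to inertia, namely $N$-twisted $\wh H$, which is the centralizer of a (Frobenius-fixed up to finite order) central torus element. Semisimplicity in an R-Levi subgroup implies semisimplicity in the ambient group: R-parabolics of $\wh G$ containing the image intersect the R-Levi in R-parabolics, as in \cite[Corollary 3.22]{BMR05}. The main obstacle is making this precise, since $\wh H\rtimes I_F$ is only an R-Levi of $\wh G\rtimes I_F$ after choosing a cocharacter of $Z(\wh H)^{I_F,\circ}$ that is regular for $\wh G/\wh H$. I would produce it by averaging a generic cocharacter of $Z(\wh H)$ over the finite image of $I_F$. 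The ``consequently'' clause then follows because multiplying by $z$ (trivial on $I_F$) does not change the restriction to $I_F$, and semisimplification commutes with restriction to the normal subgroup $I_F$, using uniqueness of semisimplification up to conjugacy.

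For (3), I apply Lemma~\ref{lemma:general-semisimplicity-criterion} with $\Gamma_0=I_F$ and $\Gamma/\Gamma_0$ generated by Frobenius. By (2), the restriction to $I_F$ is semisimple, so it remains to find a Frobenius-stable Borel--torus pair in $C=Z_{\wh G}(\rho'(I_F))^\circ_{\red}$, where $\rho'=z\cdot\ld j_{H,G}\circ\rho$. Since $\rho$ is semisimple, the analogous centralizer in $\wh H$ has a stable pair, and it will be extended to $C$. If $\ell>\rk G+1$, then $\ell$ does not divide the orders of the relevant Weyl groups. In that case every automorphism of finite order prime-to-$\ell$ modulo inner automorphisms is quasi-semisimple, and conjugation by a semisimple element is quasi-semisimple \cite{Spr06}. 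The key step is showing that $\rho'(\Fr)$ acts on $C$ through a semisimple element times a finite-order automorphism, which uses that $\ell$ is large and $\rho$ is semisimple.
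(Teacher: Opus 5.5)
The gap is in (3). Your key step (that $\rho'(\Fr)$ acts on $C=Z_{\wh G}(\rho'(I_F))^\circ_{\red}$ through ``a semisimple element times a finite-order automorphism'' and is therefore quasi-semisimple) is left unproved. Worse, the principles you invoke for it are false, even when $\ell\nmid|W|$.

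\begin{enumerate}
\item Conjugation by a nontrivial unipotent element is inner but not quasi-semisimple.
\item On $\SL_2\times\SL_2$, take $\sigma$ the swap and $s=(s_1,s_2)$ with $s_1,s_2$ semisimple but $s_1s_2$ a nontrivial unipotent. A Borel--torus pair stable under $\mathrm{Ad}(s)\circ\sigma$ would be stable under its square. That forces $s_1s_2$ to normalize a Borel--torus pair of $\SL_2$, hence to be semisimple, which it is not.
\end{enumerate}

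Nor can you simply extend the $\rho(W_F)$-stable pair of $Z_{\wh H}(\rho(I_F))^\circ_{\red}$ to $C$. Since $H$ is only a twisted Levi, $\rho'(\Fr)$ need not fix the cocharacter cutting out $\wh H$. So in general there is no Frobenius-stable parabolic of $C$ with Levi factor $(C\cap\wh H)^\circ$.

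A concrete case: let $H=T$ be an unramified elliptic torus of $\PGL_2$ and $\rho$ trivial on $I_F$. Then $C=\SL_2$, and $\rho'(\Fr)$ acts by $\mathrm{Ad}(tn)$, with $t\in\wh T$ and $n$ the Tits lift of the nontrivial Weyl element. This inverts $\wh T$, so no stable Borel contains $\wh T$. Moreover $(tn)^2=-1$, so for $\ell=2$ the element $tn$ is a nontrivial unipotent and the parameter is genuinely not semisimple. So the bound on $\ell$ must be used to control the Frobenius twist in a specific way, which your sketch does not supply.

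The missing idea is to use $\ell>\rk G+1$ to make an index prime to $\ell$.
\begin{enumerate}
\item Write $H=Z_G(S)$ with $S$ an unramified torus split by $E/F$. If a prime $r$ divides $[E:F]$, then $\Gal(E/F)$ contains an element acting on $X^*(S)$ with order $r$. Hence $r-1\le\dim S\le\rk G<\ell-1$, and since $\ell>2$, $[E_2:F]$ is prime to $\ell$.
\item On $W_{E_2}$, Lemmas~\ref{lemma:canonical-l-embedding-base-change} and \ref{lem:unramified-L-embedding-properties}(1) identify $\ld j_{H,G}$ with the standard Levi embedding. So $z\cdot\ld j_{H,G}\circ\rho|_{W_{E_2}}=\ld j_{H,G}\circ(z\rho)|_{W_{E_2}}$ is semisimple, $z\rho$ being a central twist of $\rho$.
\item Then \cite[Corollary 3.7(ii)]{BMR08} lifts semisimplicity across this normal subgroup of index prime to $\ell$.
\end{enumerate}

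By contrast, your (1) and (2) are workable alternatives to the paper's arguments.

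In (1), semisimplicity does not ascend from finite-index normal subgroups when $\ell$ divides the index, but your fallback works, using that $\Gal(E/F)$ is solvable. At a cyclic step $\Gamma_{j+1}\triangleleft\Gamma_j$, the group $Z_{\wh G}(\Delta\rho(\Gamma_{j+1}))^\circ_{\red}$ is a product of copies of $Z_{\wh H}(\rho(W_E))^\circ_{\red}$, permuted freely by $\Gamma_j/\Gamma_{j+1}$. The return maps come from $\Gamma_{j+1}$ and so act trivially. Transporting an arbitrary Borel--torus pair around each orbit therefore yields the stable pair required by Lemma~\ref{lemma:general-semisimplicity-criterion}. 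This is a hands-on version of the paper's one-line argument, which identifies $\Delta\circ\rho$ with $\rho|_{W_E}$ via Shapiro and applies \cite[Theorem 3.10]{BMR05}.

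In (2), exhibiting $\ld j_{H,G}(\wh H\rtimes I_F)$ directly as an R-Levi subgroup avoids the base change to $E_2$. However, the existence of an $I_F$-fixed cocharacter of $Z(\wh H)$ that is regular for $\wh G/\wh H$ is exactly where unramifiedness of $H$ enters; it fails for a ramified elliptic torus. So averaging a generic cocharacter needs that input. The paper's route has the advantage of producing the honest Levi embedding on $W_{E_2}$, which it then reuses for (3).
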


\begin{proof}
In (1), the Shapiro isomorphism identifies $\Delta \circ \rho$ with $\rho|_{W_E}$. This isomorphism is easily checked to preserve semisimplicity, so (1) follows from the fact that $\rho|_{W_E}$ is semisimple by \cite[Theorem 3.10, \S 6.3]{BMR05}.

For (2) and (3), write $H=Z_G(S)$ for an unramified $F$-torus $S$, and choose an unramified extension $E/F$ splitting $S$. Lemmas~\ref{lemma:canonical-l-embedding-base-change} and~\ref{lem:unramified-L-embedding-properties}(1) identify $\ld j_{H,G}|_{\wh H\rtimes W_{E_2}}$ with the standard embedding, which preserves semisimplicity by \cite[Corollary 2.10]{BMR08}. Since $I_F\subset W_{E_2}$, this proves the first assertion of (2); the second follows from \cite[Theorem 3.10, \S 6.3]{BMR05}.

For (3), we may assume $G$ is not a torus, so $\ell > 2$. Let $E/F$ be splitting field of $S$, so every prime $r$ dividing $[E:F]$ satisfies $r-1\leq\dim S\leq\rk G$. Thus $[E_2:F]$ is prime to $\ell$. Twisting $\rho$ by the central cocycle $z$ preserves semisimplicity, so $z \cdot \ld j_{H,G} \circ \rho|_{W_{E_2}}$ is semisimple. The image of this restriction is normal of index prime to $\ell$ in the image of $z \cdot \ld j_{H,G} \circ \rho$, so (3) follows from \cite[Corollary 3.7(ii)]{BMR08}.
\end{proof}

\begin{lemma}\label{lemma:general-irreducibility-criterion}
Let $k$ be a field, and let $H$ be a reductive $k$-group. Let $\Gamma$ be a group which admits a normal subgroup $\Gamma_0$ such that $\Gamma/\Gamma_0$ is cyclic. If $\rho\colon \Gamma \to H(k)$ is a homomorphism, then the following are equivalent:
\begin{enumerate}
    \item $\rho|_{\Gamma_0}$ is semisimple and $Z_{H^\circ}(\rho(\Gamma))/Z(H^\circ)^{\rho(\Gamma)}$ is finite,
    \item $\rho$ is irreducible.
\end{enumerate}
\end{lemma}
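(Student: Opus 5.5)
The plan is to reduce to the case $k=\ol k$ and $\rho$ the inclusion of a Zariski-closed subgroup, exactly as in Lemma~\ref{lemma:general-semisimplicity-criterion}, and then compare the two notions through the semisimplicity criterion together with the characterization of irreducibility via R-parabolics. Recall that $\rho$ is irreducible if $\rho(\Gamma)$ lies in no proper R-parabolic subgroup of $H$. We also use the standard fact (Serre, \cite{BMR05}) that $\rho$ is irreducible if and only if it is semisimple and $Z_{H^\circ}(\rho(\Gamma))^\circ$ contains no nontrivial torus beyond the central one. Concretely, the criterion we aim for is: $\rho$ irreducible $\iff$ $\rho$ semisimple and $Z_{H^\circ}(\rho(\Gamma))/Z(H^\circ)^{\rho(\Gamma)}$ finite.

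For (2)$\Rightarrow$(1): an irreducible $\rho$ is semisimple, so $\rho|_{\Gamma_0}$ is semisimple by \cite[Theorem 3.10]{BMR05}, since $\Gamma_0$ is normal. Now suppose $Z_{H^\circ}(\rho(\Gamma))/Z(H^\circ)^{\rho(\Gamma)}$ were infinite. Then $Z_{H^\circ}(\rho(\Gamma))^\circ_{\red}$ contains a nontrivial torus or a unipotent element not coming from the center. In the unipotent case, semisimplicity makes the centralizer reductive by \cite{BMR05}, so a cocharacter can still be extracted. We would therefore obtain a cocharacter $\lambda$ of $H^\circ$, noncentral in $H^\circ$ and centralizing $\rho(\Gamma)$. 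Then $\rho(\Gamma)\subset Z_H(\lambda)\subset P_H(\lambda)$ with $P_H(\lambda)$ proper, contradicting irreducibility.

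For (1)$\Rightarrow$(2), the strategy is as follows. Let $C=Z_H(\rho(\Gamma_0))^\circ_{\red}$, which is reductive since $\rho|_{\Gamma_0}$ is semisimple. Conjugation by a generator $\gamma$ of $\Gamma/\Gamma_0$ gives an automorphism $\alpha$ of $C$. The key observation is that $C^\alpha$ is contained in $Z_{H^\circ}(\rho(\Gamma))$, up to finite index and the central part. Hence the finiteness hypothesis gives that $C^\alpha/Z(C)^\alpha$ is finite, modulo a comparison of $Z(C)^\alpha$ with $Z(H^\circ)^{\rho(\Gamma)}$. Lemma~\ref{lemma:finite-fixed-points} then forces $C$ to be a torus. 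In particular, any Borel–torus pair of $C$ is $C$ itself, which is $\rho(\Gamma)$-stable, so Lemma~\ref{lemma:general-semisimplicity-criterion} shows that $\rho$ is semisimple. Finally, suppose $\rho(\Gamma)\subset P_H(\lambda)$. Semisimplicity lets us conjugate within $P_H(\lambda)$ so that $\rho(\Gamma)\subset Z_H(\lambda)$, i.e., $\lambda$ centralizes $\rho(\Gamma)$. Finiteness of the quotient then makes $\lambda$ land in $Z(H^\circ)$, so $P_H(\lambda)=H$ and $\rho$ is irreducible.

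The main obstacle is the bookkeeping in (1)$\Rightarrow$(2) relating the centers. We must show that the torus $C^\alpha$ (or its neutral component) has finite image modulo $Z(H^\circ)^{\rho(\Gamma)}$, given only $Z(C)^\alpha$, so as to apply Lemma~\ref{lemma:finite-fixed-points} cleanly. If that is awkward, the fallback is to pass to $H^\circ_{\ad}$ (or $H/Z(H^\circ)$), where the central term disappears and both finiteness conditions become finiteness of centralizers. Care is needed there since taking fixed points does not commute with the quotient; this affects only finite groups in characteristic $\ne$ the relevant primes, and so it is harmless for finiteness.
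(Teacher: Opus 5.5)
This is essentially the paper's proof. For (2)$\Rightarrow$(1) you use \cite[Theorem 3.10]{BMR05} and reductivity of the centralizer of an $H$-completely reducible image, and for (1)$\Rightarrow$(2) you apply Lemma~\ref{lemma:finite-fixed-points} to conjugation by $\rho(\gamma)$ on $C=Z_H(\rho(\Gamma_0))^\circ_{\red}$ and then Lemma~\ref{lemma:general-semisimplicity-criterion}. Your closing R-parabolic step, which the paper leaves implicit, tacitly needs $\rho(\Gamma)$ to meet every component of $H$ when $H$ is disconnected: a $\lambda$ that is central in $H^\circ$ and centralizes $\rho(\Gamma)$ only gives $P_H(\lambda)\supseteq H^\circ\rho(\Gamma)$.

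The center bookkeeping you flag as the main obstacle is immediate and needs no passage to $H^\circ_{\ad}$. Since $\Gamma$ is generated by $\Gamma_0$ and a lift $\gamma$ of a generator of $\Gamma/\Gamma_0$, one has $C^\alpha\subseteq Z_{H^\circ}(\rho(\Gamma))$ on the nose, and also $C^\alpha\cap Z(H^\circ)^{\rho(\Gamma)}\subseteq Z(C)^\alpha$. Hence $C^\alpha/Z(C)^\alpha$ is a quotient of a subgroup of the finite group appearing in (1).
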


\begin{proof}
For (1) $\Rightarrow$ (2), note that $Z_H(\rho(\Gamma_0))^\circ_{\mathrm{red}}$ is connected reductive by \cite[Proposition 3.12]{BMR05}, so applying Lemma~\ref{lemma:finite-fixed-points} to $\alpha = \rho(\gamma)$, we find that $S = Z_H(\rho(\Gamma_0))^\circ_{\mathrm{red}}$ is a torus. Thus $\rho$ is semisimple by Lemma~\ref{lemma:general-semisimplicity-criterion}.

For (2) $\Rightarrow$ (1), note that $Z_{H^\circ}(\rho(\Gamma))^\circ_{\mathrm{red}}$ is connected reductive by \cite[Proposition 3.12]{BMR05}. Thus if $Z_{H^\circ}(\rho(\Gamma))/Z(H^\circ)^\Gamma$ is not finite then there exists a non-central torus $S_0 \subset H^\circ$ centralized by $\Gamma$. This implies that $\rho(\Gamma) \subset Z_H(S_0)(k)$ and $Z_H(S_0) \subsetneq H$, contradicting irreducibility of $\rho$.
\end{proof}

\subsubsection{Non-singular characters}

We now use Lemma~\ref{lemma:general-irreducibility-criterion} to study non-singular characters of tori. Recall that an $F$-torus $T$ is \textit{totally ramified} if $T$ does not admit a nontrivial unramified $F$-subtorus.

\begin{lemma}\label{lemma:elliptic-finite-invariants}
If $T \subset G$ is a maximal $F$-torus, then $T$ is elliptic if and only if $\widehat{T}^{W_F}/Z(\wh G)^{W_F}$ is finite. Moreover, $T \cap G_{\der}$ is totally ramified if and only if $\wh T^{I_F}/Z(\wh G)^{I_F}$ is finite.
\end{lemma}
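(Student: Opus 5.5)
The plan is to pass to character and cocharacter lattices and reduce both statements to rational linear algebra. Write $X = X^*(T)$ and $Y = X_*(T) = X^*(\wh T)$. The Weil group acts on $Y$ through a finite quotient, and the dual torus satisfies $\wh T(k) = Y \otimes k^\times$ equivariantly. The same holds for $Z(\wh G)^\circ$, whose character lattice is the quotient of $Y$ by the saturation of the coroot lattice. Since $Z(\wh G)/Z(\wh G)^\circ$ is finite, finiteness of $\wh T^{\Gamma}/Z(\wh G)^{\Gamma}$ for a subgroup $\Gamma \in \{W_F, I_F\}$ is equivalent to the identity component $(\wh T^\Gamma)^\circ$ being contained in $Z(\wh G)$.

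For a diagonalizable group with finite action, $\dim \wh T^\Gamma = \rk (Y_\Gamma)$, where $Y_\Gamma$ denotes coinvariants. Rationally, coinvariants equal invariants because the action factors through a finite group. So the condition becomes
\[
\dim_{\Q} (Y \otimes \Q)_\Gamma = \dim_\Q (Y/\Z\Phi^\vee \otimes \Q)_\Gamma .
\]
Equivalently, $((\Q\Phi^\vee)_\Gamma) = 0$, that is, $(\Q\Phi^\vee)^\Gamma = 0$, using exactness of $\Gamma$-invariants on $\Q$-representations of a finite group. By duality this is the same as $(X\otimes\Q \cap \Q\Phi)^\Gamma = 0$, i.e. $X^*(T_{\mathrm{der}}\text{-part})^\Gamma\otimes\Q=0$. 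Here the relevant torus is $T\cap G_{\der}$ (up to isogeny $T_{\mathrm{sc}}$), whose rational character space is $\Q\Phi$ and whose cocharacter space is $\Q\Phi^\vee$.

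Now interpret the two cases.
\begin{itemize}
\item For $\Gamma = W_F$: $X_*(T\cap G_{\der})^{\Gamma_F}\otimes\Q = 0$ says that $T\cap G_{\der}$ contains no $F$-split subtorus. Since the maximal split subtorus of $T$ is the product up to isogeny of the split parts of $Z(G)^\circ$ and of $T\cap G_{\der}$, this is ellipticity of $T$. The action of $W_F$ on $Y$ factors through $\Gal(\ol F/F)$, with dense image, so the invariants agree.
\item For $\Gamma = I_F$: $X_*(T\cap G_{\der})^{I_F}\otimes\Q=0$ says that $T\cap G_{\der}$ has no subtorus split over $F^{\unr}$. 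By Galois descent, an $F^{\unr}$-split subtorus that is maximal is stable under Frobenius and descends to an unramified $F$-subtorus. Conversely, any unramified $F$-subtorus is $F^{\unr}$-split. Hence the condition is equivalent to $T\cap G_{\der}$ being totally ramified.
\end{itemize}

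The main points requiring care are the following.
\begin{itemize}
\item The center $Z(\wh G)$ is possibly disconnected, and $k$ may have positive characteristic $\ell$. Nonetheless the dimension count holds because $\wh T$ and $Z(\wh G)$ are diagonalizable and characteristic-independent, and finite component groups do not affect finiteness.
\item One must identify $Z(\wh G)^\circ\subset \wh T$ as the torus with cocharacter group $(\Q\Phi)^\perp\cap Y$. Dually, the torus with character group $Y/(Y\cap\Q\Phi^\vee)$ is meant; I expect this bookkeeping, rather than any real difficulty, to be the main obstacle.
\end{itemize}
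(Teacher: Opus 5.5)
Your argument is correct and is essentially the paper's. The paper reduces to $G$ semisimple and notes that the identity component of $\wh T^{\Gamma}$ has cocharacter group $X_*(\wh T)^{\Gamma}=X^*(T)^{\Gamma}$; you instead carry out that reduction explicitly by splitting off the coroot span $\Q\Phi^\vee=X_*(T\cap G_{\der})\otimes\Q$, which is the same rank computation.

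There are two harmless labelling slips. First, $\wh T(k)=X^*(T)\otimes k^\times$, not $X_*(T)\otimes k^\times$. Second, the cocharacter group of $Z(\wh G)^\circ$ is $X^*(T)\cap(\Q\Phi^\vee)^\perp$, not $(\Q\Phi)^\perp\cap Y$. Your character-lattice description $Y/(Y\cap\Q\Phi^\vee)$ is correct, and it is all the argument actually uses.
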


\begin{proof}
We may and do assume that $G$ is semisimple, so $\widehat{T}^{W_F}/Z(\wh G)^{W_F}$ is finite if and only if $\widehat{T}^{W_F}$ is finite. The cocharacter group $X_*(\widehat{T}^{W_F})$ is equal to $X_*(\widehat{T})^{W_F} = X^*(T)^{W_F}$ and this is clearly trivial if and only if $T$ is elliptic. The same argument works for the second claim.
\end{proof}

\begin{lemma}\label{lemma:irreducibility-criterion}
If $k$ is either $\ol{\F}_\ell$ or $\ol{\Q}_\ell$ and $\rho\colon W_F \to \ld G(k)$ is an L-parameter, then $\rho$ is irreducible if and only if $\rho|_{I_F}$ is semisimple and $Z_{\widehat{G}}(\rho(W_F))/Z(\widehat{G})^{W_F}$ is finite, and in this case $Z_{\widehat{G}}(\rho(I_F))^\circ_{\mathrm{red}}$ is a torus. Similarly, $\rho|_{I_F}$ is irreducible if and only if $Z_{\widehat{G}}(\rho(I_F))/Z(\widehat{G})^{I_F}$ is finite, and in this case $Z_{\widehat{G}}(\rho(P_F))^\circ$ is a torus. 
\end{lemma}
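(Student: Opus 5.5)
The plan is to apply Lemma~\ref{lemma:general-irreducibility-criterion} twice, once for each assertion, with $H=\ld G$ (so $H^\circ=\wh G$) or a finite-type quotient of it. Since $\rho$ is continuous and $\wh G$ acts on $\ld G$, we may replace $W_F$ by its image modulo a suitable open subgroup of $I_F$ acting trivially on $\wh G$, and work with the reductive group $\wh G\rtimes Q$ for a finite quotient $Q$ of $W_F$ through which the $W_F$-action factors.

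For the first assertion, take $\Gamma=W_F$ and $\Gamma_0=I_F$, so that $\Gamma/\Gamma_0\cong\Z$ is cyclic, generated by $\Fr$. Lemma~\ref{lemma:general-irreducibility-criterion} then says $\rho$ is irreducible exactly when $\rho|_{I_F}$ is semisimple and $Z_{\wh G}(\rho(W_F))/Z(\wh G)^{\rho(W_F)}$ is finite. Since $\wh G$ acts on its center through $W_F$, we have $Z(\wh G)^{\rho(W_F)}=Z(\wh G)^{W_F}$, which gives the stated criterion. For the torus claim, let $S=Z_{\wh G}(\rho(I_F))^\circ_{\red}$. This is connected reductive by \cite[Proposition 3.12]{BMR05}, since $\rho|_{I_F}$ is semisimple. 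Conjugation by $\rho(\Fr)$ gives an automorphism $\alpha$ of $S$ whose fixed points lie in $Z_{\wh G}(\rho(W_F))$. The center $Z(S)$ contains the image of $Z(\wh G)^{I_F}$, and in the finiteness hypothesis we compare $S^\alpha$ against $Z(S)^\alpha$. We conclude $S^\alpha/Z(S)^\alpha$ is finite, so Lemma~\ref{lemma:finite-fixed-points} forces $S$ to be a torus.

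The second assertion is the same argument with $\Gamma=I_F$ and $\Gamma_0=P_F$, using that $I_F/P_F$ is procyclic. After passing to the finite quotient it is cyclic, of order prime to $p$. The semisimplicity hypothesis on $\rho|_{P_F}$ is then automatic: the image of $P_F$ is a finite $p$-group and $p\ne\ell$, so it is linearly reductive, and the criterion of \cite[\S 6]{BMR05} applies. Lemma~\ref{lemma:general-irreducibility-criterion} therefore reduces irreducibility of $\rho|_{I_F}$ to finiteness of $Z_{\wh G}(\rho(I_F))/Z(\wh G)^{I_F}$. The torus statement for $Z_{\wh G}(\rho(P_F))^\circ$ follows from Lemma~\ref{lemma:finite-fixed-points} applied to a generator of the tame quotient, exactly as in the first part. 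This group is reduced because a linearly reductive group acts.

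The main obstacle is the bookkeeping in passing from $W_F$, which is not finite, to a setting where Lemma~\ref{lemma:general-irreducibility-criterion} applies and where the relevant centers match up. One must check that $Z(\wh G)^{I_F}$ and $Z(\wh G)^{W_F}$ map into $Z(S)^\alpha$ with finite cokernel issues handled correctly; I would try this by comparing dimensions of identity components. One must also check that replacing $\ld G$ by $\wh G\rtimes Q$ does not alter semisimplicity or irreducibility. For the latter I would use the standard fact that R-parabolics of $\wh G\rtimes Q$ containing the image correspond to those of $\ld G$.
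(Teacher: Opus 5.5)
Your proposal is correct and is essentially the paper's proof. The paper applies Lemma~\ref{lemma:general-irreducibility-criterion} to the pairs $(\rho(W_F),\rho(I_F))$ and $(\rho(I_F),\rho(P_F))$, citing \cite[Lemma 2.6]{BMR05} for semisimplicity on $P_F$ and \cite[Theorem 2.1]{PY02} for smoothness and reductivity of $Z_{\wh G}(\rho(P_F))$; the torus claims come from the Lemma~\ref{lemma:finite-fixed-points} step inside the proof of Lemma~\ref{lemma:general-irreducibility-criterion}, which you make explicit. The center bookkeeping you worry about is easier than you suggest: you only need $S^\alpha\subseteq Z_{\wh G}(\rho(W_F))$ and $S^\alpha\cap Z(\wh G)^{W_F}\subseteq Z(S)^\alpha$, so no comparison of $Z(\wh G)^{I_F}$ with $Z(S)$ is required.
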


\begin{proof}
The first claim is immediate from Lemma~\ref{lemma:general-irreducibility-criterion} applied to $\Gamma = \rho(W_F)$ and $\Gamma_0 = \rho(I_F)$. For the second claim, note that $\rho|_{P_F}$ is automatically semisimple since $\ell \neq p$ \cite[Lemma 2.6]{BMR05}, and $Z_{\widehat{G}}(\rho(P_F))$ is automatically reductive (and in particular smooth) by \cite[Theorem 2.1]{PY02}. Thus the result again follows immediately from Lemma~\ref{lemma:general-irreducibility-criterion}.
\end{proof}

The following lemma is similar to \cite[Lemma 4.1.3]{Kal21b}. The main difference is that our lemma handles L-parameters $\rho$ for which $\rho|_{P_F}$ may not factor through a maximal torus of $\wh G$; this generality will show up when studying the Fargues--Scholze correspondence, for which this property is not known a priori when $p$ is small.

\begin{lemma}\label{lemma:toral-l-parameter-variant}
Let $k$ be a field among $\ol\Q_\ell$ and $\ol\F_\ell$, and let $\rho\co W_F \to \ld G(k)$ be a semisimple L-parameter. There exists a $\rho(I_F)$-stable Borel-torus pair $(\wh B, \wh T)$ in $Z_{\wh G}(\rho(P_F))$, and for any such pair the torus $(\wh T^{I_F})^\circ_{\red}$ is a maximal $k$-torus of $Z_{\wh G}(\rho(I_F))^\circ_{\red}$. Moreover, $\wh T$ can be chosen to be $\rho(W_F)$-stable.
\end{lemma}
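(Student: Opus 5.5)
The strategy is to peel off the layers $P_F \subset I_F \subset W_F$ one at a time, applying Lemma~\ref{lemma:general-semisimplicity-criterion} at each stage.

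First, since $\ell \neq p$, the restriction $\rho|_{P_F}$ has finite $p$-group image and is therefore semisimple. Then $H \coloneqq Z_{\wh G}(\rho(P_F))$ is a (possibly disconnected) reductive $k$-group by \cite[Theorem 2.1]{PY02}, with $\pi_0(H)$ a $p$-group by Lemma~\ref{lemma:wild-centralizer-components}. Because $\rho$ is semisimple, \cite[Theorem 3.10]{BMR05} shows that $\rho|_{I_F}$ is semisimple. Apply Lemma~\ref{lemma:general-semisimplicity-criterion} with $\Gamma = \rho(I_F)$ (more precisely its finite image, after replacing $W_F$ by a suitable quotient, since $\rho|_{I_F}$ has finite image up to the tame part) and $\Gamma_0 = \rho(P_F)$, noting that $I_F/P_F$ is procyclic. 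This produces a $\rho(I_F)$-stable Borel-torus pair $(\wh B, \wh T)$ in $H^\circ_{\red} = Z_{\wh G}(\rho(P_F))^\circ$, which gives existence.

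For the second assertion, fix any $\rho(I_F)$-stable pair $(\wh B,\wh T)$ in $H^\circ$. Pick a topological generator $\sigma$ of the tame quotient; then $\rho(\sigma)$ acts on $H^\circ$ as a quasi-semisimple automorphism, since it preserves $(\wh B,\wh T)$. By \cite[Proposition 3]{Spr06} (or the Steinberg-style result on fixed points of quasi-semisimple automorphisms), $(\wh T^{\sigma})^\circ$ is a maximal torus of $((H^\circ)^{\sigma})^\circ_{\red}$. Now $Z_{\wh G}(\rho(I_F))^\circ_{\red} = (H^{\sigma})^\circ_{\red}$, because $I_F$ acts through $P_F$ together with $\sigma$; in the finite-image case $\rho(I_F)$ is generated by $\rho(P_F)$ and $\rho(\sigma)$. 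Moreover $\wh T^{I_F} = \wh T^\sigma$, since $P_F$ acts trivially on $H$, so the claim follows. The step I expect to be most delicate is justifying quasi-semisimplicity-to-maximal-torus in the disconnected/nonreduced setting when $k=\ol\F_\ell$; there I would first pass to $H^\circ_{\red}$ and use that the $\sigma$-fixed points of a quasi-semisimple automorphism have maximal torus $(T^\sigma)^\circ$ \cite[Lemma 1.?]{DM94}.

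For the final statement, let $L = Z_{\wh G}(\rho(I_F))^\circ_{\red}$, which is reductive by \cite[Proposition 3.12]{BMR05}. Applying Lemma~\ref{lemma:general-semisimplicity-criterion} again, now with $\Gamma = \rho(W_F)$ and $\Gamma_0 = \rho(I_F)$ (and $W_F/I_F$ cyclic), gives a $\rho(\Fr)$-stable Borel-torus pair $(\wh B_L, \wh T_L)$ of $L$. I then want to enlarge $\wh T_L$ to a $\rho(W_F)$-stable maximal torus $\wh T$ of $H^\circ$ that is stable under $\rho(I_F)$ and Borel-compatible. Take $\wh T = Z_{H^\circ}(\wh T_L)$. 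This is $\rho(W_F)$-stable because $\rho(W_F)$ normalizes $H$ and $\wh T_L$. It is a maximal torus because $\wh T_L$ is conjugate within $L$ to $(\wh T_0^{I_F})^\circ$ for some pair $(\wh B_0,\wh T_0)$ as in the first part, and for such a pair the centralizer of $(\wh T_0^\sigma)^\circ$ in $H^\circ$ is $\wh T_0$ (a standard fact for quasi-semisimple automorphisms, since $(\wh T_0^\sigma)^\circ$ contains regular elements relative to $\wh B_0$). Transporting $\wh B_0$ along this conjugation, which is by an element of $L$ and hence commutes with $\rho(I_F)$, gives a $\rho(I_F)$-stable Borel containing $\wh T$, as required.
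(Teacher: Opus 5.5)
Your proposal is correct and follows the paper's proof. Existence comes from Lemma~\ref{lemma:general-semisimplicity-criterion} applied to finite quotients of $P_F \subset I_F$. The maximal-torus claim is the Steinberg/Digne--Michel result for quasi-semisimple automorphisms, which the paper cites as \cite[Th\'eor\`eme 1.8(iii)]{DM94}. The $\rho(W_F)$-stable torus is $Z_{Z_{\wh G}(\rho(P_F))^\circ}(\wh T_L)$ for a $\rho(W_F)$-stable pair $(\wh B_L,\wh T_L)$ in $Z_{\wh G}(\rho(I_F))^\circ_{\red}$.

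The only difference is in that last step. The paper invokes \cite[Th\'eor\`eme 1.8(iv)]{DM94}, which also yields $\wh B_L\subset\wh B$. You instead check directly that the centralizer is a maximal torus: conjugate $\wh T_L$ within $Z_{\wh G}(\rho(I_F))^\circ_{\red}$ to some $(\wh T_0^{I_F})^\circ$, then use a $\sigma$-invariant regular cocharacter. This suffices for the statement as written.
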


\begin{proof}
The first claim follows from Lemma~\ref{lemma:general-semisimplicity-criterion} applied to suitable finite quotients $\Gamma$ and $\Gamma_0$ of $I_F$ and $P_F$, respectively. The second claim follows from \cite[Th\'eor\`eme 1.8(iii)]{DM94}. For the final claim, note that another application of Lemma~\ref{lemma:general-semisimplicity-criterion} shows that there is a $\rho(W_F)$-stable Borel-torus pair $(\wh B_0, \wh T_0)$ in $Z_{\wh G}(\rho(I_F))^\circ_{\red}$. By \cite[Th\'eor\`eme 1.8(iv)]{DM94}, the pair $(\wh B, \wh T)$ can be chosen such that $\wh T = Z_{Z_{\wh G}(\rho(P_F))^\circ}(\wh T_0)$ and $\wh B_0 \subset \wh B$, proving the claim.
\end{proof}

\begin{lemma}\label{lemma:non-singular-irreducible}
Let $k$ be a field among $\ol\F_\ell$ and $\ol\Q_\ell$, let $\Psi$ be a normalized irreducible Yu datum for $G$ with coefficients in $k$, and let $[(T,\theta)] \in \cT(\Psi)$.
\begin{enumerate}
    \item $\rho_I^{\Kal}(\Psi)$ is semisimple.
    \item The $k$-group scheme $Z_{\wh G}(\rho_I^{\Kal}(\Psi)(I_F))^\circ_{\red}$ is a torus if and only if $\Psi$ is non-singular, and in this case
    \begin{equation}\label{eqn:inertial-fixed-points-torus}
    Z_{\wh G}(\rho^{\Kal}(\Psi)(I_F))^\circ_{\red} = (\wh T^{I_F})^\circ_{\red}.
    \end{equation}
    In particular, in this case $\rho^{\Kal}(\Psi)$ is irreducible.
    \item $\rho_I^{\Kal}(\Psi)$ is irreducible if and only if $\Psi$ is non-singular and $T \cap G_{\der}$ is totally ramified.
\end{enumerate}
\end{lemma}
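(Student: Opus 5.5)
We prove the three parts in order, working with the representative $\rho = \ld j_{T,G}\circ\ld(\theta|_{T(F)_{\mathrm b}})$ of $\rho_I^{\Kal}(\Psi)$. After replacing $\Psi$ by an irredundant datum (Lemma~\ref{lemma:irredundant-yu-data}) we may assume $\Psi$ is irredundant.

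\textbf{Part (1).} The plan is to apply Lemma~\ref{lemma:general-semisimplicity-criterion} with $\Gamma$ a finite quotient of $I_F$ through which $\rho$ factors and $\Gamma_0$ the image of $P_F$. Then $\Gamma/\Gamma_0$ is cyclic, and $\rho|_{P_F}$ is semisimple because $\ell\neq p$ \cite[Lemma 2.6]{BMR05}.

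It remains to exhibit a $\rho(I_F)$-stable Borel--torus pair in $Z_{\wh G}(\rho(P_F))^\circ_{\red}$. As in the proof of Lemma~\ref{lemma:ns-yu-data-and-characters}(2), $\ld j_{T,G}$ restricts on $\wh T\rtimes P_F$ to the canonical embedding. Hence $Z_{\wh G}(\rho(P_F))^\circ$ is the group $\wh{G^0}$, with maximal torus $\wh T$ and roots the $\wh\alpha$ for $\alpha\in\Phi(G^0,T)$.

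Since $T$ is maximally unramified in $G^0$, the image of $I_F$ acts on $\wh{G^0}$ through $\wh T$ composed with pinning-type automorphisms preserving $\wh T$. Concretely, $\rho(w)=(t(w)n(w_{T,G}),w)$. On $\wh{G^0}$ the Weyl element $n(\gamma_{T,G})$, together with the Galois action, preserves a Borel of $\wh{G^0}$ containing $\wh T$. This holds because $I_F$ acts on $\Phi(G^0,T)$ preserving a positive system, which is exactly where maximal unramifiedness enters.

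So $(\wh B^0,\wh T)$ is $\rho(I_F)$-stable and Lemma~\ref{lemma:general-semisimplicity-criterion} gives semisimplicity. \emph{I expect this to be the main obstacle}: one has to check that the Tits-section factor and the $Q$-cocycle, which is valued in $\wh T$, do not break stability of the positive system of $\Phi(G^0,T)$ under inertia. I would try first to choose the Borel pair $(B_M,T_M)$ in \S\ref{sss:quasi-split-l-embedding} compatibly with an $I_F$-stable positive system of $\Phi(G^0_{F^{\unr}},T)$.

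\textbf{Part (2).} The equivalence of ``$Z_{\wh G}(\rho(I_F))^\circ_{\red}$ is a torus'' with non-singularity is exactly Lemma~\ref{lemma:toral-inertial-centralizer-criterion}, combined with Lemma~\ref{lemma:ns-yu-data-and-characters}(3) and \cite[Fact 3.1.4(1)]{Kal21b} to translate between $\theta$ being non-singular and $\Psi$ being non-singular. That lemma also gives \eqref{eqn:inertial-fixed-points-torus}.

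For irreducibility of $\rho^{\Kal}(\Psi)$ we use Lemma~\ref{lemma:irreducibility-criterion}. Since $\rho|_{I_F}$ is semisimple by (1), it suffices to show that $Z_{\wh G}(\rho(W_F))/Z(\wh G)^{W_F}$ is finite. Its identity component lies in $((\wh T^{I_F})^{\circ}_{\red})^{\rho(W_F)}$, which is contained in $\wh T^{W_F}$ modulo finite groups. By Lemma~\ref{lemma:elliptic-finite-invariants}, $\wh T^{W_F}$ is finite modulo the center because $T$ is elliptic (Lemma~\ref{lemma:ns-yu-data-and-characters}(3)).

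\textbf{Part (3).} We use the second half of Lemma~\ref{lemma:irreducibility-criterion}: $\rho$ is irreducible if and only if $Z_{\wh G}(\rho(I_F))/Z(\wh G)^{I_F}$ is finite.

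If this quotient is finite, then $Z_{\wh G}(\rho(I_F))^\circ_{\red}$ is central, hence a torus. So $\Psi$ is non-singular by (2), and then $Z_{\wh G}(\rho(I_F))^\circ_{\red}=(\wh T^{I_F})^\circ_{\red}$. Finiteness of $\wh T^{I_F}/Z(\wh G)^{I_F}$ is then equivalent, by Lemma~\ref{lemma:elliptic-finite-invariants}, to $T\cap G_{\der}$ being totally ramified.

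Conversely, if $\Psi$ is non-singular and $T\cap G_{\der}$ is totally ramified, the same two facts give finiteness of the quotient, hence irreducibility.
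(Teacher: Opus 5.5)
Your proposal is correct and follows the paper's proof essentially step for step. Part (1) goes through Lemma~\ref{lemma:general-semisimplicity-criterion} applied to an $I_F$-stable Borel of $\wh{G^0}=Z_{\wh G}(\rho(P_F))^\circ$ containing $\wh T$. Part (2) goes through Lemma~\ref{lemma:toral-inertial-centralizer-criterion}, ellipticity of $T$, and Lemma~\ref{lemma:irreducibility-criterion}. Part (3) goes through Lemmas~\ref{lemma:irreducibility-criterion} and~\ref{lemma:elliptic-finite-invariants}.

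The step you flag as the main obstacle is harmless. Since $\ld j_{T,G}$ is an L-embedding, each $\rho(w)$ normalizes $\wh T$ and acts on $X^*(\wh T)$ through the $W_F$-action defining $\ld T$. It therefore permutes the Borels of $\wh{G^0}$ containing $\wh T$ according to its action on $\Phi(\wh{G^0},\wh T)$, and the $\wh T$-valued and Tits-section factors play no role. The paper simply asserts this step.
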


\begin{proof}
By Lemma~\ref{lemma:irredundant-yu-data}, we may assume that $\Psi$ is irredundant. For (1), note that if $(G^i)$ is the twisted Levi sequence of $\Psi$, then by construction $T$ is a maximally unramified maximal $F$-torus of $G^0$. By Lemma~\ref{lemma:ns-yu-data-and-characters}(2), the canonical embedding $\wh{G^0} \to \wh G$ identifies $\wh{G^0}$ with $Z_{\wh G}(\rho_I^{\Kal}(\Psi)(P_F))^\circ$. Since $T$ is maximally unramified in $G^0$, there is a $\rho_I^{\Kal}(\Psi)(I_F)$-stable Borel subgroup $\wh B \subset Z_{\wh G}(\rho_I^{\Kal}(\Psi)(P_F))^\circ$ containing $\wh T$. Since $\rho_I^{\Kal}(\Psi)|_{P_F}$ is automatically semisimple by \cite[Lemma 2.6]{BMR05}, the claim follows from Lemma~\ref{lemma:general-semisimplicity-criterion}.

The first claim of (2) follows from Lemma~\ref{lemma:toral-inertial-centralizer-criterion} and Definition~\ref{defn:non-singular-yu-datum}. To prove \eqref{eqn:inertial-fixed-points-torus}, it is enough to show that $(\wh T^{\rho_I^{\Kal}(\Psi)(I_F)})^\circ_{\red}$ is a maximal torus of $Z_{\wh G}(\rho_I^{\Kal}(\Psi)(I_F))^\circ_{\red}$, and this follows from the previous paragraph and Lemma~\ref{lemma:toral-l-parameter-variant}.

Now assume that $\Psi$ is non-singular, so $\theta$ is non-singular and thus $T$ is elliptic by Lemma~\ref{lemma:ns-yu-data-and-characters}. Let $\rho = \rho^{\Kal}(\Psi)$. First, (1) shows that $\rho|_{I_F}$ is semisimple. Moreover, $Z_{\wh G}(\rho(I_F))^\circ_{\red}$ is a torus by Lemma~\ref{lemma:toral-inertial-centralizer-criterion}, so $Z_{\wh G}(\rho(I_F))^\circ_{\red} \subset \wh T^{I_F}$ by Lemma~\ref{lemma:toral-l-parameter-variant} and thus $Z_{\wh G}(\rho(W_F))^\circ_{\red} \subset \wh T^{W_F}$. Since $T$ is elliptic, it follows that $Z_{\wh G}(\rho(W_F))/Z(\wh G)^{W_F}$ is finite.
Thus Lemma~\ref{lemma:irreducibility-criterion} shows that $\rho$ is irreducible, proving the final claim of (2).

For (3), first suppose that $T \cap G_{\der}$ is totally ramified. Then $G^0 = T$ since the elliptic torus $T_{F^{\unr}}$ is a maximally split torus of $G^0_{F^{\unr}}$, which is quasi-split by Steinberg's theorem. Thus $\Psi$ is non-singular by Remark~\ref{remark:toral-yu-ns}. By (2), we have $Z_{\wh G}(\rho_I^{\Kal}(\Psi)(I_F))^\circ_{\red}=(\wh T^{I_F})^\circ_{\red}$. By Lemma~\ref{lemma:elliptic-finite-invariants}, the statement that $T\cap G_{\der}$ is totally ramified is equivalent to finiteness of $\wh T^{I_F}/Z(\wh G)^{I_F}$. It follows that $Z_{\wh G}(\rho_I^{\Kal}(\Psi)(I_F))/Z(\wh G)^{I_F}$ is finite, and Lemma~\ref{lemma:irreducibility-criterion} implies that $\rho_I^{\Kal}(\Psi)$ is irreducible. Conversely, if $\rho_I^{\Kal}(\Psi)$ is irreducible, then Lemma~\ref{lemma:irreducibility-criterion} shows that $Z_{\wh G}(\rho_I^{\Kal}(\Psi)(I_F))/Z(\wh G)^{I_F}$ is finite; by (2), this implies that $\Psi$ is non-singular. Since $\wh T^{I_F}\subset Z_{\wh G}(\rho_I^{\Kal}(\Psi)(I_F))$, the quotient $\wh T^{I_F}/Z(\wh G)^{I_F}$ is finite, and thus $T \cap G_{\der}$ is totally ramified by Lemma~\ref{lemma:elliptic-finite-invariants}, as desired.
\end{proof}

\subsection{The main theorem}

For a field $k$ among $\ol\Q_\ell$ and $\ol\F_\ell$, let $\Pi_k(G)_{\mathrm{tc}}$ denote the set of isomorphism classes of $k$-representations of $G(F)$ which arise from Yu's construction applied to a normalized irreducible Yu datum for $G$ over $k$, and let $\Phi_k^{\ss}(G)$ denote the set of semisimple L-parameters $W_F \to \ld G(k)$ up to $\wh G(k)$-conjugacy.  

\begin{thm}\label{thm:llc-partial-characterization}
Let $\cS$ be a set of prime numbers containing $p$, let $F$ be a non-archimedean local field of residue characteristic $p \neq 2$, let $G$ be a connected reductive $F$-group, and let $k$ be a field among $\ol\Q_\ell$ and $\ol\F_\ell$. Suppose that we are given, for each finite tamely ramified extension $E/F$ and each connected reductive $E$-group $H$ of semisimple rank at most that of $G$, a map of sets
\[
\rho\co \Pi_k(H)_{\mathrm{tc}} \to \Phi_k^{\ss}(H).
\]
Abbreviate $\rho_I \coloneqq \rho|_{I_F}$. Suppose that $\rho$ enjoys the following properties. 
\begin{enumerate}
    \item When $G$ is a torus, $\rho$ agrees with the Local Langlands Correspondence for tori.
    \item $\rho$ is compatible with central characters.
    \item $\rho$ is compatible with homomorphisms which are isomorphisms up to center, i.e., Lemma~\ref{lemma:kaletha-compatibility-with-central-isogenies} holds with $\rho_I$ (resp.\ $\rho$) in place of $\rho_I^{\Kal}$ (resp.\ $\rho^{\Kal}$)
    \item for every $\ell\notin\cS$, if $\Psi = ((G^i), x, (r_i), \tau, (\phi_i))$ is a normalized irreducible Yu datum for $G$ over $k$ such that $G^0$ is a torus, and every tamely ramified extension $E/F$ of degree $\ell$ such that $G^0_E$ is elliptic, we have
    \[
    \rho(\pi(\Psi))|_{P_F}
    \sim
    \rho(\pi(\Psi_E))|_{P_F},
    \]
    with notation as in Proposition~\ref{prop:kal-small-degree-bc}.
    \item If $\Psi = ((G^i), x, (r_i), \tau, (\phi_i))$ is a normalized irreducible Yu datum for $G$ over $k$ such that $G^0 \cap G_{\der}$ is not a totally ramified torus, then there exists a proper unramified twisted Levi $L \subsetneq G$ and a Yu datum $\Psi_L$ for $L$ over $k$ such that the conclusion of Theorem~\ref{thm:kaletha-functoriality} holds for both $\rho_I$ and $\rho_I^{\Kal}$. Moreover,
    \begin{enumerate}[label=(\alph*)]
        \item if $S_\Psi = Z_{\wh G}(\rho_I(\pi(\Psi))(I_F))^\circ_{\red}$ is a torus, then one can choose representatives such that $\rho_I(\pi(\Psi)) = \ld j_{L,G} \circ \rho_I(\pi(\Psi_L))$ and the actions of $W_F$ on $S_\Psi$ through $\rho(\pi(\Psi))$ and $\ld j_{L,G} \circ \rho(\pi(\Psi_L))$ are the same.
        \item if $\rho(\pi(\Psi))$ is irreducible, so $L = H$, then there exists a cocycle $z\co W_F/I_F \to Z(\wh H \cap \wh G_{\der})(k)^{I_F}$ such that
        \[
        \rho(\pi(\Psi)) \sim z \cdot \ld j_{H,G} \circ \rho(\pi(\Psi_H)).
        \]
    \end{enumerate}
\end{enumerate}
Let $\Psi$ be a normalized irreducible Yu datum for $G$ with $k$-coefficients, and let $[(T, \theta)] \in \cT(\Psi)$ (with notation as in \S\ref{ss:kal-param}). Let $T_0$ be the maximal unramified subtorus of $T$, and let $n$ be the ramification degree of the splitting field of the torus $T \cap Z_G(T_0)_{\der}$.
\begin{enumerate}[label=(\Alph*)]
    \item If no prime factor of $n \cdot [F(\mu_n):F]$ lies in $\cS$, then
    \begin{equation}\label{eqn:abstract-agreement-on-inertia}
    \rho_I(\pi(\Psi)) \sim \rho_I^{\Kal}(\Psi).
    \end{equation}
    Therefore $\Psi$ is non-singular if and only if $S_\Psi = Z_{\wh G}(\rho_I(\pi(\Psi))(I_F))^\circ_{\red}$ is a torus.
    \item Suppose moreover that $p$ is good for $Z_G(T_0)$ and $\Psi$ is non-singular, and let 
    \[
    X = X_*((T/Z(Z_G(T_0))^\circ_{\red})_{\ol F})_{I_F} \quad \text{and} \quad X_{\ad} = X_*((T/Z(Z_G(T_0)))_{\ol F})_{I_F}. 
    \]
    If $N$ is the least integer such that the map
    \[
    \rH^1(W_F/I_F, \Hom(X_{\ad}, k^\times)) \to \rH^1(W_{F_N}/I_F, \Hom(X, k^\times))
    \]
    is trivial, then
    \begin{equation}\label{eqn:abstract-agreement-on-finite-index}
    \rho(\pi(\Psi))|_{W_{F_N}} \sim \rho^{\Kal}(\Psi)|_{W_{F_N}}.
    \end{equation}
    Moreover, there exist $\wh G(k)$-conjugates such that $\rho(\pi(\Psi))|_{I_F} = \rho^{\Kal}(\Psi)|_{I_F}$ and the actions of $\rho(\pi(\Psi))$ and $\rho^{\Kal}(\Psi)$ on $S_\Psi$ are the same. In particular, a normalized irreducible Yu datum $\Psi_0$ is non-singular if and only if $\rho(\pi(\Psi_0))$ is irreducible.
\end{enumerate} 
\end{thm}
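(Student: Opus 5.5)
We argue by induction on the semisimple rank of $G$, using hypothesis (5) to descend to a proper unramified twisted Levi whenever $G^0\cap G_{\der}$ is not a totally ramified torus. In that case Theorem~\ref{thm:kaletha-functoriality} gives a datum $\Psi_L$ with $\rho_I^{\Kal}(\Psi)\sim\ld j_{L,G}\circ\rho_I^{\Kal}(\Psi_L)$, and hypothesis (5) gives the same relation for $\rho_I$. Applying induction to $L$ gives $\rho_I(\pi(\Psi_L))\sim\rho_I^{\Kal}(\Psi_L)$, and \eqref{eqn:abstract-agreement-on-inertia} follows. One checks that the integer $n$ attached to $(T,\theta)$ is unchanged on passing to $L$, since $T_0\subset L$ and $Z_L(T_0)=Z_G(T_0)$. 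We also use hypothesis (3), applied to a $z$-extension or adjoint quotient, to normalize $G$ where convenient.

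The base case is $G^0\cap G_{\der}$ a totally ramified torus, so $G^0=T$ and $\Psi$ is toral, hence non-singular by Remark~\ref{remark:toral-yu-ns}. Then $T$ splits over a tame extension whose ramification degree divides $n$, after an unramified extension. We choose a tower of cyclic prime-degree extensions $F=E_0\subset E_1\subset\cdots\subset E_m$, of degrees prime to $\mathcal S$, such that $T_{E_m}$ becomes maximally unramified (indeed split up to center). Tameness and the Kummer description allow degrees dividing $n\cdot[F(\mu_n):F]$. At each stage where $T$ remains elliptic, hypothesis (4) and Proposition~\ref{prop:kal-small-degree-bc} compare $P_F$-restrictions. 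When $T$ stops being elliptic or totally ramified, we switch to the Levi induction of the previous paragraph over $E_j$. This yields $\rho(\pi(\Psi))|_{P_F}\sim\rho^{\Kal}(\Psi)|_{P_F}$.

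The main obstacle is upgrading from $P_F$ to $I_F$. We use rigidity. By Lemma~\ref{lemma:non-singular-irreducible}(3), $\rho_I^{\Kal}(\Psi)$ is irreducible and $Z_{\wh G}(\rho^{\Kal}(P_F))^\circ=\wh T$. We align the two parameters on $P_F$, then use Lemma~\ref{lemma:align-tame-components}, with hypothesis (2) fixing central characters, so that their actions on $\wh T$ agree. The two tame restrictions then differ by a $1$-cocycle of $I_F/P_F$ valued in $\wh T$. Finiteness of $\wh T^{I_F}/Z(\wh G)^{I_F}$ (Lemma~\ref{lemma:elliptic-finite-invariants}), together with the central character matching via (2), should force this cocycle to be a coboundary up to center, giving \eqref{eqn:abstract-agreement-on-inertia}. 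The characterization of non-singularity then follows from Lemma~\ref{lemma:non-singular-irreducible}(2).

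For (B), we again reduce via (5)(a),(b) to $H=Z_G(T_0)$, with $T\cap H_{\der}$ totally ramified. By (A) and Lemma~\ref{lemma:align-tame-components}, we can choose conjugates such that $\rho|_{I_F}=\rho^{\Kal}|_{I_F}$ and the actions of Frobenius on $S_\Psi=(\wh T^{I_F})^\circ_{\red}$ agree. The remaining discrepancy is an unramified cocycle. Working in $H$, this cocycle is valued in the centralizer of $\rho(I_F)$, hence lies in $\rH^1(W_F/I_F,\Hom(X_{\ad},k^\times))$; goodness of $p$ is used to identify the component group. The element $z$ from (5)(b) contributes only central terms, which land in $\Hom(X,k^\times)$. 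By the definition of $N$, the class dies on $W_{F_N}$, which gives \eqref{eqn:abstract-agreement-on-finite-index}. Irreducibility equivalence then follows from Lemma~\ref{lemma:irreducibility-criterion} together with Lemma~\ref{lemma:non-singular-irreducible}(2).
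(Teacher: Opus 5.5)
There is a genuine gap in your base case for (A). Hypothesis (4), the datum $\Psi_E$, and Proposition~\ref{prop:kal-small-degree-bc} exist only for extensions $E$ over which $T=G^0$ remains elliptic: a Yu datum with $G^0=T_E$ needs $Z(G^0)/Z(G)$ anisotropic. So you cannot transport the $P_F$-comparison to a field $E_j$ where $T$ ``stops being elliptic''. Over such an $E_j$ there is also no Yu datum on which to run the Levi induction. A tower ending where $T$ is split up to center must pass through such a stage. The obvious towers, built from subfields of the splitting field, lose ellipticity immediately. For example, take $G=\PGL_3$ with $\mu_3\subset F$ and $T=\Res_{K/F}\G_m/\G_m$, where $K=F(\sqrt[3]{\varpi})$. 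The only cyclic cubic subextension of the splitting field is $K$ itself, over which $T$ splits, so your procedure never gets started.

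The missing idea is a base change that lowers the ramification of $T$ while leaving the image of the Weil group in $\Aut(X^*(T_{\der}))$ unchanged. In the example, take $E'=F(\sqrt[3]{c\varpi})$ with $c$ a non-cube unit. Then $E'$ is linearly disjoint from $K$, so $T_{E'}$ is still elliptic and (4) applies. But $KE'/E'$ is unramified, so $T_{E'}$ is no longer totally ramified and (5) applies over $E'$.

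In general, the paper first makes unramified base changes of prime degrees dividing a power of $n$; these are harmless, since a totally ramified $T_{\der}$ stays elliptic. It then adjoins $\mu_n$, so that the splitting field of $T_{\der}$ over $F_M$ is $F_M(\sqrt[n]{\varpi})$. Next it chooses $E'=F(\sqrt[n]{c\varpi_0})$ linearly disjoint from that field with $EE'/E'$ unramified. A prime-degree step $E'_0\subset E'$ then keeps $T$ elliptic while cutting the image of inertia from order $n$ to $n/\ell_0$. This is where the hypothesis on $n\cdot[F(\mu_n):F]$ enters. It is also why the induction must run on $\dim G_{\der}$ together with the size of this inertia image, not on semisimple rank alone.

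A smaller point in (B): Lemma~\ref{lemma:align-tame-components} handles a cyclic quotient of order prime to $p$, i.e.\ tame inertia, not Frobenius. Moreover, $\rH^1(W_F/I_F,-)$ with values in a finite $p$-group need not vanish, so it cannot give agreement of the Frobenius actions on $S_\Psi$. The paper instead uses (3) to make $\wh G_{\der}$ simply connected. Goodness of $p$ then shows that $Z_{\wh G}(\rho(\pi)(P_F))$ is connected, hence equal to $\wh T$. Therefore $\rho(\pi)(w)\rho^{\Kal}(\Psi)(w)^{-1}\in\wh T$ acts trivially on $S_\Psi$.

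Your promotion from $P_F$ to $I_F$ in (A) is fine as sketched. If $h$ is the image of a lift of a generator of $I_F/P_F$, then $\wh T_{\der}^{h}$ is finite. Hence $t\mapsto t\,h t^{-1}h^{-1}$ is surjective on $\wh T_{\der}(k)$, which makes the residual discrepancy a coboundary.
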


\begin{remark}
The main case of interest for Theorem~\ref{thm:llc-partial-characterization} is the Fargues--Scholze correspondence $\rho = \rho^{\FS}$. This is known to satisfy hypotheses (1), (2), and (3) of Theorem~\ref{thm:llc-partial-characterization} by \cite[Theorem I.9.6]{FS}. In \cite{CF26b}, we verify (4) with $\cS=\{p\}$ and (5) using \cite[Theorem~9.1.1 and Example~9.1.2]{F24}, as discussed in Remark~\ref{rem:modular-functoriality-black-box-II}. The reason for the added generality of $\cS$ is that we previously did not know that \cite[Theorem~9.1.1]{F24} held for all $\ell$.
\end{remark}

\subsubsection{Some bounds on the index}

Before proving Theorem~\ref{thm:llc-partial-characterization}, we explain how one can get some handle on the complicated-looking integer $N$ appearing in (B). First, if $T$ is maximally unramified, then $X = 0$ in Theorem~\ref{thm:llc-partial-characterization}(B), and thus $N = 1$.

\begin{example}\label{ex:llc-slr}
Suppose $G_{F^{\unr}} \cong \SL_r$ and $T \subset \SL_r$ is a totally ramified maximal $F$-torus, so $I_F/P_F$ acts on $X_*(T_{\ol F})$ through an elliptic Weyl element. The only elliptic Weyl element of the absolute Weyl group of $G$ is the Coxeter element, so $X \cong \Z/r\Z \cong X_{\ad}$, and the map $X \to X_{\ad}$ is trivial. Thus $N = 1$ in Theorem~\ref{thm:llc-partial-characterization}(B). In fact, we will see in \cite[Theorem~10.4.3]{CF26b} (by a longer argument) that one can take $N = 1$ for arbitrary $\Psi$ in this case.
\end{example}

Now we establish some elementary methods for computing $N$.

\begin{lemma}\label{lemma:explicit-finite-index-bound}
Let $Y$ be a finite abelian group equipped with an automorphism $\alpha$. Let $m$ be an integer such that for each prime $\ell_0$ dividing $|Y|$, the action of $\alpha^m$ on $Y/\ell_0 Y$ is of order prime to $\ell_0$, and let $n_0$ be the exponent of the image $Z$ of $Y \to Y$, $y \mapsto \sum_{i=0}^{m-1} \alpha^i(y)$. Then the restriction map
\[
\rH^1(\alpha, Y) \to \rH^1(\alpha^{mn_0}, Y)
\]
is trivial.
\end{lemma}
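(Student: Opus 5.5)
We work with cocycles for the cyclic group generated by $\alpha$. For a $\Z$-module $Y$ with automorphism $\alpha$, write $N_j \coloneqq \sum_{i=0}^{j-1}\alpha^i$. A $1$-cocycle $c$ for $\langle\alpha\rangle$ is determined by $y = c(\alpha)\in Y$, and for every $j\ge 1$ we have $c(\alpha^j) = N_j(y)$. A class is trivial exactly when $y\in(\alpha-1)Y$, and restriction to $\langle\alpha^j\rangle$ sends $[y]$ to the class of $N_j(y)$ modulo $(\alpha^j-1)Y$. We may assume $\alpha$ has finite order, since $Y$ is finite; in particular $\alpha^{mn_0}$ generates a finite cyclic group, and the statement reads the same either way. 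So the task is: given $y$, show that $N_{mn_0}(y)\in(\alpha^{mn_0}-1)Y$.

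First we factor $N_{mn_0} = N'_{n_0}\circ N_m$, where $N'_{n_0} = \sum_{j=0}^{n_0-1}\beta^j$ and $\beta = \alpha^m$. Then $N_{mn_0}(y) = N'_{n_0}(z)$ with $z = N_m(y)\in Z$. The subgroup $Z$ is $\alpha$-stable, because $N_m$ commutes with $\alpha$. It therefore suffices to prove the statement for the restriction map $\rH^1(\beta, Z)\to \rH^1(\beta^{n_0},Z)$, and then push forward along $Z\hookrightarrow Y$. This step is harmless because $(\beta^{n_0}-1)Z\subset(\beta^{n_0}-1)Y$.

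Next, decompose $Z=\bigoplus_{\ell_0} Z_{\ell_0}$ into primary parts, each stable under $\beta$. We claim that on each $Z_{\ell_0}$ the group $\langle\beta\rangle$ acts through a group of order prime to $\ell_0$; this is the main point. The hypothesis gives that $\beta$ acts on $Y/\ell_0Y$ with order prime to $\ell_0$. We want to pass from this to $Z_{\ell_0}$, and we aim to show that the order of $\beta$ on the $\ell_0$-part $Y_{\ell_0}$ is $a\ell_0^s$ with $\ell_0\nmid a$, where $\beta^a$ is unipotent. Here lies the snag: the kernel of $\Aut(Y_{\ell_0})\to\Aut(Y_{\ell_0}/\ell_0)$ is an $\ell_0$-group, so $\beta^a$ may genuinely have $\ell_0$-power order on $Y_{\ell_0}$. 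The cohomology $\rH^1$ of a cyclic group $C$ of order prime to $\ell_0$ on an $\ell_0$-group vanishes, but for an $\ell_0$-power part it need not.

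We therefore handle the exponent directly, via a uniform telescoping argument. Write $\beta=\gamma\delta$ with commuting $\gamma$ of order $a$ prime to $\ell_0$ and $\delta$ of $\ell_0$-power order. Decompose $Z_{\ell_0} = Z^{\gamma}\oplus(\gamma-1)Z_{\ell_0}$, which is possible since $a$ is invertible on $Z_{\ell_0}$. On $(\gamma-1)Z_{\ell_0}$, the group $\rH^1$ of $\langle\beta\rangle$ vanishes by the prime-to-$\ell_0$ part, via Hochschild–Serre. On $Z^\gamma$, the element $\beta$ acts as $\delta$.

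What remains is to show that $N'_{n_0}(z)\in(\beta^{n_0}-1)Z$ on this unipotent piece. Here we use that $n_0$ kills $Z$, so $n_0 z=0$. Write $\beta^j = 1+(\beta^j-1)$, so that $N'_{n_0}(z) = n_0z+\sum_j(\beta^j-1)z = \sum_j(\beta^j-1)z$. This only lands in $(\beta-1)Z$, not obviously in $(\beta^{n_0}-1)Z$.

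I expect this last step to be the main obstacle. The first thing I would try is to replace $\beta$ by $\beta'=\beta^{n_0}$ and compare norms using $N'_{n_0}(\beta-1) = \beta^{n_0}-1$. Showing that $z\in(\beta-1)Z$ fails in general, so I would instead invoke the hypothesis again to arrange that $\delta$ acts trivially on $Z^\gamma/\ell_0$ and then induct on the $\ell_0$-adic filtration $\ell_0^kZ$. If that fails, I would fall back to checking the statement separately on each cyclic Jordan-type piece.
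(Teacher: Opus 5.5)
Your preliminary reductions are sound: the factorization $N_{mn_0}=N'_{n_0}\circ N_m$ with $\beta=\alpha^m$, the passage from $Y$ to $Z$, and the splitting $Z_{\ell_0}=Z^\gamma\oplus(\gamma-1)Z_{\ell_0}$. On the second summand $\beta-1$ is injective (its fixed points are fixed by $\gamma$, a power of $\beta$), hence bijective, so $\rH^1$ vanishes there; no Hochschild--Serre is needed.

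The remaining summand $Z^\gamma$, on which $\beta$ acts through its $\ell_0$-power part $\delta$, carries the entire content of the lemma, and your proposal stops there with a list of strategies. The telescoping identity $N'_{n_0}(z)=\sum_j(\beta^j-1)z$ cannot close the argument, because it never uses the mod-$\ell_0$ hypothesis. Your plan to ``invoke the hypothesis again'' on $Z^\gamma/\ell_0Z^\gamma$ is also not justified as written, since the hypothesis is about $Y/\ell_0Y$, not about $Z$. Your worry that $\beta$ may have $\ell_0$-power order on $Z_{\ell_0}$ is not an obstruction; that is exactly the part the factor $n_0$ is there to kill.

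Two ingredients are missing. First, $N_m\colon Y\to Z$ is a surjective $\alpha$-equivariant map, so $Z/\ell_0Z$ is an equivariant quotient of $Y/\ell_0Y$, and $\beta$ acts on it with order prime to $\ell_0$. On the direct summand $Z^\gamma/\ell_0Z^\gamma$, where $\beta=\delta$ has $\ell_0$-power order, this forces $\delta$ to act trivially.

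Second, write $\delta=1+\epsilon$, so that $\epsilon^jZ^\gamma\subset\ell_0^jZ^\gamma$, and let $\ell_0^s$ be the exponent of $Z_{\ell_0}$. Then $\sum_{i=0}^{\ell_0^s-1}\delta^i=\sum_{j\geq 0}\binom{\ell_0^s}{j+1}\epsilon^j$. Each term annihilates $Z^\gamma$, because $v_{\ell_0}\binom{\ell_0^s}{j+1}+j=s-v_{\ell_0}(j+1)+j\geq s$. Since $\ell_0^s\mid n_0$, the operator $N'_{n_0}$ factors through this norm, so it is identically zero on $Z^\gamma$. This completes the proof.

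The paper reaches the same conclusion without a Jordan decomposition. Because $\beta$ and $\beta^{\ell_0}$ generate the same cyclic group on the reduction mod $\ell_0$, restriction from $\beta$ to $\beta^{\ell_0}$ is multiplication by $\ell_0$ on the mod-$\ell_0$ coinvariants. Hence each restriction step from $\beta^{\ell_0^k}$ to $\beta^{\ell_0^{k+1}}$ lands in $\ell_0\rH^1$, and after $s$ steps one is in $\ell_0^s\rH^1=0$. For that iteration to stop after $s$ steps it must be carried out in $\rH^1(\,\cdot\,,Z)$, which $\ell_0^s$ kills, rather than in $\rH^1(\,\cdot\,,Y)$. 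This again requires the first ingredient.
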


\begin{proof}
Note that
\[
\rH^1(\alpha, Y) = Y_\alpha = \bigoplus_{\ell_0 \mid |Y|} Y[\ell_0^\infty]_\alpha,
\]
and similarly for $\rH^1(\alpha^{mn_0}, Y)$, so we may pass to $Y[\ell_0^\infty]$ to assume that $|Y| = \ell_0^e$ for some $e$. The image of the map $\rH^1(\alpha, Y) \to \rH^1(\alpha^m, Y)$ lies in the image of $\rH^1(\alpha^m, Z)$, so it is enough to show that the map $\rH^1(\alpha^m, Z) \to \rH^1(\alpha^{mn_0})$ is trivial. Note that the elements $\alpha^m$ and $\alpha^{\ell_0 m}$ generate the same cyclic subgroup of $\Aut(Y/\ell_0 Y)$, so the restriction map $\rH^1(\alpha^m, Y/\ell_0 Y) \to \rH^1(\alpha^{\ell_0 m}, Y/\ell_0 Y)$, which is induced by the map $Y \to Y$, $y \mapsto \sum_{i=0}^{\ell_0-1} \alpha^{mi}(y)$, is identified with multiplication by $\ell_0$ on $(Y/\ell_0 Y)_{\alpha^m}$ and hence vanishes. Thus the restriction map $\rH^1(\alpha^m, Z) \to \rH^1(\alpha^{\ell_0 m}, Y)$ factors through $\ell_0 \rH^1(\alpha^{\ell_0 m}, Y)$, and the result follows by induction.
\end{proof}

The following special case is useful in examples.

\begin{lemma}\label{lemma:2-group-explicit-restriction}
Let $Y = (\Z/\ell)^a \oplus (\Z/\ell^2)^b$ for some $a, b \in \Z_{\geq 0}$, and let $\alpha$ be an automorphism of $Y$. If $r = \ell^{\lceil\log_\ell(\max(1, a+b, \ell b))\rceil+1}$, then the restriction map
\[
\rH^1(\alpha, Y) \to \rH^1(\alpha^r, Y)
\]
is trivial.
\end{lemma}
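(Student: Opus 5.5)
The plan is to deduce this from Lemma~\ref{lemma:explicit-finite-index-bound}. We choose the auxiliary integer $m$ so that the norm image $Z$ has small exponent, and then bound the total index by $m n_0$.

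First, we pick $m$ as a power of $\ell$ such that $\alpha^m$ acts on $Y/\ell Y$ with order prime to $\ell$. Here $Y/\ell Y\cong(\Z/\ell)^{a+b}$, and $\alpha$ induces an element of $\GL_{a+b}(\F_\ell)$. Write this element as $su$ with $s$ semisimple, $u$ unipotent, and $su=us$. The unipotent part $u$ has order dividing $\ell^{\lceil\log_\ell(a+b)\rceil}$, since $(u-1)^{a+b}=0$ and $u^{\ell^j}-1=(u-1)^{\ell^j}$. So we may take $m=\ell^{\lceil\log_\ell\max(1,a+b)\rceil}$; then $\alpha^m$ acts on $Y/\ell Y$ through a semisimple element of order prime to $\ell$. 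The only prime dividing $|Y|$ is $\ell$, so the hypothesis of Lemma~\ref{lemma:explicit-finite-index-bound} holds.

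Second, we bound the exponent $n_0$ of $Z=\operatorname{im}(\sum_{i<m}\alpha^i)$. Since $Y$ has exponent dividing $\ell^2$, we always have $n_0\le\ell^2$, but we want $n_0\le\ell$ in most cases. The map $N_m=\sum_{i<m}\alpha^i$ induces on $Y/\ell Y$ the element $\sum_{i<m}\bar\alpha^i$. I would try to show that this is nilpotent modulo $\ell$ after enlarging $m$ by a bounded factor, as follows. Over $\F_\ell$ we have $\sum_{i<m}x^i=(x^m-1)/(x-1)$, and since $m=\ell^c$ this equals $(x-1)^{m-1}$. Hence $N_m\equiv(\alpha-1)^{m-1}\pmod\ell$, acting on $Y/\ell Y$. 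Nilpotence of $\alpha-1$ holds only on the unipotent part, so I would split $Y$ (lifted via Hensel, since the semisimple part has order prime to $\ell$) into the generalized $1$-eigenpart and the rest. On the non-$1$ part, $\alpha-1$ is invertible, the $H^1$ there vanishes, and so it can be ignored.

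On the $1$-part, $(\alpha-1)^{m-1}\equiv0\pmod\ell$ once $m-1\ge a+b$. Then $N_m(Y)\subset\ell Y$, so $Z$ is killed by $\ell$ and $n_0\le\ell$. The issue is whether $m$ suffices when $m-1<a+b\le m$. In that case I would replace $m$ by $\ell m$, which is why the exponent carries the extra $+1$. The term $\ell b$ in the formula handles the case $a+b$ small but $b>0$: there $n_0$ could be $\ell^2$, and the enlarged $m$ still works.

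Finally, applying Lemma~\ref{lemma:explicit-finite-index-bound} gives that $\rH^1(\alpha,Y)\to\rH^1(\alpha^{mn_0},Y)$ is trivial. Since $mn_0\mid r$ and restriction to a further power factors through this map, $\rH^1(\alpha,Y)\to\rH^1(\alpha^r,Y)$ is trivial as well. The main obstacle I expect is the bookkeeping that $mn_0$ divides $r=\ell^{\lceil\log_\ell(\max(1,a+b,\ell b))\rceil+1}$ in every case, in particular the edge cases $a+b$ equal to a power of $\ell$, and $b>0$ with $n_0=\ell^2$. If this fails, I would instead verify directly that $\sum_{i<r}\alpha^i$ maps $Y$ into $(\alpha-1)Y$.
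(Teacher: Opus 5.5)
Your plan matches the paper's: apply Lemma~\ref{lemma:explicit-finite-index-bound} with $m$ a power of $\ell$ and $n_0\le\ell$. However, your step bounding $n_0$ is too weak, and the bookkeeping you flag as the main obstacle genuinely fails.

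You get $n_0\le\ell$ by forcing $N_m(Y)\subset\ell Y$, i.e.\ $(\bar\alpha-1)^{m-1}=0$ on the unipotent part of the $(a+b)$-dimensional space $Y/\ell Y$. In the worst case (a single Jordan block) this needs $m>a+b$. Now suppose $a+b=\ell^c$ with $c\ge1$ and $1\le b\le\ell^{c-1}$. Then $\ell b\le a+b$ and $r=\ell^{c+1}$. Your recipe only gives triviality at $\alpha^{\ell^{c+2}}$, whether you enlarge $m$ to $\ell^{c+1}$ (with $n_0\le\ell$) or keep $m=\ell^c$ (with $n_0\le\ell^2$). Triviality at a larger power does not imply triviality at $\alpha^r$.

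This is not just a loose estimate. Take $\ell=2$ and $Y=\Z/2\oplus\Z/4$, so $r=4$, with $\alpha(e_1)=e_1+2e_2$ and $\alpha(e_2)=e_1+e_2$. Then $(1+\alpha)(e_2)=e_1+2e_2\notin 2Y$.

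Your sentence that the $\ell b$ term ``handles'' the case $b>0$ is not an argument. Your fallback is also misformulated: triviality of the restriction means $\sum_{i<r}\alpha^i(Y)\subset(\alpha^r-1)Y$, which is stronger than landing in $(\alpha-1)Y$.

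The missing idea is that $n_0\mid\ell$ only requires $N_m(Y)\subset Y[\ell]$, not $N_m(Y)\subset\ell Y$. Containment in $Y[\ell]\supset\ell Y$ is governed by the $b$-dimensional quotient $Y/Y[\ell]\cong\ell Y$, not by $Y/\ell Y$.

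First reduce to $\alpha$ of $\ell$-power order. The paper does this by replacing $Y$ with $Y_{\alpha^r}$; your splitting off of the summand on which $\alpha-1$ is invertible works equally well. Put $r_1=\ell^{\lceil\log_\ell\max(1,b)\rceil}$ and $\beta=\alpha^{r_1}$. Then $\beta$ acts trivially on $Y/Y[\ell]$, so $\sum_{j<\ell}\beta^j$ acts there as multiplication by $\ell$, which is $0$. Since $N_{\ell r_1}=N_{r_1}\sum_{j<\ell}\beta^j$ and $Y[\ell]$ is $\alpha$-stable, this gives $N_{\ell r_1}(Y)\subset Y[\ell]$.

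Now let $r_2=\ell^{\lceil\log_\ell\max(1,a+b)\rceil}$ (your exponent) and take $m=\max(\ell r_1,r_2)$. This $m$ satisfies the hypothesis of Lemma~\ref{lemma:explicit-finite-index-bound}, and $n_0\mid\ell$. For $b\ge1$ it equals $\ell^{\lceil\log_\ell\max(1,a+b,\ell b)\rceil}$. So the $\ell b$ term accounts for $m$, and the $+1$ in $r$ is the factor $n_0$, not an enlargement of $m$.

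In the example above, $(1+\alpha)(Y)\subset Y[2]$, so $m=2$, $n_0=2$ and $mn_0=4=r$. For $b=0$, $Y$ has exponent $\ell$, so $m=r_2$ already works.
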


\begin{proof}
By passing from $Y$ to $Y_{\alpha^r}$, we may and do assume that $\alpha^r = 1$. Let $Y_0$ be the image of $Y[\ell]$ in $Y/\ell Y$, and let $Y_1$ be the image of $\{1\} \oplus (\Z/\ell^2)^b$ in $Y/\ell Y$, so $Y/\ell Y = Y_0 \oplus Y_1$ and $\alpha$ stabilizes $Y_0$. If $r_1 = \ell^{\lceil\log_\ell(\max(1,b))\rceil}$, then since $\dim_{\F_\ell} Y_1 = b$, the automorphism $\alpha_1$ of $Y/(\ell Y + Y[\ell])$ induced by $\alpha$ satisfies $\alpha_1^{r_1} = 1$. On the other hand, if $r_2 = \ell^{\lceil\log_\ell(\max(1, a+b))\rceil}$, then the automorphism $\ol\alpha$ of $Y/\ell Y$ induced by $\alpha$ satisfies $\ol\alpha^{r_2} = 1$. It follows that $\alpha^{\max(\ell r_1, r_2)} = 1$. By choice of $r_1$, the map $Y \to Y$, $y \mapsto \sum_{i=0}^{\ell r_1 - 1} \alpha^i(y)$, factors through $Y[\ell]$, and the claim therefore follows from Lemma~\ref{lemma:explicit-finite-index-bound}.
\end{proof}

\begin{example}\label{ex:llc-classical}
Let $G$ be a connected reductive $F$-group such that $G_{F^{\unr}}$ is isomorphic to one of $\GL_r$, $\SO_{2r+1}$, $\Sp_{2r}$, and $\SO_{2r}$. Suppose $p \neq 2$ and $\rho$ is a map as in Theorem~\ref{thm:llc-partial-characterization}.

Assume that $T \cap G_{\der}$ is totally ramified. In this case, $T$ corresponds to an elliptic Weyl element of the absolute Weyl group of $G$ (namely, the element through which $I_F/P_F$ acts on $X_*(T_{\ol F})$; here we use that $G$ splits after an unramified extension). We split into cases:
\begin{enumerate}
    \item If $G_{F^{\unr}}$ is isomorphic to $\GL_r$, then the only elliptic Weyl element is the Coxeter element, and in the notation of Theorem~\ref{thm:llc-partial-characterization}(B) we have $X \cong X_{\ad} \cong \Z/r$. By Lemma~\ref{lemma:explicit-finite-index-bound} applied to $Y = \Hom(X, k^\times)$, we may take $N = r$: indeed, $Y/\ell_0Y$ is either trivial or isomorphic to $\Z/\ell_0$ for all primes $\ell_0$, and $\Z/\ell_0$ has no automorphisms of order $\ell_0$.
    \item If $G_{F^{\unr}}$ is either $\SO_{2r+1}$ or $\Sp_{2r}$, then \cite[\S3.4]{GP00} shows that every elliptic Weyl element is the Coxeter element of a Weyl group of type $\mathrm{B}_{a_1} \times \cdots \times \mathrm{B}_{a_s}$, where $(a_1, \dots, a_s)$ is a partition of $r$. For such an element, the group $X$ of Theorem~\ref{thm:llc-partial-characterization}(B) is isomorphic to $(\Z/2)^s$, so $m$ divides $2^{\lceil \log_2(s)\rceil}$ in the notation of Lemma~\ref{lemma:explicit-finite-index-bound} and $n_0 | 2$. In particular, since $s \leq r$ we may take $N \leq 2^{\lceil\log_2(2r)\rceil}$ to be a power of $2$.
    \item If $G_{F^{\unr}}$ is isomorphic to $\SO_{2r}$, then by \cite[\S3.4]{GP00} every elliptic Weyl element is a twisted Coxeter element of a Weyl group of type $\mathrm{D}_{a_1} \times \cdots \times \mathrm{D}_{a_s}$, where $(a_1, \dots, a_s)$ is a partition of $r$ with $s$ even. For such an element, the group $X$ of Theorem~\ref{thm:llc-partial-characterization}(B) is $(\Z/2)^a \oplus (\Z/4)^b$, where $a+2b \leq s$, and Lemma~\ref{lemma:2-group-explicit-restriction} shows $N \leq 2^{\lceil\log_2(2r)\rceil}$ is a power of $2$.
\end{enumerate}
Note that in all cases above, every unramified twisted Levi $F$-subgroup of $G$ becomes isomorphic to a product of groups, each of which is either isomorphic to $\GL_s$ ($s \leq r$) or of the same type as $G$ (of smaller rank). From this, if one assumes that $\rho$ respects products and Weil restriction then one can obtain similar upper bounds to the above for general $T$. However, the bounds are of a more combinatorial nature and appear tricky to state cleanly.
\end{example}

Using Lemma~\ref{lemma:explicit-finite-index-bound} and the classification of elliptic (twisted) Weyl elements as in \cite[\S3.4]{GP00}, \cite[\S3]{GM97}, \cite[\S\S3 and 4]{GKP00}, and \cite[\S7]{He07}, it should be possible to give reasonably sharp explicit bounds on the integer $N$ in Theorem~\ref{thm:llc-partial-characterization}(B) in all cases. Indeed, it is possible to bound both integers $m$ and $n_0$ in Lemma~\ref{lemma:explicit-finite-index-bound} using the characteristic polynomial of $\alpha$, as the following well-known elementary lemma (inspired by \cite[Lemma 4.1]{Ree11}) shows.

\begin{lemma}\label{lemma:minimal-characteristic-poly-size}
Let $Y_0$ be a finite free $\Z$-module, and let $\alpha_0$ be an automorphism of $Y_0$ such that $(Y_0)_{\alpha_0}$ is finite. Let $\mu$ (resp.\ $\chi$) be the minimal (resp.\ characteristic) polynomial of $\alpha_0$. Then
\begin{enumerate}
    \item $(Y_0)_{\alpha_0}$ is killed by $\mu(1)$,
    \item $(Y_0)_{\alpha_0}$ is of order $|\chi(1)|$.
\end{enumerate}
\end{lemma}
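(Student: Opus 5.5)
The plan is to reduce both claims to linear algebra over $\Z[t]$ acting through $\alpha_0$. Set $V=Y_0\otimes\Q$. The hypothesis that $(Y_0)_{\alpha_0}=Y_0/(\alpha_0-1)Y_0$ is finite says exactly that $\alpha_0-1$ has finite cokernel on $Y_0$. Equivalently, $\alpha_0-1$ is injective on $V$, so $1$ is not an eigenvalue of $\alpha_0$. Consequently $\chi(1)\neq0$ and $\mu(1)\neq0$.

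For (1), write $\mu(t)=\mu(1)+(t-1)g(t)$ with $g\in\Z[t]$. This is possible because $\mu$ is monic with integer coefficients: $\alpha_0$ preserves a lattice, so $\mu$ divides $\chi\in\Z[t]$ and is monic, and Gauss's lemma puts $\mu$ in $\Z[t]$. Applying this identity to $\alpha_0$ and using $\mu(\alpha_0)=0$ gives $\mu(1)\cdot y=-(\alpha_0-1)g(\alpha_0)y$ for every $y\in Y_0$. Hence $\mu(1)Y_0\subset(\alpha_0-1)Y_0$, which is exactly the statement that $\mu(1)$ kills the coinvariants.

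For (2), I would use the standard fact that for an injective endomorphism $\beta$ of a free $\Z$-module of finite rank, the cokernel has order $|\det\beta|$. To prove it, put $\beta$ in Smith normal form over $\Z$; the cokernel is then $\bigoplus\Z/d_i$ with $\prod d_i=\pm\det\beta$. Applying this to $\beta=\alpha_0-1$ gives $|\det(\alpha_0-1)|=|\det(1-\alpha_0)|=|\chi(1)|$, up to the sign convention in the definition of $\chi$, which is harmless after taking absolute values.

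No step is a real obstacle. The only points needing care are the integrality of $\mu$ (Gauss's lemma) and the Smith normal form computation.
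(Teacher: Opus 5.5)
Your proposal is correct. Part (1) is the paper's argument: the paper simply observes that $\mu(1)$ acts on the coinvariants as $\mu(\alpha_0)=0$. Your factorization $\mu(t)=\mu(1)+(t-1)g(t)$ with $g\in\Z[t]$, justified by Gauss's lemma, is exactly what makes that observation precise integrally.

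For (2) you take a genuinely different and shorter route. You identify $(Y_0)_{\alpha_0}$ with $\operatorname{coker}(\alpha_0-1)$ and read off its order $|\det(\alpha_0-1)|=|\chi(1)|$ from the Smith normal form.

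The paper's proof runs as follows:
\begin{enumerate}
\item It passes to a finite-index $\alpha_0$-stable sublattice $Y_1\subset Y_0$ that is a direct sum of blocks on which $\alpha_0$ acts by companion (rational canonical form) matrices.
\item It applies the long exact sequence of $\alpha_0$-invariants and coinvariants to $0\to Y_1\to Y_0\to Y_0/Y_1\to 0$. Using $Y_0^{\alpha_0}=0$, and the Herbrand-quotient fact that the finite module $Y_0/Y_1$ has invariants and coinvariants of the same order, it deduces $|(Y_1)_{\alpha_0}|=|(Y_0)_{\alpha_0}|$.
\item It then verifies the formula directly on companion blocks.
\end{enumerate}

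Your argument bypasses both the rational canonical form and this cohomological reduction, and it makes no use of the invertibility of $\alpha_0$. The paper's route mainly has the virtue of staying within the invariants/coinvariants (Tate cohomology) language of the neighboring Lemma~\ref{lemma:explicit-finite-index-bound}. For the statement itself, your determinant argument is the more economical proof.
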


\begin{proof}
For (1), observe that $\mu(1)$ acts by $\mu(\alpha_0) = 0$ on $(Y_0)_{\alpha_0}$. For (2), note that $Y_0$ admits a finite index $\alpha_0$-stable submodule $Y_1$ which is a direct sum of $\Z$-submodules on which $\alpha_0$ acts by a matrix in rational canonical form. Applying Tate cohomology for $\alpha_0$ to the exact sequence $0 \to Y_1 \to Y_0 \to Y_0/Y_1 \to 0$ and noting that $(Y_0)^{\alpha_0} = (Y_1)^{\alpha_0} = 0$ and $|\rT^i(\alpha_0, Y_0/Y_1)|$ is independent of $i$ (by standard results on Herbrand quotients), we thereby reduce from $Y_0$ to $Y_1$. But the claim is easily checked by direct calculation for $Y_1$.
\end{proof}

\begin{example}\label{ex:llc-exceptional}
Let $G$ be an unramified absolutely simple $F$-group of exceptional type, suppose that $p$ is good for $G$, and suppose that $\rho$ is a map as in Theorem~\ref{thm:llc-partial-characterization} with $\cS = \{p\}$ which is compatible with Weil restrictions and products. Let $\pi$ be a non-singular supercuspidal $\ol\Q_\ell$-representation of $G(F)$ with associated torus-character pair $(T, \theta)$. The torus $T/Z(Z_G(T_0))$ is a totally ramified torus of (a semisimple quotient of) a Levi subgroup of $G$, so to bound the integer $N$ in Theorem~\ref{thm:llc-partial-characterization}(B) it suffices to bound $N$ when $T$ is totally ramified and $G$ has the same type as a Levi subgroup of an exceptional group.

For each element of the Weyl group $\Omega$, there is a Weyl subgroup $\Omega_0$ of $\Omega$ (i.e., the Weyl group of a root subsystem of $\Phi(G_{\ol F}, T_{\ol F})$, as in \cite[\S3, Example~(iii)]{Car72}) such that $w \in \Omega_0$ and $w$ is not contained in a proper Weyl subgroup of $\Omega_0$. If $w$ is elliptic, then this root subsystem must span $X^*(T_{\ol F}) \otimes \Q$. The set of possible $\Omega_0$ which occur can be described using Borel--de Siebenthal theory, as explained in \cite[\S4]{Car72}. Moreover, given $\Omega_0$, the set of $w$ satisfying this property is classified in \cite[\S5, Theorem A]{Car72}, and the possible characteristic polynomials of such $w$ are described in \cite[\S6, Proposition 21]{Car72}.

Using these considerations and Lemmas~\ref{lemma:explicit-finite-index-bound} and \ref{lemma:minimal-characteristic-poly-size}, we can bound the integer $N$ in Theorem~\ref{thm:llc-partial-characterization}(B) effectively. The calculations are straightforward but tedious, and we will not use them, so we only summarize here:
\begin{enumerate}
    \item If $G$ is of type $\mathrm{G}_2$, then $N \leq 4$.
    \item If $G$ is of type $\mathrm{F}_4$, then $N \leq 9$.
    \item If $G$ is of type $\mathrm{E}_6$, then $N \leq 18$.
    \item If $G$ is of type $\mathrm{E}_7$, then $N \leq 24$.
    \item If $G$ is of type $\mathrm{E}_8$, then $N \leq 36$.
\end{enumerate}
By inspecting \cite[Table 1]{Ree11}, one sees that if $G$ is of type $\mathrm{E}_8$ and $T$ is totally ramified, then there are $30$ possible elliptic Weyl elements $w$ corresponding to $T$, and for most such elements $N$ is considerably smaller. For instance, $N = 1$ for nine possible such $w$ (those with $X_w = 0$, with notation as in \cite[Table 1]{Ree11}), and $N$ divides $4$ for seven more (those with $X_w = 2^2$). There are only five such $w$ for which we cannot ensure $N \leq 9$ (those labeled $A_1^8$, $A_3D_5(a_1)$, $A_4^2$, $A_2^4$, and $A_5A_2A_1$, for which $N$ divides $16$, $16$, $25$, $27$, and $36$, respectively).
\end{example}

\subsection{Proof of Theorem \ref{thm:llc-partial-characterization}}

Fix a connected reductive $F$-group $G$ and a normalized irreducible Yu datum $\Psi = ((G^i),x,(r_i),\tau,(\phi_i))$ for $G$ with $k$-coefficients. We start by proving (A).

\subsubsection{Reductions} Let $T_{\der}\coloneqq(T\cap G_{\der})^\circ$, and let $T_{\der,0}$ be its maximal unramified subtorus. Let $\pi = \pi(\Psi)$. We will prove \eqref{eqn:abstract-agreement-on-inertia} and the first statement of (B) by induction on the dimension of $G_{\mathrm{der}}$ and on the size of the image of $I_F$ in $\Aut(X^*((T_{\der})_{\ol F}))$. The base case in which $G$ is a torus is clear from (1). If $T_{\der,0}\neq1$, then $H \coloneqq Z_G(T_{\der,0})$ is a proper unramified twisted Levi subgroup of $G$, so the induction hypothesis applies to $H$, and it proves (A) for $G$ by (5) and Theorem~\ref{thm:kaletha-functoriality}. We may therefore assume $T_{\der,0}=1$, so $T_0$ is central in $G$ and $T_{\der}$ is totally ramified. Since $T$ is maximally unramified in $G^0$, it follows that $Z_{G^0}(T_0) = T$ and thus $G^0 = T$ since $T_0$ is central in $G^0$. In this case, observe that $\Psi$ is automatically $F$-non-singular by Remark~\ref{remark:toral-yu-ns}. 


Since $T_{\der}$ is totally ramified, it follows from Lemma~\ref{lemma:non-singular-irreducible}(3) that $\rho^{\Kal}(\Psi)|_{I_F}$ is irreducible. Lemma~\ref{lemma:irreducibility-criterion} then shows that $Z_{\wh G}(\rho^{\Kal}(\Psi)(P_F))^\circ$ is a torus, and since it contains the maximal torus $\wh T$ through which $\rho^{\Kal}(\Psi)|_{P_F}$ factors, it equals $\wh T$. Irreducibility of $\rho^{\Kal}(\Psi)|_{I_F}$ implies that $\wh T^{I_F/P_F}/Z(\wh G)^{I_F/P_F}$ is finite.

We first prove
\begin{equation}\label{eqn:abstract-agreement-on-wild-inertia}
\rho(\pi)|_{P_F} \sim \rho^{\Kal}(\Psi)|_{P_F}.
\end{equation}

\subsubsection{Killing Frobenius} Note that Lemma~\ref{lemma:non-singular-irreducible}(3) implies that $\rho^{\Kal}(\Psi)|_{I_F}$ is irreducible. Let $M$ be a positive integer such that the image of $W_{F_M}$ in $\Aut(X^*((T_{\der})_{\ol F}))$ is equal to the image of $I_F$. Write $M = M_0M_1$, where $M_0$ divides a power of $n$ and $M_1$ is prime to $n$. Write $M_0 = \ell_1 \cdots \ell_b$ for primes $\ell_i$, with $b=0$ if $M_0=1$. If $b > 0$, then since $T_{\der}$ is totally ramified, $T_{F_{\ell_1}}$ is still elliptic. If $\Psi_{F_{\ell_1}}$ is the Yu datum from (4), then
\begin{equation}\label{eq:prime-switching-wild-comparison}
\rho(\pi(\Psi_{F_{\ell_1}}))|_{P_F}
\sim \rho(\pi(\Psi))|_{P_F},
\qquad
\rho^{\Kal}(\Psi_{F_{\ell_1}})|_{P_F}
\sim \rho^{\Kal}(\Psi)|_{P_F}.
\end{equation}
and thus we may pass from $F$ to $F_{\ell_1}$ in order to prove \eqref{eqn:abstract-agreement-on-wild-inertia}. Repeating this process for the $b$ prime factors of $M_0$, we may pass from $F$ to $F_{M_0}$ and thus reduce to the case that $M$ is prime to $n$. Repeating this procedure for the prime factors of $[F(\mu_n):F]$, all of which lie outside $\cS$ by assumption, we may further assume that $\mu_n\subset F$.

\subsubsection{Killing ramification} The torus $T_{\der}$ is now totally ramified, and $n$ is the ramification degree of its splitting field. We will now make further base changes to kill the ramification.

By the choice of $M$, the image of $W_{F_M}$ in $\Aut(X^*((T_{\der})_{\ol F}))$ is the same as the (cyclic) image of $I_F$, and it is of order $n$. Hence the splitting field $E/F_M$ of $(T_{\der})_{F_M}$ is a totally ramified cyclic extension of degree $n$. We may therefore write $E = F_M(\sqrt[n]{\varpi})$, where $\varpi \in F_M^\times$ is a uniformizer. Let $\varpi_0 \in F^\times$ be a uniformizer. We claim that there is an element $c \in F^\times$ such that the extension $E' = F(\sqrt[n]{c\varpi_0})$ (which is Galois since $F$ contains all $n$th roots of unity in $\ol F$) is a tamely ramified degree $n$ extension of $F$, is linearly disjoint from $E$, and has the property that $EE'/E'$ is unramified.
\[
\begin{tikzcd}
& & EE' \ar[dll, dash] \ar[drr, "\text{unramified}", dash] \\
E  = F_M(\sqrt[n]{\varpi}) \ar[ddrr, dash] & & & & E' = F(\sqrt[n]{c\varpi_0}) \ar[dl, dash, "\deg n/\ell_0"] \\
& & & E_0' \ar[dl, dash, "\deg \ell_0"] \\
& & F
\end{tikzcd}
\]
To see this, let $n = \ell_1'^{e_1} \cdots \ell_t'^{e_t}$ be the prime factorization of $n$, and for each $i$ choose a unit $c_i'\in\cO_F^\times$ which is an $n/\ell_i'^{e_i}$th power but not an $\ell_i'$th power; such a unit exists because $\mu_n\subset F$. Since $\ell_i' \nmid M$, the extensions $F_M(\sqrt[\ell_i']{\varpi_0})/F_M$ and $F_M(\sqrt[\ell_i']{c_i'\varpi_0})/F_M$ are linearly disjoint, so at least one is linearly disjoint from $E/F_M$. Choose $c_i \in \{1, c_i'\}$ giving that extension and set $c = \prod_{i=1}^t c_i$. One checks that $E$ and $E'$ are linearly disjoint and $EE'$ is unramified by Abhyankar's lemma, proving the claim.

\subsubsection{Congruence on wild inertia} Let $E'_0/F$ be a cyclic subextension of $E'/F$ of prime degree $\ell_0$.
Since $E'_0$ is linearly disjoint from $E$, base change to $E'_0$ preserves the image of $W_F$ in $\Aut(X^*((T_{\der})_{\ol F}))$ and thus $T$ is still elliptic. Thus \eqref{eq:prime-switching-wild-comparison} holds with $E_0'$ in place of $F_{\ell_1}$. By construction, the image of $I_{E_0'}$ in $\Aut(X^*((T_{\der})_{\ol F}))$ is of order $n/\ell_0$, so the induction hypothesis applied to $\Psi_{E'_0}$ therefore gives
\begin{equation}\label{eq:wild-congruence}
\rho(\pi(\Psi_{E'_0}))|_{P_F} \sim \rho^{\Kal}(\Psi_{E'_0})|_{P_F}.
\end{equation}
Combining the two comparisons in \eqref{eq:prime-switching-wild-comparison} (with $E_0'$ in place of $F_{\ell_1}$) with the induction hypothesis gives
\[
\begin{aligned}
\rho(\pi(\Psi))|_{P_F}
\stackrel{\eqref{eq:prime-switching-wild-comparison}}\sim \rho(\pi(\Psi_{E'_0}))|_{P_F}
    \stackrel{\eqref{eq:wild-congruence}}\sim \rho^{\Kal}(\Psi_{E'_0})|_{P_F}
\stackrel{\eqref{eq:prime-switching-wild-comparison}}\sim \rho^{\Kal}(\Psi)|_{P_F}.
\end{aligned}
\]
Thus \eqref{eqn:abstract-agreement-on-wild-inertia} follows by induction on the size of the image of $I_F$ in $\Aut(X^*((T_{\der})_{\ol F}))$.

\subsubsection{Promoting from wild inertia to all of inertia} With \eqref{eqn:abstract-agreement-on-wild-inertia} established, we may pass to $\wh G(k)$-conjugates to assume that 
\[
\rho(\pi)|_{P_F} = \rho^{\Kal}(\Psi)|_{P_F}.
\] 
Identify the dual torus $\wh T$ with its image in $\wh G$ via $\ld j_{T,G}|_{\wh T \times \{1\}}$. We have already seen $Z_{\wh G}(\rho(\pi)(P_F))^\circ = Z_{\wh G}(\rho^{\Kal}(\Psi)(P_F))^\circ = \wh T$, so Lemma~\ref{lemma:align-tame-components} implies that the two actions on $\wh T$ agree after conjugating $\rho(\pi)$ by some element of $Z_{\wh G}(\rho(\pi)(P_F))$. After making this conjugation and renaming $\rho(\pi)$, let $s\in I_F$ lift a topological generator of $I_F/P_F$, define $h_1=\rho(\pi)(s)$ and $h_2=\rho^{\Kal}(\Psi)(s)$.


Set $\wh{T}_{\der}\coloneqq \wh T\cap\wh G_{\der}$, so $\wh{T}_{\der}^{h_1}$ is finite. Let $f\co\ld G\to\ld G/\wh G_{\der}$ be the quotient map. Compatibility with central characters in hypothesis~(2) shows that there is an element $\ol t\in(\wh G/\wh G_{\der})(k)$ such that $f(h_2)=\ol t f(h_1)\ol t^{-1}$; let $t \in \wh T(k)$ lift $\ol t$, so $d\coloneqq h_2^{-1}t h_1t^{-1} \in \wh G_{\der}(k)$. Since $h_1$ and $h_2$ induce the same action on $\wh T$ and have the same image in $W_F$, we have $d\in Z_{\wh G_{\der}}(\wh T)(k) =\wh T_{\der}(k)$. This means that the images of $h_1$ and $h_2$ in $N_{\wh G\rtimes W_F}(\wh T)/\wh{T}_{\der}$ are conjugate under $\wh T/\wh{T}_{\der}$, so \cite[Lemma~5.3.4]{Cot26b} implies that $h_1$ and $h_2$ are $\wh T(k)$-conjugate. This means precisely that $\rho_I(\pi) \sim \rho_I^{\Kal}(\Psi)$, proving \eqref{eqn:abstract-agreement-on-inertia}. The final claim of (A) follows from Lemma~\ref{lemma:non-singular-irreducible}(2).

\subsubsection{Proof of (B)}
Next, we prove (B), so assume $p$ is good for $Z_G(T_0)$ and $\Psi$ is non-singular. In particular, $S_\Psi$ is a torus in this case.

We begin with some reductions. Let $H \subset \wt H$ be an embedding with $H_{\der} = \wt H_{\der}$ such that $Z(\wt H)$ is an induced torus, and let $\wt T = T \cdot Z(\wt H)$. If $\wt X$ and $\wt X_{\ad}$ are the $\Z[W_F/I_F]$-modules attached to $(\wt H, \wt T)$ as in (B), then $\wt X = \wt X_{\ad} = X_{\ad}$. By (3) and Lemma~\ref{lemma:kaletha-compatibility-with-central-isogenies}, we may and do therefore assume that $Z(G)$ is an induced torus, which implies that $\wh G$ has simply connected derived group.

Note that the torus $S_\Psi$ lies in the image of $\ld j_{T, G}|_{\wh T \rtimes 1}$ by Lemma~\ref{lemma:non-singular-irreducible}(2), and thus it does not change when applying Theorem~\ref{thm:kaletha-functoriality} in the above reduction to the case that $T \cap G_{\der}$ is totally ramified. By condition (5)(a), the action of $\rho(\pi(\Psi))$ on $S_\Psi$ is also unchanged by this reduction step. Thus to check that there are $\wh G(k)$-conjugates such that $\rho(\pi(\Psi))|_{I_F} = \rho^{\Kal}(\Psi)|_{I_F}$ and the two actions of $W_F$ on $S_\Psi$ are the same, we may argue as above to reduce to the case that $G^0 = T$ and $T_{\der,0} = 1$.

Once we have established the action claim of (B), we can also reduce \eqref{eqn:abstract-agreement-on-finite-index} to the case that $T \cap G_{\der}$ is totally ramified: indeed, if the result is known in this case and we let $H = Z_G(T_0)$, then condition (5)(b) shows that we may choose representatives for $\rho(\pi(\Psi))$ and $\rho^{\Kal}(\Psi_H)$ such that there is a cocycle $z\co W_F/I_F \to Z(\wh H \cap \wh G_{\der})(k)^{I_F}$ such that
\[
\rho(\pi(\Psi))|_{W_{F_N}} = \left(z \cdot \ld j_{H,G} \circ \rho^{\Kal}(\Psi_H)\right)|_{W_{F_N}}.
\]
Thus for the reduction step it suffices to show that $z|_{W_{F_N}/I_F}$ is a coboundary.

For this, note the short exact sequence
\[
1 \to Z(\wh H_{\der}) \to Z(\wh H \cap \wh G_{\der}) \to (\wh H \cap \wh G_{\der})/\wh H_{\der} \to 1,
\]
where $(\wh H \cap \wh G_{\der})/\wh H_{\der}$ is dual to $Z(H)/Z(G)$. Since $Z(H)/Z(G)$ is the center of an unramified twisted Levi subgroup of $G_{\ad}$, it follows that $(Z(H)/Z(G))_{F^{\unr}}$ is an induced torus and thus $((\wh H \cap \wh G_{\der})/\wh H_{\der})^{I_F}$ is a torus. Since $((\wh H \cap \wh G_{\der})/\wh H_{\der})^{W_F}$ is finite, \cite[Lemma~5.3.4]{Cot26b} implies that $z$ is cohomologous to a $Z(\wh H_{\der})(k)^{I_F}$-valued cocycle. If $\wh T_{\der} = \wh T \cap \wh H_{\der}$, then $\wh T_{\der}(k)^{I_F} \cong \Hom(X, k^\times)$, and thus indeed $z|_{W_{F_N}}$ is a coboundary by definition. We have thus reduced (B) to the case that $T \cap G_{\der}$ is totally ramified. Since $T_0$ is now central in $G$, our assumption implies that $p$ is good for $G$.

By \eqref{eqn:abstract-agreement-on-inertia}, we may pass to $\wh G(k)$-conjugates to assume $\rho(\pi)|_{I_F} = \rho^{\Kal}(\Psi)|_{I_F}$. Let
$C_P\coloneqq Z_{\wh G}(\rho(\pi)(P_F))$, so $C_P^\circ = \wh T$ as above. By \cite[Proposition 2.1]{GH91} and \cite[Theorem 2.15]{St75} (using that $\wh G$ has simply connected derived group, $G$ is tamely ramified\footnote{This follows from the existence of a Yu datum for $G$.}, and $p$ is good for $G$), if $\gamma \in P_F$ is any element then $Z_{\wh G}(\rho(\pi)(\gamma))$ is a Levi $k$-subgroup of $\wh G$. Applying this inductively to a composition series of $\rho(\pi)(P_F)$ with cyclic subquotients, we learn that $C_P$ is connected, and thus $C_P = \wh T$. 

Let $\wh S = Z_{\wh G}(\rho(\pi)(I_F))^\circ_{\red}$, so $\wh S$ is a (central) torus. Let $\rho_1 = \rho(\pi)$, and let $\rho_2 = \rho^{\Kal}(\Psi) = \ld j_{T,G} \circ \ld\theta$. Let $w \in W_F$ be any element. For $\gamma \in I_F$, we have
\begin{equation}\label{eq:common-inertial-conjugation}
^{\rho_2(w)}\rho_2(\gamma) = \rho_2({}^w\gamma) = \rho_1({}^w\gamma) = {}^{\rho_1(w)}\rho_1(\gamma) = {}^{\rho_1(w)}\rho_2(\gamma),
\end{equation}
so $\rho_1(w)$ and $\rho_2(w)$ have the same effect on $\rho_1(I_F) = \rho_2(I_F)$. Since $Z_{\wh G}(\rho_1(P_F)) =\wh T$, the parameters $\rho_1$ and $\rho_2$ differ by a cocycle valued in $\wh T(k)$ and induce the same action on $\wh T$, hence also on $\wh S$, as desired. The final two claims of (B) now follow, the latter from Lemma~\ref{lemma:non-singular-irreducible}(2).

Finally, we turn to the proof of \eqref{eqn:abstract-agreement-on-finite-index}. For this, we first pin down the images of $\rho_1$ and $\rho_2$ in $\ld G / \wh{G}_{\der}$. As above, let $s \in I_F$ be a lift of a pro-generator of tame inertia. Let $f\co \ld G \to \ld G/\wh G_{\der}$ be the natural quotient map, and note that $f \circ \rho_1$ and $f \circ \rho_2$ are $(\wh G/\wh G_{\der})(k)$-conjugate by (1) and (2). Since $Z(G)$ is an induced torus, the coinvariant group $X_*(Z(G_{\ol F}))_{I_F}$ is torsion-free and thus the $k$-group scheme $(\wh G/\wh G_{\der})^{I_F}$ is connected. For dimension reasons, it follows that the map $Z(\wh G)^{I_F} \to (\wh G/\wh G_{\der})^{I_F}$ is surjective. Thus, after conjugating by an element of $Z(\wh G)^{I_F}(k)$, we may assume that the central characters of $\rho_1$ and $\rho_2$ agree.

For $w \in W_F$, we define $\varphi(w)\coloneqq\rho_1(w)\rho_2(w)^{-1} \in \wh G_{\der}(k)$. Now, \eqref{eq:common-inertial-conjugation} implies that $\rho_1(w)$ and $\rho_2(w)$ induce the same automorphism of the common subgroup $\rho_1(I_F)=\rho_2(I_F)$, so in fact we have $\varphi(w)\in Z_{\wh G_{\der}}(\rho_1(I_F))(k)$. The map $\varphi$ is constant on the right cosets of $I_F$ and hence factors through a map, again denoted by $\varphi$,
\[
\varphi\co W_F/I_F\longrightarrow Z_{\wh G_{\der}}(\rho_1(I_F))(k) = \wh T_{\der}^{\rho_1(I_F)}(k),
\qquad
\varphi(\ol w)=\rho_1(w)\rho_2(w)^{-1},
\]
and it is straightforward to check that this defines a cocycle. Note that the character group of $\wh T_{\der}^{\rho_1(I_F)}$ is isomorphic to $X = X_*((T/Z(Z_G(T_0))^\circ_{\red})_{\ol F})_{I_F}$, so $\wh T_{\der}^{\rho_1(I_F)}(k) \cong \Hom(X, k^\times)$. By hypothesis on $N$, the restriction $\varphi|_{W_{F_N}/I_F}$ is a $1$-coboundary, proving \eqref{eqn:abstract-agreement-on-finite-index}.
\qed 


\appendix
\section{Some calculations with sign characters}\label{app:rootwise-sign-chi-calculations}

In this appendix, we establish certain technical lemmas related to the FKS sign character and $\chi$-data.

\subsection{Proof of Lemma~\ref{lemma:comparison-to-minimal-ramified-chi-data}}\label{app:comparison-minimal-proof}

\begin{proof}
By passing to an inner twist as in the construction of $\ld j_\chi$, we may assume that $G$ is quasi-split. We only prove the assertion for $G$ when $T$ is elliptic; if $T$ is not elliptic, the same calculation works after restricting the characters to $T(F)_{\mathrm b}$ and the corresponding cocycles to $I_F$. Write $\ld\theta(w)=(\wh\theta(w),w)$ for a $1$-cocycle $\wh\theta\co W_F\to\wh T(k)$.
By definition \eqref{eqn:def-of-L-embedding}, our claim reduces to the following equality of cohomology classes:
\begin{equation}\label{eqn:signs-cancel-1}
[Q_{\chi''_G}Q_{\min\chi''_G}^{-1}]=[\wh\epsilon_{G,\flat,0}]\quad\text{in }\rH^1(W_F, \wh T(k)).
\end{equation}

Observe that in \eqref{eqn:def-of-Q}, the elements $v_0(u_i(w))$ and $w_i^{-1}\wh\alpha^\vee$ do not depend on the choice of set of $\chi$-data. Moreover, the terms $\chi''_{G,\alpha}$ and $\min\chi''_{G,\alpha}$ only differ if $\alpha$ is either asymmetric or symmetric unramified. Thus we have
\[
Q_{\chi''_G}Q_{\min\chi''_G}^{-1} = \prod_{\ol\alpha \in (\Phi(G_{\ol F},T_{\ol F})_{\mathrm{asym}} \cup \Phi(G_{\ol F}, T_{\ol F})_{\mathrm{sym,unram}})/\Sigma} Q_{\chi''_G,\ol\alpha}Q_{\min\chi''_G,\ol\alpha}^{-1}.
\]

Now set $G^{-1} \coloneqq T$ and fix a root $\alpha \in \Phi((G^{j+1}/G^j)_{\ol F}, T_{\ol F})$, where $0 \leq j \leq d-1$, which is either asymmetric or symmetric unramified, and let $\alpha_j = \alpha|_{(Z_{G^j})_{\ol F}} \in \Phi((G^{j+1}/G^j)_{\ol F}, (Z_{G^j})_{\ol F})$ denote the corresponding restricted character. There are five cases, as in the proof of \cite[Proposition 5.27]{Kal21a}:
\begin{enumerate}
    \item $\alpha$ and $\alpha_j$ are both asymmetric,
    \item $\alpha$ is asymmetric and $\alpha_j$ is symmetric unramified,
    \item $\alpha$ is asymmetric and $\alpha_j$ is symmetric ramified,
    \item $\alpha$ and $\alpha_j$ are both symmetric unramified,
    \item $\alpha$ is symmetric unramified and $\alpha_j$ is symmetric ramified.
\end{enumerate}
Define a character $\epsilon_{\flat,0,\alpha}\co T(F) \to k^\times$ as follows: in cases (1), (2), and (4), let $\epsilon_{\flat,0,\alpha}$ be the trivial character. In case (3), define
\[
\epsilon_{\flat,0,\alpha}(t) = \sgn_{\F_\alpha^\times}(\alpha(t))^{e(\alpha/\alpha_j)}.
\]
In case (5), define
\[
\epsilon_{\flat,0,\alpha}(t) = \sgn_{\F_\alpha^1}(\alpha(t))^{e(\alpha/\alpha_j)}.
\]
By definition, we have
\[
\epsilon_{\flat,0} = \prod_{\alpha \in (\Phi(G_{\ol F},T_{\ol F})_{\mathrm{asym}}\cup \Phi(G_{\ol F},T_{\ol F})_{\mathrm{sym,unram}})/\Sigma} \epsilon_{\flat,0,\alpha},
\]
so it suffices to show the following equality of cohomology classes for each $\alpha$:
\begin{equation}\label{eqn:min-ram-root-by-root}
[Q_{\chi''_G,\ol\alpha}Q_{\min\chi''_G,\ol\alpha}^{-1}]=[\wh\epsilon_{\flat,0,\alpha}]\quad\text{in }\rH^1(W_F, \wh T(k)).
\end{equation}
In cases (1) and (4), observe that $\chi''_{G,\alpha} = \min \chi''_{G,\alpha}$, so \eqref{eqn:min-ram-root-by-root} is immediate from the definitions. Next, suppose that $\alpha$ is asymmetric; in this case, we have $\min\chi''_{G,\ol\alpha} = 1$, and by definition of $Q$, for $w \in W_F$ we have
\begin{equation}\label{eqn:q-difference-asymmetric}
Q_{\chi''_{G,\ol\alpha}}(w)Q_{\min\chi''_{G,\ol\alpha}}(w)^{-1} = \prod_{i=1}^m (w_i^{-1}\wh\alpha^\vee)(\chi''_{G,\alpha}(\rec_{F_\alpha}(v_0(u_i(w))))).
\end{equation}
Suppose first that we are in case (2). Then $\chi''_{G,\alpha_j}$ is the unique unramified quadratic character of $F_{\alpha_j}^\times$, so $\chi''_{G,\alpha}$ is also unramified of order $\leq 2$. Lemma~\ref{lemma:dual-l-homs} identifies the cohomology class represented by the left side of \eqref{eqn:q-difference-asymmetric} with the cocycle associated to $\chi''_{G,\alpha}\circ\alpha$, where $\alpha\co T\to J_\alpha=\Res_{F_\alpha/F}\G_m$ is the $F$-homomorphism arising from the root $\alpha$. Since $T$ is elliptic, $\alpha(T(F))\subset\cO_{F_\alpha}^\times$, on which $\chi''_{G,\alpha}$ is trivial. The resulting cohomology class is therefore trivial in case (2).

Next suppose that we are in case (3). Once again, $\alpha(T(F))\subset\cO_{F_\alpha}^\times$. On $\cO_{F_\alpha}^\times$, the character $\chi''_{G,\alpha}$ is equal to
\[
\sgn_{\F_{\alpha_j}^\times} \circ \Nm_{\F_\alpha/\F_{\alpha_j}}^{e(\alpha/\alpha_j)} = \sgn_{\F_\alpha^\times}^{e(\alpha/\alpha_j)}.
\]
Lemma~\ref{lemma:dual-l-homs} therefore identifies the class represented by \eqref{eqn:q-difference-asymmetric} with the cohomology class dual to $\epsilon_{\flat,0,\alpha}$, which proves \eqref{eqn:min-ram-root-by-root} in this case.

Finally, suppose we are in case (5). Since both $\chi''_{G,\alpha}$ and $\min\chi''_{G,\alpha}$ restrict to $\kappa_\alpha$ on $F_{\pm\alpha}^\times$, the quotient $\chi''_{G,\alpha} \cdot (\min\chi''_{G,\alpha})^{-1}$ factors through the map $F_\alpha^\times \to F_\alpha^1$, $t \mapsto t \cdot v_1(t)^{-1}$. By the same argument as in the asymmetric case, the factored map is $\sgn_{\F_\alpha^1}^{e(\alpha/\alpha_j)}\co F_\alpha^1 \to k^\times$. Therefore Lemma~\ref{lemma:dual-l-homs}, applied to this quotient character, gives the following identity of classes:
\[
[\wh\epsilon_{\flat,0,\alpha}]=\left[\,w\longmapsto\prod_{i=1}^m (w_i^{-1}\wh\alpha^\vee)((\chi''_{G,\alpha}\cdot(\min\chi''_{G,\alpha})^{-1})(\rec_{F_\alpha}(v_0(u_i(w)))))\,\right]
\]
This is exactly \eqref{eqn:min-ram-root-by-root} in case (5).
\end{proof}

\subsection{Proof of Lemma~\ref{lemma:modification-of-symmetric-ramified-chi-data}}\label{app:modification-symmetric-proof}

\begin{proof}
By twisting, we may and do assume that $G$ is quasi-split. We will assume that $T$ is elliptic; the general case is the same after restricting characters to $T(F)_{\mathrm b}$ and cocycles to $I_F$. As in the proof of Lemma~\ref{lemma:comparison-to-minimal-ramified-chi-data}, the proof reduces to the following equality of cohomology classes:
\begin{equation}\label{eqn:signs-cancel-2}
[Q_{\chi''_{H,G}}Q_{\min\chi''_G}^{-1}]=[\wh\epsilon_{G,\flat,1}\wh\epsilon_{H,\flat,1}\wh\epsilon_{G,\flat,2}\wh\epsilon_{H,\flat,2}]\quad\text{in }\rH^1(W_F, \wh T(k)).
\end{equation}
Since $\chi''_{H,G}$ and $\min\chi''_G$ are both minimally ramified, these sets of $\chi$-data only differ at symmetric ramified roots, at which they agree with $\chi''_H$ and $\chi''_G$, respectively. Thus we have
\[
Q_{\chi''_{H,G}}Q_{\min\chi''_G}^{-1} = \prod_{\ol\alpha \in \Phi(G_{\ol F}, T_{\ol F})_{\mathrm{sym,ram}}/\Gamma} Q_{\chi''_H,\ol\alpha}Q_{\chi''_G,\ol\alpha}^{-1}.
\]
If we further decompose $\epsilon_{G,\flat,i} = \prod_{\alpha \in \Phi(G_{\ol F},T_{\ol F})_{\mathrm{sym,ram}}/\Gamma} \epsilon_{G,\flat,i,\alpha}$ for $i = 1, 2$ as in the definitions \eqref{eqn:FKS-4}, \eqref{eqn:FKS-5}, then it suffices to prove the following equality for all $\alpha \in \Phi(G_{\ol F}, T_{\ol F})_{\mathrm{sym,ram}}$:
\begin{equation}\label{eqn:q-difference-root-by-root}
    [Q_{\chi''_H,\ol\alpha}Q_{\chi''_G,\ol\alpha}^{-1}]=[\wh\epsilon_{G,\flat,1,\alpha}\wh\epsilon_{H,\flat,1,\alpha}\wh\epsilon_{G,\flat,2,\alpha}\wh\epsilon_{H,\flat,2,\alpha}]\quad\text{in }\rH^1(W_F, \wh T(k)).
\end{equation}

Set $G^{-1} \coloneqq T$ and fix a root $\alpha \in \Phi((G^{j+1}/G^j)_{\ol F}, T_{\ol F})_{\mathrm{sym,ram}}$. By Lemma~\ref{lemma:unramified-levi-not-ramified-symmetric}, we have $\alpha \in \Phi(H_{\ol F}, T_{\ol F})$. For $w \in W_F$, we have
\[
Q_{\chi''_H,\ol\alpha}(w)Q_{\chi''_G,\ol\alpha}(w)^{-1} = \prod_{i=1}^m (w_i^{-1}\wh\alpha^\vee)((\chi''_{H,\alpha} \cdot (\chi''_{G,\alpha})^{-1})(\rec_{F_\alpha}(v_0(u_i(w))))).
\]
Let $\alpha_{G,j}$ (resp.\ $\alpha_{H,j}$) be the restriction of $\alpha$ to $Z_{G^j}$ (resp.\ $Z_{H^j}$). Note that $\chi''_{G,\alpha}$ is a tamely ramified character of $F_\alpha^\times$ whose restriction to $\cO_{F_\alpha}^\times$ is $\sgn_{\F_{\alpha_{G,j}}^\times}\circ \Nm_{F_\alpha/F_{\alpha_{G,j}}}$, and whose restriction to $F_{\pm\alpha}^\times$ is $\kappa_{\alpha_{G,j}} \circ \Nm_{F_{\pm\alpha}/F_{\pm\alpha_{G,j}}}$. By the final paragraph of the proof of \cite[Lemma 4.2.7]{FKS23} (the statement of which assumes, but the proof of which does not really use, that $p$ does not divide the order of any bond in the Dynkin diagram), replacing $\ell_G(\alpha^\vee)$ from \textit{loc.\ cit.}\ by $\ell_{G,p'}(\alpha^\vee)$, we have 
\begin{align}
    \chi''_{G,\alpha}(\ell_{G,p'}(\alpha^\vee)a_\alpha) 
        &= (-1)^{f_\alpha+1} \cdot \frG_{\F_\alpha}(\psi^0) \cdot \kappa_\alpha(-1)^{(e(\alpha/\alpha_{G,j})-1)/2}.\label{eqn:chi-double-prime-G-root-value}
\end{align}
These properties uniquely characterize $\chi''_{G,\alpha}$; there is a similar characterization of $\chi''_{H,\alpha}$. Note that the definition of $a_\alpha$ does not change when one passes from $G$ to $H$, justifying the notation.

We have
\[
\sgn_{\F_{\alpha_{G,j}}^\times}\circ \Nm_{F_\alpha/F_{\alpha_{G,j}}}
=\sgn_{\F_\alpha^\times}^{e(\alpha/\alpha_{G,j})}
=\sgn_{\F_\alpha^\times}
\quad\text{on }\cO_{F_\alpha}^\times,
\]
because $e(\alpha/\alpha_{G,j})$ is odd by \cite[Lemma 5.6.5]{FKS23}. The same calculation applies to $H$, so the quotient
\[
\zeta_\alpha\coloneqq\chi''_{H,\alpha}(\chi''_{G,\alpha})^{-1}
\]
is unramified. Moreover, both $\chi''_{H,\alpha}$ and $\chi''_{G,\alpha}$ restrict to $\kappa_\alpha$ on $F_{\pm\alpha}^\times$, so $\zeta_\alpha|_{F_{\pm\alpha}^\times}=1$. Applying \eqref{eqn:chi-double-prime-G-root-value} to $G$ and $H$, we see that $\zeta_\alpha$ is the unique unramified character with these properties and satisfying
\begin{equation}\label{eqn:chi-double-prime-HG-quotient-value}
\zeta_\alpha(a_\alpha) = \sgn_{\F_\alpha^\times}(\ell_{G,p'}(\alpha^\vee)\ell_{H,p'}(\alpha^\vee))\cdot\kappa_\alpha(-1)^{(e(\alpha/\alpha_{G,j})-1)/2 + (e(\alpha/\alpha_{H,j})-1)/2}.
\end{equation}
Because $\zeta_\alpha$ is trivial on $F_{\pm\alpha}^\times$, it factors uniquely through $t\mapsto tv_1(t)^{-1}$, giving a character $\zeta^1_\alpha\co F_\alpha^1\to\{\pm1\}$. Since $a_\alpha$ has odd valuation (with respect to the normalized valuation on $F_\alpha$), we have $a_\alpha v_1(a_\alpha)^{-1}\in-1+\frp_\alpha$, and \eqref{eqn:chi-double-prime-HG-quotient-value} therefore yields, directly from the definitions,
\[
\zeta^1_\alpha\circ\alpha
=\epsilon_{G,\flat,1,\alpha}\epsilon_{H,\flat,1,\alpha}
    \epsilon_{G,\flat,2,\alpha}\epsilon_{H,\flat,2,\alpha}
\]
on $T(F)$, or on $T(F)_{\mathrm b}$ in the inertial case. Lemma~\ref{lemma:dual-l-homs} identifies the dual cohomology class of the left side with $[Q_{\chi''_H,\ol\alpha}Q_{\chi''_G,\ol\alpha}^{-1}]$, proving \eqref{eqn:q-difference-root-by-root} and hence the lemma.
\end{proof}

\subsection{Rootwise formulation of the FKS sign character}\label{app:fks-rootwise-formulation}

Recall the notation $\ol T$ from \S\ref{ss:fks}.
There is another useful way of organizing $\epsilon_x^{G/M}|_{\ol T(\F)}$ as a product of characters: namely, one can write
\[
\epsilon_x^{G/M}|_{\ol T(\F)}
= \prod_{\alpha \in
\Phi((G/M)_{\ol F},T_{\ol F})/\Sigma}
\epsilon_{x,\alpha}^{G/M},
\]
where $\epsilon_{x,\alpha}^{G/M}$ is a $\{\pm1\}$-valued character of $\ol T(\F)$, defined as follows. If $\alpha$ is asymmetric, then we define
\[
\epsilon_{x,\alpha}^{G/M}(\gamma) = \begin{cases}
\sgn_{\F_\alpha^\times}(\alpha(\gamma))
&\begin{aligned}
    &\text{if } \frac{r}{2} \in \ord_x(\alpha) \text{ xor}\\[-2pt]
    &\quad\bigl(\alpha_M \in
    \Phi((G/M)_{\ol F}, (Z_M)_{\ol F})_{\textrm{sym,ram}}\\[-2pt]
    &\qquad\text{and } 2\nmid e(\alpha/\alpha_M)\bigr),
\end{aligned}\\[6pt]
1 &\text{otherwise.}
\end{cases}
\]
If $\alpha$ is symmetric unramified, then we define
\[
\epsilon_{x,\alpha}^{G/M}(\gamma) = \begin{cases}
\sgn_{\F_\alpha^1}(\alpha(\gamma))
&\begin{aligned}
    &\text{if } \frac{r}{2} \in \ord_x(\alpha) \text{ xor}\\[-2pt]
    &\quad\bigl(\alpha_M \in
    \Phi((G/M)_{\ol F}, (Z_M)_{\ol F})_{\textrm{sym,ram}}\\[-2pt]
    &\qquad\text{and } 2\nmid e(\alpha/\alpha_M)\bigr),
\end{aligned}\\[6pt]
1 &\text{otherwise.}
\end{cases}
\]
Finally, if $\alpha$ is symmetric ramified, then we define
\[
\epsilon_{x,\alpha}^{G/M}(\gamma) = \begin{cases}
\begin{aligned}
    &f_{(G,T)}(\alpha)(-1)^{[\F_\alpha\colon \F]+1}\\[-2pt]
    &\quad{}\cdot \sgn_{\F_\alpha^\times}\!\left(
    e_\alpha \ell_{p'}(\alpha^\vee)
    (-1)^{\frac{e(\alpha/\alpha_M)-1}{2}}
    \right)
\end{aligned}
&\text{if } \alpha(\gamma) \in -1 + \frp_\alpha, \\[6pt]
1 &\text{otherwise.}
\end{cases}
\]

We will use this notation freely in the next two subsections.

\subsection{Proof of Lemma~\ref{lemma:fks-base-change}}\label{app:fks-base-change-proof}

\begin{proof}
By definition of the sign characters, we may assume that $G$ is of adjoint type and thus $T(E)_{[x]} = T(E)_{\mathrm b}$.
Filtering the extension $E/F$ by finite extensions of prime degree, we may assume that $E/F$ is cyclic of odd prime degree $\ell$ (possibly equal to $p$). For each $\Sigma$-orbit in $\Phi((G/M)_{\ol F},T_{\ol F})$, choose a $\Gamma$-orbit $\omega$ contained in it.  Then either
\begin{enumerate}
    \item $\omega$ is a single $\Gal(\ol F/E)$-orbit, or
    \item $\omega = \bigcup_{i=0}^{\ell-1} \omega_i$ for pairwise distinct $\Gal(\ol F/E)$-orbits $\omega_0, \dots, \omega_{\ell-1} \subset \omega$.
\end{enumerate}
In case~(1), $E\cap F_\alpha=F$ and $[E_\alpha:F_\alpha]=\ell$, while in case~(2), $E\subset F_\alpha$ and $E_\alpha=F_\alpha$.
Fix some such $\omega$ and fix $\alpha \in \omega$. In case (2), fix $\beta_i \in \omega_i$ for $0 \leq i < \ell$. We will show
\begin{equation}\label{eqn:FKS-bc-root}
\epsilon_{x,\alpha}^{G/M} \circ \Nm_{E/F}|_{T(E)_{\mathrm{b}}} = \begin{cases}
    \epsilon_{x,\alpha}^{G_E/M_E} &\text{in case (1)},\\
\prod_{i=0}^{\ell-1} \epsilon_{x,\beta_i}^{G_E/M_E} &\text{in case (2).}
\end{cases}
\end{equation}
We separate further into cases according to whether $\alpha$ is asymmetric, symmetric unramified, or symmetric ramified. Since $E/F$ is unramified of odd degree, it follows that $\alpha$ is asymmetric (resp.\ symmetric unramified, resp.\ symmetric ramified) for the action of $\Gal(\ol F/F)$ if and only if it is the same for the action of $\Gal(\ol F/E)$. In case (2), each such condition for $\alpha$ is equivalent to the corresponding condition for each $\beta_i$. Completely similar remarks apply to $\alpha_M$ in place of $\alpha$.

Suppose first that $\alpha$ is asymmetric. Note that the good element $X$ remains good over $E$, so the real number $r$ remains the same. Since $E/F$ is unramified, we have $\ord_x(\alpha)_F = \ord_x(\alpha)_E$. Moreover,
\[
e(\alpha/\alpha_M)_F \equiv e(\alpha/\alpha_M)_E \pmod{2}.
\]
Thus by definition, in case (1) above it suffices to show that for $t \in T(E)_{\mathrm{b}}$, we have
\[
\sgn_{\F_{E,\alpha}^\times}(\ol{\alpha(t)}) = \sgn_{\F_\alpha^\times}(\ol{\alpha(\Nm_{E/F}(t))}),
\]
where the overline denotes reduction modulo the maximal ideal. Note that $\ol{\alpha(\Nm_{E/F}(t))} = \alpha(\Nm_{\F_E/\F}(\ol t))$ since $E/F$ is unramified. Since $[\F_{E,\alpha}\co \F_\alpha]$ is odd, we have $\sgn_{\F_\alpha^\times} = \sgn_{\F_{E,\alpha}^\times}|_{\F_\alpha^\times}$. If $\gamma$ is a generator for $\Gal(\F_{E,\alpha}/\F_\alpha)$, then for all $\ol t \in \ol T(\F_E)$ we have
\begin{align*}
\sgn_{\F_\alpha^\times}(\alpha(\Nm_{\F_E/\F}(\bar t))) = \sgn_{\F_{E,\alpha}^\times}(\alpha(\Nm_{\F_E/\F}(\bar t))) &= \prod_{i=0}^{\ell-1} \sgn_{\F_{E,\alpha}^\times}(\gamma^i(\alpha(\bar t)))\\
    &= \sgn_{\F_{E,\alpha}^\times}(\alpha(\bar t))^\ell\\
    &= \sgn_{\F_{E,\alpha}^\times}(\alpha(\bar t)),
\end{align*}
where the second equality follows from the fact that $\Gal(\F_{E,\alpha}/\F_\alpha)$ stabilizes $\alpha$ and the third follows from the fact that $\sgn_{\F_{E,\alpha}^\times}$ is $\Gal(\F_{E,\alpha}/\F_\alpha)$-invariant. This proves \eqref{eqn:FKS-bc-root} in case (1) when $\alpha$ is asymmetric.

Using the same reasoning as above, in case (2) (still assuming that $\alpha$ is asymmetric) we reduce to showing that for $\bar t \in \ol T(\F_E)$ we have
\[
\prod_{i=0}^{\ell-1} \sgn_{\F_{E,\beta_i}^\times}(\beta_i(\bar t)) = \sgn_{\F_\alpha^\times}(\alpha(\Nm_{\F_E/\F}(\bar t))).
\]
In this case, we have $E \subset F_\alpha$, so $\F_{E,\beta_i} = \F_\alpha$ for $0 \leq i < \ell$. Fix $\bar t \in \ol T(\F_E)$. Let $\sigma_i \in \Gal(\ol F/F)$ be such that $\sigma_i^{-1}\alpha = \beta_i$ for all $i$, and note
\[
\prod_{i=0}^{\ell-1} \sgn_{\F_\alpha^\times}(\beta_i(\bar t)) = \prod_{i=0}^{\ell-1} \sgn_{\F_\alpha^\times}(\sigma_i^{-1}(\alpha(\sigma_i\bar t))) = \sgn_{\F_\alpha^\times}(\alpha(\Nm_{\F_E/\F}(\bar t))),
\]
and \eqref{eqn:FKS-bc-root} holds in general when $\alpha$ is asymmetric.

The case that $\alpha$ is symmetric unramified is entirely similar to the previous case; in fact, the previous two paragraphs may be read verbatim in this case if one replaces each instance of $\F_\alpha^\times$ (resp.\ $\F_{E,\alpha}^\times$, resp.\ $\F_{E,\beta_i}^\times$) by $\F_\alpha^1$ (resp.\ $\F_{E,\alpha}^1$, resp.\ $\F_{E,\beta_i}^1$).

Finally, suppose that $\alpha$ is symmetric ramified. Unramified base change preserves $e_\alpha$, $e(\alpha/\alpha_M)$, and the element $c_\alpha$ from \eqref{eq:c-alpha} defining $f_{(G,T)}(\alpha)$. Moreover,
\[
    \kappa_{E,\alpha}
    =\kappa_{F,\alpha}\circ \Nm_{E_{\pm\alpha}/F_{\pm\alpha}},
\]
and $[E_{\pm\alpha}:F_{\pm\alpha}]$ is odd, so the $f_{(G,T)}$-factors in \eqref{eqn:FKS-3} agree. The quadratic characters in \eqref{eqn:FKS-4} agree for the same reason, while the factors in \eqref{eqn:FKS-5} agree because $e(\alpha/\alpha_M)$ is unchanged upon passage from $F$ to $E$.
Since $\ell$ is odd, the exponents occurring in \eqref{eqn:FKS-4} have the same parity in both cases. Since the elements $\alpha$ and $\beta_i$ take values in $\{\pm 1\}$ on $\ol T(\F_E)$, by definition of $\epsilon_{x,\alpha}^{G/M}$ it suffices to show that for each $\ol t \in \ol T(\F_E)$ we have
\[
\alpha(\Nm_{\F_E/\F}(\ol t)) = \begin{cases}
    \alpha(\ol t) &\text{in case (1),} \\
    \prod_{i=0}^{\ell-1} \beta_i(\ol t) &\text{in case (2).}
\end{cases}
\]
In both cases, one shows this via exactly the same calculations as before, using that $E/F$ is of odd degree.
\end{proof}

\subsection{Proof of Proposition~\ref{prop:ramified-kal-bc}}\label{app:ramified-bc-proof}

\begin{proof}
We will assume that $E/F$ is totally ramified and $\ell$ is odd. We may and do assume that $G$ is of adjoint type. By \cite[Lemma 6.4]{Kal21a}, we may pass to an inner twist of $G$ to assume that $G$ and each of the $G^i$ are quasi-split: indeed, $G^0(F)_{[x]}/G^0(F)_{x,0+}$ is unaffected by such twisting since $G^0 = T$ is a torus. We will only prove the claim when $T$ is elliptic, in which case $T(F) = T(F)_{[x]}$; the general case is obtained by replacing all cocycles in the argument below by their restrictions to $I_F$. Write $\ld\theta(w)=(\wh\theta(w),w)$ for a $1$-cocycle $\wh\theta\co W_F\to\wh T(k)$. By definition \eqref{eqn:def-of-L-embedding}, our claim reduces to the following equality of cohomology classes:
\begin{equation}\label{eqn:signs-cancel-0}
    [Q_{\chi''_F}Q_{\chi''_E}^{-1}] = [\wh{\epsilon_F\epsilon_{F,\sharp,x}}][\wh{\epsilon_E\epsilon_{E,\sharp,x}}] \quad\text{in }\rH^1(W_E, \wh T(k)).
\end{equation}
Indeed, since $[E:F]$ is odd, the character $s$ of \S\ref{sss:quasi-split-l-embedding} does not change upon passage from $F$ to $E$. Let $\Sigma_F = \Gal(\ol F/F) \times \{\pm 1\}$ and similarly for $\Sigma_E$. By definition of $Q_{\chi''_F}$ and $Q_{\chi''_E}$, it suffices to show that for each $\alpha \in \Phi((G^{j+1}/G^j)_{\ol F}, T_{\ol F})$ one has the following equality of cohomology classes:
\begin{equation}\label{eqn:signs-cancel-local-0}
    \left[Q_{\chi''_{F,\alpha}} \cdot \prod_{\beta \in \Sigma_F \cdot \alpha/\Sigma_E} Q_{\chi''_{E,\beta}}^{-1}\right] = \left[\wh{\epsilon_{x,\alpha}^{G^{j+1}/G^j}\epsilon_{\sharp,x,\alpha}^{G^{j+1}/G^j}}\right]\cdot \prod_{\beta \in \Sigma_F \cdot \alpha/\Sigma_E}\left[\wh{\epsilon_{x,\beta}^{G^{j+1}_E/G^j_E}\epsilon_{\sharp,x,\beta}^{G^{j+1}_E/G^j_E}}\right]
\end{equation}
in $\rH^1(W_E, \wh T(k))$.

Fix a root $\alpha \in \Phi((G^{j+1}/G^j)_{\ol F}, T_{\ol F})$, and let $\alpha_j = \alpha|_{Z^j_{\ol F}}$. Since $\ell$ is odd, the root $\alpha$ is asymmetric (resp.\ symmetric unramified, resp.\ symmetric ramified) for the action of $\Gal(\ol F/F)$ if and only if it is the same for the action of $\Gal(\ol F/E)$. The same equivalence holds for $\alpha_j$. By precisely the same argument as in the proof of Lemma~\ref{lemma:fks-base-change}, we have
\begin{equation}\label{eqn:norm-relation-asym-sym-unram}
(\epsilon_{x,\alpha}^{G^{j+1}/G^j}\epsilon_{\sharp,x,\alpha}^{G^{j+1}/G^j}) \circ \Nm_{E/F} = \prod_{\beta \in \Sigma_F\cdot\alpha/\Sigma_E} \epsilon_{x,\beta}^{G^{j+1}_E/G^j_E}\epsilon_{\sharp,x,\beta}^{G^{j+1}_E/G^j_E}
\end{equation}
whenever $\alpha$ is asymmetric or symmetric unramified. Moreover, if $\alpha$ is symmetric ramified, $\ol t \in \ol T(\F_E)$, and $m$ is the order of $\Sigma_F \cdot \alpha/\Sigma_E$, then the formulas \eqref{eqn:FKS-3}--\eqref{eqn:FKS-5} give
\begin{equation}\label{eqn:norm-relation-sym-ram}
\begin{aligned}
&((\epsilon_{x,\alpha}^{G^{j+1}/G^j} \circ \Nm_{E/F}) \cdot \prod_{\beta \in \Sigma_F \cdot \alpha/\Sigma_E} \epsilon_{x,\beta}^{G^{j+1}_E/G^j_E})(\ol t) \\
&\qquad= \prod_{\substack{\beta \in \Sigma_F \cdot \alpha/\Sigma_E \\ \beta(\ol t)=-1}}
\sgn_{\F_\beta^\times}\left(e_{\beta,F}e_{\beta,E}(-1)^{\frac{e(\beta/\beta_j)_E-e(\beta/\beta_j)_F}{2}}\right).
\end{aligned}
\end{equation}
Here $e_{\alpha,F}$ (resp.\ $e_{\alpha,E}$) is the ramification degree of $F_\alpha/F$ (resp.\ $E_\alpha/E$), and similarly for $e(\alpha/\alpha_j)_F$ and $e(\alpha/\alpha_j)_E$. If $\alpha_j$ is asymmetric or symmetric unramified, then $\chi''_{F,\alpha_j} \circ \Nm_{E_{\alpha_j}/F_{\alpha_j}} = \chi''_{E,\alpha_j}$ by definition since $[E_{\alpha_j}:F_{\alpha_j}]$ is odd, and thus 
\begin{equation}\label{eqn:asym-sym-unram-chi-data-bc}
    \chi''_{F,\alpha} \circ \Nm_{E_\alpha/F_\alpha} = \chi''_{E,\alpha}.
\end{equation}
Note that the element $a_\alpha$ does not change upon passage from $F$ to $E$, so if $\alpha_j$ is symmetric ramified then by \eqref{eqn:chi-double-prime-G-root-value} we have
\begin{equation}\label{eqn:symmetric-ramified-chi-data-bc}
\begin{aligned}
\chi''_{F,\alpha}(\ell_{p'}(\alpha^\vee)a_\alpha) &= (-1)^{f_{F,\alpha} + 1}\frG_{\F_\alpha}(\psi^0)\sgn_{\F_\alpha^\times}(-1)^{\frac{e(\alpha/\alpha_j)_F-1}{2}}, \\
\chi''_{E,\alpha}(\ell_{p'}(\alpha^\vee)a_\alpha) &= (-1)^{f_{E,\alpha} + 1}\frG_{\F_{E,\alpha}}(\psi_E^0)\sgn_{\F_{E,\alpha}^\times}(-1)^{\frac{e(\alpha/\alpha_j)_E-1}{2}}.
\end{aligned}
\end{equation}

There are now six cases, namely cases (1)--(5) in the proof of Lemma~\ref{lemma:comparison-to-minimal-ramified-chi-data}, and an additional case that $\alpha$ (and hence also $\alpha_j$) is symmetric ramified.
In cases (1), (2), and (4), the displayed equations \eqref{eqn:norm-relation-asym-sym-unram} and \eqref{eqn:asym-sym-unram-chi-data-bc} immediately imply \eqref{eqn:signs-cancel-local-0} by definition \eqref{eqn:def-of-Q}. Cases (3) and (5) proceed in almost precisely the same way as in the proof of Lemma~\ref{lemma:comparison-to-minimal-ramified-chi-data}, so we omit them.

It remains to understand the case that $\alpha$ (and hence also $\alpha_j$) is symmetric ramified. Choosing the representatives in \eqref{eqn:def-of-Q} compatibly with the $\Sigma_E$-orbits, Lemma~\ref{lemma:dual-l-homs} and norm functoriality identify the left side of \eqref{eqn:signs-cancel-local-0} with the class dual to $\prod_{\beta\in\Sigma_F\cdot\alpha/\Sigma_E}(\xi_\beta^1\circ\beta)$, where $\xi_\beta^1\co E_\beta^1\to k^\times$ is induced by the unramified character
\begin{equation}\label{eqn:char-dual-to-q-bc}
(\chi''_{F,\beta}\circ\Nm_{E_\beta/F_\beta})\cdot(\chi''_{E,\beta})^{-1}\co E_\beta^\times\to k^\times
\end{equation}
through $c\mapsto c v_1(c)^{-1}$. The character $\xi_\beta^1$ is trivial on $E_\beta^1\cap(1+\frp_{E,\beta})$. Since $\ell_{p'}(\beta^\vee)a_\beta$ maps to $-1$, equation~\eqref{eqn:norm-relation-sym-ram} reduces the claim to the following identity for each orbit, written for $\beta=\alpha$:
\begin{equation}\label{eqn:eps-vs-chi-bc}
\sgn_{\F_\alpha^\times}\bigl(e_{\alpha,F}e_{\alpha,E}(-1)^{\frac{e(\alpha/\alpha_j)_E-e(\alpha/\alpha_j)_F}{2}}\bigr)
=\bigl((\chi''_{F,\alpha}\circ\Nm_{E_\alpha/F_\alpha})\cdot(\chi''_{E,\alpha})^{-1}\bigr)(\ell_{p'}(\alpha^\vee)a_\alpha).
\end{equation}

First, we compute the right side of \eqref{eqn:eps-vs-chi-bc}. Since $\ell$ is odd we have $(-1)^{f_{F,\alpha}+1} = (-1)^{f_{E,\alpha}+1}$. We compute
\[
\frG_{\F_{E,\alpha}}(\psi_E^0) = \begin{cases}
    \sgn_{\F_\alpha^\times}(\ell)\frG_{\F_\alpha}(\psi^0) &\text{if } \F_{E,\alpha} = \F_\alpha, \\
    \sgn_{\F_\alpha^\times}(\ell) \frG_{\F_\alpha}(\psi^0)^\ell &\text{if } \F_{E,\alpha} \neq \F_\alpha,
\end{cases}
\]
where the first case comes from the fact that $\psi^0_E(x) = \psi^0(x)^\ell$ by definition, and the second case comes from the Hasse--Davenport relation. Since $[E_\alpha:F_\alpha]=\ell/m$, in the case $\F_{E,\alpha} = \F_\alpha$ we have
\begin{equation}
\begin{aligned}
&\bigl((\chi''_{F,\alpha} \circ \Nm_{E_\alpha/F_\alpha}) \cdot (\chi''_{E,\alpha})^{-1}\bigr)(\ell_{p'}(\alpha^\vee)a_\alpha) = \\
    &\qquad= \frG_{\F_\alpha}(\psi^0)^{\ell/m} \sgn_{\F_\alpha^\times}(-1)^{\frac{e(\alpha/\alpha_j)_F-1}{2}}\frG_{\F_{E,\alpha}}(\psi^0_E)^{-1} \sgn_{\F_{E,\alpha}^\times}(-1)^{\frac{e(\alpha/\alpha_j)_E-1}{2}} \\
    &\qquad= \sgn_{\F_\alpha^\times}\bigl(\ell \cdot (-1)^{\frac{e(\alpha/\alpha_j)_E - e(\alpha/\alpha_j)_F + \ell/m - 1}{2}}\bigr)\label{eqn:chi-calc-ram-bc}
\end{aligned}
\end{equation}
since $\frG_{\F_\alpha}(\psi^0)^2 = \sgn_{\F_\alpha^\times}(-1)$. If instead  $\F_{E,\alpha} \neq \F_\alpha$, then $m = 1$ and thus similarly
\begin{equation}
\begin{aligned}
&\bigl((\chi''_{F,\alpha} \circ \Nm_{E_\alpha/F_\alpha}) \cdot (\chi''_{E,\alpha})^{-1}\bigr)(\ell_{p'}(\alpha^\vee)a_\alpha) = \\
    &\qquad= \frG_{\F_\alpha}(\psi^0)^\ell \sgn_{\F_\alpha^\times}(-1)^{\frac{e(\alpha/\alpha_j)_F-1}{2}}\frG_{\F_{E,\alpha}}(\psi^0_E)^{-1} \sgn_{\F_{E,\alpha}^\times}(-1)^{\frac{e(\alpha/\alpha_j)_E-1}{2}} \\
    &\qquad= \sgn_{\F_\alpha^\times}(\ell \cdot (-1)^{\frac{e(\alpha/\alpha_j)_E - e(\alpha/\alpha_j)_F }{2}})\label{eqn:chi-calc-unram-bc}
\end{aligned}
\end{equation}
Since $E/F$ is a \emph{Galois} extension of degree $\ell$, it follows that $|\F_\alpha| \equiv 1 \pmod{\ell}$ and thus 
\begin{equation}\label{eqn:quadratic-reciprocity-identity}
    \sgn_{\F_\alpha^\times}(\ell \cdot (-1)^{\frac{\ell-1}{2}}) = 1
\end{equation}
by quadratic reciprocity.\footnote{This can be proven by splitting into cases according to the classes of $|\F_\alpha|$ and $\ell$ modulo $4$.} Thus by \eqref{eqn:chi-calc-ram-bc} and \eqref{eqn:chi-calc-unram-bc}, we have
\begin{equation}\label{eqn:chi-calculation}
\begin{aligned}
    &\bigl((\chi''_{F,\alpha} \circ \Nm_{E_\alpha/F_\alpha}) \cdot (\chi''_{E,\alpha})^{-1}\bigr)(\ell_{p'}(\alpha^\vee)a_\alpha) = \\
    &\qquad= \begin{cases}
        \sgn_{\F_\alpha^\times}(-1)^{\frac{e(\alpha/\alpha_j)_E-e(\alpha/\alpha_j)_F}{2}} &\text{if } E_\alpha/F_\alpha \text{ is ramified}, \\
        \sgn_{\F_\alpha^\times}(\ell \cdot (-1)^{\frac{e(\alpha/\alpha_j)_E-e(\alpha/\alpha_j)_F}{2}}) &\text{otherwise.}
    \end{cases}
\end{aligned}
\end{equation}

Next, we compute the left side of \eqref{eqn:eps-vs-chi-bc}. There are three cases:
\begin{enumerate}[label=(\Alph*)]
    \item $E \subset F_\alpha$,
    \item $E_\alpha/F_\alpha$ is unramified of degree $\ell$,
    \item $E_\alpha/F_\alpha$ is ramified of degree $\ell$.
\end{enumerate}

In case (A), we have $e_{\alpha,F}=\ell e_{\alpha,E}$ and $m=\ell$. Thus the left side of \eqref{eqn:eps-vs-chi-bc} is
\[
\sgn_{\F_\alpha^\times}(\ell\cdot(-1)^{\frac{e(\alpha/\alpha_j)_E-e(\alpha/\alpha_j)_F}{2}}),
\]
which agrees with \eqref{eqn:chi-calculation} since $E_\alpha=F_\alpha$. 

In case (B), we have $e_{\alpha,F} = \ell e_{\alpha,E}$ and $m = 1$. Thus the left side of \eqref{eqn:eps-vs-chi-bc} is 
\[
\sgn_{\F_\alpha^\times}(\ell \cdot (-1)^{\frac{e(\alpha/\alpha_j)_E-e(\alpha/\alpha_j)_F}{2}}),
\]
which again agrees with \eqref{eqn:chi-calculation} since $\F_{E,\alpha} \neq \F_\alpha$.

Finally, in case (C) we have $e_{\alpha,F} = e_{\alpha,E}$ and $m = 1$. Thus the left side of \eqref{eqn:eps-vs-chi-bc} is
\[
\sgn_{\F_\alpha^\times}(-1)^{\frac{e(\alpha/\alpha_j)_E - e(\alpha/\alpha_j)_F}{2}},
\]
which again agrees with \eqref{eqn:chi-calculation} since $E_\alpha/F_\alpha$ is ramified of degree $\ell$.
\end{proof}

\bibliographystyle{amsalpha-with-labels}
\bibliography{Bibliography}

\end{document}